\definecolor{antique}{cmyk}{0.0, 0.055, 0.08, 0.}
\theoremstyle{plain}
\newtheorem{thm}{Theorem}[section]
\newtheorem*{thm*}{Theorem}
\newtheorem{sublem}[equation]{Lemma}
\newtheorem{subcor}[equation]{Corollary}
\newtheorem{subprop}[equation]{Proposition}
\newtheorem{subthm}[equation]{Theorem}
\theoremstyle{definition}
\newtheorem{cosa}[thm]{}
\newtheorem{subcosa}[equation]{}
\newtheorem{subex}[equation]{Example}
\newtheorem{subexs}[equation]{Examples}
\theoremstyle{remark}
\newtheorem{subrem}[equation]{Remark}
\newtheorem{subrems}[equation]{Remarks}
\newtheorem{subrems*}{Remarks.}
\newtheorem{subxrz}[equation]{Exercise}
\numberwithin{equation}{thm}
\numberwithin{equation}{thm}
\newcommand{\D}{\boldsymbol{\mathsf{D}}}
\newcommand{\K}{\boldsymbol{\mathsf{K}}}
\newcommand{\Hr}{\textup{H}}
\newcommand{\Hp}{\textup{H}_\varphi}
\newcommand{\LL}{\mathsf L}
\newcommand{\R}{\mathsf R}
\newcommand{\tr}{\textup{tr}}
\newcommand{\SA}{\mathsf{A}}
\newcommand{\SB}{\mathsf{B}}
\newcommand{\SEE}{\mathsf{E}}
\newcommand{\SF}{\mathsf{F}}
\newcommand{\SG}{\mathsf{G}}
\newcommand{\SL}{\mathsf{L}}
\newcommand{\sA}{\mathscr{A}}
\newcommand{\sE}{\mathscr{E}}
\newcommand{\1}{\mathbf{1}}
\newcommand{\op}{{\mathsf o\mathsf p}}
\newcommand{\ZZ}{\mathbb Z}
 \newcommand{\Rf}{\R f^{}_{\<\<*}}
\newcommand{\sExt}{\mathscr E\<{\mathit{x\mkern-.75mu t}}}
\newcommand{\fst}{{f^{}_{\<\<*}}}
\newcommand{\ush}[1]{{#1^{\textup{\texttt\#}}}}
\newcommand{\Ush}[2]{{#1_#2^{\textup{\texttt\#}}}}
\newcommand{\pt}{\phi^{\flat}}
\newcommand{\oO}{\>{\overline{\<\CO\<}\>}}
\newcommand{\oQ}{\overline{Q}}
\newcommand{\oY}{\overline{Y\<}\>}
\newcommand{\qc}{\mathsf{qc}}
\newcommand{\Dqc}{\D_{\mathsf{qc}}}
\newcommand\Dqcpl{\D_\qc^{\lift.95,\text{\cmt\char'053},}}
\newcommand\Dpl{\D^{\lift.95,\text{\cmt\char'053},}}
\def\Kb#1;#2;{K^\bullet_{\!#1}(#2)}
\font\cmt=cmtex10
\newcommand{\CH}{\mathcal H}
\newcommand{\CI}{\mathcal I}
\newcommand{\CJ}{\mathcal J}
\newcommand{\CL}{\mathcal L}
\newcommand{\CM}{\mathcal M}
\newcommand{\CN}{\mathcal N}
\newcommand{\CO}{\mathcal O}
\newcommand{\CS}{\mathcal S}
\newcommand{\CT}{\mathcal T}
\newcommand{\bpic}{\begin{tikzpicture}}
\newcommand{\epic}{\end{tikzpicture}}
\newcommand{\Otimes}[1]{\otimes^\LL_{#1}}
\newcommand{\sHom}{\CH om}
\newcommand{\set}{\!:=}
\newcommand{\sX}{{\<\<X}}
\newcommand{\sst}{\scriptstyle}
\newcommand{\sss}{\scriptscriptstyle}
\newcommand{\smallcirc}{{\>\>\lift1,\sst{\circ},\,}}
\newcommand{\<}{\mkern-1mu}
\renewcommand{\>}{\mkern1mu}
\newcommand{\va}[1]{\vspace{#1pt}}
\def\cit#1;#2;{\cite[#1]{#2}}
\newcommand{\kf}{\kern.5pt}
\def\lift#1,#2,{\vbox to 0pt{\vskip-#1 ex\hbox{$\scriptstyle #2$}\vss}}
\newcommand{\OX}{\mathcal O_{\<\<X}}
\newcommand{\OY}{\mathcal O_Y}
\newcommand{\OZ}{\mathcal O_{\<Z}}
\newcommand{\OW}{\mathcal O_W}
\newcommand{\OV}{\mathcal O_V}
\newcommand{\OU}{\mathcal O_U}
\newcommand{\fundamentalclassa}[1]{{\boldsymbol{\mathsf{a}}}_{#1}}
\newcommand{\fundamentalclassb}[1]{{\boldsymbol{\mathsf{b}}}_{#1}}
\newcommand{\circled}[1]{\textcircled{\raisebox{-.25pt}{\scriptsize{#1}}}}
\newcommand{\lto}{\longrightarrow}
\newcommand{\xto}{\xrightarrow}
\newcommand{\lot}{\longleftarrow}
\newcommand\iso{{\mkern8mu\longrightarrow \mkern-25.5mu{}^\sim\mkern17mu}}
\newcommand\osi{{\mkern8mu\longleftarrow \mkern-25.5mu{}^{\>\sim}\mkern17mu}}
\DeclareMathOperator{\spec}{Spec}
\DeclareMathOperator{\Hom}{Hom}
\DeclareMathOperator{\tor}{Tor}
\DeclareMathOperator{\ext}{Ext}
\DeclareMathOperator{\stor}{\CT\!\<\<\mathit{or}}
\DeclareMathOperator{\id}{id}
\DeclareMathOperator{\via}{{\textup{via}}}
\newcommand{\Iso}{\vbox to 0pt{\vss\hbox{$\widetilde{\phantom{nn}}$}\vskip-7pt}}
\newcommand{\ov}{\overline}
\newcommand{\brf}{\bar{f\:\<}^{\!\<*}}
\newcommand{\SH}[1]{\mathfrak{s}_{\<#1}\>\>}
\newcommand{\upcheck}[1]{{#1}^{\sss\boldsymbol \vee}}
\def\cA #1; #2;{\cite[p.\,#1, #2]{A}}
\def\cT #1; #2;{\cite[p.\,#1, #2]{T}}
\def\lift#1,#2,{\vbox to 0pt{\vskip-#1 ex\hbox{$\scriptstyle #2$}\vss}}
\def\drlm#1{\underset{\vtop{\vskip-4.2pt\hbox to 14pt{\rightarrowfill} \vskip-10pt\hbox{$\scriptstyle \ #1$}}}\to\lim\,}
\def\dirlm#1{\lim\hskip-1.65em\lower1.37ex
       \hbox{\smash[b]{$
                   \underset{\lift 1.37,
                                         {\hbox to 0pt{\hss$\scriptscriptstyle#1$\hss}},
                                  }
                     {\colon \hbox to 1.37em {\rightarrowfill}}
               $} }                      
     \!\<}
\newcommand{\bsr}{\bar{\SH{}}_{\<\<\!R}}
\newcommand{\bst}{\bar{\SH{}}_{\<\<T}}
\newcommand{\cf}{c^{}_{\mkern-1.5mu f}}
\begin{document}
\author[J. Lipman]{Joseph Lipman}
\address{142 Ranch Ln\\
Santa Barbara CA 93111, USA}
\email{jlipman@purdue.edu}
\urladdr{http://www.math.purdue.edu/\~{}lipman/}

\subjclass[2010]{Primary 14F05}
\keywords{Grothendieck duality, fundamental class}

\title{Grothendieck Duality theories---abstract and concrete}

\begin{abstract}
Grothendieck Duality---the theory of the twisted inverse image pseudofunctor $(-)^!$ 
over a suitable category of scheme\kf-maps---can be developed \emph{concretely},  with emphasis on explicit constructions, 
or, in greater generality, \emph{abstractly},  with emphasis on category-theoretic considerations. 
We aim to connect these approaches, a nontrivial matter involving some alluring relations,  \mbox{for instance} among differential forms, residues and duality.  In particular, it emerges that the culminating Ideal Theorem  in  Hartshorne's ``Residues and Duality" holds for arbitrary essentially-finite-type maps of noetherian schemes and bounded-below complexes with quasi-coherent cohomology.

What appears here mostly concerns pseudo\kf-coherent finite maps. The rest is being prepared.

\end{abstract} 

\maketitle 

\tableofcontents
 
\section{Introduction}\label{intro}
\begin{cosa}\label{NoTerm}
(Notation and terminology.) 
A \emph{ringed space} is a topological space $X$ furnished with a sheaf $\OX$ of commutative rings. 
A map $(f,\theta)\colon X\to Y\<$  of ringed spaces consists of a continuous map $f\colon X\to Y$ 
and a homomorphism of sheaves of rings $\theta\colon\OY\!\to\!\fst\OX$. 
(The appropriate $\theta$, though under\-stood to be present,  is often left out of the notation.)
Such spaces and  maps form a category $\mathbf S$:
the composition of $(g,\psi):Y\to Z$ with $(f,\theta)$ 
is $(g\smallcirc\! f, \>\>g_*\theta\smallcirc\< \psi)$, and the identity $\id_X$ of~$X$ is the map for which $Y=X$ and both $f$ and $\theta$ are identity maps.
Schemes and their maps constitute a full subcategory. 

A diagram depicting $\mathbf S$-maps  is \emph{natural} if each unlabeled arrow in it represents\- a map whose description, while omitted, is presumed evident. Arrows decorated with a
``\raisebox{2pt}{$\Iso$}" or  ``$\>\>\simeq\>\>$" represent isomorphisms.

The abelian category of $\OX$-modules on a ringed space $X$  is denoted~$\sA(X)$; 
$\sA_{\qc}(X)\subset \sA(X)$ is the
full subcategory spanned by the \emph{quasi-coherent} $\OX$-modules. 
The derived category of $\sA(X)$ is denoted $\D(X)$; $\Dqc(X)\subset \D(X)$ is the full subcategory   spanned by the $\OX$-complexes with quasi-coherent cohomology.
$\Dpl(X)\subset\D(X)$ is the full subcategory   
spanned by the \emph{locally} cohomologically bounded-below complexes (those $C\in\D(X)$ 
for which there is an open cover $(X_\alpha)_{\alpha\in A}$ of $X$ and  for each $\alpha$ 
an integer $n_\alpha$ such that the restriction 
$(H^iC)|_{X_\alpha}\<\<$ vanishes for all $i<n_\alpha\>$);
and $\Dqcpl(X)\set \Dqc(X)\cap\Dpl(X)$.\looseness=-1\va1

To reduce clutter, for any monoidal category $(\mathbf C,\otimes)$ and $A,\>B,\>C\in\mathbf C$ we will
identify---harmlessly, via the natural isomorphism---$(A\otimes B)\otimes C$ with $A\otimes (B\otimes C)$,
and denote either of these objects as $A\otimes B\otimes C$.

Assigning to each ringed-space map $f\colon X\to Y$ the derived direct image $\R\fst\colon\D(X)\to\D(Y)$
and the derived inverse image $\LL f^*\colon\D(Y)\to\D(X)$ leads to a pair of \emph{adjoint monoidal
pseudofunctors} on $\mathbf S$, see \cite[3.6.7, 3.6.10]{li}.\looseness=-1

The abbreviation ``qcqs" denotes ``quasi-compact and quasi-separated."%
\footnote{In the oft to be referred-to exposition \cite{li}, this condition is called ``concentrated." The frequent subsequent references to \cite{li} (of which
these notes may be viewed as a continuation) are due much more to its approach and convenience than to any originality.}
A basic fact of Grothendieck Duality theory is that \emph{for any map \mbox{$f\colon X\to Y$} of qcqs schemes, 
the restriction to\/ $\Dqc(X)$ of\/  $\R\fst$ has a right adjoint,} i.e., 
there exist a functor $f^{\<\times}\colon\D(Y)\to\Dqc(X)$ and 
a functorial map
\[
\tau^\times_{\!f}\colon\R\fst f^{\<\times} G\to G\qquad (G\in\D(Y)) 
\]
such that for any complex $F\in\Dqc(X)$, the natural composite map\va{-4}
\[
\Hom_{\D(X)}(F, f^{\<\times} G\>)\to\Hom_{\D(Y)}(\R\fst F\<, \R\fst f^{\<\times} G\>)
\xto{\via\tau_{\!f}^{\<\times}\<}\Hom_{\D(Y)}(\R\fst F\<, G\>)
\]
is an isomorphism. (See, e.g., \cite[Corollary 4.1.2]{li} and the notes following its proof.)

\end{cosa}

\begin{cosa}\label{Ideal intro} 
One of our goals, ultimately,  is to prove the ``Ideal Theorem," 
called in \cite[p.\,6]{RD}  the \emph{primum mobile} of that book, 
and which---with restrictions we won't need
(existence of dualizing complexes, coherence of cohomology of 
sheaf-complexes)---is one of its main results (\emph{ibid.,}~p.\,383, Corollary 3.4).

Paraphrased, the Ideal Theorem asserts, \emph{first of all,} the existence of a 
\emph{duality pseudofunctor,} by which is meant a $\Dqcpl$-valued pseudofunctor $(-)^!$ on the category~$\sE$ of finite\kf-type separated maps of noetherian schemes, and for each proper $\sE$-map~$f$ a functorial map 
$\tau^{}_{\!f}\colon\Rf f^!\to\id$, satisfying the following properties, of which (i), (ii) and~ (iv) jointly determine these data up to unique isomorphism:

\smallskip
(i) For any \'etale $\sE$-map~~$f$,  $f^!$ is the usual restriction functor~$f^*\<$.\va1

(ii) (Duality). For any proper $\sE$-map~$f$, $\tau^{}_{\!f}$ makes $f^!$  right-adjoint
to $\Rf$ (i.e., in \S\ref{NoTerm}, one can take $(f^{\<\times}\<\<,\tau^\times_{\!f})\set(f^!\<,\tau^{}_{\!f})$).\va1

(iii) (Flat base change). For any fiber square in $\sE$
\begin{equation}\label{fiber}
\CD
X'@>v>> X\\
@VgV\mkern 20mu V @VVfV \\
Z' @>\lift4.8,\displaystyle\clubsuit,>u> Z
\endCD
\end{equation}
with $f$ (hence $g$) proper and $u$ (hence $v$) flat,
and $F\in\Dqcpl(Z)$, the map
\begin{equation}\label{beta}
\beta_{\clubsuit}(F\>)\colon:v^*\!f^!\<\<F\to g^!\<u^*\<\<F
\end{equation}
adjoint (via (ii)) to the natural composition
$$
\R g_* v^*\! f^!\<F{\iso}u^*\Rf f^!\<F
\xto[\lift1.35,\!u^*\<\tau^{}_{\!f}\!,]{} u^{\<*}\<\<F\\[1.5pt]
$$
is an \emph{isomorphism}.\va1

(iv) (For gluing (i) and (ii)). In (iii), if $u$ (hence $v$) is an open immersion, then $\beta_{\clubsuit}(F\>)$ is equal to 
the natural composite isomorphism (which exists for \emph{any} commutative square~\eqref{fiber} with $u$ and $v$ \'etale)
$$
v^*\<\<f^!\<F= v^!\<f^!\<F
\iso (fv)^!\<F=(ug)^!\<F\iso
g^!\<u^!\<F  =g^!\<u^*F.
$$

This much of the Ideal Theorem  is contained in Theorems 4.8.1
and~4.8.3 of the notes \cite{li}, and was extended to essentially-finite\kf-type maps 
by Nayak in \cite[\S5.2]{Nk09}.   The methods of proof are largely category-theoretic,
in~ line with the ``abstract" development of Grothendieck Duality initiated by Verdier and Deligne (see Deligne's Appendix in \cite{RD}, and also  \cite{Nm96}).\va4

\begin{small}
The pseudo\-functor~$(-)^!$ extends from $\Dqcpl$ to $\Dqc$ if one restricts to proper maps or to \mbox{$\sE$\kf-maps} of finite tor-dimension \cite[\S\S5.7--5.9]{AJL11}---and even without such restrictions if one relaxes ``pseudofunctor"\va1 to ``oplax functor,"\va{1} i.e., one allows that for an $\sE$\kf-diagram  
$W\overset{\lift .7,g\>,}\lto X\overset{\lift 1.15,f\>,}\lto Z $\va{.5} the associated map $(fg)^!\to g^!f^!$ need not be an isomorphism, see \cite{Nm23}. (For maps of finite 
tor-dimension the agreement of the oplax $(-)^!$  with the preceding pseudofunctor results from \cite[Prop.~13.11]{Nm23}.) In~fact, Neeman's results apply to a
broad class of noetherian stacks, including those of Deligne\kf-Mumford. 

Nayak has also established extensions to composites of pseudoproper maps and \'etale maps of formal schemes 
\cit Theorem 7.1.6;Nk05;, and to composites of proper flat pseudo\kf-coherent maps and \'etale maps of qcqs schemes \cit Theorem~7.3.2;Nk05;. In another direction, extensions are emerging in derived algebraic geometry, see, for example, \cit \S3;Ga;, \cite{LZ}, \cite{Sch}.

\end{small}

\medbreak

\emph{Secondly}---and this will be the focus of our attention---the Ideal Theorem gives \emph{concrete\- realizations} of the pseudofunctor $(-)^!$ over the subcategories of~$\sE$ spanned, respectively, by its finite maps and its smooth maps, and 
concrete descriptions of abstractly specified pseudo\kf\-functorial maps associated to some combinations of these two types of map.  

A concrete realization of $(-)^!$ is, informally stated, a \emph{concretely describable} duality pseudofunctor $\ush{(-)}\<$,   pref\-erably, though not necessarily,  canonical.%
\footnote{ \emph{Concrete} and \emph{canonical} are somewhat flexible concepts.    In what sense, for instance, is the number 1 canonical?  As Humpty Dumpty
said, ``a~word means\:\dots just what I choose it to mean."}

A concrete realization of a functorial map built up from concrete elementary maps involving the identity functor id, the derived tensor product and derived direct image, by means of categorical operations like adjunction, composition and  successive application of previously defined functors, is a concrete description of such a map.  (This somewhat vague characterization will be clarified by a number of examples, starting in section~\ref{bc}.)

\enlargethispage{-7pt}
In particular, for  proper $\sE$-maps $f$ one wants a \emph{concrete right adjoint}~$\ush f$  \emph{of\/~$\R\fst$}, and a \emph{concrete counit map} $\>\R\fst\ush f\<\to\id$, varying pseudofunctorially. Two such pairs are necessarily canonically isomorphic.\va3

The first part of this exposition explores constructs associated to certain \emph{finite} maps $f\colon X\to Y\<$, 
for which, as in \cite[pp.\:164--175]{RD}---though 
with weaker  hypotheses, see~\S\ref{2.4}---one defines ``quasi-concretely" a functor
\[
f^\flat\colon\Dqcpl(Y)\to\Dqcpl(X)\\[2pt]
\] 
plus a functorial $\fst\OX$-isomorphism 
\[
\bar t^{}_{\!f}\colon\R\fst f^\flat\iso\R\>\sHom_Y(\fst\OX,-)\>,
\]
such that with $t^{}_{\!f}$ the natural composite map
\[
\R\fst f^\flat 
\,\underset{\lift1.25,\bar t^{}_{\!f},}
\iso\R\>\sHom_Y(\fst\OX,-)
\lto \R\>\sHom_Y(\OY\<,-)
\iso\id,
\]
$(f^\flat\<, t^{}_{\!f})$ is a right adjoint for $\R\fst\>$. Thus\va{.6}  $(f^\flat\<, t^{}_{\!f})$ is a realization of~$\,(f^!\<,\tau^{}_{\!f})$
(\emph{pseudofunctorially,}  see \S\ref{pf adj}).\va{.6} The definition involves  
$\R\>\sHom(\fst\OX,-)$ and the left adjoint $\bar{f\:\<}^{\!\<*}\<$ of $\bar\fst$ where
$\bar f\colon(X,\OX)\to (Y,\fst\OX):=\oY$ is the natural ringed-space map (see Example~\ref{locally}); these functors, being
characterized by universal properties, are well-defined only up to canonical isomorphism,
and thus limit the extent to which $(f^\flat\<, t^{}_{\!f})$ can be considered  to be concrete or canonical.
 \va1

Sometimes, simpler realizations exist.
For example, restricting to finite maps $f\colon X\to Y$ that are locally finitely presentable and flat (that is,  
$\fst\OX$~is locally\- free of finite rank over~$\OY$),  
and to quasi-coherent $\OY$-complexes~$F$, there is a right adjoint $(f^\flat\<, \bar t^{}_{\!f})$\va{-1}
for the $\D(\sA_\qc)$-valued pseudo\-functor~$(-)_*$ with
$f^\flat\<\< F\set\bar{f\:\<}^{\!\<*}\sHom_Y(\fst\OX, F\>)\in\sA_{\qc}(X)$,  
and $\bar t^{}_{\!f}(F\>)$ the isomorphism
\[
\fst f^\flat \<\<F=\fst\bar{f\:\<}^{\!\<*}\sHom_Y(\fst\OX, F\>)
\iso\sHom_Y(\fst\OX,F\>)
\]
arising from $\bar{f\:\<}^{\!\<*}\<\colon\sA_\qc(\oY)\to\sA_\qc(X)$ being  
quasi-inverse to~$\bar\fst$. This $(f^\flat\<, \bar t^{}_{\!f})$ is as concrete or canonical as $\bar{f\:\<}^{\!\<*}$ is. If $Y$ is separated and quasi-compact, the natural functor $\D(\sA_\qc(Y))\to\Dqc(Y)$ is an equivalence, so every complex $G\in\Dqc(Y)$ is isomorphic (functorially) to a quasi-coherent complex $QG\>$; thus, over such schemes one gets  a realization of $(f^!\<,\tau^{}_{\!f})$ that\va{-1} is concrete or canonical to the extent that 
the functors $\bar f^*$ and $Q$ are.

Suppose, moreover, $f$ is  \emph{\'etale}. The usual trace map $\fst\OX\to\OY$
lifts to an $\fst\OX$-isomorphism $\fst\OX\iso\sHom_Y(\fst\OX, \OY)$, giving, for $G\in\D(Y)$, the first of the functorial $\fst\OX$-isomorphisms (the second being natural)
\begin{equation}\label{etaliso}
\fst\OX\otimes_Y G\iso \sHom_Y(\fst\OX,\OY)\otimes_YG\iso\sHom_Y(\fst\OX,G).
\end{equation}

\enlargethispage*{10pt}
If $\phi\colon \oY\to Y$ is the ringed-space map\va{.5} corresponding to the 
natural map $\OY\to\fst\OX$ (so that $f=\phi\bar f\>$), then the functor \va1
$\fst\OX\otimes_Y(-)\colon\D(Y)\to\D(\oY)$ is left-adjoint to $\phi_*$, and so may
be identified with $\phi^*$. \va{.5} Then by applying~$\bar{f\:\<}^{\!\<*}$ to~\eqref{etaliso}, one gets a functorial $\OX$-isomorphism\va{-1}
\[
c^{\>\flat}_{\<f}\colon f^*\<G\iso \bar{f\:\<}^{\!\<*}\sHom_Y(\fst\OX, G)=f^\flat G,
\]
\vskip-1pt\noindent
whence the concrete realization $(f^*\<,\textup{tr}^{}_{\!f})$
of~$\,(f^!\<,\tau^{}_{\!f})$, 
where for $G\in\Dqcpl(Y)$, $\textup{tr}^{}_{\!f}(G)$~is the natural composite map\va{-5}
\[
\R\fst f^*\<G\xto{\R\fst c^{\>\flat}_{\<f} }\R\fst f^\flat G\lto G,
\]
which can be shown (using e.g., \cite[(3.7.1)]{li} with $f\set\bar f$\va{.6} and $g\set\phi$ plus \cite[(3.4.7)(ii)]{li} with $A\set\phi^*G$ and $f\set\bar f\>$) to be the natural composite map\looseness=-1
\[
\R\fst f^*\<G\iso \fst\OX\<\otimes_Y G\xto{\!\textup{trace}\>\otimes\>\id\,} \OY\<\otimes_Y G \iso G.
\]
Again, this realization is canonical insofar as the left adjoint $f^*$ of $\fst$ is.

For an extension to ``almost \'etale" $\<f$, see Proposition~\ref{complementary}.\va3

When $f$ is a map of affine schemes, this all has a well-known commutative\kf-algebra translation. Indeed, for any commutative ring $R$, sheafification and the derived global-section functor induce inverse \emph{equivalences} between duality theory over 
the derived category of $R$-modules and that over $\Dqc(\spec R)$,  allowing us to realize functors and functorial maps in the latter through concrete commutative\kf-algebra constructions in the former. 

For example,
when $S$ is a finite\- $R$-algebra, with corresponding scheme\kf-map $f\colon\spec S\to\spec R$,
then one gets a concrete right adjoint for $\R\fst$ by sheafifying the 
fact that the restriction-of-scalars functor
from $\D(S)$ to~$\D(R)$ has as right adjoint the~functor $\R\<\<\Hom_{R}(S,-)$ together with the natural functorial $\D(R)$-map \va{-1} 
\[
\R\<\<\Hom_{R}(S,M)\to\R\<\<\Hom_{R}(R,M)=M \qquad (M\in\D(R)).
\]
\vskip-1pt

Another example is the commutative\kf-algebra map corresponding to the projection map 
((iii) in \S1.3 below), as described in Lemma~\ref{dualproj}.

There is much more along these lines in \S3.\va2

As another example, a detailed account of duality  provides,
for a \emph{smooth} $\sE$-map 
$f\colon X\to Y$ with fibers of pure dimension~$d$ and  $\Omega^{\>d}_{\<f}$ the sheaf of relative $d$\kf-forms,  a \emph{canonical functorial isomorphism}
\begin{equation}\label{v_f}
\ush f\<F\set\Omega^{\>d}_{\<f}[d\>]\otimes_\sX \<f^*\<\<F\underset{c^{}_{\<\<f}(F\>)}\iso f^!\<F\qquad(F\in\Dqc(Y))
\end{equation}
\vskip-1pt\noindent(where  ``$\>[d\>]\>$'' denotes $d$\kf-fold translation in $\D(X)$),  having pseudofunctorial variance~
induced by canonical isomorphisms of the form
\pagebreak[3]
\[
\Omega^{\>e}_{\<f}[e]\otimes_X \<f^*\Omega^{\>d}_{\<g}[d\>]\iso
\Omega^{\>d+e}_{gf}[d+e]
\] 
for smooth $\sE$-maps $g\colon Y\to Z$ with fibers of pure dimension~$e\>$;
and further, 
when~$f$ is also \emph{proper,} an explicit elucidation of the composite map
\[
\textup{Tr}^{}_{\!f}\colon\R\fst \ush f 
\underset{\R\fst \cf} \iso 
\R\fst f^!\xto[\lift.6,\:\tau^{}_{\!f},]{} \id\\[-3pt]
\]
via the theory of residues, as sketched in \cite[pp.\,398--400]{V} and developed in \cite{HS} or \cite[Proposition 4.2.2]{LS92}.%
\footnote{In \cite[p.\,383]{RD} this is required only for $X\set \mathbb P^{\>d}(Y)$ and $f\colon X\to Y$ the natural map, in which case $\textup{Tr}^{}_{\!f}$ can be described 
explicitly via \v Cech complexes, see e.g., \cite[\S5]{HK}.} 

Assuming the first part of the Ideal Theorem, together with a canonical representation of $g^!$ when $g$ is a regular immersion\va{.6} \cite[p.\,180, Corollary 7.3]{RD} 
(reproduced, in essence, in Proposition~\ref{KosReg} below), 
Verdier constructed such a $\cf$\va{-.6} in \cite[Proof of Theorem 3]{V}. (A much expanded treatment of this ``fundamental class" is given in \cite{LN17}.)
But proving pseudofunctoriality of~$\cf$  is~far from straightforward.
That, and much more about $\cf$ and its relation to traces and residues, is addressed in~\cite{NS}, in the context of formal schemes.\va2

The finite \'etale situation is the overlap between the finite one and the  smooth one. The ``concrete" proof in \cite{RD} of the Ideal Theorem depends on pseudofunctorially gluing\va{-1} 
$(-)^\flat$ and $\Omega_{(-)}^*[*]$ along that overlap. 
In contrast, the basic idea here will be to show\va{.5} (with weaker hypotheses) that the above pseudo\-functor $(-)^!$, \emph{abstractly constructed} via gluing of $(-)^*$ over \'etale maps and of a right adjoint of~$\R(-)_*$ over proper maps, is isomorphic over finite (resp.~smooth) maps to $(-)^\flat$ (resp.~$\Omega_{(-)}^*[*]$).

\end{cosa}

\begin{cosa}\label{expressions}
The foregoing instantiates a more general theme, as follows.

For our relatively unsophisticated purposes, the \emph{abstract  yoga of duality} takes a dualizing structure 
on~a category~ $\mathbf C$ to be an adjoint pair $(\>{}^{\boldsymbol *},{}_{\boldsymbol *}\>)$ of monoidal pseudofunctors (\cite[\S3.6]{li} or ~\cite[Lecture~3]{lil}),
taking values in closed categories \mbox{$\D_\sX\ (X\in\mathbf C)$} with product~$\otimes_\sX$ and 
unit~$\OX$, \emph{plus} for each $\mathbf C$-map $\psi\colon X\to Y$ a right adjoint $\psi^\times$ of the functor $\psi_*\colon\D_\sX\to\D_Y$, with specified counit map $\psi_*\psi^\times\to\id$,
\emph{plus} a class~$\mathsf S$ of oriented commutative $\mathbf C$-squares\va{-6}%
\footnote{An oriented commutative square is a quadruple of maps $(u,f,v,g)$ such that $ug=fv$, \emph{plus} an ordering  of the pair $(u,f)$.  A diagram such as the following one always represents a commutative square that 
is oriented by putting the bottom arrow $u$ ahead of $f$.}
\[
\CD
X'@>v>> X\\[-4pt]
@VgV\mkern 20mu V @VVfV \\[-3pt]
Z' @>\lift4,\displaystyle\spadesuit,>u> Z,
\endCD
\] 
with $\mathsf S$ closed under vertical and horizontal juxta\-position,
such that for all $\psi\colon X\to Y,$ the following maps are \emph{isomorphisms}:\va2

(i) the map $\psi^*\OY\to \OX$ adjoint to the natural map $\OY\to\psi_*\OX$;

(ii) for all $F,\>G\in\D_Y,$ the map $\psi^*(F\otimes_Y G)\to \psi^*\<F\otimes_X \psi^*\< G$ adjoint to the natural composite map
\[
F\otimes_Y G \to \psi_*\psi^* \<F\otimes_Y \psi_*\psi^*\<G \to \psi_*(\psi^*\< F\otimes_\sX \psi^*\<G);
\]

(iii) for all $E\in\D_X$ and $F\in\D_Y,$ the  map 
$\psi_*E\otimes_Y\< \<F\to\psi_*(E\otimes_\sX \psi^*\<F\>) $
(``\kf projection map") adjoint to the natural composite map
\[
\psi^*(\psi_*E\otimes_Y \<\<F\>)\to \psi^*\psi_*E\otimes_\sX\psi^*\< F\to E\otimes_X \psi^*\<F;
\]

(iv) for each $\spadesuit\in\mathsf S$ and $G\in\D_{\sX}$,
 the map $u^*\!\fst G\to g_*v^* G$ adjoint to the natural composite map
\(
g^*u^*\!\fst G\iso v^*\<\<f^*\mkern-2.5mu\fst G \lto v^*\<G\>;
\)

\enlargethispage*{5pt}
(v) for each $\spadesuit\in\mathsf S$
 and $F\in\D_{\<\<Z}$, the  map
\(
 v^*\<\<f^\times\<\<F\to g^\times\<u^*\<\<F,
\)
adjoint to the natural composite map
\(
g_* v^*\mkern-2mu f^\times\<\<F{\iso}
u^*\!\fst f^\times\<\<F
\lto u^{\<*}\<\<F.
\)

On these axiomatic foundations, one constructs, using only categorical operations (adjunction, composition, composite functors\,\dots), 
a super\-structure of pseudofunctorial maps, and compatibilities among them expressed by commutative diagrams. 
(For more in this vein, see e.g., \cite[\S3.5.4]{li} or \cite[Lectures 4 and 6]{lil}. Even more generally, see \cite{Ho} or \cite[Part I]{CD}.)\looseness=-1

This abstract theory is modeled by a variety of specific situations.
 For example (somewhat oversimplified), $\mathbf C$ could be some category of commutative rings, $\D_\sX$ the category of $X$-modules (or its derived category) with the usual closed structure, and $(\>{}^{\boldsymbol *},{}_{\boldsymbol *}\>)$ the usual (or derived) extension- and restriction-of-scalars pseudofunctors. (For elaboration, see \S\ref{affex}.) Or, $\mathbf C$ could be some category of ringed spaces, $\D_\sX$ the category of quasi-coherent $\OX$-modules (or $\Dqcpl(X)$, see \S\ref{Ideal intro}), and $(\>{}^{\boldsymbol *},{}_{\boldsymbol *}\>)$ the usual (or derived) inverse\kf- and direct-image pseudofunctors. Or, $\mathbf C$ could be the category of compactifiable maps of qcqs schemes, and $\D_\sX$ the derived category of torsion sheaves on $X$ with the \'etale topology \cite{De73}. Other categories that support duality theories are those whose objects are certain finite diagrams of noetherian schemes, with flat arrows \cite{Ha}, or certain finite ringed spaces \cite{SS}, or certain\- algebraic stacks \cite{Nm23}. With a few extras,  one can also consider categories of topological rings (local\- duality) or noetherian formal schemes, each $\D_\sX$ being a suitable ordinary or derived category \cite{AJL99}. There are other examples, for instance those mentioned in \S\ref{Ideal intro}. Undoubtedly, more will emerge in the future. \va2

In specific situations, to  enliven things and enhance applicability  one needs \emph{concrete interpretations} of the functors and maps in the preceding conditions (i)--(v), as well~as of useful pseudofunctorial maps that can be categorically derived. 

For instance, in the context of \S\ref{Ideal intro}, concrete interpretations of~$f^{\times}$ were indicated for smooth or finite $\sE$-maps. 
As for noteworthy derived maps, in that situation there are concrete descriptions, at least via flat or injective resolutions, in various places in \cite{RD}. However, the question of whether the maps so described are the same as the corresponding categorically-defined ones is not often addressed.
(See the footnote in the proof of~\ref{tensor} below.) So~one cannot say without further ado that, for example, the  multi\-tudinous diagrams in \cite{li} that are abstractly shown to commute remain commutative when their maps are interpreted as in \cite{RD}.

\va3

\emph{The overall goal here is to embed the concrete duality theory in \cite{RD}  
$(\<$amended in \cite{Co}$)$ into the abstract one in \cite{li}$, $ by showing how important maps in \cite{RD} 
can be described in category-theoretic terms.\va3}%
\footnote{For orientation, consider the analogous theme in the elementary duality theory of ordinary 
restriction- and extension-of-scalars pseudofunctors between modules over  commutative rings.}

Section~\ref{pc finite} below is devoted to doing this for certain finite maps, section~\ref{affex} to translation into commutative\kf-algebra terms.
Beyond  (i)--(v), there being an endless number of maps that can be categorically deduced,  only a few salient examples will be examined closely, such as the maps
\begin{align*}
\LL f^*\<\<E\Otimes{\sX} f^\times\<\< F&\lto f^\times\<(E\Otimes{Y}F\>) &&(E\<, \:F\in\Dqc(Y)),\\
\R\>\sHom_\sX(\LL f^*E\<, f^\times\<\< F\>)&\lto f^\times\R\>\sHom_Y(E\<,F\>) &&(E\<, \:F\in\Dqc(Y)),
\end{align*}
associated with a finite map $f\colon X\to Y\<$.
(See~\ref{flat and tensor} and~\ref{flat and hom}.) 

It is intended that  smooth $\sE$-maps will be treated in a subsequent part of this exposition. 
For such maps, basic results---even for the context of formal schemes---can be found in \cite{NS}.

\end{cosa}

\section{Pseudo-coherent finite maps}
\label{pc finite}

Recall that a scheme\kf-map $f\colon X\to Y$ is \emph{affine} (resp.~\emph{finite}) if for
each affine open subscheme $U\subset  Y\<$---or equivalently, for every member of 
some affine open cover of $Y$---the scheme $f^{-1}U$ is affine (resp.~affine and
such that the natural map makes $\Gamma(f^{-1}U,\OX)$ into a finite $\Gamma(U,\OY)$-module), see \cite[1.3.2, 6.1.4]{EGA2}. Any affine $f$ is \emph{separated,} and when for each $U$ the $\Gamma(U,\OY)$-module $\Gamma(f^{-1}U,\OX)$ is locally finitely presentable, \emph{proper}.\va1


This section~\ref{pc finite}  is concerned with a\va{-1.5} 
concrete $\Dqcpl$-valued pseudofunctor~$(-)^\flat$ together with  functorial maps $\R(-)_*(-)^\flat\to \id$,
over the category~$\mathbf\Phi$ of  finite maps $f\colon X\to Y$ that are \emph{pseudo-coherent}, 
meaning $\fst\OX$ is locally resolvable by a complex of finite\kf-rank  locally free $\OY$-modules (see \ref{quasi}).
(For example,  finite~maps of  locally noetherian schemes, finite~locally free maps, and regular immersions all are pseudo\kf-coherent.) 
These data constitute a pseudofunctorial
right adjoint for $\R(-)_{*}\>$.  
Restricting to qcqs schemes in~$\mathbf\Phi$, one has then
a  \emph{concrete realization of the pseudofunctorial pair} $((-)^\times, \tau_-^\times)$.\va1
 
The locally noetherian case is treated in  \cite[pp.\,164--175]{RD}, 
where it is indicated\- that the ``usual reductions" cut things down to the elementary context of modules over commutative rings. (Cf.~section \ref{affex} below.) 
The present approach  is more general and technical, and also more explicit, than that classical one, but basically similar, 
as follows.\va2

Fix a scheme $Y\<$. The direct-image functor gives an equivalence between (i): a category whose objects are pairs $(X,F\>)$ with $X$ a scheme affine over~$Y$ and $F$ a quasi-coherent $\OX$-module 
and, (ii): the oppo\kf\-site of a category whose objects are pairs 
$(\SL,\SF\>)$ with $\SL$ a quasi-coherent $\OY$-algebra and $\SF$ a quasi-coherent $\SL$-module, see \cite[\S9.2]{EGA1}. 
In greater generality, with
$F$ and~$\SF$ replaced by objects in  $\Dqc(X)$ and $\Dqc(\SL)$ respectively, the derived direct-image functor induces an equivalence, see~Proposition~\ref{^* equivalence}.  There is an explicit $\Dqc(-)$-valued duality pseudofunctor over quasi-coherent $\OY$-algebras, globalizing the well-known pseudofunctor over  commutative rings. (For the latter, see~\ref{affine duality}--\ref{pfush}). To transfer this pseudo\kf\-functor over to  
$Y\<$-schemes, via the equivalence, one needs to remain in a quasi-coherent context.
This can be done, for instance, 
using right adjoints $\R\oQ_L$ for the inclusions $\D_\qc(\SL)\hookrightarrow \D(\SL)$; but discussion along these lines appears only briefly, in~\S\ref{general affine},
because $\R\oQ$ is awkward to explicate for non-affine schemes, and it doesn't commute with open immersions.\looseness=-1 

Rather, we'll just restrict to $\mathbf\Phi$, where $\R\oQ$ is not needed because $\mathbf\Phi$-maps 
\mbox{$f\colon X\to Y$}
have the following key property (Lemma \ref{qcHom}):
\[
\R\>\sHom_Y(\fst\OX, \Dqcpl(Y))\subset\Dqcpl(Y).
\] 
Consequently, the equivalence $\R\fst\colon\Dqc(X)\overset{\lift.4,\approx\>,}\lto \Dqc(\fst\OX)$ 
in ~\ref{^* equivalence} reduces finding  a concrete right adjoint $(f^\flat\<, t^{}_{\!f}\>)$ for~$\R\fst\colon\Dqcpl(X)\to\Dqcpl(Y)$
to~finding one for the restriction-of-scalars functor $\phi_*\colon\Dqc(\fst\OX)\to\Dqc(Y)$.
But \emph{derived adjoint associativity,} as enhanced in Proposition~\ref{adjass0}, implies that the functor~ 
$\phi_*\colon\D(\fst\OX)\to\D(Y)$ has the right adjoint 
\[
\phi^\flat(-)\set\R\>\sHom_Y(\fst\OX, -),\\[2pt]
\] 
with counit  
the natural $\D(Y)$-map (``\kern.75pt evaluation at 1")
\[
\phi_*\phi^\flat G= \R\>\sHom_Y(\fst\OX, G\>)\lto \R\>\sHom_Y(\OY\<, G\>)= G\qquad (G\in\D(Y)),\\[2pt]
\] 
giving the desired construction (see Proposition~~\ref{represent}).

\pagebreak[3]
In fact Theorem~\ref{qp duality} says more: for any 
$\mathbf\Phi$-map $f\colon X\to Y\<$, $F\in\Dqc(X)$ and $G\in\Dqcpl(Y)$,
 there is  a \emph{sheafified duality isomorphism} \va{-3}
\[
\R\fst\R\>\sHom_\sX\<(F, f^\flat  G\>)
\iso
\R\>\sHom_Y(\R\fst F,G\>),
\]
which turns out to be a concrete realization of the standard abstract one,
namely the natural composite
\[
\R\fst\R\>\sHom_\sX\<(F, f^!  G\>)\to\R\>\sHom_Y\<(\R\fst F, \R\fst f^!  G\>)\to
\R\>\sHom_Y(\R\fst F,G\>),
\]
see Proposition~\ref{qc duality2}. \va3

Arguing as one does for $(-)^{\<\times}$, one gets, for pseudo\kf-coherent finite maps, basic properties of $(-)^\flat$  such as  \emph{pseudofunctoriality} (Proposition~\ref{pseudofunc}) and \emph{tor-independent base change}  (Theorem~\ref{indt base change}).

In further illustration  of \S\ref{expressions},  concrete interpretations are given, for suitable $f\colon X\to Y$ and
$F,G\in\Dqc(Y)$,
of some basic abstractly defined maps: the pseudofunctoriality isomorphism (Proposition~\ref{pfush}),
and the \mbox{base\kf-change} map  of \eqref{beta}---with $f^\flat$ in place of~$f^!$ (Proposition~\ref{concbc});
and~the maps 
\begin{align*}
f^\flat\<\<F\Otimes{X}\LL f^*\<G&\to f^\flat(F\Otimes{Y}G),\\
\R\>\sHom^{}_\sX(\LL f^*\<\>F\<, f^\flat G)&\to f^\flat\R\>\sHom_Y(F,G),
\end{align*}
(\ref {flat and tensor} and~\ref{flat and  hom}, respectively).
\va1

Section~\ref{excompl} deals with the role played in concrete duality for a perfect affine map $f\colon X\to Y$ by the functorial \emph{trace~map} 
\[
\tr_{\<\<f}(G)\colon\R\fst\LL f^*\<G\lto G\qquad(G\in\Dqc(Y))
\]
from \cite[p.\,154, 8.1]{Il}, and by its dual, the \emph{fundamental class} map  
\[
C_{\<\<f}(G)\colon \LL f^*\<G\lto f^\flat G\qquad(G\in\Dqc(Y)),
\]
which is an isomorphism whenever $f$ is \'etale.  

 Section~\ref{regimm} discusses duality for Koszul-regular immersions---a class of perfect 
closed immersions of schemes which includes all regular immersions.
On this class, there is a well-known concrete realization of~$(-)^\flat$, involving normal bundles 
(see Proposition~\ref{KosReg}). The surprisingly difficult task of showing that this realization is \emph{pseudo\-functorial} (see Theorem~\ref{Kozpf1}) gets reduced to the case of affine schemes, which is translated into commutative\kf-algebra terms in section~\ref{affex}, and then disposed of at the end of that section,
along the lines of the proof in \cite[Appendix C.6]{NS}. (\cite[\S\S2.5--2.6]{Co} contains the only other proof I know of.)

\begin{cosa}\label{2.1}
Associate to an affine scheme\kf-map $f\colon X\to Y$ the map of ringed spaces
$$
(X,\OX)\xto{\bar f\:\set(f\<,\,\id)\>}(Y,\fst\OX)=:\oY.
$$ 
This $\bar f$ is \emph{flat,} i.e., for all $x\in X$, the stalk $\CO_{\sX\<\<,\>x}$ is flat 
over $\CO_{\>\oY\<\<,\>\bar f(x)}$:
to check this one can assume that
$X=\spec S$, $Y=\spec R$, with $f$ corresponding to a commutative\kf-ring homomorphism  
$\varphi\colon R\to S$, then note that if $p$ is a prime $S$-ideal, the stalk $S_p$ of $\OX$ at $p$ is a
localization of---thus flat over---the stalk $S\otimes_R R_{\varphi^{-1}p}\>$ of $\fst\OX$ at~$f(p)$.
So the functor $\bar{f\:\<}^{\!\<*}\colon\sA(\oY)\to\sA(X)$ is exact. \looseness=-1

The restriction of $\bar\fst$ to $\sA_\qc(X)$ is an equivalence (necessarily exact) 
from $\sA_{\qc}(X)$ to $\sA_{\qc}(\oY)$ \cite[p.\,361, (9.2.1)]{EGA1}%
\footnote{whose proof states incorrectly, if harmlessly, that $\oY$ is a \emph{locally} ringed space.
This proof has other features that make it harder to read than the original \cite[(1.4.1), (1.4.3)]{EGA2}.\looseness=-1}.
The left adjoint $\bar{f\:\<}^{\!\<*}\<\<$ of $\bar\fst$ takes $\sA_\qc(\oY)$ to  $\sA_\qc(X)$
\cite[p.\,108, (5.1.4)]{EGA1}, and so provides a quasi-inverse equivalence.

These quasi-inverse equivalences,
being exact, \va{.6}
extend to quasi-inverse equivalences of derived categories\va{-2}
 \[
 \D\big(\sA_{\qc}(X)\big)\rightleftarrows\D\big(\sA_{\qc}(\oY)\big).
 \] 

\begin{subex}\label{locally} (Cf.~\cite[top of p.\,362]{EGA1}) When $f=\spec(\varphi)$ with $\varphi\colon R\to S$ as~above, here is what's happening,\va{.5} in concrete terms. 

Denote  by $\varphi_*$ the functor  ``restriction-of-scalars via $\varphi$" from $S$-modules to $R$-modules. Let $M$ be an $S$-module,  and let
$\widetilde M_S$ (resp.~$\widetilde M_R\set(\varphi_*M)\,\>\>\widetilde{}_{\lift.9,\!\!\<R,}\>$) be the corresponding quasi-coherent 
\mbox{$\OX$-(resp.~$\fst\OX$-)}module.
 Any~quasi-coherent $\fst\<\OX$-module~$\CM$ is naturally isomorphic to such
an~$\widetilde M_R\>$:\va1
take $M\set \Gamma(Y,\>\mathcal M\>)$ \cite[p.\,207, (1.4.5)]{EGA1}; and
 \cite[p.\,214, (1.7.7(ii))]{EGA1}\va{.6} gives a natural $\fst\OX$-isomorphism 
 $\widetilde M_R\cong\bar \fst \widetilde M_S$. Hence, \va1
since $\bar{f\:\<}^{\!\<*}$ is quasi-inverse to $\bar \fst $, there is a natural isomorphism 
\(
\bar{f\:\<}^{\!\<*}\widetilde M_R\cong\widetilde M_S\>.
\)\va2

\end{subex}

\begin{subcosa}\label{2.2}
If $F_1\to F_2 \to F\to F_3\to F_4$ is an exact sequence of $\fst\OX$-modules with $F_1$, 
$F_2$, $F_3$ and~$F_4$ quasi-coherent, then $F$ is quasi-coherent (as follows 
via~\cite[p.\,218, (2.2.4)]{EGA1} from the similar property of $\OY$-modules
\mbox{\cite[p.\,217, (2.2.2)(iii)]{EGA1}}). Hence $\Dqc(\oY)\subset\D(\>\oY)$ is a triangulated subcategory (identifiable with the derived 
category of the category of $\fst\OX$-complexes with quasi-coherent homology, see \cite[(1.9.1)]{li}).

Since $\bar{f\:\<}^{\!\<*}$ is exact and, as above,  preserves quasi-coherence, therefore\va{-1} 
\begin{equation}\label{f and qc1}
\bar{f\:\<}^{\!\<*}\Dqc(\oY)\subset\Dqc(X) \quad
\textup{and}\quad
\bar{f\:\<}^{\!\<*}\Dqcpl(\oY)\subset\Dqcpl(X). 
\end{equation}
\vspace{-13pt}

\pagebreak[3]

Also,\va{-2}
\begin{equation}\label{f and qc2}
\R\bar\fst (\Dqc(X))\subset \Dqc(\oY) \quad
\textup{and}\quad
\R\bar\fst (\Dqcpl(X))\subset \Dqcpl(\oY). 
\end{equation}

\noindent For, with $\psi=\psi^{}_{\<\<f}\colon \OY\to \fst \OX$ 
the homomorphism associated to $f$ and 
\begin{equation}\label{phi}
\phi=\phi_{\<\<f}\set(\id,\psi)\colon \oY\to Y
\end{equation}
the resulting ringed-space map (so that $f=\phi\bar f\>$), there are, for all $i\in\ZZ$ and~$E\in\Dqc(X)$, 
natural isomorphisms 
\begin{equation*}
\phi_*H^i\R\bar\fst E\iso H^i\phi_*\R\bar\fst E\iso H^i\R(\phi \bar f\>)^{}_{\<*} \>E
=H^i\R\fst E\>;
\end{equation*}
hence---since\va{.4} an $\fst\OX$-module $G$ is quasi-coherent if $\phi_*G$ is \cite[p.\,218, (2.2.4)]{EGA1}, and, clearly,\va{.6} vanishes if  $\phi_*G$ does---it's enough to prove \eqref{f and qc2}  with 
\mbox{$\bar f\colon X\to \oY$} replaced by~$f\colon X\to Y\<$,
which is done in \cite[3.9.2]{li}. (Or, reduce to where $Y$ and~$X$ are affine, say $X=\spec R$, and apply \cite[p.\,225, 5.1]{BN}.)\va2

\end{subcosa}

\begin{subprop}\label{^* equivalence} 
The functor $\bar{f\:\<}^{\!\<*}\colon\D(\>\oY)\to\D(X)$ induces an equivalence from\/ $\Dqc(\oY)$ to\/ $\Dqc(X)$
$($resp.~$\Dqcpl(\oY)$ to\/ $\Dqcpl(X)),$ with quasi-inverse induced by\/
$\R\bar\fst \colon\D(X)\to\D(\>\oY)$.
\end{subprop}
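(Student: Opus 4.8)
The plan is to show that the adjunction between $\brf\colon\D(\oY)\to\D(X)$ --- which is exact, since $\bar f$ is flat (see~\S\ref{2.1}), so no left-derivation is needed --- and its right adjoint $\R\bar\fst\colon\D(X)\to\D(\oY)$ restricts to an adjoint \emph{equivalence} between the quasi-coherent subcategories, whence $\R\bar\fst$ is a quasi-inverse to $\brf$. By~\eqref{f and qc1} and~\eqref{f and qc2}, $\brf$ carries $\Dqc(\oY)$ into $\Dqc(X)$ and $\R\bar\fst$ carries $\Dqc(X)$ into $\Dqc(\oY)$ (and similarly for the $\Dqcpl$-subcategories), so the unit $\eta\colon\id\to\R\bar\fst\brf$ and counit $\epsilon\colon\brf\R\bar\fst\to\id$ of the ambient adjunction restrict to these. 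Since an adjunction whose unit and counit are both isomorphisms is an adjoint equivalence, it suffices to prove that $\epsilon(E)$ is an isomorphism for every $E\in\Dqc(X)$ and that $\eta(F)$ is an isomorphism for every $F\in\Dqc(\oY)$. The $\Dqcpl$ assertion then follows at once, since $\brf$ and $\R\bar\fst$ preserve those subcategories.

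To verify that $\epsilon(E)$ and $\eta(F)$ are isomorphisms I would pass to cohomology sheaves, isomorphisms in $\D(X)$ and $\D(\oY)$ being detected by the functors $H^n$ $(n\in\ZZ)$. Exactness of $\brf$ gives $H^n\brf\cong\brf H^n$. Since $f$ is affine, $R^p\bar\fst$ vanishes on quasi-coherent $\OX$-modules for $p>0$ --- the same input used for~\eqref{f and qc2}, obtained there by passing from $\bar f$ to $f$ via the map $\phi$ of~\eqref{phi} and invoking \cite[3.9.2]{li} --- so the standard spectral sequence relating $R^p\bar\fst$ and $H^q$ degenerates, yielding for every $E\in\Dqc(X)$ and every $n$ a natural isomorphism $H^n\R\bar\fst E\cong\bar\fst H^nE$ (for unbounded $E$ a short truncation argument, reducing to bounded-above complexes, is also required); here $\bar\fst H^nE$ makes sense because $H^nE\in\sA_\qc(X)$, on which $\bar\fst$ is exact. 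Combining, one gets $H^n(\brf\R\bar\fst E)\cong\brf\bar\fst H^nE$ for $E\in\Dqc(X)$, and a routine naturality check identifies $H^n\epsilon(E)$ with the counit $\brf\bar\fst(H^nE)\to H^nE$ of the abelian equivalence $\sA_\qc(X)\rightleftarrows\sA_\qc(\oY)$ of~\S\ref{2.1}, which is an isomorphism. The argument for $\eta$ is the mirror image: for $F\in\Dqc(\oY)$ one has $H^n(\R\bar\fst\brf F)\cong\bar\fst\brf H^nF$, and $H^n\eta(F)$ is the corresponding unit $H^nF\to\bar\fst\brf(H^nF)$, an isomorphism because $H^nF\in\sA_\qc(\oY)$.

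The only non-formal ingredient is the commutation $H^n\R\bar\fst\cong\bar\fst H^n$ over $\Dqc(X)$, in particular its validity for unbounded complexes; I expect that to be the main point requiring care, though it rests entirely on the affine-morphism cohomology vanishing already invoked for~\eqref{f and qc2}. Everything else is either recorded in \S\S\ref{2.1}--\ref{2.2} or is the routine verification that the cohomology of the derived unit and counit reproduces the abelian unit and counit of~\S\ref{2.1}. (Alternatively, when the schemes involved are quasi-compact and separated, one could argue via the natural functors $\D(\sA_\qc(-))\to\Dqc(-)$ being equivalences, combined with the extension of the abelian equivalence to $\D(\sA_\qc)$ noted in~\S\ref{2.1}; the route above needs no such restriction.)
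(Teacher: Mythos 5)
Your proof shares the paper's high-level strategy---restrict the adjunction $\brf\dashv\R\bar\fst$ to the quasi-coherent subcategories and show the unit and counit become isomorphisms---but it handles the two maps differently from the paper, in an instructive way. You treat $\epsilon$ and $\eta$ symmetrically by passing to cohomology sheaves, so both arguments rest on the commutation $H^n\R\bar\fst E\cong\bar\fst H^nE$ for all $E\in\Dqc(X)$ and all $n$; this is strictly more than~\eqref{f and qc2} records (quasi-coherence of $H^n\R\bar\fst E$ alone), and you rightly flag the unbounded-complex convergence as the crux, though for the right-derived functor $\R\bar\fst$ the reduction should be to bounded-\emph{below} truncations (using cohomological dimension~$0$ of $\bar\fst$ on $\sA_\qc(X)$), not bounded-above as you wrote. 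The paper, by contrast, is asymmetric: for~$\epsilon_E$ it replaces~$E$ by a K-injective complex, where $\R\bar\fst E\cong\bar\fst E$ and the derived counit is the underived one; for~$\eta_F$ it avoids any computation involving $\R\bar\fst$ entirely, by applying the exact functor~$\brf$, invoking the triangle identity $\epsilon_{\brf F}\smallcirc\brf\eta_F=\id$ together with the already-established isomorphism property of~$\epsilon$, commuting~$\brf$ past~$H^n$, and concluding from the fact that the abelian equivalence $\brf|_{\sA_\qc(\oY)}$ reflects isomorphisms. That treatment of~$\eta_F$ needs only~\eqref{f and qc2}; yours buys a single uniform argument for both unit and counit at the cost of the subtler convergence lemma you identified.
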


\begin{proof}
The functor $\R\bar\fst$ is right adjoint to $\bar{f\:\<}^{\!\<*}\colon\D(\>\oY)\to\D(X)$ \cite[(3.2.1)]{li}; so
in view of \eqref{f and qc1} and \eqref{f and qc2}, it suffices that the counit  and unit maps
\[
\epsilon^{}_{\<\<E}\colon\bar{f\:\<}^{\!\<*}\R\bar\fst E\to E\ \ (E\in\Dqc(X))
\quad\>\textup{and}\>\quad 
\eta^{}_F\colon F\to \R\bar\fst \>\bar{f\:\<}^{\!\<*}\<F\ \ (F \in \Dqc(\oY)) \]
both be isomorphisms (see \cite[p.\,93, Theorem 1]{M}). 

To show that $\epsilon^{}_{\<\<E}$ is an isomorphism  one can assume\va{.6} that $E$ is K-injective 
($\:\set$~q-injective \cite[\S2.3]{li}), 
so that $\R\bar\fst E\cong\bar\fst E$ and $\epsilon^{}_{\<\<E}$ can be identified with the counit map 
associated to the adjunction  between $\bar{f\:\<}^{\!\<*}$ and $\bar \fst$ (see \cite[(3.2.1.3)]{li}), an isomorphism because these functors are, as above, quasi-inverse equivalences.

\pagebreak

As for $\eta^{}_F$, since $\epsilon^{}_{\<\smash{\bar{f\:\<}^{\!\<*}}\!\<F^{}\>}$ is an isomorphism and the composite map
\[
\bar{f\:\<}^{\!\<*}\!F\xto{\<\bar{f\:\<}^{\!\<*}\<\<\<\eta^{}_F\>} \bar{f\:\<}^{\!\<*}\R\bar\fst \>\bar{f\:\<}^{\!\<*}\!F
\xto{\<\epsilon^{}_{\<\smash{\bar{f\:\<}^{\!\<*}}\!\<F^{}\>}}\bar{f\:\<}^{\!\<*}\<\<F
\]
is the identity map, therefore $\bar{\<f\>}^{\<\<*}\<\<\eta^{}_F$ is an isomorphism. 
Since $\bar f$ is~flat 
one~has, denoting  cohomology functors by $H^n\<$,   that   for all $n\in\ZZ$, $\bar{f\:\<}^{\!\<*}\!H^n\eta^{}_F$ is isomorphic to $H^n\<\bar{\<f\>}^{\<\<*}\<\<\eta^{}_F$\va{.6} and so is an isomorphism. Since $\bar{f\:\<}^{\!\<*}|_{\sA_\qc(\oY)}$ is an equivalence, therefore every $H^n\eta^{}_F$ is an isomorphism, so
$\eta^{}_F$ is an isomorphism.
\end{proof}

For any ringed-space map $h\colon V\to W\<$ and $E,$ $E'\in \D(V)$, one has the natural 
bifunctorial composite\va3
\begin{equation}\label{nu}
\begin{aligned}
\nu(E,E'\>)\colon\R^{\mathstrut}h_*\R\>\sHom_V(E\<,\>E'\>)&\lto \R h_*\R\>\sHom_V(\LL h^*\R h_* E\<,\>E'\>)\\
&\!\iso\!
\R\>\sHom_W(\R h_* E\<,\>\R h_* E'\>)
\end{aligned}
\end{equation}
with the isomorphism as in \cite[3.2.3(ii)]{li}. Using  \cite[3.2.5(f)]{li} and the last assertion in \cite[3.2.3(i)]{li}, or otherwise, one verifies that application of the functor $H^0\R\Gamma(W,-)$ to this
composite map produces the obvious map 
\[
\Hom_{\D(V)}(E,E'\>)\lto\Hom_{\D(W)}(\R h_*E,\R h_*E'\>).
\]

 If the natural map
$\LL h^*\R h_* E\to E$ is an isomorphism---for example if $h\colon V\to W$ is 
\mbox{$\bar f\colon X\to\oY$}
(as above) and $E\in\Dqc(X)$---then so is the composite map~\eqref{nu}.\va1
Hence:

\va2

\begin{subcor}[Sheafified duality for $\bar f\,$]\label{duality for barf}
For\/ $E\in\Dqc(X),$ $F\in\Dqc(\oY),$ one has the composite bifunctorial isomorphism
\[
\R\bar\fst\R\>\sHom^{}_\sX(E, \bar{f\:\<}^{\!\<*}\!F\>)
\underset{\eqref{nu}}\iso
\R\>\sHom_{\>\oY}(\R\bar\fst E, \R\bar\fst\bar{f\:\<}^{\!\<*}\!F\>)
\underset{\textup{\ref{^* equivalence}}}\iso \R\>\sHom_{\>\oY}(\R\bar\fst E, \>F\>),
\]
to which application of $H^0\R\Gamma(Y,-)$ gives the adjunction isomorphism
\[
\Hom_{\Dqc(X)}(E,\bar{f\:\<}^{\!\<*}\!F\>)\iso\Hom_{\Dqc(\oY)}(\R \bar\fst E,F\>).
\makebox[0pt]{\kern135pt\qed}
\]

\end{subcor}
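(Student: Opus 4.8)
The plan is to obtain both statements directly from the construction \eqref{nu} together with the two remarks following it, specialized to $h=\bar f$ and $E'=\bar{f\:\<}^{\!\<*}\!F$, and then to splice in the unit isomorphism of Proposition~\ref{^* equivalence}. There is no genuine obstacle: the substantive input is already in \eqref{nu} and in \ref{^* equivalence}, and what remains is bookkeeping; the one place deserving a moment's care is the identification, at the very end, of the resulting map on $\Hom$-groups with the adjunction isomorphism.

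First I would take $h=\bar f\colon X\to\oY$, an object $E\in\Dqc(X)$, and set $E'\set\bar{f\:\<}^{\!\<*}\!F$ with $F\in\Dqc(\oY)$. Since $\bar f$ is flat, $\bar{f\:\<}^{\!\<*}$ is its own left-derived functor, and for $E\in\Dqc(X)$ the natural map $\LL h^*\R h_*E\to E$ is the counit $\epsilon^{}_{\<\<E}$ of \ref{^* equivalence}, hence an isomorphism; so the remark following \eqref{nu} (which records that \eqref{nu} is an isomorphism precisely when $\LL h^*\R h_*E\to E$ is) shows $\nu(E,\bar{f\:\<}^{\!\<*}\!F)$ to be an isomorphism---this is the first arrow in the displayed composite. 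For the second arrow, Proposition~\ref{^* equivalence} gives that the unit $\eta^{}_F\colon F\to\R\bar\fst\bar{f\:\<}^{\!\<*}\!F$ is an isomorphism---here the hypothesis $F\in\Dqc(\oY)$ enters---so $\R\>\sHom_{\>\oY}(\R\bar\fst E,(\eta^{}_F)^{-1})$ is an isomorphism. Composing the two gives the asserted isomorphism, bifunctorial in $(E,F)$ because $\nu$ is bifunctorial and $\eta$ is functorial.

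For the final assertion I would apply $H^0\R\Gamma(Y,-)$, which agrees with $H^0\R\Gamma(\oY,-)$ since $Y$ and $\oY$ have the same underlying topological space. By the other remark following \eqref{nu}, $H^0\R\Gamma$ carries $\nu(E,\bar{f\:\<}^{\!\<*}\!F)$ to the functoriality map $g\mapsto\R\bar\fst g$ from $\Hom_{\D(X)}(E,\bar{f\:\<}^{\!\<*}\!F)$ to $\Hom_{\D(\oY)}(\R\bar\fst E,\R\bar\fst\bar{f\:\<}^{\!\<*}\!F)$, and carries the second arrow to postcomposition with $(\eta^{}_F)^{-1}$; thus the resulting map on $\Hom$-groups is $g\mapsto(\eta^{}_F)^{-1}\circ\R\bar\fst g$. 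By Proposition~\ref{^* equivalence}, $\R\bar\fst$ and $\bar{f\:\<}^{\!\<*}$ restrict to quasi-inverse equivalences between $\Dqc(X)$ and $\Dqc(\oY)$; in particular $\R\bar\fst$ is fully faithful on $\Dqc(X)$, so $g\mapsto\R\bar\fst g$ is a bijection, while $\bar{f\:\<}^{\!\<*}|_{\Dqc(\oY)}$ is right adjoint to $\R\bar\fst|_{\Dqc(X)}$ with unit $\eta$. Hence $g\mapsto(\eta^{}_F)^{-1}\circ\R\bar\fst g$ is exactly the adjunction isomorphism $\Hom_{\Dqc(X)}(E,\bar{f\:\<}^{\!\<*}\!F)\iso\Hom_{\Dqc(\oY)}(\R\bar\fst E,F)$, which completes the proof.
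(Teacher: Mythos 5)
Your proposal is correct and follows precisely the route the paper has in mind: the corollary is stated with a terminal \(\square\) because its proof is just the combination of the two remarks following \eqref{nu} (that \(\nu\) is an isomorphism when \(\LL h^*\R h_*E\to E\) is, and that \(H^0\R\Gamma\) of \(\nu\) gives the obvious map on \(\Hom\)-groups) with the unit isomorphism of Proposition~\ref{^* equivalence}, exactly as you spliced them. One small imprecision: you write that \(\bar{f\:\<}^{\!\<*}\) is right adjoint to \(\R\bar\fst\) ``with unit \(\eta\)''---in fact \(\eta\colon\id\to\R\bar\fst\bar{f\:\<}^{\!\<*}\) is the unit of the other adjunction \(\bar{f\:\<}^{\!\<*}\dashv\R\bar\fst\), and for the adjunction \(\R\bar\fst\dashv\bar{f\:\<}^{\!\<*}\) (which is the one you need) the relevant map is the \emph{counit} \(\eta^{-1}\); but this does not affect your final formula \(g\mapsto\eta^{-1}_F\circ\R\bar\fst g\), which is indeed the adjunction isomorphism.
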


The equivalence \ref{^* equivalence} between $\Dqc(\oY)$ and~$\Dqc(X)$ is compatible with standard additional structures. For example, it respects the derived tensor product and 
sheafified hom functors, in the following sense:
\begin{subcor}\label{tensor} 
For\/ $E, F\in\Dqc(X),$ the natural maps are isomorphisms
\begin{align*}
\kappa\colon\R\bar\fst E\Otimes{\mkern.5mu\oY} \R\bar\fst F&\iso  \R\bar\fst(E\Otimes{\sX} F\>),\\
\nu\colon \R\bar\fst\R\>\sHom^{}_\sX(E\<,F\>)&\underset{\!\textup{\ref{nu}}}\iso\R\>\sHom_{\>\oY}(\R\bar\fst E\<,\R\bar\fst F\>);
\end{align*}
and if\/ $\R\>\sHom^{}_\sX(E,F\>)\in\Dqc(X),$ then the natural composite\looseness=-1
\[
\bar{f\:\<}^{\!\<*}\R\>\sHom_{\>\oY}(\R\bar\fst E\<,\R\bar\fst F\>)\xto[\ \lift1,\rho,\;]{}
\R\>\sHom^{}_\sX(\bar{f\:\<}^{\!\<*}\R\bar\fst E\<,\bar{f\:\<}^{\!\<*}\R\bar\fst F\>)
\underset{\!\textup{\ref{^* equivalence}}}{\<\iso\<\<}
\R\>\sHom^{}_\sX(E\<,F\>)\\[-1pt]
\]
is an isomorphism too.
\end{subcor}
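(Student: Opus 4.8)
The plan is to dispose of the three assertions using only Proposition~\ref{^* equivalence} and the flatness of $\bar f$. Recall that, $\bar f$ being flat, $\LL\bar f^*=\bar{f\:\<}^{\!\<*}$ is an exact, strong monoidal functor, and that by Proposition~\ref{^* equivalence} it and $\R\bar\fst$ form an adjoint equivalence between $\Dqc(\oY)$ and $\Dqc(X)$, with counit $\bar{f\:\<}^{\!\<*}\R\bar\fst\to\id$ an isomorphism on $\Dqc(X)$. The claim for $\nu$ then needs nothing new: it is precisely the remark preceding Corollary~\ref{duality for barf}, applied with $h\set\bar f$ and $E\in\Dqc(X)$, because the map $\LL\bar f^*\R\bar\fst E\to E$ occurring there is that counit.

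For $\kappa$, first observe that its source and target both lie in $\Dqc(\oY)$---the target by \eqref{f and qc2}, since $E\Otimes{\sX}F\in\Dqc(X)$, and the source because $\Dqc(\oY)$ is closed under $\Otimes{\oY}$ (a local matter: over an affine open, $\fst\OX$ is the sheaf associated to a ring $S$ and quasi-coherent $\fst\OX$-modules correspond to $S$-modules, so derived tensor products stay quasi-coherent). Since $\bar{f\:\<}^{\!\<*}$ is the quasi-inverse equivalence on $\Dqc$, it suffices to show $\bar{f\:\<}^{\!\<*}\kappa$ is an isomorphism. Strong monoidality of $\bar{f\:\<}^{\!\<*}$ provides a natural isomorphism $\bar{f\:\<}^{\!\<*}\bigl(\R\bar\fst E\Otimes{\oY}\R\bar\fst F\bigr)\iso\bar{f\:\<}^{\!\<*}\R\bar\fst E\Otimes{\sX}\bar{f\:\<}^{\!\<*}\R\bar\fst F$, which the counit isomorphisms carry to $E\Otimes{\sX}F$; the counit likewise identifies $\bar{f\:\<}^{\!\<*}\R\bar\fst(E\Otimes{\sX}F)$ with $E\Otimes{\sX}F$. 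Granting that $\bar{f\:\<}^{\!\<*}\kappa$ becomes, under these identifications, the identity of $E\Otimes{\sX}F$, we conclude that $\bar{f\:\<}^{\!\<*}\kappa$, hence $\kappa$, is an isomorphism.

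For the last assertion, set $H\set\R\>\sHom^{}_\sX(E,F)$ and assume $H\in\Dqc(X)$. By $\nu$ there is an isomorphism $\R\bar\fst H\iso\R\>\sHom_{\>\oY}(\R\bar\fst E,\R\bar\fst F)$; applying the quasi-inverse $\bar{f\:\<}^{\!\<*}$ and using that the counit is an isomorphism on $\Dqc(X)$ gives
\[
\bar{f\:\<}^{\!\<*}\R\>\sHom_{\>\oY}(\R\bar\fst E,\R\bar\fst F)\iso\bar{f\:\<}^{\!\<*}\R\bar\fst H\iso H.
\]
It then remains to match this composite with the one displayed in the statement---that is, to check that $\rho$ followed by the equivalence isomorphisms of~\ref{^* equivalence} is the same map---which is another compatibility among $\rho$, the map~\eqref{nu}, and the unit and counit of $\bar{f\:\<}^{\!\<*}\dashv\R\bar\fst$.

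The two compatibility verifications set aside above---that $\bar{f\:\<}^{\!\<*}\kappa$ reduces to the identity, and that the composite above coincides with $\rho$ followed by the equivalence isomorphisms---are the only non-formal part of the argument; each unwinds to the explicit definitions of the lax monoidal structure of $\R\bar\fst$ (resp.\ of the map $\rho$), the strong monoidal structure of $\bar{f\:\<}^{\!\<*}$, and the triangle identities for the adjunction. I expect this to be the main obstacle; it is exactly the ``is it the same map?'' point flagged in \S\ref{expressions}.
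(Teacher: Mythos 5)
Your proof is essentially correct and follows the same strategy as the paper: reduce through the adjoint equivalence of Proposition~\ref{^* equivalence}, use the strong monoidality of $\bar{f\:\<}^{\!\<*}$ for $\kappa$, and defer the ``same map?'' verifications to the definitions in \cite{li}. A few remarks on where the two arguments diverge.

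For $\kappa$, you correctly observe that the source $\R\bar\fst E\Otimes{\oY}\R\bar\fst F$ must land in $\Dqc(\oY)$ for the last step (``$\bar{f\:\<}^{\!\<*}\kappa$ iso $\Rightarrow$ $\kappa$ iso'') to go through; the paper leaves this implicit, so your explicit local check is a genuine clarification. The compatibility you set aside---that $\bar{f\:\<}^{\!\<*}\kappa$ becomes the identity under the monoidal and counit identifications---is precisely the characterizing commutative diagram \cite[3.2.4(ii)]{li}, so there is no residual obstacle there; the paper simply quotes that diagram and observes that the other three arrows in it are isomorphisms, so the fourth ($\bar{f\:\<}^{\!\<*}\kappa$) is too.

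For the last assertion, your route is slightly less direct than the paper's. You build a \emph{different} isomorphism between the source and target via $\nu^{-1}$ and the counit, and then must still identify this isomorphism with the displayed composite through $\rho$; that identification carries all the weight and you leave it as a ``to be matched.'' The paper avoids this entirely by first noting the easy consequence of \ref{^* equivalence} that for $A\in\Dqc(\oY)$ and $B\in\Dqc(X)$, a $\D(X)$-map $\bar{f\:\<}^{\!\<*}A\to B$ is an isomorphism if and only if its adjoint $A\to\R\bar\fst B$ is, and then simply notes that $\rho$ is (by its definition \cite[(3.5.4.5)]{li}) adjoint to a composite of isomorphisms. This shows $\rho$ itself is an isomorphism in one stroke, with no need to compare two maps between the same objects. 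You would want to fold that observation into your argument; as written, the third part is a plan with its central verification outstanding, whereas the paper's version closes it off by exploiting adjointness.
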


\pagebreak[3]
\begin{proof}
By definition (\cite[3.2.4(ii)]{li}), $\kappa$ is the unique map such that the following
diagram, with $\epsilon$ as in the proof of~\ref{^* equivalence}, and the isomorphism on the left as in \cite[3.2.4(i)]{li}, commutes:
\[
\def\1{$\bar{f\:\<}^{\!\<*}(\R\bar\fst E\Otimes{\mkern.5mu\oY} \R\bar\fst F\>)$}
\def\2{$\bar{f\:\<}^{\!\<*}\R\bar\fst(E\Otimes{\sX} F\>)$}
\def\3{$\bar{f\:\<}^{\!\<*}\R\bar\fst E\Otimes{\mkern.5mu\oY} \bar{f\:\<}^{\!\<*}\R\bar\fst F$}
\def\4{$E\Otimes{\sX} F$}
  \bpic[xscale=4.5, yscale=1.5]

   \node(11) at (1,-1){\1};   
   \node(12) at (2,-1){\2}; 
   
   \node(21) at (1,-2){\3};  
   \node(22) at (2,-2){\4};
  
    \draw[->] (11)--(12) node[above, midway, scale=.75] {$\bar{f\:\<}^{\!\<*}\!\<\kappa$} ;  
    
    \draw[->] (21)--(22) node[below, midway, scale=.75] {$\epsilon^{}_E\Otimes{\sX}\epsilon^{}_F$} ; 
    
    \draw[->] (11)--(21) node[left=1pt, midway, scale=.75] {$\simeq$} ;
    \draw[->] (12)--(22) node[right=1pt, midway, scale=.75] {$\epsilon_{E\Otimes{\sX} F}$} ;
   \epic
\]
By \ref{^* equivalence}, the counit maps $\epsilon^{}_{\<E},$  $\epsilon^{}_F$ and 
$\epsilon_{E\Otimes{\sX} F}$
are isomorphisms, and\va{-3} therefore
so is $\bar{f\:\<}^{\!\<*}\!\kappa$, whence, by~\ref{^* equivalence} again, so is $\kappa$.\va1

That $\nu$ is an isomorphism was noted above (just before~\ref{duality for barf}).\va1
 
As for the last assertion, it holds by assumption that 
\[
\R\>\sHom_{\oY}(\R\bar\fst E,\R\bar\fst F\>)\underset{\nu}\cong \bar\fst \R\>\sHom^{}_\sX(E\<,F\>)\in\Dqc(\oY)\\[-5pt]
\]
and \va{-2}
\[
\R\>\sHom^{}_\sX(\bar{f\:\<}^{\!\<*}\R\bar\fst E\<,\bar{f\:\<}^{\!\<*}\R\bar\fst F\>)
\underset{\textup{\ref{^* equivalence}}}\cong
\R\>\sHom^{}_\sX(E\<,F\>)\in\Dqc(X).
\]
But it follows easily from \ref{^* equivalence} that for $A\in\Dqc(\oY)$\va{.4}
and  $B\in\Dqc(X)$, a~$\D(X)$-map $\bar f^{*}\<\<A\to B$ is an isomorphism\va{.2} $\Leftrightarrow$ so is
its adjoint $A\to\bar\fst B$.   So the map $\rho$  is an isomorphism, since
by its definition \cite[(3.5.4.5)]{li}%
\footnote{Apropos of \S\ref{expressions}, it is left to the interested reader to show that for ringed spaces
this~$\rho$ is the~same as the explicit map in \cite[p.\,109, 5.8]{RD}. For this, \cite[3.5.6(g)]{li}, with 
$\alpha\colon [D,E]\otimes D\to E$ the natural map, might be useful.}
it is adjoint to the natural composite $\D(\oY)$-isomorphism
\begin{align*}
\R\>\sHom_{\>\oY}(\R\bar\fst E\<,\R\bar\fst F\>)
&\iso
\R\>\sHom_{\>\oY}(\R\bar\fst E\<,\R\bar\fst \bar{f\:\<}^{\!\<*}\<\<(\R\bar\fst F\>)\>)\\
&\iso
\R\bar\fst\R\>\sHom_{X}(\bar{f\:\<}^{\!\<*}\R\bar\fst E\<, \bar{f\:\<}^{\!\<*}\R\bar\fst F\>).
\end{align*}
\end{proof}

\begin{subcor}\label{project}
For\/ $E\in\Dqc(X)$ and\/ $F\in\Dqc(\oY),$ the projection maps
\begin{align*}
p^{}_{1\<,\>\bar f}\colon \R\bar\fst E\Otimes{\mkern.5mu\oY}F
&\lto
\R\bar\fst(E\Otimes{\sX}\bar{f\:\<}^{\!\<*}\!F\>) \\
p^{}_{2,\>\bar f}\colon F\Otimes{\mkern.5mu\oY} \R\bar\fst E
&\lto \R\bar\fst(\bar{f\:\<}^{\!\<*}\<\<F\Otimes{\sX}E\>)
\end{align*}
are isomorphisms.
\end{subcor}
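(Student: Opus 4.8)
The plan is to deduce both assertions from Corollary~\ref{tensor} and Proposition~\ref{^* equivalence}, making use of the facts that $\brf$ is monoidal, that $\brf|_{\Dqc(\oY)}$ is an equivalence (so reflects isomorphisms), and that the unit $\eta^{}_F$ and counit $\epsilon^{}_{\<\<E}$ of the adjunction $\brf\dashv\R\bar\fst$ are isomorphisms on quasi-coherent complexes.

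First I would note that the source and target of $p^{}_{1\<,\>\bar f}$ both lie in $\Dqc(\oY)$: indeed $\R\bar\fst\Dqc(X)\subset\Dqc(\oY)$ and $\brf\Dqc(\oY)\subset\Dqc(X)$ by \eqref{f and qc1} and \eqref{f and qc2}, while $\Dqc$ is stable under $\Otimes{}$. Then I would invoke the standard factorization of the projection morphism (\cite[3.4.6]{li}), namely that $p^{}_{1\<,\>\bar f}$ equals the composite
\[
\R\bar\fst E\Otimes{\oY}F\xto{\,\id\otimes\eta^{}_F\,}\R\bar\fst E\Otimes{\oY}\R\bar\fst\brf F\xto{\,\kappa\,}\R\bar\fst(E\Otimes{\sX}\brf F),
\]
where $\eta^{}_F$ is an isomorphism by Proposition~\ref{^* equivalence} and $\kappa$ is the isomorphism of Corollary~\ref{tensor} for the pair $E,\brf F\in\Dqc(X)$ (note $\brf F\in\Dqc(X)$ by \eqref{f and qc1}); hence $p^{}_{1\<,\>\bar f}$ is an isomorphism. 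If the precise form of that factorization in \cite{li} should not match, the same conclusion follows by conservativity: $\brf$ reflects isomorphisms on $\Dqc$, and $\brf p^{}_{1\<,\>\bar f}$ composed with the counit $\epsilon_{E\Otimes{\sX}\brf F}$ is, by the adjunction characterizing the projection morphism together with the triangle identity, the composite $\brf(\R\bar\fst E\Otimes{\oY}F)\iso\brf\R\bar\fst E\Otimes{\sX}\brf F\xto{\,\epsilon^{}_{\<\<E}\otimes\,\id\,}E\Otimes{\sX}\brf F$; this is a composite of isomorphisms, by Proposition~\ref{^* equivalence} and the monoidality of the flat pullback $\brf$, and since $\epsilon_{E\Otimes{\sX}\brf F}$ is likewise an isomorphism (as $E\Otimes{\sX}\brf F\in\Dqc(X)$), it follows that $\brf p^{}_{1\<,\>\bar f}$, hence $p^{}_{1\<,\>\bar f}$, is an isomorphism.

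The map $p^{}_{2,\>\bar f}$ is handled in exactly the same way with the two tensor factors interchanged; equivalently, it is conjugate to $p^{}_{1\<,\>\bar f}$ under the symmetry isomorphisms of the tensor products on $\D(X)$ and $\D(\oY)$, and so is an isomorphism once $p^{}_{1\<,\>\bar f}$ is. The only point calling for care is matching the definition of the projection morphisms in \cite{li} with the adjunction bookkeeping used above; past that I do not expect any genuine obstacle, the statement having been reduced entirely to facts already established.
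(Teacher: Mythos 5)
Your proof is correct and rests on the same facts as the paper's: Proposition~\ref{^* equivalence} makes the unit and counit isomorphisms on $\Dqc$, and flatness of $\bar f$ makes its monoidal-structure maps isomorphisms. The paper writes out $p^{}_{1,\bar f}$ directly as the composite $\R\bar\fst E\Otimes{\oY}F\iso\R\bar\fst\brf(\R\bar\fst E\Otimes{\oY}F)\iso\R\bar\fst(\brf\R\bar\fst E\Otimes{\sX}\brf F)\iso\R\bar\fst(E\Otimes{\sX}\brf F)$ (unit, monoidality of $\brf$, counit), whereas you factor it as $\kappa\circ(\id\otimes\eta_F)$ and delegate the middle step to Corollary~\ref{tensor}; these agree by the triangle identity, so your main argument is an equivalent reorganization, and your fallback via conservativity of $\brf$ on $\Dqc$ is also sound.
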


\noindent\emph{Proof.}
As in \cite[3.4.6.2]{li}, $p^{}_{1\<,\>\bar f}$ is the natural composite isomorphism
\begin{align*}
\R\bar\fst E\Otimes{\mkern.5mu\oY} F
&\underset{\textup{\ref{^* equivalence}}}\iso \R\bar\fst\bar{f\:\<}^{\!\<*}(\R\bar\fst E\Otimes{\mkern.5mu\oY} F\>)\\
&\iso\R\bar\fst(\bar{f\:\<}^{\!\<*}\R\bar\fst E\Otimes{\sX}\bar{f\:\<}^{\!\<*}\!F\>)
\underset{\textup{\ref{^* equivalence}}}\iso \R\bar\fst(E\Otimes{\sX}\bar{f\:\<}^{\!\<*}\!F\>); 
\end{align*}
and similarly for $p^{}_{2,\>\bar f}\>$.\hfill$\square$

\end{cosa}

\begin{cosa}\label{2.3}
Let $Y$ be a ringed space, and $\psi\colon\OY\to\CS$ an $\OY$-algebra. Let $\oY$ be the 
ringed space~$(Y\<\<,\>\CS)$, and $\phi\colon\oY\to Y$ the ringed-space map $(\id_Y,\psi)$.\va2

\begin{small}
(This subsection is independent of the preceding one. Subsequently, only the case where $\psi\colon \OY\to \CS\set \fst \OX$ is as before---just after \eqref{f and qc2}---will be needed.)\par
\end{small}

\vskip2pt
Note: $\CO_{\>\oY}=\CS$, $\sA(\oY)$ is the category of $\CS$-modules, 
\mbox{$\phi_*\colon \sA(\oY)\to\sA(Y)$}
is just restriction of scalars, $\otimes_{\>\oY}=\otimes^{}_{\CS}$ and $\sHom_{\oY}=\sHom^{}_{\CS}\>$.\va2

 The functor \va{-2}
\[\sHom_\psi\colon \sA(\oY)^\op\times\sA(Y)\to\sA(\oY)
\]
(where ``$\,^\op\>\>$" denotes ``opposite category") is given by
\[
 \sHom_\psi(F,G\>)\set\sHom_Y(\phi_*F, \>G\>)\in\sA(\oY)\qquad \bigl(F\in\sA(\oY), \,G\in\sA(Y)\bigr),
\]
with scalar multiplication given by the following natural composite map,
where~$m_{\phi_*\<F}\colon \CS\otimes_Y\phi_*F\to\> \phi_*F\>$ is scalar multiplication,
\begin{align*}
\CS\otimes^{}_Y\sHom_Y(\phi_*&F, \>G\>)=\sHom_Y(\phi_*F, \>G\>) \otimes^{}_Y\CS\\[-2pt]
&\mkern-8mu\xto{\!\via\>m_{\phi_*\<F\<}}
 \sHom_Y(\CS \otimes^{}_Y \phi_*F, \>G\>)\otimes^{}_Y\CS\\[3pt]
&\mkern-8mu\iso
\sHom_Y(\CS,\sHom_Y(\phi_*F, \>G\>)\>)\otimes^{}_Y\CS
\lto \sHom_Y(\phi_*F, \>G\>),
\end{align*}
and with the the obvious\va2 action on maps in $\sA(\oY)^\op\times\sA(Y)$.

\pagebreak[3]
\begin{small}
This scalar multiplication is adjoint to the natural composite
\[
\sHom_Y(\phi_*F, \>G\>)\otimes_Y \CS\otimes_Y\phi_*F
\xto{\!\via m_{\phi_{\<*}\<\<F\<}}\,
\sHom_Y(\phi_*F, \>G\>)\otimes_Y\phi_*F\lto\,G\>,
\]
that is, the border of the following natural diagram with $[-,-]\set\sHom_Y(-,-)$,  $\otimes\set\otimes_Y$
and  $m\set m_{\phi_*\<F}$, is commutative.\va{-2}
\[\mkern-4mu
\def\1{$[\phi_*F,G\>]\<\otimes\<\big(\CS\<\otimes\< \phi_*F\big)$}
\def\2{$[\phi_*F,G\>]\<\otimes\< \phi_*F$}
\def\3{$G$}
\def\4{$[\CS\<\otimes\<\phi_*F,G\>]\<\otimes\<\big(\CS\<\otimes\< \phi_*F\big)$}
\def\5{$\big([\CS,[\phi_*F,G\>]]\<\otimes\<\CS\big)\<\otimes\< \phi_*F$}
\def\6{$[\phi_*F,\<G\>]\<\otimes\< \phi_*F$}
  \bpic[xscale=4.6, yscale=2]
   \node(11) at (1,-1){\1};   
   \node(12) at (2.1,-1){\2}; 
   \node(13) at (3,-1){\3};
   
   \node(21) at (1,-2){\4};  
   \node(22) at (2.1,-2){\5};
   \node(23) at (3,-2){\6};
   
    \draw[->] (11)--(12) node[above, midway, scale=.75] {$\via\>m$} ;  
    \draw[->] (12)--(13) ;
    
    \draw[->] (21)--(22) node[above, midway, scale=.75] {$\lift -.1,\Iso,$} ; 
    \draw[->] (22)--(23) ;
    
    \draw[->] (11)--(21) node[left=1pt, midway, scale=.75] {$\via\>m$} ;
    \draw[<-] (13)--(23)  ;
  
   \draw[->] (1.48,-1.859)--(2.95,-1.12) ; 
   
   \node at (1.5,-1.5) [scale=.9]{\circled1};
   \node at (2.63,-1.62) [scale=.9]{\circled2};
   
  \epic
\]
Indeed, subdiagram \circled1 commutes by \cite[3.5.3(h)]{li} (with $B=\CS\otimes \phi_*F\<$, $A=\phi_*F$ 
and $C=G$), and \circled2 by, e.g.,  the Kelly-Mac\,Lane coherence theorem \cite[p.\,107, Thm.~2.4]{KM} applied to~\circled2 after replacement of $\CS$ and $\phi_*F$, respectively, by arbitrary objects $D$ and $E$
in $\sA(Y)$.

It follows that scalar multiplication factors naturally as\va{-2}
\[
[\phi_*F,G\>]\<\otimes\CS\lto [\phi_*F,G\>]\<\otimes[\phi_*F,\phi_*F\>]\lto [\phi_*F,G\>]\\[-2pt]
\]
where the second map is ``internal composition'' \cite[3.5.3(h)]{li}, that is, the map adjoint to the natural composite\va{-2}
\[
[\phi_*F,G\>]\<\otimes([\phi_*F,\phi_*F\>]\otimes\phi_*F\>)\lto [\phi_*F,G\>]\<\otimes\phi_*F\lto G.
\]

\par\va2
\end{small}
\enlargethispage*{5pt}

Let  
\(
\R\>\sHom_\psi\colon\D(\>\oY)^\op\times\>\D(Y)\<\to\D(\>\oY)
\) 
be a right-derived functor of $\sHom_\psi\>$, specified on objects by choosing for each $\OY$-complex $G$ 
a K-injective resolution $G\to I_G$ and then setting, for any $\CS$-complex $F\<$, 
\[
\R\>\sHom_\psi(F\<, G\>)\set\sHom_\psi(F\<,I_G\>),
\]
and specified on maps in the standard way.

\pagebreak[3]
The next proposition  is a slightly upgraded version~of 
\emph{derived adjoint\- associativity}---which it becomes when $\psi$~is an isomorphism.
(This termin\-ology is clarified in Remark~\ref{conjugate}.)

\begin{subprop}\label{adjass0}
There is a unique trifunctorial\/ $\D(\oY)$-isomorphism
\begin{multline*}
\alpha(E,F,G\>)\colon\R\>\sHom_\psi(E\Otimes{\>\oY}\<\<F, \>G\>)\iso\R\>\sHom_{\>\oY}(E, \R\>\sHom_\psi(F,G\>))\\   
\big(E,F\in\D(\>\oY),\;G\in\D(Y)\big)  
\end{multline*} 
such that the following natural diagram, with $\CH\set\sHom$ and\/ $\alpha^{}_0(E,F,G\>)$ the standard isomorphism of\/ $\CO_{\>\oY}$-complexes, commutes\/$.$ 
\[\mkern-4mu
\def\1{$\CH_\psi(E\otimes_{\>\oY}\<\<F, \>G\>)$}
\def\2{$\R\CH_\psi(E\otimes_{\>\oY}\<\<F, \>G\>)$}
\def\3{$\R\CH_\psi(E\Otimes{\>\oY}\<\<F, \>G\>)$}
\def\4{$\CH_{\>\oY}\<\big(\<E, \CH_{\psi}(F,G\>)\<\big)$}
\def\5{$\R\CH_{\>\oY}\<\big(\<E, \CH_{\psi}(F,G\>)\<\big)$}
\def\6{$\R\CH_{\>\oY}\<\big(\<E, \R\CH_{\psi}(F,G\>)\<\big)$}
 \bpic[xscale=4.25, yscale=1.6]

   \node(11) at (1,-1){\1};   
   \node(12) at (1.972,-1){\2}; 
   \node(13) at (3,-1){\3};
   
   \node(21) at (1,-2){\4};  
   \node(22) at (1.972,-2){\5};
   \node(23) at (3,-2){\6}; 
   
    \draw[->] (11)--(12) ;  
    \draw[->] (12)--(13) ;
 
    \draw[->] (21)--(22) ;  
    \draw[->] (22)--(23) ;

    \draw[->] (11)--(21) node[right=1pt, midway, scale=.75] {$\simeq$}
                                   node[left=1pt, midway, scale=.75] {$\alpha^{}_0(E,F,G\>)$} ;
    \draw[->] (13)--(23) node[left=1pt, midway, scale=.75] {$\simeq$}
                                   node[right=1pt, midway, scale=.75] {$\alpha(E,F,G\>)$} ; 
                                       
    \epic
\]
  \end{subprop}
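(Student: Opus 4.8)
The plan is to deduce Proposition~\ref{adjass0} from ordinary derived adjoint associativity over the ringed space $\oY$ --- the case in which $\psi$ is an isomorphism, available from \cite[\S3.5]{li} --- by identifying $\R\sHom_\psi(-,G)$ with a derived sheaf-Hom functor over $\oY$. First I would bring in the coinduction functor $\phi^\flat\set\sHom_Y(\CS,-)\colon\sA(Y)\to\sA(\oY)$, where $\CS$ acts on $\sHom_Y(\CS,A)$ through the source copy of $\CS$ --- that is, via internal composition, exactly as the $\CS$-structure on $\sHom_\psi$ is described in \S\ref{2.3}. The restriction--coinduction adjunction $\Hom_{\CS}(F,\sHom_Y(\CS,A))\cong\Hom_Y(\phi_*F,A)$, in its sheafified form, makes $\phi^\flat$ right adjoint to $\phi_*$ and supplies, for every $\CS$-complex $F$ and $\OY$-complex $A$, a natural isomorphism of $\CS$-complexes
\[
\theta(F,A)\colon\ \sHom_\psi(F,A)=\sHom_Y(\phi_*F,A)\iso\sHom_{\oY}(F,\phi^\flat A).
\]
Since $\phi_*$ is exact, hence preserves acyclicity, the functor $\phi^\flat$ carries K-injective $\OY$-complexes to K-injective $\oY$-complexes; so for a K-injective resolution $G\to I_G$ the complex $\phi^\flat G\set\phi^\flat I_G$ is K-injective over $\oY$, and $\theta(-,I_G)$ gives a bifunctorial isomorphism $\R\sHom_\psi(-,G)\iso\R\sHom_{\oY}(-,\phi^\flat G)$.

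Composing this isomorphism at the object $E\Otimes{\oY}F$, then derived adjoint associativity over $\oY$, then the same isomorphism at $F$, produces trifunctorial isomorphisms
\[
\begin{aligned}
\R\sHom_\psi(E\Otimes{\oY}F,G)
&\iso\R\sHom_{\oY}(E\Otimes{\oY}F,\phi^\flat G)
\iso\R\sHom_{\oY}(E,\R\sHom_{\oY}(F,\phi^\flat G))\\
&\iso\R\sHom_{\oY}(E,\R\sHom_\psi(F,G)),
\end{aligned}
\]
and I would take $\alpha(E,F,G)$ to be the composite; trifunctoriality is inherited from the three factors.

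To check that $\alpha$ fits into the displayed square with $\alpha^{}_0$, and that it is the only map doing so, I would reduce to the case that $E$ and $F$ are K-flat over $\oY$ and $G$ is K-injective over $Y$. For such $E,F,G$ the canonical map $E\Otimes{\oY}F\to E\otimes_{\oY}F$ is an isomorphism; the complex $\sHom_\psi(F,G)\cong\sHom_{\oY}(F,\phi^\flat G)$ is K-injective over $\oY$, being a sheaf-Hom from a K-flat complex into the K-injective complex $\phi^\flat G$; and for the arguments that occur $\sHom_\psi(-,G)$ and $\sHom_{\oY}(E,-)$ already compute the respective derived functors. Hence both horizontal composites of the square are isomorphisms, and unwinding $\theta$ together with derived adjoint associativity over $\oY$ on these representatives shows that each route around the square is the ordinary closed-category adjoint-associativity isomorphism, namely $\alpha^{}_0$. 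So the square commutes for such $E,F,G$; and since every object of $\D(\oY)$, resp.~$\D(Y)$, is isomorphic to a K-flat, resp.~K-injective, complex, trifunctoriality then gives commutativity, and the uniqueness of $\alpha$, in general.

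I expect the genuinely delicate point to be that last reduction: verifying that on the good representatives $\theta$ is literally the cochain-level coinduction adjunction, and that derived adjoint associativity over $\oY$ there specializes to the honest $\alpha^{}_0$, so that the $\alpha$ just constructed does restrict to $\alpha^{}_0$. This is the routine-but-fussy ``the derived instance of a natural transformation is computed by the underlying one'' bookkeeping, which I would model on the treatment of adjoint associativity in \cite[\S3.5]{li} rather than reproduce. A subsidiary nuisance is tracking the several $\CS$-module structures borne by complexes such as $\sHom_Y(\CS,A)$ and $\sHom_Y(\phi_*F,G)$ and checking that $\theta$ and $\alpha^{}_0$ are $\CS$-linear for them; but, as \S\ref{2.3} records, each of these actions factors through internal composition, which reduces those checks to the Kelly-Mac\,Lane coherence already invoked there.
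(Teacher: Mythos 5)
Your proposal is correct, and it takes a genuinely different route from the paper's.  The paper proves the result by directly invoking the abstract derived-trifunctor machinery of \cite[\S2.6]{li} --- setting $\alpha(E,F,G)\set\alpha_0(P_E,F,I_G)$ for a K-flat resolution $P_E\to E$ and a K-injective resolution $G\to I_G$, and justifying that this is well-defined and unique by choosing the categories of ``good'' representatives to be (all $\CO_{\oY}$-complexes, K-flat $\CO_{\oY}$-complexes, K-injective $\OY$-complexes) and checking that for K-flat $F$ and K-injective $G$ the complex $\CH_\psi(F,G)$ is K-injective over $\oY$.  You instead reduce to ordinary derived adjoint associativity over the single ringed space $\oY$ by identifying $\sHom_\psi(-,G)$ with $\sHom_{\oY}(-,\phi^\flat G)$ via the restriction--coinduction adjunction; K-injectivity of $\sHom_\psi(F,G)$ then comes for free because $\phi^\flat$ is right adjoint to the exact $\phi_*$ and hence carries K-injectives to K-injectives.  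That is in fact the same technical observation driving the paper's argument --- your coinduction isomorphism is a transparent way to see it --- so the two approaches buy similar dividends.  The trade-off is that the paper gets the compatibility with $\alpha_0$ essentially by fiat (it is built into the definition $\alpha=\alpha_0(P_E,F,I_G)$), whereas your $\alpha$ is a three-fold composite through $\phi^\flat$, and the ``routine-but-fussy'' diagram chase you defer --- showing that unwinding $\theta$ and derived adjoint associativity over $\oY$ on K-flat/K-injective representatives reproduces the honest $\alpha_0$, with all the $\CS$-actions matching up --- really does have to be done.  It does work out (the correspondences you sketch are $\CS$-linear, as the internal-composition description of the $\CS$-action in \S\ref{2.3} shows), but it is the one place where your route costs more than the paper's; everything else, including the uniqueness argument by passing to good representatives, is sound.
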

 
In fact, $\alpha(E,F,G\>)=\alpha^{}_0(P_E,F,I_G\>)$ 
where $P_E\to E$ (resp.~$G\to I_G$) is a quasi-isomorphism with $P_E$ a K-flat 
$\CO_{\>\oY}\>$-complex
(resp.~$I_G$ a K-injective $\OY$-complex). 
The \emph{proof} is the same as that of \cite[Proposition (2.6.1)$^*$]{li}, except that in the remarks
preceding \emph{loc.\,cit.}~one sets\va3
\begin{align*}   
&\mathbf L_{\text{\bf 1}}' \set\mathbf K(\oY) \textup{ (the homotopy category of $\CO_{\>\oY}$-complexes)}, 
\\[3pt]
&\mathbf L_{\text{\bf 2}}'\set \textup{full subcategory of $\mathbf K(\oY)$ spanned by 
all K-flat $\CO_{\>\oY}\>$-complexes,}\\[3pt]
&\mathbf L_{\text{\bf 3}}'\set \textup{full subcategory of $\mathbf K(Y)$ spanned by 
all K-injective $\OY$-complexes,}
\end{align*}
adjusts $\mathbf L_{\text{\bf i}}''$ and $\mathbf D_{\text{\bf i}}''$ accordingly ($\mathbf i=1,2,3$),\va1
and then observes that for \mbox{$(F,G\>)\in \mathbf L_{\text{\bf 2}}'\times \mathbf L_{\text{\bf 3}}'$,}
if the $\CO_{\>\oY}\>$-complex~$E$ is exact then so is 
\[
\CH_{\>\oY}\<\big(\<E, \CH_{\psi}(F,G\>))\cong \CH_\psi(E\Otimes{\>\oY}\<\<F, \>G\>),
\]
that is, $\CH_{\psi}(F,G\>)$ is K-injective.
\va1
\hfill{\qed}
\vskip2pt

Upon replacing $\CS$ by $\OY$ in the above definition of scalar multiplication for~$\sHom_\psi(F,G\>)$, 
one checks the equality of $\OY$-complexes
\begin{equation}\label{phiRHompsi0}
\phi_*\sHom_\psi(F,G\>)=\sHom_Y(\phi_*F,G\>);
\end{equation}
replacing $G$ by $I_G$ gives the natural isomorphism (in $\D(Y)$) 
\begin{equation}\label{phiRHompsi}
\phi_*\R\>\sHom_\psi(F,G\>)\cong\R\>\sHom_Y(\phi_*F,G\>).
\end{equation}

As is readily verified,  the isomorphism~\eqref{phiRHompsi}  is naturally identifiable with 
the inverse of the isomorphism $\alpha_\phi(\CO_{\>\oY},F,G\>)$ given by the next Corollary.

\begin{subcor}\label{adjass}
There is a unique trifunctorial\/ $\D(Y)$-isomorphism
\begin{multline*}
\alpha_\phi(E,F,G\>)\colon\R\>\sHom_Y\<\<\big(\phi_*(E\Otimes{\>\oY}F\>), \>G\big)\<\iso\<\phi_*\R\>\sHom_{\>\oY}(E, \R\>\sHom_\psi(F,G\>)\big)\\   
\big(E,F\in\D(\>\oY),\;G\in\D(Y)\big)  
\end{multline*} 
such that the following natural\/ $\D(Y)$-diagram, with $\CH\set\sHom$, commutes\/$.$ 
\[\mkern-5mu
\def\1{$\CH_Y\<\big(\phi_*(E\otimes_{\>\oY}\<\<F\>), \>G\big)$}
\def\2{$\R\CH_Y\<\big(\phi_*(E\otimes_{\>\oY}\<\<F\>), \>G\big)$}
\def\3{$\R\CH_Y\<\big(\phi_*(E\Otimes{\>\oY}\<\<F\>), \>G\big)$}
\def\4{$\phi_*\CH_{\>\oY}\<\big(\<E, \CH_{\psi}(F,G\>)\<\big)$}
\def\5{$\phi_*\R\CH_{\>\oY}\<\big(\<E, \CH_{\psi}(F,G\>)\<\big)$}
\def\6{$\phi_*\R\CH_{\>\oY}\<\big(\<E, \R\CH_{\psi}(F,G\>)\<\big)$}
 \bpic[xscale=4.37, yscale=1.7]

   \node(11) at (1,-1){\1};   
   \node(12) at (1.98,-1){\2}; 
   \node(13) at (3,-1){\3};
   
   \node(21) at (1,-2){\4};  
   \node(22) at (1.98,-2){\5};
   \node(23) at (3,-2){\6}; 
   
    \draw[->] (11)--(12) ;  
    \draw[->] (12)--(13) ;
 
    \draw[->] (21)--(22) ;  
    \draw[->] (22)--(23) ;

    \draw[->] (11)--(21) node[left=1pt, midway, scale=.75] {$\simeq$};
    \draw[->] (13)--(23) node[left=1pt, midway, scale=.75] {$\simeq$}
                                   node[right=1pt, midway, scale=.75] {$\alpha_\phi(E,F,G\>)$} ; 
                                       
    \epic
\]
  \end{subcor}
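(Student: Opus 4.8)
The plan is to deduce Corollary~\ref{adjass} directly from Proposition~\ref{adjass0} by applying the restriction-of-scalars functor $\phi_*\colon\D(\oY)\to\D(Y)$ to the isomorphism $\alpha(E,F,G)$ and to the commutative diagram characterizing it, and then rewriting the source and target by means of the relations~\eqref{phiRHompsi0} and~\eqref{phiRHompsi}. The relevant feature of $\phi_*$ is that it is \emph{exact} (restriction of scalars along $\psi$), so it commutes with cohomology and with the formation of the right-derived Hom-functors computed by the K-injective resolutions used in~\ref{adjass0}; in particular it carries the canonical morphisms from underived to derived Hom-functors over $\oY$ to the corresponding ones over $Y\<$.

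First I would apply $\phi_*$ to $\alpha(E,F,G)\colon\R\>\sHom_\psi(E\Otimes{\>\oY}F,\>G)\iso\R\>\sHom_{\>\oY}(E,\R\>\sHom_\psi(F,G))$. Its target is already the right-hand side of~\ref{adjass}, and by~\eqref{phiRHompsi}, applied with $E\Otimes{\>\oY}F$ in place of $F$, its source $\phi_*\R\>\sHom_\psi(E\Otimes{\>\oY}F,\>G)$ is naturally identified with $\R\>\sHom_Y\big(\phi_*(E\Otimes{\>\oY}F),\>G\big)$. I would take $\alpha_\phi(E,F,G)$ to be the resulting $\D(Y)$-isomorphism. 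Trifunctoriality of $\alpha_\phi$ is then inherited from that of $\alpha$ in~\ref{adjass0} together with the bifunctoriality of~\eqref{phiRHompsi}.

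To check commutativity of the displayed diagram I would apply $\phi_*$ to the commutative square of~\ref{adjass0} and identify each node and arrow with its counterpart in~\ref{adjass}. The left vertical $\alpha^{}_0(E,F,G)$ becomes, under the equality of complexes~\eqref{phiRHompsi0} (read with $E\otimes_{\>\oY}F$ in place of $F$), exactly the left vertical isomorphism $\sHom_Y\big(\phi_*(E\otimes_{\>\oY}F),G\big)\iso\phi_*\sHom_{\>\oY}(E,\sHom_\psi(F,G))$ of~\ref{adjass}; the two horizontal ``canonicalization'' maps---the map $\sHom_\psi(E\otimes_{\>\oY}F,G)\to\R\>\sHom_\psi(E\otimes_{\>\oY}F,G)$ induced by $G\to I_G$, and the map $\R\>\sHom_\psi(E\otimes_{\>\oY}F,G)\to\R\>\sHom_\psi(E\Otimes{\>\oY}F,G)$ induced by a K-flat resolution $P_E\to E$---go to the corresponding canonicalization maps over $Y$, because $\phi_*$ is a functor and~\eqref{phiRHompsi0},~\eqref{phiRHompsi} identify all the Hom-complexes involved. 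Hence the $\phi_*$-image of the \ref{adjass0}-diagram is, after these identifications, precisely the \ref{adjass}-diagram, with $\alpha_\phi(E,F,G)$ in the lower-right corner; so it commutes. Uniqueness follows as in~\ref{adjass0}: the top-left map $\sHom_Y\big(\phi_*(E\otimes_{\>\oY}F),G\big)\to\R\>\sHom_Y\big(\phi_*(E\Otimes{\>\oY}F),G\big)$, together with the left-hand isomorphism and the bottom row, pins down $\alpha_\phi(E,F,G)$ via the universal property of the right-derived functor $\R\>\sHom_Y$.

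I expect the only genuinely nonroutine step to be the bookkeeping in the commutativity argument---verifying that~\eqref{phiRHompsi0} and~\eqref{phiRHompsi} are compatible with the canonicalization maps and with $\alpha^{}_0$, so that the $\phi_*$-image of the \ref{adjass0}-diagram \emph{is} the \ref{adjass}-diagram. This amounts to recalling that~\eqref{phiRHompsi} is obtained from~\eqref{phiRHompsi0} by replacing $G$ by the chosen K-injective resolution $I_G$, and to invoking---as in the proof of~\ref{adjass0}---that $\sHom_\psi(F,I_G)$ is K-injective over $\CO_{\>\oY}$, so that the very resolutions that compute $\alpha(E,F,G)$ simultaneously compute $\R\>\sHom_Y\big(\phi_*(E\Otimes{\>\oY}F),G\big)$ and $\phi_*\R\>\sHom_{\>\oY}(E,\R\>\sHom_\psi(F,G))$. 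No input beyond Proposition~\ref{adjass0} is required.
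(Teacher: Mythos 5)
Your proposal is correct and matches the paper's proof essentially verbatim: the paper likewise obtains $\alpha_\phi(E,F,G)$ as $\phi_*\alpha(E,F,G)$ under the identifications~\eqref{phiRHompsi0} and~\eqref{phiRHompsi}, checks commutativity by applying $\phi_*$ to the diagram of Proposition~\ref{adjass0}, and settles uniqueness by reducing to the case where $E$ is K-flat and $G$ is K-injective (so that the top-row maps are isomorphisms). You spell out more of the bookkeeping behind the identifications, but the argument is the same.
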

  
 \begin{proof} Applying $\phi_{*}$ to the diagram in~\ref{adjass0}, one sees that the diagram 
 in \ref{adjass}
 commutes when $\alpha_\phi(E,F,G\>)$ is the isomorphism $\phi_*\alpha(E,F,G\>)$.
  
 Uniqueness need only be checked when $E$ is K-flat and $G$ is K-injective, in which case it holds because the maps in the top row of the latter diagram are isomorphisms. 
 \end{proof}

\pagebreak[3]
\begin{subcor}[Sheafified duality for $\phi$]\label{duality for phi}
Setting\/ 
\begin{equation*}\label{phiflat}
\pt(-)\set \R\>\sHom_\psi(\CO_{\>\oY},-)\colon\D(Y)\to\D(\>\oY),
\end{equation*}
one has, for $E\in\D(\>\oY)$ and $G\in\D(Y),$ the  bifunctorial isomorphism
\[
\alpha_\phi(E,\CO_{\>\oY},G\>)\colon\R\>\sHom_Y(\phi_*E, \>G\>)\iso\phi_*\R\>\sHom_{\>\oY}(E, \pt G\>).
\]
\end{subcor}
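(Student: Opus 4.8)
The plan is to obtain Corollary~\ref{duality for phi} as the \emph{special case} $F=\CO_{\>\oY}$ of Corollary~\ref{adjass}, so that essentially no new argument is needed. First I would record that, by the very definition of $\pt$ in the statement, $\R\>\sHom_\psi(\CO_{\>\oY},G\>)=\pt G$; hence, taking $F=\CO_{\>\oY}$ in the isomorphism $\alpha_\phi(E,F,G\>)$ of Corollary~\ref{adjass}, the target $\phi_*\R\>\sHom_{\>\oY}\big(E,\R\>\sHom_\psi(F,G\>)\big)$ is literally $\phi_*\R\>\sHom_{\>\oY}(E,\pt G\>)$, which is the right-hand side we want.

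Next I would deal with the source. With $F=\CO_{\>\oY}$, the domain of $\alpha_\phi(E,\CO_{\>\oY},G\>)$ is $\R\>\sHom_Y\big(\phi_*(E\Otimes{\>\oY}\CO_{\>\oY}),G\big)$, and I would compose it with the isomorphism induced, by applying $\R\>\sHom_Y(\phi_*(-),G\>)$, from the canonical unit isomorphism $E\Otimes{\>\oY}\CO_{\>\oY}\iso E$ of the closed category $\D(\>\oY)$---the harmless identification flagged in \S\ref{NoTerm}. This turns the source into $\R\>\sHom_Y(\phi_*E,G\>)$. The resulting composite is the asserted $\alpha_\phi(E,\CO_{\>\oY},G\>)$, and it is an isomorphism because both constituents are.

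Finally, bifunctoriality in $(E,G\>)$ would be read off from the trifunctoriality of $\alpha_\phi$ in Corollary~\ref{adjass} together with the naturality in $E$ of the unit isomorphism $E\Otimes{\>\oY}\CO_{\>\oY}\iso E$. There is no real obstacle here; the only point demanding a moment's thought is that restricting a trifunctorial isomorphism to the unit object in one argument leaves a bifunctorial isomorphism, which is automatic once one notes that the unit isomorphism is natural---a standard fact about closed categories. If one preferred an argument not routed through Corollary~\ref{adjass}, one could instead specialize Proposition~\ref{adjass0} at $F=\CO_{\>\oY}$ and then apply $\phi_*$, exactly as in the proof of Corollary~\ref{adjass}; but the direct specialization above is the shortest route.
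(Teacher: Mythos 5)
Your proposal is correct and takes exactly the route the paper intends: Corollary~\ref{duality for phi} is simply Corollary~\ref{adjass} at $F=\CO_{\>\oY}$, with the unit isomorphism $E\Otimes{\>\oY}\CO_{\>\oY}\iso E$ absorbed into the source and $\R\>\sHom_\psi(\CO_{\>\oY},-)$ relabeled $\pt$ in the target, so the paper gives no separate argument. One tiny slip: \S\ref{NoTerm} flags only the associativity identification of the monoidal product, not the unit isomorphism, but suppressing the latter is an equally standard and harmless convention and does not affect the argument.
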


\noindent\emph{Remark.} The inverse of $\alpha_\phi(E,\CO_{\>\oY},G\>)$ is described in Proposition~\ref{duality map}.
 
\begin{subcor}[Global duality for $\phi$]\label{adjunction0}
For\/ $E,F\in\D(\>\oY)$ and\/ $G\in\D(Y)$ one has,  with $\alpha_\phi\set\alpha_\phi(E,F,G\>),$
the functorial isomorphism\va3
\begin{equation*}
\textup{H}^0\R^{\mathstrut}\Gamma\big(Y,\alpha_\phi\big)
\colon\!\Hom_{\D(Y)}\!\<\big(\phi_*(E\Otimes{\>\oY}\<\<F\>), \>G\big)
\iso\<\Hom_{\D(\oY)}\<\big(E, \R\>\sHom_\psi(F,G\>)\big).
\end{equation*}

In particular, one has the adjunction\/ $\phi_*\!\dashv \pt$
given by the functorial isomorphism, with $\alpha'_\phi\set\alpha_\phi(E,\CO_{\>\oY},G\>),$ \va3
\begin{equation*}\label{adjass1}
\textup{H}^0\R^{\mathstrut}\Gamma\big(Y,\alpha'_\phi\big)\colon\Hom_{\D(Y)}\!\<\big(\phi_*E, \>G\big)\iso
\Hom_{\D(\oY)}\<(E, \>\pt G\big).
\end{equation*}
\end{subcor}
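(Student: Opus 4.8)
The plan is to deduce both displayed isomorphisms by applying the functor $\textup{H}^0\R\Gamma(Y,-)$ to the trifunctorial sheaf isomorphism
\[
\alpha_\phi(E,F,G\>)\colon\R\>\sHom_Y\big(\phi_*(E\Otimes{\>\oY}F\>),\>G\big)\iso\phi_*\R\>\sHom_{\>\oY}\big(E,\R\>\sHom_\psi(F,G\>)\big)
\]
furnished by Corollary~\ref{adjass}, and then identifying each side with the asserted $\Hom$-group by two standard devices: the natural identification $\textup{H}^0\R\Gamma\big(W,\R\>\sHom_W(A,B\>)\big)\cong\Hom_{\D(W)}(A,B\>)$ valid over any ringed space $W\<$ (the same fact used just before Corollary~\ref{duality for barf}), and the isomorphism $\R\Gamma(Y,\R h_*-)\cong\R\Gamma(V,-)$ attached to any ringed-space map $h\colon V\to W\<$.

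Concretely, applying $\R\Gamma(Y,-)$ to the source of $\alpha_\phi(E,F,G\>)$ and passing to $\textup{H}^0$ will give $\Hom_{\D(Y)}\big(\phi_*(E\Otimes{\>\oY}F\>),\>G\big)$. For the target, one uses that $\phi_*$ is restriction of scalars along $\psi$, hence exact, so $\R\phi_*=\phi_*$ and the Leray isomorphism yields $\R\Gamma\big(Y,\phi_*(-)\big)\cong\R\Gamma\big(\oY,-\big)$; thus $\R\Gamma(Y,-)$ of the target becomes $\R\Gamma\big(\oY,\R\>\sHom_{\>\oY}(E,\R\>\sHom_\psi(F,G\>))\big)$, whose $\textup{H}^0$ is $\Hom_{\D(\oY)}\big(E,\R\>\sHom_\psi(F,G\>)\big)$. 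Composing, $\textup{H}^0\R\Gamma(Y,\alpha_\phi)$ is the desired isomorphism, functorial in $(E,F,G\>)$ because $\alpha_\phi$ is trifunctorial. For the last assertion one specializes to $F=\CO_{\>\oY}$: the canonical isomorphism $E\Otimes{\>\oY}\CO_{\>\oY}\iso E$ identifies $\phi_*(E\Otimes{\>\oY}\CO_{\>\oY})$ with $\phi_*E$, while $\R\>\sHom_\psi(\CO_{\>\oY},G\>)=\pt G$ by the definition in Corollary~\ref{duality for phi}; substituting produces the functorial bijection $\Hom_{\D(Y)}(\phi_*E,\>G)\iso\Hom_{\D(\oY)}(E,\>\pt G)$, and a functorial bijection of this shape is by definition an adjunction $\phi_*\dashv\pt$.

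The argument is essentially formal, so no step presents a real obstacle; the only points requiring a little care are bookkeeping. First, one must know that $\textup{H}^0\R\Gamma\,\R\>\sHom\cong\Hom_{\D}$ is natural in both arguments, so that trifunctoriality of $\alpha_\phi$ genuinely descends to the $\Hom$-level. Second, the Leray isomorphism $\R\Gamma(Y,\phi_*-)\cong\R\Gamma(\oY,-)$ must be chosen canonically — which is immediate since $\phi$ is the identity on underlying spaces and $\phi_*$ is exact. Third, for the special case one needs the unit isomorphism $E\Otimes{\>\oY}\CO_{\>\oY}\iso E$ to be compatible with the trifunctorial structure, which follows from the corresponding compatibility of $\alpha_0$ in Proposition~\ref{adjass0}. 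Everything else is an assembly of results already in hand.
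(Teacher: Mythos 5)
Your argument is correct and is exactly the implicit proof the paper has in mind: apply $\textup{H}^0\R\Gamma(Y,-)$ to the isomorphism $\alpha_\phi(E,F,G\>)$ of Corollary~\ref{adjass}, identify the source with $\Hom_{\D(Y)}\big(\phi_*(E\Otimes{\>\oY}F\>),\>G\big)$ via the standard $\Hom_{\D}=\textup{H}^0\R\Gamma\,\R\>\sHom$ identification, identify the target with $\Hom_{\D(\oY)}\big(E,\R\>\sHom_\psi(F,G\>)\big)$ via the same identification together with $\R\Gamma(Y,\phi_*-)\cong\R\Gamma(\oY,-)$ (trivial here since $\phi$ is the identity on underlying spaces), and specialize to $F=\CO_{\>\oY}$ using Corollary~\ref{duality for phi}. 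The paper states the corollary without proof precisely because it is this routine assembly.
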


\begin{subrem}\label{conjugate} For fixed $E$ and $F$ in $\D(\oY)$, 
the functorial isomorphism $\alpha(E\<,F\<,G)$ in~\ref{adjass0} is \emph{right-conjugate} (see for instance \cite[3.3.5, 3.3.7]{li}), via natural adjunctions, to the
standard associativity isomorphism
\[
\phi_*\big(D\Otimes{\>\oY}(E\Otimes{\>\oY} F\>)\big) \osi 
\phi_*\big((D\Otimes{\>\oY} E)\Otimes{\>\oY} F\big)\qquad (D\in\D(\oY)).
\]
\end{subrem}

\begin{subcosa}\label{counit unit}
As always, one can explicate an adjunction through the associated
counit and unit maps. One does so for $\phi_*\!\dashv \pt$
by means of the well-known counit and unit maps 
for the standard adjunction $\phi_*\!\dashv\sHom_\psi(\CO_{\>\oY},-)$
(of~functors between $\sA(\oY)$ and $\sA(Y)$), obtaining that 
the corresponding  counit map at $G\in\D(Y)$ is the natural composite
\begin{equation*}
 \begin{aligned}\label{counit}
\phi_*\pt G=\phi_*\R\>\sHom_\psi(\CO_{\>\oY},G\>)&\underset{\eqref{phiRHompsi}}\iso\R\>\sHom_Y(\phi_*\CO_{\>\oY}, G\>)\\
&\,\underset{\via\psi}\lto\,\R\>\sHom_Y(\OY\<, G\>)\iso G\>;
 \end{aligned}\tag*{(\ref{counit unit}.1)}
\end{equation*}

\pagebreak
\noindent and the corresponding unit map at $E\in\D(\>\oY)$ is the natural composite\va3
\[\label{unit}
E\<\<\iso\<\<\sHom_{\oY}(\CO_{\>\oY},E)\overset{\,\,j\,}\hookrightarrow \sHom_\psi(\CO_{\>\oY}, \>\phi_*E\>)
\to \R\>\sHom_\psi(\CO_{\>\oY}, \>\phi_*E\>)=\pt\phi_*E.
\tag*{(\ref{counit unit}.2)}
\]
(The inclusion map $j$ is easily seen to be $\CO_{\>\oY}$-linear.)
\end{subcosa}

\begin{subcosa}\label{pphi}
The functor $\otimes_\psi\colon \sA(\oY)\times\sA(Y)\to\sA(\oY)$ is given by
\[
F\otimes_\psi G\>\set\phi_* F\otimes_Y G \in\sA(\oY)\qquad \bigl(F\in\sA(\oY), \,G\in\sA(Y)\bigr),
\] 
where the scalar multiplication map is the natural composite
\[
\CO_{\>\oY} \otimes_Y \<(\phi_*F\otimes_Y G\>)\iso (\CO_{\>\oY} \otimes_Y \phi_*F\>)\otimes_Y G  \lto  
\phi_*F\otimes_Y G,
\]
and the action of $\otimes_\psi$ on maps in $\sA(\oY)\times\sA(Y)$ is the obvious\va2 one.


One has for $E\in\sA(\oY), \;G\in\sA(Y)$, the natural isomorphism
\[
\Hom_{\oY}(\CO_{\>\oY}\otimes_\psi \<G,\> E\>)\iso \Hom^{}_Y(G, \phi_*E\>).
\]
Thus the functor $\CO_{\>\oY}\otimes_\psi\< -$ is left-adjoint to $\phi_*$, and
so can be identified with~$\phi^*\<$, after which the counit $\sA(\oY)$-map $\phi^*\phi_*\to\id$ becomes the scalar multiplication
\[
\CO_{\>\oY}\otimes_\psi \phi_*F\lto F\qquad(F\in\sA(\oY)),
\]
and the unit map $\id\to\phi_*\phi^*$ becomes the natural 
$\D(Y)$-map\looseness=-1
\[
G\iso\OY\otimes_Y G\lto \phi_*\CO_{\>\oY}\otimes_Y G = \phi_*(\CO_{\>\oY}\otimes_\psi\< G\>)
\qquad(G\in\sA(Y)),
\]

There is an obvious functorial isomorphism
\begin{equation*}\label{pphi1}
F\otimes_\psi G\iso F\otimes_{\>\oY}(\CO_{\>\oY}\>\otimes_\psi G)=F\otimes_{\>\oY}\phi^*\<\<G.
\tag{\ref{pphi}.1}
\end{equation*}
One has then the natural isomorphism
\begin{equation*}\label{pphi2}
\ \phi_*(F\otimes_{\>\oY}\>\phi^*G\>)=\phi_*(F\otimes_{\>\oY}(\CO_{\>\oY}\otimes_\psi G\>)\>)\cong\phi_*(F\otimes_\psi G\>)=\phi_*F\otimes_YG,
\tag{\ref{pphi}.2}
\end{equation*}
whose inverse is easily seen to be the \emph{projection map}
\[
p'_{1\<,\>\phi}\colon\phi_*F\otimes_{Y} G \lto \phi_*(F\otimes_{\>\oY}\phi^*\<\<G\>),
\]
cf.~\cite[3.4.6]{li},
so that $p'_{1\>,\>\phi}$ \emph{is an isomorphism.} 

\pagebreak[3]
Similarly, one has the isomorphism
\[
p'_{2,\>\phi}\colon G\otimes_{Y} \phi_*F \iso \phi_*(\phi^*\<G\otimes_{\>\oY}F\>);
\]
and, as in \cite[3.4.6.1]{li},  the following natural diagram commutes:
 \[
\def\1{$\phi_*F\otimes_{Y} G$}
\def\2{$\phi_*(F\otimes_{\>\oY}\phi^*\<\<G\>)$}
\def\3{$G\otimes_{Y} \phi_*F$}
\def\4{$\phi_*(\phi^*\<G\otimes_{\>\oY}F\>)$}
 \CD
 \bpic[xscale=3.4, yscale=1.5]

   \node(11) at (1,-1){\1} ;   
   \node(12) at (2,-1){\2} ; 
  
   \node(21) at (1,-2){\3} ;  
   \node(22) at (2,-2){\4} ;
   
    \draw[->] (11)--(21) node[left=1pt, midway, scale=.75] {$\simeq$} ;

    \draw[->] (12)--(22) node[right=1pt, midway, scale=.75] {$\simeq$} ;
    
    \draw[->] (11)--(12) node[below=1pt, midway, scale=.75]{$p'_{1\<,\>\phi}$}
                                   node[above,midway, scale=.75]{$\Iso$}  ;
    \draw[->] (21)--(22) node[below=1pt, midway, scale=.75]{$p'_{2,\>\phi}$}
                                   node[above,midway, scale=.75]{$\Iso$}  ;       
 \epic
  \endCD
\]

This all applies more generally, \emph{mutatis mutandis,} to $\CO_{\>\oY}$-complexes~$F$ and 
$\OY$-complexes~$G$.
In that case, upon 
replacing $G$ by a quasi-isomorphic K-flat complex one sees that the
\emph{derived} projection map is an isomorphism
\begin{equation*}\label{projphi}
p^{}_{1\<,\>\phi}\colon\phi_* F\Otimes{Y}G\iso \phi_*(F\Otimes{\>\oY}\LL \phi^* G\>)\qquad(F\in\D(\oY),\,G\in\D(Y)).
\tag{\ref{pphi}.3}
\end{equation*}
Similarly (and by \cite[3.4.6.1]{li}, equivalently), one has the isomorphism
\begin{equation*}\label{projphi2}
p^{}_{2\<,\>\phi}\colon G\Otimes{Y}\phi_* F\iso \phi_*(\LL \phi^* G\Otimes{\>\oY}F\>)\qquad(F\in\D(\oY),\,G\in\D(Y)).
\tag{\ref{pphi}.4}
\end{equation*}
\begin{small}\noindent
One checks that \eqref{projphi2} is left-conjugate to the inverse of the duality isomorphism
$\R\>\sHom_Y(\phi_*F, \>E\>)\iso\phi_*\R\>\sHom_{\>\oY}(F, \pt E\>)$  
given by Corollary~\ref{duality for phi}.\par
\end{small}
\end{subcosa}
\end {cosa}

\begin{cosa}\label{2.4}
Let $f\colon X\to Y\<$ be an affine scheme\kf-map, $\bar f$ as in \S\ref{2.1}, and $\phi$ and~$\psi$ as in the lines following \eqref{f and qc2}. So $f=\phi\bar f$; and one gets properties of $f$ by combining  the corresponding ones of $\bar f$ and~$\phi$.\va1

For example, from \eqref{projphi}, \eqref{projphi2} and \ref{project}
one gets, using transitivity of projection maps \cite[3.7.1]{li}, a simple proof of the well-known fact that 
\emph{for all\/ $E\in\Dqc(X)$ and\/ $G\in\Dqc(Y)$ the projection\va2 maps are isomorphisms}\looseness=-1
\begin{equation}\label{projf}
\R\fst E\Otimes{Y}G\!\iso\! \R\fst(E\Otimes{\<X}\LL f^*\<G\>),\ \ \ 
G\Otimes{Y}\R\fst E\!\iso\!\R\fst(\LL f^*\<G\Otimes{\<X}E\>).
\end{equation}
(For the general case of arbitrary qcqs maps, see e.g., \cite[3.9.4]{li}.)

\pagebreak[3]
Here we will emphasize duality results, a basic one being Theorem~\ref{qc duality} (sheafified affine duality).\va2

First, with $\pt\set\R\>\sHom_\psi(\fst\OX,-)$ as in~\eqref{duality for phi}, set 
\begin{equation}\label{fflat}
f^\flat \set \bar{f\:\<}^{\!\<*}\<\<\pt.
\end{equation}

\begin{sublem}\label{fst fflat}
If\/ $G\in\D(Y)$ is such that\/ $\R\>\sHom_Y(\fst\OX, G\>)\in\Dqc(Y)$ then
$\pt G\in\Dqc(\oY)$ $($whence $f^\flat G\in\Dqc(X)),$ and so one has the composite isomorphism
\[
\bar t_G\colon\R\fst f^\flat G
= \phi_*\R\bar\fst \bar{f\:\<}^{\!\<*}\<\pt G
\underset{\textup{\ref{^* equivalence}}}
\iso \phi_*\pt G
\,\underset{\textup{\eqref{phiRHompsi}}}
\iso\,\R\>\sHom_Y(\fst\OX,G\>).
\]

\end{sublem}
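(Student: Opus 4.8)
The statement is essentially a matter of stringing together the already-established pieces, so the proof will be short: the ``content'' is all in Proposition~\ref{^* equivalence}, in the identity \eqref{phiRHompsi}, and in the formula \eqref{fflat} defining $f^\flat$. First I would verify the quasi-coherence claims. By Corollary~\ref{duality for phi}, $\pt G = \R\>\sHom_\psi(\CO_{\>\oY},G\>)$, and by \eqref{phiRHompsi} one has $\phi_*\pt G \cong \R\>\sHom_Y(\fst\OX,G\>)$, which lies in $\Dqc(Y)$ by hypothesis. Since an $\fst\OX$-module is quasi-coherent as soon as its restriction of scalars to $\OY$ is (this is \cite[p.\,218, (2.2.4)]{EGA1}, invoked already just after \eqref{f and qc2}), and since $\phi_*$ commutes with cohomology, it follows that the cohomology sheaves of $\pt G$ are quasi-coherent $\fst\OX$-modules, i.e.\ $\pt G\in\Dqc(\oY)$. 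Then $f^\flat G = \bar f^*\pt G\in\Dqc(X)$ by \eqref{f and qc1}.

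With quasi-coherence in hand, the composite isomorphism is just bookkeeping. The first equality $\R\fst f^\flat G = \phi_*\R\bar\fst\>\bar f^*\pt G$ is the definition of $f^\flat$ in \eqref{fflat} together with the factorization $f=\phi\bar f$ and the pseudofunctoriality $\R f_* \cong \phi_*\R\bar\fst$ (more precisely $\R(\phi\bar f)_* = \R\phi_*\R\bar\fst = \phi_*\R\bar\fst$, as $\phi_*$ is exact, cf.\ the display in \S\ref{2.2}); one should note this is literally an equality of functors at the point-set level once one has chosen the resolutions. The middle isomorphism is the unit isomorphism $\eta$ of Proposition~\ref{^* equivalence}: since $\pt G\in\Dqc(\oY)$, the map $\pt G\to\R\bar\fst\>\bar f^*\pt G$ is an isomorphism, so applying $\phi_*$ gives $\phi_*\R\bar\fst\>\bar f^*\pt G\iso\phi_*\pt G$. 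The final isomorphism is \eqref{phiRHompsi} applied to $F=\CO_{\>\oY}$, which rewrites $\phi_*\pt G = \phi_*\R\>\sHom_\psi(\CO_{\>\oY},G\>)$ as $\R\>\sHom_Y(\phi_*\CO_{\>\oY},G\>) = \R\>\sHom_Y(\fst\OX,G\>)$.

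I do not anticipate a genuine obstacle here; the only point that requires a moment's care is making sure the first step is an honest equality (or at least a canonical identification compatible with the derived functors involved) rather than merely an isomorphism, so that the composite $\bar t_G$ is well-defined and functorial in $G$. This is handled by being explicit about the resolutions: one uses the same K-injective resolution $G\to I_G$ to compute $\R\>\sHom_\psi$ and $\R\>\sHom_Y$, and notes that $\phi_*$ of a K-injective $\fst\OX$-complex need not be K-injective but that is irrelevant because $\phi_*$ is exact, so $\phi_*\R\bar\fst$ and $\R(\phi\bar f)_*$ agree on the nose. Trifunctoriality/naturality of $\bar t_G$ in $G$ then follows from the naturality of each of the three constituent maps ($\eta$ of \ref{^* equivalence} and the isomorphism \eqref{phiRHompsi}, the latter being natural by the discussion following it, where it is identified with $\alpha_\phi(\CO_{\>\oY},F,G)^{-1}$). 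So the ``main obstacle,'' such as it is, is purely a matter of keeping the identifications clean; there is no new idea to be supplied.
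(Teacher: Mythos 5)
Your proof is correct and follows essentially the same route as the paper's: apply \eqref{phiRHompsi} to identify $\phi_*\pt G$ with $\R\>\sHom_Y(\fst\OX,G\>)$, invoke exactness of $\phi_*$ together with \cite[p.\,218, (2.2.4)]{EGA1} to pull the quasi-coherence back to $\pt G$, and then read off the composite isomorphism from \eqref{fflat}, Proposition~\ref{^* equivalence}, and \eqref{phiRHompsi}. The paper's proof is terser (it omits the bookkeeping about $f=\phi\bar f$ and naturality that you spell out), but the substance is identical.
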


\begin{proof} 
The functor $\phi_*$ is exact, and $\phi_*\pt G\cong\R\>\sHom_Y(\fst\OX, G\>)\in\Dqc(Y)$, 
so by \cite[p.\,218, (2.2.4)]{EGA1}, $\pt G\in\Dqc(\oY)$. \end{proof}

Consequently: 
\begin{subthm}[Sheafified affine duality]\label{qc duality} 
Let\/ $G\in\D(Y)$ be such that\/ $\R\>\sHom_Y(\fst\OX, G\>)\in\Dqc(Y)$.
For all\/ $F\in\Dqc(X)$ one has the natural composite bifunctorial \emph{duality isomorphism}
\begin{align*}
\R\fst\R\>\sHom_\sX\<(F, f^\flat\<  G\>)&\iso
\phi_*\R\bar\fst\R\>\sHom_\sX\<(F, \bar{f\:\<}^{\!\<*}\!\pt G\>)\\
&\underset{\ref{duality for barf}}\iso
\phi_*\R\>\sHom_{\>\oY}(\R\bar\fst F, \pt G\>)\\
&\underset{\ref{duality for phi}}\iso
\R\>\sHom_Y(\phi_*\R\bar\fst F,G\>) \<\iso\< 
\R\>\sHom_Y(\R\fst F,G\>).\makebox[10pt]{\hss$\qquad\>\> \square$}
\end{align*}
\end{subthm}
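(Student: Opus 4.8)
The plan is to read the displayed chain as an assembly of facts already proved, and to record why each arrow is a bifunctorial isomorphism. The starting point is Lemma~\ref{fst fflat}: the standing hypothesis $\R\>\sHom_Y(\fst\OX,G)\in\Dqc(Y)$ forces $\pt G\in\Dqc(\oY)$, hence $f^\flat G=\brf\pt G\in\Dqc(X)$. This is exactly what is needed for the sheafified-duality corollaries, whose arguments are required to be quasi-coherent.

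With that in hand I would go arrow by arrow. Since $f=\phi\bar f$ and $\phi_*$ is exact (so $\R\phi_*=\phi_*$), pseudofunctoriality of $\R(-)_*$ gives $\R\fst=\phi_*\R\bar\fst$; together with the definition $f^\flat=\brf\pt$ of~\eqref{fflat} this yields the first identification $\R\fst\R\>\sHom_\sX(F,f^\flat G)=\phi_*\R\bar\fst\R\>\sHom_\sX(F,\brf\pt G)$. The second arrow is the exact functor $\phi_*$ applied to the sheafified duality isomorphism of Corollary~\ref{duality for barf}, taken with that corollary's first argument equal to $F\in\Dqc(X)$ and its second equal to $\pt G\in\Dqc(\oY)$, which produces $\phi_*\R\>\sHom_{\>\oY}(\R\bar\fst F,\pt G)$. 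The third arrow is the inverse of the isomorphism of Corollary~\ref{duality for phi} (equivalently, the component $\alpha_\phi(\R\bar\fst F,\CO_{\>\oY},G)$ of the trifunctorial isomorphism of Corollary~\ref{adjass}), taken with $E=\R\bar\fst F\in\D(\oY)$, so that it reads $\phi_*\R\>\sHom_{\>\oY}(\R\bar\fst F,\pt G)\iso\R\>\sHom_Y(\phi_*\R\bar\fst F,G)$. The last arrow merely rewrites $\phi_*\R\bar\fst F=\R(\phi\bar f)_*F=\R\fst F$.

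Bifunctoriality in $(F,G)$ propagates at each stage: the first identification is natural by pseudofunctoriality and the definitions of $\pt$ and $f^\flat$; Corollary~\ref{duality for barf} is bifunctorial and $\phi_*$ preserves naturality; Corollary~\ref{duality for phi} is bifunctorial; the final rewriting is natural. I do not expect a real obstacle here, since the whole argument is formal gluing of earlier results. The only points that merit care are verifying the quasi-coherence hypotheses of Corollary~\ref{duality for barf}---which are supplied by Lemma~\ref{fst fflat}---and confirming that the identification $\R\fst=\phi_*\R\bar\fst$ is the canonical one coming from the pseudofunctor structure, which is immediate from exactness of $\phi_*$.
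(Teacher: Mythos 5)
Your proposal is correct and follows exactly the approach the paper takes: the theorem is stated as a composite of labeled isomorphisms and closed with $\square$, the hypothesis being supplied by Lemma~\ref{fst fflat} (invoked via the preceding ``Consequently:''), the first and last identifications coming from $\R\fst=\phi_*\R\bar\fst$ (exactness of $\phi_*$) and the definition $f^\flat=\bar{f\:\<}^{\!\<*}\pt$, and the two middle arrows being Corollaries~\ref{duality for barf} and~\ref{duality for phi}. You have simply made explicit the quasi-coherence bookkeeping that the paper leaves implicit.
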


\smallskip

Proposition~\ref{qc duality2} below gives another factorization of the duality isomorphism in~\ref{qc duality}.
 The next Proposition, in essence, globalizes that.
\vskip3pt

\begin{subprop}\label{represent}

Let\/ $G\in\D(Y)$ satisfy\/ $\R\>\sHom_Y(\fst\OX, G\>)\in\Dqc(Y),$ and let\/ $t^{}_{\<G}$  be the composite map
\[
\R\fst f^\flat G
= \phi_*\R\bar\fst \bar{f\:\<}^{\!\<*}\<\pt G
\underset{\textup{\ref{^* equivalence}}}
\iso \phi_*\pt G
\,\underset{\textup{\ref{counit}}}
\lto\, G
\]
given by the counit maps associated to the adjunctions\/ $\bar{f\:\<}^{\!\<*}\!\dashv \R\fst$ and $\phi_*\!\dashv \pt$ in\/ \textup{\ref{^* equivalence}} and\/~\textup{\ref{adjunction0}} respectively. Then\/$:$\va2

\textup{(i)} $\big(f^\flat G, \>t^{}_{\<G}\big)$\va{.6} represents the  contravariant functor\/ 
$\Hom_{\D(Y)}(\R\fst-, \:G\>)$ from\/ $\Dqc(X)$ to the category of\/ $\Gamma(Y,\OY)$-modules.\va1

\textup{(ii)} $t^{}_{\<G}$ is the  natural composite map
\[
\R\fst f^\flat G
\,\underset{\textup{\ref{fst fflat}}}
\iso\,\R\>\sHom_Y(\fst\OX,G\>)
\lto \R\>\sHom_Y(\OY\<,G\>)
\iso G.
\]
\end{subprop}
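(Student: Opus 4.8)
The statement has two parts. Part (ii) is a short unwinding of definitions, while part (i) carries the real content, so I would establish (i) first.

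\emph{Part (i).} Fix $F\in\Dqc(X)$. The plan is to exhibit the assignment $\alpha\mapsto t_G\circ\R\fst\alpha$, from $\Hom_{\D(X)}(F,f^\flat G)$ to $\Hom_{\D(Y)}(\R\fst F,G)$, as a composite of two adjunction bijections. By Lemma \ref{fst fflat}, the hypothesis on $G$ forces $\pt G\in\Dqc(\oY)$ and $f^\flat G=\brf\pt G\in\Dqc(X)$; hence Corollary \ref{duality for barf} (which records the global form of the equivalence of Proposition \ref{^* equivalence}) supplies a natural bijection $\Hom_{\D(X)}(F,\brf\pt G)\iso\Hom_{\D(\oY)}(\R\bar\fst F,\pt G)$, and Corollary \ref{adjunction0} (the adjunction $\phi_*\dashv\pt$) supplies $\Hom_{\D(\oY)}(\R\bar\fst F,\pt G)\iso\Hom_{\D(Y)}(\phi_*\R\bar\fst F,G)$. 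Since $f=\phi\bar f$ and $\phi_*$ is exact we have $\phi_*\R\bar\fst=\R\fst$, so composing yields a bijection $\Hom_{\D(X)}(F,f^\flat G)\iso\Hom_{\D(Y)}(\R\fst F,G)$, natural in $F$ and $\Gamma(Y,\OY)$-linear. (Alternatively, the same bijection is what one obtains by applying $H^0\R\Gamma(Y,-)$ to the sheafified duality isomorphism of Theorem \ref{qc duality}, whose two essential factors recover, by Corollaries \ref{duality for barf} and \ref{adjunction0}, the two adjunction bijections just used.)

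It then remains to identify this composite bijection with post-composition by $t_G$, and this is where the bookkeeping lies. Tracing a map $\alpha\colon F\to\brf\pt G$ through: the first bijection sends it to the composite $\R\bar\fst F\xto{\R\bar\fst\alpha}\R\bar\fst\brf\pt G\iso\pt G$, whose last arrow is the canonical isomorphism of Proposition \ref{^* equivalence} at $\pt G$, i.e.\ the inverse of the unit $\eta^{}_{\pt G}$; the second bijection then applies $\phi_*$ and post-composes with the counit $\phi_*\pt G\to G$ of $\phi_*\dashv\pt$ as described in \ref{counit unit}. Concatenating and using $\phi_*\R\bar\fst=\R\fst$ gives $\R\fst F\xto{\R\fst\alpha}\R\fst f^\flat G=\phi_*\R\bar\fst\brf\pt G\iso\phi_*\pt G\to G$, and the last two arrows here are exactly the two arrows defining $t_G$ in the statement; so the composite bijection is $\alpha\mapsto t_G\circ\R\fst\alpha$, which proves (i). The one point needing care — and the only genuine obstacle — is keeping straight that for the adjoint equivalence of Proposition \ref{^* equivalence} the canonical isomorphism $\R\bar\fst\brf\iso\id$ entering the definitions is the \emph{inverse of the unit} $\eta$ (not the counit of any adjunction); once that is pinned down, and the counit of $\phi_*\dashv\pt$ is taken as in \ref{counit unit}, the matching is purely formal.

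\emph{Part (ii).} Here I would simply unwind definitions. By the description of the counit of $\phi_*\dashv\pt$ in \ref{counit unit}, the map $\phi_*\pt G\to G$ factors as $\phi_*\pt G=\phi_*\R\>\sHom_\psi(\fst\OX,G)\underset{\eqref{phiRHompsi}}\iso\R\>\sHom_Y(\fst\OX,G)\to\R\>\sHom_Y(\OY,G)\iso G$, the middle arrow being restriction along the natural map $\psi\colon\OY\to\fst\OX$. On the other hand, by its construction in Lemma \ref{fst fflat} the isomorphism $\bar t_G$ is the composite of the isomorphism $\R\fst f^\flat G\iso\phi_*\pt G$ from Proposition \ref{^* equivalence} with \eqref{phiRHompsi}. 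Since $t_G$ is, by its definition in the statement, the composite of that same isomorphism $\R\fst f^\flat G\iso\phi_*\pt G$ with the counit $\phi_*\pt G\to G$, substituting the above factorization of the counit and recognizing its first two arrows as $\bar t_G$ produces exactly the composite $\R\fst f^\flat G\underset{\bar t_G}\iso\R\>\sHom_Y(\fst\OX,G)\to\R\>\sHom_Y(\OY,G)\iso G$ asserted in (ii).
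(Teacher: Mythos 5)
Your argument for (i) is correct and essentially identical to the paper's, which presents the same chain of adjunction bijections from Corollaries \ref{duality for barf} and \ref{adjunction0} as a commutative diagram of Hom-groups whose clockwise route is $\alpha\mapsto t^{}_{\<G}\smallcirc\R\fst\alpha$. The paper leaves (ii) to the reader, and your unwinding — factoring the counit via \ref{counit} and recognizing the first two arrows as $\bar t_G$ from Lemma \ref{fst fflat} — is a correct way to fill that in.
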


\begin{proof}
(i) The assertion is that 
for all $F\in\Dqc(X)$, the map gotten by going clockwise around the following natural diagram from  top left
to bottom left  is an isomorphism---which holds because, clearly, the diagram commutes. 
\[
\def\1{$\Hom_{\D(X)}\<(F, \bar{f\:\<}^{\!\<*}\!\pt G\>)$}
\def\2{$\Hom_{\D(\oY)}(\R\bar\fst F, \R\bar\fst\bar{f\:\<}^{\!\<*}\!\pt G\>)$}
\def\3{$\Hom_{\D(\oY)}(\R\bar\fst F, \pt G\>)$}
\def\4{$\Hom_{\D(Y)}(\phi_*\R\bar\fst F, G\>)$}
\def\5{$\Hom_{\D(Y)}(\R\fst F, \R\fst\bar{f\:\<}^{\!\<*}\!\pt G\>)$}
\def\6{$\Hom_{\D(Y)}(\phi_*\R\bar\fst F, \phi_*\R\bar\fst\bar{f\:\<}^{\!\<*}\!\pt G\>)$}
\def\7{$\Hom_{\D(Y)}(\phi_*\R\bar\fst F, \phi_*\pt G\>)$}
\def\8{$\Hom_{\D(Y)}(\R\fst F, G\>)$}
 \bpic[xscale=7, yscale=1.65]

   \node(11) at (1,-1){\1};   
   \node(12) at (2,-1){\5}; 
   
   \node(21) at (1,-2){\2};  
   \node(22) at (2,-2){\6};
  
   \node(31) at (1,-3){\3};  
   \node(32) at (2,-3){\7};
   
   \node(42) at (2,-4){\4};  
   \node(41) at (1,-4){\8};
   
    \draw[->] (11)--(12) node[above=1pt, midway, scale=.75] {$$};  
    
    \draw[->] (21)--(22) node[above=1pt, midway, scale=.75] {$$}; 
    
    \draw[->] (31)--(32) node[above=1pt, midway, scale=.75] {$$}; 
    
    \draw[double distance=2pt] (41)--(42) ;
    
    \draw[->] (11)--(21)  node[right=1pt, midway, scale=.75] {$\simeq$}
                                   node[left=1pt, midway, scale=.75] {\ref{^* equivalence}};
    \draw[->] (21)--(31) node[right=1pt, midway, scale=.75] {$\simeq$}
                                   node[left=1pt, midway, scale=.75] {\ref{^* equivalence}}; 
    \draw[->] (31)--(41) node[right=1pt, midway, scale=.75] {$\simeq$}
                                  node[left=1pt, midway, scale=.75] {\ref{adjunction0}};
   
    \draw[double distance=2pt] (12)--(22) ;
    \draw[->] (22)--(32)  node[left=1pt, midway, scale=.75] {$\simeq$}
                                    node[right=1pt, midway, scale=.75] {\ref{^* equivalence}}; 
                                    
    \draw[->] (32)--(42) node[right=1pt, midway, scale=.75] {\ref{counit}} ;
    \epic
\]

(ii) Left to the reader.
\end{proof}


\begin{subcor}\label{flat=times}
If\/ $Y$ is qcqs, then for\/ $G,$ $t^{}_{\<G}$ as in~Proposition~\textup{\ref{represent}} and
\/ $\tau^{}_{\<G}\set\tau^{}_{\<\<f}(G\>)\colon\R\fst f^{\<\times}\<G\to G$ the canonical map,
there is a unique $\D(X)$-map \va{-5}
\[
\xi^{}_{\<f}\colon f^\flat\<G\to f^{\<\times} G
\]
such that\/ $t^{}_{\<G}=\tau^{}_{\<G}\smallcirc \R\fst\xi^{}_{\<f}\>;$
and this\/~$\xi^{}_{\<f}$ is an isomorphism.
\end{subcor}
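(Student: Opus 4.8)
\emph{Plan.} The statement is essentially the uniqueness-up-to-unique-isomorphism of representing objects, so the plan is to exhibit $(f^\flat G,t_G)$ and $(f^\times G,\tau_G)$ as two representations of one and the same functor on $\Dqc(X)$ and then let Yoneda produce $\xi_f$. Since $Y$ is qcqs, the basic duality fact recalled in \S\ref{NoTerm} gives $f^\times G\in\Dqc(X)$ together with the bijection
\[
\Hom_{\D(X)}(F,f^\times G)\iso\Hom_{\D(Y)}(\R\fst F,G),\qquad a\mapsto\tau_G\circ\R\fst a\qquad(F\in\Dqc(X)),
\]
i.e.\ $(f^\times G,\tau_G)$ represents the contravariant functor $F\mapsto\Hom_{\D(Y)}(\R\fst F,G)$ on $\Dqc(X)$. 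On the other hand, the standing hypothesis $\R\>\sHom_Y(\fst\OX,G\>)\in\Dqc(Y)$ puts $f^\flat G$ in $\Dqc(X)$ by Lemma~\ref{fst fflat}, and Proposition~\ref{represent}(i) says that $(f^\flat G,t_G)$ represents the same functor on $\Dqc(X)$, via $b\mapsto t_G\circ\R\fst b$.

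\emph{Construction and verification.} Taking $F=f^\flat G$ in the first bijection, there is a unique $\D(X)$-map $\xi_f\colon f^\flat G\to f^\times G$ with $\tau_G\circ\R\fst\xi_f=t_G$; this is the asserted existence and uniqueness. For the isomorphism claim I would apply Proposition~\ref{represent}(i) with $F=f^\times G$ (legitimate, as $f^\times G\in\Dqc(X)$) to obtain the unique $\eta_f\colon f^\times G\to f^\flat G$ with $t_G\circ\R\fst\eta_f=\tau_G$, and then run the two composites through the bijections. From functoriality of $\R\fst$,
\[
t_G\circ\R\fst(\eta_f\circ\xi_f)=\tau_G\circ\R\fst\xi_f=t_G=t_G\circ\R\fst\id_{f^\flat G},
\]
and injectivity of $b\mapsto t_G\circ\R\fst b$ at $F=f^\flat G$ forces $\eta_f\circ\xi_f=\id$; symmetrically $\tau_G\circ\R\fst(\xi_f\circ\eta_f)=t_G\circ\R\fst\eta_f=\tau_G=\tau_G\circ\R\fst\id_{f^\times G}$, so $\xi_f\circ\eta_f=\id$. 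Hence $\xi_f$ is an isomorphism, with inverse $\eta_f$.

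\emph{Obstacle.} I do not anticipate a genuine difficulty: once the two representability statements are in hand the argument is purely formal. The only points that need care are that the hypothesis on $G$ is exactly what Lemma~\ref{fst fflat} requires in order to keep $f^\flat G$ inside $\Dqc(X)$, and that the qcqs hypothesis on $Y$ is precisely what makes the abstract right adjoint $f^\times$ (with its counit $\tau_G$) available in the first place; with both of these secured, the comparison of representing objects does everything.
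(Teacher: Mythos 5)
Your argument is precisely the paper's: the one-line proof of Corollary~\ref{flat=times} is that both $(f^\flat G,\,t^{}_{\<G})$ and $(f^{\<\times}G,\,\tau^{}_{\<G})$ represent the functor $\Hom_{\D(Y)}(\R\fst-,\,G)$ on $\Dqc(X)$, and you simply spell out the Yoneda verification the paper leaves implicit.
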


\begin{proof}
Both $(f^\flat G, t^{}_{\<G}\big)$ and $(f^{\<\times}\< G,\tau^{}_{\<G}\>)$
represent $\Hom_{\D(Y)}(\R\fst-, \:G\>)$. 
\end{proof}
 
\medskip
\begin{subcosa}\label{quasi}
We review some conditions under which \mbox{$\R\>\sHom_Y^\bullet(\fst\OX,\,G\>)\in\Dqc(Y)$}
for \emph{all} $G\in\Dqcpl(Y)$ (resp.~$\Dqc(Y)$)---see Lemma~\ref{qcHom} below. \va2

For this we need the notion of \emph{pseudo-coherence,}
discussed in detail in~the primary source~\cite{Il}, or, more accessibly, 
in \cite[pp.\;283{\it ff,} \S2]{TT}, or in \cite[tag 08E4]{Stacks}. (A brief summary appears in 
\cite[\S4.3]{li}.) The simplest characterization is that an
$\OY$-complex~$E$ is pseudo\kf-coherent if 
over each affine open $U\subset Y$, the
restriction~$E|_U$ is $\D(U)$-isomorphic to a bounded-above complex~$F$ of finite\kf-rank locally free
$\OX$-modules. (When $E$ is an \mbox{$\OY$-module,} this means that $E|_U$ is 
resolvable by such an $F$.) It suffices for pseudo\kf-coherence of~$E$ that this condition hold 
over each member of a covering of $Y$ by affine open subschemes.\va1

Such an $F$ being K-flat, it holds for any scheme\kf-map $h\colon Z\to Y$ that, with $h_U\colon h^{-1}U\to U$ the
induced map, the natural map is an isomorphism
$\LL h_U^*\<F\iso h_U^*\< F$, and hence that if $E$ is pseudo\kf-coherent then so is $\LL h^*\<\< E$.\va1

A finite scheme\kf-map $f\colon X\to Y$ is pseudo\kf-coherent if for any pseudo\kf-coherent $\OX$-complex~$E$, $\R\fst E$ is a pseudo\kf-coherent $\OY$-complex (see \cite[remark just before 4.7.3.4]{li}).
 For this condition to hold, it suffices that $\fst\OX$ be pseudo\kf-coherent.  
(This assertion, being local, need only be shown when
$X$ and~$Y$ are affine, for which case see \cite[4.3.2, (i)$\Leftrightarrow$(iii)]{LN07}.) 
If $Y$~is locally noetherian, then every finite $f$ is pseudo\kf-coherent.\va1

If, in addition, $\fst\OX$ has finite tor-dimension locally (and hence globally if $Y$ is quasi-compact), i.e., 
$\fst\OX$ is a perfect complex \cite[p.\,135, 5.8.1]{Il}, then  $f$ is \emph{quasi-perfect}, i.e., for any perfect $\OX$-complex~$E$, $\R\fst E$ is
a perfect $\OY$-complex. (Use \cite[Proposition 2.1]{LN07} locally on $Y\<$.) 
It is equivalent that  $f$ be \emph{perfect}, as defined in~
\cite[p.\,250, D\'efinition 4.1]{Il}---see, for instance, \cite[Example 4.7.3(d)]{li}.\va1

For example, perfection holds for any affine map $f$ that is flat and locally finitely presentable, so that 
$\fst\OX$ is a flat and locally finitely presentable---i.e., locally free of finite rank---$\OY$-module, 
see \cite[p.\,357, (9.1.15)(i)]{EGA1}.\va1

Perfection also holds whenever $f$ is a regular immersion, so that $\fst\OX$ is locally quasi-isomorphic to a Koszul complex.\va1

Clearly, a composition of two pseudo\kf-coherent (resp.~perfect) finite maps is again pseudo\kf-coherent (resp.~perfect). 
\end{subcosa}

\medskip
\begin{sublem}\label{qcHom}
If $F\in\D(Y)$ is pseudo-coherent\/ $($resp.~perfect$\>\>)$ then for  \mbox{all\/ $G\in \Dqcpl(Y)$} 
$($resp.~$\Dqc(Y)\>),$ 
$\R\>\sHom_Y(F\<,\>\>G\>)\in \Dqcpl(Y)$  $($resp.~$\Dqc(Y))$. 
 \end{sublem}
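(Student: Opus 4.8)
The plan is to reduce to $Y$ affine, represent $F$ by an explicit complex of finite-rank locally free modules, dispose of the perfect case at once, and then reach the general pseudo-coherent case from it by a degreewise approximation.

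First, since $\R\>\sHom_Y$ commutes with restriction to open subschemes and since pseudo-coherence, perfectness, and membership in $\Dqc(Y)$ or in $\Dqcpl(Y)$ are all local conditions, I would assume $Y$ affine. Then $Y$ is quasi-compact, so over $Y$ ``locally cohomologically bounded below'' means simply ``cohomologically bounded below,'' i.e.\ $\Dqcpl(Y)=\Dqc(Y)\cap\D^+(Y)$. By the description of pseudo-coherence recalled in~\ref{quasi}, $F$ is $\D(Y)$-isomorphic to a complex $P$ with each $P^{\>i}$ a finite-rank locally free $\OY$-module and $P^{\>i}=0$ for $i>n_1\>$; if $F$ is perfect, $P$ may be chosen bounded.

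Next I would dispose of the case where $P$ is a \emph{bounded} complex of finite-rank locally free modules (so, in particular, of perfect~$F$). On a K-injective resolution $G\to I$, the derived functor $\R\>\sHom_Y(P,G)$ is computed by the total Hom complex $\sHom^{\>\bullet}_Y(P,I)$, which---$P$ being bounded with finite-rank locally free terms---is identified with $\upcheck P\otimes_Y I$, where $\upcheck P$ denotes the dual complex; as $\upcheck P$ is a bounded complex of flat (indeed locally free) $\OY$-modules, hence K-flat, this is $\upcheck P\Otimes{Y}G$. Since $\upcheck P\in\Dqc(Y)$ and the derived tensor product of $\Dqc(Y)$-objects lies in $\Dqc(Y)$ (compute locally with K-flat quasi-coherent resolutions), we get $\R\>\sHom_Y(F,G)\in\Dqc(Y)$ for every $G\in\Dqc(Y)$; and it lies in $\Dqcpl(Y)$ when $G$ does, because $\upcheck P$ is bounded.

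Finally I would treat a general bounded-above $P$---and this is the step I expect to be the main obstacle. Fix a cohomological degree $k_0$. As $G\in\Dqcpl(Y)$ and $Y$ is affine, pick a K-injective resolution $G\to I$ with $I^{\>j}=0$ for $j<n_0\>$, so that $\R\>\sHom_Y(P,G)=\sHom^{\>\bullet}_Y(P,I)$; because $I$ is bounded below and $P$ bounded above, in each degree $m$ the product $\sHom^{\>\bullet}_Y(P,I)^m=\prod_i\sHom_Y(P^{\>i},I^{\>i+m})$ runs over only the finitely many $i$ with $n_0-m\le i\le n_1\>$. Hence for $N\ll 0$ the inclusion of the brutal truncation $\sigma_{\ge N}P\hookrightarrow P$ induces an isomorphism $\sHom^{\>\bullet}_Y(P,I)\to\sHom^{\>\bullet}_Y(\sigma_{\ge N}P,I)$ in all degrees $\le k_0+1$, so $H^{\>k}\R\>\sHom_Y(P,G)\cong H^{\>k}\R\>\sHom_Y(\sigma_{\ge N}P,G)$ for $k\le k_0\>$; by the bounded case the right side is the $H^{\>k}$ of an object of $\Dqcpl(Y)\subset\Dqc(Y)$, so $H^{\>k}\R\>\sHom_Y(F,G)$ is quasi-coherent for all $k\le k_0\>$, and $k_0$ being arbitrary, $\R\>\sHom_Y(F,G)\in\Dqc(Y)$. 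The same index count shows $\sHom^{\>\bullet}_Y(P,I)^m=0$ for $m<n_0-n_1\>$, whence $\R\>\sHom_Y(F,G)\in\Dqcpl(Y)$. The delicate point---and the reason the pseudo-coherent case requires $G\in\Dqcpl(Y)$, not merely $G\in\Dqc(Y)$---is precisely this finiteness: it is the (local, hence over the affine $Y$ global) boundedness-below of $G$ that makes each cohomology sheaf of $\R\>\sHom_Y(P,G)$ depend on only finitely many terms of the unbounded complex $P$, reducing matters to the bounded case. One could alternatively argue in $\D(A)$, $A=\Gamma(Y,\OY)$, via the equivalence $\Dqc(Y)\simeq\D(A)$ and the K-projectivity there of a bounded-above complex of finite free modules, but the sheaf-level argument above avoids having to check compatibility of $\R\>\sHom$ with that equivalence.
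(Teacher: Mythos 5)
Your proof is correct and follows the same basic route the paper's proof sketches: reduce to $Y$ affine, represent $F$ by a bounded-above complex $P$ of finite-rank locally free modules (bounded in the perfect case), and argue via the explicit Hom-complex. The paper itself merely points to \cite[Lemma 4.3.5]{li}, \cite[p.\,73--74, Proposition 7.3]{RD} and \cite[tag~0A6H]{Stacks} after the affine reduction, and your brutal-truncation step---using the bounded-below K-injective resolution of $G$ to make $\sHom^\bullet_Y(P,I)^m$ a finite product in each degree, so that the cohomology in any fixed degree sees only a bounded piece of~$P$---is a clean self-contained version of the devissage that those references carry out.
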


\begin{proof}
The assertion is essentially local, so one can assume $Y$ affine and then proceed as in the proof of \cite[Lemma 4.3.5]{li}. Alternatively, use \cite[p.\,73--74, Proposition 7.3 (ii) and (iii)]{RD}, or see 
\cite[tag~0A6H]{Stacks}.
\end{proof}

\medskip

\begin{subcor}[Sheafified finite duality]\label{qp duality} 
Let\/ $f\colon X\to Y$ be a pseudo\kf-coherent  $($resp.\ perfect$\>)$ finite
scheme\kf-map, and\/ $G\in\Dqcpl(Y)$ $($resp.\ $\Dqc(Y))$.
For all\/ $F\in\Dqc(X)$ one has the composite bifunctorial \emph{duality isomorphism}
\begin{align*}
\R\fst\R\>\sHom_\sX\<(F, f^\flat  G\>)&\,=\!=\ \>
\phi_*\R\bar\fst\R\>\sHom_\sX\<(F, \bar{f\:\<}^{\!\<*}\!\pt G\>)\\
&\underset{\eqref{duality for barf}}\iso
\phi_*\R\>\sHom_{\>\oY}(\R\bar\fst F, \pt G\>)\\
&\underset{\eqref{duality for phi}}\iso
\R\>\sHom_Y(\phi_*\R\bar\fst F,G\>) =\!= 
\R\>\sHom_Y(\R\fst F,G\>)
\end{align*}
\end{subcor}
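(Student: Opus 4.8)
The plan is to read this off from Theorem~\ref{qc duality} (sheafified affine duality), whose only hypothesis on~$G$ is that $\R\>\sHom_Y(\fst\OX,G\>)\in\Dqc(Y)$: everything reduces to verifying that hypothesis under the assumptions at hand. I expect no real obstacle here---the substance already lies in Theorem~\ref{qc duality}, Lemma~\ref{qcHom}, and the discussion of pseudo\kf-coherence and perfection in~\S\ref{quasi}; what is left is a one-line check together with the bookkeeping behind the two ``$=\!=$'' relations.

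First I would note that a pseudo\kf-coherent (resp.\ perfect) finite map~$f$ has $\fst\OX$ pseudo\kf-coherent (resp.\ perfect) as an $\OY$-complex. Indeed, by the definitions recalled in~\S\ref{quasi}, $\R\fst E$ is pseudo\kf-coherent (resp.\ perfect) for every pseudo\kf-coherent (resp.\ perfect) $\OX$-complex~$E$; applying this to $E\set\OX$ and using that the finite---hence affine---map~$f$ has no higher direct images, so that $\R\fst\OX=\fst\OX$ in~$\D(Y)$, gives the claim. Then Lemma~\ref{qcHom} with $F\set\fst\OX$ yields $\R\>\sHom_Y(\fst\OX,G\>)\in\Dqcpl(Y)\subset\Dqc(Y)$ for $G\in\Dqcpl(Y)$ (resp.\ $\R\>\sHom_Y(\fst\OX,G\>)\in\Dqc(Y)$ for $G\in\Dqc(Y)$), which is precisely the hypothesis of Theorem~\ref{qc duality}.

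It then remains only to record that the displayed composite is the one produced by Theorem~\ref{qc duality}, now with the two outer relations appearing as equalities: since $f=\phi\bar f$ and $\phi_*$ is exact one has $\R\fst=\phi_*\R\bar\fst$, whence $\R\fst\R\>\sHom_\sX(F,f^\flat G\>)=\phi_*\R\bar\fst\R\>\sHom_\sX(F,\brf\pt G\>)$ by the definition $f^\flat=\brf\pt$ of~\eqref{fflat}, and likewise $\R\>\sHom_Y(\phi_*\R\bar\fst F,G\>)=\R\>\sHom_Y(\R\fst F,G\>)$; the two inner isomorphisms are those of Corollaries~\ref{duality for barf} and~\ref{duality for phi}, the latter being applicable because $\pt G\in\Dqc(\oY)$ by Lemma~\ref{fst fflat}. (The same input---via Lemma~\ref{fst fflat}, the inclusion \eqref{f and qc1}, and the fact that the homeomorphism~$\phi$ lets $\phi_*$ detect quasi\kf-coherence and cohomological boundedness below---also shows $f^\flat G\in\Dqcpl(X)$, resp.\ $\Dqc(X)$, so that every complex in sight lies in the expected category; this is not needed for the statement but is worth a passing remark.)
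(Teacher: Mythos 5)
Your proposal is correct and matches the paper's own (one-line) proof: verify the hypothesis $\R\>\sHom_Y(\fst\OX,G\>)\in\Dqc(Y)$ of Theorem~\ref{qc duality} by combining Lemma~\ref{qcHom} with the fact that $\fst\OX$ is pseudo\kf-coherent (resp.\ perfect) over~$\OY$, the latter coming directly from the definitions in~\S\ref{quasi} applied to $E\set\OX$. You spell out the bookkeeping behind the ``$=\!=$'' arrows and the ancillary observation that $f^\flat G$ lands in the right category, but the substance is identical.
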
 

\begin{proof}
This follows immediately  from \ref{qc duality}  and ~\ref{qcHom}.
\end{proof}

\smallskip
Application of the functor $\Hr^0\R\Gamma(Y,-)$ yields an adjunction $\R\fst\!\dashv f^\flat$ 
composed of those given by~\ref{^* equivalence} and \ref{adjunction0}:
\begin{subcor}\label{right adjoint} 
For any\/ $f$ as in\/~\textup{\ref{qp duality}} the functor $f^\flat\colon\Dqcpl(Y)\to\Dqcpl(X)$ is right-adjoint to\/ $\R\fst\colon\Dqcpl(X)\to\Dqcpl(Y),$  with unit map
the natural functorial composite map
\[
u_F\colon F\underset{\ref{^* equivalence}}\iso \bar{f\:\<}^{\!\<*}\R\bar\fst F \underset{\ref{unit}}\lto 
\bar{f\:\<}^{\!\<*}\<\<\pt\phi_*\R\bar\fst F=\!= f^\flat\R\fst F\>,
\]
and counit  the functorial map\/ $t_G$ $($see\textup{~\ref{represent}),}
which identifies naturally with the canonical map\/ \textup{(``\kf evaluation at 1")}
\[
\R\>\sHom_Y(\fst\OX,\>G\>)\lto \R\>\sHom_Y(\OY\<,\>G\>)=G.
\]

When\/ $f$ is perfect,  the superscript ``\;$^\textup{\cmt\char'053}$" can be omitted.\hfill$\square$
\end{subcor}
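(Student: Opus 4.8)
The plan is to exhibit the asserted adjunction as the composite of two adjunctions already established and then to read off its unit and counit. Write $f=\phi\bar f$ as in \S\ref{2.4}, so that $\R\fst=\phi_*\R\bar\fst$ (the underlying map of $\phi$ is the identity, so $\phi_*$ is exact) and, by~\eqref{fflat}, $f^\flat=\bar{f\:\<}^{\!\<*}\pt$. Proposition~\ref{^* equivalence} says $\R\bar\fst$ restricts to an equivalence between $\Dqcpl(X)$ and $\Dqcpl(\oY)$ with quasi-inverse $\bar{f\:\<}^{\!\<*}$; being an adjoint equivalence, $\R\bar\fst\!\dashv\bar{f\:\<}^{\!\<*}$, with unit the isomorphism $F\iso\bar{f\:\<}^{\!\<*}\R\bar\fst F$ of~\ref{^* equivalence} and counit the inverse of the unit of~\ref{^* equivalence}. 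On the other side, Corollary~\ref{adjunction0} gives $\phi_*\!\dashv\pt$ between $\D(\oY)$ and $\D(Y)$, and I would check this restricts to the bounded-below quasi-coherent subcategories: restriction of scalars $\phi_*$ is exact and faithful and sends quasi-coherent $\CO_{\>\oY}$-modules to quasi-coherent $\OY$-modules \cite[p.\,218, (2.2.4)]{EGA1}, so (using $H^i\phi_*=\phi_*H^i$) it carries $\Dqcpl(\oY)$ into $\Dqcpl(Y)$ and reflects membership in~$\Dqcpl$; and for $G\in\Dqcpl(Y)$, Lemma~\ref{qcHom} gives $\R\>\sHom_Y(\fst\OX,G)\in\Dqcpl(Y)$, whence by Lemma~\ref{fst fflat} together with $\phi_*\pt G\cong\R\>\sHom_Y(\fst\OX,G)$ we get $\pt G\in\Dqcpl(\oY)$, so that $f^\flat G=\bar{f\:\<}^{\!\<*}\pt G\in\Dqcpl(X)$ by~\eqref{f and qc1}.

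Composing, $\R\fst=\phi_*\smallcirc\R\bar\fst$ is a composite of left adjoints and hence is left adjoint to the composite $\bar{f\:\<}^{\!\<*}\smallcirc\pt=f^\flat$ of the corresponding right adjoints: this is the adjunction $\R\fst\!\dashv f^\flat$, and its unit and counit automatically satisfy the triangle identities. To identify them I would use the standard formulas for a composite adjunction $L_2L_1\!\dashv R_1R_2$. At $F\in\Dqcpl(X)$ the unit is $R_1(\eta^2_{L_1F})\smallcirc\eta^1_F$, which here is the isomorphism $F\iso\bar{f\:\<}^{\!\<*}\R\bar\fst F$ of~\ref{^* equivalence} followed by $\bar{f\:\<}^{\!\<*}$ applied to the unit~\ref{unit} of $\phi_*\!\dashv\pt$ at $\R\bar\fst F$ --- that is, exactly the map $u_F$ displayed in the statement. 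At $G\in\Dqcpl(Y)$ the counit is $\varepsilon^2_G\smallcirc\phi_*(\varepsilon^1_{\pt G})$, which is $\phi_*$ applied to the isomorphism $\R\bar\fst\bar{f\:\<}^{\!\<*}\pt G\iso\pt G$ of~\ref{^* equivalence} followed by the counit~\ref{counit} of $\phi_*\!\dashv\pt$ --- that is, exactly the map $t^{}_{\<G}$ of Proposition~\ref{represent}; and by Proposition~\ref{represent}(ii), transported along $\bar t_G$ it is the ``evaluation at~$1$'' map $\R\>\sHom_Y(\fst\OX,G)\to\R\>\sHom_Y(\OY,G)=G$. The only step that is not a purely formal chase of adjunctions is this last identification of the composite counit with $t^{}_{\<G}$, and that is precisely what the commutative diagram in the proof of Proposition~\ref{represent}(i)--(ii) records; so I expect no genuine obstacle beyond careful bookkeeping of units and counits along the factorization $f=\phi\bar f$.

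For the perfect case the argument is verbatim with $\Dqc$ in place of $\Dqcpl$ throughout: Lemma~\ref{qcHom} then yields $\R\>\sHom_Y(\fst\OX,G)\in\Dqc(Y)$ for every $G\in\Dqc(Y)$, hence $\pt G\in\Dqc(\oY)$ and $f^\flat G\in\Dqc(X)$, and the same composite adjunction applies. As the sentence preceding the Corollary suggests, one may instead apply $\Hr^0\R\Gamma(Y,-)$ to the sheafified duality isomorphism of Corollary~\ref{qp duality}, using $\R\Gamma(Y,\R\fst-)=\R\Gamma(X,-)$ and $\Hr^0\R\Gamma\big(W,\R\>\sHom_W(-,-)\big)=\Hom_{\D(W)}(-,-)$, to obtain the natural bijection $\Hom_{\D(X)}(F,f^\flat G)\iso\Hom_{\D(Y)}(\R\fst F,G)$; Proposition~\ref{represent} is exactly the assertion that this bijection is the adjunction just described.
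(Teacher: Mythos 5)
Your proof is correct and takes exactly the approach the paper intends: the sentence preceding the Corollary says the adjunction is obtained by composing those from~\ref{^* equivalence} and~\ref{adjunction0}, and you have spelled out that composition, verified it restricts to the $\Dqcpl$ (resp.\ $\Dqc$) subcategories, and checked that the standard unit/counit formulas for a composite adjunction reproduce $u_F$ and $t_G$.
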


\begin{subcosa} (Cf.~\cite[p.172, 6.8]{RD}.) Similarly, if $f$ is \emph{any} closed immersion then 
$f^\flat$~is right-adjoint to\/ $\fst=\R\fst\colon\D(X)\to \D(Y)$. That's because
for such ~$f,$ Lemma~\ref{^* equivalence} and hence Proposition~\ref{qc duality}
hold for all $G\in\D(Y)$.  
\end{subcosa}

\begin{subex}\label{f*unit}
\emph{Let\/ $G\in\D(Y)$ be such that\/ $\R\>\sHom_Y(\fst\OX, G\>)\in\Dqc(Y),$ 
$u$~the unit map in\/ \textup{\ref{right adjoint},} and $\mu$ the natural composite}
\[
\fst \OX\Otimes{Y}\fst\OX\lto\fst \OX\otimes_Y\fst\OX\lto\fst\OX\>.
\]
\emph{The following natural diagram commutes.}\va3
\[
\def\1{$\R\fst f^\flat G$}
\def\2{$\R\fst f^\flat\R\fst f^\flat G$}
\def\3{$\R\>\sHom_Y(\fst \OX, \R\>\sHom_Y(\fst\OX, \>G\>)\>)$}
\def\4{$\R\>\sHom_Y(\fst \OX, G\>)$}
\def\5{$\R\>\sHom_Y(\fst \OX\Otimes{Y}\<\fst\OX, \>G\>)$}
 \bpic[xscale=5, yscale=1.9]

   \node(11) at (1,-1){\1};   
   \node(12) at (2,-1){\2}; 
   
   \node(22) at (2,-2){\3};  
    
   \node(31) at (1,-3){\4};  
   \node(32) at (2,-3){\5};

    \draw[->] (11)--(12) node[above, midway, scale=.75] {$\R\fst u_{\<\<f^{\mkern-.5mu\flat} \<G}$};  
    
    \draw[->] (31)--(32) node[below=1pt, midway, scale=.75] {$\via\mu$}; 
    
    \draw[->] (11)--(31)  node[right=1pt, midway, scale=.75] {$\simeq$}
                                   node[left=1pt, midway, scale=.75] {\textup{\ref{fst fflat}}};
                                   
    \draw[->] (12)--(22) node[left=1pt, midway, scale=.75] {$\simeq$}
                                   node[right=1pt, midway, scale=.75] {\textup{\ref{fst fflat}}}; 
    \draw[->] (22)--(32) node[left=1pt, midway, scale=.75] {$\simeq$};
   
       \epic
\]
This assertion will not be used, so the (not entirely trivial) proof is omitted.
\end{subex}
\end{cosa}

\pagebreak[3]

\begin{cosa}\label{alt dual}
The following Proposition~\ref{duality map} (resp.~\ref{qc duality2}) shows that the duality isomorphism in Corollary~\ref{duality for phi} (resp.~Theorem~\ref{qc duality})
is concordant with the abstract duality map given by \cite[4.2.1]{li}. Proposition~\ref{via duality} 
characterizes the isomorphism $\xi_f\colon f^\flat G\iso f^\times G$ in \ref{flat=times} by means of that abstract map.\va1

\begin{sublem}\label{commnu}
Let\/ $h\colon V\to W$ be a ringed-space map, and let\/ $E,\; F$ be\/ $\OV$-complexes.
With\/ $\nu^{}_0=\nu^{}_0(E,F\>)$ the natural map\/ $($cf.~\textup{\cite[(3.1.4)]{li}),} $\nu=\nu(E,F\>)$ as in~\eqref{nu}$,$ and\/ $\CH\set\CH om,$ the following natural diagram commutes.

\[
\def\1{$h_*\CH_V(E\<,\>F\>)$}
\def\2{$\R h_*\CH_V(E\<,\>F\>)$}
\def\3{$\CH_W(h_*E, h_*F\>)$}
\def\5{$\R\CH_W(h_*E, \R h_*F\>)$}
\def\6{$\R\CH_W(\R h_*E, \R h_*F\>)$}
\def\7{$\R\CH_W(h_*E,  h_*F\>)$}
\def\8{$\R h_*\R\CH_V(E\<,\>F\>)$}
 \bpic[xscale=4.5, yscale=1.6]
   
   \node(11) at (1,-1){\1} ;
   \node(12) at (2,-1){\2} ;   
   \node(13) at (3,-1){\8} ;

   \node(23) at (3,-2){\6} ; 
   
   \node(31) at (1,-3){\3} ;  
   \node(32) at (2,-3){\7} ;
   \node(33) at (3,-3){\5} ;
   
    \draw[->] (11)--(12) ;  
    \draw[->] (12)--(13) ;
    
    \draw[->] (31)--(32) ;
    \draw[->] (32)--(33) ;
    
    \draw[->] (11)--(31) node[left=1pt, midway, scale=.75] {$\nu^{}_0$} ;
    
    \draw[->] (13)--(23) node[right=1pt, midway, scale=.75] {$\nu$} ; 
    \draw[->] (23)--(33)  ; 
                                      
    \epic
\]
\end{sublem}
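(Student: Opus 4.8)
The plan is to recognise the diagram as obtained, by passing to right-derived functors, from an elementary commutative diagram of genuine complexes, and to reduce to that underived identity by a coherent choice of resolutions. First I would reformulate. Write $\CH\set\sHom$, as in the statement. The composite along the top edge is the canonical map $h_*\CH_V(E,F)\to\R h_*\R\CH_V(E,F)$, and the composite $\CH_W(h_*E,h_*F)\to\R\CH_W(h_*E,h_*F)\to\R\CH_W(h_*E,\R h_*F)$ along the bottom edge is the canonical map from $\CH_W(h_*E,h_*F)$ into the right-derived functor of $F\mapsto\CH_W(h_*E,h_*F)$---which, since $h_*$ preserves K-injectivity, is $F\mapsto\R\CH_W(h_*E,\R h_*F)$. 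So, letting $\R\nu^{}_0$ denote the natural transformation gotten by right-deriving $\nu^{}_0\colon h_*\CH_V(E,-)\Rightarrow\CH_W(h_*E,h_*(-))$, the asserted commutativity will follow---via the standard fact that right-deriving a natural transformation produces a commutative square with the canonical maps into the derived functors---from the identity
\[
\R\nu^{}_0(E,F)\ =\ \Big(\R h_*\R\CH_V(E,F)\xto{\,\nu\,}\R\CH_W(\R h_*E,\R h_*F)\lto\R\CH_W(h_*E,\R h_*F)\Big),
\]
the last arrow being induced by the canonical map $h_*E\to\R h_*E$. For the two sides of this identity to be legitimate maps of derived functors one uses, with $P\to E$ a K-flat resolution and $F\to I$ a K-injective resolution over $\OV$, that the sheaf-Hom $\CH_V(P,I)$ of a K-flat complex into a K-injective one is K-injective over $\OV$; hence $h_*\CH_V(P,I)$ represents $\R h_*\R\CH_V(E,F)$, and $\CH_W(h_*E,h_*I)$ represents $\R\CH_W(h_*E,\R h_*F)$.

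To prove the displayed identity I would pass to these representatives, so that every object and arrow in it becomes an honest complex or chain map: the isomorphism of \cite[3.2.3(ii)]{li} is modelled by the genuine $h^*\!\dashv h_*$ adjunction isomorphism $h_*\CH_V(h^*Q,I)\iso\CH_W(Q,h_*I)$ for a K-flat resolution $Q\to h_*E$ over $\OW$, and the counit $\LL h^*\R h_*E\to E$ by a chain map $h^*Q\to P$ lifting $h_*E\to\R h_*E$. The identity then becomes the commutativity (in $\mathbf K(W)$) of the corresponding diagram of \emph{underived} sheaf-Hom maps attached to $h$, which is formal: it follows from the triangle identities for $h^*\!\dashv h_*$ and from the fact that $\nu^{}_0$ of \cite[(3.1.4)]{li} is itself the composite ``restrict along the counit $h^*h_*E\to E$, then apply the $h^*\!\dashv h_*$ adjunction isomorphism''---which is exactly the underived shadow of the construction~\eqref{nu} of $\nu$.

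\textbf{The main obstacle} is organisational rather than conceptual: one must choose the several resolutions at play---$P\to E$ K-flat, $E\to I_E$ and $F\to I$ K-injective over $\OV$, and $Q\to h_*E$ K-flat over $\OW$---together with the chain-level models of the counit $\LL h^*\R h_*E\to E$ and of the \cite[3.2.3(ii)]{li} isomorphism, so that they are \emph{mutually} compatible, to the effect that the two paths of the diagram coincide already in $\mathbf K(W)$ rather than merely up to an unnamed homotopy. Once this bookkeeping is arranged the verification is the routine unwinding indicated above; alternatively, one might test both composites via $H^0\R\Gamma(W,-)$ and compatibility with restriction to open subschemes, using \cite[3.2.5(f)]{li} and \cite[3.2.3(i)]{li} as in the remark following~\eqref{nu}.
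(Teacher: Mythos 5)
Your plan is morally on the right track---both $\nu_0$ and $\nu$ are built the same way out of the adjunction counit and the adjunction ``Hom isomorphism,'' and the whole lemma is nothing but a compatibility of their derived and underived incarnations. But as written the argument has two genuine gaps.

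First, the ``general principle'' you want to invoke---that right-deriving a natural transformation yields a commutative square with the two canonical maps---requires that $\R h_*\R\CH_V(E,-)$ actually \emph{be} a right-derived functor of $F\mapsto h_*\CH_V(E,F)$, so that the universal property of such a functor can be applied. This fails for general~$E$: a K-injective $I$ does not make $\CH_V(E,I)$ into an $h_*$-acyclic (or K-injective) complex unless $E$ is K-flat, so $\R h_*\smallcirc\R\CH_V(E,-)$ need not coincide with $\R\bigl(h_*\CH_V(E,-)\bigr)$. You introduce a K-flat resolution $P\to E$, but that replaces $E$ by $P$ in the derived-functor framework while the statement you must prove has $E$, not $P$, at the underived corners ($h_*\CH_V(E,F)$ and $\CH_W(h_*E,h_*F)$). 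Since the comparison maps $h_*\CH_V(E,F)\to h_*\CH_V(P,F)$ and $\R\CH_W(h_*E,\R h_*F)\to\R\CH_W(h_*P,\R h_*F)$ are not isomorphisms, commutativity for $E$ does not follow from commutativity for $P$ by a cube argument without further work.

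Second, even granting the framework, the identity $\R\nu_0=(\text{restriction along }h_*E\to\R h_*E)\smallcirc\nu$ is precisely where the content of the lemma lies, and ``routine unwinding at representatives'' is an understatement of what must be done. The paper carries out exactly that unwinding by expanding the square into a larger diagram whose cells encode: the expression of $\nu_0$ via the $h^*\dashv h_*$ unit and counit ($\nu_0(E,F)=\eta\>\nu_0(h^*h_*E,F)\>\epsilon$), the compatibility between the derived and underived adjunction isomorphisms (\cite[3.2.3(ii)]{li}), and the compatibility of $h_*\to\R h_*$ with the unit of $\LL h^*\dashv\R h_*$ (\cite[(3.2.1.2)]{li}). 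Your proposal cannot avoid exactly these three verifications; invoking ``derived natural transformation'' does not shorten them and, for the reasons above, does not by itself license the commutativity. The safest version of your idea is thus to drop the appeal to derived-functor universality and verify the square directly at chain-level representatives, which is what the paper's subdiagram decomposition accomplishes.
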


\pagebreak[3]
\begin{proof}
The diagram expands naturally as follows, with maps labeled $\epsilon^{\bullet}$ (resp.~$\eta$) induced by the counit  map
$h^*h_*E\to E$ (resp.~the unit map $E\to h_{*}h^{*}E$): 
\[\mkern-4mu
\def\1{$h_*\CH_V(E\<,\>F\>)$}
\def\2{$\R h_*\R\CH_V(E\<,\>F\>)$}
\def\3{$\R h_*\R\CH_V(h^{\<*}\<h_*E\<,\>F\>))$}
\def\4{$h_*\CH_V(h^{\<*}\<h_*E\<,\>F\>)$}
\def\5{$\R h_*\R\CH_V(\LL h^{\<*}\<h_*E\<,F\>))$}
\def\6{$\CH_W(h_*E,  h_*F\>)$}
\def\7{$\R h_*\R\CH_V(\LL h^*\R h_*E\<,F\>)\ \:$}
\def\8{$\R\CH_W(h_*E, h_*F\>)$}
\def\9{$\R\CH_W(h_*E, \R h_*F\>)$}
\def\ten{$\R\CH_W(\R h_*E, \R h_*F\>)$}
\def\lvn{$\CH_W(h_*h^{\<*}\<h_*E, h_*F\>)$}
\def\twv{$\R h_*\CH_V(E\<,\>F\>)$}
 \bpic[xscale=3.75, yscale=1.3]

   \node(11) at (1,-.5){\1} ;   
   \node(12) at (2.22,-.5){\twv} ; 
   \node(14) at (3.5,-.5){\2} ; 
  
   \node(22) at (1.57,-1.5){\4} ;  
   \node(23) at (2.84,-1.5){\3} ; 
     
   \node(42) at (1.57,-3.5){\lvn} ;
   \node(44) at (3.5,-2.5){\7} ; 
  
   \node(63) at (2.2,-2.5){\5} ;
   \node(64) at (3.5,-3.5){\ten} ;
   
   \node(71) at (1,-4.5){\6} ;  
   \node(72) at (2.22,-4.5){\8} ;  
   \node(74) at (3.5,-4.5){\9} ;   
 
    \draw[->] (11)--(12) ;
    \draw[->] (12)--(14) ;

    \draw[->] (22)--(23) ;
    
    \draw[->] (71)--(72) ;
    \draw[->] (72)--(74) ;
    
    \draw[->] (11)--(71) node[left, midway, scale=.75] {$\nu^{}_0(F\<,F')$} ;
    
    \draw[->] (22)--(42) node[left=-1pt, midway, scale=.75] {$\nu^{}_0(h^*h_*F\<\<,F')$} ;
    
    \draw[->] (14)--(44) ;  
    \draw[->] (44)--(64) node[right=1, midway, scale=.75] {$\simeq$} ;  
    \draw[->] (64)--(74) ;  
    
    \draw[->] (11)--(22) node[above=-2pt, midway, scale=.75] {$\mkern18mu \epsilon$} ;
    \draw[<-] (42)--(71) node[above=-2pt, midway, scale=.75] {$\epsilon'\mkern15mu$} ;
    \draw[<-](1.2,-4.25)--(1.48,-3.76)  node[below=-2pt, midway, scale=.75] {$\mkern18mu \eta$} ;
    \draw[->] (14)--(23) node[above=-2pt, midway, scale=.75] {$\mkern-20mu \epsilon$} ;
    \draw[->] (23)--(63) ;
    \draw[->] (63)--(74) ;
    \draw[->] (44)--(63) ;
    
    \node at (1.3,-2)[scale=.9]{\circled1} ;
    \node at (2.2,-3.5)[scale=.9]{\circled2} ;
    \node at (3.02,-2)[scale=.9]{\circled3} ;
    
  \epic
\]

Commutativity of subdiagram \circled1 results from the obvious equalities
\[
\nu_0(E,F\>) =\eta\epsilon'\<\nu_0(E,F\>) = \eta \nu^{}_0(h^*h_*E\<\<,F\>)\>\epsilon.
\]

Commutativity of \circled2 results from \cite[3.2.3(ii)]{li}\va{,6} (modulo replacement in \cite[3.1.5, 3.1.6]{li} of $(f, A,B)$ by $(h, h_*E,F\>)$\,\ldots)\va{.4}

Commutativity of  \circled3 results from that of \cite[3.2.1.2]{li}.  

Commutativity of the unlabeled subdiagrams is clear.
\end{proof}

\begin{sublem} \label{tensor and _*}
Let\/ $h\colon V\to W$ be a ringed-space map, and let\/ $E,\; F$ be\/ $\OV$-complexes.
The following natural diagram commutes.
\[
\def\1{$\R h_*E\Otimes{W}\R h_*F$}
\def\2{$\R h_*(E\Otimes{V}F\>)$}
\def\3{$h_*E\Otimes{W}h_*F$}
\def\4{$\R h_*(E\otimes_{V}\<F\>)$}
\def\5{$h_*E\otimes_{W}h_*F$}
\def\6{$h_*(E\otimes_{V}\<F\>)$}
 \bpic[xscale=3, yscale=1.25]

   \node(11) at (1,-1){\1} ;   
   \node(13) at (3,-1){\2} ; 
   
   \node(21) at (1,-2){\3} ;
   \node(23) at (3,-2){\4} ;
   
   \node(31) at (1,-3){\5} ;  
   \node(33) at (3,-3){\6} ;
   
    \draw[->] (11)--(13) node[above, midway, scale=.75] {$\gamma$}   
                                   node[below, midway, scale=.75] {\cite[3.2.4(ii))]{li}} ;
    \draw[->] (31)--(33) node[below, midway, scale=.75] {$\gamma^{}_0$} ;

    \draw[<-] (11)--(21) node[left=1pt, midway, scale=.75] {$$} ;
    \draw[->] (21)--(31) node[left=1pt, midway, scale=.75] {$$} ; 
    
    \draw[->] (13)--(23) node[left=1pt, midway, scale=.75] {$$} ;
    \draw[<-] (23)--(33) node[left=1pt, midway, scale=.75] {$$} ; 

 \epic
\]

\end{sublem}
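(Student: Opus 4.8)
The plan is to reduce the claimed commutativity to a few compatibilities between the derived adjunction $\LL h^*\dashv\R h_*$ and the underived adjunction $h^*\dashv h_*$ that are already recorded in \cite{li}, together with routine naturality---proceeding much as in the proof of \ref{commnu} above. Recall first, from \cite[3.2.4(ii)]{li}, that $\gamma=\gamma(E,F)$ is the $\D(W)$-map \emph{adjoint}, under $\LL h^*\dashv\R h_*$, to the composite
\[
\LL h^*\bigl(\R h_*E \Otimes{W}\R h_*F\bigr)
\xrightarrow{\;\sim\;}
\LL h^*\R h_*E \Otimes{V} \LL h^*\R h_*F
\xrightarrow{\;\epsilon\otimes\epsilon\;}
E\Otimes{V}F ,
\]
the first arrow being the monoidal isomorphism of \cite[3.2.4(i)]{li} and $\epsilon$ the counit of $\LL h^*\dashv\R h_*$; equivalently, $\epsilon^{}_{E\Otimes{V}F}\circ\LL h^*\gamma$ is the above composite. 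In the same way $\gamma^{}_0=\gamma^{}_0(E,F)$ is the $\OW$-complex map adjoint, under $h^*\dashv h_*$, to the corresponding composite built from the underived monoidal isomorphism of $h^*$ and the counit $\epsilon^{}_0$ of $h^*\dashv h_*$.

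Since a $\D(W)$-map $A\to\R h_*B$ is determined by its transpose $\LL h^*A\to B$ under $\LL h^*\dashv\R h_*$, namely by $\phi\mapsto\epsilon^{}_B\circ\LL h^*\phi$, it suffices to check that the hexagon's two composites $h_*E\Otimes{W}h_*F\to\R h_*(E\otimes_VF)$ coincide after applying $\LL h^*$ and postcomposing with $\epsilon^{}_{E\otimes_VF}$. Write $c\colon h_*\to\R h_*$ and $e\colon\LL h^*\to h^*$ for the comparison transformations, and let $d$ stand in each instance for the comparison $\otimes^{\LL}\to\otimes$. On the clockwise side, naturality of $\epsilon$ (applied to $d\colon E\Otimes{V}F\to E\otimes_VF$) followed by the defining property of $\gamma$ turns the transposed map into
\[
d\circ(\epsilon\otimes\epsilon)\circ\bigl(\text{\cite[3.2.4(i)]{li}}\bigr)\circ\LL h^*(c\Otimes{W}c) ;
\]
on the counterclockwise side, the compatibility of $\epsilon$ with $\epsilon^{}_0$ via $e$ recorded in \cite[3.2.1.2]{li}, then naturality of $e$, then the defining property of $\gamma^{}_0$, turn the transposed map into
\[
(\epsilon^{}_0\otimes\epsilon^{}_0)\circ\bigl(\text{\cite[3.2.4(i)]{li}, underived}\bigr)\circ h^*(d)\circ e .
\]
Equality of these two maps $\LL h^*(h_*E\Otimes{W}h_*F)\to E\otimes_VF$ then follows from: the compatibility (via $e$ and $d$) of the derived monoidal isomorphism of \cite[3.2.4(i)]{li} with the underived monoidal isomorphism of $h^*$---which holds by construction of that derived functor; the compatibility \cite[3.2.1.2]{li} applied to each of $E$ and $F$ separately; and the naturality of $c$, of $d$, of $e$, and of \cite[3.2.4(i)]{li} itself.

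The only real difficulty is bookkeeping: keeping straight the directions of the two comparison transformations $c\colon h_*\to\R h_*$ and $e\colon\LL h^*\to h^*$, and inserting the counit-comparison square \cite[3.2.1.2]{li} and the monoidal-compatibility square with their correct orientations. No new idea enters---which, as with \ref{commnu}, is why the statement may be left with only the diagram displayed. (Alternatively, one may choose K-flat resolutions $P\xrightarrow{\,\sim\,}E$ and $Q\xrightarrow{\,\sim\,}F$ of $\OV$-complexes, note that $P\otimes_VQ$ is again K-flat, describe $\gamma$ explicitly through $P$, $Q$ and an $h_*$-acyclic resolution of $P\otimes_VQ$, and verify by inspection that the hexagon becomes a commuting diagram of honest complex maps.)
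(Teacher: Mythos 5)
Your proposal is correct and follows essentially the same strategy as the paper: first identify both $\gamma$ and $\gamma^{}_0$ as adjoints (under $\LL h^*\dashv\R h_*$ and $h^*\dashv h_*$ respectively) of the evident monoidal composites, then pass to the adjoint diagram and appeal to the compatibility squares from \cite[3.2.1]{li} together with the monoidal compatibility \cite[3.2.4(i)]{li} and naturality of the comparison transformations. The paper also uses this reduction, expanding the hexagon and closing it with \cite[(3.2.1.2)]{li}, \cite[(3.2.1.3)]{li} and \cite[(3.2.4(i)]{li}; your writeup cites only \cite[3.2.1.2]{li} by number, but the two distinct unit/counit compatibilities you actually invoke are \cite[(3.2.1.2)]{li} and \cite[(3.2.1.3)]{li}---worth flagging the exact references, though it does not affect the argument.
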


\begin{proof} By definition, (or, if a definition other than that in \cite[3.2.4]{li} is preferred, one shows that)  $\gamma$ is adjoint to the natural composite map
\[
\LL h^*(\R h_*E\Otimes{W} \R h_*F\>) \lto \LL h^*\R h_*E\Otimes{V} \LL h^*\R h_*F \lto E\Otimes{V} F.
\]

By \cite[3.1.9]{li}, and application of the ``duality principle'' of \cite[\S3.4.5]{li} to the argument at the beginning of that subsection%
\footnote{\ In connection with \cite[\S3.4.5]{li}, in the proof of \cite[3.2.4(ii)]{li}, the erroneous phrase 
``the adjoint (3.5.4.1) of (3.4.2.1)'' should be replaced by ``(3.4.5.1)''.}
(or otherwise), $\gamma^{}_0$ is adjoint to the natural composite map
\[
h^{*}(h_{*}E\otimes_W\< h_{*}F\>) \lto h^{*}h_{*}E\otimes_V\< h^{*}h_{*}F \lto E\otimes_V\< F.
\]

It suffices therefore to show
commutativity of the natural diagram
\begin{small}
\[\mkern-3mu
\def\1{$\R h_*E\Otimes{W}\R h_*F$}
\def\2{$\R h_*(E\Otimes{V}\<F\>)$}
\def\3{$h_*E\Otimes{W}h_*F$}
\def\4{$\R h_*(E\otimes_{V}\<F\>)$}
\def\5{$h_*E\otimes_W\<h_*F$}
\def\6{$h_*(E\otimes_{V}\<F\>)$}
\def\7{$\R h_*\LL h^*(\R h_*E\Otimes{W}\R h_*F\>)$}
\def\8{$\R h_*(\LL h^*\R h_*E\Otimes{V}\LL h^*\R h_*F\>)$}
\def\9{$\R h_*\LL h^*(h_*E\Otimes{W} h_*F\>)$}
\def\ten{$\R h_*\LL h^*(h_*E\otimes_W\< h_*F\>)$}
\def\lvn{$\R h_* h^*(h_*E\otimes_W\< h_*F\>)$}
\def\twv{$h_* h^*(h_*E\otimes_W\< h_*F\>)$}
\def\thn{$h_*(h^*h_*E\otimes_V\<  h^*h_*F\>)$}
\def\frn{$\R h_*( h^*h_*E\otimes_V\<  h^*h_*F\>)$}
\def\ffn{$\R h_*(\LL h^*h_*E\Otimes{V} \LL h^*h_*F\>)$}
\def\sxn{$\R h_*(h^*h_*E\Otimes{V} h^*h_*F\>)$}
 \bpic[xscale=3.47, yscale=1.75]

   \node(11) at (1,-.5){\1} ;   
   \node(12) at (1.8,-1.5){\7} ;
   \node(13) at (2.55,-.5){\8} ; 
   \node(14) at (3.95,-.51){\2} ; 
   
   \node(21) at (1,-3){\3} ;
   \node(22) at (1.97,-3){\9} ;
   \node(225) at (2.55,-2.25){\ffn} ;
   \node(235) at (3.35,-1.5){\sxn} ;
   \node(23) at (3.35,-3){\frn} ;
   \node(24) at (3.95,-4){\4} ;
   
   \node(32) at (1.6,-4){\ten} ;
   \node(33) at (2.75,-4){\lvn} ;
   
   \node(41) at (1,-5){\5} ;  
   \node(42) at (1.9,-5){\twv} ; 
   \node(43) at (3,-5){\thn} ; 
   \node(44) at (3.95,-5){\6} ;
   
    \draw[->] (12)--(13) ;  
   
    \draw[->] (21)--(22) ;  
   
    \draw[->] (32)--(33) ;

    \draw[->] (41)--(42) ;  
    \draw[->] (42)--(43) ;
    \draw[->] (43)--(44) ;
    
    \draw[<-] (11)--(21) node[left=1pt, midway, scale=.75] {$$} ;
    \draw[->] (21)--(41) ; 
    
    \draw[<-] (12)--(22)  ;
    \draw[<-] (32)--(22)  ;
    \draw[->] (225)--(13) ;
    \draw[->] (14)--(24)  ;
    \draw[<-] (24)--(44)  ; 
      
    \draw[->] (11)--(12) ;
    \draw[->] (13)--(14) ; 
    \draw[->] (33)--(23) ;
    \draw[->] (23)--(24) ;
    \draw[->] (41)--(32) ;
    \draw[->] (42)--(33) ; 
    \draw[->] (22)--(225) ;
    \draw[->] (225)--(235) ;
    \draw[->] (235)--(14) ;  
    \draw[->] (235)--(23) ;
    
   \node at (3.05,-1.03) [scale=.9]{\circled1} ;
   \node at (2.65,-3.02) [scale=.9]{\circled2} ;
   \node at (1.8,-4.5) [scale=.9]{\circled3} ;
      
 \epic
\]
\end{small}

Commutativity of the unlabeled subdiagrams is easily checked. 

Commutativity of \circled1 (respectively~\circled3) is given by that of \cite[(3.2.1.2)]{li}, (respectively \cite[(3.2.1.3)]{li}).

Commutativity of \circled2 is given by  \cite[(3.2.4(i)]{li}.
\end{proof}

As in \S\ref{2.3}, let $Y$ be any ringed space, $\psi\colon\OY\to\CS$ an $\OY$-algebra, $\oY$ the 
ringed space~$(Y\<\<,\>\CS)$, and $\phi\colon\oY\to Y$ the ringed-space map $(\id_Y,\psi)$. Then
$\CO_{\>\oY}=\CS$, $\sA(\oY)$ is the category of $\CS$-modules, 
\mbox{$\phi_*\colon \sA(\oY)\to\sA(Y)$}
is restriction of scalars, $\otimes_{\>\oY}=\otimes^{}_{\<\CS}$ and $\sHom_{\oY}=\sHom^{}_{\CS}\>$.

\begin{sublem}\label{gamma-nu} For\/ $E,F\in\D(\>\oY),$ $G\in\D(Y),$ the following natural diagram, 
with $\CH\set\sHom$  and with\/  
$\gamma\colon\phi_*E\<\Otimes{Y}\<\phi_*F\to  \phi_*(E\Otimes{\mkern.5mu\oY}F\>)$ 
as in\/ \cite[3.2.4(ii))]{li}$,$ commutes.\va{2}
\[
\def\1{$\phi_*\R\CH_{\oY}(E, \R\CH_\psi(F\<, G\>))$}
\def\2{$\R\CH_Y(\phi_*E, \phi_*\R\CH_\psi(F\<, G\>))$}
\def\3{$\R\CH_Y(\phi_*(E\Otimes{\mkern.5mu\oY}F\>), G\>)$}
\def\4{$\R\CH_Y(\phi_*E\<\Otimes{Y}\<\phi_*F\<, \>G\>)$}
\def\5{$\R\CH_Y(\phi_*E, \R\CH_Y(\phi_*F\<, G\>))$}
 \bpic[xscale=3, yscale=1.6]

   \node(11) at (3,-1){\1};   
   \node(13) at (3,-3){\2}; 
   
   \node(21) at (1,-1){\3};  
   \node(22) at (1,-2.03){\4};
   \node(23) at (1,-3){\5};  
    
    \draw[->] (11)--(21)  node[above=1pt, midway, scale=.75] {$\Iso$}
                                    node[below, midway, scale=.75] {\textup{\ref{adjass}}} ;
   
    \draw[->] (13)--(23) node[above=.75pt, midway, scale=.75] {$\Iso$}
                                   node[below=1.5pt, midway, scale=.75] {\eqref{phiRHompsi}} ;
                                   
    \draw[->] (11)--(13) node[right=1pt, midway, scale=.75] {\eqref{nu}}  ;  
    
    \draw[<-] (23)--(22) node[left=1pt, midway, scale=.75] {$\simeq$} ; 
    \draw[<-] (22)--(21) node[left=1pt, midway, scale=.75] {$\via\gamma$} ; 

  \epic
\]
\end{sublem}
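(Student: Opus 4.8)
The plan is to deduce the identity of the two composite $\D(Y)$-maps from the commutativity of a diagram of honest maps of $\OY$-complexes, obtained by resolving and by using Lemmas~\ref{commnu} and~\ref{tensor and _*} to replace the derived maps $\nu$ and $\gamma$ by their non-derived analogues. First I would fix quasi-isomorphisms $P\to E$ and $Q\to F$ with $P,\>Q$ K-flat over $\CO_{\>\oY}=\CS$, and $G\to I$ with $I$ K-injective over $\OY$. By the observation closing the proof of Proposition~\ref{adjass0}, $\sHom_\psi(Q,I)$ is then K-injective over $\CS$, so $\R\>\sHom_\psi(F,G)$ is represented in $\D(\oY)$ by $\sHom_\psi(Q,I)$ and $\R\>\sHom_{\>\oY}\big(E,\R\>\sHom_\psi(F,G)\big)$ by $\sHom_{\>\oY}\big(P,\sHom_\psi(Q,I)\big)$; moreover, by the formula following Proposition~\ref{adjass0} together with Corollary~\ref{adjass}, $\alpha(E,F,G)$ is represented by the ordinary adjoint-associativity isomorphism $\alpha^{}_0(P,Q,I)$ of $\CS$-complexes and $\alpha_\phi(E,F,G)$ by $\phi_*\alpha^{}_0(P,Q,I)$ composed with the equality $\phi_*\sHom_\psi(-,-)=\sHom_Y(\phi_*-,-)$ of~\eqref{phiRHompsi0}. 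Since restriction of scalars is exact, $\R\phi_*=\phi_*$.

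Next I would feed into the diagram of Lemma~\ref{gamma-nu}: Lemma~\ref{commnu} with $h=\phi$, which expresses the map $\nu$ of~\eqref{nu} through the non-derived map $\nu^{}_0$ (the natural restriction $\phi_*\sHom_{\>\oY}(-,-)\to\sHom_Y(\phi_*-,\phi_*-)$); Lemma~\ref{tensor and _*} with $h=\phi$, which does the same for $\gamma$ through $\gamma^{}_0$ (whose underlying map is the canonical $\phi_*P\otimes_Y\phi_*Q\to\phi_*(P\otimes_\CS Q)$); and the above description of $\alpha_\phi$. Since $\phi_*P\otimes_Y\phi_*Q$ does not compute $\phi_*E\Otimes{Y}\phi_*F$, modelling the middle term of the left-hand composite also calls for a K-flat $\OY$-resolution $L\to\phi_*P$, and Lemma~\ref{tensor and _*} is exactly what identifies the relevant instance of $\gamma$ with $\gamma^{}_0$ precomposed with $L\to\phi_*P$. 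After these substitutions every arrow becomes an honest map of $\OY$-complexes built from $\phi_*$, $\otimes_Y$, $\otimes_\CS$, $\sHom_Y$, $\sHom_\CS$, the currying isomorphisms $\alpha^{}_0$, and the canonical map $\otimes_Y\to\otimes_\CS$.

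The resulting diagram is then a purely formal identity in the closed category $\sA(Y)$ together with the $\OY$-algebra $\psi$; it records the compatibility of ``currying'' with restriction of scalars and with the canonical map $\otimes_Y\to\otimes_\CS$. I would check it either by passing to sections over an affine open of $Y$, where it becomes the evident such compatibility for modules over commutative rings, or---as with subdiagram~\circled2 in \S\ref{2.3}---by the Kelly--Mac\,Lane coherence theorem \cite[p.\,107, Thm.~2.4]{KM}, applied after replacing $\CS$, $\phi_*P$ and $\phi_*Q$ by arbitrary objects of $\sA(Y)$, the module structures over $\psi$ entering only at the very end.

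I expect the main obstacle to be the bookkeeping in the reduction: $\phi_*$ carries neither K-flat $\CS$-complexes to K-flat $\OY$-complexes---already $\phi_*\CS$ need not be $\OY$-flat---nor K-injectives to K-injectives, so one cannot ``resolve and compute'' naively; the commutative squares of Lemmas~\ref{commnu} and~\ref{tensor and _*}, whose vertical arrows are precisely the relevant non-invertible natural maps $h_*\to\R h_*$ and $\otimes\to\Otimes{}$, are what legitimize the passage to complexes, and tracking those maps through the enlarged diagram is the real work. A resolution-free alternative, worth recording, is to apply $\Hom_{\D(Y)}(A,-)$ to the whole diagram for arbitrary $A\in\D(Y)$: by $\LL\phi^*\dashv\phi_*$, the $\otimes$--$\sHom$ adjunctions on $Y$ and on $\oY$, Corollary~\ref{adjunction0}, associativity, and the projection isomorphisms~\eqref{projphi}, \eqref{projphi2}, both composites become, after the same chain of natural identifications, precomposition with $\id_A\Otimes{Y}\gamma$ on $\Hom_{\D(Y)}\big(A\Otimes{Y}\phi_*(E\Otimes{\>\oY}F),\,G\big)$, whence they agree by Yoneda; the compatibilities of $\nu$, $\alpha_\phi$ and~\eqref{phiRHompsi} with these adjunctions that one then needs are again supplied by Lemmas~\ref{commnu} and~\ref{tensor and _*}, Corollary~\ref{adjunction0}, and Remark~\ref{conjugate}.
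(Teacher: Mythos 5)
Your proposal is correct and follows essentially the same strategy as the paper: assume $F$ is K-flat over $\CS$ and $G$ is K-injective over $\OY$, then use the comparison squares of Lemmas~\ref{commnu} and~\ref{tensor and _*} (for $h=\phi$) together with the defining square of Corollary~\ref{adjass} to replace $\nu$, $\gamma$ and $\alpha_\phi$ by their non-derived precursors, and finally verify the residual diagram of honest $\OY$-complex maps by applying sections over an arbitrary open $U\subset Y$. The paper does not also resolve $E$ or introduce a K-flat $\OY$-resolution $L\to\phi_*P$—the comparison squares in the two cited lemmas already encode the needed transitions through the non-invertible maps $\otimes\to\Otimes{}$ and $h_*\to\R h_*$ without exhibiting explicit resolutions—so your reduction is slightly more elaborate than necessary but not in error; your closing Yoneda alternative is not developed in the paper but, as you note, would rely on the same supporting lemmas.
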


\pagebreak[3]
\begin{proof} 
It may be assumed that $F$ is a K-flat $\CS$-complex and that $G$~is a \mbox{K-injective} $\OY$-complex; 
and then
(as in the proof of ~\ref{adjass0}) 
$\CH_\psi(\CS,G\>)$ is a K-injective $\CS$-complex.
The diagram in question expands naturally as  

\begin{small}
 \[\mkern-4mu
\def\1{$\R\CH_Y(\phi_*(E\Otimes{\mkern.5mu\oY}F\>), G\>)$}
\def\2{$\phi_{\<*}\R\CH_{\oY}\<\<\big(\<E, \R\CH_\psi(F,G\>)\big)$}
\def\3{$\CH_Y(\phi_{\<*}(E\otimes_{\>\oY}\<\<F\>),G\>)$}
\def\4{$\phi_{\<*}\CH_{\oY}\<\<\big(\<E, \CH_\psi(F,G\>)\big)$}
\def\5{$\phi_{\<*}\R\CH_{\oY}\<\<\big(\<E, \CH_\psi(F,G\>)\big)$}
\def\6{$\CH_{Y}\<\<\big(\phi_{\<*}E, \phi_{\<*}\CH_\psi(F,G\>)\big)$}
\def\7{$\CH_{Y}\<\<\big(\phi_{\<*}E, \CH_Y(\phi_*F,G\>)\big)$}
\def\8{$\R\CH_{Y}\<\<\big(\phi_{\<*}E, \phi_{\<*}\CH_\psi(F,G\>)\big)$}
\def\9{$\R\CH_{Y}\<\<\big(\phi_{\<*}E, \CH_Y(\phi_*F,G\>)\big)$}
\def\ten{$\R\CH_Y(\phi_*E\<\Otimes{Y}\<\phi_*F\<, \>G\>)$}
\def\lvn{$\R\CH_Y\<\<\big(\phi_{\<*}E,\R\CH_Y(\phi_{\<*}F,G\>)\big)$}
\def\twv{$\R\CH_{Y}\<\<\big(\phi_{\<*}E, \phi_{\<*}\R\CH_\psi(F,G\>)\big)$}
\def\sxn{$\CH_Y(\phi_*E\otimes_Y\<\phi_*F,G\>)$}
 \bpic[xscale=2.1, yscale=2.65]

   \node(11) at (.75,-.5){\1};   
   \node(15) at (4.95,-.5){\2}; 
      
   \node(22) at (2, -1.5){\3} ;    
   \node(21) at (2,-2){\sxn};
   \node(23) at (3.15,-.9){\4};
   \node(24) at (4.3,-1.5){\5};
   
   \node(33) at (3.15,-2.5){\6};
   
   \node(42) at (2,-3){\7};
   \node(43) at (4.3,-3){\8};
   \node(44) at (3.15,-3.55){\9};
   
   \node(51) at (.75,-2.5){\ten};  
   \node(53) at (1,-4){\lvn};
   \node(55) at (4.95,-4){\twv};

    \draw[->] (15)--(11) node[below=1pt, midway, scale=.75] {\textup{\ref{adjass}}} 
                                  node[above=1pt, midway, scale=.75] {$\Iso$}  ;  
    
    \draw[->] (23)--(22) node[above=-1.2pt, midway, scale=.75] {\rotatebox{28}{$\Iso$}} ;
    \draw[->] (24)--(23) node[above=-1.2pt, midway, scale=.75] {\rotatebox{-28}{$\Iso$}};   
                                
    \draw[double distance=2pt] (33)--(42) ;
    
    \draw[->] (42)--(44) ;                                  

    \draw[->] (33)--(43) ;   
    
    \draw[->] (43)--(55) ;
    \draw[->] (55)--(53) node[above=1pt, midway, scale=.75] {$\Iso$} 
                                      node[below=1pt, midway, scale=.75] {\eqref{phiRHompsi}};
    \draw[->] (15)--(24) node[above=-.5pt, midway, scale=.75] {\rotatebox{58}{$\Iso$}} ;
    
     \draw[<-] (.75,-3.87)--(.75, -2.635) node[left=1pt, midway, scale=.75] {$\simeq$} ;
     \draw[<-]  (.75, -2.365)--(11) node[left=1pt, midway, scale=.75] {$\via\gamma$} ;
     \draw[->] (42)--(53) ;
     \draw[<-] (5.4, -3.87)--(5.4,-.63) node[right=1pt, midway, scale=.75] {$\eqref{nu}$} ;
     \draw[->] (24)--(43) node[right=1pt, midway, scale=.75] {$\eqref{nu}$} ; 
     \draw[->] (23)--(33)  ;
     \draw[<-] (21)--(22) ;
     \draw[<-] (42)--(21) node[right, midway, scale=.75] {$\simeq$} ;
     \draw[double distance=2pt] (43)--(44) ;
  
    \draw[->] (22)--(11) node[above=-1.75pt, midway, scale=.75] {\rotatebox{-40}{$\ \Iso$}} ; 
    \draw[->] (44)--(53) ;
    \draw[->] (21)--(51)   ;
  
  \node at (2,-.91)[scale=.9]{\circled1} ;
  \node at (1.42,-1.75)[scale=.9]{\circled2} ;
  \node at (2.6,-1.75)[scale=.9]{\circled3} ;
  \node at (3.725, -2)[scale=.9]{\circled4} ;
  \node at (1.42, -2.77)[scale=.9]{\circled5} ;
  \epic
  \]
\end{small}
It suffices therefore to show commutativity of all the subdiagrams.\va2

Checking commutativity of the unlabeled subdiagrams is straightforward.\va2 

Commutativity of subdiagram \circled1 is given by that of the\va1 diagram in~\ref{adjass}.\va2

Commutativity of \circled2 follows from Lemma~\ref{tensor and _*}; of \circled4 from Lemma~\ref{commnu}; and of \circled5 from Corollary~{\textup{\ref{adjass}}} (with $\CS=\OY$ and $\psi=\id$).\va2

Commutativity of \circled3 is easily verified: just do so after applying global sections
over an arbitrary open $U\subset Y\<$.
\end{proof}

 \pagebreak
 \begin{subprop}\label{duality map} 
The isomorphism\/~$\alpha_\phi(E,\CO_{\>\oY},G\>)^{-1}$ in\/ \textup{~\ref{duality for phi}}
factors as
\[
\phi_*\R\>\sHom_{\>\oY}(E, \pt G\>)\xto[\eqref{nu}\>]{}
\R\>\sHom_{Y}(\phi_*E, \phi_*\pt G\>)\xto[\!\<\textup{\ref{counit}}]{\via}
\R\>\sHom_Y(\phi_*E, \>G\>).
\]
\end{subprop}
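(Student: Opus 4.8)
The plan is to read the Proposition off from Lemma~\ref{gamma-nu} in the special case $F\set\CO_{\>\oY}$. With that choice one has the canonical identifications $E\Otimes{\>\oY}\CO_{\>\oY}=E$, $\R\>\sHom_\psi(\CO_{\>\oY},G\>)=\pt G$ (the definition in Corollary~\ref{duality for phi}), and $\phi_*\CO_{\>\oY}=\CS$ (so that also $\R\phi_*=\phi_*$, restriction of scalars being exact); and, by Corollary~\ref{duality for phi}, the arrow of~\ref{gamma-nu} marked \textup{\ref{adjass}} becomes $\alpha_\phi(E,\CO_{\>\oY},G\>)^{-1}$. Thus Lemma~\ref{gamma-nu} specializes to the identity
\[
\rho\circ\nu(E,\pt G)=(\textup{adj})\circ\R\>\sHom_Y(\gamma,G\>)\circ\alpha_\phi(E,\CO_{\>\oY},G\>)^{-1},
\]
in which $\nu$ is the map~\eqref{nu} for $h=\phi$; $\gamma=\gamma(E,\CO_{\>\oY})\colon\phi_*E\Otimes{Y}\CS\to\phi_*(E\Otimes{\>\oY}\CO_{\>\oY})=\phi_*E$ is the map of \cite[3.2.4(ii)]{li}; $\rho\colon\R\>\sHom_Y(\phi_*E,\phi_*\pt G\>)\iso\R\>\sHom_Y(\phi_*E,\R\>\sHom_Y(\CS,G\>))$ is the isomorphism induced by~\eqref{phiRHompsi}; and $\textup{adj}$ denotes the standard Hom-tensor adjunction isomorphism $\R\>\sHom_Y(\phi_*E\Otimes{Y}\CS,G\>)\iso\R\>\sHom_Y(\phi_*E,\R\>\sHom_Y(\CS,G\>))$.

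Since $\alpha_\phi(E,\CO_{\>\oY},G\>)^{-1}$ is invertible, comparison of this identity with the factorization asserted in the Proposition reduces the claim to showing that the endomorphism $\ (\via\,\textup{\ref{counit}})\circ\rho^{-1}\circ(\textup{adj})\circ\R\>\sHom_Y(\gamma,G\>)\ $ of $\R\>\sHom_Y(\phi_*E,G\>)$ is the identity; here $(\via\,\textup{\ref{counit}})$ is the second map in the claimed factorization, namely $\R\>\sHom_Y(\phi_*E,-)$ applied to the counit $(\textup{\ref{counit unit}}.1)\colon\phi_*\pt G\to G$. By the description of the adjunction $\phi_*\dashv\pt$ in~\textup{\ref{counit unit}}, that counit is~\eqref{phiRHompsi} followed by $\R\>\sHom_Y(\psi,G\>)$ and the unit isomorphism $\R\>\sHom_Y(\OY,G\>)\iso G$; hence the first two factors of the endomorphism collapse---the two occurrences of~\eqref{phiRHompsi} cancelling---to $\R\>\sHom_Y(\phi_*E,-)$ applied to $\R\>\sHom_Y(\psi,G\>)$ followed by the unit isomorphism. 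Invoking naturality of $\textup{adj}$ in its second variable together with the coherence of the unit isomorphisms (Kelly-Mac\,Lane, \cite[p.\,107, Thm.~2.4]{KM}), the endomorphism becomes $\R\>\sHom_Y(-,G\>)$ applied to the composite
\[
\phi_*E\iso\phi_*E\Otimes{Y}\OY\xto{\ \id\otimes\psi\ }\phi_*E\Otimes{Y}\CS\xto{\ \gamma\ }\phi_*(E\Otimes{\>\oY}\CO_{\>\oY})=\phi_*E .
\]

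It remains only to see that this composite equals $\id_{\phi_*E}$. That is precisely the unit-coherence (``right unitor'') identity for the lax-monoidal functor $\R\phi_*=\phi_*$, valid because its monoidal-unit map $\OY\to\phi_*\CO_{\>\oY}=\CS$ is $\psi$ itself; one may instead verify it directly by unwinding the adjoint description of $\gamma$ from \cite[3.2.4(ii)]{li} in the case $F=\CO_{\>\oY}$, in which both the counit $\LL\phi^*\phi_*\CO_{\>\oY}\to\CO_{\>\oY}$ and the structural map $\OY\to\phi_*\CO_{\>\oY}$ reduce to $\psi$. Granting this, the displayed endomorphism is the identity, and the Proposition follows. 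The main obstacle is not any isolated deep point but the bookkeeping in the middle step---correctly matching the three ``residual'' maps left over from Lemma~\ref{gamma-nu} against the explicit counit $(\textup{\ref{counit unit}}.1)$, while keeping the adjunction, unit, and associativity isomorphisms coherent throughout.
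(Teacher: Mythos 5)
Your argument is correct and follows essentially the same route as the paper's proof: both specialize Lemma~\ref{gamma-nu} to $F=\CO_{\>\oY}$ and then reduce, via naturality and coherence of the tensor-hom adjunction, to the lax-unitality identity $\gamma\circ(\id\otimes\psi)\circ(\text{unit iso})^{-1}=\id$ for the monoidal functor $\phi_*$. (One small slip in a parenthetical aside: the counit $\LL\phi^*\phi_*\CO_{\>\oY}\to\CO_{\>\oY}$ does not ``reduce to $\psi$''---it is the multiplication map $\CS\Otimes{Y}\CS\to\CS$---but your primary argument via lax unitality of $\phi_*$ does not rely on that remark.)
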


\begin{proof}
In Lemma~\ref{gamma-nu}, set $F\set\CO_{\>\oY}$ to get that the border of the following natural diagram commutes:
\[\mkern-3mu
\def\1{$\R\CH_Y(\phi_*(E\Otimes{\mkern.5mu\oY}\CO_{\>\oY}\>), G\>)$}
\def\2{$\R\CH_Y(\phi_*E\<\Otimes{Y}\<\phi_*\CO_{\>\oY}\<, \>G\>)$}
\def\3{$\R\CH_Y(\phi_*E, \R\CH_Y(\phi_*\CO_{\>\oY}\<, G\>))$}
\def\4{$\phi_*\R\CH_{\oY}(E\<, \phi^\flat G\>)$}
\def\5{$\R\CH_Y(\phi_*E\<, \phi_*\phi^\flat G\>))$}
\def\6{$\R\CH_Y(\phi_*E, G\>)$}
\def\7{$\R\CH_Y(\phi_*E\<\Otimes{Y}\<\OY\<, \>G\>)$}
\def\8{$\R\CH_Y(\phi_*E, \R\CH_Y(\OY\<, G\>))$}
 \bpic[xscale=4.35, yscale=2]
     
   \node(11) at (2.98,-1){\4};
   \node(12) at (2,-1){\6};  
   \node(21) at (1,-1){\1};  
   
   \node(22) at (1,-2.03){\2};
   \node(24) at (1.6,-1.51){\7};
   \node(25) at (2.4,-2.51){\8};
   \node(26) at (2.98,-2.03){\5};
     
   \node(13) at (2.98,-3){\5}; 
   \node(23) at (1,-3){\3};  
    
    \draw[->] (11)--(12)  node[above, midway, scale=.75] {$\Iso$}
                                    node[below, midway, scale=.75] {\textup{\ref{duality for phi}}} ;
    \draw[->] (12)--(21)  node[above, midway, scale=.75] {$\Iso$} ;
                                    
    \draw[->] (13)--(23) node[above=.75pt, midway, scale=.75] {$\Iso$}
                                   node[below=1.5pt, midway, scale=.75] {\eqref{phiRHompsi}} ;
                                   
    \draw[->] (11)--(26) node[right=1pt, midway, scale=.75] {\eqref{nu}}  ;  
    \draw[double distance =2] (26)--(13) ;
    \draw[->] (23)--(22) node[left=1pt, midway, scale=.75] {$\simeq$} ; 
    \draw[<-] (.98,-1.83)--(.98,-1.19) node[left, midway, scale=.75] {$\via\gamma$} ; 
    \draw[->] (1.02,-1.83)--(1.02,-1.19) node[right, midway, scale=.75] {$\bar\gamma$} ; 
    
    \draw[->] (22)--(24) ;
    \draw[<-] (12)--(24) ;
    \draw[->] (24)--(25) ;
    \draw[->] (25)--(2, -1.16) node[below, midway, scale=.75] {} ;
    \draw[->] (23)--(25) ;
    \draw[->] (26)--(12) node[above=-5pt, midway, scale=.75] {\rotatebox{-26}{\kern20pt$\via$}} 
                                   node[below=-7pt, midway, scale=.75] {\rotatebox{-26}{\kern5pt\textup{\ref{counit}}}} ;                               

   \node at (1.4,-1.28) [scale=.9]{\circled1} ;
   \node at (1.57,-2.4) [scale=.9]{\circled2} ;
   \node at (2.06,-1.85) [scale=.9]{\circled3} ;
   \node at (2.38,-1.85) [scale=.9]{\circled4} ;
  \epic
\]

Here, the map $\bar\gamma$ is the left inverse of ``via $\gamma$" induced by 
the right inverse of~$\gamma$ that is
given by the  natural commutative diagram
 \[
\def\1{$\phi_*E\Otimes{Y}\phi_*\CO_{\>\oY}$}
\def\2{$\phi_*(E\Otimes{\>\oY}\CO_{\>\oY})$}
\def\3{$\phi_*E\Otimes{Y}\OY$}
\def\4{$\phi_*E$}
 \bpic[xscale=4.1, yscale=1.5]

   \node(11) at (1,-2){\1} ;   
   \node(12) at (1,-1){\2} ; 
  
   \node(21) at (2,-2){\3} ;  
   \node(22) at (2,-1){\4} ;
   
    \draw[->] (11)--(12) node[left=1pt, midway, scale=.75] {$\gamma$} ;

    \draw[->] (21)--(22) node[right=1pt, midway, scale=.75] {$\simeq$} ;
    
    \draw[<-] (11)--(21) ;
    \draw[<-] (12)--(22) node[above, midway, scale=.75] {$\Iso$};       
 \epic
\]
 (cf.~subdiagram~\circled2 in the proof of \cite[3.4.7(iii)]{li}). Thus subdiagram \circled1 commutes; and
careful diagram-chasing shows it enough to prove
that subdiagrams \circled2, \circled3 and \circled4 commute.

Commutativity of \circled2 and of \circled4 is obvious.

Finally, one finds, using the definition of the maps involved 
(see \cite[3.5.3(e), 3.5.6(e)]{li}),  that the left-conjugate of \circled3 is the natural diagram
\[
\def\1{$F\Otimes{Y}\phi_*E$}
\def\2{$F\Otimes{Y}(\phi_*E\Otimes{Y}\<\OY)$}
\def\3{$(F\Otimes{Y}\phi_*E)\Otimes{Y}\<\OY\<,$}
 \bpic[xscale=4, yscale=1]

   \node(13) at (3,-1){\1} ; 
   
   \node(22) at (2.2,-2){\2} ; 
   
   \node(33) at (3,-3){\3} ;adjoint associativity
     
     \draw[<-] (33)--(13) ; 
  
    \draw[->] (22)--(13) node[above=-1pt, midway, scale=.75] {$\simeq\mkern15mu$};
    \draw[<-] (22)--(33) ; 
 \epic
\]
whose commutativity is easily shown when $F$ is K-flat, so that it holds 
for all $F\in\D(Y)$, whence commutativity holds for \circled3.
\end{proof}

Now specialize to where $Y$ is a scheme and, with $f\colon X\to Y$ an affine scheme\kf-map,
$\psi\colon \OY\to\CS\set \fst\OX$ is the associated map.

\begin{subprop}\label{qc duality2}
The duality isomorphism in Theorem~\textup{\ref{qc duality}} factors as 
\begin{align*}
\R\fst\R\>\sHom_\sX\<(F,\< f^\flat G\>)\<\xto{\!\eqref{nu}}\<\R\>\sHom_Y\<(\R\fst F, \R\fst f^\flat G\>)
\<\xto[\textup{\ref{represent}}]{\!\via\> t^{}_{\<G}\>}\<\R\>\sHom_Y(\R\fst F,G\>).
\end{align*}
\end{subprop}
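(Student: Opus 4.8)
The plan is to unwind both the duality isomorphism of Theorem~\ref{qc duality} and the two-step composite in the statement into composites built from the map~\eqref{nu}, attached to the ringed-space maps $\bar f$, $\phi$ and $f=\phi\>\bar f$, and then to match the two expressions. Write $\nu_h$ for the map~\eqref{nu} associated with a ringed-space map $h$. First I would recall, via Proposition~\ref{represent}, that the counit $t^{}_{\<G}\colon\R\fst f^\flat G\to G$ is the composite
\[
\R\fst f^\flat G=\phi_*\R\bar\fst\bar{f\:\<}^{\!\<*}\!\pt G\xto{\phi_*\zeta}\phi_*\pt G\xto{e}G\>,
\]
where $\zeta\colon\R\bar\fst\bar{f\:\<}^{\!\<*}\!\pt G\iso\pt G$ is the isomorphism supplied by Proposition~\ref{^* equivalence} and $e\colon\phi_*\pt G\to G$ is the ``evaluation at $1$'' counit map~\ref{counit}; consequently
\[
\R\>\sHom_Y(\R\fst F, t^{}_{\<G}\>)=\R\>\sHom_Y(\R\fst F, e\>)\smallcirc\R\>\sHom_Y(\R\fst F,\phi_*\zeta\>)\>.
\]

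Next I would expand the isomorphism of Theorem~\ref{qc duality}. Its first non-formal step is $\phi_*$ applied to the isomorphism of Corollary~\ref{duality for barf}; by the definition of the latter, this step equals $\phi_*\R\>\sHom_{\>\oY}(\R\bar\fst F,\zeta)\smallcirc\phi_*\nu_{\bar f}(F,\bar{f\:\<}^{\!\<*}\!\pt G)$. Its second non-formal step is $\alpha_\phi(\R\bar\fst F,\CO_{\>\oY},G\>)^{-1}$ (Corollary~\ref{duality for phi}), which by Proposition~\ref{duality map} equals $\R\>\sHom_Y(\R\fst F, e\>)\smallcirc\nu_\phi(\R\bar\fst F,\pt G)$. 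As the factor $\R\>\sHom_Y(\R\fst F, e\>)$ occurs in both presentations, it then suffices to establish the reduced identity
\[
\nu_\phi(\R\bar\fst F,\pt G)\smallcirc\phi_*\R\>\sHom_{\>\oY}(\R\bar\fst F,\zeta)\smallcirc\phi_*\nu_{\bar f}(F,\bar{f\:\<}^{\!\<*}\!\pt G)=\R\>\sHom_Y(\R\fst F,\phi_*\zeta\>)\smallcirc\nu_f(F,f^\flat G\>)\>,
\]
for precomposing with $\R\>\sHom_Y(\R\fst F, e\>)$ on both sides then yields the asserted factorization.

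To prove the reduced identity I would invoke two facts about~\eqref{nu}. The first is its compatibility with composition of ringed-space maps: for $f=\phi\>\bar f$ and any $B\in\D(X)$,
\[
\nu_f(F,B\>)=\nu_\phi(\R\bar\fst F,\R\bar\fst B\>)\smallcirc\phi_*\nu_{\bar f}(F,B\>)\>.
\]
The second is the bifunctoriality (in particular, naturality in the second variable) of $\nu_\phi$. Putting $B\set f^\flat G=\bar{f\:\<}^{\!\<*}\!\pt G$ in the first fact rewrites the right-hand side of the reduced identity as $\R\>\sHom_Y(\R\fst F,\phi_*\zeta)\smallcirc\nu_\phi(\R\bar\fst F,\R\bar\fst\bar{f\:\<}^{\!\<*}\!\pt G)\smallcirc\phi_*\nu_{\bar f}(F,\bar{f\:\<}^{\!\<*}\!\pt G)$; naturality of $\nu_\phi$ applied to $\zeta$ (moving it across $\nu_\phi$) then identifies this with the left-hand side.

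The main obstacle is the compatibility of~\eqref{nu} with composition. Unwinding~\eqref{nu}, the map $\nu_h(E,E')$ is $\R h_*$ of the map $\R\>\sHom_V(E,E')\to\R\>\sHom_V(\LL h^*\R h_*E,E')$ induced by the counit of $\LL h^*\!\dashv\R h_*$, followed by the sheafified-adjunction isomorphism $\R h_*\R\>\sHom_V(\LL h^*\R h_*E,E')\iso\R\>\sHom_W(\R h_*E,\R h_*E')$ of \cite[3.2.3(ii)]{li}. Hence the compatibility reduces to the pseudofunctoriality of $\R(-)_*$, the compatibility with composition of the counits $\LL(-)^*\R(-)_*\to\id$, and the analogous compatibility of the isomorphism \cite[3.2.3(ii)]{li}; assembling these into the required commutative diagram is a routine, if somewhat lengthy, chase in the style of Lemmas~\ref{commnu} and~\ref{tensor and _*}, which I would either extract from \cite{li} or insert as an auxiliary lemma beforehand.
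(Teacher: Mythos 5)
Your proposal is correct and takes essentially the same route as the paper: both arguments decompose the claim into (a) transitivity of the map~\eqref{nu} for the factorization $f=\phi\bar f$, (b) naturality of $\nu_\phi$ in its second argument with respect to the equivalence isomorphism $\R\bar\fst\bar{f\:\<}^{\!\<*}\!\pt G\iso\pt G$, and (c) Proposition~\ref{duality map}, the paper simply presenting this as a $3\times 2$ commutative diagram rather than an algebraic rearrangement. The only thing you leave slightly open---the transitivity of~$\nu$---the paper handles by a direct citation to the second diagram in \cite[3.7.1.1]{li}, so your anticipated ``auxiliary lemma'' is in fact already available.
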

\begin{proof} 
As in Lemma~\ref{fst fflat}, $\pt G\in\Dqc(\oY)$.
The Proposition amounts to commutativity of the border of the diagram
\begin{equation*}\label{qcdualdiag}
\def\1{$\R\fst\R\>\sHom_\sX\<(F, \bar{f\:\<}^{\!\<*}\!\pt G\>)$}
\def\2{$\phi_*\R\>\sHom_{\>\oY}(\R\bar\fst F, \R\bar\fst\bar{f\:\<}^{\!\<*}\!\pt G\>)$}
\def\3{$\phi_*\R\>\sHom_{\>\oY}(\R\bar\fst F, \pt G\>)$}
\def\4{$\R\>\sHom_{Y}(\phi_*\R\bar\fst F, G\>)$}
\def\5{$\R\>\sHom_Y(\R\fst F, \R\fst\bar{f\:\<}^{\!\<*}\!\pt G\>)$}
\def\6{$\R\>\sHom_Y(\phi_*\R\bar\fst F, \phi_*\R\bar\fst\bar{f\:\<}^{\!\<*}\!\pt G\>)$}
\def\7{$\R\>\sHom_Y(\phi_*\R\bar\fst F, \phi_*\pt G\>)$}
\def\8{$\R\>\sHom_Y(\R\fst F, G\>)$}
 \bpic[xscale=6, yscale=1.45]

   \node(11) at (1,-1){\1};   
   \node(12) at (2,-1){\5}; 
   
   \node(21) at (1,-2){\2};  
   \node(22) at (2,-2){\6};
  
   \node(31) at (1,-3){\3};  
   \node(32) at (2,-3){\7};
   
   \node(44) at (2,-4){\4};  
   \node(42) at (1,-4){\8};
   
    \draw[->] (11)--(12) node[above, midway, scale=.75] {\textup{\eqref{nu}}};  
    
    \draw[->] (21)--(22) node[above, midway, scale=.75] {\textup{\eqref{nu}}}; 
    
    \draw[->] (31)--(32) node[above, midway, scale=.75] {\textup{\eqref{nu}}}; 
    
    \draw[double distance=2pt] (42)--(44) ;
    
    \draw[->] (11)--(21)  node[right=1pt, midway, scale=.75] {$\simeq$}
                                   node[left=1pt, midway, scale=.75] {\textup{\ref{tensor}}};
    \draw[->] (21)--(31) node[right=1pt, midway, scale=.75] {$\simeq$}
                                   node[left=1pt, midway, scale=.75] {\textup{\ref{^* equivalence}}}; 
    \draw[->] (31)--(42) node[right=1pt, midway, scale=.75] {$\simeq$}
                                  node[left=1pt, midway, scale=.75] {\textup{\ref{duality for phi}}};
   
    \draw[double distance=2pt] (12)--(22) ;
    \draw[->] (22)--(32)  node[left=1pt, midway, scale=.75] {$\simeq$}
                                    node[right=1pt, midway, scale=.75] {\textup{\ref{^* equivalence}}}; 
                                    
    \draw[->] (32)--(44) node[right=1, midway, scale=.75] {\textup{\ref{counit}}} ;
    
   \node at (1.48,-1.43) [scale=.9]{\circled1} ;
   \node at (1.48,-2.43) [scale=.9]{\circled2} ;
   \node at (1.48,-3.53) [scale=.9]{\circled3} ;
  \epic
\end{equation*}

Commutativity of \circled1 results from that of the second diagram in \cite[3.7.1.1]{li}---where $``(gf)"$ 
should be $``(f\<g)"$---with $(f,g,F')\set(\phi,\bar f,f^\flat\< G)$; that of \circled2 is clear; and that
of \circled3 is given by \ref{duality map} with $E\set\R\bar\fst F$.
\end{proof}
\end{cosa}

For a map \mbox{$f\colon X\to Y$} of qcqs schemes, and $G\in\D(Y),$
the \emph{abstract duality map} $\delta=\delta(f,G\>)$ is the natural composite 
(with $\tau^{}_{\<G}\set\tau^{}_{\<\!f}(G)$)\va{-3}
\begin{align*}
\R\fst f^{\<\times}\<G=\R \fst\R\>\sHom^{}_\sX(\OX,f^{\<\times}\<G\>)\xto{\!\eqref{nu}}
&\;\R\>\sHom_Y(\R\fst\OX,\R\fst f^{\<\times}\<G\>\>)\\[-2pt]
\xto{\<\via\> \tau^{}_{\<G}}&\;\R\>\sHom_Y(\R\fst\OX,G\>).
\end{align*}
\emph{If\/ $\>\R\>\sHom_Y(\R\fst\OX,G\>)\in\Dqc(Y)$ then\/ $\delta$ is an isomorphism.}
To see this via Yoneda's Lemma, one checks (cf.~\cite[p.\,404]{V}) that for $E\in\Dqc(Y)$, the map\looseness=-1
\[
\Hom_{\D(Y)}(E, \R\fst f^{\<\times}\<G\>)\to\Hom_{\D(Y)}(E,\R\>\sHom_Y(\R\fst\OX,G\>))
\]
induced by $\delta$
is naturally isomorphic to the map 
\[
\Hom_{\D(Y)}(\R\fst\LL f^*E,G\>)\to \Hom_{\D(Y)}(E\Otimes{Y}\R\fst\OX,G\>)
\]
induced by the projection \emph{isomorphism}
\[
E\Otimes{Y}\R\fst\OX\iso\R\fst(\LL f^*\<\<E\Otimes{\sX}\OX) = \R\fst\LL f^*\<\<E.
\]

\begin{subprop}\label{via duality} 
Let\/ $f\colon X\to Y$ be an affine map of qcqs schemes, let\/ $G\in\D(Y)$ be
such that\/ $\R\>\sHom_Y(\fst\OX, G\>)\in\Dqc(Y),$ and let\/ $\delta=\delta(f,G\>)$ be the above duality 
isomorphism. 
The isomorphism\/ $\xi^{}_{\<f}$ of Corollary\/ \textup{\ref{flat=times}}\va{-2.7} 
is the unique\/ $\D(X)$-map 
\(
\xi\colon f^\flat\<G\to f^{\<\times} \<G
\)
such that the composite isomorphism\/ 
\[
\bar t_G\colon\R\fst f^\flat G
= \phi_*\R\bar\fst \bar{f\:\<}^{\!\<*}\<\pt G
\underset{\textup{\ref{^* equivalence}}}
\iso \phi_*\pt G
\,\underset{\textup{\eqref{phiRHompsi}}}
\iso\,\R\>\sHom_Y(\fst\OX,G\>).
\]
\vskip-3pt
\noindent in\/ \textup{\ref{fst fflat}} factors  as\va{-3}
\[
\R\fst f^\flat G \xto{\!\R\fst\xi\>}\R\fst f^{\<\times}\<G\xto{\ \delta\ }\R\>\sHom_Y(\fst\OX,G\>)\>.
\]
\end{subprop}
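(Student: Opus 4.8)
I would prove this by exploiting a uniqueness mechanism and then checking that $\xi^{}_{\<f}$ works. The plan rests on two observations. First, under the standing hypothesis $\R\>\sHom_Y(\fst\OX,G\>)\in\Dqc(Y)$ the abstract duality map $\delta=\delta(f,G\>)$ is an \emph{isomorphism}, as recorded just above the statement. Second, $\R\fst$ restricted to $\Dqc(X)$ is \emph{faithful}: by \ref{^* equivalence} it is the composite of the equivalence $\R\bar\fst\colon\Dqc(X)\iso\Dqc(\oY)$ with the faithful restriction-of-scalars functor $\phi_*$. Since $f^\flat G\in\Dqc(X)$ by \ref{fst fflat} while $f^{\<\times}G\in\Dqc(X)$ by construction, these two facts show that the equation $\bar t_G=\delta\smallcirc\R\fst\xi$ has \emph{at most one} solution $\xi\in\Hom_{\D(X)}(f^\flat G, f^{\<\times}G\>)$. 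So everything reduces to verifying that $\xi^{}_{\<f}$ is itself a solution, \ie that $\delta\smallcirc\R\fst\xi^{}_{\<f}=\bar t_G$; that will simultaneously give existence and identify $\xi^{}_{\<f}$ as the unique solution.

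To do the verification I would unwind both sides through the map $\nu$ of~\eqref{nu}. Since $f$ is affine, $\R\fst\OX=\fst\OX$, so by its very definition $\delta=\R\>\sHom_Y(\fst\OX,\tau^{}_{\<G})\smallcirc\nu(\OX, f^{\<\times}G\>)$. On the other hand, Proposition~\ref{qc duality2} applied with $F\set\OX$ gives $\bar t_G=\R\>\sHom_Y(\fst\OX, t^{}_{\<G})\smallcirc\nu(\OX, f^\flat G\>)$, because the duality isomorphism of Theorem~\ref{qc duality} specializes, when $F=\OX$, to the isomorphism $\bar t_G$ of~\ref{fst fflat} (its defining chain reduces term by term to the one defining $\bar t_G$, using $\R\>\sHom_\sX(\OX,-)=\id$, $\R\bar\fst\OX=\CO_{\>\oY}$ and $\R\>\sHom_{\>\oY}(\CO_{\>\oY},-)=\id$). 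The two expressions for $\delta$ and $\bar t_G$ now differ only in carrying $\tau^{}_{\<G}$ and $\nu(\OX,f^{\<\times}G\>)$ where the other carries $t^{}_{\<G}$ and $\nu(\OX,f^\flat G\>)$. So I would use bifunctoriality of $\nu$ in its second argument---along $\xi^{}_{\<f}\colon f^\flat G\to f^{\<\times}G$, with $\R\>\sHom_\sX(\OX,\xi^{}_{\<f})=\xi^{}_{\<f}$---to rewrite $\nu(\OX,f^{\<\times}G\>)\smallcirc\R\fst\xi^{}_{\<f}=\R\>\sHom_Y(\fst\OX,\R\fst\xi^{}_{\<f})\smallcirc\nu(\OX,f^\flat G\>)$, whence $\delta\smallcirc\R\fst\xi^{}_{\<f}=\R\>\sHom_Y(\fst\OX,\,\tau^{}_{\<G}\smallcirc\R\fst\xi^{}_{\<f})\smallcirc\nu(\OX,f^\flat G\>)$; and then Corollary~\ref{flat=times}, in the form $\tau^{}_{\<G}\smallcirc\R\fst\xi^{}_{\<f}=t^{}_{\<G}$, turns the right side into $\R\>\sHom_Y(\fst\OX,t^{}_{\<G})\smallcirc\nu(\OX,f^\flat G\>)=\bar t_G$, as needed.

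The step I expect to be the main obstacle is the one invoked "by inspection": that the Theorem~\ref{qc duality} isomorphism at $F=\OX$ really is $\bar t_G$. The substantive point there is that the instance of $\nu$ built into Corollary~\ref{duality for barf} collapses to the identity once one uses $\R\bar\fst\OX=\CO_{\>\oY}$ and that the counit $\brf\R\bar\fst\OX\to\OX$ is an isomorphism---this comes down to a triangle identity---together with the already-noted fact that Corollary~\ref{duality for phi} at $E=\CO_{\>\oY}$ is precisely the isomorphism~\eqref{phiRHompsi}. These are routine unwindings of the relevant definitions; everything else in the argument is formal bookkeeping with adjunctions and the naturality of~$\nu$.
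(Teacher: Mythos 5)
Your uniqueness argument rests on a claim that is false, and that the paper itself explicitly warns against. You assert that the restriction-of-scalars functor $\phi_*\colon\D(\oY)\to\D(Y)$ is faithful, and hence so is $\R\fst=\phi_*\smallcirc\R\bar\fst$ on $\Dqc(X)$. But the paragraph at the beginning of \S\ref{bc}, together with its footnote, points out precisely that the natural map from $\Hom_{\Dqc(\oY)}$ to $\Hom_{\Dqc(Y)}$ induced by $\phi_*$ need not be injective, and gives a counterexample: for $k$ a field and $R$ a finite-dimensional local $k$-algebra with nonzero maximal ideal, the map $\Hom_{\D(R)}(k,k[1])\to\Hom_{\D(k)}(k,k[1])=0$ is not injective. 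While $\phi_*$ is an exact, faithful functor at the level of abelian categories, its derived incarnation is not: $\phi_*$ does not carry K-projectives to K-projectives, and passage to derived categories can collapse nonzero morphism classes. So your deduction that the equation $\bar t_G=\delta\smallcirc\R\fst\xi$ has at most one solution is not supported by the reason you offer.

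Uniqueness does hold, but for a different reason, and it is the one the paper's first diagram encodes. Since $\delta$ is an isomorphism, $\bar t_G=\delta\smallcirc\R\fst\xi$ determines $\R\fst\xi$, hence also $\tau^{}_{\<G}\smallcirc\R\fst\xi$, which is the adjunct of $\xi$ under $\R\fst\dashv f^{\<\times}$; the adjunction bijection $\Hom_{\D(X)}(f^\flat G,f^{\<\times}\<G)\iso\Hom_{\D(Y)}(\R\fst f^\flat G,G)$ then determines $\xi$. Equivalently (as the paper argues), $\bar t_G=\delta\smallcirc\R\fst\xi$ forces $\tau^{}_{\<G}\smallcirc\R\fst\xi=t^{}_{\<G}$, whereupon Corollary~\ref{flat=times}'s uniqueness clause—a representability/Yoneda argument, not a faithfulness argument—gives $\xi=\xi^{}_{\<f}$. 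Your existence sketch, writing $\delta$ and $\bar t_G$ as $\nu$ followed by $\tau^{}_{\<G}$ resp.~$t^{}_{\<G}$ and invoking naturality of $\nu$ along $\xi^{}_{\<f}$ together with Corollary~\ref{flat=times}, has the right shape and parallels the paper's second diagram; but as you acknowledge, the identification of the Theorem~\ref{qc duality} isomorphism at $F=\OX$ with $\bar t_G$ is a genuine verification (the paper checks a chain of compatibilities via \cite{li}) that is not yet carried out.
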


\begin{proof} If  $\>\>\bar t_G=\delta\smallcirc\R\fst\xi$ 
then the following natural diagram commutes, i.e., $\tau^{}_{\<G}\smallcirc\R\fst\xi\>$
is the map  $t^{}_{\<G}$ in \ref{represent}, and so by~\ref{flat=times}, $\xi$ is 
the isomorphism~$\xi^{}_{\<f}\>$.\va{-3} 
\[\mkern-3mu
\def\1{$\R\fst f^\flat G$}
\def\2{$\R\fst f^{\<\times}\<G$}
\def\3{$\phi_*\R\bar \fst\bar{f\;\<}^{\!\<*}\!\pt G$}
\def\4{$\phi_*\pt G$}
\def\5{$\R\>\sHom_Y(\fst\OX,G\>)$}
\def\6{$\R\>\sHom_Y(\OY\<,\R\fst f^{\<\times}\<G\>)$}
\def\7{$\R\>\sHom_Y(\OY\<,G\>)= G$}
\def\8{$\R\fst\sHom^{}_\sX(\OX,f^{\<\times}\<G\>)$}
\def\9{$\qquad\R\>\sHom_Y(\fst\OX,\R\fst f^{\<\times}\<G\>)$}
 \bpic[xscale=4.85, yscale=1.3]

   \node(11) at (1,-1){\1};   
   \node(12) at (2,-1){\2}; 
       
   \node(22) at (2,-2){\8};
   \node(23) at (3,-1){\6};
   
   \node(31) at (1,-3){\3};
   \node(32) at (2,-3){\9};
   
   \node(41) at (1,-4){\4};  
   \node(42) at (2,-4){\5};
   \node(43) at (3,-4){\7};

    \draw[->] (11)--(12) node[above=.5pt, midway, scale=.75] {$\R\fst\xi$} ;   
    \draw[double distance=2pt] (12)--(23) ;

    \draw[->] (41)--(42)  node[above=1pt, midway, scale=.75]{$\Iso$}
                                 node[below=1pt,midway,scale=.75]{\eqref{phiRHompsi}} ;

    \draw[->] (42)--(43);
      
  \draw[double distance=2pt] (11)--(31) ;
  \draw[->] (31)--(41) node[left=1pt, midway, scale=.75]{$\simeq$}
                                 node[right=1pt,midway,scale=.75]{\textup{\ref{^* equivalence}}} ;
  
  \draw[double distance=2pt] (12)--(22) ;   
  \draw[->] (22)--(32) node[left, midway, scale=.75] {\eqref{nu}} ;  
  \draw[->] (32)--(42) node[left=.5pt, midway, scale=.75] {$\via\tau^{}_{\<G}$} ;                            
      
  \draw[->] (23)--(43) node[right=1pt, midway, scale=.75] {$\via\tau^{}_{\<G}$};
  
  \draw[->] (32)--(23) ; 
  
 \node at (2.4,-1.35)[scale=.75]{\textup{commutes, by}} ;
 \node at (2.4,-1.57)[scale=.75]{\textup{\cite[3.5.6(e)]{li}}} ;
 \node at (1.485,-2.5)[scale=.75]{$\bar t_G=\delta\smallcirc\R\fst\xi$} ;
  \epic
  \]
\vskip-3pt
It remains to be shown that $\>\>\bar t_G=\delta\smallcirc\R\fst\xi^{}_{\<f}$, that is, the outer border of the following natural diagram commutes.\va3
\begin{small}
\[\mkern-3mu
\def\1{$\R\fst f^\flat G$}
\def\2{$\R\fst f^{\<\times}\<G$}
\def\3{$\phi_*\R\bar\fst\bar{f\;\<}^{\!\<*}\!\pt G$}
\def\4{$\R\fst\R\>\sHom^{}_\sX(\OX,f^\flat G\>)$}
\def\5{$\R\fst\R\>\sHom^{}_\sX(\OX,f^{\<\times} G\>)$}
\def\6{$\R\>\sHom_Y(\fst\OX,\R\fst f^\flat\<G\>)$}
\def\7{$\R\>\sHom_Y(\fst\OX,\R\fst f^{\<\times}\<G\>)$}
\def\8{$\phi_*\pt G$}
\def\9{$\R\>\sHom_Y(\fst\OX,G\>)$}
\def\ten{$\phi_*\R\>\sHom_{\>\oY}(\bar\fst\OX,\R\bar\fst\bar{f\;\<}^{\!\<*}\!\pt G\>)$}
\def\lvn{$\phi_*\R\>\sHom_{\>\oY}(\bar\fst\OX,\pt G\>)$}
 \bpic[xscale=4.87, yscale=2]

   \node(11) at (1,-1){\1};   
   \node(13) at (3,-1){\2}; 
   
   \node(21) at (1,-1.75){\3};    
   \node(22) at (2.05,-1.75){\4};
   \node(23) at (3,-1.75){\5};
   
   \node(251) at (1.525,-2.5){\ten};
   
   \node(32) at (2.05,-3){\6};
   \node(33) at (3,-3){\7};
   
   \node(351) at (1.525,-3.5){\lvn};

   \node(42) at (1,-4.25){\8};  
   \node(43) at (3,-4.25){\9};   

    \draw[->] (11)--(13) node[above, midway, scale=.75] {$\R\fst\xi^{}_{\<f}$} ;   
  
    \draw[->] (22)--(23) node[above, midway, scale=.75] {$\via\>\xi^{}_{\<f}$} ;
    
    \draw[->] (32)--(33) node[above, midway, scale=.75] {$\<\via\>\xi^{}_{\<f}\>\>$} ;

    \draw[->] (42)--(43) node[above=1pt, midway, scale=.75]{$\Iso$}
                                   node[below=1pt,midway,scale=.75]{\eqref{phiRHompsi}} ;
      
   \draw[double distance=2pt] (11)--(21) ;  
   \draw[->] (21)--(42) node[left=1pt, midway, scale=.75]{$\simeq$}
                                 node[right=.5pt,midway,scale=.75]{\textup{\ref{^* equivalence}}} ; 
                                 
   \draw[->] (251)--(351) node[right=1pt, midway, scale=.75]{$\simeq$}
                                 node[left=.5pt,midway,scale=.75]{\textup{\ref{^* equivalence}}} ;
   
   \draw[->] (22)--(32) node[right=.5pt, midway, scale=.75] {\eqref{nu}} ;  
   
   \draw[double distance=2pt] (13)--(23) ;
   \draw[->] (23)--(33) node[right=.5pt, midway, scale=.75] {\eqref{nu}} ;    
   \draw[->] (33)--(43) node[right=1pt, midway, scale=.75] {$\via\tau^{}_{\<\<G}$};
  
   \draw[double distance=2pt] (11)--(22) ;
   \draw[->] (22)--(251) node[above=-2.6pt, midway, scale=.75] {\eqref{nu}\kern35pt} ;
   \draw[->] (32)--(43) node[left=1pt, midway, scale=.75] {$\via\> t_G\mkern13mu$} ;                            
   \draw[->] (351)--(43) node[below=-2,midway,scale=.75]{\ref{duality for phi}\kern45pt} ;
   \draw[->] (21)--(251) ;
   \draw[->] (42)--(351) ;
   
  \node at (1.45,-1.77)[scale=.75]{\circled1} ;
  \node at (1.67,-3.92)[scale=.75]{\circled2} ;
  \node at (1.91,-3.28)[scale=.75]{\circled3} ;
  \node at (2.75,-3.5)[scale=.75]{\circled4} ;
  
 \epic
\]
\end{small}
\vskip-12pt 
 
Commutativity of the unlabeled subdiagrams is obvious.  Commutativity of \circled1 is a simple consequence of that of the first diagram in  \cite[3.5.6(e)]{li}.
That of \circled2 is what is asserted immediately after \eqref{phiRHompsi}, with $F\set\bar\fst\OX$.  That  of~\circled3 is given by Proposition~\ref{qc duality2},  and of \circled4 by Corollary~\ref{flat=times}. 
\end{proof}

\begin{cosa}\label{pf adj} (Pseudofunctoriality)
One verifies formally that over the category~$\mathbf F$  of  pseudo\kf-coherent (resp.~perfect) finite scheme\kf-maps\va{.75} there is a unique 
$\Dqcpl$(resp.~$\Dqc$)-valued contravariant pseudofunctor $(-)^\flat$\va1   such that\va{-4} for any  map $f\in\mathbf F$, Corollary~\ref{right adjoint} holds,\va1 and 
for any $W\xto{\,g\,}X\xto{\,f\,}Y$ in $\mathbf F$,\va{.5} the associated isomorphism $g^\flat\<\< f^\flat\iso(fg)^\flat$ is  the natural\va{3} composite (see \S\ref{counit unit})
\begin{equation}\label{pf flat}
g^\flat \<\<f^{\flat^{\mathstrut}} \to  (fg)^\flat\R(fg)_{\<*}\>g^\flat \<\<f^\flat\iso 
(fg)^\flat\R\fst\R g^{}_*\> g^\flat \<\<f^\flat
\to (fg)^\flat\R f_{\<*} f^\flat \to (fg)^\flat,
\end{equation}
\vskip2pt\noindent
i.e., it is right-conjugate to the natural isomorphism $\R\fst\R g^{}_* \osi \R(fg)^{}_*$, 
see \cite[3.3.5]{li} with $(\fst, g_{\<*}, f^*\<\<,\>g^*)$ replaced by 
$(g^\flat\<\< f^\flat,(fg)^\flat\<,\R\fst \R g^{}_*,\R(fg)^{}_*)$, and cf.~\cite[3.6.8.1]{li}.
 
 Restricting to qcqs schemes, one has the same statement with $(-)^{\<\times}$ in place of $(-)^\flat$. 
Consequently:

\begin{subprop}\label{pseudofunc} 
Over the category of\va1 pseudo-coherent\/ $(\<$resp.~perfect$\>)$ finite maps
of qcqs schemes, the\va{-1} map\/ $\xi^{}_{\<f}$ in~\textup{\ref{flat=times}} is the\/ $f$-component of an \emph{isomorphism\- of pseudofunctors} $(-)^\flat\iso(-)^{\<\times}$.\hfill{$\square$}
\end{subprop}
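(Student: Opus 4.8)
\emph{Proof plan.} The plan is to reduce the assertion to a purely formal fact about adjoints: between two right adjoints of a fixed functor there is a unique comparison isomorphism, and such comparisons compose. Recall from the discussion in \S\ref{pf adj} that the composition isomorphism $g^\flat\<\<f^\flat\iso(fg)^\flat$ of the pseudofunctor $(-)^\flat$ is, by \eqref{pf flat}, right-conjugate to the canonical isomorphism relating $\R\fst\R g^{}_*$ and $\R(fg)^{}_*$, with respect to the adjunctions $\R\fst\dashv f^\flat$, $\R g^{}_*\dashv g^\flat$, $\R(fg)^{}_*\dashv(fg)^\flat$ of Corollary~\ref{right adjoint}; the same holds verbatim for $(-)^{\<\times}$ with the adjunctions $\R(-)^{}_*\dashv(-)^{\<\times}$. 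Recall also from Corollary~\ref{flat=times} that $\xi^{}_{\<f}$ is the unique $\D(X)$-map $f^\flat G\to f^{\<\times}\<G$ with $t^{}_{\<G}=\tau^{}_{\<G}\smallcirc\R\fst\xi^{}_{\<f}$; in other words, $\xi^{}_{\<f}$ is exactly the canonical comparison of the right adjoints $(f^\flat,t^{}_{\<f})$ and $(f^{\<\times},\tau^{}_{\<f})$ of $\R\fst$. Since $\xi^{}_{\<f}$ is already known to be an isomorphism, only the compatibility with composition remains (the relation $\xi^{}_{\id}=\id$ being immediate).

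First I would fix a composable pair $W\xto{\,g\,}X\xto{\,f\,}Y$ of pseudo-coherent (resp.\ perfect) finite maps of qcqs schemes and note that $g^\flat\<\<f^\flat$ is a right adjoint of $\R\fst\R g^{}_*$, with counit the composite $\R\fst\R g^{}_*g^\flat\<\<f^\flat\to\R\fst f^\flat\to\id$ built from $t^{}_{\<g}$ and $t^{}_{\<f}$, and likewise that $g^{\<\times}\<\<f^{\<\times}$ is a right adjoint with the analogous counit from $\tau^{}_{\<g},\tau^{}_{\<f}$. The first genuine step is to show that the horizontal composite
\[
g^\flat\<\<f^\flat\xto{\>g^\flat\xi^{}_{\<f}\>}g^\flat\<\<f^{\<\times}\xto{\>\xi^{}_{\<g}f^{\<\times}\>}g^{\<\times}\<\<f^{\<\times}
\]
is the canonical comparison of these two right adjoints of $\R\fst\R g^{}_*$. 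This is a short chase: after applying $\R\fst\R g^{}_*$ to this composite and following with the $\tau$-built counit, one invokes naturality of the counit $\tau^{}_{\<g}$ against $\xi^{}_{\<f}$, then the relation $\tau^{}_{\<g}\smallcirc\R g^{}_*\xi^{}_{\<g}=t^{}_{\<g}$, then $\tau^{}_{\<f}\smallcirc\R\fst\xi^{}_{\<f}=t^{}_{\<f}$, landing on the $t$-built counit.

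Next I would transport all adjunctions along the canonical isomorphism between $\R\fst\R g^{}_*$ and $\R(fg)^{}_*$, so that $g^\flat\<\<f^\flat$, $g^{\<\times}\<\<f^{\<\times}$, $(fg)^\flat$ and $(fg)^{\<\times}$ become four right adjoints of $\R(fg)^{}_*$. Transport of an adjunction along an isomorphism carries canonical comparisons to canonical comparisons (the defining counit equation is translated verbatim, via naturality of the isomorphism), so the horizontal composite above stays the canonical comparison $g^\flat\<\<f^\flat\to g^{\<\times}\<\<f^{\<\times}$; $\xi^{}_{\<fg}$ is the canonical comparison $(fg)^\flat\to(fg)^{\<\times}$ by Corollary~\ref{flat=times}; and, by the description recalled in the first paragraph, the two pseudofunctoriality isomorphisms $g^\flat\<\<f^\flat\to(fg)^\flat$ and $g^{\<\times}\<\<f^{\<\times}\to(fg)^{\<\times}$ are themselves canonical comparisons of right adjoints of $\R(fg)^{}_*$. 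Since these comparisons are unique and compose, both ways around the relevant square coincide with the canonical comparison $g^\flat\<\<f^\flat\to(fg)^{\<\times}$, which is the assertion. (The $\xi$'s then also intertwine the units of Corollary~\ref{right adjoint}, being isomorphisms between right adjoints, though only the composition-compatibility is needed here.)

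The one point that is not literally tautological---and the main obstacle---is the identification of the two pseudofunctoriality isomorphisms with canonical comparisons \emph{of right adjoints of $\R(fg)^{}_*$}: this is precisely the claim, asserted in \S\ref{pf adj}, that \eqref{pf flat} is right-conjugate to the canonical isomorphism for $\R(fg)^{}_*$, i.e.\ that the composite $t$-built counit of $g^\flat\<\<f^\flat$ corresponds, under the transported adjunction isomorphisms, to the counit $t^{}_{\<fg}$ of $(fg)^\flat$ (and similarly with $\tau$). Once that bookkeeping around the notion of right-conjugate is in place, the rest is the one-line uniqueness argument above. Should one wish to sidestep it entirely, the whole argument can be rephrased at the level of representable functors: for $G\in\Dqcpl(Y)$ the objects $g^\flat\<\<f^\flat G$, $g^{\<\times}\<\<f^{\<\times}G$, $(fg)^\flat G$ and $(fg)^{\<\times}G$ all represent $\Hom_{\D(Y)}(\R(fg)^{}_*-,\>G\>)$, each carrying its evident universal element, every arrow in the square in question is the unique map respecting universal elements, and commutativity is then Yoneda's lemma.
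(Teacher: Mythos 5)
Your proposal is correct and takes essentially the same route as the paper: \S\ref{pf adj} records that \eqref{pf flat} (and its $(-)^{\<\times}$ analogue) is right-conjugate to the natural isomorphism $\R\fst\R g^{}_*\osi\R(fg)^{}_*$, and Proposition~\ref{pseudofunc} is then exactly the formal consequence you spell out—uniqueness and composability of comparison isomorphisms between right adjoints of the same functor. The paper treats this as immediate (hence the bare $\square$); your write-up simply makes the bookkeeping explicit.
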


\begin{subrem} The $\mathbf F$-maps $f$, $g$ and $fg$ entail maps of sheaves of rings 
\mbox{$\psi\colon\OY\to \fst\OX$,} $\xi\colon\OX\to g_*\OW$ and $\zeta\set (\fst\xi)\smallcirc \psi$,
and ringed-space maps 
$\bar f\colon X\to(Y,\fst\OX)$,
 $\bar g\colon W\to (X,g_*\OW)$, $\ov{fg}\colon W\to (Y, \fst g_*\OW)$ (see \S\ref{2.1}); and \eqref{pf flat} is a functorial isomorphism
\[
\bar g^*\R\>\sHom_\xi(g_*\OW, \brf\R\>\sHom_\psi(\fst\OX,-))\iso 
\ov{fg}^{\>\>*}\R\>\sHom_{\>\zeta}(\fst g_*\OW, -).
\]

The description of such an isomorphism in \cite[p.\,167\kf]{RD} needs noetherian hypotheses not assumed here.
In any case, locally, where one deals, in essence, with a composition $R\to S\to T$ of ring homomorphisms,
one realizes \eqref{pf flat} as the sheafification of the natural $\D(T)$-isomorphism
\[
\R\<\<\Hom_S(T, \R\<\<\Hom_R(S,\SG))\iso \R\<\<\Hom_R(T,\SG)\qquad(\SG\in\D(R)),
\]
see the paragraph preceding Proposition~\ref{pfush}.
\end{subrem}

\smallskip
The duality isomorphism in Corollary~\ref{qp duality} has the following transitivity property, making it compatible with \eqref{pf flat}. \va2

\begin{subprop}\label{transdual} Let\/ $W\xto{\,g\,}X\xto{\,f\,}Y$ in\/ $\mathbf F$ be as above.
For\/ \mbox{$F\in\D(W)$} and\/ $G\in\D(Y),$ the following natural diagram commutes.
 \[
\def\1{$\R(fg)_*\R\>\sHom_W(F,\> g^\flat \<\<f^\flat G)$}
\def\3{$\R\fst\R g_*\R\>\sHom_W(F,\> g^\flat \<\<f^\flat G)$}
\def\4{$\R(fg)_*\R\>\sHom_W(F, (fg)^\flat G)$}
\def\5{$\R\>\sHom_Y(\R(fg)_*F, G)$}
\def\7{$\R\>\sHom_Y(\R\fst\R g_*F, G)$}
\def\8{$\R\fst\R\>\sHom^{}_\sX(\R g_*F,\> f^\flat G)$}
 \bpic[xscale=3, yscale=1.5]

   \node(11) at (1,-1){\1};   
   \node(13) at (3,-1){\4}; 
   
   \node(21) at (1,-2){\3};  
   \node(23) at (3,-2){\5};
   
   \node(31) at (1,-3){\8};  
   \node(33) at (3,-3){\7};

    \draw[->] (11)--(13) node[above=1pt,midway,scale=.75]{\eqref{pf flat}} ;   

    \draw[->] (31)--(33) node[below=1pt, midway, scale=.75]{\textup{\ref{qp duality}}} ;

   \draw[->] (11)--(21) node[left=1pt, midway, scale=.75]{$\simeq$} ;  
   \draw[->] (21)--(31) node[left=1pt, midway, scale=.75]{\textup{\ref{qp duality}}} ; 
   
   \draw[->] (23)--(33) node[right=1pt, midway, scale=.75]{$\simeq$} ;
   \draw[->] (13)--(23) node[right=1pt, midway, scale=.75]{\textup{\ref{qp duality}}} ;  
  
 \epic
\]
\end{subprop}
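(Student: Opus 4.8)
The plan is to reduce the square to the factorization of the duality isomorphism supplied by Proposition~\ref{qc duality2}, and then to two transitivity inputs. Write $t^h_G\colon\R h_*h^\flat G\to G$ for the counit of Proposition~\ref{represent} attached to a finite map $h$, and $\nu^h$ for the map~\eqref{nu} associated to $h$. Since Corollary~\ref{qp duality} is the case of Theorem~\ref{qc duality} to which Proposition~\ref{qc duality2} applies, the latter expresses the duality isomorphism of~\ref{qp duality} for $h$ (at arguments $E$, $G$) as $\nu^h(E,h^\flat G)$ followed by $\R\>\sHom(\R h_*E,-)$ applied to the counit $t^h_G$. Substituting this for the three occurrences of~\ref{qp duality} in the square (for $f$, for $g$ and for $fg$), the square becomes the outer border of an enlarged diagram whose interior cells are of three kinds: naturality squares for the bifunctorial map~\eqref{nu} in its $\sHom$-target variable (one, for $\nu^{fg}$, applied to the map $g^\flat\<\<f^\flat G\to(fg)^\flat G$ of~\eqref{pf flat}; one, for $\nu^f$, applied to the counit $t^g_{f^\flat G}\colon\R g_*g^\flat\<\<f^\flat G\to f^\flat G$); a transitivity cell for~\eqref{nu} relative to the composition $fg$; and one cell relating the counits $t^{fg}_G$, $t^f_G$, $t^g_{f^\flat G}$ to~\eqref{pf flat}.

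Next I would dispose of the counit compatibility: the composite of $\R(fg)_*$ applied to~\eqref{pf flat} with $t^{fg}_G$ agrees, under the canonical isomorphism $\R(fg)_*\iso\R\fst\R g_*$, with $t^f_G\smallcirc\R\fst(t^g_{f^\flat G})$. Since, by its construction in \S\ref{pf adj} and \S\ref{counit unit}, \eqref{pf flat} is the right-conjugate of the canonical isomorphism $\R\fst\R g_*\iso\R(fg)_*$, this is the formal fact that composing right adjoints composes counits: unwinding~\eqref{pf flat} as the unit $\eta$ of $\R(fg)_*\dashv(fg)^\flat$ followed by $(fg)^\flat$ applied successively to the canonical isomorphism, to $\R\fst(t^g_{f^\flat G})$, and to $t^f_G$, one then applies $\R(fg)_*$, post-composes with $t^{fg}_G$, slides $t^{fg}$ past each $(fg)^\flat(-)$ by naturality, and cancels $t^{fg}\smallcirc\R(fg)_*\eta=\id$ by the triangle identity; cf.~\cite[3.3.5]{li}.

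For the transitivity of~\eqref{nu}: for every $C\in\D(W)$, the map $\nu^{fg}(F,C)$ equals $\nu^f(\R g_*F,\R g_*C)\smallcirc\R\fst\nu^g(F,C)$ once the canonical isomorphism $\R(fg)_*\iso\R\fst\R g_*$ is inserted wherever $\R(fg)_*$ occurs. This is precisely the transitivity statement of~\cite[3.7.1.1]{li} --- the ingredient that yields commutativity of subdiagram~\circled1 in the proof of Proposition~\ref{qc duality2} --- and can alternatively be checked directly from transitivity of $\LL(-)^*$ and of $\R(-)_*$ together with the standard compatibilities in~\cite[\S3.2]{li}. Granting these, assembling the enlarged diagram and performing a routine chase gives commutativity of the outer border, which is the assertion.

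The step I expect to be the genuine obstacle is purely bookkeeping: inserting the canonical isomorphism $\R(fg)_*\iso\R\fst\R g_*$ consistently at all of its occurrences (the source argument of the outer $\sHom$, its target argument, and the ambient push-forward) throughout the enlarged diagram, so that the cells actually match up. A way to sidestep the chase altogether is via Yoneda: by~\ref{represent} and~\ref{flat=times}, both routes around the square sheafify one and the same identification of the functor $\Hom_{\D(Y)}(\R(fg)_*-,G)$ on $\Dqc(W)$ --- obtained on one side from the iterated adjunctions $\R\fst\dashv f^\flat$ and $\R g_*\dashv g^\flat$, and on the other from the composite adjunction $\R(fg)_*\dashv(fg)^\flat$ together with~\eqref{pf flat} --- and these agree exactly because~\eqref{pf flat} is defined to be the conjugate of $\R\fst\R g_*\iso\R(fg)_*$.
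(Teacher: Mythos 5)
Your proposal is correct and follows essentially the same route as the paper: factor each occurrence of the duality isomorphism via Proposition~\ref{qc duality2} into $\nu$ followed by the counit, then reduce to transitivity of~\eqref{nu} (the second diagram in~\cite[3.7.1.1]{li}) and to the conjugacy characterization of~\eqref{pf flat} (the paper cites~\cite[3.3.7(a)]{li}, which is the same formal fact you invoke via~\cite[3.3.5]{li}). The closing Yoneda remark is a nice sanity check but not needed; the diagram chase is already the paper's argument.
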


\begin{proof} Using Proposition~\ref{qc duality2}, and with $\CH\set\sHom$,
expand the diagram naturally as follows:

\begin{small}
\[
\def\1{$\R(fg)_{\<*}\>\R\CH_W(F,\>\> g^\flat\<\< f^\flat G)$}
\def\2{$\R\CH_Y(\R(fg)_{\<*}\>F,\>\> \R(fg)_{\<*}\>g^\flat\<\< f^\flat G)$}
\def\3{$\R\fst\R g_*\R\CH_W(F,\>\> g^\flat\<\< f^\flat G)$}
\def\4{$\R(fg)_{\<*}\>\R\CH_W(F,\> (fg)^\flat G)$}
\def\5{$\R\CH_Y(\R(fg)_{\<*}\>F,\> G)$}
\def\6{$\R\CH_Y(\R(fg)_{\<*}\>F,\> \R(fg)_{\<*}\>(fg)^\flat G)$}
\def\7{$\R\CH_Y(\R\fst\R g_*F,\> G)$}
\def\8{$\R\fst\R\CH_X(\R g_*F,\>\> \R g_*g^\flat\<\< f^\flat G)$}
\def\9{$\R\CH_Y(\R\fst\R g_*F,\>\>\R\fst\R g_*g^\flat\<\< f^\flat G)$}
\def\ten{$\R\fst\R\CH_X(\R g_*F,\>\> f^\flat G)$}
\def\lvn{$\R\CH_Y(\R\fst\R g_*F,\>\>\R\fst f^\flat G)$}
 \bpic[xscale=4, yscale=1.2]

   \node(11) at (1,-1){\1};   
   \node(13) at (2.98,-1){\4}; 
   
   \node(21) at (1,-2.5){\3};  
   \node(22) at (1.95,-1.75){\2};  
   \node(23) at (2.98,-2.5){\6};

   \node(31) at (1,-4){\8};  
   \node(32) at (1.95,-3.25){\9};  
   \node(33) at (2.98,-4){\5};
   
   \node(41) at (1,-5.5){\ten}; 
   \node(42) at (1.95,-4.75){\lvn};   
   \node(43) at (2.98,-5.5){\7};
   
    \draw[->] (11)--(13) node[above=1pt,midway,scale=.75]{\eqref{pf flat}} ;   

    \draw[->] (23)--(33)  ;
   
    \draw[->] (42)--(43)  ;

   \draw[->] (11)--(21) node[left=1pt, midway, scale=.75]{$\simeq$} ;  
   \draw[->] (21)--(31) node[left=1pt, midway, scale=.75]{\eqref{nu}} ; 
   \draw[->] (31)--(41) ;
   
   \draw[->] (22)--(32)  ;
   \draw[->] (32)--(42)  ;
                                
   \draw[->] (13)--(23) node[right=1pt, midway, scale=.75]{\eqref{nu}} ;  
   \draw[->] (33)--(43) node[right=1pt, midway, scale=.75]{$\simeq$} ;
  
   \draw[->] (11)--(22)  node[above=-1.5pt, midway, scale=.7]{\kern35pt\eqref{nu}} ;
   \draw[->] (22)--(23)  node[below=-2.9pt, midway, scale=.7]{\eqref{pf flat}\kern50pt};
   \draw[->] (31)--(32)  node[below=-2.3pt, midway, scale=.7]{\kern38pt\eqref{nu}} ;
   \draw[->] (41)--(42)  node[below=-2.3pt, midway, scale=.7]{\kern38pt\eqref{nu}} ;
   \draw[->] (41)--(43)  ;
   
   \node at (1.69,-2.5)[scale=.9]{\circled1} ;   
   \node at (2.46,-3.7)[scale=.9]{\circled2} ;
   
 \epic
 \]
\end{small}
\vskip-15pt
Commutativity of \circled1 is given by the second diagram in \cite[3.7.1.1]{li}. 

That of \circled2 follows from \eqref{pf flat} being (as stated above)
right-conjugate to the natural
isomorphism $\R\fst\R g_*\iso \R(fg)_*$, see e.g., the second diagram in \cite[3.3.7(a)]{li}, with
$(\fst, \>g_*, f^*\<\<, \>g^*)$ replaced by $(g^\flat\<\< f^\flat\<, (fg)^\flat\<, \>\R\fst \R g_*, \R(fg)_*)$.

Commutativity of the unlabeled subdiagrams is obvious.
\end{proof}

\end{cosa}

\begin{cosa}\label{bc} 
Given Corollary~\ref{qp duality}, one can translate many standard results about the pseudofunctor 
$(-)^{\<\times}$ (see e.g., \cite[\S4.7\kf]{li}) into corresponding results about the pseudofunctor
$(-)^\flat$ of~\ref{pf adj}, and ask for concrete interpretations.
In this section, an elaboration of \cite[p.\,167, 6.3]{RD}, tor-independent base change for $(-)^\flat$  is treated, both abstractly (Theorem~\ref{indt base change}) and concretely (Proposition~\ref{concbc}). 
In the following two sections, which elaborate \cite[p.\,174, 6.9]{RD}, further illustration is provided by an explication of the interaction of~$(-)^\flat$ with derived tensor (Proposition~\ref{flat and tensor}) and with derived hom (Proposition~\ref{flat and hom}). 
Later, section~\ref{affex} deals with the case of affine schemes, where the
foregoing examples can be described in equivalent commutative\kf-algebra terms.

Note, in perusing such examples with regard to a pseudo\kf-coherent finite map
$f=\phi\bar f\colon X\to Y\<$, that concretely representing a $\Dqc(X)$-map 
$\xi\colon F\to G$ is more or less the same (via~\ref{^* equivalence}) as concretely representing the 
$\Dqc(\fst\OX)$-map $\R\bar \fst\xi\>$, and that the latter involves more than doing the same for 
the $\Dqc(Y)$-map $\R\fst\xi\>$---because the natural map
\[
\Hom_{\Dqc\<(\fst\<\OX\<)}(\R\fst F,\R\fst G)\to\Hom_{\Dqc(Y)}(\R\fst F,\R\fst G)
\] 
isn't always injective.%
\footnote{\kf Let $k$ be a field and $R$ a finite\kf-dimensional local $k$-algebra with 
maximal ideal $m\ne0$ such that the natural map $k\to R/m$ is an isomorphism. The natural composite
\[
0\ne\Hom_k(m/m^2,k)\iso \Hom_R(m,k)\lto \textup{Ext}_R^1(k,k)
\]
is an isomorphism, so that the natural map
$\textup{Ext}^1_R(k,k)\to \textup{Ext}^1_k(k,k)=0$ is not injective, i.e., the natural map 
$\Hom_{\D(R)}(k,k[1])\to\Hom_{\D(k)}(k,k[1])$ is not injective.}
\medbreak

\begin{subcosa}\label{Indt square}

To any oriented commutative square of scheme\kf-maps
\begin{equation}\label{indt square}
\def\1{$X'$}
\def\2{$X$}
\def\3{$Y'$}
\def\4{$Y$}
\CD
 \bpic[xscale=.9, yscale=.7]

   \node(11) at (1,-1){\1};   
   \node(13) at (3,-1){\2}; 
     
   \node(31) at (1,-3){\3};
   \node(33) at (3,-3){\4};
   
    \draw[->] (11)--(13) node[above=1pt, midway, scale=.75] {$v$} ;  
      
   \draw[->] (31)--(33) node[below=1pt, midway, scale=.75] {$u$} ;
    
    \draw[->] (11)--(31) node[left=1pt, midway, scale=.75] {$g$} ;
   
    \draw[->] (13)--(33) node[right=1pt, midway, scale=.75] {$f$};
  \node at (2.05,-2)[scale=.85] {$\sigma$} ;
 \epic 
\endCD
\end{equation}
one associates the map
\begin{equation}\label{theta}
\theta_{\<\sigma}(E)\colon \LL u^*\R\fst E\to \R g_*\LL v^*\<\<E\qquad(E\in\Dqc(X))
\end{equation}                                                                                                                                                                                                                                                                                                                                                                                                                                                                                                                                                                                                                                                                                                                                                                                                                                                                    adjoint to the natural composite map
\[
\R\fst E\to\R\fst\R v_* \LL  v^*\<\<E\iso\R u_*\R g_*\LL v^*\<\<E.
\]
For a concrete local description of $\theta_{\<\sigma}$, see the end of \S\ref{concrete version beta} below.

As in \cite[\S3.10]{li}, the square $\sigma$ is called \emph{independent} if for all $E\in\Dqc(X)$, $\theta_{\<\sigma}(E)$
 is an isomorphism.
 
  If $\sigma$ is independent,  $f$ and $g$ finite and $f$ pseudo\kf-coherent (resp.~perfect), then 
 $g_*\CO_{X'}\cong g_*\LL v^*\OX\cong \LL u^*\!\fst\OX$ is pseudo\kf-coherent  (resp.~perfect), 
 i.e.,  $g$ is pseudo\kf-coherent (resp.~perfect).

If $\sigma$ is a fiber square (i.e., the associated map $X'\to X\times_Y Y'$ is an isomorphism), then 
independence of $\sigma$ is equivalent to \emph{tor-independence,} i.e.,  for all $y'\in Y'$ and $x\in X$ such that $y\set u(y') = f(x)$, 
\[
\tor_i^{\CO_{Y\!,\>y}}(\CO_{Y'\!,\>\>y'}\>,\>\CO_{\<X\<\<,\>x})=0 \quad\textup{for all }i>0.
\] 
For the proof,  one reduces as in \cite[3.10.3.2 and 3.10.3.3]{li} to where $Y\<$, $X\<$, $Y'$ and $X'$ 
are all affine, in which case \cite[3.10.3.1]{li} applies.%
\footnote{Misprint: references to (3.10) in \emph{loc.\,cit.} should be to (3.10.2.3).} 
 
Hence, if $\sigma$ is a fiber square then its being independent does not depend on its orientation;  and 
such a $\sigma$ is independent if  either $\<f$ or~$u$ is flat.
\end{subcosa}
\end{cosa}

The following Theorem~\ref{indt base change} is essentially contained in \cite[4.4.1]{li},
in whose proof
the assumption (at the beginning of Section~4.4) that all schemes are qcqs is needed only 
to ensure that $\R\fst$~has a right adjoint---which in the present circumstances is  known to be so (Corollary~\ref{qp duality}) 
without the said assumption. 
The proof here, though related to that of \emph{loc.\,cit.}, is\va1 more direct.\looseness=-1

In Theorem~\ref{indt base change} and Remark~\ref{beta-times}(a),  these  abbreviations are used:
\[
(-)_*\set\R(-)_*,\qquad (-)^*\set\LL (-)^*,
\qquad\sHom\set\R\>\sHom.
\]

\begin{subthm}[Tor-independent base change]\label{indt base change} 
Let\/~$\sigma$  be, as above, an independent fiber square, in which
$f$---hence $g$---is finite\va{.5} and pseudo\kf-coherent $($resp.~perfect\/$)$ and  $u$ has finite tor-dimension $($resp.~$u$ is arbitrary\/$),$  and let\/ $G\in\Dqcpl(Y)\ 
(resp.~\Dqc(Y)).$ With\/ $(-)^\flat$ and\/~$t^{}_{\<G}$ as in Prop.~\textup{\ref{represent},} \va{-.5}  the~adjoint of the composite map
\smash{\( 
g_*v^*\<\< f^\flat G\xto[\>\>\lift1.4,\theta_{\<\sigma}^{-\<1},\>]{} u^*\!\fst f^\flat G
 \xto[\lift1.3,u^*t^{}_{\<G},\!]{}u^*\mkern-.5mu G
\)}
is an \emph{isomorphism}\va6
 \[
 \beta_\sigma(G)\colon v^*\mkern-2.5mu f^\flat G\iso g^\flat u^*\mkern-.5mu G.
\] 
\end{subthm}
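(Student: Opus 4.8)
The plan is to establish the base-change isomorphism $\beta_\sigma(G)$ by factoring it through the corresponding statement for the abstract twisted-inverse-image pseudofunctor $(-)^{\<\times}$ together with the comparison isomorphism $\xi^{}_{\<f}$ of Corollary~\ref{flat=times}, and then to verify directly that the required adjoint relation holds by a diagram chase built from the sheafified duality isomorphism of Corollary~\ref{qp duality}. First I would reduce to checking that the map $\beta_\sigma(G)$, defined as the adjoint of $u^*t^{}_{\<G}\smallcirc\theta_{\<\sigma}^{-1}$, is an isomorphism; since both $v^*\<\<f^\flat G$ and $g^\flat u^*G$ lie in $\Dqc(X')$, and $\R g_*\colon\Dqc(X')\to\Dqc(Y')$ is (by Proposition~\ref{^* equivalence} applied to $g$, composed with restriction of scalars) conservative on the relevant subcategories in the sense that a map is an isomorphism iff its image under the sheafified-$\Hom$ pairing is—it suffices to show that $\beta_\sigma(G)$ induces an isomorphism after applying $\R g_*\R\>\sHom_{X'}(F',-)$ for all $F'\in\Dqc(X')$, or more simply that it is the unique map whose adjoint under the adjunction $\R g_*\dashv g^\flat$ (Corollary~\ref{right adjoint}, valid for $g$) is the prescribed composite.

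The key step is then the following: by Corollary~\ref{right adjoint}, $g^\flat u^*G$ together with its counit $t^{}_{u^*G}$ represents $\Hom_{\D(Y')}(\R g_*-,\>u^*G)$ on $\Dqc(X')$, so producing $\beta_\sigma(G)$ amounts to exhibiting $v^*\<\<f^\flat G$ as representing the same functor, with the corresponding universal element being the composite $u^*t^{}_{\<G}\smallcirc\theta_\sigma^{-1}\colon\R g_*v^*\<\<f^\flat G\to u^*G$. To do this I would, for $F'\in\Dqc(X')$, write down the chain of natural isomorphisms
\[
\Hom_{\D(X')}(F',v^*\<\<f^\flat G)\iso\Hom_{\D(X)}(\R v_*F',f^\flat G)\iso\Hom_{\D(Y)}(\R\fst\R v_*F',G),
\]
using that $\sigma$ is independent so that $\R\fst\R v_*\iso\R u_*\R g_*$, giving $\Hom_{\D(Y)}(\R u_*\R g_*F',G)$; then, since $u$ has finite tor-dimension (resp.\ is arbitrary in the perfect case) and $\fst\OX$ is pseudo-coherent (resp.\ perfect), the projection and base-change formalism—here one uses $\R\>\sHom_Y(\R\fst\OX,G)=\R\>\sHom_Y(\fst\OX,G)\in\Dqc(Y)$ from Lemma~\ref{qcHom}, together with flatness/finite-tor-dimension of $u$ to commute $\LL u^*$ past $\R\>\sHom$—one rewrites $\Hom_{\D(Y)}(\R u_*\R g_*F',G)\iso\Hom_{\D(Y')}(\R g_*F',u^!G)$ where $u^!$ is computed concretely, and matches it with $\Hom_{\D(Y')}(\R g_*F',u^*G)$ via the duality isomorphism of Corollary~\ref{qp duality} applied to $f$ (and then restricted along $u$). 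Chasing the identity element of $\Hom_{\D(X')}(v^*\<\<f^\flat G,v^*\<\<f^\flat G)$ through this composite and checking it lands on $u^*t^{}_{\<G}\smallcirc\theta_\sigma^{-1}$ is exactly the content of the theorem; this is a formal but lengthy verification, best organized as in the proof of \cite[4.4.1]{li} but shortened because the existence of the right adjoint is already in hand via Corollary~\ref{qp duality}.

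The main obstacle I expect is the compatibility step: showing that the abstractly defined base-change map $\beta_\sigma$ for $(-)^\flat$ really does correspond, under the isomorphism $\xi^{}_{\<f}\colon f^\flat\iso f^{\<\times}$ (and its analogue for $g$), to the standard base-change map $\beta^\times_\sigma$ for $(-)^{\<\times}$—equivalently, that the square relating $v^*\xi_f$, $\xi_g u^*$, $\beta_\sigma$ and $\beta^\times_\sigma$ commutes. This requires unwinding the definition of $\xi^{}_{\<f}$ through Proposition~\ref{via duality} (i.e.\ through the abstract duality map $\delta$) and combining it with the interaction of $\theta_\sigma$ with duality, which is where the hypothesis that $\sigma$ is a \emph{fiber} square and the tor-independence assumption genuinely enter (to guarantee $g$ is again pseudo-coherent/perfect and that $\theta_\sigma$ is invertible). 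Once that compatibility is pinned down, invertibility of $\beta_\sigma(G)$ follows immediately from invertibility of $\beta^\times_\sigma(G)$, which is the classical tor-independent base-change theorem for $(-)^{\<\times}$; alternatively, and perhaps more cleanly, one sidesteps $(-)^{\<\times}$ entirely and proves invertibility by the representability chase sketched above, in which case the only real work is the naturality bookkeeping in that chase.
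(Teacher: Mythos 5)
Your opening reduction is on the right track and matches the paper's: since $g$ is affine, it suffices to show that $\R g_*\beta_\sigma(G)$ (equivalently $g_*\beta_\sigma(G)$) is an isomorphism, $g_*$ being conservative on $\Dqc$. However, both of the two routes you sketch from there have problems.

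The ``representability chase'' route breaks at its first step. You write
\[
\Hom_{\D(X')}(F',v^*\<\<f^\flat G)\iso\Hom_{\D(X)}(\R v_*F',f^\flat G),
\]
but this would require $\LL v^*$ (or $v^*$) to be a \emph{right} adjoint of $\R v_*$, i.e.\ $v^!\cong v^*$, which is false for a general $v$. The correct adjunction $\LL v^*\dashv\R v_*$ runs in the opposite direction, giving $\Hom_{\D(X')}(\LL v^*A,B)\iso\Hom_{\D(X)}(A,\R v_*B)$. Here $v$ is the base change of $u$, and in the stated generality $u$ is only assumed to have finite tor-dimension (or to be arbitrary), so there is no reason for $v^*$ to coincide with a right adjoint of $\R v_*$. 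Once this step is removed, the chain does not reach $\Hom_{\D(Y')}(\R g_* F', u^*G)$, and the representability argument does not go through.

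The alternative route via $\beta^\times_\sigma$ and $\xi_f$ is correct as far as it goes---the compatibility square you are worried about is in fact easy (the paper records it in Remark~\ref{beta-times}(a), essentially just by unwinding the two definitions as adjoints)---but it is unavailable at the level of generality the theorem claims. The twisted inverse image $(-)^\times$ and hence $\beta^\times_\sigma$ are only defined (via \cite[4.1.2]{li}) under qcqs hypotheses, and the whole point of Theorem~\ref{indt base change} as stated in the paper is to dispense with qcqs: the paragraph preceding the theorem explains that $(-)^\flat$ supplies the right adjoint to $\R\fst$ \emph{without} that assumption, and that the proof is therefore reorganised to avoid invoking $(-)^\times$. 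Relying on the classical tor-independent base change for $(-)^\times$ would silently reintroduce the qcqs hypothesis.

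The paper's own proof instead identifies $g_*\beta_\sigma(G)$, through the duality isomorphisms of \ref{fst fflat} and the definitions of the counits, with the concrete map
\[
\rho(G)\colon u^*\R\>\sHom_Y(\fst\OX,G\>)\lto \R\>\sHom_{Y'}(u^*\<\<\fst\OX,u^*\<G\>),
\]
and then proves $\rho(G)$ is an isomorphism by localising and using pseudo-coherence (resp.\ perfection) of $\fst\OX$ together with the finite tor-dimension hypothesis on $u$ (this is where \cite[4.6.7]{li} and the induction on the length of a strictly perfect resolution enter). The diagram chase identifying $g_*\beta_\sigma$ with $\rho$ is the genuine content; it is analogous to, but not the same as, your Hom chase, and it is carried out at the level of sheaf-Hom rather than global Hom so that it makes sense without qcqs. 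You would need to replace the broken adjunction step by this identification (or something equivalent) to have a complete proof.
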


\begin{proof}
Since $g$ is affine, \cite[3.10.2.2]{li} makes it enough to show that $g_*\beta_\sigma(G)$ is an isomorphism---i.e., that
the top row of the following natural diagram,  with $\CH\set\sHom$,  composes to an isomorphism. 
\begin{small}
\[\mkern-4mu
\def\1{$g_*v^*\!f^\flat G$}
\def\2{$g_*g^\flat\< g_*v^*\!f^\flat G$}
\def\3{$g_*g^\flat u^{\<*}\<\<\fst f^\flat G$}
\def\4{$g_*g^\flat u^{\<*}\<G$}
\def\5{$g_*\CH_{\sX'}(\CO_{\sX'}, v^*\!f^\flat G\>)$}
\def\6{$\CH_{Y'}(g_*\CO_{\sX'}, g_*v^*\!f^\flat G\>)$}
\def\7{$u^{\<*}\<\<\fst f^\flat G$}
\def\8{$\CH_{Y'}(g_*\CO_{\sX'}, u^{\<*}\<\<\fst f^\flat G\>)$}
\def\9{$\CH_{Y'}(g_*\CO_{\sX'}, u^{\<*}G\>)$}
\def\ten{$u^{\<*}\CH_Y(\fst\OX,G\>)$}
\def\lvn{$\CH_{Y'}(u^{\<*}\<\<\fst\OX, u^{\<*}G\>)\ $}
 \bpic[xscale=3.8, yscale=1.1]

   \node(11) at (1.175,-1){\1};   
   \node(12) at (2.25,-1){\2}; 
   \node(13) at (3.05,-1){\3};   
   \node(14) at (3.8,-1){\4}; 
   
   \node(21) at (1.7,-2){\5};  
   
   \node(31) at (1.9,-3.2){\6};
   
   \node(41) at (1.175,-2.5){\7};   
   \node(42) at (3.05,-3.2){\8}; 
   \node(43) at (3.8,-2.5){\9}; 
 
   \node(51) at (1.175,-4){\ten};  
   \node(52) at (3.8,-4){\lvn};
   
    \draw[->] (11)--(12) ;  
    \draw[->] (12)--(13) node[below=1pt, midway, scale=.75] {$\via\>\theta_{\<\sigma}^{-\<1}$} ;  
    \draw[->] (13)--(14) ;  
    
    \draw[->] (42)--(43)  ;
   
   \draw[->] (51)--(52)  node[below=1pt, midway, scale=.75] {$\quad\rho(G)$} ;
    
    \draw[->] (11)--(41)  node[left, midway, scale=.75] {$\simeq$}
                                    node[right, midway, scale=.75] {$\theta_{\<\sigma}^{-\<1}$} ;
    \draw[->] (41)--(51) node[left, midway, scale=.75] {$\simeq$} ; 
     
    \draw[->] (21)--(31)  ; 
   
    \draw[->] (13)--(42)  node[right, midway, scale=.75] {$\simeq$};
     
    \draw[->] (14)--(43)  node[right, midway, scale=.75] {$\simeq$} ;
    \draw[<-] (43)--(52)  node[right, midway, scale=.75] {$\simeq$} 
                                    node[left, midway, scale=.75] {$\via\>\theta_{\<\sigma}^{-\<1}\!$} ; 
    
    \draw[double distance=2pt] (11)--(21)  ;     
  
   \draw[->] (12)--(2.1,-2.965)  node[right, midway, scale=.75] {$\mkern2mu\simeq$};
  
   \draw[->] (31)--(42) node[above=1pt, midway, scale=.75] {$\via\>\theta_{\<\sigma}^{-\<1}$} ; 
            
  \node at (2.495, -3.62) [scale=.9] {\circled1};
  \node at (1.8, -1.5) [scale=.9] {\circled2};
  \node at (2.62, -2.07) [scale=.9] {\circled3};
  \node at (3.425, -1.9) [scale=.9] {\circled4};         
  \epic
\]
\end{small}
\vskip-16pt
\indent
\emph{The natural map $\rho(G)$  is an isomorphism}: this assertion, being local,  results, e.g., from \cite[4.6.7]{li}
(in whose proof the untreated case where $E$ is strictly perfect---i.e., a bounded 
complex of\/ finite\kf-rank locally free\/ $\OY$-modules---and $H$ arbitrary is  easily disposed of by induction on the number of $n$ such that $E^n\ne 0$.) Thus it~suffices to show that the diagram commutes.
 
Using Proposition~\ref{via duality} and the second diagram in \cite[3.5.6(e)]{li}, 
one verifies---with a bit of patience---that
commutativity of subdiagram~\circled1 is given by~\cite[Lemma 4.6.4]{li} with $F\set\OX$ and 
$f^!$ replaced by $f^\flat$. 

Commutativity of \circled2 results from that of the natural functorial diagram\va{-1} 
 \[
\def\1{$g_*$}
\def\2{$g_*g^\flat\< g_*$}
\def\3{$\CH_{Y'}(\CO_{Y'}, g_*)$}
\def\5{$g_*\CH_{\sX'}(\CO_{\sX'}, \>\id)$}
\def\6{$\CH_{Y'}(g_*\CO_{\sX'}, g_*)$}
 \bpic[xscale=3, yscale=1.1]

   \node(11) at (1,-1){\1};   
   \node(13) at (3,-1){\2}; 
  
   \node(21) at (2,-2){\3};  
   
   \node(31) at (1,-3){\5};
   \node(33) at (3,-3){\6};
   
    \draw[->] (11)--(13) ;  
      
    \draw[->] (31)--(33) ;
    
    \draw[double distance=2pt] (11)--(31) ;
   
    \draw[->] (13)--(33) node[right=1pt, midway, scale=.75] {$\simeq$} ;
   
    \draw[double distance=2pt] (11)--(21)  ;     
  
    \draw[->] (33)--(21)  ; 
           
  \node at (2.6, -1.55) [scale=.9] {\circled2$_1$};
  \node at (1.55, -2.5) [scale=.9] {\circled2$_2$};
 
  \epic
\]
\vskip-2pt\noindent
Here, by the description in \ref{right adjoint} of the counit map $t$, commutativity of~\circled2$_1$  
means just that the natural composite \mbox{$g_*\to g_*g^\flat \<g_*\xto{\<u^{\<*}\<t\>} g_*$} is\va1 the identity; and
that of \circled2$_2$ is given by that of the first diagram\va1 in \cite[3.5.6(e)]{li}.\looseness=-1

Finally, commutativity of \circled 3 and \circled4 is clear.
\end{proof}

\begin{subrems}\label{beta-times} 
(a) If the schemes in Theorem~\textup{\ref{indt base change}} are qcqs, the following diagram, 
with $\xi$ as in~\textup{\ref{flat=times}}  and\/ $\beta_\sigma^{\<\times}(G)$ as in~\textup{\cite[4.4.3]{li},} 
commutes$\>:$\va{-1}
 \[
\def\1{$v^*\<\<f^{\<\times}\<G$}
\def\2{$g^{\<\times} u^*\mkern-.5mu G$}
\def\3{$v^*\<\<f^\flat G$}
\def\4{$g^\flat u^*\mkern-.5mu G$}
 \bpic[xscale=2.5, yscale=1.35]

   \node(11) at (1,-1){\3};   
   \node(12) at (2,-1){\4}; 
   
   \node(21) at (1,-2){\1};  
   \node(22) at (2,-2){\2};
   
    \draw[->] (11)--(12) node[above, midway, scale=.75] {$\beta_\sigma(G)$} ;  
    
    \draw[->] (21)--(22) node[below, midway, scale=.75] {$\beta^{\<\times}_\sigma(G)$} ;

    \draw[->] (11)--(21)  node[left, midway, scale=.75] {$v^*\<\xi^{}_{\<\<f}$}
                                    node[right, midway, scale=.75] {$\simeq$};
    
    \draw[->] (12)--(22) node[right=1pt, midway, scale=.75] {$\xi^{}_{\<g}$}
                                    node[left, midway, scale=.75] {$\simeq$};
  \epic
\]

\va{-6}
Indeed, $\beta_\sigma^{\<\times}(G)$, resp.~$\beta_\sigma(G)$, is by definition adjoint to\va{-1}
\[ 
g_*v^*\<\< f^{\<\times}\<G\xto[\lift1.4,\theta_{\<\sigma}^{-\<1},]{} u^*\!\fst f^{\<\times}\<G \xto[\lift1,\,u^*\<\tau^{}_{\<G},]{}u^*\mkern-.5mu G\>, \ \ \textup{resp.}\ \ 
g_*v^*\<\< f^\flat\<G\xto[\lift1.4,\theta_{\<\sigma}^{-\<1},]{} u^*\!\fst f^\flat\<G \xto[\lift1,\,u^*\<t^{}_{\<G},]{}u^*\mkern-.5mu G,
\]
from which definitions one readily derives the assertion via~\ref{flat=times}. \va1

(b) The special case of Theorem~\ref{indt base change} where the map $u$ is an open immersion
is equivalent to the composite map in Proposition~\ref{qc duality2} being an isomorphism,
cf.~~\cite[4.3.6]{li}. 

(c) A noteworthy result of Neeman \cite[Lemma 5.19]{Nm23} (using \emph{ibid.,} Convention 5.5) implies that when $u$ is flat and $g$ is perfect,  then $\beta_\sigma(G)$ is an isomorphism for all $G\in\Dqc(X)$. 

\end{subrems}

The rest of this section is devoted to realizing the map~$\beta_\sigma$---or equivalently, the map
$\bar g_*\beta_\sigma$---concretely. (As indicated before, this is rather more difficult than doing the same for
$g_*\beta_\sigma$, which was shown in
the proof of Theorem~\ref{indt base change} to be  
naturally isomorphic to the map~$\rho$, whose explicit description was indicated 
in the footnote in the proof of \ref{tensor}.) Locally, a more explicit such realization, in commutative\kf-algebra terms, is given in Proposition~\ref{concrete bc2}.

\smallskip
Until further notice,
the symbols $(-)_*$, $(-)^*$, $\otimes$ and $\sHom$ will have their ordinary (non-derived) meaning.

\begin{subcosa}
Let $Y$ be a ringed space, $\psi\colon\OY\to\CS$ an $\OY$-algebra, and $\oY$ the ringed space $(Y,\CS)$. The category~$\sA(\oY)$ of $\CS$-modules is naturally isomorphic to the category having as objects the
pairs $(\CN, m_{\CN})$ with $\CN$ an $\OY$-module and \mbox{$m_{\CN}\colon \CS\otimes_Y\CN\to \CN$} an $\OY$-homomorphism satisfying the usual conditions for scalar multiplication, and having the obvious morphisms. 

For example, when  $F$ is an $\CS$-module and $G$ an $\OY$-module, the $\CS$-module $\sHom_\psi(F,G)$ is specified as such a pair in \S\ref{2.3}.

Let $u\colon Y'\to Y$ be a map of ringed spaces. Let $\psi'\colon\CO_{Y'}\to\CS'$ be the $\CO_{Y'}$-algebra $u^*\<\psi$,  $\CS\to u_*u^*\<\CS=u_*\CS'$ the natural map, and 
\[
\bar u\colon \oY'\set(Y'\<\<,\CS')\lto (Y\<,\CS)=\colon\oY
\]
the corresponding map of ringed spaces. By definition, essentially, the direct image $\bar u_*(\CM,m_\CM)$
of an $\CS'$-module is the $\CS$-module $(u_*\CM, m_{\mkern.5mu u_*\CM})$ where $m_{\mkern.5mu u_*\CM}$ is the natural composite
\[
\CS\otimes_Y u_*\CM\lto u_*(u^*\<\CS\otimes_{Y'}\CM)\xto{\<\<u_*\mkern.5mu m_{\<\CM}\>} u_*\CM\>.
\]
The direct image of a map of $\CS'$-modules is, in these terms, specified in the obvious way.

One checks that the functor $\bar u_*$ has the left adjoint ${\bar u}^*$  given objectwise by 
\(
{\bar u}^*(\CN,m_\CN) = (u^*\<\<\CN, m_{\mkern.5mu u^*\<\<\CN})
\)
where $m_{u^*\<\<\CN}$ is the natural composite\va{-3}
\begin{equation}\label{u^*m}
u^*\<\CS\otimes_{Y'} u^*\<\<\CN\iso u^*(\CS\otimes_Y\CN\>\>)\xto{\<\<u^*m_\CN\>}u^*\<\<\CN,\\[-2pt]
\end{equation}
and mapwise in the natural way.\va1

Let $\phi\colon \oY=(Y\<,\CS)\to (Y,\OY)$ be the ringed-space map $(\id_Y,\psi)$, and let
$\phi'\colon \oY'=(Y'\<,\CS')\to (Y',\CO_{Y'})$  be $(\id_{Y'},\psi')$. With the preceding~ $\bar u^*$ it holds that $u^*\<\phi_*=\phi'_{\<*}\bar u^*$.
\end{subcosa}

\pagebreak[3]
\begin{sublem}\label{rho and fstOX}
For any complexes\/ $F\in\sA(\oY)$ and $G\in\sA(Y)$ there is a unique\/ $\CS'$-map\va{-3}
\[
\bar\rho^{}_0=\bar\rho^{}_0(F\<,G\>)\colon \bar u^*\sHom_\psi(F\<,G) \to \sHom_{\psi'}(\bar u^*\<\<F\<,u^*\mkern-.5mu G)\\[-1pt]
\]
such that\/ $\phi'_{\<*}\bar \rho^{}_0$ is the natural map\va{-3}
\[
u^*\<\phi_*\<\sHom_\psi(F,G)=u^*\sHom_Y(\phi_*F\<,G)\xto{\,\rho^{}_0\,} \sHom_{Y'}(u^*\<\phi_*F\<,u^*\mkern-.5mu G).
\]
\end{sublem}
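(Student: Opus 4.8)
The plan is to produce the map $\bar\rho_0$ directly as the map of pairs $(\CN,m_\CN)$ described in the preceding subsection, and then check that restriction of scalars along $\phi'$ recovers the classical $\rho_0$. Concretely, $\bar u^*\sHom_\psi(F,G)$ is the $\CS'$-module whose underlying $\CO_{Y'}$-module is $u^*\sHom_Y(\phi_*F,G)$, equipped with the scalar multiplication $m_{u^*\sHom_Y(\phi_*F,G)}$ obtained from \eqref{u^*m} applied to the $\CS$-module structure on $\sHom_\psi(F,G)$ described in \S\ref{2.3}; while $\sHom_{\psi'}(\bar u^*F,u^*G)$ is the $\CS'$-module whose underlying $\CO_{Y'}$-module is $\sHom_{Y'}(\phi'_*\bar u^*F,u^*G)=\sHom_{Y'}(u^*\phi_*F,u^*G)$ (using $\phi'_*\bar u^*=u^*\phi_*$), equipped with the scalar multiplication coming from the $\CS'$-module structure of $\bar u^*F$, i.e.\ from $m_{u^*\phi_*F}$ built via \eqref{u^*m}. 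So the underlying $\CO_{Y'}$-map is forced: it must be $\phi'_*$-equal to $\rho_0$, which by definition of $\rho_0$ (as the natural map $u^*\sHom_Y(A,B)\to\sHom_{Y'}(u^*A,u^*B)$) is the adjoint of the composite $u^*(\sHom_Y(\phi_*F,G)\otimes_Y\phi_*F)\to u^*\sHom_Y(\phi_*F,G)\otimes_{Y'}u^*\phi_*F$ followed by evaluation pulled back. This proves uniqueness immediately, since $\phi'_*$ is restriction of scalars and hence faithful on morphisms.

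For existence, I would verify that this forced underlying map $\rho_0$ is in fact $\CS'$-linear for the two module structures above, i.e.\ that it commutes with the respective scalar multiplications $m_{u^*\sHom_Y(\phi_*F,G)}$ and the $\sHom_{\psi'}$-action. The cleanest route is to unwind both scalar multiplications through \eqref{u^*m}: the source action is $u^*$ applied to the composite that defines scalar multiplication on $\sHom_\psi(F,G)$ in \S\ref{2.3}, and the target action is the one on $\sHom_{\psi'}(\bar u^*F,u^*G)$, which by the displayed formula for $\sHom_\psi$-scalar-multiplication (with $\psi$ replaced by $\psi'$ and $F$ by $\bar u^*F$) is the adjoint of a composite built from $m_{u^*\phi_*F}$ and evaluation. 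Both composites are assembled from: the natural map commuting $u^*$ past $\otimes$; the unit/counit of $(u^*,u_*)$; the Kelly--Mac\,Lane-type coherence isomorphisms among $\otimes$ and internal $\sHom$; and the pulled-back evaluation map. Since $u^*$ is a (strong) symmetric monoidal functor on $\OY$-modules, all of these are compatible, and the required square commutes by coherence — one can, as in the commutativity verifications of \S\ref{2.3}, reduce to checking after adjunction and then invoke \cite[p.\,107, Thm.~2.4]{KM}, or just chase the diagram of canonical maps by hand on sections over affine opens.

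Finally, since everything above is formulated objectwise and all the constituent maps ($u^*$ past $\otimes$, unit, counit, evaluation, coherence isos) are natural in $F$ and $G$, the assignment $(F,G)\mapsto\bar\rho_0(F,G)$ is automatically a morphism of functors; and passing from modules to complexes is term-by-term with differentials respected, because $\rho_0$ and the module structures are built from additive natural transformations. I would organize the write-up as: (1) recall the two $\CS'$-module structures in explicit pair form; (2) observe $\phi'_*$ is faithful on maps, giving uniqueness; (3) check $\CS'$-linearity of $\rho_0$ by unwinding \eqref{u^*m} on both sides and applying coherence; (4) note naturality and passage to complexes are formal. The main obstacle is step (3): correctly matching the scalar multiplication on $\sHom_{\psi'}(\bar u^*F,u^*G)$ — which is defined intrinsically at the level of $\oY'$ — with the pulled-back scalar multiplication on $\bar u^*\sHom_\psi(F,G)$, keeping track of where the various canonical isomorphisms and the adjunction $u^*\dashv u_*$ enter; this is the diagram-chase that the statement is essentially isolating, and it is exactly parallel to (though one layer more involved than) the coherence computation carried out for the scalar multiplication of $\sHom_\psi$ in \S\ref{2.3}.
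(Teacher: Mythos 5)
Your proposal follows the same strategy as the paper: note that $\phi'_*$ is faithful so the underlying $\CO_{Y'}$-map is forced to be $\rho_0$ (uniqueness), then check $\CS'$-linearity of that map by unwinding both scalar multiplications via \eqref{u^*m} and the definition in \S\ref{2.3} (existence). One small caveat: the closing appeal to Kelly--Mac\,Lane coherence does not by itself cover the compatibilities of $u^*$ with internal $\sHom$ that appear in the check (the paper invokes \cite[3.5.6(a)]{li} and \cite[3.5.6(c)]{li} for two of the subdiagrams, which go beyond monoidal coherence), but your alternative of chasing sections over affine opens would fill that in.
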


\begin{proof} This follows easily from the standard explicit realization of $\rho^{}_0$ and of the scalar multiplication map in \S\ref{2.3}.  

More formally, the uniqueness of~$\bar\rho^{}_0$ is obvious, and its existence is given by commutativity of the border\va{.6} of the following natural diagram, in which $\CH\set\sHom$ and $E\set\phi_*F\<$, whose left (resp.~right) column composes to scalar multiplication by $u^*\<\CS=\CS'\>$:
\begin{small}
\[\mkern-2mu
\def\1{$u^*\CH_Y(E\<,G)\otimes^{}_{Y'}\<u^*\<\<\CS$}
\def\2{$\CH_{Y'}(u^*\<\<E\<,u^*\<G)\otimes^{}_{Y'}u^*\<\<\CS$}
\def\3{$u^*(\CH_Y(E\<,\>G)\<\otimes^{}_Y\<\CS)$}
\def\4{$u^*\CH_Y(\CS\otimes^{}_Y\<\<E\<,G)\<\otimes^{}_{Y'}\<\<u^*\<\<\CS$}
\def\5{$u^*(\CH_Y(\CS\otimes^{}_Y\<\<E\<,\>G)\otimes^{}_Y\<\CS)$}
\def\6{$\CH_{Y'}(u^*\<\<\CS\otimes^{}_{Y'}\<u^*\<\<E\<,\>u^*\<G)\otimes^{}_{Y'}\<\<u^*\<\<\CS$}
\def\7{$u^*(\CH_Y(\CS,\CH_Y(E\<,G))\otimes^{}_Y\<\CS)$}
\def\8{$\CH_{Y'}(u^*\<\<\CS,\CH_{Y'}(u^*\<\<E\<,u^*\<G))\otimes^{}_{Y'}\<\<u^*\<\<\CS$}
\def\9{$u^*\CH_Y(E\<,G)$}
\def\ten{$\CH_{Y'}(u^*\<\<E\<,u^*\<G)$}
\def\lvn{$u^*\CH_Y(\CS,\CH_Y(E\<,G))\otimes^{}_{Y'}\<\<u^*\<\<\CS$}
\def\twv{$\CH_{Y'}(u^*\CS,u^*\CH_Y(E\<,G))\otimes^{}_{Y'}\<\<u^*\<\<\CS$}
 \bpic[xscale=4.5, yscale=1.45]

   \node(11) at (1,-1){\1};   
   \node(13) at (2.65,-1){\2}; 
   
   \node(21) at (1,-2){\3};  
   \node(22) at (1.92,-2){\4};
   
   \node(31) at (1, -3){\5};  
   \node(33) at (2.65,-3){\6};
   
   \node(41) at (1,-4){\7}; 
   \node(42) at (1.8,-5){\lvn}; 
   \node(43) at (2.65,-4){\8};
   
   \node(51) at (1,-7){\9}; 
   \node(52) at (2.05,-6){\twv};
   \node(53) at (2.65,-7){\ten};

    \draw[->] (11)--(13) node[above=1pt,midway,scale=.75]{$\rho^{}_0\otimes\id$} ;   
   
    \draw[->] (51)--(53) node[below=1pt,midway,scale=.75]{$\rho^{}_0$} ;
       
   \draw[->] (11)--(21) node[left=1pt, midway, scale=.75]{$\simeq$} ;  
   \draw[->] (21)--(31) node[left=1pt, midway, scale=.75]{$\via \>m_{\<E}$} ; 
   \draw[->] (31)--(41) ;
   \draw[->] (41)--(51) ;
   
   \draw[->] (1.84,-2.2)--(1.84,-4.8) ;
   \draw[->] (1.84,-5.2)--(1.84,-5.8) ;
   
   \draw[->] (13)--(33) node[right=1pt, midway, scale=.75]{$\via \>m_{u^{\<*}\<\<E}$} ; 
   \draw[->] (33)--(43) ;
   \draw[->] (43)--(53) ;
   
    \draw[->] (11)--(22) node[right=1pt, midway]{$\lift1.5,\!\via \>m_{\<\<E},$} ; 
    \draw[->] (1.95,-2.2)--(33) ;
    \draw[->] (31)--(22) ;
    \draw[->] (41)--(1.75,-4.8) ;
    \draw[->] (1.94,-4.8)--(43) ;
    \draw[<-] (51)--(52) ;
    \draw[<-] (43)--(52) node[right=1pt, midway]{$\lift-.5,\mkern-7mu\via \rho^{}_0,$} ;;
    
     \node at (2.1, -1.5) [scale=.9] {\circled1};
     \node at (2.25, -3.5) [scale=.9] {\circled2};
     \node at (1.46, -5.5) [scale=.9] {\circled3}; 
 \epic
\]
\end{small}
\vskip-12pt
Commutativity of the unlabeled subdiagrams is clear. 

That subdiagram \circled1 commutes follows easily from the definition of~ $m_{u^{\<*}\<\<E}$ (see \eqref{u^*m}, with $\CN=E$).

Commutativity of \circled3 results from \cite[3.5.6(a)]{li} with $\big(f,A,B\big)$ replaced by 
$\big(u,\CS,\CH_Y(E\<,G)\big)$.

It suffices now to show commutativity of the following natural diagram,  
with $[-,-]\set\CH_Y(-,-)$ and $[-,-]'\set\CH_{Y'}(-,-)$, whose border is
adjoint to \circled2 without ``$\otimes_{Y'}\>\>u^*\<\CS.$" (Note that
$u^*[A,B]\to[u^*A,u^*B]'$ is adjoint to the natural composite map 
$[A,B\>]\lto[A,u_*u^*\mkern-.5mu B\>]\iso u_*[u^*A,u^*\mkern-.5mu B\>]',$ see \cite[(3.5.4.5)\:\emph{ff}\kf]{li}.)

\[
\def\1{$[S\otimes_Y\<E,G\>]$}
\def\2{$[S\otimes_YE,u_*u^*\mkern-.5mu G\>]$}
\def\3{$u_*[u^*(S\otimes_YE),u^*\mkern-.5mu G\>]'$}
\def\4{$[S,[E,u_*u^*\mkern-.5mu G\>]]$}
\def\5{$u_*[u^*S\otimes_{Y'}u^*E,u^*\mkern-.5mu G\>]'$}
\def\6{$[S,u_*[u^*E,u^*\mkern-.5mu G\>]'\>]$}
\def\7{$u_*[u^*S,[u^*E,u^*\mkern-.5mu G\>]'\>]'$}
\def\8{$[S,[E,G\>]]$}
\def\9{$[S,u_*u^*[E,G\>]]$}
\def\ten{$u_*[u^*S,u^*[E,G\>]]'$}
 \bpic[xscale=4.65, yscale=1.5]

   \node(11) at (1,-1){\1};   
   \node(12) at (2,-1){\2}; 
   \node(13) at (3,-1){\3};
   
   \node(22) at (2,-2){\4};  
   \node(23) at (3,-2){\5};
   
   \node(32) at (2, -3){\6};  
   \node(33) at (3,-3){\7};
   
   \node(41) at (1,-4){\8}; 
   \node(42) at (2,-4){\9};
   \node(43) at (3,-4){\ten};
   
   \draw[->] (11)--(12) node[above=1pt,midway,scale=.75]{$$} ;   
   \draw[->] (12)--(13) ;
   
   \draw[->] (32)--(33) ;
   
   \draw[->] (41)--(42) ;   
   \draw[->] (42)--(43) ;
       
   \draw[->] (11)--(41) node[left=1pt, midway, scale=.75]{$$} ;  
   
   \draw[->] (12)--(22) ; 
   \draw[->] (22)--(32) ;
   \draw[<-] (32)--(42) ;
      
   \draw[->] (13)--(23) node[right=1pt, midway, scale=.75]{$$} ; 
   \draw[->] (23)--(33) ;
   \draw[<-] (33)--(43) ;
 
    \draw[->] (41)--(22)  ; 

     \node at (1.45, -2.02) [scale=.9] {\circled4};
     \node at (2.46, -2.02) [scale=.9] {\circled5};
     \node at (1.55, -3.52) [scale=.9] {\circled6}; 
     \node at (2.46, -3.52) [scale=.9] {\circled7};
 \epic
\]
Commutativity of subdiagrams \circled4 and \circled7 is clear.
Commutativity of \circled6 results from the fact, noted above, that  $u^*[E,G\>]\to [u^*E,u^*\mkern-.5mu G\>]'$ is adjoint to $[E,G\>]\lto[E,u_*u^*\mkern-.5mu G\>]\iso u_*[u^*E,u^*\mkern-.5mu G\>]'.$
  Commutativity of \circled5 is given by \cite[Exercise 3.5.6(c)]{li}, with $(f,E,F,G)\set(u,S,E,u^*\mkern-.5mu G)$.
\end{proof}

\begin{subcosa}\label{decompose}
Let there be given an independent square of scheme\kf-maps
\[
\def\1{$X'$}
\def\2{$X$}
\def\3{$Y'$}
\def\4{$Y$}
\CD
 \bpic[xscale=.9, yscale=.8]

   \node(11) at (1,-1){\1};   
   \node(13) at (3,-1){\2}; 
     
   \node(31) at (1,-3){\3};
   \node(33) at (3,-3){\4};
   
    \draw[->] (11)--(13) node[above=1pt, midway, scale=.75] {$v$} ;  
      
   \draw[->] (31)--(33) node[below=1pt, midway, scale=.75] {$u$} ;
    
    \draw[->] (11)--(31) node[left=1pt, midway, scale=.75] {$g$} ;
   
    \draw[->] (13)--(33) node[right=1pt, midway, scale=.75] {$f$};
  \node at (2.05,-2)[scale=.85] {$\sigma$} ;
 \epic 
\endCD
\]
in which the maps $f$ and $g$ are affine. 
This $\sigma$ decomposes as the border of the commutative diagram of ringed-space maps
\[
\def\1{$X'$}
\def\2{$X$}
\def\3{$\oY'\set(Y'\<\<,\>g_*\CO_{\sX'})$}
\def\4{$\oY\set(Y\<,\>\fst\OX)$}
\def\5{$Y'$}
\def\6{$Y$}
 \bpic[xscale=2.5, yscale=1.65]

   \node(11) at (1,-1){\1};   
   \node(13) at (3,-1){\2}; 
   
   \node(21) at (1,-2){\3};  
   \node(23) at (3,-2){\4};
   
   \node(31) at (1,-3){\5};  
   \node(33) at (3,-3){\6};

    \draw[->] (11)--(13) node[above=1pt,midway,scale=.75]{$v$} ;   

    \draw[->] (21)--(23) node[above=1pt,midway,scale=.75]{$\bar u\set(u,\bar\psi)$} ;   

   \draw[->] (31)--(33) node[below=1pt, midway, scale=.75]{$u$} ;

   \draw[->] (11)--(21) node[left=1pt, midway, scale=.75]{$\bar g\set(g,\id)$} ;  
   \draw[->] (21)--(31) node[left=1pt, midway, scale=.75]{$\phi'\set(\id,\psi')$} ; 
   
   \draw[->] (23)--(33) node[right=1pt, midway, scale=.75]{$\phi\set(\id,\psi)$} ;
   \draw[->] (13)--(23) node[right=1pt, midway, scale=.75]{$\bar f\set(f,\id)$} ;  
  \epic
\]
where $\psi\colon\OY\to\fst\OX$ and $\psi'\colon\CO_{Y'}\to g_*\CO_{X'}$ are the maps associated with $f$ and $g$ respectively, and $\bar\psi$ is the natural composite map
\[
\fst\OX\lto \fst v_*\CO_{\sX'}\iso u_*g_*\CO_{\sX'}\>.
\]
The  ringed-space maps 
$\bar f$ and $\bar g$ are flat (see \S\ref{2.1}), so that the functors $\bar f^*$ 
and~$\bar g^*$ are exact, as are the functors $\phi_*$ and $\phi'_{\<*}$.

There results, for any  $E\in\Dqc(X)$  a natural commutative diagram (see \cite[3.7.2(ii)]{li}):\va{-8}
 \begin{equation}\label{transtheta}
\def\1{$\LL u^*\R(\phi\bar f\>)_*E$}
\def\2{$\R(\phi'\bar g)_{\<*}\LL v^*\<\<E$}
\def\3{$\LL u^*\<\phi_*\R\bar \fst E$}          
\def\4{$\phi'_{\<*}\LL \bar u^*\R\bar f_*E$}
\def\5{$\phi'_{\<*}\>\R\bar g_*\LL v^*\<\<E$}
\CD
 \bpic[xscale=3.5, yscale=1.5]

   \node(11) at (1,-1){\1};   
   \node(13) at (3,-1){\2}; 
   
   \node(21) at (1,-2){\3}; 
   \node(22) at (2.02,-2){\4};     
   \node(23) at (3,-2){\5};
   
    \draw[->] (11)--(13) node[below=1pt,midway,scale=.75]{$\theta_{\<\sigma}(E)$} 
                                   node[above,midway]{$\Iso$};
    \draw[->] (21)--(22) node[below=1pt,midway,scale=.75]{$\theta_1(\R\bar\fst E)$} ;   

   \draw[->] (22)--(23) node[below=1pt, midway, scale=.75]{$\phi'_{\<*}\theta_2(E)$} ;

   \draw[->] (11)--(21) node[left=1pt, midway, scale=.75]{$\simeq$} ;  
   \draw[->] (13)--(23) node[right=1pt, midway, scale=.75]{$\simeq$} ; 
  
 \epic
 \endCD
\end{equation}	

The definition \cite[3.7.2(i)(c)]{li} of $\theta_2$ implies that the following natural diagram 
commutes:\va{-8} 
\[
\def\1{$\bar g^*\LL \bar u^*\R\bar\fst E$}
\def\2{$\bar g^*\R\bar g_*\LL v^*\<\<E$}
\def\3{$\LL v^*\<\<\brf\R\bar f_*E$}          
\def\4{$\LL v^*\<\<E$}
 \bpic[xscale=3.5, yscale=1.35]

   \node(11) at (1,-1){\1};   
   \node(12) at (2,-1){\2}; 
   
   \node(21) at (1,-2){\3}; 
   \node(22) at (2.,-2){\4};     
     
    \draw[->] (11)--(12)  node[above=1pt,midway,scale=.75]{$\bar g^*\theta_2(E)$} ;
    \draw[->] (21)--(22) node[above=1pt,midway,scale=.75]{$\Iso$} 
                                    node[below=1pt,midway,scale=.75]{\ref{^* equivalence}} ;      
    
   \draw[->] (11)--(21) node[left=1pt, midway, scale=.75]{$\simeq$} ;  
   \draw[->] (12)--(22) node[left=1pt, midway, scale=.75]{$\simeq$} 
                                  node[right=1pt,midway,scale=.75]{\ref{^* equivalence}} ; 
  
 \epic
\]
So $\bar g^*\theta_2(E)$---whence, by ~\ref{^* equivalence}, 
$\theta_2(E)$---\emph{is an isomorphism,}
whence so is $\theta_1(\R\bar\fst E)$ (see \eqref{transtheta}), i.e.,  by ~\ref{^* equivalence}, \emph{so is\/ $\theta_1(H\>)$ for any} $H\in\Dqc(\oY)$.    
\end{subcosa}

For a ringed-space map $u\colon Y'\to Y\<$, an $\OY$-complex~$E$ is called $u^*\<$-\emph{acyclic} if the canonical map is an isomorphism $\LL u^*\<\<E\iso u^*\<\<E$.
For example, any K-flat $E$ is $u^*\<$-acyclic. 

For a scheme $Y$, a \emph{strictly perfect} $\OY$-complex is a bounded 
complex of\/ finite\kf-rank locally free\/ $\OY$-modules. Note that an $\OY$-complex is perfect~
if  locally---even \emph{globally} when $Y$ is affine---it is the target of a quasi-isomorphism with source a strictly perfect one \cite[p.\,122, 4.8, p.\,175, 2.2.10, p.\,163, 2.0, and p.\,96, 2.2]{Il}.

\begin{sublem}\label{barrho}
In\/ \textup{\ref{decompose},} let\/ $F\in\D(\oY),$ and assume\/ \emph{either} that
$\phi_*F$ is pseudo\kf-coherent,
$G\in\Dqcpl(Y)$ and $u$ has finite tor-dimension  
\emph{or} that $\phi_*F$ is perfect and\/ $G\in\Dqc(Y)$.  \va2

\noindent\textup{(i)} There is a unique bifunctorial\/ $\D(\oY')$-map
\[
\bar\rho=\bar\rho(F,G)\colon \LL\bar u^*\R\sHom_{\psi}(F\<,G)\to \R\>\sHom_{\psi'}(\LL \bar u^*\<\<F\<,
\>\LL u^*\mkern-.5mu G)
\]
such that  if\/ $G$ is\/ $u^*\<$-acyclic, the following natural diagram commutes\/$:$\va{-7} 

\[
\def\1{$\LL\bar u^*\sHom_{\psi}(F\<,G)$}
\def\2{$\bar u^*\sHom_{\psi}(F\<,G)$}
\def\3{$\LL\bar u^*\R\>\sHom_{\psi}(F\<,G)$}
\def\4{$\sHom_{\psi'}(\bar u^*\<\<F\<,u^*\mkern-.5mu G)$}
\def\5{$\R\>\sHom_{\psi'}(\bar u^*\<\<F\<,u^*\mkern-.5mu G)$}
\def\6{$\R\>\sHom_{\psi'}(\LL \bar u^*\<\<F\<,\LL u^*\mkern-.5mu G)$}
\def\7{$\R\>\sHom_{\psi'}(\LL \bar u^*\<\<F\<,u^*\mkern-.5mu G);$}
 \bpic[xscale=8, yscale=1.2]

   \node(11) at (1.525,-1){\1};   
   \node(12) at (2,-1){\2}; 
   
   \node(21) at (1,-1){\3};  
   \node(22) at (2,-2){\4};

   \node(32) at (2,-3){\5};
   
   \node(41) at (1,-4){\6}; 
   \node(42) at (2,-4){\7};

    \draw[->] (11)--(12) node[above=-.5,midway]{$\Iso$} 
                                   node[below=1pt,midway,scale=.75]{$\bar b$} ;   

    \draw[->] (41)--(42)  node[above=-.5,midway]{$\Iso$} 
                                    node[below=1pt,midway,scale=.75]{$\bar e$} ;
    
   \draw[->] (11)--(21) node[above,midway]{$\Iso$} 
                                  node[below=1pt, midway, scale=.75]{$\bar a$} ;
;  
   \draw[->] (21)--(41) node[left=1pt, midway, scale=.75]{$\bar\rho\>(F\<,G\>)$} ;

   \draw[->] (12)--(22) node[left=1pt, midway, scale=.75]{\textup{\ref{rho and fstOX}}} 
                                  node[right=1pt, midway, scale=.75]{$\bar\rho^{}_0(F\<,G\>)$}  ;  
   \draw[->] (22)--(32) node[left=1pt, midway, scale=.75]{$\simeq$} 
                                  node[right=1pt, midway, scale=.75]{$\bar c$} ; 
   \draw[->] (32)--(42) node[right=1pt, midway, scale=.75]{$\bar d$} ;
  
 \epic
\] 
and this\/ $\bar\rho\>(F\<,G\>)$ is an isomorphism.

\noindent\textup{(ii)} The following diagram commutes.
\[
\def\1{$\phi'_{\<*}\LL \bar u^*\R\>\sHom_\psi(F,G\>)$}
\def\2{$\phi'_{\<*}\R\>\sHom_{\psi'}(\LL\bar u^*\<\<F\<, \LL u^*\mkern-.5mu G\>)$}
\def\3{$\LL u^*\phi_*\R\>\sHom_\psi(F,G\>)$}
\def\4{$\LL u^*\R\>\sHom_Y(\phi_*F,G\>)$}
\def\5{$\R\>\sHom_{Y'}(\phi'_{\<*}\LL\bar u^*\<\<F\<, \LL u^*\mkern-.5mu G\>)$}
\def\6{$\R\>\sHom_{Y'}(\LL u^*\phi_*F\<, \LL u^*\mkern-.5mu G\>)$}
 \bpic[xscale=6, yscale=1.6]

   \node(11) at (1,-1){\1};   
   \node(12) at (2,-1){\2}; 
   
   \node(21) at (1,-2){\3};
   \node(22) at (2,-2){\5};
   
   \node(31) at (1,-3){\4};  
   \node(32) at (2,-3){\6};
 
    \draw[->] (11)--(12) node[above=.5pt, midway, scale=.75] {$\phi'_{\<*}\>\bar\rho$} ;  
    
    \draw[->] (31)--(32) node[below=.5pt, midway, scale=.75] {$\rho$} ;

    \draw[->] (11)--(21)  node[left, midway, scale=.75] {$\simeq$}
                                    node[right, midway, scale=.75] {$\theta_1^{-\<1}$};
    \draw[->] (21)--(31)  node[left, midway, scale=.75] {$\simeq$}
                                    node[right, midway, scale=.75] {\eqref{phiRHompsi}};
     
    \draw[->] (12)--(22)  node[right, midway, scale=.75] {$\simeq$}
                                    node[left, midway, scale=.75] {\eqref{phiRHompsi}};
    \draw[->] (22)--(32)  node[right, midway, scale=.75] {$\simeq$}
                                    node[left, midway, scale=.75] {$\via\>\theta_1$};

  \epic
\]
\end{sublem}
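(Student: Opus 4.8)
\emph{Strategy.} The plan is to realise $\bar\rho$ as the derived version of the map $\bar\rho^{}_0$ of Lemma~\ref{rho and fstOX}, to use the displayed diagram in (i) as its defining property, and then to obtain both the isomorphism assertion in (i) and the compatibility in (ii) by applying the restriction\kf-of\kf-scalars functor $\phi'_{\<*}\colon\D(\oY')\to\D(Y')$: this functor is exact and reflects quasi\kf-isomorphisms, and under it $\bar\rho$ becomes the ordinary $\R\>\sHom$ base\kf-change map $\rho$, whose bijectivity under the present hypotheses was already noted in the proof of Theorem~\ref{indt base change}.

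\textbf{Construction and uniqueness of $\bar\rho$.} Every $G\in\D(Y)$ admits a quasi\kf-isomorphism from a K\kf-flat, hence $u^*\<$-acyclic, complex, so bifunctoriality forces the value of $\bar\rho$ at an arbitrary $G$ to be determined by its values at $u^*\<$-acyclic $G\<$; and for such $G$ the displayed diagram prescribes $\bar\rho(F,G)=\bar e^{-1}\smallcirc\bar d\smallcirc\bar c\smallcirc\bar\rho^{}_0(F,G)\smallcirc\bar b\smallcirc\bar a^{-1}$. This yields uniqueness at once. For existence one must verify that, under the stated hypotheses and for $u^*\<$-acyclic $G\<$, the structural comparison maps $\bar a,\dots,\bar e$ really are isomorphisms, and that the resulting composite does not depend on the auxiliary choices. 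This is a local question on $Y\<$; shrinking $Y$ one reduces it, via the characterisation of pseudo\kf-coherence (resp.\ perfectness) of $\phi_*F$ together with $G\in\Dqcpl(Y)$ (resp.\ $\Dqc(Y)$) and the finite tor\kf-dimension of $u$, to the situation handled in \cite[\S4.6]{li} --- concretely, to the bijectivity of the analogous untwisted comparison maps, which is the content of the reasoning behind \cite[4.6.7]{li}. One then checks, as in \emph{loc.\,cit.}, independence of the resolution and bifunctoriality in $(F,G)$. I expect this local bookkeeping --- which is where all three hypotheses on $\phi_*F$, $G$ and $u$ are actually consumed --- to be the principal obstacle; there is nothing conceptual in it beyond \emph{loc.\,cit.}, but it has to be transported to the $\psi$\kf-twisted framework, with $\bar\rho^{}_0$ playing the part of the untwisted $\rho^{}_0$.

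\textbf{Part (ii).} Apply $\phi'_{\<*}$ to $\bar\rho$. Since both rows and all four columns of the diagram in (ii) are functorial in $G\<$, K\kf-flat resolutions reduce (ii) to the case of $u^*\<$-acyclic $G\<$, where $\bar\rho$ is given by the explicit composite above. The upper square of (ii) is then the compatibility of the isomorphism $\theta_1$ of \S\ref{decompose} with \eqref{phiRHompsi}, while the lower square unwinds --- using Lemma~\ref{rho and fstOX} in the form $\phi'_{\<*}\bar\rho^{}_0=\rho^{}_0$, the transitivity \eqref{transtheta} of the maps $\theta$, and the compatibility of the base\kf-change map $\nu$ of \eqref{nu} with direct image $($Lemmas~\ref{commnu} and~\ref{tensor and _*}, and \cite[3.2.3(ii)]{li}$)$ --- into a diagram of underived sheaf complexes whose commutativity is verified by inspection, e.g.\ after taking sections over an affine open of $Y\<$. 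Thus $\phi'_{\<*}\bar\rho$ is identified with $\rho=\rho(\phi_*F,G)$ under the four vertical isomorphisms.

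\textbf{Conclusion.} Since $\phi'_{\<*}$ reflects isomorphisms in $\D(\oY')$ --- a complex of $\CS'$-modules is acyclic if and only if its underlying complex of $\CO_{Y'}$-modules is --- and $\rho(\phi_*F,G)$ is an isomorphism under the present hypotheses, by \cite[4.6.7]{li} (the strictly\kf-perfect case omitted there being filled in, by an easy induction on the number of nonzero terms, in the proof of Theorem~\ref{indt base change}), part (ii) shows $\phi'_{\<*}\bar\rho$ is an isomorphism, hence so is $\bar\rho(F,G)$. Together with the construction above this completes both parts of the Lemma; the only genuinely laborious steps are the local comparison arguments in the existence part of (i) and the diagram chase reducing (ii) to an underived identity.
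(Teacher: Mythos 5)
Your strategy is essentially the paper's, with one genuine reorganization worth noting. The paper establishes the isomorphism claim of (i) \emph{inside} part (i): it shows that $\phi'_{\<*}\bar\rho^{}_0 = \rho^{}_0$ is an isomorphism (by a local induction on the length of a strictly perfect replacement of $\phi_*F$), hence $\bar\rho^{}_0$ is, hence $\bar\rho$ is, since all the structural maps $\bar a,\dots,\bar e$ are isomorphisms for $u^*$-acyclic $G$. You instead prove only existence and uniqueness of $\bar\rho$ in step one, then prove (ii), and then read off the isomorphism statement of (i) from (ii) together with the facts that $\phi'_{\<*}$ reflects isomorphisms and that $\rho$ is an isomorphism (by \cite[4.6.7]{li}, as already invoked in the proof of Theorem~\ref{indt base change}). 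This reordering is sound: the paper's proof of (ii) uses (i) only through the defining commutative diagram, not through the isomorphism property; and the quasi-coherence of $\R\>\sHom_\psi(F\<,G)$ needed for $\theta_1$ to make sense is established independently via Lemma~\ref{qcHom} and \cite[p.\,218, (2.2.4)]{EGA1}. Both routes pass through the same essential fact---$\phi'_{\<*}$ converts the $\psi$-twisted comparison map to the untwisted $\rho$, whose bijectivity is known---so the difference is organizational, not conceptual.

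The one place where your proposal is thin is the proof of (ii) itself. You say the lower square ``unwinds into a diagram of underived sheaf complexes whose commutativity is verified by inspection, e.g.\ after taking sections over an affine open of $Y$''; and you cite Lemmas~\ref{commnu} and~\ref{tensor and _*} for the compatibility of $\nu$ with direct image. The paper's argument is more specific and cites different prior results: subdiagram $\circled1$ commutes by \cite[Lemma~3.10.1.1]{li} (with $(f,g,u,v)\set(\phi,\phi'\<\<,u,\bar u)$), subdiagram $\circled2$ by Lemma~\ref{rho and fstOX} (which you do cite), and subdiagram $\circled3$ is precisely diagram~(4.6.7.1) of \cite[\S4.6]{li}, which is a nontrivial compatibility of $\rho$ with the $u^*$-vs.-$\LL u^*$ comparison maps that the paper has already shown to commute. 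None of this reduces to a mere underived inspection over affine opens, since several of the arrows involve $\LL u^*$ applied to objects that are not themselves $u^*$-acyclic until one has run an auxiliary induction (the paper notes explicitly, in the proof of (i), that $\sHom_Y(\phi_*F\<,G)$ is $u^*$-acyclic when $G$ is, and that argument is needed before any ``underived'' reading of the diagram is legitimate). So (ii) does require a genuine diagram chase and the cited prior lemmas, not just inspection; if you flesh out your sketch along those lines you will recover the paper's proof.
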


\begin{proof}
(i) First, \emph{the canonical map\/ $\sHom_{\psi}(F\<,G)\to\R\sHom_{\psi}(F\<,G)$ is an isomorphism 
$(\<$whence so is\/ $\bar a)$}---whether or not $G$ is $u^*$-acyclic.
For proving this, application\- of the functor~$\phi_*$ justifies replacing  ``$\>\psi$\kf" by~``$Y$" 
and ``$F$\kf" by~``$\phi_*F$\kf" (see \eqref{phiRHompsi0} and \eqref{phiRHompsi}). Moreover, the question being local, one can assume $\phi_*F$ to be a complex of locally free $\OY$-modules. 
Then one can proceed as in the second- and third-last paragraphs 
of \cite[\S4.6]{li}, with $(E,H)\set(\phi_*F,G)$.
(In line 3 of the third-last paragraph, ``isomorphism" should be ``quasi-isomorphism."
Also, when $\phi_*F$ is strictly perfect and $G$~is arbitrary, induct on the~number of degrees in which $F$ doesn't vanish.)

Likewise, $\bar c$ is an isomorphism.

Now every $\OY$-complex $G$ is
$\D(Y)$-isomorphic to a $u^*\<$-acyclic one, which can be assumed
bounded-below if $u$ has finite tor-dimension and $G\in\Dpl(Y)$ \cite[2.7.5,(vi) and (a)]{li}.
Thus to prove (i) one may assume that $G$ is $u^*$-acyclic, so that
$\bar e\>$ is an isomorphism, whence, via \cite[2.6.5]{li}, the existence and uniqueness of
a map $\bar\rho$ making the diagram commute.\va3

A similar inductive argument shows that if $G$ is $u^*$-acyclic, then for each integer~$n$, the canonical map is an isomorphism 
\[
H^n\LL u^*\sHom_Y(\phi_*F,G)\iso H^nu^*\sHom_Y(\phi_*F,G),
\]
i.e.,
$\sHom_Y(\phi_*F,G)=\phi_*\sHom_\psi(F,G\>)$ is $u^*$-acyclic. Also,
\[
H\set\sHom_{\psi}(F\<,G)\cong\R\sHom_{\psi}(F\<,G) \in \Dqc(\oY),
\] 
as follows via \cite[p.\,218, (2.2.4)]{EGA1} from the exactness of $\phi_*$ and the fact that, by~\eqref{phiRHompsi} and \ref{qcHom}, $\phi_*\R\sHom_\psi(F\<,G)\cong\R\sHom_Y(\phi_* F\<,G)\in\Dqc(Y)$. So as in the remarks right after \eqref{transtheta}, $\theta_1(H)$ is an isomorphism; and, as~noted right after \eqref{u^*m}, $u^*\phi_*=\phi_*'\bar u^*\<$.  Thus, from the natural diagram 
\[
\def\1{$\LL \bar u^*\phi_*H$}
\def\2{$\phi_*'\LL \bar u^*H$}
\def\3{$\bar u^*\phi_*H$}          
\def\4{$\phi_*' \bar u^*H,$}
 \bpic[xscale=3, yscale=1.4]

   \node(11) at (1,-1){\1};   
   \node(12) at (2,-1){\2}; 
   
   \node(21) at (1,-2){\3}; 
   \node(22) at (2,-2){\4};     
     
    \draw[->] (11)--(12)  node[above=1pt,midway,scale=.75]{$\Iso$} 
                                    node[below=1pt,midway,scale=.75]{$\theta_1(H)$} ;
     \draw[double distance=2] (21)--(22) ;      
    
   \draw[->] (11)--(21) node[left=1pt, midway, scale=.75]{$\simeq$} ;  
   \draw[->] (12)--(22) node[right=1pt,midway,scale=.75]{$\phi_*'\bar b$} ; 
  
 \epic
\]
which commutes (see \cite[Lemma 3.10.1.1]{li}), one gets that $\phi_*'\bar b$, 
and hence $\bar b$ itself, is  an isomorphism.

Finally, another similar induction shows that $\phi_*'\bar\rho^{}_0(F\<,G\>)=\rho^{}_0(F\<,G\>)$
is an isomorphism, whence so is $\bar\rho^{}_0(F\<,G\>)$.
Therefore, if $G$ is $u^*$-acyclic then $\bar\rho(F\<,G\>)\cong \bar\rho^{}_0(F\<,G\>)$ is an isomorphism, whence so is $\bar\rho(F\<,G\>)$ even if $G$ is not $u^*$-acyclic.\va1

(ii) This says that in the following natural diagram, where  \mbox{$\CH\set\sHom$,}
subdiagram \circled4 commutes. (The maps labeled $\theta_1^{-1}$ exist 
by the remarks after \eqref{transtheta} because the isomorphic complexes $\sHom_{\psi}(F\<,G)$ and $\R\sHom_{\psi}(F\<,G)$ are 
in $\Dqc(\oY)$, as follows\va{.5} via \cite[p.\,218, (2.2.4)]{EGA1} from the exactness of $\phi_*$ and the fact that, by \ref{qcHom}, $\phi_*\R\sHom_{\psi}(F\<,G)\cong\R\sHom_Y(\phi_* F\<,G)\in\Dqc(Y)$.)
\begin{small}
\[\mkern-4mu
\def\1{$\phi'_{\<*}\LL \bar u^*\CH_\psi(F\<,G\>)$}
\def\2{$\phi'_{\<*}\bar u^*\CH_\psi(F\<,G\>)$}
\def\3{$\LL u^*\<\<\phi_*\CH_\psi(F\<,G\>)$}
\def\4{$u^*\<\<\phi_*\CH_\psi(F\<,G\>)$}
\def\5{$\LL u^*\CH_Y(\phi_*F\<,G\>)$}
\def\6{$u^*\CH_Y(\phi_*F\<,G\>)$}
\def\7{$\phi'_{\<*}\LL \bar u^*\R\CH_\psi(F\<,G\>)$}
\def\8{$\phi'_{\<*}\CH_{\psi'}(\bar u^*\<\<F\<,u^*\mkern-.5mu G\>)$}
\def\9{$\LL u^*\<\<\phi_*\R\CH_\psi(F\<,G\>)$}
\def\ten{$\CH_{Y'}(\phi'_{\<*}\bar u^*\<\<F\<,u^*\mkern-.5mu G\>)$}
\def\lvn{$\LL u^*\R\CH_Y(\phi_*F\<,G\>)$}
\def\twv{$\CH_{Y'}(u^*\<\<\phi_*F\<,u^*\mkern-.5mu G\>)$}
\def\thn{$\phi'_{\<*}\R\CH_{\psi'}(\bar u^*\<\<F\<,u^*\mkern-.5mu G\>)$}
\def\frn{$\R\CH_{Y'}(\phi'_{\<*}\bar u^*\<\<F\<,u^*\mkern-.5mu G\>)$}
\def\ffn{$\R\CH_{Y'}(u^*\<\<\phi_*F\<,u^*\mkern-.5mu G\>)$}
\def\sxn{$\R\CH_{Y'}(\LL u^*\<\<\phi_*F\<,\LL u^*\mkern-.5mu G\>)$}
\def\svn{$\R\CH_{Y'}(\LL u^*\<\<\phi_*F\<,u^*\mkern-.5mu G\>)$}
\def\egn{$\R\CH_{Y'}(\phi'_{\<*}\LL \bar u^*\<\<F\<,\LL u^*\mkern-.5mu G\>)$}
\def\ntn{$\R\CH_{Y'}(\phi'_{\<*}\LL \bar u^*\<\<F\<, u^*\mkern-.5mu G\>)$}
\def\twy{$\phi'_{\<*}\R\CH_{\psi'}(\LL \bar u^*\<\<F\<,\LL u^*\mkern-.5mu G\>)$}
\def\twn{$\phi'_{\<*}\R\CH_{\psi'}(\LL \bar u^*\<\<F\<, u^*\mkern-.5mu G\>)\ $}
 \bpic[xscale=3.23, yscale=.965]

  \node(11) at (1,-.97){\1} ;   
  \node(16) at (3.93,-.97){\2} ; 
  
  \node(22) at (1.465,-2){\3} ;  
  \node(25) at (3.415,-2){\4} ; 
     
  \node(33) at (1.95,-3.03){\5} ;
  \node(34) at (2.9,-3.03){\6} ; 
  
  \node(42) at (1,-4){\7} ;   
  \node(46) at (3.93,-4){\8} ; 
  
  \node(52) at (1.465,-5.03){\9} ;  
  \node(55) at (3.42,-5.03){\ten} ; 
     
  \node(63) at (1.95,-6.06){\lvn} ;
  \node(64) at (2.9,-6.06){\twv} ;  
  
  \node(76) at (3.93,-7){\thn} ; 
  
  \node(85) at (3.42,-8.1){\frn} ; 
     
  \node(94) at (2.9,-9.2){\ffn} ;  

  \node(103) at (1.95,-10){\sxn} ;  
  \node(104) at (2.9,-11){\svn} ; 
  
  \node(112) at (1.95,-12){\egn} ;  
  \node(115) at (3.42,-12){\ntn} ; 
     
  \node(121) at (1,-13){\twy} ;
  \node(126) at (3.93,-13){\twn} ; 
 
    \draw[->] (11)--(16) node[above, midway, scale=.75] {$\phi'_{\<*}\bar b$} ;

    \draw[->] (22)--(25) ;
    
    \draw[->] (33)--(34) node[above, midway, scale=.75] {$b$} ;
    
    \draw[->] (103)--(104) node[above=-1, midway, scale=.75] {$\mkern15mu e$} ;
    
    \draw[->] (112)--(115) node[below, midway, scale=.75] {$$};
    
    \draw[->] (121)--(126) node[below=1pt, midway, scale=.75] {$\phi'_{\<*}\bar e$} ;
    
    \draw[->] (11)--(42) node[left, midway, scale=.75] {$\phi'_{\<*}\bar  a$} ;
    \draw[->] (42)--(121) node[left, midway, scale=.75] {$\phi'_{\<*}\bar\rho$} ;  
    
    \draw[->] (22)--(52) node[left=-1pt, midway, scale=.75] {$$} ;

    \draw[->] (33)--(63) node[left=1pt, midway, scale=.75] {$a$} ;
    \draw[->] (63)--(103) node[left=1pt, midway, scale=.75] {$\rho$} ;
    
    \draw[->] (34)--(64) node[right=1pt, midway, scale=.75] {$\rho^{}_0$} ;  
    \draw[->] (64)--(94) node[right=1pt, midway, scale=.75] {$c$} ;  
    \draw[->] (94)--(104) node[right=1pt, midway, scale=.75] {$d$} ;  
    
    \draw[->] (55)--(85) ;  
    \draw[->] (85)--(115) ;  

    \draw[->] (16)--(46) node[right=1pt, midway, scale=.75] {$\phi'_{\<*}\bar\rho^{}_0$} ;  
    \draw[->] (46)--(76) node[right=1pt, midway, scale=.75] {$\phi'_{\<*}\bar c$} ;  
    \draw[->] (76)--(126) node[right=1pt, midway, scale=.75] {$\phi'_{\<*}\bar d$} ;  
        
    \draw[->] (11)--(22) node[above=-1pt, midway, scale=.75] {$\mkern20mu\simeq$} 
                                   node[below=-4pt, midway, scale=.7] {$\theta^{-\<1}_1\mkern50mu$} ;
    \draw[double distance=2pt] (22)--(33) ;
    \draw[double distance=2pt] (16)--(25) ;
    \draw[double distance=2pt] (25)--(34) ;
    
    \draw[->] (42)--(52) node[above=-1pt, midway, scale=.75] {$\mkern20mu\simeq$} 
                                   node[below=-4pt, midway, scale=.7] {$\theta^{-\<1}_1\mkern50mu$} ;
    \draw[->] (52)--(63) node[above=-1pt, midway, scale=.75] {$\mkern20mu\simeq$} 
                                   node[below=-4pt, midway, scale=.75] {\eqref{phiRHompsi}\kern40pt} ;
    \draw[double distance=2pt] (46)--(55) ;
    \draw[double distance=2pt] (55)--(64) ;

    \draw[->] (76)--(85) node[below=-9pt, midway, scale=.75] {\rotatebox{36.5}
                                              {\kern16pt\eqref{phiRHompsi}}} 
                                   node[above=-2pt, midway, scale=.75] {\rotatebox{37}{$\simeq\mkern10mu$} } ;
     \draw[double distance=2pt] (85)--(94) ;
    \draw[->] (121)--(112) node[above=-1pt, midway, scale=.75] {$\simeq\mkern20mu$} 
                                   node[below=-3pt, midway, scale=.75] {\kern50pt\eqref{phiRHompsi}} ;
    \draw[<-] (112)--(103) node[left=1pt, midway, scale=.75] {$\simeq$} 
                                   node[right=1pt, midway, scale=.75] {$\via\theta_1$} ;
    \draw[<-] (126)--(115) node[above=-1pt, midway, scale=.75] {$\mkern20mu\simeq$} 
                                   node[below=-4pt, midway, scale=.75] {\eqref{phiRHompsi}\kern43pt} ;
    \draw[<- ](115)--(104) node[above=-1pt, midway, scale=.75] {$\mkern20mu\simeq$} 
                                   node[below=-4pt, midway, scale=.75] {$\via\theta_1\mkern60mu$} ;
    
   \node at (2.48,-1.5)[scale=.9]{\circled1} ;
   \node at (3.4,-3.54)[scale=.9]{\circled2} ;
   \node at (2.44,-7)[scale=.9]{\circled3} ;
   \node at (1.465,-7.98)[scale=.9]{\circled4} ;
    
  \epic
\]
\end{small}
\vskip-7pt\noindent
As before, one may assume that $G$ is $u^*$-acyclic, so that the maps $e$ and~$\bar e$ 
are isomorphisms. 

\pagebreak[3]
Commutativity of subdiagram~\circled1 is given by \cite[Lemma 3.10.1.1]{li} (with $(f,g,u,v)\set(\phi,\phi'\<\<,u,\bar u)$)---which holds over the category of arbitrary ringed spaces; and of~\circled2  by Lemma~\ref{rho and fstOX}. Subdiagram~\circled3 is just
diagram (4.6.7.1) toward the end of \cite[\S4.6]{li}, shown there to commute.
By (i), the outer border commutes. Diagram-chasing shows then that 
\circled4 commutes.\va3
\end{proof}

Let $\bar\rho=\bar \rho(\fst\OX,G\>)$ be as in~\ref{barrho}, and 
$\theta_2\colon\LL\bar u^*\R\fst\OX\iso \R\bar g_*\LL v^*\OX$ as in \eqref{transtheta} (an isomorphism by the remarks following \eqref{transtheta}). 
With notation as in \eqref{transtheta}, Proposition~\ref{concbc} below shows that applying~ 
$\>\bar g^*$ to the composite map
\begin{equation}\label{rho1}
\begin{aligned}
\bar\beta_\sigma(G)\colon \LL \bar u^*\<\<\pt G&=\LL\bar u^*\R\>\sHom_{\psi}(\bar\fst\OX,G)\\[2pt]
&\,\underset{\bar\rho}\lto\,
\R\>\sHom_{\psi'}(\LL \bar u^*\<\<\bar\fst\OX\<,\LL u^*\mkern-.5mu G)\\[-3pt]
&\underset{\via\theta_2}\iso
\R\>\sHom_{\psi'}(\bar g_*\CO_{\<X'},\LL u^*\mkern-.5mu G)
=\phi'{}^\flat \LL u^* G
\end{aligned}  
\end{equation}
gives a realization of the base\kf-change map $\beta_\sigma(G)$ of~\ref{indt base change}, a~realization that is concrete, modulo taking resolutions, as far as indicated by \ref{barrho}(i) and the explicit local description 
of~$\theta$ in \cite[Lemma 3.10.1.2]{li}.

 A more explicit local realization of $\beta_\sigma(G)$, in commutative\kf-algebra terms, results from Proposition~\ref{concrete bc2} below. \va1

\begin{sublem}\label{concrete bc} 
In the situation of\/ Theorem~\textup{\ref{indt base change},} $\beta_\sigma(G)$~is the unique\/  $\D(X')$-map\/ 
\mbox{$\beta(G)\colon v^*\!f^\flat G\to g^\flat u^*\mkern-.5mu G$} making the following  diagram
commute.
 \[
\def\1{$\R\bar g_*\LL v^*\mkern-2.5mu f^\flat G$}
\def\2{$\R\bar g_*g^\flat \LL u^*\mkern-.5mu G$}
\def\3{$\LL\bar u^*\<\pt G$}
\def\4{$\phi'{}^\flat \LL u^*\mkern-.5mu G$}
\def\5{$\LL\bar u^*\R\bar\fst f^\flat G$}
 \bpic[xscale=3, yscale=2]

   \node(11) at (1,-1){\1};   
   \node(12) at (1,-2){\2}; 
   
   \node(21) at (2.05,-1){\5};
   
   \node(31) at (3,-1){\3};  
   \node(32) at (3,-2){\4};
   
    \draw[->] (11)--(12) node[left=.5pt, midway, scale=.75] {$\R\bar g_*\beta(G)$} ;  
    
    \draw[->] (31)--(32)  node[right=.5pt, midway, scale=.75] {$\bar\beta_\sigma(G)$} 
                                    node[left=1pt, midway, scale=.75] {\eqref{rho1}} ;

    \draw[<-] (11)--(21)  node[above, midway, scale=.75] {$\Iso$}
                                    node[below, midway, scale=.75] {$\theta_2(f^\flat\<G)$} ;
    \draw[->] (21)--(31)  node[above, midway, scale=.75] {$\Iso$}
                                    node[below=1pt, midway, scale=.75] {\textup{\ref{^* equivalence}}} ;
     
    \draw[->] (12)--(32)  node[above=1pt, midway, scale=.75] {$\Iso$}
                                    node[below=1pt, midway, scale=.75] {\textup{\ref{^* equivalence}}} ;
  
  \epic
\]
\end{sublem}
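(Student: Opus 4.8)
The plan is to separate uniqueness from the identification. First I would dispose of uniqueness: by Proposition~\ref{^* equivalence}, applied to the affine (finite) map $g$, the functor $\R\bar g_*$ restricts to an equivalence $\Dqc(X')\to\Dqc(\oY')$. Since $\LL v^*\!f^\flat G$ and $g^\flat\LL u^*G$ both lie in $\Dqc(X')$ (by \eqref{f and qc1}, \ref{fst fflat} and \ref{qp duality}), the map $\R\bar g_*\beta(G)$ determines $\beta(G)$, so at most one $\D(X')$-map $\beta(G)$ can render the square commutative. Hence the content of the lemma is that $\beta_\sigma(G)$ is that map, i.e.\ that $\R\bar g_*\beta_\sigma(G)$ equals the composite along the other three sides; equivalently, recalling that $\bar g^*$ is quasi-inverse to $\R\bar g_*$ and that $g^\flat=\bar g^*(\phi')^\flat$ by \eqref{fflat}, that $\beta_\sigma(G)$ agrees with $\bar g^*$ applied to $\bar\beta_\sigma(G)$ composed with the canonical isomorphisms (this is the realization announced just before the lemma).

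For that identification I would start from the definition of $\beta_\sigma(G)$ in Theorem~\ref{indt base change}: it is the $\D(X')$-map transposed, under the adjunction $\R g_*\dashv g^\flat$ of Corollary~\ref{right adjoint}, to $\R g_*\LL v^*\!f^\flat G\xrightarrow{\theta_\sigma^{-1}}\LL u^*\R\fst f^\flat G\xrightarrow{\LL u^*t_G}\LL u^*G$. Since $g=\phi'\bar g$, we have $\R g_*=\phi'_*\R\bar g_*$ and $(\phi')^\flat=\R\>\sHom_{\psi'}(\bar g_*\CO_{X'},-)$, and by the construction of the pseudofunctor in \S\ref{pf adj} the adjunction $\R g_*\dashv g^\flat$ is the composite of $\R\bar g_*\dashv\bar g^*$ (Proposition~\ref{^* equivalence}) with $\phi'_*\dashv(\phi')^\flat$ (Corollary~\ref{adjunction0}); likewise $\R\fst=\phi_*\R\bar\fst$. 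I would then insert the decomposition \eqref{transtheta} of $\theta_\sigma$ as $\phi'_*\theta_2(f^\flat G)\smallcirc\theta_1(\R\bar\fst f^\flat G)$ (modulo those identifications) and the factorization of the counit $t_G$ supplied by \ref{represent}(ii) and \ref{counit unit}: $t_G$ is $\R\fst f^\flat G=\phi_*\R\bar\fst\bar f^*\pt G\iso\phi_*\pt G\to G$, the first map from Proposition~\ref{^* equivalence} and the last the ``evaluation at $1$'' counit of $\phi_*\dashv\pt$. Transporting the desired equality across the two adjunction layers and applying $(\phi')^\flat=\R\>\sHom_{\psi'}(\bar g_*\CO_{X'},-)$, what remains is the commutativity of the diagram in Lemma~\ref{barrho}(ii) with $F\set\bar\fst\OX$, combined with the compatibility \eqref{phiRHompsi} of the two descriptions of $\phi_*\R\>\sHom_\psi$, the naturality of the counits of $\R\bar\fst\dashv\bar f^*$ and of $\phi_*\dashv\pt$, and the relation — coming from \eqref{transtheta}, the commutativity of the ringed-space square in \ref{decompose} (so that $\LL v^*\bar f^*=\bar g^*\LL\bar u^*$), and pseudofunctoriality of $\theta$ — between the $\theta_2(f^\flat G)$ in the lemma's diagram and the $\theta_2(\bar\fst\OX)$ occurring in the definition \eqref{rho1} of $\bar\beta_\sigma(G)$.

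In practice the write-up should be one large commutative diagram whose outer border is the asserted identity and whose interior cells are: Lemma~\ref{barrho}(ii); the square \eqref{transtheta} together with the diagram displayed immediately after it (which pins down $\theta_2$); the description of the counit $t$ in Corollary~\ref{right adjoint}; the naturality squares for the counits above; and routine transitivity/compatibility statements for the $\theta$-maps and for composite adjunctions from \cite[\S\S3.3, 3.5, 3.7]{li}. The hard part, I expect, is purely organizational: keeping track, across the three nested levels — $\bar f,\bar g$ via Proposition~\ref{^* equivalence}; $\phi,\phi'$ via Corollary~\ref{adjunction0}; and the base change $u,\bar u,v$ — of which unit or counit is being inserted where, and in particular of the passage from $\theta_2(\bar\fst\OX)$ to $\theta_2(f^\flat G)$. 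No ingredient beyond Proposition~\ref{^* equivalence}, Lemma~\ref{barrho}, Proposition~\ref{represent} and the decomposition \eqref{transtheta} should be needed.
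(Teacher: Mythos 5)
Your proposal follows essentially the same route as the paper: uniqueness via Proposition~\ref{^* equivalence}, reduction to $\D(Y')$ through the adjunction $\phi'_*\dashv\phi'{}^\flat$ (the paper phrases this as injectivity of $\Hom_{\D(\oY')}(E,\phi'{}^\flat F)\to\Hom_{\D(Y')}(\phi'_*E,\phi'_*\phi'{}^\flat F)$, via Corollary~\ref{duality map}), and then the same subdiagram checks---\eqref{transtheta} applied at $\OX$ and at $f^\flat G$, Lemma~\ref{barrho}(ii), and the description of $t_G$. One small understatement in your closing inventory: the paper disposes of the $t_G$ portion by recycling a subdiagram already verified in the proof of Theorem~\ref{indt base change} (which itself drew on Proposition~\ref{via duality} and \cite[Lemma 4.6.4]{li}), not from Proposition~\ref{represent}(ii) alone.
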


\begin{proof} Uniqueness results from Proposition~\ref{^* equivalence}.

Applying $\Hr^0\R\Gamma(W,-)$ to the composite isomorphism in
Corollary~\ref{duality map}, and using the sentence right after \eqref{nu}, one gets that 
for $F\in\D(Y')$ the natural map
\[
\Hom_{\D(\oY')}(E, \phi'{}^\flat \<\<F\>)\lto \Hom_{\D(Y')}(\phi'_{\<*}E, \phi'_{\<*}\phi'{}^\flat \<\<F\>)
\]
has a left inverse, and so is injective. Hence, for Lemma~\ref{concrete bc} to hold it suffices that the border of the following diagram, in which
$\CH\set\R\>\sHom$,
$\bar\rho\set\bar\rho(\fst\OX,G)$, $(-)_*\>\set\R(-)_*$ and $(-)^*\>\set\LL (-)^*$,
commute:

\[\mkern-4mu
\def\1{$\phi'_{\<*}\bar u^*\mkern-1.5mu \pt G$}
\def\2{$\phi'_{\<*} \bar u^*\CH_\psi(\bar\fst\OX,G\>)\qquad$}
\def\3{$\phi'_{\<*}\bar u^*\<\<\bar\fst f^\flat G$}
\def\4{$\phi'_{\<*}\bar g_*v^*\<\< f^\flat G$}
\def\5{$\phi'_{\<*}\bar g_*g^\flat u^*\mkern-.5mu G$}
\def\6{$\phi'_{\<*}\CH_{\psi'}\<(\bar u^*\<\<\mkern-.5mu\bar\fst\OX\<, \<u^*\mkern-.5mu G\>)\qquad\quad $}
\def\7{$g_*v^*\<\< f^\flat G$}
\def\8{$g_*g^\flat u^*\mkern-.5mu G$}
\def\9{$\CH_{Y'}(g_*\CO_{\<X'},u^*\mkern-.5mu G\>)$}
\def\ten{$u^*\!\fst f^\flat G$}
\def\lvn{$\>\>u^*\<\<\phi_*\pt G$}
\def\twv{$u^*\CH_{Y}(\fst\OX,G\>)\>\>$}
\def\thn{$\CH_{Y'}(u^*\!\fst\OX,u^*\mkern-.5mu G\>)$}
\def\frn{$\CH_{Y'}(\<\phi'_{\<*}\bar u^*\<\<\bar\fst\OX,u^*\mkern-.5mu G\>)\ \>$}
\def\ffn{$\CH_{Y'}(u^*\<\<\phi_*\bar\fst\OX,u^*\mkern-.5mu G\>)$}
\def\sxn{$\phi'_{\<*}\CH_{\psi'}(\bar g_*\CO_{\<X'}\<,u^*\mkern-.5mu G\>)$}
 \bpic[xscale=3.6, yscale=1.75]

   \node(10) at (.6,-1){\4};   
   \node(13) at (3.5,-1){\2}; 
   
   \node(22) at (1.76,-1){\3};     
   
   \node(31) at (2.55,-1){\1};  
   \node(32) at (.6, -6){\5};  
   \node(33) at (3.5, -6){\6};
   \node(34) at (1.76, -6){\sxn};
     
   \node(41) at (.9, -2){\7};  
   \node(42) at (.9, -5){\8};  
   \node(53) at (2.93,-5){\frn}; 
   
   \node(51) at (1.76,-2){\ten}; 
   \node(43) at (1.76, -5){\9};
   \node(63) at (2.93,-4){\ffn};
   
   \node(50) at (2.55,-3){\lvn};  
   \node(60) at (1.76,-3){\twv};
   \node(62) at (1.76,-4){\thn};

    \draw[->] (10)--(32) node[ left=-.5pt,midway,scale=.75]{$\phi'_{\<*}\bar g_*\beta_\sigma$}  ;    
    
    \draw[<-] (34)--(33) node[below,midway,scale=.75]{$\via\theta_2$} ;
     
    \draw[->] (41)--(42) node[ right=1pt,midway,scale=.75]{$g_*\beta_\sigma$} ;
    \draw[->] (42)--(43) node[above=1pt,midway,scale=.75]{\kern-1pt\ref{fst fflat}\kern1pt} ;
   
    \draw[->] (50)--(60) node[below=1pt,midway,scale=.75]{\eqref{phiRHompsi} } ;
    \draw[->] (60)--(62) node[left=1pt,midway,scale=.75]{$\rho^{\mathstrut}$} ;
    \draw[double distance=2pt] (62)--(63) ;
    
   \draw[->] (22)--(51) node[left, midway, scale=.75]{$\theta_1^{-\<1}$} ;
   \draw[->] (31)--(50) node[right, midway, scale=.75]{$\theta_1^{-\<1}$} ;
                              
   \draw[double distance=2pt] (10)--(41) ;
   \draw[->] (41)--(51) node[below, midway, scale=.75]{$\theta_{\<\sigma}^{-\<1}$} ;
   
   \draw[double distance=2pt] (32)--(42) ;
   
   \draw[->] (13)--(33) node[right=1pt, midway, scale=.75]{$\phi'_{\<*}\bar \rho$} ;
   \draw[<-] (33)--(53) node[above=-8pt, midway, scale=.75]{\eqref{phiRHompsi}\kern45pt} ;
   \draw[<-] (43)--(53) node[below=1pt, midway, scale=.75]{$\via\theta_2$} ;
   \draw[<-] (53)--(63) node[right, midway, scale=.75]{$\via\theta_1$} ;
    
   \draw[<-] (22)--(10) node[above,midway,scale=.75]{$\phi'_{\<*}\theta_2^{-\<1}$} ;
   \draw[->] (22)--(31) node[above,midway,scale=.75]{\ref{^* equivalence}\kern.5pt} ; 
   \draw[double distance=2pt] (31)--(13)  ; 
     \draw[->] (32)--(34) node[below=1pt,midway,scale=.75]{\ref{^* equivalence}} 
                                   node[above,midway,scale=.75]{$\Iso$};
  
   \draw[->] (43)--(34) node[left=1pt,midway,scale=.75]{\eqref{phiRHompsi}} ;
   \draw[->] (51)--(50) node[above=-1.25pt,midway,scale=.75]{\kern35pt\ref{^* equivalence}} ;
   \draw[->] (51)--(60) node[left=1pt,midway,scale=.75]{\ref{fst fflat}} ;
   \draw[<-] (43)--(62) node[left,midway,scale=.75]{$\via\theta_{\<\sigma}$} ;
   
   \node at (2.93, -3.53)[scale=.9]  {\circled3};
   \node at (1.23, -1.52)[scale=.9]  {\circled1};
   \node at (2.35, -4.52)[scale=.9]  {\circled4};
   \node at (1.37, -3.53)[scale=.9]  {\circled2};
 
 \epic
\]

The unlabeled subdiagrams are easily seen to commute.  Commutativity of subdiagrams \circled1 and~\circled4  is given by that of the diagram~\eqref{transtheta}, with $E\set f^\flat G$ or $\OX$; commutativity of~\circled2 was shown in the proof of~\ref{indt base change}; and commutativity of~subdiagram~\circled3 is \ref{barrho}(ii). The conclusion follows.
\end{proof}

\begin{subprop}\label{concbc}
The following natural diagram commutes.
 \[
\def\1{$\LL v^*\<\<f^\flat G$}
\def\2{$g^\flat \LL u^*\mkern-.5mu G$}
\def\3{$\bar g^*\LL\bar  u^*\<\pt G$}
\def\4{$\bar g^*\phi'{}^\flat \LL u^* G$}
\def\5{$\LL v^*\<\<\bar f^*\pt G$}
 \bpic[xscale=2.5, yscale=2.25]

   \node(11) at (1,-1){\1};   
   \node(12) at (1,-2){\2}; 
   
   \node(21) at (1.95,-1){\5};
   
   \node(31) at (3,-1){\3};  
   \node(32) at (3,-2){\4};
   
    \draw[->] (11)--(12) node[left=.5pt, midway, scale=.75] {$\beta_\sigma(G)$} ;  
    
    \draw[->] (31)--(32)  node[right=.5pt, midway, scale=.75] {$\bar g^*\<\<\bar\beta_\sigma$} 
                                    node[left=1pt, midway, scale=.75] {\eqref{rho1}} ; 
    \draw[double distance = 2pt]  (11)--(21) ;
    
    \draw[->] (21)--(31)  node[above, midway, scale=.75] {$\Iso$} ;
     
    \draw[double distance = 2pt]  (12)--(32) ;
  
  \epic
\]

\end{subprop}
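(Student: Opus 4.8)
The plan is to deduce this from Lemma~\ref{concrete bc}. The two maps to be compared, $\beta_\sigma(G)$ and the composite $\delta$ running along the bottom and right edges of the square, are $\D(X')$-maps between the $\Dqc(X')$-objects $\LL v^*\<\<f^\flat G$ and $g^\flat\LL u^* G$, so, $\R\bar g_*$ being an equivalence on $\Dqc(X')$ (Proposition~\ref{^* equivalence}), it suffices to show $\R\bar g_*\delta=\R\bar g_*\beta_\sigma(G)$. Now Lemma~\ref{concrete bc} says that $\beta_\sigma(G)$ is the unique $\D(X')$-map $\beta$ making the rectangle there commute; reading off that rectangle,
\[
\R\bar g_*\beta_\sigma(G)=a_2^{-1}\circ\bar\beta_\sigma(G)\circ a_1\circ\theta_2(f^\flat G)^{-1},
\]
where $a_1\colon\LL\bar u^*\R\bar f_* f^\flat G\iso\LL\bar u^*\pt G$ and $a_2\colon\R\bar g_* g^\flat\LL u^* G\iso\phi'{}^\flat\LL u^* G$ are the isomorphisms furnished by Proposition~\ref{^* equivalence} (here $f^\flat G=\bar f^*\pt G\in\Dqc(X)$ by Lemma~\ref{fst fflat}, and $\phi'{}^\flat\LL u^* G=\R\>\sHom_{\psi'}(\bar g_*\CO_{X'},\LL u^* G)\in\Dqc(\oY')$ by~\eqref{phiRHompsi} and Lemma~\ref{qcHom}, the standing hypotheses on $f$, $u$, $G$ being inherited by $g$). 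In particular $a_2=(\eta^{\bar g}_{\phi'{}^\flat\LL u^* G})^{-1}$, the inverse of the unit of the adjunction $\bar g^*\dashv\R\bar g_*$, and $a_1=\LL\bar u^*\big((\eta^{\bar f}_{\pt G})^{-1}\big)$, where $(\eta^{\bar f}_{\pt G})^{-1}\colon\R\bar f_*\bar f^*\pt G\iso\pt G$ is the Proposition~\ref{^* equivalence} isomorphism.

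Next I would unwind $\delta$. By~\eqref{fflat} applied to $f$ and to $g$ one has $f^\flat=\bar f^*\pt$, $g^\flat=\bar g^*\phi'{}^\flat$, and tracing the square gives $\delta=\bar g^*\bar\beta_\sigma(G)\circ c$, where $c\colon\LL v^*\bar f^*\pt G\iso\bar g^*\LL\bar u^*\pt G$ is the canonical isomorphism expressing pseudofunctoriality of $\LL(-)^*$ on the commuting square $\bar f v=\bar u\bar g$ of flat ringed-space maps from~\ref{decompose}. Applying $\R\bar g_*$, composing with $a_2$, and using naturality of $\eta^{\bar g}$ at $\bar\beta_\sigma(G)$ yields
\[
a_2\circ\R\bar g_*\delta=(\eta^{\bar g}_{\phi'{}^\flat\LL u^* G})^{-1}\circ\R\bar g_*\bar g^*\bar\beta_\sigma(G)\circ\R\bar g_*(c)=\bar\beta_\sigma(G)\circ(\eta^{\bar g}_{\LL\bar u^*\pt G})^{-1}\circ\R\bar g_*(c).
\]
Comparing with $a_2\circ\R\bar g_*\beta_\sigma(G)=\bar\beta_\sigma(G)\circ a_1\circ\theta_2(f^\flat G)^{-1}$, the whole statement is reduced to the single identity
\[
\theta_2(\bar f^*\pt G)=\R\bar g_*(c)^{-1}\circ\eta^{\bar g}_{\LL\bar u^*\pt G}\circ\LL\bar u^*\big((\eta^{\bar f}_{\pt G})^{-1}\big).
\]

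It remains to verify this identity, which concerns only the base-change map $\theta_2$ of~\ref{decompose}—the map $\theta_{\sigma'}$ of~\eqref{theta} attached to the ringed-space square $\sigma'$ with edges $v$, $\bar g$, $\bar u$, $\bar f$—and the canonical isomorphisms. By the counit-form of the base-change map (\cite[3.7.2]{li}; recall $\LL\bar f^*=\bar f^*$, $\LL\bar g^*=\bar g^*$ since $\bar f$, $\bar g$ are flat), $\theta_2(E)$ is adjoint under $\bar g^*\dashv\R\bar g_*$ to $\bar g^*\LL\bar u^*\R\bar f_* E\iso\LL v^*\bar f^*\R\bar f_* E\xrightarrow{\LL v^*\epsilon^{\bar f}_E}\LL v^* E$, with $\epsilon^{\bar f}$ the counit of $\bar f^*\dashv\R\bar f_*$ and the first arrow the component of the canonical isomorphism $\bar g^*\LL\bar u^*\iso\LL v^*\bar f^*$ (whose inverse at $\pt G$ is $c$). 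Take $E=\bar f^*\pt G$; since $\pt G\in\Dqc(\oY)$ the unit $\eta^{\bar f}_{\pt G}$ is invertible (Proposition~\ref{^* equivalence}), so the triangle identity forces $\epsilon^{\bar f}_{\bar f^*\pt G}=\bar f^*\big((\eta^{\bar f}_{\pt G})^{-1}\big)$; then naturality of $\bar g^*\LL\bar u^*\iso\LL v^*\bar f^*$ at $(\eta^{\bar f}_{\pt G})^{-1}$, followed by naturality of $\eta^{\bar g}$, rewrites that adjoint description as precisely the displayed identity. That closes the argument. The only real difficulty is bookkeeping: pinning down the two isomorphisms labelled ``\ref{^* equivalence}'' in Lemma~\ref{concrete bc} as the units $\eta^{\bar f}$, $\eta^{\bar g}$, and checking that $\theta_2$ restricts on the essential image of $\bar f^*$ to the evident composite of canonical isomorphisms; nothing beyond Proposition~\ref{^* equivalence}, Lemma~\ref{concrete bc}, \eqref{fflat}, and the formal properties of $\theta$ in \cite[\S3.7]{li} is needed, and the only genuine risk is a direction error in one of these canonical isomorphisms, which the triangle-identity check precludes.
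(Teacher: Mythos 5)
Your proof is correct and takes essentially the same approach as the paper: both reduce via the equivalence $\R\bar g_*$ of Proposition~\ref{^* equivalence}, invoke Lemma~\ref{concrete bc} to pin down $\R\bar g_*\beta_\sigma(G)$, and settle the remaining comparison by unwinding the adjointness description of $\theta_2$ from \cite[3.7.2(i)]{li} together with the invertibility of the units $\eta^{\bar f}\<\<,\>\eta^{\bar g}$. The paper carries this out as a diagram chase (inserting $\bar g^*\bar g_*$ and $\bar f_*\bar f^*$ nodes and checking subdiagrams), whereas you do it as an explicit equation-level computation, but the ingredients and logical content are identical.
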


\begin{proof} 
The diagram, without ``$\>G,\>$" expands naturally to the following one,
where $(-)^*\set\LL (-)^*$ and  $(-)_*\set\R(-)_*$\,:
\[
\def\1{$v^*\<\<\bar f^*\<\pt$}
\def\2{$\bar g^*\bar  u^*\<\pt$}
\def\3{$v^*\<\<\bar f^*\<\<\bar\fst\bar f^*\<\pt$}
\def\4{$\bar g^*\bar  u^*\<\<\bar\fst\bar f^*\<\pt$}
\def\5{$\bar g^*\bar g_*v^*\<\<\bar f^*\<\pt$}
\def\6{$\bar g^*\bar g_*\bar g^*\<\phi'{}^\flat u^*$}
\def\7{$\bar g^*\<\phi'{}^\flat u^*$}
 \bpic[xscale=3.75, yscale=1.45]

   \node(11) at (1,-1){\1};   
   \node(13) at (3,-1){\2}; 
   
   \node(21) at (1.75,-1.42){\3};     
   \node(22) at (2.6, -1.85){\4};  
 
   \node(32) at (2, -3.1){\5};  
 
   \node(41) at (1, -4.15){\7};
   \node(42) at (2,-4.15){\6};
   \node(43) at (3,-4.15){\7};
     
    \draw[->] (11)--(13) node[above=1pt,midway,scale=.75]{$\Iso$}  ;    
    
    \draw[->] (21)--(22) node[above=1pt,midway,scale=.75]{$$} ;
    
    \draw[->] (41)--(42) node[above=1pt,midway,scale=.75]{$\Iso$} ; 
    \draw[->] (42)--(43) node[above=1pt,midway,scale=.75]{$\Iso$} ;
    
   \draw[->] (11)--(41) node[left, midway, scale=.75]{$\beta_\sigma$} ;
   \draw[->] (32)--(42) node[right, midway, scale=.75]{$\bar g^*\bar g_*\beta_\sigma$} ;
   \draw[->] (13)--(43) node[right, midway, scale=.75]{$\bar g^*\<\<\bar\beta_\sigma$} ;

    \draw[->] (11)--(21) node[above=1pt,midway,scale=.75]{$$}  ;    
    \draw[->] (11)--(32) node[above=1pt,midway,scale=.75]{$$}  ;
    \draw[->] (22)--(32) node[below=-2.1pt,midway,scale=.75]{$\mkern40mu \bar g^*\theta_2$} ;
    \draw[->] (13)--(22) node[above=1pt,midway,scale=.75]{$$} ;
    \draw[->] (41)--(42) node[above=1pt,midway,scale=.75]{$$} ;

   \node at (2, -2.3) [scale=.9] {\circled1};
   \node at (2.6, -3.12) [scale=.9] {\circled2};
 
 \epic
\]
Commutativity of subdiagram \circled1 results from \ref{^* equivalence} and adjointness of~$\theta_2$ to the natural composite  $\bar g^*\bar u^*\<\<\bar\fst\iso v^*\<\<\bar f^*\<\<\bar\fst\to v^*$ (see \cite[3.7.2(i)]{li}); and that of~\circled2 from Lemma~\ref{concrete bc}. That of the other two subdiagrams is clear.
\end{proof}


\begin{cosa}\label{tensor and flat}
For a pseudo\kf-coherent finite map $f \colon X\to Y\<$, and $F,G\in\Dqcpl(Y)$, with $G$ 
perfect (so that  $f^\flat\<\<F\Otimes{\sX} \LL f^*\<G \in\Dqcpl(X)$ and $F\Otimes{Y}G\in\Dqcpl(Y))$,
Proposition~\ref{flat and tensor}  provides a concrete representation of the map
\[
\chi=\chi(f,F,G)\colon f^\flat\<\<F\Otimes{\sX} \LL f^*\<G\lto f^\flat\<(F\Otimes{Y}G\>)
\]
that is defined abstractly to be 
adjoint under\/ \textup{\ref{right adjoint}} to the natural composite 
\[
\R\fst(f^\flat\<\< F\Otimes{\mkern.5mu\oY}\LL f^*\<G\>)
\underset{\lift1.2,p,}\iso\R\fst f^\flat\<\< F\Otimes{Y}G\lto F\Otimes{Y}G,\\[-3pt]
\]   
where\/ $p$ is the first  (projection) isomorphism in\/~\eqref{projf} with $E\set f^\flat F$.

The \emph{pseudofunctoriality} of $\chi$ is explicated in Proposition~\ref{pfchi}.
\va3

Let $(Y,\OY)$ be a ringed space, and
\begin{equation}\label{gam0}
\gamma^{}_0(E,F,G\>)\colon\sHom_Y(E,F\>)\otimes_Y G\lto
\sHom_Y(E,F\<\otimes_Y\< G\>)
\end{equation} 
the natural map of $\OY$-complexes. The next result is, \emph{mutatis mutandis,} an instance of \cite[Corollary 2.6.5]{li}.

\begin{sublem} \label{gamma}
There exists a unique trifunctorial map
\[
\gamma(E,F,G\>)\colon \<\R\>\sHom_Y(E,F\>)\Otimes{Y}G\to\R\>\sHom_Y(E,F\Otimes{Y}G\>)
\quad\<(E,\>F,\>G\in\D(Y))
\]
such that if $F$ is K-injective and\/ $G$ is K-flat then the following natural diagram commutes.
\[\mkern-4mu
\def\1{$\sHom_Y(E,F\>)\otimes_Y G$}
\def\2{$\sHom_Y(E,F\>)\Otimes{Y} G$}
\def\3{$\R\>\sHom_Y(E,F\>)\Otimes{Y} G$}
\def\4{$\sHom_Y(E,F\otimes_Y G\>)$}
\def\5{$\R\>\sHom_Y(E,F\<\otimes_{Y} G\>)$}
\def\6{$\R\>\sHom_Y(E,F\<\Otimes{Y}G\>)$}
  \bpic[xscale=4.5, yscale=1.57]

   \node(11) at (1,-1){\1};  
   \node(12) at (1.95,-1){\2};   
   \node(13) at (3,-1){\3}; 
   
   \node(21) at (1,-2){\4}; 
   \node(22) at (1.97,-2){\5};  
   \node(23) at (3,-2){\6};
     
    \draw[->] (11)--(12) node[above, midway, scale=.75] {$\Iso$} ;
    \draw[->] (12)--(13) node[above, midway, scale=.75] {$\Iso$} ;   
    
    \draw[->] (21)--(22) ;
    \draw[->] (22)--(23) node[above, midway, scale=.75] {$\Iso$} ;   
   
    \draw[->] (11)--(21) node[left=1pt, midway, scale=.75] {$\gamma^{}_0$} ;
     
    \draw[->] (13)--(23) node[right=1pt, midway, scale=.75] {$\gamma$} ;
  \epic
\]
\end{sublem}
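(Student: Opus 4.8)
As the statement signals, this is \emph{mutatis mutandis} an instance of \cite[Corollary 2.6.5]{li}; the plan is to present both sides as derived functors that can be computed on one common class of adapted resolutions, on which $\gamma^{}_0$ is visibly compatible with the canonical comparison maps into those derived functors, and then to quote that corollary.

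In detail: for $E,F,G\in\D(Y)$ I would fix a K-flat resolution $P\to G$ and a K-injective resolution $F\to I$. Then $\sHom_Y(E,I)$ represents $\R\>\sHom_Y(E,F)$ by definition, and, $P$ being K-flat, the complex $\sHom_Y(E,I)\otimes_Y P$ represents $\R\>\sHom_Y(E,F)\Otimes{Y}G$ — note that it is legitimate to resolve only $G$ here, not $\sHom_Y(E,F)$, precisely because $P$ is K-flat. On the other side $F\otimes_Y P$ represents $F\Otimes{Y}G$, so, choosing a K-injective resolution $F\otimes_Y P\to J$, the complex $\sHom_Y(E,J)$ represents $\R\>\sHom_Y(E,F\Otimes{Y}G)$. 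Under these identifications, $\gamma(E,F,G\>)$ is to be the $\D(Y)$-morphism given by the composite of complex maps
\[
\sHom_Y(E,I)\otimes_Y P\xrightarrow{\,\gamma^{}_0(E,I,P)\,}\sHom_Y(E,I\otimes_Y P)\longrightarrow\sHom_Y(E,J),
\]
the second arrow being $\sHom_Y(E,-)$ applied to a morphism $I\otimes_Y P\to J$ in $\mathbf K(Y)$ that is compatible, up to homotopy, with $F\otimes_Y P\to I\otimes_Y P$ and with $F\otimes_Y P\to J$. Such a morphism exists, and is unique up to homotopy, because $J$ is K-injective and — the key point — $F\otimes_Y P\to I\otimes_Y P$ is a quasi-isomorphism, K-flatness of $P$ guaranteeing that tensoring the quasi-isomorphism $F\to I$ with $P$ preserves quasi-isomorphisms. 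Consequently $I\otimes_Y P\to J$ is itself a quasi-isomorphism, and the standard comparison arguments show that $\gamma(E,F,G\>)$ is independent of the choices of $P$, $I$ and $J$.

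With this recipe in hand, \cite[Corollary 2.6.5]{li} applies: the triples $(E,F,G\>)$ with $F$ K-injective and $G$ K-flat form a class adapted to both functors $(E,F,G\>)\mapsto\R\>\sHom_Y(E,F)\Otimes{Y}G$ and $(E,F,G\>)\mapsto\R\>\sHom_Y(E,F\Otimes{Y}G\>)$ (right-derived in $F$, left-derived in $G$, with $E$ a contravariant parameter), and on this class $\gamma^{}_0$ is, by construction, compatible with the canonical maps into the two derived functors. The corollary then yields a unique trifunctorial $\D(Y)$-map $\gamma$ making the displayed diagram commute when $F$ is K-injective and $G$ is K-flat. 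Uniqueness can also be checked directly: for such $F$, $G$ the top row of the diagram consists of isomorphisms, so $\gamma(E,F,G\>)$ is forced, and a general triple is linked to one of this special form by isomorphisms, whence any two candidates coincide by naturality.

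The step I expect to take the most care is the ``adaptedness'' bookkeeping underlying the appeal to \cite[Corollary 2.6.5]{li} — verifying that resolving $G$ alone suffices to compute $\R\>\sHom_Y(E,F)\Otimes{Y}G$ for every $E$, and that the quasi-isomorphism $F\otimes_Y P\to I\otimes_Y P$ really does legitimize the comparison map $I\otimes_Y P\to J$. Once these are settled, everything else is formal, following the proof of \cite[Corollary 2.6.5]{li} essentially verbatim.
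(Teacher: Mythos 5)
Your proposal is correct and follows the same route the paper indicates: the paper offers no explicit proof beyond the remark, immediately preceding the statement, that the lemma is \emph{mutatis mutandis} an instance of \cite[Corollary 2.6.5]{li}, and you are unpacking exactly why that corollary applies — choosing a K-injective resolution $F\to I$ and a K-flat resolution $P\to G$ so that both sides are computed on the homotopy category, noting that $F\otimes_Y P\to I\otimes_Y P$ is a quasi-isomorphism (K-flatness of $P$), and lifting into a K-injective model of $F\Otimes{Y}G$ to obtain the comparison map and its uniqueness. The details you flag as needing care (that resolving $G$ alone suffices on the tensor side, and that the lifting argument is legitimate) are precisely what the paper's terse citation delegates to \cite[2.6.5]{li}, so nothing is missing.
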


\begin{subcor}\label{gam for O}
The map\/ $\gamma(\OY\<, F, \>G)$ factors naturally as
\[
\R\>\sHom_Y(\OY\<, F\>)\Otimes{Y}G\iso F\Otimes{Y}G\iso\R\>\sHom_Y(\OY\<, F\Otimes{Y}G\>).
\]
\end{subcor}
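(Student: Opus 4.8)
The plan is to read the factorization off the characterization of $\gamma$ in Lemma~\ref{gamma}. Both $\gamma(\OY,F,G)$ and the displayed composite are morphisms of $\D(Y)$ depending functorially on $F\in\D(Y)$ and $G\in\D(Y)$, and every object of $\D(Y)$ is $\D(Y)$-isomorphic to a K-injective, resp.\ to a K-flat, complex; so I would first reduce, by naturality, to the case where $F$ is K-injective and $G$ is K-flat. (Here one uses that the two displayed isomorphisms are the evident canonical ones, hence natural in $F$ and $G$, and that $\gamma$, being trifunctorial, is compatible with the quasi-isomorphisms entering the computation of $\R\>\sHom_Y$ and of $\Otimes{Y}$.)

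For such $F$ and $G$ one has $\R\>\sHom_Y(\OY,F)=\sHom_Y(\OY,F)$ and $F\Otimes{Y}G=F\otimes_Y G$, the top row of the diagram in Lemma~\ref{gamma} (taken with $E\set\OY$) is an identity, and that lemma therefore presents $\gamma(\OY,F,G)$ as the composite
\[
\R\>\sHom_Y(\OY,F)\Otimes{Y}G=\sHom_Y(\OY,F)\otimes_Y G\xto{\,\gamma^{}_0(\OY,F,G)\,}\sHom_Y(\OY,F\otimes_Y G)\lto\R\>\sHom_Y(\OY,F\Otimes{Y}G),
\]
the last arrow being the canonical map. The next step is to bring in the standard natural isomorphism $\ell\colon\sHom_Y(\OY,-)\iso\id$ of a closed category (\cite[3.5.3]{li}): it identifies the last arrow with $\ell^{-1}_{F\otimes_Y G}$, i.e.\ with the second isomorphism in the statement; and, since under $\ell$ the evaluation map $\sHom_Y(\OY,F)\otimes_Y\OY\to F$ becomes $\id_F$, unwinding the definition of $\gamma^{}_0$ as the transpose of $(\sHom_Y(\OY,F)\otimes_Y G)\otimes_Y\OY\iso(\sHom_Y(\OY,F)\otimes_Y\OY)\otimes_Y G\xto{\mathrm{ev}\otimes\id}F\otimes_Y G$ shows that $\ell$ carries $\gamma^{}_0(\OY,F,G)$ to $\id_{F\otimes_Y G}$. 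Substituting these identifications into the displayed composite produces exactly the two-step composite $\R\>\sHom_Y(\OY,F)\Otimes{Y}G\iso F\Otimes{Y}G\iso\R\>\sHom_Y(\OY,F\Otimes{Y}G)$ of the statement.

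I expect the only point with any content to be the coherence assertion that $\ell$ carries $\gamma^{}_0(\OY,F,G)$ to the identity; this is a routine consequence of the unit axioms of a closed category (or of the Kelly--Mac\,Lane coherence theorem, invoked just as in the proof of Proposition~\ref{adjass0}), and I anticipate no real obstacle. Everything else---the reduction to K-injective and K-flat complexes, and the chase through the diagram of Lemma~\ref{gamma}---is formal. Should one wish to avoid unwinding the definition of $\gamma^{}_0$, an alternative is to note that for $F$ K-injective and $G$ K-flat the displayed two-step composite manifestly fits into the triangle that characterizes $\gamma(\OY,F,G)$ in Lemma~\ref{gamma} (because $\gamma^{}_0(\OY,F,G)$ sits in the corresponding non-derived triangle), and then to conclude by the uniqueness clause of that lemma, in the spirit of \cite[Corollary 2.6.5]{li}, from which Lemma~\ref{gamma} is itself deduced.
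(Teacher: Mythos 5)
Your proof is correct and follows the same route as the paper: reduce to $F$ K-injective and $G$ K-flat, use the commutative square of Lemma~\ref{gamma} to pass to $\gamma^{}_0$, and then observe that $\gamma^{}_0(\OY,F,G)$ is the canonical identification under $\sHom_Y(\OY,-)\cong\id$. You merely spell out in more detail what the paper leaves as the ``easily-verified statement for $\gamma^{}_0$.''
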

\begin{proof} Replace $F$ (respectively~$G$) by a quasi-isomorphic K-injective (respectively~K-flat) complex, and then
use the commutative diagram in~\ref{gamma} to reduce  to the corresponding easily-verified statement for $\gamma^{}_0$.
\end{proof}

\begin{subrem}\label{gamma0} As a relatively easy example of ``concrete vs.~abstract," it is readily shown 
that~$\gamma^{}_0$ is adjoint to the natural composite
\[
(\sHom_Y(E,F\>)\otimes_{Y}G\>) \otimes_{Y}E
\iso
(\sHom_Y(E,F\>)\otimes_{Y}E) \otimes_{Y}G
\lto
F\otimes_{Y}G,
\]
and hence that $\gamma$ is adjoint to the natural composite
\[
(\R\>\sHom_Y(E,F\>)\Otimes{Y}G\>) \Otimes{Y}E
\iso
(\R\>\sHom_Y(E,F\>)\Otimes{Y}E) \Otimes{Y}G
\lto
F\Otimes{Y}G.
\]
\end{subrem}

\vskip2pt

Now let $\psi\colon\OY\to\CS$, $\phi\colon(\oY,\CO_{\>\oY})=(Y,\CS)\to(Y,\OY\<)$ be as in \S\ref{2.3}, and $\pt(-)\set\R\>\sHom_\psi(\CO_{\>\oY},-)$ as in \ref{duality for phi}. \va2

Define the map of $\CO_{\>\oY}$-modules
\begin{equation}\label{bargamma}
\bar\gamma^{}_0(F,\,G\>)\colon\sHom_\psi(\CO_{\>\oY},F\>)\otimes_{\>\oY} \phi^*G
\,\lto \sHom_\psi(\CO_{\>\oY},F\<\otimes_Y\< G\>)
\end{equation}
to be the natural composite $\CO_{\>\oY}$-linear map
\begin{align*}
\phi_*(\sHom_\psi(\CO_{\>\oY},F\>)\otimes_{\>\oY} \phi^*G\>)
&\underset{\textup{\ref{pphi2}}}\iso
\phi_{\<*}\sHom_\psi(\CO_{\>\oY},F\>)\otimes_Y\<G\\
&\mkern 5mu=\!=\mkern 5mu
\sHom_Y(\phi_*\CO_{\>\oY},F\>)\otimes_Y G\\
&\,\xto[\lift1.2,\,\gamma^{}_0\>\>,]{}\,
\sHom_Y(\phi_*\CO_{\>\oY},F\<\otimes_Y\< G\>)\\
&\mkern 5mu=\!=
\phi_* \sHom_\psi(\CO_{\>\oY},F\<\otimes_Y\< G\>).
\end{align*}

\begin{sublem}\label{ft0} 
In the situation of\/ \textup{\S\ref{2.3}:}

{\rm (i)}  The map adjoint under\/ \textup{\ref{adjunction0}} to the composite map
\[
\phi_{\<*}(\pt\< F\Otimes{\mkern.5mu\oY}\LL\phi^*G\>)
\underset{\eqref{projphi}}\iso\phi_{\<*}\pt\< F\Otimes{Y}G\underset{\ref{counit}}\lto F\Otimes{Y}G
\qquad(F,\>G\in\D(Y))
\]   
is the unique\/ $\D(\oY)$-map\/ 
\[
\bar\chi\colon\pt\< F\Otimes{\mkern.5mu\oY}\LL\phi^*G\lto
\pt(F\Otimes{Y}G\>)
\] 
such that the following diagram commutes.
\[
\def\3{$\phi_{\<*}(\pt\< F\Otimes{\mkern.5mu\oY}\LL\phi^*G\>)$}
\def\4{$\phi_{\<*}\pt(F\Otimes{Y}G\>)$}
\def\6{$\phi_{\<*}\pt\< F\Otimes{Y}G$}
\def\9{$\R\>\sHom_Y(\fst\OX,F\>)\<\Otimes{Y} G$}
\def\ten{$\R\>\sHom_Y(\fst\OX,F\<\Otimes{Y}\<G\>)$}
  \bpic[xscale=5.2, yscale=1.65]
     
   \node(22) at (1.95,-2){\3};  
   \node(23) at (3,-2){\4} ;
   
     \node(32) at (1.95,-3){\6} ;  
   
   \node(42) at (1.95 ,-4){\9};   
   \node(43) at (3,-4){\ten}; 
  
   \draw[->] (22)--(23) node[above=-.7pt, midway, scale=.75] {$\phi_*\bar\chi$} ; 
   \draw[->] (42)--(43) node[above=1pt, midway, scale=.75] {\textup{\ref{gamma}}} 
                                  node[below=1pt, midway, scale=.75] {$\gamma$};
   
    \draw[->] (22)--(32) node[right=1pt, midway, scale=.75] {$\simeq$} 
                                   node[left=1pt,midway,scale=.75]{\eqref{projphi}};
    \draw[->] (32)--(42) node[left=1pt, midway, scale=.75] {\eqref{phiRHompsi}} 
                                   node[right=1pt, midway, scale=.75] {$\simeq$} ;
    
     \draw[->] (23)--(43) node[right=1pt, midway, scale=.75] {\eqref{phiRHompsi}} 
                                   node[left=1pt, midway, scale=.75] {$\simeq$} ;

    \node at (2.5,-3.03)[scale =.9]{\circled1} ;

  \epic
\]

\pagebreak[3]

{\rm(ii) (Explicit $\bar\chi$)} 
For K-injective\/ $F$ and \/K-flat $G,$ $\bar\chi$ factors naturally as\va2 
\begin{align*}
\pt\< &F\Otimes{\>\oY}\LL\phi^*G
=\R\>\sHom_\psi(\CO_{\>\oY},F\>)\Otimes{\>\oY} \LL\phi^*G\\
&\iso\sHom_\psi(\CO_{\>\oY},F\>)\otimes_{\>\oY} \phi^*G\\[-3pt]
&\xto[\!\eqref{bargamma}\!]{\bar\gamma^{}_0} \sHom_\psi(\CO_{\>\oY},F\<\otimes_Y\< G\>)
\lto \R\>\sHom_\psi(\CO_{\>\oY},F\<\otimes_Y\< G\>)
=\pt(F\Otimes{Y} G\>).
\end{align*}
\end{sublem}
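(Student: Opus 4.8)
The plan is to obtain both parts from an explicit model for $\bar\chi$, available after replacing $F$ and $G$ by resolutions. All the arrows in the diagram of~(i) and in the displays of~(ii) are functorial in $(F,G)\in\D(Y)\times\D(Y)$, being built from $\pt$, $\LL\phi^*$, the derived tensor products, $\R\>\sHom_\psi$, $\R\>\sHom_Y$, the functorial counit of $\phi_*\!\dashv\pt$, the isomorphism~\eqref{projphi}, the maps~\eqref{phiRHompsi}, and $\gamma$ of~\ref{gamma}; and every object of $\D(Y)$ is isomorphic both to a K-injective and to a K-flat complex. So it suffices to treat the case in which $F$ is K-injective and $G$ is K-flat. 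In that case $\pt F=\R\>\sHom_\psi(\CO_{\>\oY},F)=\sHom_\psi(\CO_{\>\oY},F)$, which --- $\CO_{\>\oY}$ being K-flat --- is itself K-injective over~$\CO_{\>\oY}$ (see the proof of~\ref{adjass0}), so $\sHom_\psi(\CO_{\>\oY},F)\to\R\>\sHom_\psi(\CO_{\>\oY},F)$ is a homotopy equivalence; also $\LL\phi^*G=\phi^*G$, which is K-flat over~$\CO_{\>\oY}$, whence $\pt F\Otimes{\oY}\LL\phi^*G=\sHom_\psi(\CO_{\>\oY},F)\otimes_{\oY}\phi^*G$; and $F\Otimes{Y}G=F\otimes_Y G$, the projection isomorphism~\eqref{projphi} reduces to~\eqref{pphi2}, and, by the defining square of~\ref{gamma} together with~\eqref{phiRHompsi0}, $\gamma(\phi_*\CO_{\>\oY},F,G)$ reduces to $\gamma^{}_0(\phi_*\CO_{\>\oY},F,G)$ of~\eqref{gam0} followed by the canonical map $\sHom_Y(\phi_*\CO_{\>\oY},F\otimes_Y G)\to\R\>\sHom_Y(\phi_*\CO_{\>\oY},F\otimes_Y G)$. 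Thus all derived-functor decorations can be stripped off and the two assertions become statements about honest complexes of sheaves.

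In this reduced situation I would take the composite displayed in~(ii) as the definition of $\bar\chi$: the evident identification $\pt F\Otimes{\oY}\LL\phi^*G\iso\sHom_\psi(\CO_{\>\oY},F)\otimes_{\oY}\phi^*G$, followed by the $\CO_{\>\oY}$-linear map $\bar\gamma^{}_0(F,G)$ of~\eqref{bargamma}, followed by the canonical map to $\R\>\sHom_\psi(\CO_{\>\oY},F\otimes_Y G)=\pt(F\Otimes{Y}G)$; this is a well-defined $\D(\oY)$-map. To identify it with the map of~(i) it suffices, by~\ref{adjunction0} and uniqueness of adjoints, to check that its adjoint under $\phi_*\!\dashv\pt$ is the composite $\phi_*(\pt F\Otimes{\oY}\LL\phi^*G)\underset{\eqref{projphi}}{\iso}\phi_*\pt F\Otimes{Y}G\underset{\ref{counit}}{\lto}F\Otimes{Y}G$ of~(i). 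That adjoint is $\phi_*\bar\chi$ post-composed with the counit described in~\S\ref{counit unit}; applying~$\phi_*$ to the three constituents of $\bar\chi$ and inserting the defining formula~\eqref{bargamma} for $\phi_*\bar\gamma^{}_0(F,G)$, in which $\gamma^{}_0$ is sandwiched between~\eqref{pphi2} and the equality~\eqref{phiRHompsi0}, one is reduced to the elementary fact that ``evaluation at~$1$'' (the map~$\via\psi$ of~\S\ref{counit unit}) applied after $\gamma^{}_0(\phi_*\CO_{\>\oY},F,G)$ equals $\via\psi$ tensored with~$\id_G$ --- cf.~Remark~\ref{gamma0}, or the proof of~\ref{gam for O}. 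Cancelling the identifications leaves precisely the counit at~$F$ tensored with $\id_G$, precomposed with~\eqref{projphi}; this is the composite of~(i), so $\bar\chi$ is as claimed, and~(ii) is proved.

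For~(i), $\bar\chi$ then exists and is the unique $\D(\oY)$-map adjoint to the composite of~(i), and two points remain. First, $\bar\chi$ makes the stated diagram commute: with the K-injective/K-flat reductions and the substitutions above, that diagram becomes, after applying~$\phi_*$, the outer rectangle of the definition~\eqref{bargamma} of~$\bar\gamma^{}_0(F,G)$ extended along the canonical map $\sHom_\psi(\CO_{\>\oY},F\otimes_Y G)\to\R\>\sHom_\psi(\CO_{\>\oY},F\otimes_Y G)$, and so commutes by the construction of~$\bar\chi$. Second, uniqueness: if $\D(\oY)$-maps $h,h'\colon\pt F\Otimes{\oY}\LL\phi^*G\to\pt(F\Otimes{Y}G)$ both make the diagram commute, then \eqref{phiRHompsi} post-composed with $\phi_*h$ equals \eqref{phiRHompsi} post-composed with $\phi_*h'$, each being the common composite around the diagram; post-composing further with ``via~$\psi$'' and using~\textup{(\ref{counit unit}.1)}, one sees that $h$ and $h'$ have the same adjoint under $\phi_*\!\dashv\pt$, so $h=h'$ by~\ref{adjunction0}.

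The substantive part is the diagram chase in the second paragraph --- coordinating the abstract $\gamma$ with the concrete $\bar\gamma^{}_0$ of~\eqref{bargamma}, the projection isomorphisms \eqref{pphi1}--\eqref{pphi2} and~\eqref{projphi}, and the counit of~\S\ref{counit unit}, so that the adjoint of the explicit $\bar\chi$ comes out to be exactly the composite in~(i); the companion check for the diagram of~(i) is of the same kind. Each ingredient is routine, and once the bookkeeping is done the uniqueness assertions follow formally from the adjunction~\ref{adjunction0} and the description of its counit.
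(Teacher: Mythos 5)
Your proof is correct and follows essentially the same route as the paper: reduce to K-injective $F$ and K-flat $G$, take the composite in (ii) as the explicit model for $\bar\chi$, use the compatibility of $\gamma_0$ with ``evaluation at~$1$'' (as in Corollary~\ref{gam for O}) to identify its adjoint under~\ref{adjunction0} with the composite in~(i), and derive uniqueness from the adjunction. The paper merely reorders the steps---uniqueness first, via the auxiliary diagram built from~\ref{gam for O}, then (i) and (ii) together by expanding the diagram in (i)---but the reduction, the explicit $\bar\chi$, and the $\gamma$--counit compatibility are the same in both arguments.
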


\begin{proof}First, the uniqueness in  (i).
If  \circled1 commutes then, as made clear by~\ref{gam for O}, so does the following diagram, where going clockwise from upper right to lower left  gives the counit map from~\ref{counit}.\looseness=-1 

\[\mkern-4mu
\def\1{$\phi_*(\phi^\flat\<\<F\Otimes{\oY} \LL \phi^*G\>)$}
\def\2{$\phi_* \phi^\flat(F\Otimes{Y}G\>)$}
\def\3{$\phi_* \phi^\flat\<\<F\<\Otimes{Y} G$}
\def\5{$\R\>\sHom_Y(\phi_*\CO_{\>\oY},F\>)\<\Otimes{Y} G$}
\def\6{$\R\>\sHom_Y(\phi_*\CO_{\>\oY},F\<\Otimes{Y}\<G\>)$}
\def\7{$F\<\Otimes{Y}G$}
\def\8{$\R\>\sHom_Y(\OY\<,F\>)\<\Otimes{Y} G$}
\def\9{$\R\>\sHom_Y(\OY\<,F\<\Otimes{Y}G\>)$}
  \bpic[xscale=4.45, yscale=1.65]

   \node(11) at (1,-1){\1};   
   \node(13) at (3.03,-1){\2}; 
   
   \node(21) at (1,-2){\3};  
   \node(22) at (1.92,-2){\5} ;
   \node(23) at (3.03,-2){\6};
   
   \node(31) at (1,-3){\7} ;
   \node(32) at (1.92,-3){\8} ;
   \node(33) at (3.03,-3){\9} ;
   
    \draw[->] (11)--(13) node[above, midway, scale=.75] {$\phi_*\bar\chi$} ;  
    
    \draw[->] (21)--(22) node[above=-.5pt, midway, scale=.75] {$\Iso$}
                                   node[below=1pt, midway, scale=.75] {\eqref{phiRHompsi}}  ; 
    \draw[->] (22)--(23) node[below=1pt, midway, scale=.75] {$\gamma$} ; 
    
   \draw[<-] (31)--(32) node[above, midway, scale=.75] {$\Iso$} 
                                  node[below=1pt, midway, scale=.75] {\textup{natural}} ;
   \draw[<-] (32)--(33) node[above, midway, scale=.75] {$\Iso$}
                                  node[below=1pt, midway, scale=.75] {$\gamma^{-\<1}$}; 
   
    \draw[->] (11)--(21) node[right=1pt, midway, scale=.75] {$\simeq$} 
                                   node[left=1pt,midway,scale=.75]{\eqref{projphi}};
    \draw[->] (21)--(31) node[left=1pt, midway, scale=.75] {\ref{counit}} ;
    
    \draw[->] (22)--(32) node[right=1pt, midway, scale=.75] {$\via\psi$} ;
    
    \draw[->] (13)--(23) node[right=1pt, midway, scale=.75] {\eqref{phiRHompsi}} 
                                   node[left=1pt, midway, scale=.75] {$\simeq$} ;
    \draw[->] (23)--(33) node[right=1pt, midway, scale=.75] {$\via\psi$} ;
   
   \node at (1.93,-1.48)[scale =.9]{\circled1} ;
   
  \epic
\]
Thus any $\D(\oY)$-map~$\bar\chi$ such that \circled1 commutes must be the one adjoint 
under~\ref{adjunction0} to the natural composite
\[
\phi_{\<*}(\pt\< F\Otimes{\mkern.5mu\oY}\LL\phi^*G\>)
\underset{\eqref{projphi}}\iso\phi_{\<*}\pt\< F\Otimes{Y}G
\underset{\ref{counit}}\lto F\Otimes{Y}G.
\]   

This being so,  (i) and (ii) can be proved by showing, after replacing $F$ (resp.~$G$)
by a quasi-isomorphic K-injective (resp.~K-flat) $\OY$-complex, that for $\bar\chi$ as in (ii),  \circled1 commutes; and for this just note that each subdiagram
of the following natural diagram, where $\CH\set\sHom$, commutes---more or less by definition of the functorial maps involved, whence so does the border.\looseness=-1
\begin{small}
\[\mkern-5mu
\def\1{$\phi_{\<*}(\pt\< F\Otimes{\mkern.5mu\oY}\LL\phi^*G\>)$}
\def\2{$\phi_{\<*}(\CH_\psi\mkern-.5mu(\CO_{\>\oY}\mkern-.5mu,\mkern-.5mu F\>)\<\<\otimes_{\>\oY}\<\phi^*G\>)\quad$}
\def\3{$\phi_{\<*}\pt\< F\Otimes{Y}G$}
\def\4{$\phi_{\<*}\CH_\psi(\CO_{\>\oY},F\otimes_Y\<G\>)$}
\def\5{$\phi_{\<*}\pt(F\Otimes{Y}G\>)$}
\def\6{$\R\CH_Y(\phi_*\CO_{\>\oY},F\>)\Otimes{Y}G$}
\def\7{$\phi_{\<*}\CH_\psi(\CO_{\>\oY},F\>)\otimes_Y\<G$}
\def\8{$\CH_Y(\phi_*\CO_{\>\oY},F\>)\otimes_Y\<G$}
\def\ten{$\CH_Y(\phi_*\CO_{\>\oY},F\otimes_{Y}\<G\>)$} 
\def\lvn{$\R\CH_Y(\CO_{\>\oY},F\Otimes{Y}G\>)$}
\def\twv{$\phi_{\<*}\R\CH_\psi(\CO_{\>\oY},F\otimes_Y\<G\>)$}
\def\frn{$\R\CH_Y(\phi_*\CO_{\>\oY},F\Otimes{Y}G\>)$}
\def\ffn{$\phi_{\<*}(\R\CH_\psi\mkern-.5mu(\CO_{\>\oY}\mkern-.5mu,\mkern-.5mu F\>)\<\<\Otimes{\>\oY}\<\LL\phi^*G\>)$}
  \bpic[xscale=3.74, yscale=1.4]
  
   \node(01) at (1,-1){\1};
   \node(02) at (2.15,-1){\ffn};  
   \node(03) at (3.47,-1){\2};   
 
   \node(04) at (1,-2){\3};
   
   \node(11) at (3.47,-4){\4};   
   \node(12) at (1,-5){\5}; 
   
   \node(22) at (2.25,-2){\7};  
   \node(23) at (2.25,-2.9){\8} ;
   
   \node(31) at (1,-2.9){\6} ;
   \node(32) at (2.25,-4){\ten} ;  
   
   \node(42) at (3.47 ,-5){\twv};
    
   \node(51) at (1 ,-4){\frn};

   \draw[double distance=2pt] (01)--(02) ;
   \draw[->] (02)--(03) node[above, midway, scale=.75] {$\Iso$} ;
   \draw[->] (04)--(22) node[above=1pt, midway, scale=.75] {$\Iso$} ;
   \draw[->] (23)--(31) node[above=1pt, midway, scale=.75] {$\Iso$} ;        
                                 
     \draw[->] (01)--(04) node[right=1pt, midway, scale=.75] {$\simeq$}
                                  node[left=1pt, midway, scale=.75]{\eqref{projphi}}; 
     \draw[->] (04)--(31) node[left=1pt, midway, scale=.75] {\eqref{phiRHompsi}} 
                                    node[right=1pt, midway, scale=.75] {$\simeq$} ;
 
    \draw[->] (31)--(51) node[left=1pt, midway, scale=.75] {$\gamma$};

   \draw[->] (03)--(11) node[right=1.5pt, midway, scale=.75] {$\phi_*\<\bar\gamma^{}_0$} ; 
   
   \draw[double distance=2pt] (22)--(23) ;
  
   \draw[->] (11)--(42)  ;
   \draw[->] (42)--(12) node[above=1pt, midway, scale=.75] {$\Iso$} 
                                  node[below=1pt, midway, scale=.75] {\eqref{phiRHompsi}};
   \draw[->] (23)--(32) node[right=1pt, midway, scale=.75] {$\gamma^{}_0$} ; 
 
    \draw[->] (03)--(22) node[above, midway, scale=.75] {$\simeq\mkern15mu$}
                                     node[below=-3pt, midway, scale=.75]{\kern60pt\eqref{pphi2}} ;
    \draw[double distance=2pt] (11)--(32) ;
    \draw[->] (32)--(51)  ; 
    \draw[->] (12)--(51) node[left=1pt, midway, scale=.75] {\eqref{phiRHompsi}} 
                                   node[right=1pt, midway, scale=.75] {$\simeq$} ;  
   \node at (1.64,-3.45)[scale =.75] {\textup{(see \ref{gamma})}} ;
 \epic
\]
\vskip-10pt
\end{small}

\end{proof}

The following proposition addresses, both concretely and abstractly, the relation between $(-)^\flat$ and $\Otimes{}$.

\begin{subprop}\label{flat and tensor}
Let\/ $f \colon X\to Y$ be a pseudo\kf-coherent finite scheme-map, and $\psi\colon\OY\to\fst\OX$ the associated homomorphism, so that $f$ factors as\looseness=-1
\[
X\set(X\<,\OX)\xto{\!\bar f\,\set(f\<\<,\mkern1.5mu\id)}\oY\set(Y,\fst\OX)\xto{\!\phi\,\set(\id,\psi)} (Y,\OY)=:Y
\] 
$($see \textup{\S\ref{2.4})}.\va2  
For\/ $F,G\in\D(Y)$ let\/ $\chi=\chi(f,F,G)$ be the  natural composite map\va{-3}
\[
f^\flat\<\<F\Otimes{\sX} \LL f^*\<G\iso \bar{f\:\<}^{\!\<*}\<\<(\pt\< F\Otimes{\mkern.5mu\oY}\LL\phi^*G\>)
\xto[\textup{\ref{ft0}}]{\bar{f\:\<}^{\!\<*}\<\<\bar\chi\>\>} \bar{f\:\<}^{\!\<*}\<\<\pt(F\Otimes{Y}G\>)
  =f^\flat\<(F\Otimes{Y}G\>).
\]

{\rm (i)}
If\/ $F\in\Dqcpl(Y)$ and\/ $G\in\Dqc(Y)$ are such that\/ $F\Otimes{Y}G\in\Dqcpl(Y),$ then 
$\chi(f,F,G)$
is the\va{1.4} unique\/ 
$\D(X)$-map\/~$\chi\>'\colon f^\flat\<\<F\Otimes{\sX} \LL f^*\<G\to f^\flat\<(F\Otimes{Y}G\>)$
such that the following diagram commutes$\>:$\va{-1}
\[
\def\1{$\R\fst(f^\flat\<\<F\Otimes{\sX} \LL f^*\<G\>)$}
\def\2{$\R\fst f^\flat(F\Otimes{Y}G\>)$}
\def\3{$\R\fst f^\flat\<\<F\<\Otimes{Y} G$}
\def\5{$\R\>\sHom_Y(\fst\OX,F\>)\<\Otimes{Y} G$}
\def\6{$\R\>\sHom_Y(\fst\OX,F\<\Otimes{Y}G\>)\>;$}
  \bpic[xscale=3, yscale=1.25]

   \node(11) at (1,-1){\1};   
   \node(13) at (3,-1){\2}; 
   
   \node(21) at (1,-2){\3};  
   \node(22) at (1,-3){\5} ;
   \node(23) at (3,-3){\6};
     
    \draw[->] (11)--(13) node[above, midway, scale=.75] {$\R\fst\chi\>'$} ;  
    
    \draw[->] (21)--(22) node[left=1pt, midway, scale=.75] {$\simeq$} 
                                   node[right=1pt, midway, scale=.75] {$\textup{\ref{fst fflat}}$} ; 
    \draw[->] (22)--(23) node[below=1pt, midway, scale=.75] {$\gamma$}  ; 
    \draw[->] (11)--(21) node[right=1pt, midway, scale=.75] {\eqref{projf}}  
                                   node[left=1pt, midway, scale=.75] {$p$};
     
    \draw[->] (13)--(23) node[left=1pt, midway, scale=.75] {$\textup{\ref{fst fflat}}$} 
                                   node[right=1pt, midway, scale=.75] {$\simeq$} ;
   \node at (2.03,-2)[scale =.9]{\circled1} ;
   
  \epic
\]
\vskip-4pt
and $\chi$ corresponds under\/ \textup{\ref{represent}} to the  composite map\va{-2}
\[
\R\fst(f^\flat\<\< F\Otimes{\sX}\LL f^*\<G\>)
\underset{p}{\iso}\R\fst f^\flat\<\< F\Otimes{Y}G\xto[\lift1.3,\!t_{\<F}\Otimes{Y}\id\>,]{} F\Otimes{Y}G.
\]   
\vskip2pt

{\rm(ii)} If $f$ is perfect then\/ \textup{(i)} holds for all\/~$F,G\in\Dqc(Y),$ and 
both\/ $\bar\chi$ and\/~$\chi$ are isomorphisms.
\end{subprop}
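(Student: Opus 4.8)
The plan is to deduce (i) from Lemma~\ref{ft0} by transporting everything, through the equivalence $\bar f^*\colon\Dqc(\oY)\to\Dqc(X)$ of Proposition~\ref{^* equivalence}, to the ringed space $\oY$, and then to obtain (ii) by a perfection argument. First I would record the standing reductions. Under the hypotheses of (i), Lemma~\ref{qcHom} (applied to the pseudo-coherent complex $\fst\OX$) gives that $\R\>\sHom_Y(\fst\OX,F\>)$ and $\R\>\sHom_Y(\fst\OX,F\Otimes{Y}G\>)$ lie in $\Dqcpl(Y)$, whence by Lemma~\ref{fst fflat} the complexes $\pt F$ and $\pt(F\Otimes{Y}G\>)$ lie in $\Dqc(\oY)$ and $f^\flat F$, $f^\flat(F\Otimes{Y}G\>)$ lie in $\Dqc(X)$; also $\LL\phi^*G=\fst\OX\Otimes{Y}G\in\Dqc(\oY)$ since $\fst\OX$ is pseudo-coherent. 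So every object named in the statement is defined. Because $f=\phi\bar f$ with $\bar f$ flat, one has $\LL f^*=\LL\bar f^*\LL\phi^*$ and $\R\fst=\phi_*\R\bar\fst$, and the first, unlabeled arrow in the displayed definition of $\chi$ is the monoidality isomorphism $\bar f^*\pt F\Otimes{\sX}\LL\bar f^*\LL\phi^*G\iso\bar f^*(\pt F\Otimes{\oY}\LL\phi^*G\>)$; thus $\chi$ is $\bar f^*\bar\chi$ up to canonical isomorphism, where $\bar\chi$ is the map of Lemma~\ref{ft0}, and $\R\fst\chi=\phi_*\R\bar\fst\bar f^*\bar\chi$ identifies with $\phi_*\bar\chi$ via the unit isomorphism of~\ref{^* equivalence} at the relevant $\Dqc(\oY)$-object.

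Next I would prove that the square in (i) commutes with $\chi'\set\chi$, by applying $\phi_*\R\bar\fst$ throughout and matching the outcome with subdiagram~\circled1 of the diagram in Lemma~\ref{ft0}(i). Here Corollaries~\ref{tensor} and~\ref{project} govern the behaviour of $\R\bar\fst$ with respect to $\Otimes{}$ and to $\LL\bar f^*$; transitivity of projection maps~\cite[3.7.1]{li} factors the projection isomorphism $p$ of~\eqref{projf} for $f$ (with $E\set f^\flat F$) through the projection isomorphism for $\phi$ followed by that for $\bar f$; and the isomorphism $\bar t_{(-)}$ of Lemma~\ref{fst fflat} is by construction assembled from~\ref{^* equivalence} and~\eqref{phiRHompsi}. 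Putting these together turns the square of (i) into subdiagram~\circled1 of Lemma~\ref{ft0}(i), which is asserted there to commute. I expect this bookkeeping --- keeping straight the various unit and monoidality isomorphisms, and the direction conventions for $p$ --- to be the \emph{main obstacle}; it is a careful but routine diagram chase of the same flavour as those in the proofs of~\ref{qc duality2} and~\ref{concbc}. Granting it, the ``corresponds under~\ref{represent}'' clause and the uniqueness of $\chi'$ come out together: by Proposition~\ref{represent} (applicable since $F\Otimes{Y}G\in\Dqcpl(Y)$ and $f$ is pseudo-coherent) a $\D(X)$-map into $f^\flat(F\Otimes{Y}G\>)$ is determined by the composite of $\R\fst(-)$ with the counit at $F\Otimes{Y}G$, and for any $\chi'$ making the square commute the square forces $\R\fst\chi'$, hence forces the composite of $\R\fst\chi'$ with that counit. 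Computing it via~\ref{represent}(ii) (which expresses the counit through~\ref{fst fflat} and ``$\via\psi$''), naturality of $\gamma$ in its first argument applied to $\psi\colon\OY\to\fst\OX$, and Corollary~\ref{gam for O}, one obtains precisely the map $(t_F\Otimes{Y}\id_G)\smallcirc p$ of the last display in (i). Since this is independent of $\chi'$, it both pins down $\chi'$ (uniqueness) and --- taken with $\chi'\set\chi$ --- identifies $\chi$ with the abstractly-defined map of~\S\ref{tensor and flat}, i.e.\ proves the final clause of (i).

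For (ii), $f$ perfect forces $\fst\OX$ perfect (see~\ref{quasi}), so Lemma~\ref{qcHom} yields $\R\>\sHom_Y(\fst\OX,G\>)\in\Dqc(Y)$ for every $G\in\Dqc(Y)$; hence Lemmas~\ref{fst fflat} and~\ref{represent} and Corollary~\ref{right adjoint} hold with $\Dqc$ in place of $\Dqcpl$, and since $F\Otimes{Y}G\in\Dqc(Y)$ whenever $F,G\in\Dqc(Y)$, the proof of (i) applies verbatim to all $F,G\in\Dqc(Y)$. Finally, $\chi$ is an isomorphism if and only if $\bar\chi$ is (as $\chi=\bar f^*\bar\chi$ up to isomorphism and $\bar f^*$ is an equivalence, by~\ref{^* equivalence}), and $\bar\chi$ is an isomorphism if and only if $\phi_*\bar\chi$ is, since $\phi_*$ is conservative (it leaves the underlying complexes of abelian sheaves untouched). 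By subdiagram~\circled1 of Lemma~\ref{ft0}(i), $\phi_*\bar\chi$ is, up to isomorphisms, the map $\gamma(\fst\OX,F,G\>)$ of Lemma~\ref{gamma}; and this is an isomorphism because $\fst\OX$ is perfect --- one reduces, exactly as for the map $\rho$ in the proof of Theorem~\ref{indt base change}, to the case where $\fst\OX$ is strictly perfect and then inducts on the number of degrees in which it is nonzero, the case of a single finite-rank locally free $\OY$-module being immediate (cf.~\cite[4.6.7]{li}). This gives (ii).
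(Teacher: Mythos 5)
Your proposal is correct and follows essentially the same route as the paper's proof: reduce to Lemma~\ref{ft0} through the equivalence of Proposition~\ref{^* equivalence}, use transitivity of projection maps and the monoidality of $\bar f^*$ to match the square of (i) with subdiagram~\circled1 of~\ref{ft0}(i), derive uniqueness via Proposition~\ref{represent} together with~\ref{gam for O} and naturality of $\gamma$, and obtain (ii) from the fact that $\gamma$ is an isomorphism for the perfect complex $\fst\OX$. The one cosmetic difference — your appeal to conservativity of $\phi_*$ rather than the paper's direct homology-isomorphism argument — is a rephrasing of the same fact, and the diagram-chase you flag as the main obstacle is indeed what the paper spends most of the proof on (its subdiagrams~\circled2, \circled3 and the appeal to~\cite[3.4.7(i)]{li}).
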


\begin{proof} 
For diagram \circled1 to commute when $\chi'=\chi\>,$ \kf it clearly suffices that
the following natural diagram commute. \va{-3}
\begin{small}
\[\mkern-5mu
\def\1{$\phi_{\<*}\R\bar\fst\bar{f\:\<}^{\!\<*}\<\<\<(\pt\<\< F\<\Otimes{\mkern.5mu\oY}\<\LL\phi^{\<*}\<G\>)$}
\def\2{$\phi_{\<*}\R\bar\fst\bar{f\:\<}^{\!\<*}\<\<\pt(F\Otimes{Y}G\>)\ \;$}
\def\3{$\phi_{\<*}(\pt\< F\Otimes{\mkern.5mu\oY}\LL\phi^*G\>)$}
\def\4{$\phi_{\<*}\pt(F\Otimes{Y}G\>)$}
\def\5{$\phi_{\<*} \R\bar\fst\<(\bar{f\:\<}^{\!\<*}\<\<\<\pt\< F\<\Otimes{\sX}\<\bar{f\:\<}^{\!\<*}\<\LL\phi^{\<*}\<G\>)$}
\def\6{$\phi_{\<*}\pt\< F\Otimes{Y}G$}
\def\7{$\phi_{\<*}\R\bar\fst\bar{f\:\<}^{\!\<*}\<\<\pt\< F\Otimes{Y}G$}
\def\8{$\R\fst f^\flat\<\< F\Otimes{Y}G$}
\def\9{$\R\>\sHom_Y(\fst\OX,F\>)\<\Otimes{Y} G$}
\def\ten{$\R\>\sHom_Y(\fst\OX,F\<\Otimes{Y}\<G\>)\qquad\:$}
\def\lvn{$\R\fst(f^\flat\<\<F\Otimes{\sX} \LL f^*\<G\>)$}
\def\twv{$\R\fst f^\flat(F\Otimes{Y}G\>)$}
\def\thn {$\phi_{\<*}(\R\bar\fst\bar{f\:\<}^{\!\<*}\<\<\<\pt\< F\<\Otimes{\mkern.5mu\oY}
            \<\LL\phi^{\<*}\<G\>)$}
  \bpic[xscale=4.6, yscale=1.25]
  
   \node(02) at (1.95,.2){\lvn};   
   \node(03) at (3,.2){\twv}; 

   \node(12) at (1.95 ,-1){\1};   
   \node(13) at (3,-1){\2};

   \node(21) at (1,.2){\5};
   \node(22) at (1.95,-2){\3};  
   \node(23) at (3,-2){\4} ;
   
   \node(31) at (1,-3){\7} ;
   \node(20) at (1,-2){\thn} ;
   \node(32) at (1.95,-3){\6} ;  
   
   \node(41) at (1 ,-4){\8};
   \node(42) at (1.9 ,-4){\9};   
   \node(43) at (3,-4){\ten}; 
  
   \draw[->] (02)--(03) node[above, midway, scale=.75] {$\R\fst\chi$} ;
   
   \draw[->] (12)--(13) node[above=1pt, midway, scale=.75] 
                           {$\!\<\phi_{\<*}\R\bar\fst\bar{f\:\<}^{\!\<*}\<\<\< \bar\chi$} ;  
  
   \draw[->] (20)--(22) node[above=1pt, midway, scale=.75] {$\Iso$} 
                                  node[below=1pt, midway, scale=.75] {\ref{^* equivalence}};   
   \draw[->] (22)--(23) node[below=1pt, midway, scale=.75] {$\phi_*\bar\chi$} ; 
   
   \draw[->] (31)--(32) node[above=1pt, midway, scale=.75] {$\Iso$} 
                                  node[below=1pt, midway, scale=.75] {\ref{^* equivalence}};   
    
   \draw[->] (41)--(42)  node[above=1pt, midway, scale=.75] {$\Iso$}
                                   node[below=1pt, midway, scale=.75] {\textup{\ref{fst fflat}}} ;
   \draw[->] (42)--(43) node[below=1pt, midway, scale=.75] {$\gamma$} ;
   
    \draw[->] (21)--(20) node[left=1pt, midway, scale=.75] {$\simeq$} 
                                   node[right=1pt, midway, scale=.75] {\textup{\ref{project}}} ;
    \draw[->] (20)--(31) node[left=1pt, midway, scale=.75] {$\simeq$} 
                                   node[right=1pt,midway,scale=.75]{\eqref{projphi}};

    \draw[->] (02)--(12) node[left=1pt, midway, scale=.75] {$\simeq$} ;
    \draw[->] (12)--(22) node[left=1pt, midway, scale=.75] {$\simeq$} 
                                   node[right=1pt, midway, scale=.75] {\ref{^* equivalence}} ;
     
    \draw[double distance=2pt] (03)--(13);
   \draw[double distance = 2pt] (41)--(31)  ;
    
    \draw[->] (22)--(32) node[left=1pt, midway, scale=.75] {$\simeq$} 
                                   node[right=1pt,midway,scale=.75]{\eqref{projphi}};
    \draw[->] (32)--(1.95,-3.78) node[right=1pt, midway, scale=.75] {\eqref{phiRHompsi}} 
                                   node[left=1pt, midway, scale=.75] {$\simeq$} ;
    
    \draw[->] (13)--(23) node[left=1pt, midway, scale=.75] {$\textup{\ref{^* equivalence}}$} 
                                   node[right=1pt, midway, scale=.75] {$\simeq$} ;
    \draw[->] (23)--(43) node[left=1pt, midway, scale=.75] {\eqref{phiRHompsi}} 
                                   node[right=1pt, midway, scale=.75] {$\simeq$} ;
   
    \draw[->] (02)--(21) node[above=1pt, midway, scale=.75] {$\Iso$} ;
    \draw[<-] (12)--(21)  node[above=-1.5pt, midway, scale=.75] {\rotatebox{-21}{$\Iso\quad$}};
    \draw[double distance = 2pt] (41)--(31)  ;
   
   \node at (2.5,-3.04)[scale =.9]{\circled3} ;
   \node at (1.35,-1)[scale =.9]{\circled2} ; 
  \epic
\]

\end{small}

\vskip-2pt
\noindent But subdiagram \circled2 commutes, by \cite[3.4.7(i)]{li},
subdiagram \circled3 commutes, by \ref{ft0}, 
and all the unlabeled subdiagrams
obviously commute as well. \va2

As for the rest of (i),
unwinding the definitions of the maps involved, one verifies that the unlabeled subdiagrams of the next diagram commute; and via \ref{represent}(ii) and \ref{gam for O}, that going around clockwise from upper right to lower left  gives the map $t^{}_{\<\<F\>\Otimes{Y}G}\,$.\va{-7}
\[\mkern-3mu
\def\1{$\R\fst(f^\flat\<\<F\Otimes{\sX} \LL f^*\<G\>)$}
\def\2{$\R\fst f^\flat(F\Otimes{Y}G\>)$}
\def\3{$\R\fst f^\flat\<\<F\<\Otimes{Y} G$}
\def\5{$\R\>\sHom_Y(\fst\OX,F\>)\<\Otimes{Y} G$}
\def\6{$\R\>\sHom_Y(\fst\OX,F\<\Otimes{Y}G\>)\ $}
\def\7{$F\<\Otimes{Y}G$}
\def\8{$\R\>\sHom_Y(\OY\<,F\>)\<\Otimes{Y} G$}
\def\9{$\R\>\sHom_Y(\OY\<,F\<\Otimes{Y}G\>)$}
  \bpic[xscale=4.5, yscale=1.5]

   \node(11) at (1,-1){\1};   
   \node(13) at (3,-1){\2}; 
   
   \node(21) at (1,-2){\3};  
   \node(22) at (1.9,-2){\5} ;
   \node(23) at (3,-2){\6};
   
   \node(31) at (1,-3){\7} ;
   \node(32) at (1.9,-3){\8} ;
   \node(33) at (3,-3){\9} ;
   
    \draw[->] (11)--(13) node[above, midway, scale=.75] {$\R\fst\chi'$} ;  
    
    \draw[->] (21)--(22) node[above=-.5pt, midway, scale=.75] {$\Iso$}
    node[below=1pt, midway, scale=.75] {$\textup{\ref{fst fflat}}$}  ; 
    \draw[->] (22)--(23) node[below=1pt, midway, scale=.75] {$\gamma$} ; 
    
   \draw[<-] (31)--(32) node[above, midway, scale=.75] {$\Iso$} 
                                  node[below=1pt, midway, scale=.75] {\textup{natural}} ;
   \draw[<-] (32)--(33) node[above, midway, scale=.75] {$\Iso$}
                                  node[below=1pt, midway, scale=.75] {$\gamma^{-\<1}$}; 
   
    \draw[->] (11)--(21) node[right=1pt, midway, scale=.75] {$\simeq$} 
                                   node[left=1pt, midway, scale=.75] {$p$}  ;
    \draw[->] (21)--(31) node[left=1pt, midway, scale=.75] {$t_{\<F}\<\<\Otimes{Y}\<\<\id$} ;
    
    \draw[->] (22)--(32) node[right=1pt, midway, scale=.75] {$\via\psi$} ;
    
    \draw[->] (13)--(23) node[left=1pt, midway, scale=.75] {\ref{fst fflat}} 
                                   node[right=1pt, midway, scale=.75] {$\simeq$} ;
    \draw[->] (23)--(33) node[right=1pt, midway, scale=.75] {$\via\psi$} ;
   
   \node at (1.9,-1.48)[scale =.9]{\circled1} ;
   
  \epic
\]

\vskip-2pt
Thus, and in view of \ref{qcHom} with $F\set\fst\OX$, any $\chi'$ such that \circled1~commutes must  correspond
under\/ \textup{\ref{represent}} to the composite map
\[
\R\fst(f^\flat\<\< F\Otimes{\sX}\LL f^*\<G\>)
\underset{p}{\iso}\R\fst f^\flat\<\< F\Otimes{Y}G\xto[\lift1.3,\!t_{\<F}\Otimes{Y}\id\>,]{} F\Otimes{Y}G.
\]   
\vskip3pt

(ii) If $f$ is perfect, the proof of (i) is valid 
for all $F$ and $G$ in $\Dqc(Y)$.\va1

 For $\chi$ and $\bar\chi\>$ to be isomorphisms, it suffices that $\R\fst\chi$ be an isomorphism:
for if \mbox{$\R\fst\chi=\phi_* \R\bar\fst \chi$}
induces homology isomorphisms,\va{.6} then so does $\R\bar\fst\chi$, i.e.,  $\R\bar\fst\chi\cong\R\bar\fst\bar{f\:\<}^{\!\<*}\<\<\bar\chi\>$ is an
isomorphism, whence by Proposition~\ref{^* equivalence}, so are $\chi$ and $\bar\chi$. 

Since \circled2~commutes when $\chi'=\chi\>$,  therefore $\R\fst\chi$ is an isomorphism
if $\gamma$~is an isomorphism---which $\gamma$ \emph{is} when $f$ is perfect. (This is well-known:
the question being local, one can replace~$\fst\OX$ by an isomorphic bounded complex~$E$ of 
finite\kf-rank free $\OY$-modules, then by induction on the number of nonvanishing components of $E\<$, using the triangle \cite[p.\,70, (1)]{RD}, reduce\- to the trivial case where $E$ itself is a finite\kf-rank free $\OY$-module.)\va1
\end{proof}

\smallskip
The following variant of \cite[4.7.3.4, (a) and (d)]{li} contains the \emph{pseudofunctoriality of} $\chi$
(cf.~the part of \S5.7 in \cite{AJL11} that follows (5.7.3)). 

The proof of \ref{pfchi} that appears here  is abstract; a concrete treatment, via~\ref{ft0}(ii), is left\va1 to the curious reader.

\begin{subprop}[Transitivity of $\chi$]\label{pfchi} 
Let\/ $f,$ $F,$ $G$ and\/ $\chi$ be as in\/~\textup{\ref{flat and tensor}.} 
Let\/ $g \colon W\to X$ be a pseudo\kf-coherent finite scheme-map, assumed perfect if\/~$f$ is perfect. The following natural diagram, with $\chi'\set\chi(g,\OX,f^\flat\<\<F\>)$ and 
$E\set f^\flat\<\<F\Otimes{\sX}\LL f^*G,$ commutes.\va3
\begin{small}
\[\mkern-3mu
\def\1{$(g^\flat\OX\<\<\Otimes{W}\<\LL g^*\!f^\flat\< F)\<\<\Otimes{W}\<\LL g^*\LL f^*\<G$}
\def\2{$g^\flat\<\< f^\flat\< F\<\<\Otimes{W}\<\LL g^*\LL f^*\<G$}
\def\3{$(f\<g)^\flat\< F\<\Otimes{W}\<\LL g^*\LL f^*\<G\ $}
\def\4{$g^\flat\OX\<\<\Otimes{W}\LL g^*\<(f^\flat\< F\Otimes{\sX}\LL f^*\<G\>)\ \,$}
\def\5{$(f\<g)^\flat\< F\<\Otimes{W}\<\LL (f\<g)^{\<*}\<G\ $}
\def\6{$\ \ g^\flat (f^\flat\< F\Otimes{\sX}\LL f^*\<G\>)$}
\def\7{$g^\flat\<\< f^\flat(F\Otimes{Y}G)$}
\def\8{$(f\<g)^\flat(F\Otimes{Y}G\>)$}
 \bpic[xscale=4.27, yscale=1.4]

   \node(11) at (1,-1){\1} ;
   \node(12) at (2.1,-1){\2} ;   
   \node(13) at (3,-1){\3} ;
   
   \node(21) at (1,-2){\4} ;
   \node(23) at (3,-2){\5} ;
    
   \node(31) at (2.1,-2){\6} ;  
   \node(32) at (2.1,-3){\7} ;
   \node(33) at (3,-3){\8} ;
   
    \draw[->] (11)--(12) node[above, midway, ] {$\lift1.7,\via\chi'\>,$} ;
    \draw[->] (12)--(13) ;   
    
    \draw[->] (31)--(32)  node[right=1, midway, scale=.75] {$g^\flat\chi(f,F,G)$} ;
    \draw[->] (32)--(33)  ;
    
    \draw[->] (11)--(21) ;
                                     
    \draw[->] (12)--(31) node[right=1, midway, scale=.75] {$\chi(g,f^\flat\<\< F,\LL f^*\<\<G\>)$} ;
    \draw[->] (21)--(31)  node[below=1, midway, scale=.75]{$\chi(g,\<\OX\<,\<E)$} ;

    \draw[->] (13)--(23)  ;
    \draw[->] (23)--(33)  node[right=1, midway, scale=.75] {$\chi(fg,F,G\>)$} ;
    
    \node at (1.55,-1.52) [scale=.9]{\circled1} ;
    \node at (2.55,-2.02) [scale=.9]{\circled2} ;

  \epic
\]
\end{small}
\end{subprop}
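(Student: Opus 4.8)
\textbf{Proof plan for Proposition~\ref{pfchi} (transitivity of $\chi$).}
The plan is to reduce the statement to a ``pushed-forward'' version, just as was done for the base-change map in Theorem~\ref{indt base change} and for transitivity of the duality isomorphism in Proposition~\ref{transdual}. Concretely, by Proposition~\ref{^* equivalence} (applied to the composite finite map $fg$, or rather to $\ov{fg}\colon W\to(Y,\fst g_*\OW)$) together with the injectivity remark in \S\ref{bc} concerning $\Hom_{\Dqc(\fst\OX)}(\cdots)\to\Hom_{\Dqc(Y)}(\cdots)$, it suffices to prove that the diagram commutes after applying $\R(fg)_*=\R\fst\R g_*$. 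Under the identifications of \ref{fst fflat} and \ref{qp duality}, $\R(fg)_*$ of every object in the diagram becomes an $\R\>\sHom_Y(\fst g_*\OW,-)$-type complex tensored with $G$, and $\R(fg)_*$ of every arrow becomes one of the already-understood maps: the pseudofunctoriality isomorphism \eqref{pf flat}, realized locally as $\R\Hom_S(T,\R\Hom_R(S,-))\iso\R\Hom_R(T,-)$; the projection isomorphisms \eqref{projf}; the map $\gamma$ of Lemma~\ref{gamma}; and the counit maps $t^{}_{(-)}$ of Proposition~\ref{represent}. So the real content is a commutativity statement about $\R\>\sHom$, $\Otimes{}$, $\gamma$ and $t$, with no more $(-)^\flat$'s left.

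First I would record, using Proposition~\ref{flat and tensor}(i), the characterization of each of the three maps $\chi(fg,F,G)$, $\chi(g,f^\flat F,\LL f^*G)$, $\chi(g,\OX,E)$ (and the trivial map $\chi'=\chi(g,\OX,f^\flat F)$) as the unique $\D(\cdot)$-map whose image under the appropriate $\R(-)_*$ fits into a \ref{flat and tensor}-type square involving $\gamma$ and the relevant counit $t$. Thus, after applying $\R(fg)_*$, subdiagram \circled2 is an instance of the compatibility of $\chi$ with a single composition of finite maps on the \emph{outer} argument (here $\OX$, then $f^\flat F$), which is exactly the content of the transitivity of projection maps \cite[3.7.1]{li} combined with \ref{ft0}; and subdiagram \circled1 reduces, via \eqref{pphi}.1 and the definition of $\chi(g,\OX,-)$, to the purely formal fact that $\chi(g,\OX,-)$ is ``$\LL g^*$ followed by $g^\flat\OX\Otimes{W}(-)$'' up to canonical isomorphism. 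The global statement then follows by applying $\Hr^0\R\Gamma$ together with the injectivity discussed above, or, more cleanly, by noting that all objects in the expanded (pushed-forward) diagram lie in $\Dqcpl$ where \ref{^* equivalence} applies directly.

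The technical heart — and the step I expect to be the main obstacle — is organizing the expansion of $\R(fg)_*$ of the hexagon into a diagram all of whose faces are either: an instance of transitivity of the projection maps (second diagram of \cite[3.7.1.1]{li}); an instance of \ref{ft0}(i) (commutativity of diagram \circled1 there, relating $\bar\chi$, $\gamma$ and $t$); an instance of the base identities $\chi(g,\OX,-)\cong$ (tensor by $g^\flat\OX$)$\circ\LL g^*$; or obviously commutative. This is the same kind of bookkeeping as in the proof of Proposition~\ref{flat and tensor}(i) and of \cite[4.7.3.4]{li}, and as the author notes it is abstract; the subtlety is that three distinct instances of $\chi$ and two distinct compositions of finite maps ($fg$ once, $g\circ(\text{something over }X)$ twice) are interleaved, so one must be careful that the canonical isomorphism $g^\flat\<\<f^\flat\iso(fg)^\flat$ used on the left edge is \emph{the same} \eqref{pf flat} that enters when one pushes forward the right edge. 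That coherence is precisely \eqref{pf flat} being right-conjugate to $\R\fst\R g_*\osi\R(fg)_*$ (as used already in the proof of \ref{transdual}), so invoking \cite[3.3.7(a)]{li} at the appropriate spot should close the argument; a concrete verification via \ref{ft0}(ii) and the local model $R\to S\to T$ is possible but, as the author says, is left to the reader.
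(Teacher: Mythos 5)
Your plan is a genuinely different route from the paper's, and in outline it could work, but it is more elaborate than the paper's argument and leaves a couple of points underspecified.

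The paper proves the two subdiagrams without ever leaving the $(-)^\flat$ world. For \circled2 it takes adjoints under $\R(fg)_*\dashv (fg)^\flat$ and expands the resulting diagram into faces that commute by the characterization of $\chi$ via \ref{flat and tensor}, the transitivity of projection maps \cite[Proposition~3.7.1]{li}, and the fact that \eqref{pf flat} is right-conjugate to $\R\fst\R g_*\iso\R(fg)_*$ (the second diagram of \cite[3.3.7(a)]{li}). For \circled1 it does not take adjoints at all but expands directly, using \cite[3.4.7(iv)]{li} (associativity of the projection isomorphism) and \ref{flat and tensor} again. At no point does it descend to the $\R\>\sHom_Y(\fst g_*\OW,-)$ realization; everything is done with the abstract data $\R(-)_*,\ \Otimes{}$, projection isomorphisms, and counits.

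Your proposal instead pushes forward by $\R\ov{(fg)}_*$ and works at the sheafified-$\Hom$ level, in the spirit of \ref{ft0}(ii), \ref{qp duality}, and the remark after \ref{pseudofunc}. This is viable provided you are careful about the following. (a) Applying $\R(fg)_*$ alone is not enough, since the natural map $\Hom_{\Dqc(\fst g_*\OW)}\to\Hom_{\Dqc(Y)}$ can fail to be injective (cf.\ the footnote just before \ref{Indt square}); you do flag the need to use $\ov{fg}$, but the bookkeeping that every object and every arrow of the hexagon carries a compatible $\fst g_*\OW$-module structure should be made explicit, since that is the hypothesis under which \ref{^* equivalence} makes $\R\ov{(fg)}_*$ fully faithful. (b) Your description of \circled1 as reducing to ``$\chi(g,\OX,-)$ is $\LL g^*$ followed by $g^\flat\OX\Otimes{W}(-)$'' undersells what is actually needed: the commutative face in the paper's expansion uses the associativity coherence of the projection isomorphism from \cite[3.4.7(iv)]{li}, which is a nontrivial identity and not a definition. (c) Your reduction of \circled2 correctly identifies transitivity of projection maps \cite[3.7.1.1]{li} and the conjugacy of \eqref{pf flat} with $\R\fst\R g_*\iso\R(fg)_*$ as the key inputs; that is exactly what the paper uses, so on this subdiagram the two approaches coincide in substance.

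In short, the paper's proof is shorter because it never translates into $\R\sHom$, and it treats \circled1 by direct expansion rather than by adjunction or pushforward. Your route is the ``concrete'' counterpart; to finish it you would need to spell out the $\fst g_*\OW$-linear structure of the pushed-forward hexagon (so that $\R\ov{(fg)}_*$-faithfulness applies) and supply the \cite[3.4.7(iv)]{li} coherence step for \circled1.
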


\vskip-3pt
\emph{Remark.} It should be noted that under the assumptions of \ref{flat and tensor}(i), one has
\mbox{$E\cong f^\flat(F\Otimes{Y}G)\in\Dqcpl(X)$;} so in any case, $\chi(g,\OX,E)$ is
well-defined.

\begin{proof}
The commutativity of subdiagram \circled2 is equivalent to that of its adjoint, which, in view of 
\ref{flat and tensor}, is the border of the natural diagram\va3
\begin{small}
\[\mkern-3mu
  \def\1{$\R(f\<g)_*\big(g^\flat\<\< f^\flat\< F\Otimes{W}\LL g^*\LL f^*\<G\big)$}
  \def\2{$\R(f\<g)_*\big((f\<g)^\flat\< F\<\Otimes{W}\<\LL g^*\LL f^*\<G\big)\ $}
  \def\3{$\R\fst\R g_*\big(g^\flat\<\< f^\flat\< F\Otimes{W}\LL g^*\LL f^*\<G\big)$}
  \def\4{$\R\fst(\R g_*g^\flat\<\< f^\flat\< F\Otimes{\sX}\LL f^*\<G\>)$}
  \def\5{$\R(f\<g)_*\big(g^\flat (f^\flat\< F\Otimes{\sX}\LL f^*\<G\>)\big)$}
  \def\6{$\R\fst(f^\flat\< F\Otimes{\sX}\LL f^*\<G\>)$}
  \def\7{$\R(f\<g)_*\big((f\<g)^\flat\< F\<\Otimes{W}\<\LL (f\<g)^{\<*}\<G\big)\ $}
  \def\8{$\R\fst\R g_*g^\flat (f^\flat\< F\Otimes{\sX}\LL f^*\<G\>)$}
  \def\9{$\R\fst\R g_*g^\flat\<\< f^\flat\< F\<\<\Otimes{Y}\<\<G$}
  \def\ten{$\R(f\<g)_*g^\flat\<\< f^\flat(F\<\<\Otimes{Y}\<\<G)$}
  \def\lvn{$\R\fst f^\flat\<F\<\<\Otimes{Y}\<\<G$}
  \def\twv{$\mkern30mu\R(f\<g)_*(f\<g)^\flat\<F\<\<\Otimes{Y}\<\<G$}
  \def\thn{$\R\fst\R g_*g^\flat\<\< f^\flat(F\<\<\Otimes{Y}\<\<G)$}
  \def\frn{$\R\fst f^\flat(F\<\<\Otimes{Y}\<\<G)$}
  \def\ffn{$\mkern10mu F\<\<\Otimes{Y}\<\<G$}
 \bpic[xscale=2.8, yscale=1.4]

   \node(11) at (1,-1){\1} ;
   \node(14) at (4,-1){\2} ;   
   
   \node(22) at (1.9,-2){\3} ;
   \node(24) at (4,-2){\7} ;
   
   \node(33) at (2.8,-2.83){\4} ;
   
   \node(43) at (3.4,-3.66){\9} ;
   \node(44) at (4,-4.5){\twv} ;
   
   \node(51) at (1,-5.33){\5} ;
   \node(52) at (1.9,-4.5){\8} ; 
   \node(53) at (2.8,-5.33){\6} ;  
   
   \node(63) at (3.4,-6.16){\lvn} ;
   
   \node(71) at (1,-7){\ten} ;
   \node(72) at (1.9,-6.16){\thn} ; 
   \node(73) at (2.8,-7){\frn} ;  
   \node(74) at (4,-7){\ffn} ;

    \draw[->] (11)--(14) node[above, midway, ] {$$} ;
    
    \draw[->] (43)--(44) node[above, midway, ] {$$} ;
    
    \draw[->] (22)--(24) node[above, midway, ] {$$} ;   
    
    \draw[->] (51)--(52)  ;
    \draw[->] (52)--(53)  ;
   
    \draw[->] (71)--(72)  ;
    \draw[->] (72)--(73)  ;
    \draw[->] (3.27,-7)--(3.78,-7)  ;
    
    \draw[->] (11)--(51) ;
    \draw[->] (51)--(71) ;
                                     
    \draw[->] (22)--(52) ;
    \draw[->] (52)--(72) ;

    \draw[->] (33)--(53) ;
    \draw[->] (53)--(73) ;

    \draw[->] (43)--(63) ;
 
    \draw[->] (14)--(24) ;
    \draw[->] (24)--(44) ;
    \draw[->] (44)--(74) ;
%
    
    \draw[->] (22)--(33) ;
    \draw[->] (33)--(43) ;
    \draw[->] (53)--(63) ;
    \draw[->] (3.58,-6.4)--(74) ;
    
    \node at (2.35, -3.66) [scale=.9]{\circled3} ;
    \node at (3.72,-2.85) [scale=.9]{\circled4} ;
    \node at (3.72,-5.35) [scale=.9]{\circled5} ;
    \node at (3.4,-6.62) [scale=.9]{\circled6} ;

  \epic
\]
\end{small}
\vskip-5pt
In that diagram, the commutativity of \circled4 and \circled6 is given by \ref{flat and tensor}, 
that of \circled3 follows from \cite[Proposition 3.7.1]{li}, and, with notation as in the first paragraph
of \S\ref{pf adj},  \circled5 is the second (commutative) diagram in~\cite[3.3.7(a)]{li}. Commutativity of the unlabeled subdiagrams is easily checked.

Similarly, the commutativity of \circled1 results from that of all the subdiagrams of the following natural diagram. 
\begin{small}
\[\mkern-3mu
\def\1{$\R g_*\big((g^\flat\OX\<\<\Otimes{W}\LL g^*\!f^\flat\< F)\Otimes{W}\LL g^*\LL f^*\<G\big)$}
\def\2{$\R g_*\big(g^\flat\<\< f^\flat\< F\Otimes{W}\LL g^*\LL f^*\<G\big)$}
\def\3{$\R g_*(g^\flat\OX\<\<\Otimes{W}\LL g^*\!f^\flat\< F)\Otimes{\sX}\LL f^*\<G$}
\def\4{$\qquad\R g_*g^\flat\<\< f^\flat\<F\Otimes{W}\LL f^*\<G$}
\def\5{$\R g_*\big(g^\flat\OX\<\<\Otimes{W}\LL g^*\<(f^\flat\< F\Otimes{\sX}\LL f^*\<G\>)\big)$}
\def\6{$\R g_*g^\flat\OX\<\<\Otimes{\sX}\<\<f^\flat\< F\Otimes{\sX}\LL f^*\<G$}
\def\7{$\OX\!\Otimes{\sX}\<\<f^\flat\<F\<\Otimes{\sX}\<\LL f^*\<G$}
\def\8{$\quad\qquad f^\flat\<F\<\Otimes{\sX}\<\LL f^*\<G$}
 \bpic[xscale=3.93, yscale=1.4]

   \node(11) at (1,-1){\1} ;
   \node(13) at (3,-1){\2} ;   
   
   \node(21) at (1.35,-2){\3} ;
   \node(23) at (3,-2){\4} ;
   
   \node(31) at (1,-4){\5} ;
   \node(32) at (1.35,-3){\6} ;
   \node(33) at (2.6,-3){\7} ;
   \node(34) at (3,-4){\8} ;

    \draw[->] (11)--(13) node[above, midway, ] {$\lift1.4,\via\chi'\>,$} ;
    
    \draw[->] (2,-2)--(2.68,-2) node[above, midway, ] {$\lift1.4,\via\chi'\>,$} ;   
    
    \draw[->] (31)--(32)  ;
    \draw[->] (32)--(33)  ;
    \draw[->] (33)--(34)  ;
    
    \draw[->] (.65,-1.25)--(.65,-3.75) ;
                                     
    \draw[->] (21)--(32) ;
  
    \draw[->] (3.1,-2.25)--(3.1, -3.75)  ;

    \draw[->] (3.1,-1.25)--(3.1,-1.8)  ;
    
    \draw[->] (11)--(21) ;
    
    \node at (1,-2.52) [scale=.9]{\circled7} ;
    \node at (2.25,-1.48) [scale=.9]{\circled8} ;
    \node at (2.25,-2.5) [scale=.9]{\circled9} ;
  \epic
\]
\end{small}
The commutativity of subdiagram \circled7 is given, \emph{mutatis mutandis,} by \cite[3.4.7(iv)]{li}
(with $f$ replaced by $g$, $A\set \LL f^*\<G$, $B\set f^\flat\<F$ and $C\set g^\flat\OX$). The commutativity of \circled8 is clear, and that of \circled9 results from \ref{flat and tensor}.
So the border, and hence \circled1, commutes.
\end{proof}
\end{cosa}

\begin{cosa}\label{Hom and flat}
Let\/ $f\colon X\to Y$ be a pseudo\kf-coherent finite scheme-map\va1 (see \S\ref{quasi}),  let \mbox{$F\in\D(Y)$} be pseudo\kf-coherent, 
and let $G\in\Dqcpl(Y)$.  Let $f=\phi\bar f$ be as in~\S\ref{2.1}, $\pt$ as in~\ref{phiflat}, 
and $f^\flat\set\bar{f\:\<}^{\!\<*}\<\<\pt$ as in~\eqref{fflat}.

By~\ref{qcHom}, $\R\sHom_Y(\fst\OX,G)\in\Dqcpl(Y)$, so  $\pt G\in\Dqcpl(\oY)$. (See the proof of \ref{fst fflat}). Similarly, \eqref{projphi2} gives  
\[
\phi_*\LL\phi^*F\cong  F\Otimes{Y}\phi_*\CO_{\>\oY}=F\Otimes{Y}\fst\OX\in\Dqc(Y),
\]
so $\LL\phi^*F\in\Dqc(\oY)$.

Hence,  by~\ref{^* equivalence}, there are natural isomorphisms\va{-2}
\begin{gather*}
\LL\phi^*\<\<F\cong \R\bar\fst \bar{f\:\<}^{\!\<*}\LL\phi^*\<\<F\cong\R\bar\fst \LL f^*\<\<F,\qquad
\pt G\cong \R\bar\fst \bar{f\:\<}^{\!\<*}\!\pt G=\R\bar\fst f^\flat G.
\end{gather*}
\vskip-2pt
\noindent Moreover,  $\LL f^*\<\<F\in\Dqc(X)$ is pseudo\kf-coherent and $f^\flat G\in\Dqcpl(X)$, so \ref{qcHom} gives 
$\R\>\sHom^{}_\sX(\LL f^*\<\<F,f^\flat G\>)\in\Dqcpl(X)$,
and~\ref{tensor} gives an isomorphism\va{-2}
\begin{equation}\label{phi to f}
\bar{f\:\<}^{\!\<*}\R\>\sHom_{\>\oY}(\LL \phi^*\<\<F\<,\pt G\>)\iso
\R\>\sHom^{}_\sX(\LL f^*\<\<F\<,f^\flat G\>).\\[-5pt]
\end{equation}

\pagebreak[3]
\begin{subprop}\label{flat and hom} Under the preceding conditions$\>:$\va2

{\rm (i)}  The map adjoint under\/ \textup{\ref{adjunction0}} to the natural composite
\[
\phi_{\<*}\R\>\sHom_{\>\oY}(\LL\phi^*\<\<F\<,\pt G\>)\iso\R\>\sHom_Y(F\<,\phi_*\pt G\>)
\xto[\!\!\ref{counit}\!\!]{}\R\>\sHom_Y(F\<,G\>)
\]   
\vskip-3pt
\noindent is the unique\/ $\D(\oY)$-map\/ 
\[
\bar\zeta\colon \R\>\sHom_{\>\oY}(\LL\phi^*\<\<F\<,\pt G\>)\to\pt\R\>\sHom_Y(F\<,G\>)
\]  
making the next, natural, diagram commute---whence\/ $\bar\zeta$ is an isomorphism$\>:$

\[\mkern-4mu
\def\1{$\phi_{\<*}\R\>\sHom_{\oY}(\LL\phi^*\<\>F\<,\pt G\>)$}
\def\2{$\phi_{\<*}\pt\R\>\sHom_Y(F\<,G\>)$}
\def\3{$\R\>\sHom_Y(F\<,\phi_{\<*}\pt\<G\>)$}
\def\4{$\R\>\sHom_Y(F\<,\R\>\sHom_Y(\fst\OX\<,\>G\>)\>)$}
\def\5{$\R\>\sHom_Y(F\Otimes{Y}\<\fst\OX\<,\>G\>)$}
\def\6{$\R\>\sHom_Y(\fst\OX,\R\>\sHom_Y(F\<,G\>))$}  
  \bpic[xscale=3.7, yscale=1.75]
     
   \node(11) at (1,-1){\1};  
   \node(13) at (3,-1){\2} ;
   
   \node(21) at (1,-2){\3} ;  
   
   \node(31) at (1,-3){\4};   
   \node(32) at (2,-2.4){\5}; 
   \node(33) at (3,-3){\6};
  
    \draw[->] (11)--(13) node[above, midway, scale=.75] {$\phi_*\bar\zeta$} ;  
      
    \draw[->] (11)--(21) node[left=1pt, midway, scale=.75] {$\simeq$} ;
    \draw[->] (21)--(31) node[left=1pt, midway, scale=.75] {$\simeq$} 
                                   node[right, midway, scale=.75] {\eqref{phiRHompsi}} ;
                                   
    \draw[->] (13)--(33) node[left, midway, scale=.75] {{\eqref{phiRHompsi}}}
                                   node[right=1pt, midway, scale=.75] {$\simeq$} ;
                                   
   \draw[->] (31)--(32) node[above=-1.9pt, midway, scale=.75] {\rotatebox{14}{$\Iso$}} ;
   \draw[->] (32)--(33) node[above=-1.9pt, midway, scale=.75] {\rotatebox{-14}{$\Iso$}} ;
   
  \node at (2.03,-1.75) [scale=.9]{\circled1} ;
  
  \epic
\]

\pagebreak[3]
{\rm(ii) (Explicit $\bar\zeta\>$)} Suppose that\/ $G$ $($hence $\sHom_\psi(\fst\OX,G\>))$ is K-injective\va{.5} and that\/ $F$  is K-flat. Then\/ $\bar\zeta$~is the natural composite map
\begin{align*}
\R\>\sHom_{\>\oY}(\LL\phi^*\<\<F\<,\pt G\>)
&\<\<\iso\!
\sHom_{\>\oY}(\phi^*\<\<F\<,\sHom_\psi(\fst\OX,G\>)\>)\\[2pt]
&\<\underset{\lift1.4,\bar\zeta^{}_0,}{\<\iso}\!
\sHom_\psi(\fst\OX\<,\sHom_Y(F\<,G\>)\>)
\<\<\iso\!
\pt\R\>\sHom_Y(F\<,G\>),
\end{align*}
\noindent with $\bar\zeta^{}_0$  the natural composite map---$\fst\OX$-linear via the multiplication action of\/ $\fst\OX$ on itself,
\begin{align*}
\phi_*\sHom_{\>\oY}(\phi^*\<\<F\<,\sHom_\psi(\fst\OX,G\>)\>)&\iso
\sHom_Y(F\<,\phi_*\sHom_\psi(\fst\OX,G\>)\>)\\
&\mkern9.5mu=\!\!=\,
\sHom_Y(F\<,\sHom_Y(\fst\OX,G\>)\>)\\
&\iso
\sHom_Y(F\otimes_Y\<\fst\OX, G\>)\\
&\iso
\sHom_Y(\fst\OX,\sHom_Y(F\<, G\>)\>)\\
&\mkern9.5mu=\!\!=\,
\phi_*\sHom_\psi(\fst\OX,\sHom_Y(F\<, G\>)\>).
\end{align*}

\vskip3pt
{\rm(iii)} The composite isomorphism  
\begin{align*}
\R\>\sHom^{}_\sX(\LL f^*\<\<F,f^\flat G\>)
&\underset{\eqref{phi to f}}\iso
\bar{f\:\<}^{\!\<*}\R\>\sHom_{\>\oY}(\LL \phi^*\<\<F\<,\pt G\>)\\
&\underset{\bar{f\:\<}^{\!\<*}\<\<\bar\zeta}\iso 
\bar{f\:\<}^{\!\<*}\<\<\pt\R\>\sHom_Y(F\<,G\>)=
f^\flat\R\>\sHom_Y(F,G\>)
\end{align*}
is the unique\/ $\D(X)$-map~$\zeta$ making the following natural diagram commute$\>:$
\[\mkern-2mu
\def\1{$\R\fst f^\flat\R\>\sHom_Y(F,G\>)$}
\def\2{$\R\fst \R\>\sHom_\sX(\LL f^*\<\<F,\>f^\flat G\>)$}
\def\3{$\R\>\sHom_Y(\fst\OX,\R\>\sHom_Y(F,G\>));$}
\def\4{$\R\>\sHom_Y(F,\R\fst f^\flat G\>)$}
\def\5{$\R\>\sHom_Y(F,\R\>\sHom_Y(\fst\OX, G\>))$}
\def\6{$\R\>\sHom_Y(F\Otimes{Y}\<\fst\OX, \>G\>)$}
  \bpic[xscale=7.4, yscale=1.65]

   \node(11) at (2,-1){\1};   
   \node(12) at (1,-1){\2}; 
   
   \node(21) at (1,-2){\4};
   
   \node(31) at (1,-3){\5};
   \node(32) at (2,-3){\3};
   \node(30) at (1.5,-2.475){\6};

    \draw[<-] (11)--(12) node[above, midway, scale=.75] {$\R\fst\zeta$} ;

    \draw[->] (12)--(21) node[left=1pt, midway, scale=.75] {$\simeq$} ;
    \draw[->] (21)--(31) node[left=1pt, midway, scale=.75] {$\simeq$} 
                                   node[right=1pt, midway, scale=.75] {$\textup{\ref{fst fflat}}$} ;
    \draw[->] (11)--(32) node[left=1pt, midway, scale=.75] {$\textup{\ref{fst fflat}}$}
                                   node[right=1pt, midway, scale=.75] {$\simeq$} ;
                                   
   \draw[->] (31)--(30) node[above=-1.9pt, midway, scale=.75] {\rotatebox{14}{$\Iso$}} ;
   \draw[->] (30)--(32) node[above=-1.9pt, midway, scale=.75] {\rotatebox{-14}{$\Iso$}} ;
      
   \node at (1.52,-1.75)[scale=.9] {\circled2} ;
  
 \epic
\]
and this $\zeta$ corresponds under\/ \textup{\ref{represent}} to the  natural composite map
\[
\R\fst \R\>\sHom_\sX(\LL f^*\<\<F,\>f^\flat G\>)
\iso\R\>\sHom_Y(F,\R\fst f^\flat G\>)
\lto\R\>\sHom_Y(F\<,G\>).
\]
\end{subprop}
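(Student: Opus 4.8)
The plan is to mirror, for the $\sHom$-duality map $\zeta$, the exact strategy just used for $\chi$ in Proposition~\ref{flat and tensor}. The three parts are logically linked: part (i) produces $\bar\zeta$ as the unique map making a certain square commute (and by \ref{adjunction0} and \ref{^* equivalence} it is forced to be an isomorphism since the composite it is adjoint to is an isomorphism once one knows $\phi_*\R\>\sHom_{\>\oY}(\LL\phi^*\<\<F\<,\pt G\>)\iso\R\>\sHom_Y(F\<,G\>)$ is one); part (ii) identifies $\bar\zeta$ explicitly on K-injective/K-flat representatives; and part (iii) transports everything across $\bar{f\:\<}^{\!\<*}$ via \ref{tensor} and \eqref{phi to f}, then re-expresses the resulting $\D(X)$-map through \ref{represent}.

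First I would prove (i). Apply $\phi_*$ and use \ref{adjunction0}: a $\D(\oY)$-map $\bar\zeta\colon\R\>\sHom_{\>\oY}(\LL\phi^*\<\<F\<,\pt G\>)\to\pt\R\>\sHom_Y(F\<,G\>)$ corresponds to a $\D(Y)$-map $\phi_*\R\>\sHom_{\>\oY}(\LL\phi^*\<\<F\<,\pt G\>)\to\R\>\sHom_Y(F\<,G\>)$; I would take the composite indicated in the statement (using \eqref{phiRHompsi} to rewrite $\phi_*\R\>\sHom_{\>\oY}(\LL\phi^*\<\<F\<,\pt G\>)$, then a standard adjoint-associativity isomorphism $\phi_*\R\>\sHom_{\>\oY}(\LL\phi^*\<\<F\<,\pt G\>)\iso\R\>\sHom_Y(F,\phi_*\pt G\>)$ coming from \ref{adjass} with $\CS=\OY$, then ``evaluation at~1'' \ref{counit}). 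Commutativity of the displayed triangle~\circled1 is then just the assertion that this composite, transported along the two vertical isomorphisms, equals the ``via $\mu$''-type map $F\Otimes{Y}\<\fst\OX\to\fst\OX$-juggling composite along the bottom; one checks this after replacing $F$ by a K-flat and $G$ by a K-injective complex, where all the derived functors become ordinary ones and the statement reduces to the manipulation of $\sHom$ and $\otimes$ recorded in the definition of $\bar\gamma^{}_0$ in~\eqref{bargamma} and the scalar-multiplication formula of~\S\ref{2.3}. That $\bar\zeta$ is an isomorphism follows because the composite it is adjoint to is built from isomorphisms (\eqref{phiRHompsi}, adjoint associativity, and ``evaluation at~1'' is an isomorphism after $\phi_*$ precisely because $\R\>\sHom_Y(\fst\OX,-)\to\R\>\sHom_Y(\OY,-)$ is inverted on $\Dqcpl$ thanks to \ref{qcHom} and \ref{right adjoint}---or, more directly, by combining \ref{^* equivalence}, \ref{tensor} and \ref{qp duality}).

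For (ii), after choosing the stated resolutions, $\LL\phi^*F=\phi^*F$ is K-flat over $\CO_{\>\oY}$ (since $F$ is K-flat and $\phi$ is just restriction of scalars), and $\pt G=\sHom_\psi(\fst\OX,G\>)$ is K-injective over $\CO_{\>\oY}$ (as in the proof of \ref{adjass0}); so all the $\R$'s drop. Then $\bar\zeta$ is pinned down by (i), and one verifies the explicit factorization by a diagram whose every square commutes essentially by the definitions of the maps involved---most visibly the square expressing that $\bar\zeta^{}_0$ is, after $\phi_*$, the concrete $\sHom$-swap $\sHom_Y(F,\sHom_Y(\fst\OX,G))\iso\sHom_Y(F\otimes_Y\fst\OX,G)\iso\sHom_Y(\fst\OX,\sHom_Y(F,G))$, which is an instance of the standard ``internal-hom adjunction'' coherence (cf.\ \cite[3.5.3(h)]{li} and the Kelly--Mac\,Lane theorem, exactly as in~\S\ref{2.3}).

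Finally, for (iii), I would apply $\bar{f\:\<}^{\!\<*}$ to $\bar\zeta$ and precompose with~\eqref{phi to f}, getting $\zeta$; it is an isomorphism because $\bar\zeta$, \eqref{phi to f} are. Uniqueness of $\zeta$ making~\circled2 commute follows from \ref{^* equivalence} (a $\D(X)$-map into $\R\>\sHom^{}_\sX(\LL f^*\<\<F,f^\flat G\>)\in\Dqcpl(X)$ is determined by its image under $\R\bar\fst$, equivalently after further applying $\phi_*=\R\fst(-)$-modulo the equivalence). To see that the specific $\zeta$ built from $\bar\zeta$ does make~\circled2 commute, expand it, applying $\R\fst=\phi_*\R\bar\fst$, into a diagram whose outer border is~\circled2 and whose inner cells are: the defining square of $\bar\zeta$ from (i) (after $\R\bar\fst\bar{f\:\<}^{\!\<*}\cong\id$ on $\Dqc(\oY)$ via \ref{^* equivalence}); the compatibility of \eqref{phi to f} with $\R\fst$, which is the content of \ref{tensor} together with the transitivity diagram \cite[3.7.1.1]{li} (with $(f,g)\set(\phi,\bar f)$), exactly the role played by subdiagram~\circled1 in the proof of Proposition~\ref{qc duality2}; and the identification via \ref{fst fflat} of $\R\fst f^\flat$ with $\R\>\sHom_Y(\fst\OX,-)$. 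The last clause---that $\zeta$ corresponds under \ref{represent} to $\R\fst \R\>\sHom_\sX(\LL f^*\<\<F,f^\flat G\>)\iso\R\>\sHom_Y(F,\R\fst f^\flat G\>)\to\R\>\sHom_Y(F\<,G\>)$---then follows by composing~\circled2 with the $\Dqcpl$-identification $\R\>\sHom_Y(\fst\OX,\R\>\sHom_Y(F,G))\lto\R\>\sHom_Y(\OY,\R\>\sHom_Y(F,G))=\R\>\sHom_Y(F,G)$ and recognizing, via \ref{represent}(ii), that this is precisely the counit description; the bottom path of~\circled2 is, after ``evaluation at~1,'' the swap $\R\>\sHom_Y(F,\R\>\sHom_Y(\fst\OX,G))\iso\R\>\sHom_Y(F\otimes_Y\fst\OX,G)\iso\R\>\sHom_Y(\fst\OX,\R\>\sHom_Y(F,G))$ followed by evaluation at~1 in the last slot, which collapses to $\R\>\sHom_Y(F,\R\fst f^\flat G\>)\lto\R\>\sHom_Y(F,G)$ induced by $t^{}_{\<G}$.

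The main obstacle I expect is bookkeeping in (iii): correctly matching the six or seven edges when $\R\fst$ is applied to $\zeta$ and checking that the ``$\bar{f\:\<}^{\!\<*}$-to-$\R\fst$'' passage interacts coherently with both the $\sHom$-swap in (i) and the projection-type isomorphism hidden in \eqref{phi to f}. Everything needed is available---\ref{tensor}, \ref{^* equivalence}, the transitivity diagram \cite[3.7.1.1]{li}, \ref{fst fflat}, \ref{represent}---but the coherence check is delicate, and (as with the analogous ``the interested reader'' remarks after \ref{tensor} and in \ref{pfchi}) the cleanest route is probably to reduce to the K-injective/K-flat case supplied by (ii) and then invoke the non-derived coherence theorems for closed categories.
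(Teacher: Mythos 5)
Your overall strategy tracks the paper's own proof quite closely (uniqueness via adjunction, then verification after K-injective/K-flat replacement for (i)--(ii), then transport along $\bar{f\:\<}^{\!\<*}$ for (iii)), and the verification ideas for (ii) are essentially right. But two steps are genuinely off, and one of them is exactly the trap the paper warns against.

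The serious one is the uniqueness claim in (iii). You write that a $\D(X)$-map is determined by its image under $\R\bar\fst$, ``equivalently after further applying $\phi_*=\R\fst(-)$.'' The first half is right ($\R\bar\fst$ is an equivalence on $\Dqc$), but the parenthetical equivalence is false: the natural map
\[
\Hom_{\Dqc(\fst\OX)}(\R\fst A,\R\fst B)\lto\Hom_{\Dqc(Y)}(\R\fst A,\R\fst B)
\]
need not be injective, as the paper points out explicitly (with the $\textup{Ext}^1_R(k,k)\to\textup{Ext}^1_k(k,k)$ counterexample) in the remarks opening \S\ref{bc}. So knowing the $\D(Y)$-map $\R\fst\zeta$---which is all that the commutativity of \circled2 directly pins down---does \emph{not} determine $\zeta$. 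The paper's argument instead composes the two paths of \circled2 with ``evaluation at $1$'' (using the auxiliary commuting subdiagrams \circled3, \circled4) to show that any $\zeta$ making \circled2 commute has its image under the \emph{adjunction bijection} of \ref{represent}(i) equal to the specified composite $\R\fst\R\>\sHom_\sX(\LL f^*\<\<F,f^\flat G)\iso\R\>\sHom_Y(F,\R\fst f^\flat G)\to\R\>\sHom_Y(F,G)$. It is that representability---not faithfulness of $\phi_*$---that forces uniqueness.

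A second, more minor, defect: in (i) your explanation of why $\bar\zeta$ is an isomorphism asserts that ``evaluation at~1 is an isomorphism after $\phi_*$'' and that $\R\>\sHom_Y(\fst\OX,-)\to\R\>\sHom_Y(\OY,-)$ is ``inverted on $\Dqcpl$.'' That map is the counit for $\R\fst\dashv f^\flat$ and is emphatically not an isomorphism (otherwise $f^\flat=\LL f^*$ always), and \ref{qcHom}/\ref{right adjoint} do not say it is. The correct (and simpler) argument: once \circled1 commutes, the map $\phi_*\bar\zeta$ agrees with the composite along the other three sides of the diagram, all of which are isomorphisms; since $\phi_*$ is conservative, $\bar\zeta$ is an isomorphism. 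You also cite $\bar\gamma^{}_0$ from \eqref{bargamma} (the $\otimes$-version used for $\chi$) rather than $\bar\zeta^{}_0$ from part (ii), but that is evidently a typo.
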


 \begin{proof}
First, uniqueness in (iii) and (i). Consider the natural diagram
\[\mkern-2mu
\def\1{$\R\fst f^\flat\R\>\sHom_Y(F,G\>)$}
\def\2{$\R\fst \R\>\sHom_\sX(\LL f^*\<\<F,\>f^\flat G\>)$}
\def\3{$\R\>\sHom_Y(\fst\OX,\R\>\sHom_Y(F,G\>))\quad $}
\def\4{$\R\>\sHom_Y(F,\R\fst f^\flat G\>)$}
\def\5{$\R\>\sHom_Y(F,\R\>\sHom_Y(\fst\OX, G\>))$}
\def\6{$\R\>\sHom_Y(F\Otimes{Y}\<\fst\OX, \>G\>)$}
\def\7{$\R\>\sHom_Y(\OY\<,\R\>\sHom_Y(F,G\>))$}
\def\8{$\R\>\sHom_Y(F,\R\>\sHom_Y(\OY\<, G\>))$}
\def\9{$\R\>\sHom_Y(F\Otimes{Y}\<\OY\<, \>G\>)$}
\def\ten{$\R\>\sHom_Y(F, G\>)$}
  \bpic[xscale=7.4, yscale=1.4]

   \node(11) at (2,-1){\1};   
   \node(12) at (1,-1){\2}; 
   
   \node(21) at (1,-2){\4};
   
   \node(31) at (1,-3.1){\5};
   \node(32) at (2,-3.1){\3};
   \node(30) at (1.5,-2.475){\6}; 
   
   \node(42) at (1,-4.3){\8};
   \node(43) at (2,-4.3){\7};
   \node(40) at (1.5,-3.675){\9};
   
   \node(50) at (1.5,-4.9){\ten};
   
    \draw[<-] (11)--(12) node[above, midway, scale=.75] {$\R\fst\zeta$} ;

    \draw[->] (12)--(21) node[left=1pt, midway, scale=.75] {$\simeq$} ;
    \draw[->] (21)--(31) node[left=1pt, midway, scale=.75] {$\simeq$} 
                                   node[right=1pt, midway, scale=.75] {$\textup{\ref{fst fflat}}$} ;
    \draw[->] (31)--(42) ;
                                    
    \draw[->] (30)--(40) ;                                
    \draw[->] (40)--(50) ;
                                   
   \draw[->] (11)--(32) node[left=1pt, midway, scale=.75] {$\textup{\ref{fst fflat}}$}
                                   node[right=1pt, midway, scale=.75] {$\simeq$} ;
   \draw[->] (32)--(43) ;
                                   
   \draw[->] (31)--(30) node[above=-1.9pt, midway, scale=.75] {\rotatebox{14}{$\Iso$}} ;
   \draw[->] (30)--(32) node[above=-1.9pt, midway, scale=.75] {\rotatebox{-14}{$\Iso$}} ;
   \draw[->] (42)--(50) node[above=-1.9pt, midway, scale=.75] {\rotatebox{-14}{$\Iso$}} ;
   \draw[<-] (43)--(40) node[above=-1.9pt, midway, scale=.75] {\rotatebox{-14}{$\Iso$}} ;
   \draw[->] (43)--(50) node[above=-1.9pt, midway, scale=.75] {\rotatebox{14}{$\Iso$}} ;
   \draw[->] (42)--(40) node[above=-1.9pt, midway, scale=.75] {\rotatebox{14}{$\Iso$}} ;

   \node at (1.52,-1.75)[scale=.9] {\circled2} ;
   \node at (1.39,-4.32) [scale=.9]{\circled3} ;
   \node at (1.585,-4.32) [scale=.9]{\circled4} ;
   
 \epic
\]
\vskip-5pt
\noindent
After replacing $G$ by a quasi-isomorphic K-injective complex, one can drop all the $\R$\kf s in \circled3 and \circled4
and check that the resulting subdiagrams---hence the original ones---are commutative.\va2

\begin{small}
More generally, in any closed category, using the definitions of the maps involved 
(see \cite[3.5.6(e) and 3.5.3(e)]{li}) one checks that subdiagrams \circled3 and \circled 4 are 
right-conjugate to the (clearly) commutative natural diagrams
\[
\def\7{$D\Otimes{Y}(F\Otimes{Y}\<\OY)$}
\def\8{$(D\Otimes{Y}F\>)\Otimes{Y}\<\OY$}
\def\9{$(D\Otimes{Y}\OY)\Otimes{Y}\<F$}
\def\ten{$D\Otimes{Y}F$}
  \bpic[xscale=7.4, yscale=1.4]

   \node(41) at (1,-4.3){\8};
   \node(42) at (2,-4.3){\9};
   
   \node(40) at (1.5,-3.675){\7};
   
   \node(50) at (1.5,-4.9){\ten};
   
  
    \draw[<-] (40)--(50) ;
                                    
   \draw[<-] (41)--(50) node[above=-1.2pt, midway, scale=.75] {\rotatebox{-14}{$\Iso$}} ;
   \draw[<-] (41)--(40) node[above=-1.2pt, midway, scale=.75] {\rotatebox{14}{$\Iso$}} ;
   \draw[<-] (50)--(42) node[above=-1.2pt, midway, scale=.75] {\rotatebox{14}{$\Iso$}} ;
   \draw[<-] (40)--(42) node[above=-1.2pt, midway, scale=.75] {\rotatebox{-14}{$\Iso$}} ;

   \node at (1.35,-4.3) [scale=.9]{\circled3$'$} ;
   \node at (1.65,-4.3) [scale=.9]{\circled4$'$} ;
   
 \epic
\]
\end{small}
\vskip-15pt
 It follows, in view of the definition of the counit $t$ for the adjunction $\R\fst\!\dashv f^\flat\<$ in~\ref{right adjoint}, and of \ref{qcHom},
that any $\D(X)$-map~$\zeta$ such that \circled2 commutes must be the one corresponding under \ref{represent}  to the natural composite\va{-2}
\[
\R\fst \R\>\sHom_\sX(\LL f^*\<\<F,\>f^\flat G\>)
\iso\R\>\sHom_Y(F,\R\fst f^\flat G\>)
\lto\R\>\sHom_Y(F\<,G\>),\\[-2pt]
\]
whence the uniqueness\va{1}  in (iii).%
\footnote
{In fact, $\zeta$ is right-conjugate to 
the projection isomorphism 
\[
\R\fst E\Otimes{\sX}F\iso \R\fst(E\Otimes{\sX}\LL f^*\<\<F\>),
\] 
cf.~\cite[Exercise 4.2.3(f)]{li}. Moreover, $\zeta^{-1}$ is adjoint to the natural composite map
\[
f^\flat\R\>\sHom_Y(F\<,G\>)\Otimes{X}\LL f^*\<\<F
\xto[\lift1.5,\stackrel{\scriptstyle\textup{\ref{flat and tensor}}}{\textup{(iii)}}\>,]{\chi}
f^\flat(\R\>\sHom_Y(F\<,G\>)\Otimes{Y}F\>)\lto f^\flat G,
\]\vskip1pt
cf.~\cite[Exercise 4.9.3(b)]{li} (in whose third last line ``$\>\bar{\!f\>}^!$" should be ``$f^!$").
} 

Similarly, using \ref{counit} one shows that any $\bar\zeta$ making \circled1 commute
is adjoint under~\ref{adjunction0} to the natural composite
\[
\phi_{\<*}\R\>\sHom_{\>\oY}(\LL\phi^*\<\<F\<,\pt G\>)\iso\R\>\sHom_Y(F\<,\phi_*\pt G\>)
\lto\R\>\sHom_Y(F\<,G\>),
\]   
whence the uniqueness in (i).\va2

This being so,  (i) and (ii) can be proved thus: assuming (as one may) that $G$ (resp.~$F$)~
is a K-injective (resp.~K-flat) $\OY$-complex, show that with~$\bar\chi$ as in (ii), diagram \circled1 commutes; and for this, note that each subdiagram
of the following natural diagram, where $\CH\set\sHom$,  commutes (more or less by definition of the functorial maps involved), whence so does the border.
\begin{small}
\[\mkern-5mu
\def\1{$\phi_*\R\CH_{\oY}(\LL\phi^*\<\<F\<,\pt G\>)$}
\def\2{$\phi_*\CH_{\oY}(\phi^*\<\<F\<,\CH_\psi(\fst\OX,G\>)\>)\ $}
\def\3{$\R\CH_Y(F\<,\phi_*\pt G\>)$}
\def\4{$\R\CH_Y(F\<,\phi_*\R\CH_\psi(\fst\OX,G\>)\>)$}
\def\5{$\CH_Y(F\<,\phi_*\CH_\psi(\fst\OX,G\>)\>)$}
\def\6{$\CH_Y(F\<,\CH_Y(\fst\OX,G\>)\>)$}
\def\7{$\CH_Y(F\otimes_Y\<\<\fst\OX, G\>)$}
\def\8{$\CH_Y(\fst\OX,\CH_Y(F\<, G\>)\>)$}
\def\9{$\phi_*\pt\R\CH_Y(F\<, G\>)$}
\def\ten{$\phi_*\CH_\psi(\fst\OX,\CH_Y(F\<, G\>)\>)$}
\def\lvn{$\R\CH_Y(F\<,\R\CH_Y(\fst\OX,G\>)\>)$}
\def\twv{$\R\CH_Y(F\Otimes{Y}\<\<\fst\OX,G\>)$}
\def\thn {$\R\CH_Y(\fst\OX,\R\CH_Y(F\<,G\>)\>)$}
  \bpic[xscale=4.4, yscale=1.3]
  
   \node(11) at (1,-1){\1};   
   \node(13) at (3,-1){\2}; 

   \node(21) at (1,-2){\3};
   \node(22) at (1.925 ,-2){\4};   
   \node(23) at (3,-2){\5}; 
   
   \node(31) at (1,-3){\lvn} ;
   \node(33) at (3,-3){\6} ;  
   
   \node(42) at (1 ,-4){\twv};
   \node(44) at (3,-4){\7}; 
   
   \node(52) at (1 ,-6){\9};   
   \node(53) at (3,-6){\ten}; 
   
   \node(61) at (1,-5){\thn} ;
   \node(63) at (3,-5){\8} ;  

   \draw[->] (11)--(13) node[above=1pt, midway, scale=.75] {$\Iso$} ;
   
   \draw[double distance=2pt] (21)--(22) ;  
   \draw[<-] (22)--(23) ; 
   
   \draw[<-] (31)--(33) ;
   
   \draw[<-] (42)--(44) ;
    
   \draw[<-] (52)--(53) ;
   
   \draw[<-] (61)--(63) ;
   
    \draw[->] (11)--(21) node[left=1pt, midway, scale=.75] {$\simeq$} ;
    \draw[->] (21)--(31) node[left=1pt, midway, scale=.75] {$\simeq$} 
                                   node[right, midway, scale=.75]{\eqref{phiRHompsi}} ; 
    \draw[->] (31)--(42) node[left=1pt, midway, scale=.75] {$\simeq$} ;
    \draw[->] (42)--(61) node[left=1pt, midway, scale=.75] {$\simeq$} ;

    \draw[->] (13)--(23) node[right=1pt, midway, scale=.75] {$\simeq$} ;
    \draw[double distance=2pt] (23)--(33);
    \draw[->] (33)--(44) node[right=1pt, midway, scale=.75] {$\simeq$} ;
    \draw[->] (44)--(63) node[right=1pt, midway, scale=.75] {$\simeq$} ;
    
     \draw[double distance=2pt] (63)--(53) ;
     \draw[->] (52)--(61) node[left=1pt, midway, scale=.75] {$\simeq$}
                                     node[right=1pt, midway, scale=.75]{\eqref{phiRHompsi}} ;
    \draw[->] (22)--(31) node[above=-1pt, midway, scale=.75] {$\simeq\mkern30mu$} 
                                   node[below, midway, scale=.75] {$\mkern50mu\eqref{phiRHompsi}$} ;
     \node at (1.925,-1.5)[scale =.85] {\textup{\cite[3.2.3(ii)]{li}}} ;
     \node at (1.925,-3.5)[scale =.85] {\textup{ \cite[2.6.1$^*$]{li}}} ;
     \node at (1.925,-4.5)[scale =.85] {\textup{\cite[2.6.1$^*$]{li}}} ;
  \epic
\]
\end{small}
\vskip-10pt
As for (iii),  to see that the following natural diagram expands \circled2, apply\- the first diagram in \cite[3.7.1.1]{li} to the leftmost column. Since by~(i),
subdiagram~\circled1 commutes, therefore for (iii) to hold---i.e., for the border to commute---it suffices that all the~other subdiagrams commute.
\begin{small}           
\[\mkern-4mu
\def\1{$\R\fst \R\>\CH_\sX(\LL f^*\<\<F\<,\>f^\flat G\>)$}
\def\3{$\phi_*\R\bar\fst \R\>\CH_\sX(\LL f^*\<\<F\<,\>f^\flat G\>)$}
\def\4{$\R\fst\bar{f\:\<}^{\!\<*}\R\CH_{\oY}(\LL \phi^*\<\<F\<,\pt G\>)$}
\def\5{$\R\fst f^\flat\R\>\CH_Y(F\<,G\>)$}
\def\6{$\phi_*\R\bar\fst\bar{f\:\<}^{\!\<*}\R\CH_{\oY}(\LL \phi^*\<\<F\<,\pt G\>) $}
\def\7{$\phi_*\R\bar\fst \bar{f\:\<}^{\!\<*}\pt \R\>\CH_Y(F\<,G\>)$}
\def\8{$\phi_*\R\>\CH_{\oY}(\LL\phi^*\<\<F\<,\R\bar\fst \bar{f\:\<}^{\!\<*}\pt G\>)$}
\def\9{$\phi_*\R\>\CH_{\oY}(\LL\phi^*\<\<F\<,\pt G\>)$}
\def\ten{$\phi_*\pt \R\>\CH_Y(F\<,G\>)$}
\def\lvn{$\R\>\CH_Y(F\<,\phi_* \R\bar\fst \bar{f\:\<}^{\!\<*}\pt  G\>)$}
\def\twv{$\R\>\CH_Y(F\<,\R\fst f^\flat G\>)$}
\def\thn{$\R\>\CH_Y(F\<,\phi_*\pt G\>)$}
\def\frn{$\R\>\CH_Y(F\Otimes{Y}\<\<\fst\OX, \>G\>)$}
\def\ffn{$\R\>\CH_Y(F\<,\R\>\CH_Y(\fst\OX, G\>))$}
\def\sxn{$\R\>\CH_Y(\fst\OX,\R\>\CH_Y(F\<,  G\>))$}
  \bpic[xscale=2.98, yscale=1.6]

   \node(11) at (1,-1){\1};   
   \node(13) at (2.45,-1){\4}; 
   \node(14) at (3.9,-1){\5};
   
   \node(22) at (1,-2){\3};   
   \node(23) at (2.45,-1.55){\6}; 
   \node(24) at (3.9,-2){\7};

   \node(32) at (1,-3){\8};
   \node(33) at (2.45,-3){\9};
   \node(34) at (3.9,-3){\ten}; 
   
   \node(42) at (1,-4){\lvn};
   \node(43) at (2.45,-4){\thn};
   
   \node(51) at (1,-5){\twv};
   \node(52) at (2.45,-5){\ffn};   
   \node(53) at (3.2,-4.5){\frn}; 
   \node(54) at (3.9,-5){\sxn};
   
    \draw[->] (11)--(13) node[above, midway, scale=.75] {$\Iso$} 
                                   node[below=1pt, midway, scale=.75] {$\eqref{phi to f}$} ; 
     \draw[->] (13)--(14) node[above, midway, scale=.75] {$\Iso$} 
                                   node[below=1pt, midway, scale=.75] 
                                                             {$\R\fst\bar{f\:\<}^{\!\<*}\<\<\bar\zeta\>\>$} ;
                                                              
    \draw[->] (32)--(33) node[below=1pt, midway, scale=.75] {{\textup{\ref{^* equivalence}}}}
                                   node[above, midway, scale=.75] {$\Iso$} ;
   \draw[->] (33)--(34) node[above, midway, scale=.75] {$\phi_*\bar\zeta$} ;    
   
   \draw[->] (42)--(43) node[below=1pt, midway, scale=.75] {{\textup{\ref{^* equivalence}}}}
                                   node[above, midway, scale=.75] {$\Iso$} ;

   \draw[->] (51)--(52)  node[above, midway, scale=.75] {$\Iso$} 
                                   node[below=1pt, midway, scale=.75] {$\eqref{fst fflat}$} ; 
                                   
     \draw[double distance=2pt] (11)--(22) ;
     \draw[->] (22)--(32) node[left=1pt, midway, scale=.75] {$\simeq$} ;
     \draw[->] (32)--(42) node[left=1pt, midway, scale=.75] {$\simeq$} ;
     \draw[double distance=2pt] (42)--(51) ;
       
     \draw[double distance=2pt] (13)--(23) ;
     \draw[->] (33)--(43) node[left=1pt, midway, scale=.75] {$\simeq$} ;
    
     \draw[double distance=2pt] (14)--(24) ; 
     \draw[->] (24)--(34) node[left=1pt, midway, scale=.75] {{\textup{\ref{^* equivalence}}}}
                                   node[right=1pt, midway, scale=.75] {$\simeq$} ;
                                   
     \draw[->] (23)--(33) node[left=1pt, midway, scale=.75] {{\textup{\ref{^* equivalence}}}}
                                   node[right=1pt, midway, scale=.75] {$\simeq$} ;
      \draw[->] (34)--(54)  node[right=1pt, midway, scale=.75] {$\simeq$} 
                                       node[left, midway, scale=.75] {\eqref{phiRHompsi}} ;

    \draw[->] (43)--(52) node[right, midway, scale=.75] {$\simeq$} 
                                   node[left, midway, scale=.75] {\eqref{phiRHompsi}} ;
   \draw[->] (22)--(23) node[above=-.5, midway, scale=.75] {\rotatebox{10}{$\Iso\mkern5mu$}} 
                                   node[below, midway, scale=.75] {$\mkern50mu\eqref{phi to f}$} ;
   \draw[->] (23)--(24) node[above=-.5pt, midway, scale=.75] {$\rotatebox{-10}{$\Iso$}$} 
                                   node[below, midway, scale=.75] {$\via\>\bar\zeta\mkern40mu$} ;
     
   \draw[->] (52)--(53) node[above=-2pt, midway, scale=.75] {\rotatebox{21}{$\Iso\mkern10mu$}} ;
   \draw[->] (53)--(54) node[above=-2pt, midway, scale=.75] {\rotatebox{-23}{$\ \Iso$}} ;
   
   \node at (3.15, -3.8)[scale=.9]{\circled1} ;
   \node at (1.75, -2.43)[scale=.9]{\circled5} ;
 \epic
\]
\end{small} 
Commutativity  of \circled5 results, in view of \cite[Example 3.5.2(d)]{li}, from the definition of the map $\rho$ 
given in the proof of Corollary~\ref{tensor}.

Commutativity of the remaining subdiagrams is obvious.
\end{proof}

\begin{subprop}[Transitivity of $\zeta$]\label{pfzeta} 
Let\/ $\zeta=\zeta(f,F\<,G)$  be as in\/~\textup{\ref{flat and hom}(iii).} 
Let\/ $g \colon W\to X$ be a pseudo\kf-coherent finite scheme-map.
Then, modulo natural isomorphisms,
\[
\zeta(gf,F\<,G)=g^\flat\zeta(f,F\<,G)\smallcirc \zeta(g,\LL f^*\<\<F\<,f^\flat G).
\]
\end{subprop}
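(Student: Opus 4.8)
The plan is to characterize every occurrence of $\zeta$ by the universal property of Proposition~\ref{represent}, in the form given by Proposition~\ref{flat and hom}(iii), and to reduce the asserted identity to facts already in hand: the transitivity of the sheafified duality isomorphism of Corollary~\ref{qp duality} (Proposition~\ref{transdual}), the compatibility of the ``evaluation at $1$'' counits $t$ with the pseudofunctoriality isomorphism \eqref{pf flat}, and the transitivity of the projection isomorphisms and of the map $\nu$ of~\eqref{nu} (\cite[3.7.1, 3.7.1.1]{li}). First one notes that all three maps in the statement are well defined: $gf$ is again a pseudo\kf-coherent finite map, $\LL f^*\<\<F$ is pseudo\kf-coherent on $X$ (\S\ref{quasi}), and $f^\flat G\in\Dqcpl(X)$ by Lemmas~\ref{qcHom} and~\ref{fst fflat}; and the source and target of $g^\flat\zeta(f,F,G)\smallcirc\zeta(g,\LL f^*\<\<F,f^\flat G)$ are identified with those of $\zeta(gf,F,G)$ through the monoidal pseudofunctor isomorphism $\LL(gf)^*\iso\LL g^*\LL f^*$ and the pseudofunctoriality isomorphism $(gf)^\flat\iso g^\flat\<\< f^\flat$ of~\eqref{pf flat}.

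By Proposition~\ref{flat and hom}(iii), applied to the map $gf$, the map $\zeta(gf,F,G)$ is the unique $\D(W)$-map which, after applying $\R(gf)_*$ and composing with the counit $t^{}_{\R\sHom_Y(F,G)}$ of the adjunction $\R(gf)_*\dashv(gf)^\flat$ from Corollary~\ref{right adjoint}, equals the natural composite
\[
\R(gf)_*\R\>\sHom_W(\LL(gf)^*\<\<F,(gf)^\flat G)\iso\R\>\sHom_Y(F,\R(gf)_*(gf)^\flat G)\lto\R\>\sHom_Y(F,G),
\]
the first arrow being the sheafified $\LL(gf)^*\dashv\R(gf)_*$ adjunction isomorphism (cf.\ the footnote to Proposition~\ref{flat and hom}) and the second being induced by $t$. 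Since by Proposition~\ref{represent} the pair $\bigl((gf)^\flat\R\sHom_Y(F,G),\,t^{}_{\R\sHom_Y(F,G)}\bigr)$ represents $\Hom_{\D(Y)}(\R(gf)_*-,\R\sHom_Y(F,G))$, it suffices to apply $\R(gf)_*$ to $g^\flat\zeta(f,F,G)\smallcirc\zeta(g,\LL f^*\<\<F,f^\flat G)$, compose with $t^{}_{\R\sHom_Y(F,G)}$, and check that the outcome is this same composite.

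To do this I would decompose $\R(gf)_*\cong\R\fst\R g_*$ and argue one step at a time. Applying $\R g_*$ to $\zeta(g,\LL f^*\<\<F,f^\flat G)$ and composing with the counit $t^{(g)}$ rewrites it, by Proposition~\ref{flat and hom}(iii) for $g$, as the explicit $\nu$/projection composite attached to $g$; applying $\R\fst$ to $\zeta(f,F,G)$ and composing with $t^{(f)}$ does the same for $f$. These two composites are glued into the one for $gf$ by: (i) the identity $t^{(gf)}_{\R\sHom_Y(F,G)}=t^{(f)}_{\R\sHom_Y(F,G)}\smallcirc\R\fst\bigl(t^{(g)}_{f^\flat\R\sHom_Y(F,G)}\bigr)$ modulo \eqref{pf flat}, which is exactly the statement that \eqref{pf flat} is right-conjugate to $\R\fst\R g_*\osi\R(gf)_*$, combined with the ``evaluation at $1$'' description of $t$ in Corollary~\ref{right adjoint} (cf.\ the argument in the proof of Proposition~\ref{transdual}); (ii) the transitivity of the sheafified adjunction isomorphism $\R h_*\R\sHom(\LL h^*\<\<F,-)\cong\R\sHom(F,\R h_*-)$ for $h=gf$, which follows from transitivity of the projection isomorphisms (\cite[3.7.1]{li}) and of $\nu$ (\cite[3.7.1.1]{li}); and (iii) naturality, together with Proposition~\ref{transdual}. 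Assembling these into a single commutative diagram whose cells are precisely the items listed, and invoking Proposition~\ref{represent}, gives the claim.

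The main obstacle is bookkeeping: one must carry the two pseudofunctorial identifications $(gf)^\flat\cong g^\flat\<\<f^\flat$ and $\LL(gf)^*\cong\LL g^*\LL f^*$ consistently through a fairly large diagram, and verify item (i)---that the concretely described ``evaluation at $1$'' counits compose correctly under \eqref{pf flat}. Over affine schemes this is the compatibility of the natural isomorphism $\R\Hom_S(T,\R\Hom_R(S,-))\iso\R\Hom_R(T,-)$ (for $R\to S\to T$) with the two evaluation-at-$1$ maps $\R\Hom_{(-)}((-)_*\CO,-)\to(-)$, and globally it needs the same kind of patient diagram-chase as in the proofs of Propositions~\ref{transdual} and~\ref{flat and hom}; everything else is naturality and the cited transitivity results in \cite[\S3.7, \S3.3]{li}. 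Alternatively---and perhaps most efficiently---one can deduce transitivity of $\zeta$ from that of $\chi$ (Proposition~\ref{pfchi}) via the adjunction relation between $\zeta^{-1}$ and $\chi$ recorded in the footnote to Proposition~\ref{flat and hom}, or phrase the whole argument conceptually by observing, again via that footnote, that $\zeta$ is the right conjugate of the projection isomorphism $\R\fst E\Otimes{\sX}F\iso\R\fst(E\Otimes{\sX}\LL f^*\<\<F)$ relative to the pseudofunctorial adjunctions $\R(-)_*\dashv(-)^\flat$, so that the result becomes a formal consequence of transitivity of those projection isomorphisms (\cite[3.7.1]{li}) and of the compatibility of conjugation with composition of adjunctions (\cite[3.3.7(a)]{li})---though making this precise still draws on the same inputs.
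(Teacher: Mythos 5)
The paper gives no proof of Proposition~\ref{pfzeta}---it is literally ``Left to the reader''---so there is nothing to compare your argument against; what matters is whether what you propose actually works, and it does.

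Your primary route---characterize each $\zeta$ via the representability statement \ref{represent} as made explicit in \ref{flat and hom}(iii), decompose $\R(fg)_*\cong\R\fst\R g_*$, and glue using transitivity of $\nu$ and of the projection isomorphisms (\cite[3.7.1, 3.7.1.1]{li}) plus the compatibility of the evaluation-at-$1$ counits with \eqref{pf flat}---is sound, and your inventory of inputs is complete. You correctly note the prerequisites ($\LL f^*\<\<F$ remains pseudo-coherent by \S\ref{quasi}, $f^\flat G\in\Dqcpl(X)$ by \ref{qcHom}, and $fg$ is again pseudo-coherent finite) so all three $\zeta$'s are defined. The key reduction---that modulo \eqref{pf flat} the counit $t^{(fg)}$ factors as $t^{(f)}\smallcirc\R\fst t^{(g)}$---is exactly the statement that \eqref{pf flat} is right-conjugate to $\R\fst\R g_*\osi\R(fg)_*$, which the paper states in \S\ref{pf adj}; the rest is the same kind of patient expansion already carried out in \ref{transdual} and \ref{pfchi}.

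The last alternative you sketch is actually the cleanest and, given the footnote in \ref{flat and hom}, almost certainly what the author had in mind: $\zeta(f,F,-)$ is the right conjugate of the projection isomorphism $\R\fst(-)\Otimes{Y}F\iso\R\fst((-)\Otimes{\sX}\LL f^*\<\<F\>)$ relative to the paired adjunctions $\R\fst\dashv f^\flat$ and $-\Otimes{}F\dashv\R\sHom(F,-)$, so that transitivity of $\zeta$ falls out formally from transitivity of the projection map (\cite[3.7.1]{li}) and the behavior of conjugation under composition of adjunctions (\cite[3.3.7(a)]{li}). This avoids most of the bookkeeping you flag, though one must still track the identification $\LL(fg)^*\cong\LL g^*\LL f^*$ and the pseudofunctoriality isomorphism \eqref{pf flat} simultaneously. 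The middle alternative---transfer from the proved transitivity of $\chi$ (\ref{pfchi}) via the adjunction between $\zeta^{-1}$ and $\chi$ in the second footnote to \ref{flat and hom}---also works. Any of your three routes would make a complete proof. (One small point worth flagging: the composite of $g\colon W\to X$ and $f\colon X\to Y$ is $fg$, so the ``$\zeta(gf,\dots)$'' in the statement should read $\zeta(fg,\dots)$; you implicitly handled this correctly.)
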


\begin{proof}
Left to the reader.
\end{proof}
\end{cosa}

\begin{cosa}\label{excompl}
This section deals with the role played in concrete duality for a perfect affine map $f\colon X\to Y$ by the functorial \emph{trace~map} 
\[
\tr_{\<\<f}(G)\colon\R\fst\LL f^*\<G\to G\qquad(G\in\Dqc(Y))
\]
from \cite[p.\,154, 8.1]{Il}.  Dual to this map is the \emph{fundamental class} map  
\[
C_{\<\<f}(G)\colon \LL f^*\<G\lto f^\flat G\qquad(G\in\Dqc(Y)),
\]
an isomorphism if $f$ is \'etale.  

The trace map is ``\kf transitive" with respect to a composition of perfect affine maps, and so the map $C_{\<\<f}$ is \emph{pseudofunctorial,}
see Proposition~\ref{transfund}. 

For ``almost \'etale" $f\colon X\to Y\<,$ there results a
canonical pair involving a ``complementary sheaf\kern1.5pt" and a trace map, which pair represents the functor\- 
$\Hom_Y(\fst-,G\>)$ from~$\sA_{\qc}(X)$ to 
abelian groups, or, if $\fst\OX$ is locally free, the functor
$\Hom_{\D(Y)}(\R\fst-, \:G\>)$ from\/ $\Dqc(X)$ to abelian groups,
see Proposition~\ref{complementary}.

\va2

\begin{subcosa}\label{trprelim}
Let  $f\colon X\to Y\<$, $\bar f\colon X\to\oY$,  $\psi\colon\OY\to\fst\OX$, $\phi\colon\oY\to Y$ and $f^\flat\colon\D(Y)\to\D(X)$ be as in \S\ref{2.4}. Let  $F$ be a quasi-coherent
$\OX$-module and $G$ an $\OY$-module such that 
the $\OY$-module $\sHom_Y(\fst\OX,G\>)$---equivalently, 
the $\CO_{\>\oY}$-module $\sHom_\psi(\fst\OX,G\>)$---is quasi-coherent.\va1

One has the  iso\-morphism, \emph{affine duality for quasi-coherent sheaves,}
\begin{align*}
\fst\sHom_{\OX}(F,\bar{f\:\<}^{\!\<*}\sHom_\psi(\fst\OX, G\>))
&\iso\phi_*\sHom_{\<\fst\<\OX}\<\<(\bar\fst F,\sHom_\psi(\fst\OX, G\>))\\
&\iso \sHom_{\OY}\<\<(\fst F, G\>).
\end{align*}
that over open $U\subset Y$
is the standard isomorphism, with $S_U\set\Gamma(f^{-\<1}U,\OX)$,
$F_U\set\Gamma(f^{-\<1}U,F)$, $R_U\set\Gamma(U,\OY)$ and $G_U\set\Gamma(U,G)$,
\[
\Hom_{S_U\<}(F_U\>,\>\Hom_{R_U\<}(S_U,G_U))\iso\Hom_{R_U\<}(F_U\>,\>G_U).
\]

If $\>\R\sHom_Y(\fst\OX,G)\in\Dqc(Y)$, then this duality isomorphism arises from application of~$H^0$ to the isomorphism in Theorem~\ref{qc duality}.\va1

It follows that the pair consisting of 
$\bar{f\:\<}^{\!\<*}\sHom_\psi(\fst\OX, G\>)\ (=H^0\<\<f^\flat\< G\>)$
and the natural composite 
\begin{align*}
t'_G\colon\fst \bar{f\:\<}^{\!\<*}\sHom_\psi(\fst\OX, G\>)&\iso \phi_*\sHom_\psi(\fst\OX, G\>)\\
&\ =\!\!=\;\sHom_Y(\fst\OX,G\>)
\lto \sHom_Y(\OY\<,G\>)= G
\end{align*}
represents the functor $\Hom_Y(\fst-,G\>)$ from $\sA_\qc(X)$ to abelian groups. \va2

If $f$~is \emph{flat and locally finitely presentable,} that is, the $\OY$-module $\fst\OX$ is locally free of finite rank, 
then the natural map is an isomorphism
\[
\bar{f\:\<}^{\!\<*}\sHom_\psi(\fst\OX, G\>)=H^0\<\<f^\flat G\iso f^\flat G,
\] 
and Proposition~\ref{represent} implies that 
the pair above also represents the functor 
$\Hom_{\D(Y)}(\R\fst-, \:G\>)$ from\/ $\Dqc(X)$ to abelian groups. 

\pagebreak[3]
In this case, moreover, 
the natural map is an $\fst\OX$-isomorphism
\[
\sHom_\psi(\fst\OX, \OY)\otimes_{\oY} \phi^*\<G\iso\sHom_\psi(\fst\OX, G\>),
\]
whence the representing pair is naturally isomorphic to the pair
\[
(H^{\>0}\<\<f^\flat\OY\<\otimes_\sX \<f^*\<G\<,\,\> t'_{\OY}\!\otimes_Y\<\id_G\>).\\[2pt]
\]

If $f$ is \emph{finite and \'etale,} so that the $\CO_{\>\oY}$-module
$\sHom_\psi(\fst\OX, \OY\>)$ is free of rank one, generated by the usual \emph{trace map}
$\tr_{\<\<f}\colon \fst\OX\to\OY$, then
\[
H^{\>0}\<\<f^\flat\OY\otimes_\sX \<f^*\<G\cong \bar{f\:\<}^{\!\<*}\<\CO_{\>\oY}\otimes_\sX \<f^*\<G\cong\OX\otimes_\sX \<f^*\<G\cong f^*\<G,
\] 
and the representing pair is naturally isomorphic to the pair consisting of  $f^*\<G$ and the natural composite map
\[
 \R\fst f^*\<G \iso \fst\OX\otimes_Y G\xto{\!\tr_{\!f}\otimes\>\id_G}\OY\otimes_Y G\iso G.
\]
\end{subcosa}

\begin{subcosa}\label{hbar}
For dealing with more general $f$, recall from~\ref{gamma0} the trifunctorial map, over any 
ringed space $(Y,\OY)$,
\[
\gamma^{}_{\>Y}\colon\R\>\sHom_Y(L,M)\Otimes{Y} N\to \R\>\sHom_Y(L,M\Otimes{Y}N)\qquad(L,M,N\in\D(Y)),
\]
that is adjoint to the natural composition
\[
\R\>\sHom_Y(L,M)\Otimes{Y} N\Otimes{Y} L
\iso \R\>\sHom_Y(L,M)\Otimes{Y} L\Otimes{Y} N 
\lto M\Otimes{Y}N. 
\]

Elaborating \ref{gamma},  one finds that $\gamma^{}$ can be obtained from a K-injective 
resolution $M\to\>\>\overline{\<\<M}$ and a K-flat resolution~$\>\>\overline{\<\<N}\to N\<$ as the sheafification
\[
\sHom_Y( L, \>\>\overline{\<\<M}\>\>)\otimes_Y  \>\>\overline{\<\<N}\to 
\sHom_Y( L, \>\>\overline{\<\<M}\otimes_Y \>\>\overline{\<\<N}\>\>)
\]
of the map of presheaves of complexes that sends, for each open $U\subset Y\<$, any 
\[
(\alpha\colon  L^i|_U\to \>\>\overline{\<\<M}^{\>j}|_U)\otimes_{\CO_U} (\mu_k\in \>\>\overline{\<\<N}^{\>k}(U))
\qquad(i,j,k\in\mathbb Z)
\]
to the map taking $\lambda_i\in L^i(U)$ to 
$(-1)^{ik}\alpha(\lambda_i)\otimes_{\CO_U}  \mu_k.$

\begin{sublem}\label{^* and hbar}
\textup{(i)} For any ringed-space map $g\colon Y'\to Y$, the following natural diagram commutes.
\[
\def\1{$\LL g^*(\R\>\sHom_Y(L,M)\Otimes{Y} N)$}
\def\2{$\LL g^*\R\>\sHom_Y(L,M\Otimes{Y} N)$}
\def\3{$\LL g^*\R\>\sHom_Y(L,M)\Otimes{Y'} \LL g^*N$}
\def\4{$\R\>\sHom_{Y'}(\LL g^*\<\<L,\LL g^*\<(M\Otimes{Y} N))$}
\def\5{$\R\>\sHom_{Y'}(\LL g^*\<\<L,\LL g^*\<\<M)\Otimes{Y'} \LL g^*\<\<N$}
\def\6{$\R\>\sHom_{Y'}(\LL g^*\<\<L,\LL g^*\<\<M\Otimes{Y'} \LL g^*\<\<N)$}
  \bpic[xscale=6.9, yscale=1.6]
     
   \node(11) at (1,-1){\1};  
   \node(12) at (2,-1){\2} ;
   
   \node(21) at (1,-2){\3} ;  
   \node(22) at (2,-2){\4};   
   
   \node(31) at (1,-3){\5}; 
   \node(32) at (2,-3){\6};
  
    \draw[->] (11)--(12) node[above, midway, scale=.75] {$\LL g^*\<\gamma^{}_{\>Y}$} ;  
    \draw[->] (31)--(32) node[below, midway, scale=.75] {$\gamma^{}_{\>Y'}$} ;
      
    \draw[->] (11)--(21) node[left, midway, scale=.75] {$\simeq$} ;
    \draw[->] (21)--(31) ;
                                   
   \draw[->] (12)--(22) ;
    \draw[->] (22)--(32) node[right, midway, scale=.75] {$\simeq$} ;
    
  \epic
\]

\textup{(ii)} If the complex $L$ is perfect, then all the maps in this diagram are isomorphisms. 
\end{sublem}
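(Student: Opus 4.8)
\textbf{Proof plan for Lemma~\ref{^* and hbar}.}

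\medskip
The strategy is to reduce everything to the explicit presheaf-level description of $\gamma$ recalled just above the Lemma, together with the corresponding explicit descriptions of $\LL g^*$ on complexes and of the canonical maps $\LL g^*\R\>\sHom_Y(L,M)\to\R\>\sHom_{Y'}(\LL g^*L,\LL g^*M)$ and $\LL g^*(M\Otimes{Y}N)\iso\LL g^*M\Otimes{Y'}\LL g^*N$. Concretely, for part~(i) I would first replace $M$ by a K-injective resolution $M\to\ov{M}$ and $N$ by a K-flat resolution $\ov{N}\to N$, and also replace $L$ by a K-flat resolution $P\to L$; upon applying $g^*$ one gets a K-flat complex $g^*P$ (since $g^*$ preserves K-flatness) and a K-flat complex $g^*\ov N$, so that the three canonical maps $\LL g^*(-)\to g^*(-)$ appearing in the top-right composite and in the bottom-left composite become honest equalities in the respective derived categories. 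After these replacements the diagram becomes a diagram of genuine complexes (no more derived functors), and its commutativity can be checked degreewise on sections over an arbitrary open $U'\subset Y'$, pushing everything down to sections over an open $U\subset Y$ with $g(U')\subset U$. On sections, $\gamma$ is the sign-twisted map $(\alpha\otimes\mu)\mapsto(\lambda\mapsto(-1)^{ik}\alpha(\lambda)\otimes\mu)$ described in the excerpt, and $g^*$ on a hom-complex and on a tensor product is induced by the ring map $\CO_U\to\CO_{U'}$ in the evident way; the required identity is then the trivial observation that applying $\otimes_{\CO_U}\CO_{U'}$ commutes with that formula and respects the Koszul sign $(-1)^{ik}$. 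I would phrase this as: both legs of the square send the section $(\alpha\otimes\mu)\otimes c$ (with $c\in\CO_{U'}$, $\alpha$ an $\CO_U$-homomorphism $L^i|_U\to\ov M^{\>j}|_U$, $\mu\in\ov N^{\>k}(U)$) to the map $\lambda'\mapsto(-1)^{ik}(\alpha\otimes 1)(\lambda')\otimes(\mu\otimes c)$ on sections of $g^*P$, and this is immediate.

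\medskip
For part~(ii), I would invoke the standard device for perfect $L$: the statement is local on $Y$ (and on $Y'$), so one may assume $L$ is \emph{strictly perfect}, i.e.\ a bounded complex of finite-rank free $\OY$-modules. In that case all four of the maps labeled with generic arrows in the diagram are isomorphisms: the two vertical maps $\LL g^*\R\>\sHom_Y(L,M)\to\R\>\sHom_{Y'}(\LL g^*L,\LL g^*M)$ and $\LL g^*\R\>\sHom_Y(L,M\Otimes{Y}N)\to\R\>\sHom_{Y'}(\LL g^*L,\LL g^*(M\Otimes{Y}N))$ are isomorphisms because $L$ is perfect—this is exactly the last assertion of Corollary~\ref{tensor} in the case $h=g$, or more elementarily follows by induction on the number of nonzero components of $L$ from the case $L=\OY^{\,n}$, where it is trivial; and the map $\gamma^{}_{\>Y}(L,M,N)$ itself (and likewise $\gamma^{}_{\>Y'}$) is an isomorphism for perfect $L$ by the same induction, the base case $L=\OY^{\,n}$ being clear and the inductive step using the distinguished triangle expressing a strictly perfect complex in terms of one with fewer components and a shift of a free module (cf.\ the argument at the end of the proof of Proposition~\ref{flat and tensor}(ii), where $\gamma$ is shown to be an isomorphism for perfect first argument). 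Once three of the four are known to be isomorphisms the fourth is too, by the commutativity established in~(i).

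\medskip
The only mildly delicate point—the ``main obstacle''—is bookkeeping of the canonical comparison maps $\LL g^*\to g^*$ (and the flatness of $g^*P$, $g^*\ov N$) so that the reduction to a diagram of actual complexes is legitimate; one must be a little careful that the resolutions chosen are compatible with the ones implicitly used to \emph{define} $\gamma^{}_{\>Y}$ and $\gamma^{}_{\>Y'}$, so that the squares relating ``$\gamma$ on complexes'' to ``$\gamma$ in the derived category'' commute. This is handled exactly as in the proof of Lemma~\ref{gamma} (an instance of \cite[2.6.5]{li}): the defining property of $\gamma$ is its compatibility with $\gamma^{}_0$ for K-injective $M$ and K-flat $N$, and since $g^*$ carries such resolutions to resolutions of the same type, the comparison is automatic. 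Apart from this the verification is entirely routine, and the sign $(-1)^{ik}$ causes no trouble because it depends only on the degrees $i$ and $k$, which are unchanged by $g^*$.
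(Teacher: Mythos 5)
Your part~(ii) matches the paper's argument almost verbatim: localize to make $L$ strictly perfect, then induct on the number of nonzero degrees, the base case being immediate.

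Part~(i), however, is a genuinely different route. The paper passes to the \emph{adjoint} diagram (tensoring each row with $\LL g^*L$ and using the evaluation map, via the characterization of $\gamma$ recorded in Remark~\ref{gamma0}), expands it into a large diagram of abstract closed-category maps, and disposes of the subdiagrams by citing the formal compatibilities in \cite[\S3.4.5 and \S3.5.6]{li}. You instead propose a direct presheaf-level computation after choosing K-flat/K-injective resolutions, and check the explicit Koszul-sign formula on sections. Both approaches work, and yours has the virtue of being concrete. But there is one subtlety your write-up underplays: after replacing $M$ by a K-injective $\overline M$, the complex $g^*\overline M$ is \emph{not} K-injective over $Y'$, so the bottom row of your ``diagram of honest complexes'' is not literally of the form required by the defining property of $\gamma_{Y'}$ in Lemma~\ref{gamma}. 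One must interpose a further K-injective resolution $g^*\overline M\to J$ over $Y'$ and check that $\gamma^{}_0$ is compatible with it (which it is, by naturality of $\gamma^{}_0$ in the middle argument, applied to the quasi-isomorphism $g^*\overline M\otimes g^*\overline N\to J\otimes g^*\overline N$). You flag this in your last paragraph as handled ``exactly as in the proof of Lemma~\ref{gamma}'', but it is slightly more than that: Lemma~\ref{gamma} characterizes $\gamma$ only against K-injective/K-flat inputs, and $g^*\overline M$ fails that hypothesis, so a short additional compatibility argument is needed. The paper's adjunction route sidesteps exactly this issue, which is presumably why it was chosen; if you keep your approach, that extra step should be made explicit rather than absorbed into a remark.
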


\begin{proof}
(i) It suffices to show the commutativity of the adjoint diagram, i.e., of the border of the following natural diagram, in which $g^*$ stands for $\LL g^*\<$, $\CH$~for $\R\>\sHom$, $\otimes$ for $\Otimes{Y}$
and $\otimes'$ for $\Otimes{Y'}\>$, and $\alpha$ is adjoint to the identity map of $\CH(L,M\otimes N)$:
\begin{small}
\[\mkern-4mu
\def\1{$g^*(\CH_Y(L,M)\otimes N)\otimes'\<\< g^*\<\<L$}
\def\2{$g^*\CH_Y(L,M\otimes N)\otimes'\<\< g^*\<\<L$}
\def\3{$g^*(\CH_Y(L,M)\otimes N\otimes L)$}
\def\4{$g^*(\CH_Y(L,M\<\otimes\< N)\<\otimes\< L)$}
\def\5{$\ \ \CH_{Y'}(g^*\<\<L,g^*(M\otimes N))\otimes'\<\< g^*\<\<L$}
\def\6{$g^*\CH_Y(L,M)\otimes'\<\< g^*\<\<N\otimes'\<\< g^*\<\<L$}
\def\7{$g^*(\CH_Y(L,M)\otimes L\otimes N)$}
\def\8{$g^*(M\otimes N)$}
\def\9{$g^*\CH_Y(L,M)\otimes'\<\< g^*L\otimes'\<\< g^*N$}
\def\ten{$g^*(\CH_Y\<(L,\<M)\<\otimes\< L)\<\<\otimes'\<\< g^*\!N$}
\def\lvn{$\CH_{Y^{\<'}}\<(g^*L,g^*\<\<M)\<\otimes'\! g^*\<\<N\<\otimes'\! g^*\<\<L$}
\def\twv{$\CH_{Y^{\<'}}\<(g^*L,g^*\<\<M)\<\otimes'\! g^*\<\<L\<\otimes'\! g^*\<\<N$}
\def\thn{$g^*\<\<M\otimes'\<\< g^*\<\<N$}
  \bpic[xscale=3.85, yscale=1.5]
     
   \node(11) at (1,-1){\1};  
   \node(13) at (3,-1){\2} ;
   
   \node(22) at (1.87,-2){\4}; 
   \node(23) at (3,-2){\5}; 
     
    \node(31) at (1.6,-3){\3} ;  
   
   \node(41) at (1.6,-4){\7};
   \node(43) at (3.3,-4){\8};
   
   \node(52) at (1.6,-5.25){\9};   

   \node(61) at (1,-6){\6} ;  
   \node(63) at (2.5,-4.6){\ten}; 
   
   \node(71) at (1,-7){\lvn} ;  
   \node(72) at (3,-7){\twv};   
   \node(73) at (3.3,-5.25){\thn}; 
   
    \draw[->] (11)--(13) node[above, midway, scale=.75] {$\via \gamma^{}_{\>Y}$} ;  
    
    \draw[->] (31)--(22) node[above=-3, midway, scale=.75] {$\<\via \gamma^{}_{\>Y}\mkern48mu$} ;
    
    \draw[->] (41)--(43) node[below, midway, scale=.75] {$$} ;
    
     \draw[->] (71)--(72) node[below, midway, scale=.75] {$\via \gamma^{}_{\>Y'}$} ;
     \draw[->] (3.3,-6.8)--(3.3,-5.5) node[below, midway, scale=.75] {$$} ;
      
    \draw[->] (11)--(61) node[left, midway, scale=.75] {$$} ;
    \draw[->] (61)--(71) ;
                                   
    \draw[->] (31)--(41) ;
    \draw[->] (52)--(72) node[right, midway, scale=.75] {$$} ;
    
    \draw[->] (3.3,-1.2)--(3.3,-1.8) node[left, midway, scale=.75] {$$} ;
    \draw[->] (3.3,-2.2)--(3.3,-3.8) ;
    \draw[->] (43)--(73) ;

  \draw[->] (11)--(31) ;
  \draw[<-] (22)--(13) ;
  \draw[->] (22)--(43) node[below=1, midway, scale=.75] {$g^*\<\<\alpha$} ;
  \draw[->] (41)--(52) ;
  \draw[->] (1.92,-4.2)--(2.18,-4.35) ; ;
  \draw[->] (61)--(52) ;
  \draw[->] (52)--(63) ;
  \draw[->] (63)--(73) ;
 
  \node at (1.3,-3.52)[scale=.9]{\circled1} ;
  \node at (2.85,-2.75)[scale=.9]{\circled2} ;
  \node at (2.7,-5.75)[scale=.9]{\circled3} ;
 
  \epic
\]
\end{small}
\vskip-10pt

Here, the commutativity of the unlabeled subdiagrams is easily verified. 

\pagebreak[3]
The commutativity of \circled1 results from that of the diagram that is \emph{dual} (see \cite[3.4.5]{li})
to the second one following \cite[(3.4.2.1)]{li}.

That of \circled2 results from \cite[3.5.6(g)]{li}, with $C\set\CH_Y(L,M\<\otimes\< N)$, $D\set L$, and
$E\set M\<\otimes\< N$ (details left to the reader).

That of \circled3 is given by \cite[3.5.6(a)]{li}. 

Thus (i) is proven.

(ii) A \emph{strictly perfect} $\OY$-complex is a bounded complex of direct summands of finite\kf-rank free $\OY$-modules. An $\OY$-complex is perfect if locally it is the target of a quasi-isomorphism with source a strictly perfect one \cite[p.\,122, 4.8, p.\,163, 2.0, and p.\,96, 2.2]{Il}.
So to prove (ii), one can reduce, by localizing, to where $L$ is strictly perfect, and then by a simple induction on~the number $n$ of degrees in which $L$ doesn't vanish, to where $n=1$, in~which case the assertion is readily verified.
\end{proof}

\pagebreak[3]
\begin{subcosa}\label{tensorhom}
Let  $(Y,\OY)$ be a ringed space and $L$ a \emph{perfect} $\OY$-complex, so that, 
as in \ref{^* and hbar}(ii), the map~$\gamma^{}=\gamma^{}_{\>Y}$ is an isomorphism.\va1 

The trace map $\textup{\bf tr}^{}_{\<\<L}=\textup{\bf tr}^{}_{\<\<L/\OY}$ 
is defined to be the natural 
$\D(Y)$-composite
\[
\R\>\sHom_Y(L,L\>)
\underset{\gamma^{-1}}\iso \R\>\sHom_Y(L,\OY)\Otimes{Y}L
\lto \OY\<,
\]
see~\cite[p.\,154, 8.1]{Il}.%
\footnote{A similar definition holds in any closed monoidal category for an object $L$ such that, with $[-,-]\set$internal hom, the natural map is an isomorphism $[L,\mathbf 1]\otimes L \iso [L,L\>]$. For even greater generality, see e.g., \cite{PS}.}
There results, for any perfect $\OY$-algebra~$A$, the natural composite $\D(Y)$-map
\[
\tr^{\textup{alg}}_{\!A}\colon A\iso\R\>\sHom_{A}(A,A)
\xto{\mu_{\<A}\>}
\R\>\sHom_Y(A,A)\xto{\!\textup{\bf tr}_{\!A}\>}\OY.
\]

From~\ref{^* and hbar}(ii) and commutativity of subdiagram \circled2 (with $N\set\OY$) in the proof of ~\ref{^* and hbar}(i), one gets a natural identification
$\>g^*\textup{\bf tr}^{}_{\<\<L/\OY}\cong\textup{\bf tr}^{}_{\<\<g^{\<*}\!L/\CO_{Y'}}\>$. Together  with the natural identification $g^*\<\<\mu_{\<A}\cong\mu_{g^*\<\!A}$, this gives a natural identification 
 $\>g^*\tr^{\textup{alg}}_{\!A}\cong\tr^{\textup{alg}}_{\!g^*\<\!A}\>$.\va2

Recall that for any $\OY$-complex~$G$, the~natural
$\D(Y)$-map is an isomorphism $\sHom(L,G)\iso\R\>\sHom(L,G)$. (Proof: localizing on $Y$ and induction on the number of degrees in which $L$ doesn't vanish reduces the assertion to the trivial case $L=\OY$.) Consequently, and by the last part of \S\ref{hbar}, $\textup{\bf tr}^{}_{\!L}$~can be identified naturally with (the $\D(Y)$-image~of) the natural composite $\sA(Y)$-map
\[
\sHom_Y(L,L\>)\iso \sHom_Y(L,\OY)\otimes_{Y}L\lto \OY.
\]

In particular, if $L$ is a finite\kf-rank locally free $\OY$-module then $\textup{\bf tr}^{}_{\!L}$  can be identified naturally with (the $\D(Y)$-image~of) the the usual trace map $\tr^{}_{\!L}\colon\sHom(L\<,L\>)\to \OY$. Moreover, it follows from the explicit description of the map $\gamma^{}_Y$ in \ref{hbar} that there is a
natural identification
\[
\textup{\bf tr}^{}_{\!L[\>j]} =(-1)^j\tr^{}_{\<\<L}\qquad(j\in\mathbb Z).
\]
More generally, if $L$ is strictly perfect, so that the degree\kf-$i$ component $L^{\lift1.4,\<i,}$ is a finite\kf-rank locally free $\OY$-module for all $i$,  and vanishes for all but finitely many $i$,
then there is a natural identification
\begin{equation*}\label{tensorhom1}
\textup{\bf tr}^{}_{\<\<L}=\sum_i \>(-1)^i\>\tr^{}_{\<\<L^{\lift1.2,\<\<{\sss i},}}\>,
\tag{\ref{tensorhom}.1}
\end{equation*}
as is easily shown by  induction on the number of $i$ such that $L^{\lift1.4,\<i,}\ne0$.

\end{subcosa}

\begin{subcosa}
If $f\colon X\to Y$ is a \emph{perfect} affine  scheme\kf-map, so that $\R\fst\OX$ is a perfect
$\OY$-complex , then one has the 
$\D(Y)$-map $\tr_{\<\<f}\set\tr^{\textup{alg}}_{\<\<\fst\<\OX}\colon \fst\OX\to\OY\<,$
whence for any $G\in\Dqc(Y)$, the natural functorial composite 
\[
\tr_{\<\<f}(G)\colon\R\fst\LL f^*\<G\underset{\lift1.1,\eqref{projf},}\iso \fst\OX\Otimes{Y}G
\xto[\lift1.3,\tr_{\!f}\>\Otimes{}\>\>\id,]{}
\OY\Otimes{Y}G\iso G,
\]
\vskip2pt
\noindent whence, by  Corollary~\ref{right adjoint}, a natural functorial $\Dqc(X)$-map
\begin{equation}\label{fundclass}
C_{\<\<f}(G)\colon \LL f^*\<G\lto f^\flat G.
\end{equation}

For example, if $f$ is \emph{flat and finitely presentable,} so that $\fst\OX$ is locally free, then 
$\tr_{\<\<f}(\OY)\colon \fst\OX\to\OY$ identifies naturally with  the usual trace map; and if, also, $Y$ is noetherian, then $C_f(\OY\<)\colon\OX\to f^\flat\OY$ becomes~the (naive) \emph{fundamental class} (\cite[(0.2.3) with $n=0$, plus Example 2.6]{AJL14}).\looseness=-1 

For finite \emph{\'etale}~$f\<$,  $C_{\<\<f}$ is the \emph{isomorphism} at the end of \S\ref{trprelim}.
 
\end{subcosa}

The map $C_{\<\<f}\colon\LL f^*\to f^\flat$  is \emph{pseudofunctorial,} in the following sense.
\end{subcosa}

\begin{subprop} \label{transfund} Let\/ $W\xto{\,g\,\>} X\xto{\,f\,} Y$ $($hence $fg)$ be 
perfect affine~maps. 
The  following diagram of functors from\/ $\Dqc(Y)$ to\/ $\Dqc(W)$ commutes.
\[
\def\3{$g^\flat\<\< f^\flat$}
\def\4{$(fg)^\flat$}
\def\1{$\LL g^*\LL f^*$}
\def\2{$\LL(fg)^*$}
 \bpic[xscale=1.5, yscale=1]

   \node(11) at (1,-1){\1};   
   \node(13) at (3,-1){\2}; 
     
   \node(31) at (1,-3){\3};
   \node(33) at (3,-3){\4};
   
    \draw[->] (11)--(13) node[above, midway] {\Iso} 
                                   node[below=1pt, midway, scale=.75] {\textup{natural}} ;  
      
   \draw[->] (31)--(33) node[above, midway] {\Iso} 
                                  node[below=1pt, midway, scale=.75] {\eqref{pf flat}} ;
    
    \draw[->] (11)--(31) node[left, midway, scale=.75] {$\via \>C_{\<g}$} 
                                   node[right, midway, scale=.75] {\textup{and }$C_{\<\<f}$} ;
    \draw[->] (13)--(33) node[right=1pt, midway, scale=.75] {$C_{\<\<fg}$} ;
 
 \epic  
\]
\end{subprop}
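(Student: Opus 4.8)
The statement asserts that the fundamental-class maps $C_{\<\<f}$, $C_{\<g}$, $C_{\<\<fg}$ are compatible with the pseudofunctorial isomorphisms for $\LL(-)^*$ and $(-)^\flat$. The natural strategy is to pass to right adjoints and compare everything against the \emph{trace} side, where the analogous transitivity is essentially built in. Since, by Corollary~\ref{right adjoint}, $C_{\<\<f}(G)$ is defined as the map adjoint (under the adjunction $\R\fst\!\dashv f^\flat$) to the composite
\[
\R\fst\LL f^*\<G\iso \fst\OX\Otimes{Y}G\xto{\,\tr_{\!f}\Otimes{}\id\,}\OY\Otimes{Y}G\iso G,
\]
and since $C_{\<g}$, $C_{\<\<fg}$ are defined analogously, the first step is to translate the desired square into the statement that a certain square of maps \emph{into} the identity functor commutes. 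Concretely, I would show it suffices to prove commutativity of the outer rectangle obtained by applying $\R(fg)_*$ to the square in question and then composing with the counits $t$; using the adjunction isomorphisms together with Proposition~\ref{transdual} (transitivity of the duality isomorphism, which is compatible with \eqref{pf flat}) this reduces the problem to a statement about $\R(fg)_*\LL(fg)^*$ and the trace maps $\tr_{\!f}$, $\tr_g$, $\tr_{fg}$.

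\textbf{Key steps.} (1) Rewrite $C_{\<g}\circ(\text{nat.})$, via adjunction, as a map $\R g_*\LL g^*\to\id$, and likewise for $C_{\<\<f}$ and $C_{\<\<fg}$; the square to be proved becomes (after applying $\R(fg)_*$ and transposing) the assertion that the two ways of building a map $\R(fg)_*\LL(fg)^*\to\id$ from the elementary trace maps agree. (2) Expand $\tr_{fg}$ using its definition $\tr^{\textup{alg}}_{\R(fg)_*\OW}$ and the fact (from \S\ref{tensorhom}) that $\tr^{\textup{alg}}$ for a composite $\OY\to\fst\OX\to\fst\R g_*\OW$ of algebra maps factors through $\tr^{\textup{alg}}$ for each step—this is exactly the transitivity of algebra traces, which follows from the multiplicativity of $\textup{\bf tr}$ and of the unit maps $\mu$. (3) Combine this with transitivity of the projection isomorphisms \eqref{projf} (i.e.\ \cite[3.7.1]{li}) and of the pseudofunctor isomorphism $\R\fst\R g_*\iso\R(fg)_*$, to see that the two composites $\R(fg)_*\LL(fg)^*\to\id$ coincide. (4) Finally, transpose back through the adjunction $\R(fg)_*\!\dashv(fg)^\flat$, using Proposition~\ref{transdual} to identify the composite counit for $\R(fg)_*$ with the one obtained by gluing the counits for $\R\fst$ and $\R g_*$ via \eqref{pf flat}; this yields exactly the commutativity of the displayed square of functors.

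\textbf{Main obstacle.} The crux is step~(2)–(3): pinning down precisely that the algebra trace $\tr^{\textup{alg}}_{\R(fg)_*\OW}\colon\R(fg)_*\OW\to\OY$ decomposes, compatibly with the ring maps, as
\[
\R(fg)_*\OW\cong\R\fst\R g_*\OW\xrightarrow{\ \R\fst(\tr^{\textup{alg}}_{\R g_*\OW})\ }\R\fst\OX\xrightarrow{\ \tr^{\textup{alg}}_{\R\fst\OX}\ }\OY.
\]
This is a ``transitivity of the trace'' statement; it is not literally stated in the excerpt, so I would need to derive it from the constructions in \S\ref{tensorhom} (the definition of $\textup{\bf tr}_L$ via $\gamma^{-1}$ and the defining property of $\tr^{\textup{alg}}$) together with the fact that $\R\fst$ commutes with the relevant $\gamma$'s. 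Once this trace-transitivity is in hand, the rest is a matter of carefully chasing adjunctions and invoking the already-established compatibilities (Proposition~\ref{transdual}, \cite[3.3.7(a)]{li}, \cite[3.7.1.1]{li}); none of those chases is conceptually hard, but assembling them into one commuting diagram without sign or orientation errors is where the real work lies.
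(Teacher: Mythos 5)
Your plan is exactly the paper's: dualize through the adjunction $\R(fg)_*\dashv(fg)^\flat$, use that \eqref{pf flat} is right-conjugate to $\R\fst\R g_*\osi\R(fg)_*$, reduce via transitivity of the projection isomorphism (\cite[3.7.1]{li}, your step~(3)) to a transitivity statement for the trace, and then reduce the latter to the case $G=\OY$. The ``main obstacle'' you isolate---that $\tr_{\<\<fg}$ factors as $\tr_{\<\<f}\smallcirc\R\fst\tr_{\<g}$---is precisely where the paper spends its effort, proving it as Lemma~\ref{Lem:TransTrace} (take $A\set g_*\OW$). Your step (4), by the way, does not need Proposition~\ref{transdual}; the paper instead uses directly that \eqref{pf flat} is defined as the conjugate of $\R\fst\R g_*\osi\R(fg)_*$, which is lighter but equivalent.

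Where your sketch thins out is the suggestion that trace-transitivity follows from the definition of $\textup{\bf tr}_{\<\<L}$ via $\gamma^{-1}$ ``together with the fact that $\R\fst$ commutes with the relevant $\gamma$'s.'' The actual argument is finer and breaks into two pieces you haven't surfaced. First, one must show that the trace pairing is preserved under the equivalence $\R\bar\fst\colon\Dqc(X)\to\Dqc(\fst\OX)$ of Proposition~\ref{^* equivalence}; this is Lemma~\ref{compatible}, a coherence statement for symmetric monoidal functors and internal hom, proved by an unavoidable diagram chase rather than by any ``$\R\fst$ commutes with $\gamma$'' identity (the $\gamma$'s over $\OY$ and over $\fst\OX$ are distinct, and what relates them is the monoidal structure of $\R\bar\fst$, not commutation). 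Second, once one is reduced to a statement about a perfect algebra $\CS$ and a perfect $\CS$-module $E$ over a single ringed space, one still needs the pure-algebra transitivity Lemma~\ref{transtr}; that lemma is not formal either: its proof uses compatibility of $\textup{\bf tr}$ with base change, an induction on the number of nonzero degrees via \eqref{tensorhom1}, and a direct-sum decomposition argument to reduce to the trivial case $F=\CS$. So you have located the crux correctly, but the route you gesture at would not get across it; the two lemmas above are what carry the load.
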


\noindent\emph{Proof.} It suffices to prove commutativity of the dual diagram, which is the unlabeled subdiagram in the following natural diagram. 
\[
\def\1{$\R(fg)_{\<*}\>\LL g^*\LL f^*$}
\def\2{$\R(fg)_{\<*}\>\LL(fg)^*$}
\def\3{$\R\fst\R g_*\LL g^*\LL f^*$}
\def\4{$\R\fst\LL f^*$}
\def\5{$\id$}
\def\6{$\R\fst\R g_*g^\flat \LL f^*$}
\def\7{$\R(fg)_* (fg)^\flat$}
\def\8{$\R(fg)_*\> g^\flat\<\< f^\flat$}
\def\9{$\R\fst\R g_*g^\flat\<\< f^\flat$}
\def\ten{$\R\fst f^\flat$}
  \bpic[xscale=3.2, yscale=1.4]

   \node(11) at (2.5,-1){\1};   
   \node(14) at (4,-1){\2};
   
   \node(21) at (1,-1){\3};
   \node(22) at (2.5,-2){\8};   
   \node(23) at (3.5,-3){\7};

   \node(32) at (1.5,-3){\6};
   \node(33) at (2.5,-3){\9};

   \node(42) at (2.5,-4.3){\ten};
      
   \node(51) at (1,-5){\4};  
   \node(54) at (4,-5){\5};
   
    \draw[<-] (11)--(21) node[above=1, midway, scale=.75] {$\Iso$} ;

    \draw[->] (11)--(14) node[above=1, midway] {$\Iso$} ; 
    
    \draw[->] (22)--(23) node[above=-2, midway] {\rotatebox{-22}{$\Iso$}} 
                                   node[below=-10, midway, scale=.75] {\rotatebox{-22}{$\mkern15mu\via\>\eqref{pf flat}\mkern30mu$}} ;
                                                              
    \draw[->] (32)--(33) node[below=1, midway, scale=.75] {$\via\>C_{\!f}$} ;
    
   \draw[->] (51)--(54)  node[below=1, midway, scale=.75] {$\tr_{\!f}$}; 
                                   
      \draw[->] (11)--(22) node[left, midway, scale=.75] {$\via \>C_{\<g}$}
                                      node[right, midway, scale=.75] {$\textup{and }C_{\!f}$} ;

     \draw[->] (21)--(51) node[left=1, midway, scale=.75] {$\R\fst\tr_{\<g}$} ;
     
     \draw[->] (14)--(54) node[right=1, midway, scale=.75] {$\tr_{\!fg}$} ;
    
   \draw[->] (22)--(33) ;
   \draw[->] (21)--(32) node[right=1, midway, scale=.75] {$\via \>C_{\<g}$} ;
   \draw[->] (32)--(51) ;
   \draw[->] (51)--(42) node[above, midway, scale=.75] {$\via \>C_{\!f}\mkern55mu$} ;
   \draw[->] (42)--(3.915,-4.905);
   \draw[->] (23)--(3.93,-4.85) ;
   \draw[->] (33)--(42) ;
   
   \node at (1.9, -2.02)[scale=.9]{\circled1} ;
   \node at (1.9, -3.8)[scale=.9]{\circled3} ;
   \node at (3.05, -3.45)[scale=.9]{\circled2} ;
   \node at (1.2, -3.45)[scale=.9]{\circled4} ;
   \node at (2.5, -4.72)[scale=.9]{\circled5} ;

 \epic
\]
The commutativity of subdiagrams \circled1 and \circled3 is clear, subdiagrams \circled4 and \circled5 commute by definition, and the commutativity of \circled2 holds because the map \eqref{pf flat} is, by definition, dual to the composite map
\[
\R(fg)_* (fg)^\flat \to \R\fst\R g_*g^\flat\<\< f^\flat \to \R\fst f^\flat \to \id.
\]
Hence diagram chasing  shows it sufficient that the~border commute (i.e., that \emph{transitivity of the trace map} hold).

The border in question, applied to an arbitrary $G\in\Dqc(Y)$, expands to the border of the following natural diagram. 

\[
\def\2{$\R(fg)_{\<*}\>\LL g^*\LL f^*\<G$}
\def\1{$\R\fst\R g^{}_*\LL g^*\LL f^*\<G$}
\def\3{$\R(fg)_*\LL(fg)^*G$}
\def\4{$\R\fst\LL f^*\<G$}
\def\5{$G$}
\def\6{$(fg)_*\OW\<\Otimes{Y}G$}
\def\7{$\OY\<\Otimes{Y}G$}
\def\8{$\fst g^{}_*\OW\<\Otimes{Y}G$}
\def\9{$\fst\OX\<\Otimes{Y}G$}
\def\ten{$\R\fst(g^{}_*\OW\<\Otimes{\sX}\LL f^*\<G\>)$}
\def\lvn{$\R\fst(\OX\<\Otimes{\sX}\LL f^*\<G\>)$}
  \bpic[xscale=2.5, yscale=2]

   \node(11) at (2.55,-1){\2};  
   \node(12) at (1,-1){\1};
   \node(14) at (4,-1){\3};
   
   \node(21) at (1,-2){\ten};
   \node(22) at (2.55,-2){\8};
   \node(24) at (4,-2){\6};
   
   \node(31) at (1,-3){\lvn}; 
   \node(32) at (2.55,-3){\9};  
   \node(34) at (4,-3){\7};
      
   \node(41) at (1,-4){\4};  
   \node(44) at (4,-4){\5};
   
    \draw[<-] (11)--(12) node[above=.5, midway, scale=.75] {$\Iso$} ;
       
    \draw[->] (11)--(14) node[above=.5, midway, scale=.75] {$\Iso$} ; 
    \draw[->] (21)--(22) node[above=.5, midway, scale=.75] {$\Iso$} 
                                   node[below=.5, midway, scale=.75] {\eqref{projf}} ; 
    \draw[double distance=2] (22)--(24)  ;
    
    \draw[->] (31)--(32) node[above=.5, midway, scale=.75] {$\Iso$} 
                                   node[below=.5, midway, scale=.75] {\eqref{projf}} ;
    \draw[->] (32)--(34)  node[below=1, midway, scale=.75] {$\via\tr_{\!f}$};
    
    \draw[->] (41)--(44)  node[below=1, midway, scale=.75] {$\tr_{\!f}(G)$}; 

      \draw[->] (12)--(21) node[left=1, midway, scale=.75] {$\simeq$} ;
      \draw[->] (21)--(31) node[left, midway, scale=.75] {$\via\tr_{\<g}$} ;
      \draw[->] (31)--(41) node[left=1, midway, scale=.75] {$\simeq$} ;
      
      \draw[->] (22)--(32) node[left, midway, scale=.75] {$\via\tr_{\<g}$} ;

     \draw[->] (14)--(24) node[right=1, midway, scale=.75] {$\simeq$} ;
     \draw[->] (24)--(34) node[right=1, midway, scale=.75] {$\via\tr_{\!fg}$} ;
     \draw[->] (34)--(44) node[right=1, midway, scale=.75] {$\simeq$} ;
    
  \draw[->] (41)--(32) node[above=-2, midway] {\rotatebox{23}{$\Iso$}} ;

   \node at (2.5, -1.52)[scale=.9]{\circled7} ;
   \node at (3.25, -2.52)[scale=.9]{\circled8} ;

 \epic
\]
\vskip-3pt
\noindent Here, commutativity of the unlabeled subdiagrams is clear, and that of~\circled7 results readily from \cite[3.7.1]{li} with $F\set\OX$.  So to prove the commutativity of the border,
it suffices to prove that of \circled8 with all occurrences of~``$\Otimes{Y}\>G\>$" ignored (i.e., to prove
transitivity of the trace when $G=\OY$),
which  one can do by taking $A\set g_*\OW$ in the next lemma, where, for an $\OX$- or $\OY$-algebra $\CS$, $[-,-]_\CS\set\R\>\sHom_\CS(-,-)$
and $\otimes_\CS\set\Otimes{\CS}\>$.


\begin{sublem}\label{Lem:TransTrace} If\/ $A$ is a perfect\/ $\OX\<$-complex then\/ $\fst A$  is perfect
over both\/ $\fst\OX$ and\/ $\OY\<,$ and the following natural diagram commutes.
\[\mkern-3mu
\def\1{$\fst A$}
\def\2{$[\fst A,\fst A]_{\fst\<\OX}$}
\def\3{$[\fst A,\fst A]_{\OY}$}
\def\4{$\fst [A,A]_{\OX}$}
\def\5{$\fst [A,\OX]_{\OX}\!\otimes_{\fst\OX}\!\fst A$}
\def\6{$[\fst A,\OY]_{\OY}\!\otimes_{\OY}\!\fst A\mkern60mu$}
\def\7{$\fst( [A,\OX]_{\OX}\!\otimes_{\OX}\!A)$}
\def\8{$[\fst A,\fst\OX]_{\fst\<\OX}\!\otimes_{\fst\OX}\!\fst A$}
\def\9{$\fst\OX$}
\def\ten{$[\fst\OX,\fst\OX]_{\OY}$}
\def\lvn{$[\fst\OX,\OY]_{\OY}\!\otimes_{\OY}\!\fst\OX$}
\def\twv{$\OY$}
  \bpic[xscale=2.98, yscale=2]

   \node(11) at (1,0){\1};   
   \node(13) at (3.25,0){\1}; 
   \node(14) at (4.25,0){\3};
   
   \node(21) at (1,-1){\4};   
   \node(22) at (2.125,-2){\5}; 
   \node(23) at (3.25,-1){\2}; 
   \node(24) at (4.25,-2){\6};

   \node(31) at (1,-3){\7};
   \node(33) at (3.25,-3){\8};

   \node(41) at (1,-4.1){\9};
   \node(42) at (1.86,-4.1){\ten}; 
   \node(43) at (3.24,-4.1){\lvn};
   \node(44) at (4.25,-4.1){\twv};
 
    \draw[double distance=2] (11)--(13) ; 
    \draw[->] (13)--(14) ;
                                                              
    \draw[->] (41)--(42) ;
    \draw[->] (42)--(43) node[above, midway, scale=.75] {$\Iso$} ;
    \draw[->] (43)--(44) ;
                                   
     \draw[->] (11)--(21) ;
     \draw[->] (21)--(31) node[left=1pt, midway, scale=.75] {$\simeq$} ;
     \draw[->] (31)--(41)  ;
       
     \draw[->] (13)--(23) ;
     \draw[->] (23)--(33) node[left=1pt, midway, scale=.75] {$\simeq$} ;
     
     \draw[->] (14)--(24) node[right=1pt, midway, scale=.75] {$\simeq$} ;
     \draw[->] (24)--(44) ;
 
   \draw[->] (21)--(23) ;
   \draw[->] (22)--(1.3,-2.75) ;
   \draw[<-] (22)--(33) node[above=-1, midway, scale=.75] {\kern15pt$\simeq$} 
                                   node[below=-3, midway, scale=.75] {\textup{\ref{tensor}}\kern35pt};
   \draw[->] (33)--(41) ;
   
   \node at (2.06, -.52)[scale=.9]{\circled1} ;
   \node at (2.06, -1.52)[scale=.9]{\circled2} ;
   \node at (2.06, -3)[scale=.9]{\circled3} ;
   \node at (3.75, -2.52)[scale=.9]{\circled4} ;

 \epic
\]

\end{sublem}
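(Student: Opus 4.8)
\textbf{Proof proposal for Lemma~\ref{Lem:TransTrace}.}
The plan is to prove the two perfectness assertions first, then to establish commutativity of the pentagonal diagram by decomposing it into the five labeled subdiagrams and treating each in turn. For perfectness: since $f$ is a perfect affine map, $\fst$ is exact, and $A$ is locally a bounded complex of finite-rank free $\OX$-modules; applying $\fst$ locally, and using that $\fst\OX$ is a perfect $\OY$-complex (as $f$ is perfect, see \S\ref{quasi}), one sees that $\fst A$ is perfect over $\OY$ by the transitivity of perfectness under composition of perfect maps, while $\fst A$ is perfect over $\fst\OX$ because $\bar{f\:\<}^{\!\<*}$ is a quasi-inverse equivalence carrying strictly perfect complexes to strictly perfect ones and $\R\bar\fst A\cong \bar{f\:\<}^{\!\<*}{}^{-1}A$ is perfect over $\CO_{\>\oY}=\fst\OX$. (Here one uses Proposition~\ref{^* equivalence} and the characterization of perfection recalled in \S\ref{tensorhom}/\ref{^* and hbar}(ii).)

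For the diagram, the key observation is that the left column composes (via $\circled1$, $\circled2$, $\circled3$) to $\fst$ applied to the internal trace construction of \ref{tensorhom} for $A$ over $\OX$, while the right column composes (via $\circled1$, $\circled4$) to the internal trace construction for $\fst A$ over $\OY$, and the bottom row is $\fst\tr_{\<\<\fst\OX}$ tensored appropriately—so the whole diagram is exactly the statement that $\tr^{\textup{alg}}$ is ``transitive'' along $X\to Y$. First I would handle $\circled1$: both sides arise from the identity map of $\fst A$ via the multiplication maps $\mu_{\fst A/\fst\OX}$ and $\mu_{\fst A/\OY}$, and commutativity reduces to the compatibility of these multiplication maps with restriction of scalars along $\psi\colon\OY\to\fst\OX$, which is formal (it is essentially the functoriality of $\R\sHom$ in the ring variable, as encoded in Proposition~\ref{adjass0}). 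Next, $\circled2$ is the compatibility of the isomorphism $\gamma^{-1}$ (equivalently the isomorphism $[\fst A,\fst A]\cong[\fst A,\OY]\otimes_{\OY}\fst A$ defining the trace) with restriction of scalars; this is handled by Corollary~\ref{tensor} together with the explicit description of $\gamma_Y$ in \S\ref{hbar}, after reducing to K-injective/K-flat resolutions so that everything is a genuine map of complexes. Subdiagram $\circled3$ is the compatibility of the evaluation/contraction map $[A,\OX]_{\OX}\otimes_{\OX}A\to\OX$ with $\fst$, which follows from the explicit sheafified formula for $\gamma$ in \S\ref{hbar} (the sign $(-1)^{ik}$ plays no role here since the target is in degree~$0$). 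Subdiagram $\circled4$ is, after applying $\fst$ only to the $\fst\OX$-factor, the same contraction-versus-$\gamma$ compatibility but now over $\OY$ for the algebra $\fst\OX$; it is dealt with the same way, again invoking Corollary~\ref{tensor}.

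The step I expect to be the main obstacle is $\circled2$ (and its sibling $\circled4$): one must verify that the isomorphism $\nu$ of \eqref{nu} / Corollary~\ref{tensor}, which expresses that $\R\bar\fst$ intertwines $\R\sHom$ on $X$ and on $\oY$, is genuinely compatible with the $\gamma$-type isomorphisms on both sides, and doing this cleanly requires the bookkeeping of Lemma~\ref{commnu} and Lemma~\ref{tensor and _*} rather than a direct sign-chase. Concretely, I would reduce to the case where $A$ is a single finite-rank free $\OX$-module in degree~$0$ (by localizing and inducting on the number of nonvanishing degrees of a strictly perfect model of $A$, using the triangle \cite[p.\,70, (1)]{RD} and additivity of the trace as in \eqref{tensorhom1}), at which point $[\fst A,\fst A]_{\fst\OX}$, $[\fst A,\OY]_{\OY}\otimes_{\OY}\fst A$, etc. are all honest sheaves, $\gamma$ is the tautological evaluation isomorphism, and commutativity of each remaining subdiagram becomes a matter of unwinding definitions over an arbitrary affine open, i.e.\ the purely algebraic fact that for a finite free module $N$ over $S$ with $S$ finite over $R$, the composite $N\otimes_S N^\vee\to S\xrightarrow{\tr_{S/R}}R$ equals $N\otimes_S N^\vee\to R$ where $N$ is regarded as a finite (not necessarily free) $R$-module—which is the classical transitivity of the trace and is checked on a basis.
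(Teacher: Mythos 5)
Your big-picture reading of the lemma is right—the statement is a sheaf-theoretic transitivity of the algebraic trace, and the key algebraic fact is indeed a transitivity statement reducible (for the module variable) to the free case. But two of your subdiagram assignments are wrong, and the reduction at the end is more delicate than you allow. First, $\circled4$ is not ``the same contraction-versus-$\gamma$ compatibility dealt with by Corollary~\ref{tensor}.'' Corollary~\ref{tensor} lives entirely on the $\bar f$-side (the equivalence $\Dqc(X)\cong\Dqc(\fst\OX)$) and says nothing about $\OY$. Subdiagram $\circled4$, by contrast, lives entirely on the $Y$-side: it involves only $\OY$, $\fst\OX$, and $\fst A$, and it is precisely the transitivity statement ``$\textup{\bf tr}^{}_{\fst A/\OY}$ factors through $\textup{\bf tr}^{}_{\fst A/\fst\OX}$ followed by $\tr^{\textup{alg}}_{\fst\OX/\OY}$.'' This is the core content of the whole lemma, and in the paper it is isolated as Lemma~\ref{transtr} (applied with $\CS\set\fst\OX$, $E\set\fst A$). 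Similarly, $\circled1$ is not about ``compatibility of multiplication maps with restriction of scalars along $\psi\colon\OY\to\fst\OX$''; that square sits strictly between the $X$-side and the $\fst\OX$-side (only $\OX$- and $\fst\OX$-internal Homs appear in it), and the paper resolves it by the pseudofunctoriality diagram from~\cite[3.7.1.1]{li} for the composite ringed-space map $(X,\OX)\to(Y,\fst\OX)$. Subdiagrams $\circled1$--$\circled3$ together express that the trace construction is preserved by the equivalence of Proposition~\ref{^* equivalence}, and they are disposed of by abstract coherence (including the dedicated Lemma~\ref{compatible} on symmetric monoidal functors between closed categories), with no free-module reduction.

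The free-module reduction you sketch at the end is the right idea, but only for $\circled4$, and even there the end-game you describe is too optimistic. When $f$ is merely perfect, $\fst\OX$ is a perfect $\OY$-complex but need not be finite flat, so ``$S$ finite over $R$'' with a basis computation of $\tr_{S/R}$ is not available. Lemma~\ref{transtr} therefore reduces only the module variable $E$ to a direct summand $F$ of a finite-rank free $\CS$-module (via base change and~\eqref{tensorhom1}), and then uses the direct-sum decomposition of the trace square—with the off-diagonal $\Hom(F_i,F_j)$ contributions vanishing—to land on the tautological case $F=\CS$; the algebra $\CS$ is never reduced to free. Attempting your uniform reduction of the whole pentagon to the free case would also require verifying additivity in $A$ for every node and every internal arrow of the diagram before inducting, which is heavier bookkeeping than the paper's clean separation into ``equivalence-compatibility ($\circled1$--$\circled3$, abstract nonsense)'' and ``transitivity of trace ($\circled4$, Lemma~\ref{transtr}).''
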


\noindent\emph{Proof.}
To see that $\fst A$ is perfect over $\fst\OX$ and over $\OY$, one can localize on~$Y\<$, and so can assume that $X$ and $Y$ are affine schemes, in which case the assertions have a standard translation into the analogous, and simple, ones in commutative algebra.

As for the diagram, we first show subdiagrams \circled1, \circled2 and \circled3 commute,
so that ``trace is preserved under the equivalence in \ref{^* equivalence}."

It is left to the reader to verify that the natural map
\[
\alpha\colon\fst\R\>\sHom^{}_\sX(A,B)\to \R\>\sHom_Y(\fst A,\fst B\>)\qquad(A,B\in\D(X))
\]
associated to an \emph{arbitrary ringed-space map} $f\colon(X,\OX)\to(Y,\OY)$ via, e.g.,
\cite[(3.1.4) and 2.6.5]{li}, is characterized abstractly 
(cf.~\cite[(3.5.4.1)]{li} as being adjoint to the natural composite map
\[
\fst\R\>\sHom^{}_\sX(A,B)\Otimes{Y}\fst A\to
\fst(\R\>\sHom_Y(A,B\>)\Otimes{\sX}A)
\to \fst B.
\]
The commutativity of \circled3 is the special case\va2 where $B\set\OX$ and $\OY\set \fst\OX.$  

(Likewise, when other abstract properties of functorial maps, as in~\cite{li}, are used in the rest of this proof, it should be checked that the natural concrete interpretations of those maps do have those properties---so that the concrete result is an instance of an abstract one.)\va2

The commutativity of \circled1 is that of the second diagram 
in~\cite[3.7.1.1]{li} (where $\fst$ and $g^{}_*$ should be switched), derived from 
the natural composite map denoted here by $(X, A)\to(X,\OX) \to (Y,\fst\OX)$,  with $F=F'\set A$.

The commutativity of \circled2 is essentially the case $B\set\OX$, $C\set A$ of the next lemma, applied to the functor $\R\bar\fst\colon\D(\OX)\to\D(\fst\OX)$ associated
to the natural ringed-space map $\bar f\colon(X,\OX)\to (Y,\fst\OX)$.

\pagebreak[3]
\begin{sublem}\label{compatible} Let\/ $\fst\colon\mathbf X\to\mathbf Y $ be a symmetric monoidal functor between monoidal closed categories. With notation as in\/ \textup{\cite[\S3.5]{li},} for any\/ $A,$ $B$ and\/ $C$ in\/~$\mathbf X,$
the following natural diagram commutes.
\[
\def\1{$\fst\bigl([A,B]\otimes C\bigr)$}
\def\2{$\fst[A,B]\otimes \fst C$}
\def\3{$[\fst A,\fst B]\otimes \fst C$}
\def\4{$\fst[A,B\otimes C]$}
\def\5{$[\fst A,\fst(B\otimes C)]$}
\def\6{$[\fst A,\fst B\otimes \fst C]$}
  \bpic[xscale=3.75, yscale=1.3]

   \node(11) at (1,-1){\1} ;  
   \node(12) at (2,-1){\2} ;
   \node(13) at (3,-1){\3} ;
   
   \node(21) at (1,-2){\4} ;
   \node(22) at (2,-2){\5} ;
   \node(23) at (3,-2){\6} ;
   
    \draw[<-] (11)--(12) ;
    \draw[<-] (13)--(12) ;
       
    \draw[->] (21)--(22) ;
    \draw[->] (23)--(22) ;

      \draw[->] (11)--(21) ;
      \draw[->] (13)--(23) ;
      
 \epic
\]

\end{sublem}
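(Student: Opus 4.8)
\textbf{Proof plan for Lemma~\ref{compatible}.} The plan is to reduce the assertion to the universal property defining the natural map $\fst[A,B]\to[\fst A,\fst B]$, namely that it is adjoint to the composite $\fst[A,B]\otimes\fst A\iso\fst([A,B]\otimes A)\to\fst B$ obtained from the evaluation $[A,B]\otimes A\to B$ and the monoidal structure on $\fst$ (see \cite[(3.5.4.1)]{li}). Both legs of the hexagon, after precomposing with the evaluation pairing against $\fst A$, should be seen to equal one and the same natural map $\fst([A,B]\otimes C)\otimes\fst A\to\fst(B\otimes C)$. Since the diagram involves no derived functors—everything is in the underlying closed monoidal categories $\mathbf X$, $\mathbf Y$ and the map $\fst$ is an honest symmetric monoidal functor—this is a purely formal manipulation with the coherence isomorphisms and the unit/counit of the $\otimes\dashv[-,-]$ adjunctions.

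Concretely, first I would adjointify: the hexagon commutes if and only if, after applying $(-)\otimes\fst A$ and composing with the evaluation $[\fst A,\fst B\otimes\fst C]\otimes\fst A\to\fst B\otimes\fst C$ (and on the other side with $[\fst A,\fst(B\otimes C)]\otimes\fst A\to\fst(B\otimes C)$ followed by the canonical $\fst(B\otimes C)\to\fst B\otimes\fst C$, which is invertible since $\fst$ is monoidal), one gets the same arrow. I would then expand each path. Going right-then-down: $\fst([A,B]\otimes C)\otimes\fst A\to\fst[A,B]\otimes\fst C\otimes\fst A\to[\fst A,\fst B]\otimes\fst C\otimes\fst A$, then reassociate and evaluate to $\fst B\otimes\fst C$; by the defining property of $\fst[A,B]\to[\fst A,\fst B]$ the evaluation $[\fst A,\fst B]\otimes\fst A\to\fst B$ precomposed with $\fst[A,B]\otimes\fst A$ equals $\fst([A,B]\otimes A)\to\fst B$ after the monoidal iso, so this path becomes $\fst([A,B]\otimes C)\otimes\fst A\iso\fst([A,B]\otimes C\otimes A)\to\fst(B\otimes C)$, using $\fst$'s monoidality and the evaluation $[A,B]\otimes A\to B$. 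Going down-then-right: $\fst([A,B]\otimes C)\otimes\fst A\to\fst[A,B\otimes C]\otimes\fst A\to[\fst A,\fst(B\otimes C)]\otimes\fst A\to\fst(B\otimes C)$; again by the defining property of the adjoint map this is $\fst([A,B\otimes C]\otimes A)\to\fst(B\otimes C)$ via the evaluation $[A,B\otimes C]\otimes A\to B\otimes C$, precomposed with $\fst(([A,B]\otimes C)\otimes A)\to\fst([A,B\otimes C]\otimes A)$. Both descriptions now reduce to the commutativity, \emph{in $\mathbf X$ alone}, of the evident diagram relating $([A,B]\otimes C)\otimes A$, $[A,B]\otimes A\otimes C$, $[A,B\otimes C]\otimes A$ and $B\otimes C$—which is itself just the standard compatibility of the map $[A,B]\otimes C\to[A,B\otimes C]$ with evaluation (the concrete version of \cite[3.5.6(a)]{li}), combined with the symmetry isomorphism. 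Applying $\fst$ and using functoriality finishes the argument.

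The only genuinely substantive point—and the one I expect to be the main obstacle to writing this out cleanly—is bookkeeping the symmetry isomorphisms and the monoidal coherence of $\fst$: one must make sure that the braiding used in the two paths (hidden in the reassociations $[A,B]\otimes C\otimes A\rightleftarrows[A,B]\otimes A\otimes C$ and in the definition of the map $[A,B]\otimes C\to[A,B\otimes C]$) is consistent, so that the underlying $\mathbf X$-diagram really commutes by coherence rather than merely up to a sign or a transposition. This is handled by invoking the Kelly--Mac~Lane coherence theorem, or equivalently by citing \cite[3.5.6(a)]{li} directly; once that $\mathbf X$-diagram is in hand, transporting it across the symmetric monoidal functor $\fst$ is automatic. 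I would therefore structure the write-up as: (1) adjointify the hexagon; (2) rewrite each leg via the defining adjointness of $\ush{(-)}$-type maps—here the concrete $\fst[A,-]\to[\fst A,\fst(-)]$—so that both legs are expressed purely in terms of $\fst$ applied to an $\mathbf X$-morphism; (3) invoke coherence in $\mathbf X$ (or \cite[3.5.6(a)]{li}) to identify those two $\mathbf X$-morphisms; (4) conclude by functoriality of $\fst$. No step requires more than routine diagram-chasing once (1) and (2) are set up, so I would leave the final verification to the reader, as is done for the neighbouring lemmas.
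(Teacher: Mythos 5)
Your strategy is essentially the one the paper uses: expand the diagram using the fact that the canonical map $\fst[A,B]\to[\fst A,\fst B]$ is adjoint to $\fst[A,B]\otimes\fst A\to\fst([A,B]\otimes A)\to\fst B$, reduce each piece either to the definition of symmetric monoidal functor or to a coherence diagram in $\mathbf X$ transported along $\fst$, and dispose of the residual subdiagram by adjointifying. So the plan is sound.

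The one concrete slip is your assertion that the map $\fst(B\otimes C)\to\fst B\otimes\fst C$ is invertible because $\fst$ is monoidal. A (lax symmetric) monoidal functor only comes equipped with structure maps $\fst B\otimes\fst C\to\fst(B\otimes C)$, and these are not invertible in general; the lemma is stated without any strong-monoidality hypothesis. Fortunately you do not need this: in the diagram the arrow between $[\fst A,\fst B\otimes\fst C]$ and $[\fst A,\fst(B\otimes C)]$ points \emph{into} the latter---it is $[\fst A,-]$ applied to the lax structure map $\fst B\otimes\fst C\to\fst(B\otimes C)$---so both legs of the hexagon already terminate at $[\fst A,\fst(B\otimes C)]$. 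After you adjointify against $-\otimes\fst A$ you are comparing two arrows into $\fst(B\otimes C)$, and no inverse of any monoidal structure map is ever invoked. With that bookkeeping corrected, the rest of your outline (use the adjoint description of the canonical maps $\fst[A,-]\to[\fst A,\fst(-)]$, fold both paths into $\fst$ applied to an $\mathbf X$-morphism, and cite coherence or \cite[3.5.6(a)]{li}) goes through as you expect and matches the paper's argument.
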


\begin{proof}
\hskip-3pt%
\footnote{\kf Conceivably, the assertion is contained in \cite[Theorem 4.18]{le}.}
Expand the diagram, naturally,  as follows:
\begin{small}
\[\mkern-4mu
\def\1{$\fst\bigl([A,B]\<\<\otimes\<\< C\bigr)$}
\def\2{$\fst[A,B]\<\<\otimes\<\< \fst C$}
\def\3{$[\fst A,\fst B]\<\<\otimes\<\< \fst C$}
\def\4{$\fst[A,B\<\<\otimes\<\< C]$}
\def\5{$[\fst A,\fst(B\<\<\otimes\<\< C)]$}
\def\6{$[\fst A,\fst B\<\<\otimes\<\< \fst C)]$}
\def\7{$\fst[A,[A,B]\<\<\otimes\<\< C\<\<\otimes\<\< A]$}
\def\8{$[\fst A,[\fst A,\fst B]\<\<\otimes\<\< \fst C\<\<\otimes\<\< \fst A]$}
\def\9{$[\fst A,\fst([A,B]\<\<\otimes\<\< C\<\<\otimes\<\< A)]$}
\def\ten{$[\fst A,\fst[A,B]\<\<\otimes\<\< \fst C\<\<\otimes\<\< \fst A]$}
\def\lvn{$[\fst A,\fst([A,B]\<\<\otimes\<\< A\<\<\otimes\<\< C)]$}
\def\twv{$[\fst A,\fst[A,B]\<\<\otimes\<\< \fst A\<\<\otimes\<\< \fst C]$}
\def\thn{$[\fst A,[\fst A,\fst B]\<\<\otimes\<\< \fst A\<\<\otimes\<\< \fst C]$}
\def\frn{$\fst[A,[A,B]\<\<\otimes\<\< A\<\<\otimes\<\< C]$}
\def\ffn{$[\fst A,\fst([A,B]\<\<\otimes\<\< A)\<\<\otimes\<\< \fst C]$}
  \bpic[xscale=4.25, yscale=1.1]

   \node(11) at (1,-.9){\1};   
   \node(12) at (2.25,-.9){\2};
   \node(13) at (3,-.9){\3}; 
   
   \node(21) at (1,-2){\7};
   \node(22) at (2.25,-2){\ten};
    
   \node(32) at (1.7,-3){\9};
   \node(33) at (3,-3){\8}; 

   \node(42) at (1.7,-5.1){\lvn};   
   \node(43) at (2.25,-4){\twv};
   \node(44) at (3,-5.05){\thn}; 
      
   \node(51) at (1,-4){\frn};  
   \node(53) at (2.25,-5.9){\ffn};
   
   \node(81) at (1,-7){\4};   
   \node(82) at (1.7,-7){\5};
   \node(83) at (3,-7){\6}; 
   
    \draw[<-] (11)--(12) ;
    \draw[->] (12)--(13) ;   

    \draw[->] (81)--(82) ;
    \draw[<-] (82)--(83) ;         
                               
     \draw[->] (11)--(21) ;
     \draw[->] (21)--(51) ;
     \draw[->] (51)--(81) ;
     
     \draw[->] (12)--(22) ;

     \draw[->] (32)--(42) ;
     \draw[->] (42)--(82) ;
     
     \draw[->] (43)--(53) ;
     
     \draw[->] (13)--(33) ;
     \draw[->] (33)--(44) ;
     \draw[->] (44)--(83) ;

   \draw[->] (21)--(32) ;
   \draw[->] (22)--(32) ;
   \draw[->] (22)--(33) node[above=-1.5, midway, scale = .75]{$\mkern50mu\via\alpha$} ;
   \draw[->] (22)--(43) ;
   \draw[->] (43)--(42) ;
   \draw[->] (43)--(44) node[above=-1.5, midway, scale = .75]{$\mkern50mu\via\alpha$} ;
   \draw[->] (53)--(83) ;
   \draw[->] (53)--(42) ;
   \draw[->] (51)--(42) ;
   
  
   \node at (1.58, -1.65)[scale=.9]{\circled5} ;
   \node at (2, -3.52)[scale=.9]{\circled6} ;
   \node at (2.12, -4.7)[scale=.9]{\circled7} ;
   \node at (2.65, -5.5)[scale=.9]{\circled8} ;
 
 \epic
\]
\end{small}
Commutativity of the unlabeled subdiagrams is simple to verify. That of \circled6 and \circled7 follows at once from the definition of symmetric monoidal functor, see~e.g., \cite[3.4.2]{li}. That of~\circled8 
results from the above description of $\alpha$. 
That of \circled5 is equivalent to that of the adjoint diagram, that is, the border 

\pagebreak[3]
\noindent of the following natural diagram, where $D\set[A,B]$; and this border does commute, since
all the subdiagrams clearly do. 
\[
\def\1{$\fst(D\<\<\otimes\<\< C)\<\<\otimes\<\< \fst A$}
\def\2{$\fst D\<\<\otimes\<\< \fst C\<\<\otimes\<\< \fst A$}
\def\3{$\fst([A,D\<\<\otimes\<\< C\<\<\otimes\<\< A]\<\<\otimes\<\< A])$}
\def\4{$\fst(D\<\<\otimes\<\< C\<\<\otimes\<\< A)$}
\def\5{$[\fst A,D\<\<\otimes\<\< C\<\<\otimes\<\< A]\<\<\otimes\<\< \fst A]$}
\def\7{$\fst[A,D\<\<\otimes\<\< C\<\<\otimes\<\< A]\<\<\otimes\<\< \fst A$}
\def\9{$\fst(D\<\<\otimes\<\< C\<\<\otimes\<\< A)$}
\def\ten{$\fst D\<\<\otimes\<\< \fst C\<\<\otimes\<\< \fst A$}
 \bpic[xscale=4.5, yscale=1.15]
   \node(11) at (1,-1){\1};   
   \node(13) at (3,-1){\2};
  
   \node(23) at (3,-2){\4};
   
   \node(31) at (1,-3){\7}; 
   \node(32) at (2.07,-3){\3};
   \node(33) at (3,-3){\4};

    \draw[<-] (11)--(13) ;
    
    \draw[->] (31)--(32) ;
   \draw[->] (32)--(33) ; 
                               
     \draw[->] (11)--(31) ;
     \draw[->] (13)--(23) ;
     \draw[double distance =2] (23)--(33) ;
         
   \draw[->] (11)--(23) ;
   \draw[->] (23)--(32) ;

%
   
  \epic
\]

This completes the proof of Lemma~\ref{compatible}.
\end{proof}

To complete the proof of Lemma~\ref{Lem:TransTrace}, whence 
of Proposition~\ref{transfund}, one~needs 
subdiagram \circled4 to commute---which it does, by the next Lemma
(with $\CS\set\fst\OX$ and $\CT\set\fst A$).

\begin{sublem}\label{transtr} 
Let\/ $(Y,\OY)$ be a ringed space, let\/ $\CS$ be a perfect\/ $\OY$-algebra,
and let $E$ be a perfect\/ $\CS$-module. Then $E$ is a perfect $\OY$-module, and\/
$\textup{\bf tr}^{}_{\<\<E/\OY}$ factors naturally as\va{-10}
\begin{equation*}\label{transtr1}
\sHom_{\OY}(E,E)\lto \sHom_{\CS}(E,E)\xto{\textup{\bf tr}^{}_{\<\<E/\CS}}
\CS\xto{\!\tr^{\textup{alg}}_{\<\<\CS\</\OY}\!}\OY.
\tag{\ref{transtr}.1}
\end{equation*}

In particular, if\/ $\CT$ is a perfect\/ $\OY$-algebra, then
\[
\tr^{\textup{alg}}_{\CT/\OY}\<=\tr^{\textup{alg}}_{\<\<\CS/\OY}\!\smallcirc\>\tr^{\textup{alg}}_{\CT\</\CS}\colon \CT\to\OY.
\]

\end{sublem}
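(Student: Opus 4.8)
The plan is to prove Lemma~\ref{transtr} by reducing it, via localization, to a statement in commutative algebra, and then to prove that statement by first handling the free case and then the strictly-perfect case by an additivity argument.

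First I would dispose of the perfectness claim: that $E$ is perfect over $\OY$ when it is perfect over a perfect $\OY$-algebra $\CS$ is local on $Y\<$, so one may assume $Y=\spec R$ is affine and $\CS$ corresponds to a perfect $R$-algebra $S$ and $E$ to a perfect $S$-complex; then ``perfect over $S$, $S$ perfect over $R$ $\Rightarrow$ perfect over $R$'' is the elementary transitivity of perfection for complexes over commutative rings (a complex of finite-rank frees over $S$, resolved in turn by such a complex over $R$, is again so resolved over $R$). The factorization \eqref{transtr1} is likewise a local assertion (all the maps in it are defined globally and naturally, and being an equality of maps it may be checked on an affine open cover), so from here on one may assume $Y$ affine, sheafify away, and work with a perfect $R$-algebra $S$ and a perfect $S$-complex $E$, with $[-,-]_{\<\CS}$ and $\textup{\bf tr}$ replaced by their evident module-theoretic avatars $\R\<\<\Hom$ and the algebraic trace.

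The heart of the matter is then: for $E$ a strictly perfect $S$-complex, $\textup{\bf tr}^{}_{\<\<E/R}$ equals $\tr^{\textup{alg}}_{S/R}\smallcirc\textup{\bf tr}^{}_{\<\<E/S}$ after the restriction map $\R\<\<\Hom_{R}(E,E)\to\R\<\<\Hom_{S}(E,E)$ is inserted. I would first reduce from perfect to \emph{strictly perfect} $E$ by the standard device (used already in the proof of \ref{^* and hbar}(ii) in the excerpt): the statement is local and invariant under quasi-isomorphism in $E$, so one may take $E$ strictly perfect. Next, by \eqref{tensorhom}.1 both $\textup{\bf tr}^{}_{\<\<E/R}$ and $\textup{\bf tr}^{}_{\<\<E/S}$ are alternating sums over the degrees $i$ of the traces of the components $E^{\lift1.2,\<i,}$, which are finite-rank \emph{projective} $S$-modules; and restriction of scalars is additive over these degrees. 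So it suffices to treat a single finite-rank projective $S$-module $P$, i.e.\ to show $\tr^{}_{\<\<P/R}=\tr^{\textup{alg}}_{S/R}\smallcirc\tr^{}_{\<\<P/S}$ as maps $\End_R(P)\to R$ (with the understanding that $\End_S(P)\hookrightarrow\End_R(P)$). Adding a complementary projective, one may assume $P=S^n$ is free, and then $\tr^{}_{\<\<S^n/R}$ on an $n\times n$ matrix over $S$ is the $R$-trace of its entries' sum, i.e.\ $\tr^{\textup{alg}}_{S/R}$ applied to the ordinary $S$-matrix trace $\tr^{}_{\<\<S^n/S}$; this is a direct check. The final ``in particular'' is the special case $E\set\CT$, using that $\textup{\bf tr}^{}_{\<\<\CT/\CS}$ composed with $\mu_\CT\colon\CT\to\R\<\<\Hom_\CS(\CT,\CT)$ is $\tr^{\textup{alg}}_{\CT/\CS}$ and that $\mu_\CT$ over $\OY$ factors through $\mu_\CT$ over $\CS$.

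The main obstacle I anticipate is not any single computation but the bookkeeping needed to see that the \emph{sheafified} and \emph{derived} maps appearing in the statement—$\R\>\sHom$, the isomorphism $\gamma$ of \ref{^* and hbar}, the equivalence of \ref{^* equivalence}, the trace of \ref{tensorhom}—are correctly identified with their naive module-theoretic counterparts once one localizes and replaces $E$ by a strictly perfect complex; that is, one must make sure the reduction genuinely lands in the free/projective situation with the derived decorations stripped. The cited compatibilities ($g^*\textup{\bf tr}^{}_{\<\<L}\cong\textup{\bf tr}^{}_{\<\<g^*L}$, the behaviour of $\gamma$ under $\LL g^*$ from \ref{^* and hbar}, and the identification of $\textup{\bf tr}$ with the classical trace for finite-rank locally free modules in \S\ref{tensorhom}) do all the real work, so once those are invoked the remaining argument is the routine matrix computation sketched above. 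I would therefore present the proof as: (1) localize, (2) strip to strictly perfect $E$, (3) use \eqref{tensorhom}.1 to reduce to a single finite-rank projective, hence (adding a complement) a free module, (4) verify the matrix identity, (5) specialize to $E=\CT$ for the last clause.
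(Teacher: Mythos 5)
Your proposal is correct and follows essentially the same route as the paper's proof: localize and use compatibility of $\textup{\bf tr}$ with base change to reduce to strictly perfect $E$, apply \eqref{tensorhom1} to reduce to a single finite-rank projective $\CS$-module, and then reduce by a direct-sum/additivity argument to the free case. The only cosmetic difference is that the paper iterates the decomposition argument (showing the diagram commutes for $F_1\oplus F_2$ iff it does for each $F_i$) all the way down to the trivial case $F=\CS$, whereas you stop at $F=\CS^n$ and verify the matrix identity directly; both are fine, and both involve the same implicit bookkeeping you flag about identifying derived $\OY$-traces with nonderived ones.
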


\noindent\emph{Proof.}
That $E$ is perfect over $\OY$ can be shown as in the beginning of the proof of \ref{Lem:TransTrace}.\va2

As for \eqref{transtr1}, compatibility of {\bf tr} with base change (see section~\ref{tensorhom}) allows one to assume that $E$ is $\D(\CS)$-isomorphic to a bounded complex 
of direct summands of finite\kf-rank free $\CS$-modules (see proof of~\ref{^* and hbar}(ii)). It~suffices then, in view of~ \eqref{tensorhom1}, to show that for any direct summand~$F$ of a finite\kf-rank free $\CS$-module, the following natural diagram commutes, where by previous considerations, the maps involved
can be identified with their nonderived precursors:
\begin{equation*}\label{transtr2}
\def\1{$\CS$}
\def\2{$[F\<,F\>]_{\OY}$}
\def\3{$[F\<,F\>]_{\CS}$}
\def\4{$[\CS,\CS\>]_{\OY}$}
\def\5{$[\CS,\OY]_{\OY}\<\<\otimes_{\OY}\<\CS$}
\def\6{$[F\<,\OY]_{\OY}\<\<\otimes_{\OY}\<\<F$}
\def\7{$[F\<,\CS\>]_{\CS}\otimes_\CS F$}
\def\8{$\OY$}
 \CD
  \bpic[xscale=2.5, yscale=2]

   \node(11) at (1,-1){\3};   
   \node(14) at (4,-1){\2};

   \node(24) at (4,-2){\6};
    
   \node(31) at (1,-2){\7};
     
   \node(41) at (1,-3){\1};  
   \node(42) at (1.8,-3){\4};   
   \node(43) at (3,-3){\5};
   \node(44) at (4,-3){\8}; 
   
    \draw[->] (11)--(14) ;    

    \draw[->] (41)--(42) ;
    \draw[->] (42)--(43) node[above=-1, midway, scale=.9]{$\Iso$} ; 
    \draw[->] (43)--(44) ;        
                               
     \draw[->] (11)--(31) node[left=1, midway, scale=.75]{$\simeq$} ;
     \draw[->] (31)--(41) ;
     
     \draw[->] (14)--(24) node[right=1, midway, scale=.75]{$\simeq$} ;
     \draw[->] (24)--(44) ;
        
  \epic
   \endCD \tag{\ref{transtr}.2}
\end{equation*}

If $F=F_1\oplus_\CS F_2\>$, then (one verifies) this diagram is the direct sum of the four natural diagrams
obtained by substituting $F_i$ for the first occurrence of~$F$ at each node and $F_{\<j}$
for the second occurrence, with $(i,j)=(1,1)$ or~$(1,2)$ or $(2,1)$ or $(2,2)$,  
the resulting arrow 
\[
[F_i,\CT\>\>]_{\CT}\otimes_{\>\CT} F_{\<j}\lto\CT\qquad (\CT\set\CS\textup{ or }\OY)
\]
representing the natural composite map
\[
[F_i,\CT\>\>]_{\CT}\otimes_{\>\CT} F_{\<j}\lto[F\<,\CT\>\>]_{\CT}\otimes_{\>\CT} F_{\<j}
\lto[F_{\<j},\CT\>\>]_{\CT}\otimes_{\>\CT} F_{\<j}\lto \CT,
\]
which vanishes when $i\ne j$. Hence,
\eqref{transtr2} commutes for $F$ if and only if it commutes for both $F_1$ and~$F_2$.
\looseness=-1

It follows that the question of commutativity of  \eqref{transtr2} reduces to the trivial case where $F=\CS$. \hfill{$\square\ \square\ \square$}

\vskip5pt
Our underlying theme, concrete realizations of abstract constructions, spurs~continuing on with interpretations, via traces for perfect affine maps, of maps involving $(-)^\flat$ via maps involving $(-)^*$.
Proposition~\ref{transfund} is just a first example. But this could be an endless process. 

Let some further examples, provided by the following assertions, suffice. As before, justification of these
assertions requires deploying the formalism of adjoint functors between closed categories (see e.g., \cite[\S\S3.5.5--3.5.6]{li}), and/or its scheme\kf-theoretic realization (see \cite[\S3.6.10]{li}).
\enlargethispage{-20pt}
\pagebreak[3]
\begin{subxrz}\label{redo}
Let $f\colon X\to Y$ be a perfect affine map, $F, G\in\Dqc(Y)$,
and $C_{\<\<f}$ as in \eqref{fundclass} (an isomorphism if $f$ is \'etale). 
\vskip2pt
(i)  Let $\sigma$ and $\beta_\sigma$ be as in Theorem~\ref{indt base change}.
Then the affine map~$g$ is perfect, and  the following natural diagram commutes.
\[
\def\5{$\LL v^*\!f^\flat G$}
\def\6{$g^\flat \LL u^*\mkern-.5mu G$}
\def\7{$\LL v^*\LL f^*\<G$}
\def\8{$\LL g^*\LL u^*\mkern-.5mu G$}
 \bpic[xscale=1.35, yscale=.95]
 
   \node(15) at (6,-3){\5};   
   \node(17) at (8,-3){\6}; 
     
   \node(35) at (6,-1){\7};
   \node(37) at (8,-1){\8};

   \draw[->] (15)--(17) node[above, midway, scale=.75] {$\Iso$}
                                  node[below=1pt, midway, scale=.75] {$\beta_\sigma$} ;  
      
   \draw[->] (35)--(37) node[above, midway, scale=.75] {$\Iso$} ;
    
    \draw[->] (35)--(15) node[left=1pt, midway, scale=.75] {$\LL v^*C_{\<\<f}$} ;
   
    \draw[->] (37)--(17) node[right=1pt, midway, scale=.75] {$C_{\<g}$} ;

 \epic 
\]
\vskip-2pt

(ii) Let $\chi\colon\<\< f^\flat \<\<F\<\Otimes{\sX}\<\LL f^*\<G\to \<f^\flat\<(F\<\Otimes{Y}\<\<G)$ 
be the isomorphism in~\ref{flat and tensor}(ii).
The following natural diagram commutes.
\[
\def\9{$f^\flat \<\<F\<\Otimes{\sX}\<\LL f^*\<\<G$}
\def\ten{$f^\flat(F\<\Otimes{Y}\<\<G)$}
\def\lvn{$\LL f^*\<\< F\<\Otimes{\sX}\<\LL f^*\<\<G$}
\def\twv{$\LL f^*\<(F\<\Otimes{Y}\<\<G)$}
 \bpic[xscale=1.25, yscale=.95]
 
   \node(12) at (0,-7){\9};   
   \node(14) at (2.8,-7){\ten}; 
     
   \node(32) at (0,-5){\lvn};
   \node(34) at (2.8,-5){\twv};
   
    \draw[->] (12)--(14) node[above, midway, scale=.75] {$\Iso$} 
                                   node[below=1, midway, scale=.75] {$\chi$} ;
      
   \draw[->] (32)--(34) node[above, midway, scale=.75] {$\Iso$} ;
    
    \draw[<-] (12)--(32) node[left=1pt, midway, scale=.75] {$\via\> C_{\<\<f}$} ;
   
    \draw[<-] (14)--(34) node[right=1pt, midway, scale=.75] {$C_{\<\<f}$};
  
   \epic 
\]

(iii) Assume $\R\>\sHom_Y(F,G)\in\Dqc(Y)$ and $\R\>\sHom_\sX(\LL f^*\<\<F,f^\flat G)\in\Dqc(Y)$ (for example, $F$ perfect, or $F$ pseudo\kf-coherent
and $G\in\Dqcpl(Y))$.

Let $\zeta\colon \R\>\sHom^{}_\sX(\LL f^*\<\<F,f^\flat G\>)\to
 f^\flat\R\>\sHom_Y(F,G\>)$ correspond via \ref{represent}(i) to the natural composite map
\[
\R\fst \R\>\sHom_\sX(\LL f^*\<\<F,\>f^\flat G\>)
\iso\R\>\sHom_Y(F,\R\fst f^\flat G\>)
\xto{\>t^{}_{\<G}\>}\R\>\sHom_Y(F\<,G\>).
\]

The following natural diagram commutes.
\[
\def\thn{$\R\>\sHom^{}_\sX(\LL f^*\<\<F,f^\flat G\>)$}
\def\frn{$f^\flat\R\>\sHom_Y(F,G\>)$}
\def\ffn{$\R\>\sHom^{}_\sX(\LL f^*\<\<F,\LL f^* G\>)$}
\def\sxn{$\LL f^*\R\>\sHom_Y(F,G\>)$}
 \bpic[xscale=1.25, yscale=.95]
   
   \node(16) at (6,-7){\thn};   
   \node(18) at (9.6,-7){\frn}; 
     
   \node(36) at (6,-5){\ffn};
   \node(38) at (9.6,-5){\sxn};
   
    \draw[->] (16)--(18)  node[below, midway, scale=.75] {$\zeta$}  ;
      
   \draw[<-] (36)--(38) ;
    
    \draw[<-] (16)--(36) node[left=1pt, midway, scale=.75] {$\via\> C_{\<\<f}$} ;
   
    \draw[<-] (18)--(38) node[right=1pt, midway, scale=.75] {$C_{\<\<f}$};

 \epic 
\]

\end{subxrz}

\begin{subcosa}\label{ex compl}
For additional  illustration, generalizing the last paragraph in \S\ref{trprelim}, 
Proposition~\ref{complementary} below gives, for any 
``almost \'etale" $f\colon X\to Y\<$, a concrete representation of the representing pair
$(H^0\<\<f^\flat\OY\<,\> t'_{\OY})$ in\/ \S\textup{\ref{trprelim}}.\va2

Consider a fiber square of scheme\kf-maps, with qcqs $f\>$:
\begin{equation*}\label{fib}
\mkern-60mu
\def\0{$V=$}
\def\1{$U\!\times_Y\<\< X$}
\def\2{$X$}
\def\3{$U$}
\def\4{$Y$}
\CD
 \bpic[xscale=2, yscale=1.35]

   \node(10) at (.4,-.975){\0} ; 
   \node(11) at (1,-1){\1} ;   
   \node(12) at (2,-1){\2} ; 
   
   \node(21) at (1,-2){\3} ;  
   \node(22) at (2,-2){\4} ;
   
    \draw[->] (11)--(12) node[above=1pt, midway, scale=.75] {$v$} ;  
 
    \draw[->] (21)--(22) node[below=1pt, midway, scale=.75] {$u$} ;

    \draw[->] (11)--(21) node[left=1pt, midway, scale=.75] {$g$} ;
    
    \draw[->] (12)--(22) node[right=1pt, midway, scale=.75] {$f$} ; 
                      
    \epic
  \endCD\tag*{(\ref{ex compl}.1)}
\end{equation*}
Assume that $u$ is  flat, or that  $f$ is affine.  
Then the natural map of functors 
from $\sA_{\qc}(X)$ to  $\sA_{\qc}(U)$
is an isomorphism $u^*\!\fst\iso g_*v^*$ \cite[9.3.3]{EGA1},
and one has the natural composite map
\begin{align*}
\varrho^{}_0=\varrho^{}_0(f,u)\colon \Hom_Y(\fst\OX,\OY)
&\,\lto
 \,\Hom_U(u^*\!\fst\OX, u^*\OY)\\
&\iso\Hom_U(g_*\OV, \OU),
\end{align*}
which sends $t\colon \fst\OX\to \OY$ to the natural composite
\[
g_*\OV\iso u^*\!\fst\OX\xto{\<u^{\<*}\<t\>\>} u^*\OY\iso\OU.
\]
That $s=\varrho^{}_0t$ is equivalent to the commutativity of the natural diagram
\[
\def\7{$u^*\!\fst\OX$}
\def\8{$g_*\OV$}
\def\9{$u^*\OY$}
\def\0{$\OU$}
 \bpic[xscale=2.4, yscale=1.5]
   \node(14) at (1,-1){\7} ;
   \node(15) at (2,-1){\8} ;
   
   \node(24) at (1,-2){\9} ;
   \node(25) at (2,-2){\0} ;
 
    \draw[->] (14)--(15)  node[above, midway, scale=.75] {$\Iso$} ; 
    \draw[->] (24)--(25) node[above, midway, scale=.75] {$\Iso$} ;  
    
    \draw[->] (14)--(24) node[left=1pt, midway, scale=.75] {$u^*t$};   
      
    \draw[->] (15)--(25) node[right=1pt, midway, scale=.75] {$s$}  ; 
  
  \epic
\]
or of its adjoint
\[
\def\1{$\fst\OX$}
\def\2{$u_*u^*\!\fst\OX$}
\def\3{$u_*g_*\OV$}
\def\4{$\OY$}
\def\5{$u_*u^*\OY$}
\def\6{$u_*\OU$}
 \bpic[xscale=2.4, yscale=1.55]
   \node(11) at (4,-1){\1} ;   
   \node(12) at (5,-1){\2} ;
   \node(13) at (6.08,-1){\3} ; 
   
   \node(21) at (4,-2){\4} ;  
   \node(22) at (5,-2){\5} ;
   \node(23) at (6.08,-2){\6} ;
 
    \draw[->] (11)--(12)  ;  
    \draw[->] (12)--(13)  node[above, midway, scale=.75] {$\Iso$} ;
   
    \draw[->] (21)--(22)  ;
    \draw[->] (22)--(23)  node[above, midway, scale=.75] {$\Iso$} ;
    
    \draw[->] (11)--(21) node[left=1pt, midway, scale=.75] {$t$} ;
      
    \draw[->] (13)--(23) node[right=1pt, midway, scale=.75] {$u_*s$} ; 
  
  \epic
\]

Suppose in addition that \va1

(i) \emph{$u$ is schematically surjective and qcqs,} in other words, the natural map \mbox{$\OY\to u_*\OU$} is 
injective---whence the map $\varrho^{}_0$ is injective and  $v_*\OV$ is quasi-coherent; and that 

(ii) \emph{$g$ is  finite and \'etale}---so that there is a unique $g_*\OV$-\emph{isomorphism}
\[ 
c^{}_{g}\colon g_*\OV\iso \sHom_U(g_*\OV,\OU)
\]
that sends $1\in\Gamma(U,g_*\OV)$ to the usual trace map 
\(
\tr^{}_g\colon g_*\OV\to \OU.\va1
\)

Denote base change to~any open subscheme $W\subset Y$ by ``subscript~$W.$" 
With $\tilde\varrho^{}_0$ the sheafification of the map of presheaves associating to 
any $W$ the map $\varrho^{}_0(f_W,u_W)$,
one has then the natural composite injective map
\[
\varrho'\colon\sHom_Y(\fst\OX,\OY)\underset{\lift1.4,\>\tilde\varrho^{}_0,\,}{\hookrightarrow} u_*\sHom_U(g_*\OV, \OU)
\underset{\lift1.2,c^{-\<1}_g\>,\>}\iso u_*g_*\OV=\fst v_*\OV.
\]
This map is in a natural way $\fst\OX$-linear, so
can also be represented, with notation as in \S\ref{trprelim}, as
\[
\varrho'\colon \sHom_\psi(\fst\OX,\OY)\hookrightarrow \bar\fst v_*\OV.
\]
Setting 
\[
\mathscr C_{\phi,u}\set\varrho'\sHom_\psi(\fst\OX,\OY),
\]
 one gets a natural $\OX$-isomorphism 
\[
H^{\>0}\<\<f^\flat\OY=\bar f^*\sHom_\psi(\fst\OX,\OY)\iso \bar{f\:\<}^{\!\<*}\mathscr C_{\phi,u}=\colon\mathscr C_{\<\<f,u} \subset v_*\OV.
\]

\pagebreak[3]
For an explicit description of the $\OX$-module $\mathscr C_{\<\<f,u}$, note that by the above description of the image of $\varrho^{}_0\>$, $\mathscr C_{\phi,u}\>$ is the sheafification of
the presheaf  $\>\mathscr C^{\>0}_{\phi,u}\>$ for which, with ``subscript~$W$" as above,
$\mathscr C^{\>0}_{\phi,u}(W)$ is the set of $r\in \Gamma(V_W,\OV)$ such that the natural composite\va{-10}
\[
f^{}_{W\<*}\CO_{\<X_W}\lto u^{}_{W\<*} \>u_{W}^*f^{}_{W\<*}\CO_{\<X_W}\iso u^{}_{W\<*}\>g^{}_{W\<*}\CO_{V_W}\xto{\!u^{}_{W\<*}\<\<\big(\<r\cdot \tr^{}_{g^{}_W}\<\big)}
u^{}_{W\<*}\CO_{U_W}\\[2pt]
\]
factors (necessarily uniquely) as  $f^{}_{W\<*}\CO_{\<X_W}\to\OW\hookrightarrow u^{}_{W\<*}\CO_{U_W}$.\va3

 The next proposition results.

\begin{subprop}\label{complementary}
Let\/  $f\colon X\to Y$ be an affine scheme-map such that the\/ $\OY$-module\/ $\sHom_Y(\fst\OX,\OY)$ is quasi-coherent, 
and let\/ \textup{\ref{fib}} be a fiber square in which $u$ is schematically surjective $($i.e., the associated map\/~\mbox{$\OY\to u_*\OU$} is injective$)$ and qcqs,
and the map\/~$g$ is finite and \'etale. 
The representing pair\/ 
$(H^{\>0}\<\<f^\flat\OY\<,\> t'_{\OY})$ in\/ \S\textup{\ref{trprelim}}\va{.6}
is naturally isomorphic~to the pair whose components are the 
``complementary sheaf''~$\mathscr C_{\<\<f,u}\subset v_*\OV$ $($see above$)$
and the restriction of\/ $u_*\tr_g$ to 
\(
\fst\mathscr C_{\<\<f,u}\subset \fst v_*\OV=u_*g_*\OV.
\)
\end{subprop}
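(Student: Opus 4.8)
The plan is to assemble the proof from the pieces already built up in \S\ref{excompl}, essentially by unwinding the definitions of the ``complementary sheaf'' and of the representing pair $(H^{0}\<\<f^\flat\OY,\>t'_{\OY})$ and checking they match. First I would recall that by the discussion in \S\ref{trprelim}, $\mathscr C_{\<\<f,u}=\bar{f\:\<}^{\!\<*}\mathscr C_{\phi,u}$ and $\mathscr C_{\phi,u}=\varrho'\sHom_\psi(\fst\OX,\OY)$, so that $\varrho'$ gives a natural $\CO_{\>\oY}$-isomorphism $\sHom_\psi(\fst\OX,\OY)\iso\mathscr C_{\phi,u}\subset u_*g_*\OV$; applying the exact functor $\bar{f\:\<}^{\!\<*}$ and using $H^{0}\<\<f^\flat\OY=\bar{f\:\<}^{\!\<*}\sHom_\psi(\fst\OX,\OY)$ then yields the asserted natural isomorphism of $\OX$-modules $H^{0}\<\<f^\flat\OY\iso\mathscr C_{\<\<f,u}$. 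So the only content is the compatibility of the trace maps: I must show that under this isomorphism, $t'_{\OY}\colon \fst H^{0}\<\<f^\flat\OY\to\OY$ corresponds to the restriction of $u_*\tr_g$ to $\fst\mathscr C_{\<\<f,u}\subset u_*g_*\OV$.

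Since $u$ is schematically surjective, the map $\OY\to u_*\OU$ is injective, and hence (as noted in the construction of $\varrho'$) so is $\varrho^{}_0$ and so is the natural map $\Hom_{\OY}(\fst H^{0}\<\<f^\flat\OY,\OY)\to\Hom_{\OY}(\fst H^{0}\<\<f^\flat\OY,u_*\OU)$. Therefore it suffices to check the equality after composing with $\OY\hookrightarrow u_*\OU$, i.e.\ to check a statement about $U$-modules after base change along $u$. Over $U$ everything becomes explicit: $g$ is finite \'etale, so $g_*\OV$ is locally free of finite rank, $u^*H^{0}\<\<f^\flat\OY\iso g_*\sHom_{U}(g_*\OV,\OU)\iso g_*\OV$ via $c_g^{-1}$, and the base-changed $t'_{\OY}$ becomes the ``evaluation at $1$'' map $\sHom_U(g_*\OV,\OU)\to\sHom_U(\OU,\OU)=\OU$ precomposed with the identification coming from $\varrho^{}_0$. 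I would then trace through the defining diagram for $\varrho^{}_0$ (the one saying $s=\varrho^{}_0 t$ means the square with vertices $u^*\!\fst\OX,\ g_*\OV,\ u^*\OY,\ \OU$ commutes) together with the definition of $c_g$ (which sends $1$ to $\tr_g$) to see that the composite $u^*H^{0}\<\<f^\flat\OY\xrightarrow{c_g^{-1}} g_*\OV\xrightarrow{\text{mult.}}g_*\OV\xrightarrow{u^*t'_{\OY}}\OU$ is exactly $r\mapsto \tr_g(r\cdot-)$ evaluated appropriately, which is the claim.

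More precisely, the key computation is this: for a local section $r$ of $\mathscr C_{\phi,u}$, say $r=\varrho'(\xi)$ with $\xi\in\sHom_\psi(\fst\OX,\OY)$, one has by the very definition of $\varrho'=c_g^{-1}\smallcirc\tilde\varrho^{}_0$ that $c_g(r)=\tilde\varrho^{}_0(\xi)$, i.e.\ $c_g(r)$ is the map $g_*\OV\iso u^*\!\fst\OX\xrightarrow{u^*\xi}u^*\OY\iso\OU$; since $c_g(r)=\tr_g(r\cdot-)$, this says $\tr_g(r\cdot-)$ \emph{is} the base change of $\xi$ under the equivalence. On the other hand $t'_{\OY}$ applied to (the image in $\fst H^{0}\<\<f^\flat\OY$ of) $\xi$ is, by its definition, the composite $\fst\bar{f\:\<}^{\!\<*}\sHom_\psi(\fst\OX,\OY)\iso\phi_*\sHom_\psi(\fst\OX,\OY)=\sHom_Y(\fst\OX,\OY)\xrightarrow{\text{eval.\ at }1}\OY$, which is nothing but $\xi$ itself viewed as an element of $\sHom_Y(\fst\OX,\OY)$ composed with $\OY\to\fst\OX$; base-changing along $u$ and using injectivity of $\OY\to u_*\OU$ reduces the desired equality to the identity just obtained. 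Assembling these gives the proposition.

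The main obstacle I anticipate is bookkeeping rather than conceptual: one must keep straight the several identifications in play---$\sA(\oY)$ with $\CS$-modules, $\bar{f\:\<}^{\!\<*}$ as quasi-inverse to $\bar\fst$ (Proposition~\ref{^* equivalence} / \S\ref{2.1}), the equivalence $u^*\!\fst\iso g_*v^*$ from \cite[9.3.3]{EGA1}, and the normalization $c_g(1)=\tr_g$---and verify that the two ways of producing a map $\fst v_*\OV\supset\fst\mathscr C_{\<\<f,u}\to\OY$ (one via $t'_{\OY}$ through $f^\flat$, the other via $u_*\tr_g$) genuinely agree, not merely up to the sign or scalar ambiguities that plague trace computations. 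The schematic-surjectivity hypothesis is exactly what lets me sidestep this by testing on $U$, where no ambiguity remains; so the proof is really a reduction to $U$ followed by the explicit unravelling above, with \S\ref{trprelim} and the construction of $\varrho'$ doing all the heavy lifting.
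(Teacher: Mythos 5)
Your proof is correct and follows the route the paper intends: the paper dispatches \ref{complementary} with the terse ``The next proposition results,'' leaving the construction of $\varrho'$, $\mathscr C_{\phi,u}$ and $c_g$ in \S\ref{ex compl} to carry the full weight, and your write-up simply makes that argument explicit---$\bar{f\:\<}^{\!\<*}\varrho'$ gives the module isomorphism, and the injectivity of $\OY\to u_*\OU$ reduces the trace compatibility to the defining identity $c_g(\varrho'\xi)=\tilde\varrho^{}_0(\xi)$, which evaluated at $1$ yields $\tr_g(\varrho'\xi)=u^*(\xi(1))=u^*t'_{\OY}(\xi)$. The only blemishes are minor notational slips (e.g.\ an extraneous $g_*$ and writing $u^*H^0f^\flat\OY$ where $u^*\fst H^0f^\flat\OY$ is meant), which do not affect the substance.
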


\begin{subexs} (a) In~\ref{complementary}, if $u$ is the identity map then
$\mathscr C_{\phi,u} = \fst\OX$, giving the last paragraph in section~\ref{trprelim}.\va1

(b) Let  $R$ \kf be an integral domain with fraction field~$K\<$, and $R\to S$ a ring-homomorphism with $S$ finitely presentable\va{.5} as an $R$-module and $L\set S\otimes_RK$ a separable $K$-algebra, 
with trace map $\tr^{}_{\<\<L/K}\colon L\to K$. 
Let \ref{fib} be the scheme\kf-diagram corresponding to the natural diagram
\[
\def\1{$L$}
\def\2{$S$}
\def\3{$K$}
\def\4{$R$}
 \bpic[xscale=2, yscale=1.35]

   \node(11) at (1,-1){\1} ;   
   \node(12) at (2,-1){\2} ; 
   
   \node(21) at (1,-2){\3} ;  
   \node(22) at (2,-2){\4} ;
   
    \draw[<-] (11)--(12) node[above, midway, scale=.75] {$j$}  ;  
 
    \draw[<-] (21)--(22)  node[below, midway, scale=.75] {$i$}  ;

    \draw[<-]  (11)--(21) ;
    
    \draw[<-]   (12)--(22); 
                      
    \epic
\]
Then the complementary sheaf $\mathscr C_{\<\<f,u}$ is the sheafification of the $S$-module 
$\{x\in L\mid \tr^{}_{\<\<L/K}(x(jS))\subset R\}$.
\end{subexs}
\end{subcosa}
\end{cosa}

\begin{cosa}\label{regimm} 
This section treats  duality for a class of \emph{perfect 
closed immersions} of schemes, including all regular immersions.
Described, on this class,  is a concrete realization of the pseudofunctor~$(-)^\flat$ (see Proposition~\ref{KosReg} and Theorem  ~\ref{Kozpf1}), as well~as its interaction with $\Otimes{}$ and with $\R\sHom$ (see Proposition~\ref{ci tensor hom}). One prior version of such material can be found in~\cite[\S\S2.5--2.6]{Co}.\va2

Throughout, $f\colon X\to Y$ will be a closed immersion of schemes, and $\CI$~the kernel of the natural map $\OY\to \fst\OX$. The functor $f_*\colon\sA_\qc(X)\to\sA_\qc(Y)$ is ``extension by 0,"  and its natural left adjoint~$f^*$ associates to $G\in\sA_\qc(Y)$ the restriction to $X$ of $G/\CI G$. 	 So $f^*\mkern-2.5mu\fst$ is the identity functor of $\sA_\qc(X)$; and for $G\in\sA_\qc(Y)$, the unit map
$G\to \fst f^*\<G$ identifies naturally with the canonical surjection $G\twoheadrightarrow G/\CI G$.

\begin{subcosa} \label{I and Tor}

Define the $\OY$-isomorphism
\begin{equation}\label{I/I^2}
\triangledown\kern-1pt^{}_{\<\!f}\colon \CI\<\</\CI^{\>2}\iso 
\stor_1^{\OY}\!(\fst\OX,\fst\OX)=\fst H^{-\<1}\>\LL f^*\mkern-2.5mu\fst\OX,
\end{equation}
to be $-\partial^{-1}$ where $\partial$ is the usual connecting isomorphism
\[
\stor_1^{\OY}\<\<(\fst\OX,\fst\OX)\iso \stor_0^{\OY}\<\<(\fst\OX,\CI)= \CI\<\</\CI^{\>2}
\]
associated to the natural exact sequence $0\to\CI\to\OY\to\fst\OX\to 0$.\va1

Any  $\OY$-surjection $\vartheta\colon P\twoheadrightarrow\CI$  with $P$~\emph{flat} expands to a flat resolution \mbox{$P_\bullet:\cdots\to P'\to P \to \OY$} of $\fst\OX\cong\OY\!/\CI$, entailing natural isomorphisms
\[
\stor_1^{\OY}\<\<(\fst\OX,\fst\OX)\iso H_1(P_\bullet/\CI P_\bullet)
\iso\CI\<\</\CI^{\>2}
\]
whose composition one finds, by dissecting definitions, to be $-\partial=\triangledown\kern-1pt{}^{\>-1}_{\<\!f}$.\va1

For any flat  $u\colon Y'\to Y\<$, the projection $f'\colon X'\set X\times_Y Y'\to Y'$ being a closed immersion, one gets (via $P_\bullet$, for example) a natural identification\va{-2}
\begin{equation}\label{localize s}
\triangledown\kern-1pt^{}_{\<\!f'} = u^{\<*}\triangledown\kern-1pt^{}_{\<\!f}\>.
\end{equation}
\vskip-2pt
\enlargethispage*{3pt}
The natural composite map
\begin{equation}\label{torprod}
\begin{aligned}
 \LL f^*\mkern-2.5mu\fst \OX\Otimes{\sX}\> \LL f^*\mkern-2.5mu\fst \OX
&\iso\<\< \LL f^*\<(\fst \OX\Otimes{Y}\>\fst \OX)\\
&\,\lto\, \LL f^*\mkern-2.5mu\fst (\OX\Otimes{\sX}\OX)\<\<\iso\<\< \LL f^*\mkern-2.5mu\fst \OX. 
\end{aligned}
\end{equation}
makes the graded group
\mbox{$\oplus_{n\ge 0}\>\>H^{-n} \LL f^*\mkern-2.5mu\fst\OX$} \va{.6} into a strict (= alternating) 
graded $\OX$-algebra. 
(Localize, and see, e.g., \cite[p.\,201, Exercise 9(c)]{Bo}).%
\footnote{\,We'll need this only for ``Koszul-regular" $f$ (see \S\S\ref{K-reg},\ \ref{fundloc}), 
in which case  one can use---locally---the exterior-algebra structure on a Koszul complex that resolves $\fst\OX$.}\,)
Thus with  ${\lift1.7,{\pmb\bigwedge},}$ denoting ``exterior algebra," the isomorphism
\[
f^*\triangledown\kern-1pt^{}_{\<\!f}\colon f^*\<\big(\CI\<\</\CI^{\>2}\big)\iso 
f^*\mkern-2.5mu\fst H^{-\<1}\>\LL f^*\mkern-2.5mu\fst\OX = H^{-\<1}\>\LL f^*\mkern-2.5mu\fst\OX
\]  
extends uniquely to a homomorphism of graded $\OX$-algebras
\begin{equation}\label{fund1}
{\lift1.7,{\pmb\bigwedge},}_{\mkern-2mu \lift.9,X,}
\>f^*\<\<\big(\CI\<\</\CI^{\>2\>}\big)
\set
\oplus_{n\ge 0}\,{\lift1.7,{\bigwedge},\lift1.9,{\<\<n},}_{\mkern-10mu \lift.9,X,}\>f^*\<\<\big(\CI\<\</\CI^{\>2\>}\big)\lto 
\oplus_{n\ge 0}\>\>H^{-n}\LL f^*\mkern-2.5mu\fst \OX.
\end{equation}
\end{subcosa}

\vskip1pt
\begin{subcosa}\label{K-reg}
Suppose the $\OY$-ideal $\CI$ is generated by~a sequence of global sections\va{.5}  
$\mathbf t\set(t_1,t_2, \dots, t_d)$ that is
\emph{Koszul-regular},\va{1.2} i.e., with $K_i$ the $\OY$-complex that is 
\mbox{$\>\OY\xto{\lift.95,\<t_i\>\>,}\OY$} in degrees~\mbox{-1} and 0 
and that vanishes elsewhere,\va1 the Koszul complex 
$K(\mathbf t)\set\otimes_{i=1}^d K_i$ 
is  a finite free resolution of~$\fst\OX=H^0\<K(\mathbf t)$.\va{.6}
This~holds if the germ of~$\>\mathbf t$ at any point $y$ in the image of $f$ is
regular,\va1 see \cite[tag~063K]{Stacks}; and  conversely if $Y$ is locally noetherian \cite[tag~063L]{Stacks} or, for arbitrary $Y\<$, modulo ``smooth localization" \cite[tag~0629]{Stacks}.\va1

As a graded group,  $K(\mathbf t)$ identifies naturally with 
${\lift1.7,{\pmb\bigwedge},}_{\lift.9,Y,}\OY^{\lift1.4,d,}$;
and one checks that \va{.6} exterior-algebra multiplication 
\begin{equation}\label{mu}
\mu_{\>\mathbf t}\colon K(\mathbf t)\otimes_{\OY} K(\mathbf t)\to K(\mathbf t)
\end{equation}
is a map of \emph{flat $\OY\<$-complexes} that is $\D(Y)$-isomorphic (via the natural map $K(\mathbf t)\to\fst\OX$) to the natural composite map\looseness=-1
\[
\fst \OX\Otimes{Y}\>\fst \OX
\lto\fst (\OX\Otimes{\sX}\OX)\<\<\iso\<\<\fst \OX. 
\]
Using, e.g., \cite[(3.2.4.1)]{li},  one deduces a $\D(X)$-isomorphism 
from $f^*\<\<\mu_{\mathbf t}$ to~the composite map \eqref{torprod}.
Hence the natural isomorphism
\[
f^*\<\<K(\mathbf t)=
\oplus_{n =0}^d\< \big(H^{-n}f^*\<\<K(\mathbf t)\big)[n]\iso
\oplus_{n =0}^d\! \<\big(H^{-n}\LL f^*\mkern-2.5mu\fst\OX\big)[n]
\]
is an isomorphism of graded $\OX$-algebras. (Details left to the reader.)

Now let $P_\bullet\set K(\mathbf t)\to\fst\OX$ be the natural map, and 
let the map $\vartheta$ in~\ref{I and Tor} be the induced map $P\set P_0=\OY^d\twoheadrightarrow \CI$
(which sends the $i$-th canonical generator of $\OY^d$ 
to $t_i\ (1\le i\le d\>)$).  
From $H^{-1}\<K(\mathbf t)=0$ it follows that 
$f^*\vartheta\colon \CO_{\!X}^d\to f^*\CI=f^*\<(\CI\<\</\CI^{\>2})$ is an isomorphism, the resulting natural composite $\D(X)$-isomorphism \va2
\begin{equation}\label{fund1'}
\oplus_{n =0}^{d^{\mathstrut}}{\lift1.7,{\bigwedge},\lift1.9,{\<\<n},}_{\mkern-11mu \lift.9,X,}\>f^*\<\<\big(\CI\<\</\CI^{\>2\>}\big)[n]
\iso
\oplus_{n =0}^d\,{\lift1.7,{\bigwedge},\lift1.9,{\<\<n},}_{\mkern-11mu \lift.9,X,}\<
\big(\CO_{\!X}^{\lift1.3,d,}\big)[n] 
\iso
f^*\<\<K (\mathbf t)
\iso 
\LL f^*\mkern-2.5mu\fst\OX
\end{equation} 
being sent by  $H^{\<-1}$ to the isomorphism $f^*\triangledown\kern-1pt^{}_{\<\!f}$.
Furthermore, the preceding paragraph implies that application of the functor $\oplus_{n =0}^d \>H^{-n}$  to \eqref{fund1'} gives an \emph{isomorphism of graded $\OX$-algebras,} which, being determined by what it does in degree~1,  \emph{must be the map} \eqref{fund1} 
(\kern-.5pt which does not depend on the choice of the generating sequence~$\mathbf t$).
\end{subcosa}

\begin{subcosa}\label{fundloc}

Let $g\colon V\to W$ be a map of ringed spaces. 
For each $d\in\mathbb Z$ and \mbox{$F,G\in\D(W)$,} one has the natural map\va3
\[
\ext^{\>d}_W\<(F,G\>)
= \Hom_{\D(W)}\<(F,G[d\>]\>)
\lto\, \Hom_V\<(H^{-d}\LL g^*\<\<F,H^{0}\LL g^*\<G).
\]
Base\kf-changing to arbitrary open subsets of $W\<$, one gets a map of presheaves, whose sheafification is  a bifunctorial $\D(W)$-map
\[
\Psi(g,F,G,d\>)\colon \sExt^{\>d}_{W}(F,G\>)\lto 
g_*\sHom_V(H^{-d}\>\LL g^*\<\<F,H^{\>0}\LL g^*\<G\>).
\]

In particular, for $g$ the closed immersion $f\colon X\to Y\<$, $F\set\fst\OX=\OY\</\CI$, and $G$~an 
$\OY$-module, one has the $\D(Y)$-map
\[
\Psi(f,\fst\OX,G,d\>)\colon \sExt^{\>d}_Y(\fst\OX,G\>)\lto 
\fst\sHom^{}_\sX(H^{-d}\>\LL f^*\mkern-2.5mu\fst\OX,f^*\<G\>),
\]
from which one gets, via \eqref{fund1}, the natural composite map
\begin{align*}
 \Phi(f,G,d\>)\colon \sExt^{\>d}_{Y}(\fst\OX,G\>)&\,\lto \,
 \fst\sHom_\sX(f^*\<\<{\lift1.7,{\bigwedge},\lift2.4,{\<d},}_{\mkern-7mu Y^{\mathstrut}}\<\<\big(\CI\<\</\CI^{\>2\>}\big),f^*\<G\>)\\
 &\iso
\sHom_Y({\lift1.7,{\bigwedge},\lift2.4,{\<d},}_{\mkern-7mu Y^{\mathstrut}}\<\<\big(\CI\<\</\CI^{\>2\>}\big),G/\CI G\>),
 \end{align*}
sheafifying the natural composite map
\begin{equation}\label{unsheafify}
\begin{aligned}
\Hom_{\D(Y)}(\fst\OX,G[d\>])&\lto\Hom_Y\<(H^{-d}\>\LL f^*\<\<\fst\OX,H^{\>0}\LL f^*\<G\>)\\
&\lto \Hom_Y\<({\lift1.7,{\bigwedge},\lift2.4,{\<d},}_{\mkern-7mu Y^{\mathstrut}}\<\<\big(\CI\<\</\CI^{\>2\>}\big),G/\CI G\>).
\end{aligned}
\end{equation}

We'll say that $f$ is a \emph{Koszul-regular closed immersion of codimension} $d$ if $X$ is covered by open subsets of $\>Y$ over each of which the ideal $\CI$ is generated by 
a length~$d$, Koszul-regular, sequence of sections. Such an $f$ is perfect.

\begin{sublem}[cf.~{\cite[p.\,179, 7.2]{RD}}]\label{Kreg} 
If\/ $f\colon X\to Y$ is a K\kern-.5pt oszul-regular closed immersion of codimension $d,$ 
$\CI$ is the kernel of the associated map\/ $\OY\to\fst\OX,$ and\/ 
$G$ is an\/ $\OY$-module, then 
$\Phi(f,G,d\>)$ is an isomorphism
 \[
 \sExt^{\>d}_{Y}(\fst\OX,G\>)\iso
\sHom_Y({\lift1.7,{\bigwedge},\lift2.4,{\<d},}_{\mkern-7mu Y^{\mathstrut}}\<\<\big(\CI\<\</\CI^{\>2\>}\big),G/\CI G\>).
 \]
 \end{sublem}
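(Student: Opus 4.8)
The claim is local on $X$ (and hence on $Y$), and both source and target of $\Phi(f,G,d)$ are sheaves, so I would reduce at once to the case where $\CI$ is generated by a single Koszul-regular sequence $\mathbf t=(t_1,\dots,t_d)$ of global sections, i.e.\ where the Koszul complex $K(\mathbf t)$ is a finite free resolution of $\fst\OX$. In that situation everything can be computed explicitly. The first step is to use the chain of identifications set up in \S\ref{K-reg}: the natural composite isomorphism \eqref{fund1'} identifies $\LL f^*\mkern-2.5mu\fst\OX$ with $\oplus_{n=0}^d\,{\lift1.7,{\bigwedge},\lift1.9,{\<\<n},}_{\mkern-11mu \lift.9,X,}\<\big(\CO_{\!X}^{\lift1.3,d,}\big)[n]\cong f^*\<K(\mathbf t)$, and under this identification the map \eqref{fund1}---hence the degree-$d$ comparison map entering the definition of $\Phi(f,G,d)$ via \eqref{fund1}---becomes the canonical iso\-morphism ${\lift1.7,{\bigwedge},\lift2.4,{\<d},}_{\mkern-7mu Y^{\mathstrut}}f^*\<(\CI/\CI^{2})\iso H^{-d}f^*\<K(\mathbf t)$.

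Granting that, it remains to show that the remaining ingredient of $\Phi(f,G,d)$---namely $\Psi(f,\fst\OX,G,d)$, the sheafification of the map $\Hom_{\D(Y)}(\fst\OX,G[d])\to\Hom_Y(H^{-d}\LL f^*\mkern-2.5mu\fst\OX,H^0\LL f^*G)$---is an isomorphism in this Koszul case. For this I would compute both sides using the free resolution $K(\mathbf t)$: $\sExt^d_Y(\fst\OX,G)$ is the $d$-th cohomology of $\sHom_Y(K(\mathbf t),G)$, which is a Koszul cochain complex on $\mathbf t$ with coefficients in $G$; since $K(\mathbf t)$ has top term in degree $-d$ equal to ${\lift1.7,{\bigwedge},\lift2.4,{\<d},}_{\mkern-7mu Y^{\mathstrut}}\OY^d\cong\OY$, and the differential into it is multiplication by (signed) $t_i$'s, the degree-$d$ cohomology is exactly $\sHom_Y({\lift1.7,{\bigwedge},\lift2.4,{\<d},}_{\mkern-7mu Y^{\mathstrut}}\OY^d,\,G/(t_1,\dots,t_d)G)=\sHom_Y({\lift1.7,{\bigwedge},\lift2.4,{\<d},}_{\mkern-7mu Y^{\mathstrut}}\OY^d,\,G/\CI G)$ (using Koszul-regularity to identify $(\mathbf t)G$ with $\CI G$ in the relevant quotient, or more simply: the cokernel of the top Koszul differential, computed pointwise). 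On the other side $H^{-d}\LL f^*\mkern-2.5mu\fst\OX\cong H^{-d}f^*\<K(\mathbf t)\cong{\lift1.7,{\bigwedge},\lift2.4,{\<d},}_{\mkern-7mu X^{\mathstrut}}\CO_X^d$ and $H^0\LL f^*G=f^*G=G/\CI G$ (restricted to $X$), so the target of $\Psi$ is the same module; and one checks by unwinding the definition of the edge map---"apply $H^{-d}\LL f^*(-)$ to a map $\fst\OX\to G[d]$, read off the induced map on top cohomology"---that $\Psi$ is precisely the canonical identification just exhibited, i.e.\ an isomorphism. Assembling, $\Phi(f,G,d)=\Psi\ \text{(composed with)}\ \eqref{fund1}$-type iso is an isomorphism. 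Finally, independence of the choice of $\mathbf t$ is automatic because, as recorded at the end of \S\ref{K-reg}, the graded-algebra map \eqref{fund1} is intrinsic; so the local isomorphisms glue.

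The main obstacle is bookkeeping with signs and with the two "descriptions" of $\LL f^*\mkern-2.5mu\fst\OX$: one must be careful that the isomorphism \eqref{fund1} used in the \emph{definition} of $\Phi$ (via $\triangledown\kern-1pt^{}_{\<\!f}$ and exterior powers of $\CI/\CI^2$) matches, under the explicit resolution $K(\mathbf t)$, the tautological top-degree identification coming from the Koszul complex---that is exactly what \S\ref{K-reg} was engineered to provide, so the job is to invoke it correctly rather than to reprove it. A secondary, purely technical point is justifying that $H^{-d}$ of a Koszul \emph{cochain} complex is the cokernel of the last differential and that this cokernel is $G/\CI G$ tensored with the top exterior power; this is standard (localize, and use that a regular sequence's Koszul homology is concentrated in degree $0$, so the dual complex has cohomology concentrated in degree $d$ equal to $G/\mathbf t G$). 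With those two points in hand the rest is routine diagram-unwinding, and the proof closes.
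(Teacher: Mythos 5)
Your proposal is correct and is essentially the paper's argument: reduce (via \eqref{localize s} and the fact that \eqref{fund1} is already known to be an isomorphism) to the local case where $\CI$ is generated by a Koszul-regular sequence $\mathbf t$, then compute both sides of $\Psi(f,\fst\OX,G,d)$ using the finite free resolution $K(\mathbf t)\to\fst\OX$ and check that $\Psi$ matches the canonical identification. The only presentational difference is that the paper does not "compute both sides and compare" directly; instead it factors the composite $\alpha\cdot\beta\cdot(\text{iso})$ (from $\Hom_{\K(Y)}(K(\mathbf t),G[d])$ through $\Hom_{\D(Y)}(K(\mathbf t),G[d])$ and $\Hom_{\OX}(H^{-d}\LL f^*K(\mathbf t),H^0\LL f^*G)$ down to $\Gamma(Y,G)/\mathbf t\Gamma(Y,G)$) and observes it is the natural isomorphism; it then cites \cite[p.\,121, Cor.\,4.6]{Il} for the fact that $\alpha$ sheafifies to an isomorphism, from which it follows that $\beta$ does too. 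Your step "$\sExt^d_Y(\fst\OX,G)$ is the $d$-th cohomology of $\sHom_Y(K(\mathbf t),G)$" is precisely where that Illusie citation is doing the work in the paper, so if you wanted to tighten your argument you would invoke the same reference (or equivalently, that $K(\mathbf t)$ is strictly perfect so $\sHom_Y(K(\mathbf t),-)$ already computes $\R\sHom$). Otherwise the two proofs coincide.
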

 
\begin{proof}  As \eqref{fund1} is an isomorphism 
(see last paragraph in section~\ref{K-reg}), the assertion means that 
$\Psi\set\Psi(f,\fst\OX,G,d\>)$ \emph{is an isomorphism}.
The question being local (see \eqref{localize s}), one may assume that 
$\>\CI$ is generated~by a Koszul-regular sequence 
 $\mathbf t=(t_1,t_2,\dots,t_d)$ of sections. Then, by \cite[p.\,121, Cor.\,4.6]{Il}, the natural map, 
with $\K(Y)$ the homotopy category of $\OY$-complexes, 
\(
 \Hom^{\mathstrut}_{\>\K(Y)}(K(\mathbf t),G[d\>]\>) 
 \underset{\lift1.2,\alpha\>,}\lto\Hom_{\D(Y)}(K(\mathbf t),G[d\>]\>)
\)
sheafifies to an isomorphism; and since the composition of the natural sequence of maps 
\begin{align*}
 \Hom^{\mathstrut}_{\>\K(Y)}(K(\mathbf t),G[d\>]\>) 
&\underset{\lift1.2,\alpha\>,}\lto\Hom_{\D(Y)}(K(\mathbf t),G[d\>]\>)\\[-3pt]
&\underset{\lift1.2,\beta,}\lto
\Hom^{}_{\OX}\<\<(H^{-d}\LL f^*\<\<K(\mathbf t),H^{\>0}\LL f^*\<G)\\
&\!\iso\! \Gamma(Y,G)/\mathbf t\Gamma(Y,G),
\end{align*}
is just the natural isomorphism,
therefore $\Psi$, the sheafification of $\beta$, is indeed an isomorphism.
\end{proof}

For a Koszul-regular immersion $f\colon X\to Y$, the next proposition gives another representation of $f^\flat$ and the counit map $\fst f^\flat \to \id$.\va1

As a preliminary, note that there exists locally a  Koszul-regular sequence~$\mathbf t$ that generates the kernel $\CI$ of the natural map $\OY\to\fst\OX$; and one has natural isomorphisms
\[
\fst f^\flat \OY
\underset{\ref{fst fflat}}\iso
\R\>\sHom_Y(\fst\OX, \OY)
\iso
\sHom_Y(K(\mathbf t), \OY)\iso K(\mathbf t)[-d\>],\\[2pt]
\]
\vskip-3pt
\noindent
the last being inverse to the adjoint of the natural composite map
\[
K(\mathbf t)[-d\>]\otimes_Y K(\mathbf t) \iso (K(\mathbf t)\otimes_Y K(\mathbf t))[-d\>]
\xto[\eqref{mu}]{\<\mu_{\>\mathbf t}[-d\>]}K(\mathbf t)[-d\>]\lto \OY.\\[-3pt]
\]
Therefore, $\fst f^\flat \OY$---and hence $f^\flat \OY$---has vanishing cohomology in every degree
other than $d$. So there are natural global $\D(X)$-isomorphisms\va{-2} 
\[
(H^d\<f^\flat\OY\<\<)[-d\>]\iso f^\flat\OY\\[-4pt]
\] 
and \va{-4}
\[
\def\1{$(H^d\<\fst f^\flat\OY\<\<)[-d\>]$}
\def\2{$\big(H^{d}\R\>\sHom_Y(\fst\OX,\OY\<\<)\big)[-d\>]$}
\def\3{$\fst f^\flat\OY$}
\def\4{$\R\>\sHom_Y(\fst\OX,\OY\<\<)$}
 \bpic[xscale=5, yscale=1.2]
  \node(11) at (1,-1){\1} ;  
  \node(12) at (2,-1){\3} ;  
   
  \node(21) at (1,-2){\2} ; 
  \node(22) at (2,-2){\4} ;

    \draw[->] (11)--(12)  node[above, midway, scale=.75] {$\Iso$} ;
  
    \draw[<-] (11)--(21) node[left, midway, scale=.75] {$\simeq$} ;

    \draw[->] (12)--(22) node[right, midway, scale=.75] {$\simeq$} ;
 \epic
\]

\begin{subprop}\label{KosReg} Let\/ $f\colon X\to Y$ be a K\kern-.5pt oszul-regular closed immersion of codim\-ension $d,$ let $\CI$ be the kernel of the associated map\/ $\OY\to\fst\OX,$ and let\/ $\omega_{\<\<f}$ be the locally free\/ $\OX$-complex \va{-3}
\[
\omega_{\<\<f}\set \sHom^{}_\sX({\lift1.7,{\bigwedge},\lift2.4,{\<d},}_{\mkern-10mu X^{\mathstrut}}\<f^*\<\big(\CI\<\</\CI^{\>2\>}\big),\OX\<\<)[-d\>].
\]
Let\/ $t'$ be the natural composite map
\begin{align*}
\fst\omega_{\<\<f} 
&\iso\sHom_Y({\lift1.7,{\bigwedge},\lift2.4,{\<d},}_{\mkern-8mu Y^{\mathstrut}}
\<\<\big(\CI\<\</\CI^{\>2\>}\big),\OY\</\>\CI\>)[-d\>]\\[-1pt]
&\underset{\lift1,\ref{Kreg},}\iso
\sExt_Y^d(\fst\OX,\OY\>)[-d\>]\\[-2pt]
&\iso 
\big(H^{d}\R\>\sHom_Y(\fst\OX,\OY\<\<)\big)[-d\>] \\[1pt]
&\iso\R\>\sHom_Y(\fst\OX,\OY\<\<)\\
&\,\lto\,\R\>\sHom_Y(\OY\<,\OY\<\<)\iso\OY.
\end{align*}

\pagebreak[3]
The functorial map\va{-3} dual to the natural composite\/ $\Dqc(Y)$-map
\[
\fst(\omega_{\<\<f} \Otimes{\sX}\LL f^*\<G)
\underset{\textup{\ref{project}}\>}\iso
\fst\omega_{\<\<f} \Otimes{Y}G
\xto[\!\lift1.2,\via\> t',]{}
\OY\Otimes{Y}G\iso G\\[-3pt]
\]
is a\/ $\Dqc(X)$-isomorphism
\[
c^{\>\flat}_{\<\<f}(G)\colon \omega_{\<\<f}\Otimes{\sX} \LL f^*\<G\iso f^\flat\< G
\qquad(G\in\Dqc(Y).\\[-2pt]
\]
 \end{subprop}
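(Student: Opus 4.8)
The strategy is to show that the pair $(\omega_{\<\<f}\Otimes{\sX}\LL f^*\<G,\>t')$, when pushed forward by $\fst$ and combined with the duality isomorphism, represents the functor $\Hom_{\D(Y)}(\R\fst-,\>G)$ on $\Dqc(X)$, so that by Corollary~\ref{flat=times}--style uniqueness of representing objects it is canonically isomorphic to $(f^\flat G, t^{}_{\<G})$ of Proposition~\ref{represent}, whence $c^{\>\flat}_{\<\<f}(G)$ is a well-defined isomorphism. First I would verify that all the arrows in the displayed chain defining $t'$ are isomorphisms except the last, in particular that $f^\flat\OY$ is concentrated in degree $d$: this follows from Lemma~\ref{fst fflat} together with the local identification $\R\>\sHom_Y(\fst\OX,\OY)\cong\sHom_Y(K(\mathbf t),\OY)\cong K(\mathbf t)[-d]$ via a Koszul-regular generating sequence $\mathbf t$, as recorded in the paragraph preceding the statement. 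Since $f$ is perfect (being Koszul-regular of codimension $d$), Corollary~\ref{qp duality} and Corollary~\ref{right adjoint} apply with $\Dqc$ in place of $\Dqcpl$, so $f^\flat G$ is an honest right adjoint value for all $G\in\Dqc(Y)$, and the counit is ``evaluation at $1$."

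Next I would construct the map $c^{\>\flat}_{\<\<f}(G)$ exactly as described: it is the map adjoint under the adjunction $\R\fst\!\dashv f^\flat$ of Corollary~\ref{right adjoint} to the composite $\Dqc(Y)$-map
\[
\fst(\omega_{\<\<f}\Otimes{\sX}\LL f^*\<G)\underset{\ref{project}}\iso \fst\omega_{\<\<f}\Otimes{Y}G\xto{\via\> t'}\OY\Otimes{Y}G\iso G,
\]
using that the projection map $p^{}_{2,\bar f}$ from Corollary~\ref{project} (combined with $p^{}_{2,\phi}$ from \eqref{projphi2} and transitivity \cite[3.7.1]{li}) is an isomorphism for the affine map $f$. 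To see $c^{\>\flat}_{\<\<f}(G)$ is an isomorphism, I would reduce to showing that the above composite $\fst(\omega_{\<\<f}\Otimes{\sX}\LL f^*\<G)\to G$ exhibits $\omega_{\<\<f}\Otimes{\sX}\LL f^*\<G$ as a right adjoint value, i.e.\ that for every $F\in\Dqc(X)$ the induced map
\[
\Hom_{\D(X)}(F,\>\omega_{\<\<f}\Otimes{\sX}\LL f^*\<G)\to\Hom_{\D(Y)}(\R\fst F,\>G)
\]
is bijective. By Proposition~\ref{^* equivalence} it is equivalent to check this after applying $\R\bar\fst$, and by Corollary~\ref{duality for barf}/Corollary~\ref{duality for phi} (i.e.\ Corollary~\ref{qp duality}) it reduces to the identification of $\R\bar\fst(\omega_{\<\<f}\Otimes{\sX}\LL f^*\<G)$ inside $\R\>\sHom_{\>\oY}(-,\pt G)$; concretely, one wants the composite $\omega_{\<\<f}\Otimes{\sX}\LL f^*\<G\to f^\flat G$ adjoint to $t'$-composed-with-projection to agree, after $\R\fst$, with the canonical ``evaluation at $1$" map of Proposition~\ref{represent}(ii), which then forces bijectivity by Corollary~\ref{flat=times}. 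The key computation here is that $t'$, unwound via Lemma~\ref{Kreg} and the graded-algebra isomorphism \eqref{fund1}, is precisely the degree-$d$ part of the evaluation-at-$1$ counit $\R\>\sHom_Y(\fst\OX,\OY)\to\OY$; this is exactly what the explicit description of $\Phi(f,G,d)$ sheafifying \eqref{unsheafify}, together with the last paragraph of \S\ref{K-reg} (identifying application of $\oplus H^{-n}$ to \eqref{fund1'} with the map \eqref{fund1}), is designed to give.

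The main obstacle will be the compatibility bookkeeping in the previous paragraph: checking that the concretely-built map $t'$ (through $\omega_{\<\<f}$, Lemma~\ref{Kreg}, and the cohomology identification of $\R\>\sHom_Y(\fst\OX,\OY)$) matches, under the duality isomorphism of Corollary~\ref{qp duality}, the abstract counit $t^{}_{\OY}$ of Proposition~\ref{represent}. Concretely this amounts to a local computation with the Koszul complex $K(\mathbf t)$: one picks a Koszul-regular generating sequence $\mathbf t$ for $\CI$, identifies $\fst\omega_{\<\<f}\cong K(\mathbf t)[-d]\cong\sHom_Y(K(\mathbf t),\OY)$ using exterior-algebra duality and the multiplication $\mu_{\>\mathbf t}$ of \eqref{mu}, and verifies that the resulting trace-type map down to $\OY$ is the composite in \S\ref{K-reg} connecting $f^*K(\mathbf t)$ to $\LL f^*\fst\OX$ evaluated in top degree. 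Because all the maps in sight are $\D(Y)$-isomorphisms onto complexes concentrated in a single degree, this check is purely a matter of tracking signs and the chosen generators, and is independent of $\mathbf t$ by the remark in \S\ref{K-reg} that \eqref{fund1} does not depend on the generating sequence; globalizing is then automatic by the local-nature of all the constructions (cf.\ \eqref{localize s}). Once this identification is in hand, the bijectivity of $c^{\>\flat}_{\<\<f}(G)$ follows formally from Yoneda and Corollary~\ref{flat=times}, and the functoriality in $G$ is immediate from the functoriality of the projection map \ref{project}, of $t'$, and of the adjunction in Corollary~\ref{right adjoint}.
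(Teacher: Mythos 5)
Your construction of $c^{\>\flat}_{\<\<f}(G)$ as the adjoint of the projection--followed--by--$t'$ composite matches the paper, and the local Koszul computation you sketch (identifying $t'$ with the evaluation-at-$1$ counit $t_{\OY}$, via a Koszul-regular generating sequence $\mathbf t$, Lemma~\ref{Kreg}, and the graded-algebra map \eqref{fund1}) is precisely the ``$c^{\>\flat}$ is dual to $t'$'' check that the paper records as ``readily verified.''

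However there is a gap in the step from $G=\OY$ to general $G$. Your Koszul computation only establishes that $c^{\>\flat}\colon\omega_{\<\<f}\iso f^\flat\OY$ is an isomorphism matching $t'$ with $t_{\OY}$. From this you do obtain that $c^{\>\flat}\Otimes{\sX}\id\colon\omega_{\<\<f}\Otimes{\sX}\LL f^*\<G\iso f^\flat\OY\Otimes{\sX}\LL f^*\<G$ is an isomorphism; but $c^{\>\flat}_{\<\<f}(G)$ factors as $\chi(f,\OY\<,G)\smallcirc(c^{\>\flat}\Otimes{\sX}\id)$ where $\chi(f,\OY\<,G)\colon f^\flat\OY\Otimes{\sX}\LL f^*\<G\to f^\flat G$ is the abstractly defined map of \S\ref{tensor and flat}, and you still have to know that \emph{this} $\chi$ is an isomorphism. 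That is Proposition~\ref{flat and tensor}(ii), which you never invoke, and it does not ``follow formally from Yoneda'' once the $\OY$-case is done: after $\R\fst$ it is equivalent to the nontrivial local statement that $\gamma\colon\R\>\sHom_Y(\fst\OX,\OY\<\<)\Otimes{Y}G\to\R\>\sHom_Y(\fst\OX,G)$ is an isomorphism for the perfect complex $\fst\OX$. Your appeal to Corollary~\ref{flat=times} does not close this either, because that corollary takes two \emph{known} representing pairs as input, whereas the thing you need to prove is exactly that $(\omega_{\<\<f}\Otimes{\sX}\LL f^*\<G,\,\text{your composite map})$ \emph{is} a representing pair. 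The paper avoids all of this by defining the isomorphism $c^{\>\flat}$ directly as the chain \eqref{KosReg.1}, observing $c^{\>\flat}_{\<\<f}(G)=\chi(f,\OY\<,G)\smallcirc(c^{\>\flat}\Otimes{\sX}\id)$ by a small diagram check (whose only nontrivial panel is your Koszul identification), and then citing \ref{flat and tensor}(ii) for the remaining isomorphism. Add that citation (or redo its local perfect-complex argument) and your approach is sound.
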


\begin{proof}
Let $c^{\>\flat}\colon \omega_{\<\<f}\to f^\flat\OY$ be the natural composite $\OX$-isomorphism\va{3}
\begin{equation*}
\begin{aligned}\label{KosReg.1}
 \omega_{\<\<f}&\,=\!=\;\sHom_{\sX\<}^{\mathstrut^{\mathstrut}}({\lift1.7,{\bigwedge},\lift2.4,{\<d},}_{\mkern-10mu X^{\mathstrut}}\<f^*\<\<\big(\CI\<\</\CI^{\>2\>}\big),\OX\<\<)[-d\>]\\
&\iso
 f^*\sHom_Y({\lift1.7,{\bigwedge},
     \lift2.4,{\<d},}_{\mkern-7mu Y^{\mathstrut}}\<\<\big(\CI\<\</\CI^{\>2\>}\big),\OY\</\>\CI\>)[-d\>]\\
&\underset{\ref{Kreg}\>\>}
\iso 
f^*\mkern-.5mu \sExt^{\>d}_{Y}(\fst\OX,\OY\<)[-d\>]\\[-2pt]
&\iso 
f^*\<(H^d\<\fst f^\flat \OY)[-d\>]\\
&\iso
f^*\<(\fst H^d\<\<f^\flat\< \OY)[-d\>]\\
&\iso
(H^d\<f^\flat\< \OY)[-d\>]\iso f^\flat\OY.
\end{aligned}\tag{\ref{KosReg}.1}
\end{equation*}

Then with $\chi(f,\OY\<,G\>)\colon f^\flat \OY\Otimes{\sX}\LL f^*\<G\iso f^\flat G$
the  isomorphism from Proposition~\ref{flat and tensor}(ii), one has
\[
c^{\>\flat}_{\<\<f}(G\>)\set\chi(f,\OY\<,G)\smallcirc (c^{\>\flat}\Otimes{\sX}\id_{\LL f^*G}),
\] 
that is, the border of the following natural diagram commutes:
\[
\def\1{$\fst(\omega_{\<\<f}\Otimes{\sX} \LL f^*\<G)$}
\def\2{$\fst(f^\flat\OY\Otimes{\sX} \LL f^*\<G)$}
\def\3{$\fst f^\flat G$}
\def\4{$G$}
\def\5{$\fst\omega_{\<\<f}\Otimes{Y} G$}
\def\6{$\fst f^\flat\OY\Otimes{Y} G$}
\def\7{$\OY\Otimes{Y} G$}
 \bpic[xscale=5, yscale=1.6]
  \node(11) at (1,-1){\1} ;  
  \node(12) at (2,-1){\2} ;  
  \node(13) at (3,-1){\3} ;
   
  \node(22) at (2,-2){\6} ; 
  \node(23) at (3,-2){\4} ;
    
   \node(31) at (1,-3){\5} ;  
   \node(33) at (3,-3){\7} ;
    
    \draw[->] (11)--(12)  node[above, midway, scale=.75] {$\via c^{\>\flat}$} ;
    \draw[->] (12)--(13)  node[above, midway, scale=.75] {$\fst\chi$} ;   
     
    \draw[->] (31)--(33)  node[below, midway, scale=.75] {$\via\> t'$} ;

    \draw[->] (11)--(31) node[left=1pt, midway, scale=.75] {$\simeq$} 
                                  node[right, midway, scale=.75] {\eqref{projf}} ;

    \draw[->] (12)--(22) node[right=1pt, midway, scale=.75] {$\simeq$} 
                                  node[left, midway, scale=.75] {\eqref{projf}} ;
    
    \draw[->] (13)--(23)  ; 
    \draw[->] (33)--(23) node[right=1pt, midway, scale=.75] {$\simeq$} ;
      
    \draw[->] (31)--(22) node[above=1pt, midway, scale=.75] {$\via c^{\>\flat}\mkern7mu$} ;
    \draw[->] (22)--(33) ;
   
    \node at (1.47,-2){\circled1} ;
    \node at (2.6,-2){\circled2} ;
    \node at (2,-2.57){\circled3} ;
 \epic
\]
Indeed, the commutativity of subdiagram \circled1 is clear, that of \circled2 is given by the last assertion
 in \ref{flat and tensor}(i), and that of \circled3 (signifying that $c^{\>\flat}$ is dual to $t'$) is readily verified.

Thus $c^{\>\flat}_{\<\<f}(G\>)$ is an isomorphism, as asserted.
\end{proof}

The next lemma provides
an alternative, local,  description of the isomorphism in~\ref{Kreg}, hence of \eqref{KosReg.1} and
$c^{\>\flat}_{\<\<f}(\OY\<)$.

\begin{sublem}\label{alt c}
In Proposition~\textup{\ref{KosReg},} suppose that\/ $\CI$ is generated by a Koszul-regular sequence\/ $\mathbf t=(t_1,\dots,t_d)$ in $\Gamma(Y,\CI\>),$ and let 
$\vartheta\colon \OY^{\>d}\to\CI$ be the $\OY$-homomorphism taking the $i$\kf-th canonical generator of\/~$\OY^{\>d}$ to\/~$t_i$ $(1\le i\le d\>)$ $($so that, since $H^{-1}K(\mathbf t)=0,$ $f^*\vartheta$ is an isomorphism$)$. 

The following natural diagram of\/ $\OX$-isomorphisms commutes.
\[
\def\1{$\sHom_{\sX\<}({\lift1.7,{\bigwedge},\lift2.4,{\<d},}_{\mkern-10mu X^{\mathstrut}}\<
f^*\<\<\big(\CI\<\</\CI^{\>2\>}\big)\<,\OX\<\<)$}
\def\2{$\sHom_{\sX\<}({\lift1.7,{\bigwedge},\lift2.4,{\<d},}_{\mkern-10mu X^{\mathstrut}}\<\<
\big(\OX^{\>d}\big)\<,\OX\<\<)$}
\def\3{$f^*\sHom_Y({\lift1.7,{\bigwedge},\lift2.4,{\<d},}_{\mkern-7mu Y^{\mathstrut}}\<\<\big(\CI\<\</\CI^{\>2\>}\big)\<,\OY\</\>\CI\>)$}
\def\4{$H^d\>\sHom_{\sX\<}(f^*\<\<K_Y(\mathbf t),\OX)$}
\def\5{$f^*\mkern-.5mu \sExt^{\>d}_{Y}(\fst\OX,\OY\<)$}
\def\6{$H^d\<f^*\sHom_{Y}(K_Y(\mathbf t),\OY)$}
\def\7{$f^*\<\<H^d\<\fst f^\flat\OY$}
\def\8{$f^*\!H^d\>\sHom_{Y}(K_Y(\mathbf t),\OY)$}
  \bpic[xscale=6, yscale=1.6]

   \node(11) at (1,-1){\1} ;  
   \node(21) at (1,-2){\3} ;
   \node(31) at (1,-3){\5} ;
   \node(41) at (1,-4){\7} ;
    
   \node(12) at (2,-1){\2} ;
   \node(22) at (2,-2){\4} ;
   \node(32) at (2,-3){\6} ;
   \node(42) at (2,-4){\8} ;

     \draw[->] (11)--(12) node[above, midway, scale=.75] {$\via f^*\<\<\vartheta$} ;    
     \draw[<-] (42)--(41) ;
                                                                                                                                                                                                                                                                                                      
    \draw[<-] (11)--(21) ;
    \draw[<-] (21)--(31) node[left, midway, scale=.75] {\textup{\ref{Kreg}}} ;
    \draw[->] (31)--(41) ;

    \draw[<-] (12)--(22) ;
    \draw[<-] (22)--(32) ;
    \draw[<-] (32)--(42) ;
    
 \epic
\]
\end{sublem}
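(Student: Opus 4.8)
\textbf{Proof proposal for Lemma~\ref{alt c}.}
The plan is to reduce everything to an explicit chase through the definitions of the maps in the square, using the choice of the Koszul-regular sequence $\mathbf t$ to replace the abstract objects $\CI\</\CI^{\>2}$, $\stor_1$, and $\R\sHom$ by the concrete free resolution $K(\mathbf t)$ and the honest (non-derived) complex $\sHom_Y(K(\mathbf t),\OY)$. First I would observe that, since $H^{-1}K(\mathbf t)=0$, the map $f^*\vartheta\colon \OX^{\>d}\to f^*(\CI\</\CI^2)$ is an isomorphism (as already noted in \S\ref{K-reg}), so the top horizontal arrow and the left/right vertical composites all make sense as isomorphisms of $\OX$-modules; the content of the lemma is then purely that a certain square of identifications commutes. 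The key structural input is the identification, recorded in \S\ref{K-reg}, of $f^*K(\mathbf t)$ with $\oplus_{n}\bigl(H^{-n}\LL f^*\mkern-2.5mu\fst\OX\bigr)[n]$ as graded $\OX$-algebras, compatibly with $f^*\triangledown\kern-1pt^{}_{\<\!f}$ and with $f^*\vartheta$; this is exactly what lets one pass between the ``$\CI\</\CI^2$'' side (left column) and the ``$K(\mathbf t)$'' side (right column) of the diagram.

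The steps, in order, are: (1) unwind the definition of the isomorphism in Lemma~\ref{Kreg} in the presence of $\mathbf t$, namely as the composite appearing in the proof of \ref{Kreg}, $\Hom_{\K(Y)}(K(\mathbf t),G[d]) \xrightarrow{\alpha} \Hom_{\D(Y)}(K(\mathbf t),G[d]) \xrightarrow{\beta} \Hom_{\OX}(H^{-d}\LL f^*\mkern-2.5mu\fst\OX, H^0\LL f^* G)$, with $G\set\OY$, whose sheafified composite is the canonical isomorphism with $\Gamma(Y,G)/\mathbf t\Gamma(Y,G)$; this pins down the middle row of the square. (2) Identify $H^d\sHom_Y(K(\mathbf t),\OY)$ with $\OY\</\CI$ via the top-degree component of the dual Koszul complex, and check that under the natural pairing $K(\mathbf t)[-d]\otimes K(\mathbf t)\to \OY$ (the one used just before \ref{KosReg} to trivialize $\fst f^\flat\OY$) this matches the identification of $f^\flat\OY$ used in \eqref{KosReg.1}; this handles the bottom row. (3) Verify the two squares (left and right) separately: the right square says that applying $f^*$ and $H^d$ to the dual-Koszul identifications is compatible with the exterior-algebra identification of $f^*\bigwedge^d(\OX^d)$ with $H^{-d}f^*K(\mathbf t)$, which is essentially the graded-algebra statement from \S\ref{K-reg}; the left square says that $\Phi(f,\OY,d)$, defined via $\Psi$ and \eqref{fund1}, is computed by the same $\beta$, which is how $\Phi$ was built in the first place. (4) Conclude that the two composites $\OX$-module maps from the top-left node to the bottom-right node agree, using that a map out of a locally free rank-one sheaf is determined by a local generator, so it suffices to evaluate on the image of the generator $1\in\Gamma(Y,K(\mathbf t))$ (equivalently the canonical top-degree generator of $\bigwedge^d$), where both composites give the canonical trivialization.

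The main obstacle, I expect, is bookkeeping of signs and of the shift $[-d]$: the several identifications all involve the Koszul differential and the pairing $\mu_{\mathbf t}$, and the isomorphism $\triangledown\kern-1pt^{}_{\<\!f}$ was \emph{defined} with a sign ($-\partial^{-1}$), so one must check that the sign conventions in \eqref{fund1}, in the proof of \ref{Kreg}, in the trivialization of $\fst f^\flat\OY$ preceding \ref{KosReg}, and in \eqref{KosReg.1} are mutually consistent. I would handle this exactly as in \ref{K-reg}: work locally, represent $\fst\OX$ by the exterior algebra $\bigwedge_Y\OY^{\>d}$ with its standard multiplication $\mu_{\mathbf t}$, and use \cite[(3.2.4.1)]{li} to transport the abstract composite \eqref{torprod} to $f^*\mu_{\mathbf t}$; once everything is on the Koszul side the commutativity of the square is a finite, mechanical check on $\OX$-modules with no derived-category subtleties, and I would leave the sign verification to the reader in the same spirit as the analogous ``details left to the reader'' remarks in \S\ref{K-reg}.
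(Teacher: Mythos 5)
Your proposal is correct in outline and rests on the same underlying material (the Koszul resolution, the graded-algebra identification from \S\ref{K-reg}, and concrete checking rather than further abstract nonsense), but it takes a genuinely different route from the paper's proof. The paper first passes to the \emph{adjoint diagram}: it notes that $H^d\sHom_Y(K_Y(\mathbf t),\OY)$ is annihilated by $\CI$ and that $H^d$ commutes with $\fst$, then observes that the adjoint of the displayed square is (up to canonical isomorphism) the sheafification of a purely presheaf-level diagram whose central node is the honest Hom group $\Hom_{\D(Y)}(\fst\OX,\OY[d])$. Commutativity of that presheaf diagram is then checked by pushing an \emph{arbitrary} $\D(Y)$-map $\fst\OX\to\OY[d]$ around both ways to the upper right corner. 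This adjoint trick buys two things: it moves the entire verification over to $Y$, where one works with ordinary modules/complexes rather than pulled-back sheaves on $X$; and it turns the check into an elementwise computation in a $\Hom$ group, which effectively finesses the sign and shift bookkeeping that you (correctly) flag as a source of danger. Your alternative — staying on $X$, using the graded-algebra comparison of $f^*K(\mathbf t)$ with $\oplus H^{-n}\LL f^*\mkern-2.5mu\fst\OX$, and checking on a local generator of an invertible $\OX$-module — is a legitimate strategy because all the vertices in the square are rank-one locally free, so maps between them are determined by a local generator. Two caveats worth noting, though: your proposed decomposition into ``left and right squares'' does not match the actual shape of the diagram (it is a $4\times2$ rectangle, i.e.\ three stacked squares, and the arrows do not all point the same way around), so you would need to reorganize that step; and your step (4) — that both composites send the canonical generator to the canonical trivialization — is precisely the content of the lemma and is asserted rather than argued, so the real work would still lie there. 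The paper's phrasing of the same work (``push a map around'') is more mechanical and less sign-sensitive, which is why it is left as a one-line instruction.
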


\begin{proof} Noting that $H^d\>\sHom_{Y}(K_Y(\mathbf t),\OY)$ is annihilated by~$\CI$, 
and that $H^d$~commutes with $\fst$,
 one checks that the adjoint diagram is isomorphic to the sheafification of the natural diagram
\[
\def\1{$\Hom_{\sX\<}({\lift1.7,{\bigwedge},\lift2.4,{\<d},}_{\mkern-10mu X^{\mathstrut}}\<
f^*\<\<\big(\CI\<\</\CI^{\>2\>}\big)\<,\OX\<\<)$}
\def\2{$\Hom_{\sX\<}({\lift1.7,{\bigwedge},\lift2.4,{\<d},}_{\mkern-10mu X^{\mathstrut}}\<\<
\big(\OX^{\>d}\big)\<,\OX\<\<)$}
\def\3{$\Hom_Y({\lift1.7,{\bigwedge},\lift2.4,{\<d},}_{\mkern-7mu Y^{\mathstrut}}\<\<\big(\CI\<\</\CI^{\>2\>}\big)\<,\OY\</\>\CI\>)$}
\def\4{$H^d\<\Hom_{\sX\<}(f^*\<\<K_Y(\mathbf t),\OX)$}
\def\5{$\Hom_{\D(Y)}(\fst\OX,\OY[d\>])$}
\def\6{$\fst f^*\!H^d\<\Hom_{Y}(K_Y(\mathbf t),\OY)$}
\def\7{$H^d\<\fst f^\flat\OY$}
\def\8{$H^d\<\Hom_{Y}(K_Y(\mathbf t),\OY)$}
  \bpic[xscale=6, yscale=1.6]

   \node(11) at (1,-1){\1} ;  
   \node(21) at (1,-2){\3} ;
   \node(31) at (1,-3){\5} ;
   \node(41) at (1,-4){\7} ;
    
   \node(12) at (2,-1){\2} ;
   \node(22) at (2,-2){\4} ;
   \node(32) at (2,-3){\8} ;
   \node(42) at (2,-4){\8} ;

     \draw[->] (11)--(12) node[above, midway, scale=.75] {$\via f^*\<\<\vartheta$} ;    
     \draw[->] (41)--(42) ;
                                                                                                                                                                                                                                                                                                      
    \draw[<-] (11)--(21) ;
    \draw[<-] (21)--(31) node[left, midway, scale=.75]{\eqref{unsheafify}} ;
    \draw[->] (31)--(41) ;

    \draw[<-] (12)--(22) ;
    \draw[<-] (22)--(32) ;
    \draw[double distance=2] (32)--(42) ;
 \epic
\]
So it suffices to see that this last diagram commutes---which one can do by pushing an
arbitrary $\D(Y)$-map $\fst\OX\to\OY[d\>]$ clockwise and counterclockwise around the diagram to the upper right corner.
\end{proof}
\end{subcosa}  

\begin{subcosa} \label{compose} Next, the setup for Theorem~\ref{Kozpf1}---pseudo\-functoriality 
of~$c^{\>\flat}_{\<\<-}$.

Let $X\xto{\,f\,} Y\xto{\,g\,}Z$  be Koszul-regular closed immersions of codim\-ensions $d$ and $e$ respectively, let $\CJ$ be the kernel of the natural map 
$\OZ\to g_*\OY$, and let
$\CL$ be the kernel of the natural map $\OZ\to g_*\fst \OX$---so that 
$\CJ\subset \CL$ and $\CI\set g^*(\CL/\<\<\CJ\>)$ is the kernel of the natural map $\OY\to\fst\OX$.
Then the map $gf$ is a Koszul-regular closed immersion of codimension $d+e$, 
see \cite[tag~067Q]{Stacks}.

\pagebreak[3]
The inclusion $\CJ\subset\CL$ induces an exact sequence\va{-2}
\[
0\to f^*\<g^*\<\big(\CJ/\CJ^2\big)\xto{\,i\,} f^*\<g^*\<\big(\CL/\<\CL^2\big)\xto{\,p\,} f^*(\CI/\CI^{\>2})\to 0
\]
of locally free $\OX$-modules of ranks $e$, $d+e$, and $d$, respectively, 
see \cite[tag~063N]{Stacks}, whence
a \emph{locally split} exact sequence, with $\upcheck E\set\sHom_{\OX}\<(E,\OX)$ for any $\OX$-module~$E$:\va{-3}
\[
0\to \upcheck{(f^*\<(\CI\<\</\CI^{\>2\>}))\!}\<
\xto{\,\upcheck p}\upcheck{(f^*\<\<g^*\<(\CL/\<\CL^{\>2\>}))\!}
\xto{\,\upcheck i}\upcheck{(f^*\<\<g^*\<(\CJ\<\</\<\<\CJ^{\>2\>}))\!}\to 0.
\]

Locally, there exists a right inverse $q$ of $p$, giving rise to the left inverse~$j$ of~$i$
such that $ij=\id-qp$, whence the isomorphism
\[
\upcheck{(f^*\<(\CI\<\</\CI^{\>2\>}))\!}\oplus \upcheck{(f^*\<\<g^*\<(\CJ\<\</\<\<\CJ^{\>2\>}))\!}\<
\underset{(\upcheck p\<\<\!,\,\upcheck j)}\iso \,
\upcheck{(f^*\<\<g^*\<(\CL/\<\CL^{\>2\>}))\!},
\]
\vskip-3pt
\noindent whence the standard isomorphism of presheaves (cf.~ \cite[Chap.~III, \S7.7\kf]{Bo70}), hence of sheaves:\va2
\begin{equation}\label{exact}
{\lift1.7,{\bigwedge},\lift2.4,{\<d},}_{\mkern-10mu X^{\mathstrut}}
 \<\< \big(\upcheck{(f^*\<(\CI\<\</\CI^{\>2\>}))\!} \> \big)^{\mathstrut}
\otimes^{}_\sX
{\lift1.7,{\bigwedge},\lift1.95,{\<e},}_{\mkern-10mu X^{\mathstrut}}
   \<\< \big(\upcheck{(f^*\<\<g^*\<(\CJ\<\</\<\<\CJ^{\>2\>}))\!} \> \big)
\iso
{\lift1.7,{\bigwedge},\lift2.4,{\<d+e},}_{\mkern-33mu X^{\mathstrut}}
  \mkern10mu  \big(\upcheck{(f^*\<\<g^*\<(\CL/\<\CL^{\>2\>}))\!} \> \big),
\end{equation}
readily seen to be \emph{independent of the choice of\/ $q$}, so that these local maps glue together into a natural global isomorphism of invertible $\OX$-modules. 
\end{subcosa}

\begin{subcosa}\label{explicit}
In more explicit algebraic terms, the maps\va1 $g$ and $f$ correspond locally to a pair of surjective ring homomorphisms 
$R\overset{\lift{.5},\varphi,\;}\twoheadrightarrow S \overset{\lift{.8},\xi,\ }\twoheadrightarrow T$, with kernels $J$ and~$I$ generated by Koszul-regular sequences $(r^{}_1,\dots,r_e)$ 
and~$(\bar s^{}_1,\dots,\bar s_d)$ respectively. Let $s_i\in R$ be such that  
\mbox{$\bar s_i=\xi(s_i)\ (1\le i\le d)$.} 
The $R$-sequence $(r^{}_1,\dots,r_e,s^{}_1,\dots, s_d)$ is 
Koszul-regular \cite[tag~0669]{Stacks}, 
and it generates the kernel $L$ of $\xi\varphi$. The inclusion $J\subset L$ induces an exact sequence
\begin{equation}\label{ciexact}
0\to T\otimes_S(J/J^2)\xto{\,\mathfrak i\,\>} L/\<L^2\xto{\,\mathfrak p\,} I/I^{\>2}\to 0
\end{equation}
of free $T$-modules of respective ranks $e$, $d+e$ and $d$ (see  just before~\eqref{fund1'}).
Let $\mathfrak j\colon L/L^2\to T\otimes_S(J/J^2)$ be the left inverse of $\mathfrak i$ such that 
\begin{align*}
\mathfrak j(r_m + L^2)&=1\otimes (r_m+J^2)&&\mkern-40mu(1\le m\le e),\\
\mathfrak j(s_n + L^2)&= 0 &&\mkern-40mu(1\le n\le d).
\end{align*}
Over $\spec(R)$, one checks, \eqref{exact} is the sheafification of the isomorphism\va3
\[
\lambda\colon\<{\lift1.7,{\bigwedge},\lift2.4,{\<d},}_{\mkern-7mu T^{\mathstrut}}
\<\<\Hom_{\>T}(I/I^2\<\<,T)
\otimes_{\mkern.5muT}
{\lift1.7,{\bigwedge},\lift1.95,{\<e},}_{\mkern-6mu T^{\mathstrut}}
\<\<\Hom_{\>T}(T\otimes_SJ/\<J^2\<\<,T)
\!\iso\!
{\lift1.7,{\bigwedge},\lift2.4,{\<d+e},}_{\mkern-29mu T^{\mathstrut}}
\mkern11mu \Hom_{\>T}(L/L^2\<,T),
\]
such that
\[
\lambda\big((\alpha^{}_1\wedge\dots\wedge\alpha^{}_d)\otimes(\beta^{}_1\wedge\dots\wedge\beta^{}_e)\big)=
(\alpha^{}_1\mathfrak p)\wedge\dots\wedge(\alpha^{}_d\>\>\mathfrak p)\wedge(\beta^{}_1\mathfrak j)\wedge\dots\wedge(\beta^{}_e\>\mathfrak j).
\]

\pagebreak[3]
\begin{subcosa}
Back in the global situation, if an $\OX$-module $E$ is locally free of rank $d$,\va{.8} there is an isomorphism
\(
{\lift1.7,{\bigwedge},\lift2.4,{\<\<d},}_{\mkern-10mu X^{\mathstrut}}\<\<\upcheck{(E\>})
\iso
\upcheck{({\lift1.7,{\bigwedge},\lift2.4,{\<d},}_{\mkern-10mu X^{\mathstrut}}\<E)\<\<},
\) 
\kf induced by the map $(\upcheck{E\>}){}^d \times E^{\>\>d} \to\OX$ that takes\va{.5} 
local sections $\big((\alpha_1,\dots,\alpha_d),(e_1,\dots,e_d)\big)$ to 
the determin\-ant $\det(\alpha_i\>e_j)$.\va{.6} 
Also, for a  finite\kf-rank locally free $\OY$-module~$F\<$, and  $\upcheck F\set\sHom_{\OY}\<(F,\OY)$,
there is a natural isomorphism $f^*\<\<(\upcheck F\>)\iso\upcheck{(f^*\<\<F\>)\<}$. 

Using such isomorphisms, one gets from \eqref{exact}---considered as a global isomorphism---a natural isomorphism of invertible $\OX$-modules\va2
\begin{equation}\label{exact'}
\upcheck{\big({\lift1.7,{\bigwedge},\lift2.4,{\<d},}_{\mkern-10mu X^{\mathstrut}}\<f^*\<\big(\CI\<\</\CI^{\>2\>}\big)\big)\!}\otimes^{}_\sX
f^*\<\upcheck{\big({\lift1.7,{\bigwedge},\lift2,{\<e},}_{\mkern-7mu Y^{\mathstrut}}\>g^*\<\big(\CJ\<\</\<\<\CJ^{\>2\>}\big)\big)\!}
\iso
\upcheck{\big({\lift1.7,{\bigwedge},\lift2.4,{\<d+e},}_{\mkern-33mu X^{\mathstrut}}\mkern12mu (g\<f)^{\<*}\<\big(\CL/\<\CL^{\>2\>}\big)\big)\!},
\end{equation}
\vskip2pt\noindent
whence, for the $\OX$-complexes\va2
\[\omega_{\<\<f}\set\upcheck{\big({\lift1.7,{\bigwedge},\lift2.4,{\<d},}_{\mkern-10mu X^{\mathstrut}}\<f^*\<\big(\CI\<\</\CI^{\>2\>}\big)\big)\!}[-d\>],\qquad
\omega_{g\<f}\set\upcheck{\big({\lift1.7,{\bigwedge},\lift2.4,{\<d+e},}_{\mkern-33mu X^{\mathstrut}}\mkern12mu (g\<f)^{\<*}\<\big(\CL/\<\CL^{\>2\>}\big)\big)\!}[-d-\<\<e]
\]
\vskip2pt\noindent
and the $\OY$-complex
\[
\omega_{g}\set\upcheck{\big({\lift1.7,{\bigwedge},\lift2,{\<e},}_{\mkern-7mu Y^{\mathstrut}}\>g^*\<\big(\CJ\<\</\<\<\CJ^{\>2\>}\big)\big)\!}[-e],
\]
a natural isomorphism 
\begin{equation}\label{tensoromega}
\omega_{\<\<f}\otimes_\sX \<f^*\<\omega_g \iso \omega_{gf},
\end{equation}
\vskip2pt\noindent
equal in degree $d+e$ to $(-1)^{de}$ times the isomorphism \eqref{exact'} 
(use the map $\theta_{ij}$ from \cite[(1.5.4)]{li},  with $i=-d,\:j=-e$).

In the local situation \ref{explicit},  routine manipulations show that \eqref{tensoromega} 
identifies naturally with the sheafification of the isomorphism of $\>T$-complexes\va3 
\begin{multline}\label{exact'aff}
h^{\mathstrut}\colon\mkern-6mu\Hom_{\>T}\!\big({\lift1.7,{\bigwedge},\lift2.4,{\<d},}_{\mkern-7mu T^{\mathstrut}}\<\<\big(I/I^{\>2\>}\big)\<,\<T\big)[-d\>]
\otimes_{\>T}\>
\Hom_S\!\big({\lift1.7,{\bigwedge},\lift2,{\<e},}_{\mkern-8mu S^{\mathstrut}}\<\<\big(J/J^{\>2\>}\big)\<,\<T\big)[-e]\\
\<\!\iso\!\!
\Hom_{\>T}\!\big({\lift1.7,{\bigwedge},\lift2.4,{\<d+e},}
 _{\mkern-29mu T^{\mathstrut}}\mkern10mu\big(\<L/\<L^{\>2\>}\big)\<,\<T\big)[-d-\<e]
\end{multline}
such that in degree $d+e$, with $r_{\!i}^L\set (r^{}_{\!i}+L^2)\in L/L^2$, etc.,\va2
\[
h(\alpha\otimes\beta) (r^L_{\!\lift1,1,}\wedge\dots\wedge r^L_{\!e}\wedge s^L_1\wedge\dots\wedge s^L_d)=
\alpha(\bar s^I_1\wedge\dots\wedge \bar s^I_d)\>\beta(r^J_{\!\lift1,1,}\wedge\dots\wedge r^J_{\!e}).
\]
\end{subcosa}


The \emph{pseudofunctoriality of} $c_{-}^\flat$ is given by the next theorem---essentially \cite[p.\,55, Theorem 2.5.1]{Co}, whose proof in \emph{ibid.,} section~2.6 is long and technical. The proof to be presented here is, \emph{mutatis mutandis}, the more direct one given in \cite[Appendix C.6]{NS}.

\begin{subthm}\label{Kozpf1} 
For\/ $G\in\Dqc(Z),$ the following\va{-1} natural\/ $\D(X)$-diagram commutes$\>:$
\[\mkern-3mu
\def\6{$\omega_{g\<f}\Otimes{\sX}\LL (gf)^*\<G$}
\def\7{$f^\flat\< g^\flat G $}
\def\8{$(gf)^\flat G$}
\def\9{$\omega_{\<\<f}\otimes_\sX \LL f^*\<\omega_g\Otimes{\sX}\LL f^*\LL g^*\<G$}
\def\0{$\omega_{g\<f}\Otimes{\sX}\LL f^*\LL g^*\<G$}
\def\ten{$f^\flat\OY\Otimes{\sX}\LL f^*\<\<g^\flat G$}
 \bpic[xscale=6, yscale=1.5]
   
   \node(11) at (1,-1){\9} ;
   \node(12) at (2,-1){\0} ; 
   
   \node(21) at (1,-2){\ten} ; 
   \node(22) at (2,-2){\6} ;
   
   \node(31) at (1,-3){\7} ;
   \node(32) at (2,-3){\8} ;
   
    \draw[->] (11)--(12) node[above, midway, scale=.75] {$\via\eqref{tensoromega}$ } ;
    
    \draw[->] (31)--(32) node[above, midway, scale=.75] {$\Iso$}
                                    node[below=1, midway, scale=.75] {\eqref{pf flat}} ;

     \draw[->] (11)--(21) node[left=1, midway, scale=.75] {$\ref{KosReg}$\textup{(i)}} 
                                     node[right, midway, scale=.75]{$\simeq$} ;
     \draw[->] (21)--(31) node[left=1, midway, scale=.75] {$\ref{flat and tensor}$\textup{(ii)}} 
                                         node[right, midway, scale=.75]{$\simeq$}  ;
     
     \draw[->] (12)--(22) node[left=1, midway, scale=.75] {$\simeq$} ;
     \draw[->] (22)--(32) node[right=1, midway, scale=.75] {$\ref{KosReg}$\textup{(i)}} 
                                     node[left=1, midway, scale=.75]{$\simeq$} ;

 \epic
\]
\end{subthm}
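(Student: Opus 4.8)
The plan is to reduce the global commutativity assertion to a local, affine statement and then to a purely multiplicative identity about exterior algebras, which can be checked by an explicit computation with generators. The point is that \eqref{pf flat}, the pseudofunctoriality isomorphism for $(-)^\flat$, is itself constructed abstractly from adjunction data, so while Proposition~\ref{KosReg} gives the concrete realization $c^{\>\flat}_{\<\<f}$ of each $(-)^\flat$-value, the coherence between these concrete realizations and \eqref{pf flat} is not formal. First I would observe that both routes around the diagram are $\D(X)$-morphisms between $\LL f^*$-type objects, so by the projection isomorphisms \eqref{projf} and the fact (Proposition~\ref{flat and tensor}(ii)) that $\chi(f,-,-)$ is an isomorphism, it suffices to check commutativity after tensoring down to $G=\OZ$; that is, it suffices to prove the diagram commutes for the "dualizing complexes" themselves, i.e. that
\[
\omega_{\<\<f}\otimes_\sX\<\LL f^*\<\omega_g \xrightarrow{\>\eqref{tensoromega}\>} \omega_{gf}
\]
is carried by $c^{\>\flat}$ (from \eqref{KosReg.1}) and the isomorphism $f^\flat\OZ\cong f^\flat g^\flat\OZ\cong(gf)^\flat\OZ$ built from \eqref{pf flat} and \ref{flat and tensor}(ii) into the commuting square obtained by specializing the Theorem's diagram. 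Concretely, after this reduction one is comparing, on the one hand, the composite $c^{\>\flat}_{gf}$, and on the other, $c^{\>\flat}_f$ followed by $f^*$ applied to $c^{\>\flat}_g$ followed by the transitivity map \eqref{pf flat}.

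Next I would pass to the affine-local setting: Koszul-regularity is local, the isomorphism \eqref{tensoromega} was identified in \ref{explicit}--\ref{exact'aff} with the sheafification of the explicit map $h$ of $T$-complexes, and \eqref{pf flat} was noted (in the Remark after \ref{pseudofunc}) to be locally the sheafification of the natural $\D(T)$-isomorphism $\R\<\<\Hom_S(T,\R\<\<\Hom_R(S,-))\iso\R\<\<\Hom_R(T,-)$ for $R\to S\to T$. So the whole diagram becomes a diagram of $\D(T)$-morphisms built from Koszul resolutions $K_R(\mathbf r)$, $K_S(\bar{\mathbf s})$, $K_R(\mathbf r,\mathbf s)$ of $S$, $T$, $T$. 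Using Lemma~\ref{alt c} to replace the \ref{Kreg}-based description of $c^{\>\flat}$ by the one in terms of $\vartheta$ and the explicit $\Hom$ of Koszul complexes, and using the standard multiplicativity $K_R(\mathbf r,\mathbf s)\cong K_R(\mathbf r)\otimes_R K_R(\mathbf s)$, every arrow in the diagram becomes an explicitly writable chain map, and commutativity reduces to tracking signs and the combinatorics of the exterior-algebra splitting \eqref{ciexact}, i.e. to the identity defining $h$ in degree $d+e$ together with the defining formula for $\lambda$ in \ref{explicit}. This is exactly the line of argument carried out in \cite[Appendix C.6]{NSa}, and I would invoke or transcribe that computation, being careful that the sign $(-1)^{de}$ inserted in \eqref{tensoromega} via $\theta_{ij}$ of \cite[(1.5.4)]{li} matches the sign produced by $h$ and by the Koszul-complex tensor identification.

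The main obstacle, and the only genuinely non-formal point, is the compatibility of the abstractly-defined transitivity isomorphism \eqref{pf flat} with the concrete Koszul-resolution picture: one must verify that the $\D(T)$-isomorphism $\R\<\<\Hom_S(T,\R\<\<\Hom_R(S,-))\iso\R\<\<\Hom_R(T,-)$, when computed via the evident maps of Koszul complexes $K_R(\mathbf r,\mathbf s)\to K_R(\mathbf r)\to S$ and $K_R(\mathbf r,\mathbf s)\to K_S(\bar{\mathbf s})\to T$, is the sheafified/explicit map one expects, and that it interacts correctly with the $\chi$ of \ref{flat and tensor}(ii) and with the map $c^{\>\flat}$ of \eqref{KosReg.1}. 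This requires unwinding the definition of \eqref{pf flat} as being right-conjugate to $\R\fst\R g_*\iso\R(gf)_*$ (see the first paragraph of \S\ref{pf adj}) and pairing it with the transitivity of the projection maps \cite[3.7.1]{li} and of $\chi$ (Proposition~\ref{pfchi}); once those compatibilities are in hand, the remaining work is the bookkeeping with exterior powers and Koszul signs, which, though lengthy, is routine. I would organize the argument so that the Theorem is deduced from: (1) the reduction to $G=\OZ$ via \ref{flat and tensor}(ii) and naturality of $\chi$; (2) passage to affines and to Koszul resolutions; (3) the identification of \eqref{pf flat}, $c^{\>\flat}$, and \eqref{tensoromega} with explicit chain-level maps via Lemma~\ref{alt c} and \ref{exact'aff}; (4) the final exterior-algebra identity, cited from \cite[Appendix C.6]{NSa}.
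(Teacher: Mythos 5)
Your outline matches the paper's own argument closely: the paper proves Theorem~\ref{Kozpf1} by first reducing (via Proposition~\ref{pfchi}, transitivity of $\chi$) to the $G=\OZ$ case stated as Lemma~\ref{Kozpf}, then reducing that via Lemma~\ref{lem:Kozpf} to the affine setting by faithfully flat base change (relying on the key observation that the vertices of the reduced diagram have homology concentrated in a single degree, so the diagram is essentially one of quasi-coherent sheaves and commutes iff it does so locally), and finally carrying out the Koszul/exterior-algebra computation in Proposition~\ref{affine version}, along the lines of \cite[Appendix C.6]{NSa}. Your identification of the genuinely non-formal point---reconciling the abstract right-conjugate definition of \eqref{pf flat} with its chain-level realization via $\pi_{\xi,\varphi}$ and $\chi$, including the sign $(-1)^{de}$ in \eqref{tensoromega}---is exactly where the paper spends its effort (Propositions~\ref{pfush}, \ref{concrete chi}, and the computation with $h$, $\mathfrak g_{\mathbf r,\mathbf s}$, $d'_{\bullet}$ in \S\ref{transci}); the only thing you compress is the flat-base-change compatibility of each edge of the diagram, which the paper does not take for granted but establishes subdiagram by subdiagram in Lemma~\ref{lem:Kozpf}.
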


\begin{proof}[Proof of \textup{\ref{Kozpf1}}] It suffices to prove commutativity of the natural diagram\begin{small}
\[\mkern-3mu
\def\1{$(f^\flat\OY\<\<\Otimes{\sX}\LL f^*\<\<g^\flat\< \OZ)\Otimes{\sX}\LL f^*\LL g^*\<G$}
\def\2{$f^\flat \<\<g^\flat\< \OZ\Otimes{\sX}\LL f^*\LL g^*\<G$}
\def\3{$(gf)^\flat\< \OZ\<\Otimes{\sX}\<\LL f^*\LL g^*\<G\ $}
\def\4{$f^\flat\OY\<\<\Otimes{\sX}\LL f^*(g^\flat\< \OZ\Otimes{Y}\LL g^*\<G\>)$}
\def\5{$(gf)^\flat\< \OZ\<\Otimes{\sX}\<\LL (gf)^*\<G\ $}
\def\6{$\ f^\flat (g^\flat\< \OZ\Otimes{Y}\LL g^*\<G\>)$}
\def\7{$f^\flat\< g^\flat G $}
\def\8{$(gf)^\flat G$}
\def\9{$\omega_{\<\<f}\otimes_\sX \LL f^*\<\omega_g\Otimes{\sX}\LL f^*\LL g^*\<G$}
\def\0{$\omega_{g\<f}\Otimes{\sX}\LL f^*\LL g^*\<G$}
\def\ten{$f^\flat\OY\Otimes{\sX}\LL f^*\<\<g^\flat G$}
 \bpic[xscale=4.27, yscale=1.5]
   
   \node(01) at (1,0){\9} ;
   \node(03) at (2.95,0){\0} ; 
   
   \node(11) at (1,-1){\1} ;
   \node(12) at (2.25,-1){\2} ;   
   \node(13) at (2.95,-1.75){\3} ;
   
   \node(21) at (1,-2.25){\4} ;
   \node(23) at (2.95,-2.75){\5} ;
    
   \node(30) at (1,-3.75){\ten} ; 
   \node(31) at (2.25,-2.25){\6} ;  
   \node(32) at (2.25,-3.75){\7} ;
   \node(33) at (2.95,-3.75){\8} ;
   
    \draw[->] (01)--(03) ;
    
    \draw[->] (11)--(12) ;
    \draw[->] (12)--(13) ;   
    
    \draw[->] (30)--(32)  ;
    \draw[->] (31)--(32)  ;
    \draw[->] (32)--(33)  ;
    
    \draw[->] (01)--(11) ;
    \draw[->] (11)--(21) ;
    \draw[->] (21)--(30) ;
                                     
    \draw[->] (12)--(31)  ;
    \draw[->] (21)--(31)  ;
    
    \draw[->] (03)--(13) ;
    \draw[->] (13)--(23)  ;
    \draw[->] (23)--(33)  ;
    
    \node at (2.2,-.52) [scale=.9]{\circled1} ;
    \node at (1.64,-3) [scale=.9]{\circled2} ;

  \epic
\]
\end{small}

\vskip-7pt
The commutativity of \circled1 results from the following Lemma~\ref{Kozpf}. That of \circled2 is clear.
That of the other two subdiagrams is contained, \emph{mutatis mutandis,} in Proposition~\ref{pfchi}. The conclusion results.
\end{proof}

\vskip2pt
\begin{sublem}\label{Kozpf}
The following\va{-1} natural\/ $\D(X)$-diagram commutes$\>:$

\[
\def\0{$\omega_{\<\<f}\<\otimes_\sX\< f^*\<\omega_{\<g}$}
\def\1{$\omega_{\<\<f}\Otimes{\sX}\LL f^*\<\omega_{\<g}$}
\def\2{$\omega_{gf}$}
\def\3{$f^\flat\< g^\flat\OZ$}
\def\4{$(gf)^\flat\OZ$}
\def\5{$f^\flat\OY\Otimes{\sX}\LL f^*\<\<g^\flat\OZ$}
 \bpic[xscale=6.1, yscale=1.65]

   \node(10) at (1.55,-1){\0} ;
   \node(11) at (1,-1){\1} ;   
   \node(12) at (2,-1){\2} ;
   
   \node(101) at (1,-2){\5} ;
    
   \node(21) at (1,-3){\3} ;  
   \node(22) at (2,-3){\4} ;
 
    \draw[->] (11)--(10) node[above, midway, scale=.75] {$\Iso$} ;
    \draw[->] (10)--(12)  node[above, midway, scale=.75] {\kern-1pt\eqref{tensoromega}} ;   
    
    \draw[->] (21)--(22)  node[above, midway, scale=.75] {$\qquad\Iso$}
                                    node[below=1, midway, scale=.75] {\qquad\eqref{pf flat}} ;
    
    \draw[->] (11)--(101) node[left=1, midway, scale=.75] {$\ref{KosReg}$\textup{(i)}} 
                                     node[right, midway, scale=.75]{$\simeq$} ;
                                   
    \draw[->](101)--(21) node[left=1, midway, scale=.75] {$\ref{flat and tensor}$\textup{(ii)}} 
                                         node[right, midway, scale=.75]{$\simeq$} ;

    \draw[->] (12)--(22)  node[right=1, midway, scale=.75] {\textup{(\ref{KosReg}.1)}} 
                                    node[left, midway, scale=.75]{$\simeq$} ;
   \node at (1.55,-2)[scale=.9]{$\Delta$} ;
  
 \epic
\]
\end{sublem}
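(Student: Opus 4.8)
\textbf{Proof plan for Lemma~\ref{Kozpf}.}

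The plan is to reduce the statement to a purely local question over affine schemes and then to a finite identity of alternating tensors, which is the heart of the matter. First I would observe that all six objects and all five maps in the diagram~$\Delta$ localize well: the complexes $\omega_{\<\<f}$, $\omega_g$, $\omega_{gf}$, $f^\flat g^\flat\OZ$, $(gf)^\flat\OZ$ and $f^\flat\OY\Otimes{\sX}\LL f^*g^\flat\OZ$ are each determined by their restriction to members of an affine open cover, the isomorphism~\eqref{tensoromega} was shown in~\S\ref{explicit} to be the sheafification of the explicit map~\eqref{exact'aff}, the map~\eqref{pf flat} is locally the sheafification of the standard transitivity isomorphism $\R\Hom_S(T,\R\Hom_R(S,-))\iso\R\Hom_R(T,-)$ (see the Remark after~\ref{pseudofunc}), the isomorphism~\eqref{KosReg.1} has the local description furnished by Lemma~\ref{alt c}, and $\chi$ from~\ref{flat and tensor} has a concrete shape via~\ref{ft0}(ii). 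So it suffices to prove commutativity when $Z=\spec R$, $Y=\spec S$, $X=\spec T$, with $\varphi\colon R\twoheadrightarrow S$, $\xi\colon S\twoheadrightarrow T$, and $\mathbf r=(r_1,\dots,r_e)$, $\mathbf{\bar s}=(\bar s_1,\dots,\bar s_d)$ Koszul-regular generating sequences as in~\S\ref{explicit}, with lifts $s_i\in R$, so that $(r_1,\dots,r_e,s_1,\dots,s_d)$ generates $L=\ker(\xi\varphi)$ and is Koszul-regular.

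Next I would replace each derived functor by its Koszul-complex model: $\fst\OX$ is resolved over $\OY$ by $K_S(\mathbf{\bar s})$, $g_*\OY$ by $K_R(\mathbf r)$, and $g_*\fst\OX$ by $K_R(\mathbf r,\mathbf s)\cong K_R(\mathbf r)\otimes_R K_R(\mathbf s)$; correspondingly $f^\flat\OY$, $g^\flat\OZ$, $(gf)^\flat\OZ$ become the $\Hom$-duals of these Koszul complexes, shifted, via the identifications preceding Proposition~\ref{KosReg} and via Lemma~\ref{alt c}. Under these identifications the clockwise route $\Delta$ becomes: first multiply exterior-algebra generators as in~\eqref{exact'aff}, then dualize the Koszul complex $K_R(\mathbf r,\mathbf s)$ for $L$; the counterclockwise route becomes: first split off the $f^\flat\OY$ factor by $c^{\>\flat}$ and the $\LL f^*g^\flat\OZ$ factor, then apply the transitivity map~\eqref{pf flat}, which at the level of $\Hom$-duals of Koszul complexes is nothing but the canonical isomorphism $\Hom_T(K_T(\mathbf{\bar s}),\Hom_S(K_S(\mathbf s),\,-))\cong\Hom_T(K_T(\mathbf r,\mathbf{\bar s}),-)$ coming from $K_R(\mathbf r,\mathbf s)\cong K_R(\mathbf r)\otimes_R K_R(\mathbf s)$. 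Both routes are then maps of one-term complexes concentrated in degree $d+e$, tensored with $\id$, so the comparison reduces to a single identity of top exterior powers: one must check that under the explicit formula for $h$ in~\eqref{exact'aff}, evaluated on $r^L_1\wedge\dots\wedge r^L_e\wedge s^L_1\wedge\dots\wedge s^L_d$, the factorizations $\mathfrak p$ and $\mathfrak j$ of the exact sequence~\eqref{ciexact} reproduce precisely the signs and pairings that~\eqref{pf flat} introduces through the Koszul tensor-product decomposition. The signs $(-1)^{de}$ recorded after~\eqref{tensoromega}, together with those from the Koszul-complex shift conventions in~\cite[(1.5.4)]{li}, must match exactly; this bookkeeping, carried out along the lines of~\cite[Appendix C.6]{NSa}, is the substance of the proof.

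The main obstacle, and essentially the only one, will be precisely this sign and orientation bookkeeping: the transitivity isomorphism~\eqref{pf flat} is defined abstractly via counit maps, and tracing it through the Koszul models requires pinning down the Koszul-complex shift isomorphism $K_R(\mathbf r,\mathbf s)\cong K_R(\mathbf r)\otimes_R K_R(\mathbf s)$ and its dual, then verifying that the resulting pairing on $\bigwedge^{d+e}$ agrees—on the nose, signs included—with the map $h$ determined by the choice of left inverse $\mathfrak j$. Everything else (localization, replacement of derived functors by Koszul models, the reduction to degree $d+e$) is formal, given Lemmas~\ref{Kreg} and~\ref{alt c}, Proposition~\ref{KosReg}, and Proposition~\ref{flat and tensor}. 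I would therefore organize the write-up as: (1) localize and pass to $R\to S\to T$; (2) install the Koszul models and identify all maps concretely; (3) reduce to the degree-$(d+e)$ identity; (4) verify that identity by the explicit formula, being scrupulous about signs, following~\cite[Appendix C.6]{NSa}.
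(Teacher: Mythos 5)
Your proposal is correct and matches the paper's own strategy: reduce to the affine situation of~\S\ref{explicit} by passing to a disjoint union of an affine open cover (hence a faithfully flat map), realize everything with Koszul complexes and their $\Hom$-duals, and then check a single identity of top exterior powers by explicit sign bookkeeping along the lines of~\cite[Appendix C.6]{NSa}. That is precisely what the paper does, with the local case worked out as Proposition~\ref{affine version} in~\S\ref{transci}.

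One caveat worth flagging: you describe the localization/reduction step as ``formal,'' but it is not quite. Since the five maps in~$\Delta$ are defined via adjoints and universal properties, one must actually prove that each of them is compatible with flat base change; this is the content of Lemma~\ref{lem:Kozpf} (subdiagrams \circled A--\circled{\kf D}), which in turn uses tor-independent base change (Theorem~\ref{indt base change}) and Lemma~\ref{alt c}. The justification that it suffices to check $\Delta$ after pullback along the faithfully flat cover is that the vertices all have cohomology concentrated in degree $d+e$, so $\Delta$ is essentially a diagram of quasi-coherent sheaves. Once that is in place, the rest of your outline---install Koszul models, reduce to the degree-$(d+e)$ identity, track signs against the $(-1)^{de}$ from~\eqref{tensoromega} and the factorizations $\mathfrak p$, $\mathfrak j$ of~\eqref{ciexact}---matches the argument of~\S\ref{transci} (see in particular Lemma~\ref{explicit d} and the final generator computation).
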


The strategy for proving \ref{Kozpf} is to reduce to the local situation \ref{explicit}, which is
disposed of in \S\ref{transci} below by means of arguments appearing in \cite[Appendix C.6.]{NS}.
The reduction is given by the following lemma,
with $\tilde Z$ the disjoint union of the members of an affine open covering of $Z$, over 
each of which both $\CI$ and $\CJ$ are generated by Koszul-regular sequences of sections, and with
$p\>$ the natural map. (Note that the vertices\va{.3} in $\Delta$ all have homology that vanishes in degrees other than $d+e$, so that $\Delta$ is essentially\va{.6} a diagram of quasi-coherent $\OX$-modules,
whence for any faithfully\va{.3} flat map $r\colon\tilde X\to X\<$, $\Delta$ commutes if $r^*\<\<\Delta$ does.)

\begin{sublem} \label{lem:Kozpf}
The situation being as in\/ \textup{\ref{compose},} let $p\colon\tilde Z\to Z$ be a flat scheme-map, and
\[
\def\0{$\tilde X$}
\def\1{$\tilde Y$}
\def\2{$\tilde Z$}
\def\3{$X$}
\def\4{$Y$}
\def\5{$Z$}
 \bpic[xscale=2, yscale=1.3]

   \node(11) at (1,-1){\0} ;
   \node(12) at (2,-1){\1} ;   
   \node(13) at (3,-1){\2} ;
   
   \node(21) at (1,-2){\3} ;
   \node(22) at (2,-2){\4} ;  
   \node(23) at (3,-2){\5} ;
 
    \draw[->] (11)--(12) node[above, midway, scale=.75] {$\tilde f$} ;
    \draw[->] (12)--(13) node[above, midway, scale=.75] {$\tilde g$} ;   
    
    \draw[->] (21)--(22) node[below=1, midway, scale=.75] {$f$} ;
    \draw[->] (22)--(23) node[below=1, midway, scale=.75] {$g$} ;
    
    \draw[->] (11)--(21) node[left=1, midway, scale=.75] {$r$} ;
                                   
    \draw[->](12)--(22) node[left=1, midway, scale=.75] {$q$} ;

    \draw[->] (13)--(23)  node[right=1, midway, scale=.75] {$p$} ;
 
 \epic
\]
a composite fiber square.  Then\/ $\tilde f$ and\/ $\tilde g$ are Koszul-regular immersions, and the inner rectangle in the following natural diagram is isomorphic to the outer one$\>:$
\[
\def\1{$\omega_{\<\<\tilde f}\otimes_{\<\<\tilde X} \tilde \<f^{\lift1.2,*,}\<\<\omega_{\tilde g}$}
\def\2{$\omega_{\tilde g\<\tilde f}$}
\def\3{$r^*\<(\omega_{\<\<f}\otimes_\sX \<f^*\<\omega_g)$}
\def\4{$r^*\<(f^\flat\OY\<\<\Otimes{\!X}\<\LL f^*\<\<g^\flat\OZ)\ $}
\def\5{$r^*\omega_{g\<f}$}
\def\6{$\tilde f^{\mkern1.5mu\flat}\CO_{\tilde Y}\Otimes{\!\tilde X}\LL \tilde f^{\lift1.2,*,}\<\<\tilde g^\flat\CO_{\<\<\tilde Z}$}
\def\7{$r^*\mkern-2.5mu f^\flat\< g^\flat\OZ$}
\def\8{$r^*\<(gf)^\flat\OZ$}
\def\lvn{$\tilde f^{\mkern1.5mu\flat}\< \tilde g^\flat\CO_{\<\<\tilde Z}$}
\def\twv{$(\tilde g\tilde f\>)^\flat\CO_{\<\<\tilde Z}$}
\def\thn{$\tilde f^{\mkern1.5mu\flat} q^*\< g^\flat\OZ$}
 \bpic[xscale=2.4, yscale=1.5]

   \node(11) at (.7,-.5){\1};   
   \node(15) at (4.75,-.5){\2}; 
      
   \node(22) at (2.1, -2){\3} ;    
   \node(24) at (3.5,-2){\5};
   
   \node(31) at (.7,-4.05){\6};
   
   \node(351) at (2.1,-3.35){\4} ;
   
   \node(42) at (2.1,-4.75){\7};
   \node(43) at (3.5,-4.75){\8};
   
   \node(52) at (1.45,-5.5){\thn};
   \node(53) at (.7,-6.25){\lvn};
   \node(55) at (4.75,-6.25){\twv};

    \draw[->] (11)--(15) node[below=1pt, midway, scale=.75] {\eqref{tensoromega}}
                                    node[above, midway, scale=.75] {$\Iso$}  ;  
    
    \draw[->] (22)--(24) node[below=1pt, midway, scale=.75] {$\via$\;\eqref{tensoromega}}
                                    node[above, midway, scale=.75] {$\Iso$}  ;   
       
    \draw[->] (42)--(43) node[below=1pt, midway, scale=.75] {$\via$\;\eqref{pf flat}}
                                    node[above, midway, scale=.75] {$\Iso$}  ;                                  

    \draw[->] (53)--(55) node[below=1pt, midway, scale=.75] {\eqref{pf flat}} 
                                    node[above, midway, scale=.75] {$\Iso$}  ;
    
    \draw[->] (11)--(31) node[right, midway, scale=.75] {$\ref{KosReg}$\textup{(i)}} 
                                   node[left, midway, scale=.75]{$\simeq$} ;
                                   
    \draw[->](31)--(53) node[right, midway, scale=.75] {$\ref{flat and tensor}\textup{(ii)}_{}$} 
                                   node[left, midway, scale=.75]{$\simeq$} ;

    \draw[->] (15)--(55)  node[left, midway, scale=.75] {$\ref{KosReg}$\textup{(i)}} 
                                    node[right, midway, scale=.75]{$\simeq$} ;
                                    
     \draw[->] (22)--(351) node[right, midway, scale=.75] {$\via\>\ref{KosReg}$\textup{(i)}} 
                                      node[left, midway, scale=.75]{$\simeq$} ;
                                   
    \draw[->](351)--(42) node[right, midway, scale=.75] {$\via\>\ref{flat and tensor}$\textup{(ii)}} 
                                     node[left, midway, scale=.75]{$\simeq$} ;

    \draw[->] (24)--(43)  node[left, midway, scale=.75] {$\ref{KosReg}$\textup{(i)}} 
                                    node[right, midway, scale=.75]{$\simeq$} ;
                                
    \draw[->] (22)--(11) node[above=-3, midway, scale=.75] {\rotatebox{-35}{$\ \Iso$}} ; 
    \draw[->] (24)--(15) node[above=-3, midway, scale=.75] {\rotatebox{35}{$\ \Iso$}} ;
    \draw[->] (42)--(52) node[above=-4, midway, scale=.75] {\rotatebox{33}{$\Iso\ $}} 
                                    node[below=-3, midway, scale=.75] {\kern33pt\textup{\ref{indt base change}}} ;
    \draw[->] (52)--(53) node[above=-4, midway, scale=.75] {\rotatebox{33}{$\Iso\ $}} 
                                    node[below=-3, midway, scale=.75] {\kern33pt\textup{\ref{indt base change}}} ;
    \draw[->] (43)--(55) node[above=-3, midway, scale=.75] {\rotatebox{-34}{$\ \Iso$}} 
                                    node[below=-3, midway, scale=.75] {\textup{\ref{indt base change}}\kern35pt} ;
  \node at (2.87,-1.27)[scale=.9]{\circled A} ;
  \node at (1.1,-3.37)[scale=.9]{\circled{\kf B}} ;
  \node at (3.97,-3.37)[scale=.9]{\circled{C\kf}} ;
  \node at (2.8,-5.55)[scale=.9]{\circled{\kf D}} ;

 \epic
\]
\end{sublem}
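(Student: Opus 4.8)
The plan is to deduce Lemma \ref{lem:Kozpf} from four base-change compatibilities, one for each isomorphism occurring on an edge of the rectangle $\Delta$ of Lemma \ref{Kozpf}: the exterior-algebra isomorphism \eqref{tensoromega}, the concrete isomorphism $c^\flat_{(-)}$ of Proposition \ref{KosReg}, the isomorphism $\chi$ of \ref{flat and tensor}(ii), and the pseudofunctoriality isomorphism \eqref{pf flat}. The comparison between the outer rectangle $r^*\Delta$ and the inner one (the corresponding diagram attached to $\tilde f,\tilde g$) will be effected on the $(-)^\flat$-vertices by the base-change isomorphisms $\beta_\sigma$ of Theorem \ref{indt base change}, applied to the fiber squares $(f,q)$, $(g,p)$, $(gf,p)$ --- all independent, since $q$ and $p$ are flat, and legitimate, since $f$, $g$, $gf$ are Koszul-regular, hence perfect --- and on the $\omega$-vertices by the evident pullback isomorphisms $r^*\omega_f\cong\omega_{\tilde f}$, $q^*\omega_g\cong\omega_{\tilde g}$, $r^*\omega_{gf}\cong\omega_{\tilde g\tilde f}$.

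First I would settle the preliminaries. Koszul-regularity of a closed immersion, and the generating data of \S\ref{compose}, are stable under flat base change, so $\tilde f$ and $\tilde g$ are Koszul-regular immersions of codimensions $d$ and $e$; the natural maps identify $\tilde f^*(\tilde{\CI}/\tilde{\CI}^{2})$, $\tilde f^*\tilde g^*(\tilde{\CJ}/\tilde{\CJ}^{2})$, $\tilde f^*\tilde g^*(\tilde{\CL}/\tilde{\CL}^{2})$ with the appropriate $r^*$-pullbacks of the conormal sheaves for $f$, $g$, $gf$; and by \eqref{localize s} (already stated for an arbitrary flat base) these identifications carry $\triangledown_{\tilde f}$, $\triangledown_{\tilde g}$, $\triangledown_{\tilde g\tilde f}$ to the pullbacks of $\triangledown_f$, $\triangledown_g$, $\triangledown_{gf}$. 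Hence the $\omega$-pullback isomorphisms hold, the locally split exact sequence \eqref{ciexact} base-changes to the one for $\tilde f,\tilde g$, and --- \eqref{exact} (whence \eqref{tensoromega}) being a natural construction on such sequences of locally free sheaves --- it commutes with these pullbacks. This is subdiagram \circled{A}.

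Next I would treat the edges governed by the abstract formalism. As \eqref{pf flat} is right-conjugate to $\R f_*\R g_*\osi\R(fg)_*$, a natural isomorphism compatible with flat base change (a formal consequence of the behaviour of $\theta_\sigma$ under composition, \cite[3.7.2]{li}), and as the counit maps of Corollary \ref{right adjoint} transform correctly under $\beta_\sigma$ --- exactly what the proof of Theorem \ref{indt base change} establishes, cf.\ its subdiagram \circled2 --- passing to right adjoints gives the base-change compatibility of \eqref{pf flat}, i.e.\ subdiagram \circled{D}. The isomorphism $\chi$ is characterized in \ref{flat and tensor}(i) by an adjunction identity built only from $\R f_*$, $\LL f^*$, $\Otimes{}$, the projection isomorphism and the counit $t_F$, each compatible with the relevant base-change maps, so $\chi$ is too; this gives the unlabeled square between \circled{B} and \circled{D}. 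For $c^\flat_{(-)}$, Proposition \ref{KosReg} exhibits $c^\flat_f(G)$ as the map dual to the concrete composite $\R f_*(\omega_f\Otimes{\sX}\LL f^*G)\iso\R f_*\omega_f\Otimes{Y}G\to G$, whose middle map is the projection isomorphism and whose last arrow factors through $t'$ and the canonical $\OY\Otimes{Y}G\iso G$; since duality and the counit are compatible with $\beta_\sigma$, it remains only to verify that $t'$ --- hence the isomorphism $\Phi(f,G,d)$ of \ref{Kreg}, together with the identifications entering the definition of $t'$ --- commutes with flat base change. This last, the assertion being local, follows from the compatibility with pullback of the map $\Psi$ in the proof of \ref{Kreg}, again via \eqref{localize s}. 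This yields subdiagrams \circled{B} and \circled{C}.

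Finally I would assemble the pieces: the diagonal arrows joining $r^*\Delta$ to the inner rectangle are the $\omega$-pullback isomorphisms and the composites of $\beta_\sigma$'s $r^*f^\flat G\iso\tilde f^\flat q^*G\iso\tilde f^\flat\tilde g^\flat p^*G$ and $r^*(gf)^\flat G\iso(\tilde g\tilde f)^\flat p^*G$ from Theorem \ref{indt base change}, all isomorphisms; combining the commutativities just listed shows that the entire displayed diagram commutes, so the inner rectangle is isomorphic to the outer one, as claimed. I expect the only genuinely laborious step --- the main obstacle --- to be the base-change compatibility of the concrete map $c^\flat_{(-)}$, equivalently of $t'$ and of the isomorphism \ref{Kreg}; all remaining edges reduce to formal manipulation with adjunctions and with Theorem \ref{indt base change}.
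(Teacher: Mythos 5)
Your proposal is correct and takes essentially the same approach as the paper: verify flat-base-change compatibility of each edge ($\circled{A}$ locally via the explicit exterior-algebra isomorphism, $\circled{D}$ and the $\chi$-compatibility by the adjunction/conjugation formalism, and the $c^\flat$-compatibility entering $\circled{B}$ and $\circled{C}$ by reduction to a local Koszul computation via Lemma~\ref{alt c} together with the $\rho$-compatibility established in the proof of Theorem~\ref{indt base change}). The only small inaccuracy is your remark about ``the unlabeled square between $\circled{B}$ and $\circled{D}$'' --- the $\chi$-compatibility is in fact subsumed into $\circled{B}$ itself (the paper's subdiagram $\circled{5}$ in its expansion of $\circled{B}$), and the $c^\flat$-compatibility is verified not by directly checking pullback-functoriality of $\Psi$ but through the Koszul-complex description of Lemma~\ref{alt c}; these have the same content.
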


\begin{proof}
That $\tilde f$ and\/ $\tilde g$ are Koszul-regular immersions (of respective codimensions $d$ and $e$)
is easy to verify.

It needs then to be shown that subdiagrams \circled A, 
\circled{\kf B}, \circled{C\kf} and \circled{\kf D} commute.
(In other words,  the maps in \ref{Kozpf} are compatible with flat base change.)

Set $h\set gf\<$, $\tilde h\set \tilde g\tilde f$. These are Koszul-regular immersions of codimension 
$c\set d+e$, the kernel of the natural map $\OZ\to h_*\OX$ 
(resp.~$\CO_{\tilde Z}\to\tilde h_*\CO_{\<\tilde X}$) being $\CL$ (resp.~$\tilde\CL\set\CL\CO_{\<\<\tilde Z}$).
 
Using the definitions of the maps in \circled{C\kf}, and the easily-checked fact that, $h_*$ being exact,
the two natural composite maps $h^*\<\<H^ch_*\iso h^*\<h_*H^c\to H^c$ and 
$h^*\<\<H^ch_*\to H^ch^*\<h_*\to H^c$
are equal, one sees that commutativity of~\circled{C\kf}  is equivalent to that of the border of the natural diagram, with $\CH\set\sHom$,\begin{small}
\[\mkern-4mu
\def\1{$r^*\CH^{}_{\sX}\<\big(\<{\lift1.7,{\bigwedge},\lift2,{\<c},}_{\mkern-10mu X^{\mathstrut}}\<h^{\<*}\<\<\big(\CL/\<\CL^{\>2\>}\big)\<,\OX\<\big)$}
\def\3{$r^*h^{\<*}\CH^{}_{\<Z}\<\big(\<{\lift1.7,{\bigwedge},\lift2,{\<c},}_{\mkern-8mu Z^{\mathstrut}}\<\<
\big(\CL/\<\CL^{\>2\>}\big)\<,\>\OZ/\<\CL\big)$}
\def\5{$r^*h^{\<*}\sExt^{\>c}_{\<Z}(h_*\OX,\>\OZ)$}
\def\7{$r^*h^{\<*}\<H^ch_*h^\flat\OZ$}
\def\9{$r^*\<H^ch^{\<*}\<h_*h^\flat\OZ$}
\def\lvn{$r^*\<H^ch^\flat\OZ$}
\def\thn{$r^*h^\flat\OZ[c]$}
\def\2{$\CH_{\<\<\tilde X}\<\big(\<{\lift1.7,{\bigwedge},\lift2,{\<c},}_{\mkern-10mu \tilde X^{\mathstrut}}\<\tilde h^{\<*}\<\<\big(\tilde\CL/\tilde\CL^{\>2\>}\big)\<,\CO_{\!\tilde X}\<\big)$}
\def\4{$\tilde h^{\<*}\CH_{\<\tilde Z}\<\big(\<{\lift1.7,{\bigwedge},\lift2,{\<c},}_{\mkern-9mu \tilde Z^{\mathstrut}}\<\<\big(\tilde\CL/\tilde\CL^{\>2\>}\big)\<,\>\CO_{\<\<\tilde Z}/\<\<\tilde\CL\>\big)$}
\def\6{$\tilde h^{\<*}\sExt^{\>c}_{\<\tilde Z}(\tilde h_*\CO_{\!\tilde X},\>\CO_{\<\<\tilde Z})$}
\def\8{$\tilde h^{\<*}\<H^c\tilde h_*\tilde h^\flat\CO_{\<\<\tilde Z}$}
\def\ten{$H^c\tilde h^{\<*}\<\tilde h_*\tilde h^\flat\CO_{\<\<\tilde Z}$}
\def\twv{$H^c\tilde h^\flat\CO_{\<\<\tilde Z}$}
\def\frn{$\tilde h^\flat\CO_{\<\<\tilde Z}[c]$}
\def\ffn{$\CH_{\<\<\tilde X}\<\big(r^*\<\<{\lift1.7,{\bigwedge},\lift2,{\<c},}_{\mkern-10mu X^{\mathstrut}}\<h^{\<*}\<\<\big(\CL/\<\CL^{\>2\>}\big)\<,\>r^*\<\OX\<\big)$}
\def\sxn{$H^c \>r^*\<h^\flat\OZ$}
\def\svn{$H^c\>r^*\<h^{\<*}\<h_*h^\flat\OZ$}
\def\egn{$H^c\tilde h^{\<*}\<p^*\<h_*h^\flat\<\OZ$}
\def\ntn{$H^c\tilde h^{\<*}\<\tilde h_*r^*\<h^\flat\<\OZ$}
\def\tty{$\tilde h^{\<*}\<H^c\>p^*\<h_*h^\flat\<\OZ$}
\def\ton{$\tilde h^{\<*}\<H^c\tilde h_*r^*\<h^\flat\<\OZ$}
\def\ttw{$\tilde h^{\<*}\<p^*\<\<H^ch_*h^\flat\OZ$}
\def\tth{$\tilde h^{\<*}\<p^*\<\sExt^{\>c}_{\<Z}(h_*\OX,\>\OZ)\qquad\ $}
\def\tfr{$\tilde h^{\<*}\<p^*\<\<H^c\R\CH^{}_{\<Z}(h_*\OX\<,\OZ\<)$}
\def\tfv{$\tilde h^{\<*}\<\<H^c\R\CH_{\<\tilde Z}(\>p^*\<h_*\OX\<,p^*\<\OZ\<)\qquad$}
\def\tsx{$\tilde h^{\<*}\<H^c\R\CH_{\<\tilde Z}(\tilde h^{\<*}\CO_{\!\tilde X}\<,\CO_{\<\<\tilde Z}\<)$}
\def\tsv{$\quad \tilde h^{\<*}\<p^*\< \CH^{}_{\<Z}\<\big(\<{\lift1.7,{\bigwedge},\lift2,{\<c},}_{\mkern-8mu Z^{\mathstrut}}\<\<\big(\CL/\<\CL^{\>2\>}\big)\<,\>\OZ/\<\CL\big)$}
\def\teg{$\quad \tilde h^{\<*}\<\CH^{}_{\<\tilde Z}\<\big(p^*\< {\lift1.7,{\bigwedge},\lift2,{\<c},}_{\mkern-8mu Z^{\mathstrut}}\<\<\big(\CL/\<\CL^{\>2\>}\big)\<,\>p^*\< (\OZ/\<\CL)\big)$}
\def\tnn{$\tilde h^{\<*}\<H^cp^*\<\R\CH^{}_{\<Z}(h_*\OX\<,\OZ\<)$}
  \bpic[xscale=4.3, yscale=1.3]
 
   \node(11) at (.95,1.35){\1} ;   
   \node(12) at (2.02,1.35){\ffn} ;
   \node(13) at (3.05,1.35){\2} ; 
   \node(135) at (1.6,-.6){\tsv} ; 

   \node(21) at (1,.35){\3} ;
   \node(22) at (2.25,.35){\teg} ;
   \node(23) at (3,-.6){\4} ;   
  
   \node(31) at (1,-1.55){\5} ;
   \node(322) at (2.55,-2.5){\tsx} ;
   \node(32) at (1.6,-2.5){\tth} ;
   \node(33) at (3,-1.7){\6} ; 
   
   \node(315) at (1.6,-4){\tfr} ;
   \node(325) at (2.21,-1.7){\tfv} ; 
   \node(335) at (2.1,-3.25){\tnn} ;

   \node(41) at (1,-4.5){\7} ;
   \node(42) at (2.1,-4.75){\tty} ;
   \node(43) at (3,-4.5){\8} ; 
   
   \node(425) at (1.6,-5.5){\ttw} ;
   
   \node(51) at (1,-6){\9} ; 
   \node(52) at (1.6,-7){\svn} ;
   \node(53) at (2.55,-5.5){\ton} ;
      
   \node(62) at (2.1,-6.25){\egn} ;
   \node(63) at (3,-6){\ten} ;
   
   \node(72) at (2.55,-7){\ntn} ;
   \node(81) at (1,-8){\lvn} ;
   \node(82) at (2.1,-8){\sxn} ;
   \node(83) at (3,-8){\twv} ;  
   
   \node(91) at (1,-9){\thn} ;
   \node(93) at (3,-9){\frn} ;
    
    \draw[->] (11)--(12) node[above, midway, scale=.65] {$\Iso$}  ;
    \draw[->] (12)--(13) node[above, midway, scale=.65] {$\Iso$}  ;
   
    \draw[double distance=2] (322)--(33) node[above=1, midway, scale=.75] {$$}  ;
   
    \draw[->] (51)--(52) node[above=1, midway, scale=.75] {$$}  ;
                                  
    \draw[->] (81)--(82) node[above=1, midway, scale=.75] {$$}  ;
    \draw[->] (82)--(83) node[below=1pt, midway, scale=.75] {via\,\eqref{indt base change}} 
                                  node[above=1, midway, scale=.75] {$\Iso$}  ;
  
    \draw[->] (91)--(93) node[below=1pt, midway, scale=.75] {\eqref{indt base change}} 
                                    node[above=1, midway, scale=.9] {$\Iso$}  ;
                                       
     \draw[->](1,1.2)--(21) node[left, midway, scale=.75]{$\simeq$} ;
     \draw[->](21)--(31) node[left, midway, scale=.75]{\textup{\ref{Kreg}}} 
                                   node[right, midway, scale=.75]{$\simeq$} ;
                                   
     \draw[->](31)--(41) ;
     \draw[->](335)--(2.1,-1.95) ;
     \draw[->](135)--(32) node[left=-.6, midway]{\scalebox{.93}[1.05]{\lift4,\textup{\ref{Kreg}},}} 
                                   node[right=-.5, midway, scale=.94]{$\lift4.3,\simeq,$} ;
     \draw[->](41)--(51) ; 
     \draw[->](51)--(81) ; 
     \draw[->](81)--(91) node[left, midway, scale=.75]{$\simeq$} ;
     
     \draw[->](52)--(82) ;
     \draw[->](62)--(42) ;
     \draw[->](72)--(53) ; 
     \draw[->](335)--(42) ;
     
     \draw[->](3,1.2)--(23) node[right, midway, scale=.75]{$\simeq$} ;
     \draw[->](23)--(33) node[right, midway, scale=.75]{\textup{\ref{Kreg}}} 
                                   node[left, midway, scale=.75]{$\simeq$} ;
     \draw[->](33)--(43) ;
     \draw[->](43)--(63) ; 
     \draw[->](63)--(83) ; 
     \draw[->](83)--(93) node[right, midway, scale=.75]{$\simeq$} ;

      \draw[->](21)--(135) ;
      \draw[->](135)--(22) ;
      \draw[->](22)--(23) ;
      \draw[->](31)--(1.4, -2.25) ;
      \draw[double distance=2](32)--(315) ;
      \draw[->](315)--(335) ;
      \draw[->](325)--(322) ;
      \draw[->](315)--(425) ;
      \draw[->](52)--(62) ;
      \draw[->](62)--(72) ;
      \draw[->](72)--(63) node[above=-3, midway, scale=.7]{\rotatebox{34} {via}\kern3pt}
                                      node[below=-7, midway, scale=.7]{\kern10pt\rotatebox{34} {\kern3pt\eqref{indt base change}}} ;
      \draw[->](53)--(43) node[above=-3, midway, scale=.7]{\rotatebox{34} {via}\kern3pt}
                                      node[below=-7, midway, scale=.7]{\kern10pt\rotatebox{34} {\kern3pt\eqref{indt base change}}} ;
      \draw[->](322)--(43) ;
      \draw[->](72)--(82) ;
      \draw[->](41)--(425) ;
      \draw[->](425)--(42) ;
      \draw[->](42)--(53) ;
      \draw[->](82)--(91) ;  
      
     \node at (2.3,-.95)[scale =.85] {\circled1} ;
     \node at (2.5,-4)[scale =.85] {\circled2} ;
     \node at (2.1,-7.02)[scale =.85] {\circled3} ;
  \epic
\]
\vskip-2pt
\end{small}

Whether subdiagram \circled1 commutes is a local question. Hence one can assume that $\CL$ is generated by a Koszul-regular sequence $\mathbf t=(t_1,\dots,t_c),$ and replace $\R\CH^{}_{\<Z}(h_*\OX\<,\OZ\<)$ by
$\CH^{}_{\<Z}(K^{}_{\<\<Z}({\mathbf t}),\OZ\<)$ (resp.~$\R\CH^{}_{\<\tilde Z}(\tilde h_*\OX\<,\CO_{\<\<\tilde Z}\<)$ by
$\CH^{}_{\<Z}(K^{}_{\<\<Z}({p^*\mathbf t}),\OZ\<)$). Then Lemma~\ref{alt c} gives the commutativity of \circled1.

Subdiagram \circled2 commutes, since without ``$\>\tilde h^*\<H^c$"  it is the diagram, with $(f,g,u,v,G)\set (p,r,h,\tilde h, \CO_{\<\<\tilde Z})$, shown in the proof of~\ref{indt base change} to commute.

Commutativity of \circled3 holds because the natural map 
$p^*\<h_*\to\tilde h_*r^*$ is, by definition, adjoint to the natural composite map $\tilde h^*p^*\<h_*\iso r^*\<h^*h_*\to r^*$.

Checking commutativity of the unlabeled subdiagrams is straightforward.

Diagram chasing shows now that the border commutes, whence so does subdiagram \circled{C\kf}.\va2

Subdiagram \circled A involves only sheaves, so its commutativity is readily checked, locally, via \eqref{exact'aff}.\va2

As for \circled{\kf B},\va1 expand it naturally as follows, where $G\set g^\flat\OZ$---so that  
one has the isomorphism $q^*G\underset{\textup{\ref{indt base change}}}\iso \tilde g^\flat\CO_{\<\<\tilde Z}=\colon \tilde G$.
\begin{small}
\[\mkern-2mu
\def\1{$\omega_{\<\<\tilde f}\otimes_{\<\<\tilde X} \tilde f^{\lift1.2,*,}\!\omega_{\tilde g}$}
\def\2{$r^*\<\omega_{\<\<f}\otimes_{\<\<\tilde X} \tilde f^* \<q^*\<\omega_g$}
\def\3{$r^*\<\omega_{\<\<f}\otimes_{\<\<\tilde X} r^*\< \<f^*\<\omega_g$}
\def\4{$r^*\<(\omega_{\<\<f}\otimes_\sX \<\<f^*\<\omega_g)$}
\def\5{$\tilde f^{\mkern1.5mu\flat}\CO_{\tilde Y}\Otimes{\!\tilde X}\LL \tilde f^{\lift1.2,*,}\<\<\tilde G$}
\def\6{$r^*\!f^\flat\OY\<\otimes_{\<\<\tilde X} \LL\tilde f^* \<q^*\<G$}
\def\7{$r^*\!f^\flat\OY\<\otimes_{\<\<\tilde X} r^*\LL f^*\<G$}
\def\8{$r^*\<(f^\flat\OY\<\<\Otimes{\!X}\<\LL f^*\<\<G)\quad $}
\def\9{$\tilde f^{\mkern1.5mu\flat}\CO_{\tilde Y}\Otimes{\!\tilde X} \LL\tilde f^* \<q^*\<G$}
\def\ten{$\tilde f^{\mkern1.5mu\flat}\<\tilde G$}
\def\lvn{$\tilde f^{\mkern1.5mu\flat}\< q^*\< G$}
\def\twv{$r^*\!f^\flat G$}
  \bpic[xscale=3.345, yscale=2.2]
  
   \node(11) at (1,-1){\1};   
   \node(12) at (2,-1){\2};  
   \node(13) at (3,-1){\3};  
   \node(14) at (4,-1){\4}; 

   \node(21) at (1,-2){\5};
   \node(22) at (2,-1.6){\6};
   \node(23) at (3,-2){\7};
   \node(24) at (4,-2.4){\8};   
  
   \node(225) at (2,-2.4){\9};
   
   \node(31) at (1,-3){\ten} ;
   \node(32) at (2,-3){\lvn} ;  
   \node(34) at (4,-3){\twv} ;

   \draw[->] (12)--(11) ;
   \draw[->] (13)--(12) ;
   \draw[->] (14)--(13) node[below=5, midway, scale=.75] {} 
                                  node[above, midway, scale=.75] {} ;

   \draw[->] (22)--(21) ;
   \draw[->] (23)--(22) ;
   \draw[->] (24)--(23) ;  
    
   \draw[->] (23)--(225) ; 
   \draw[->] (225)--(21) ;
   
   \draw[->] (32)--(31) ;
   \draw[->] (34)--(32) ;
   
    \draw[->] (11)--(21) ;
    \draw[->] (21)--(31) ;
    
    \draw[->] (12)--(22) ;
    \draw[->] (225)--(32) ;

    \draw[->] (13)--(23) ;
    
    \draw[->] (14)--(24);
    \draw[->] (24)--(34) ;
 
    \node at (1.42,-1.35)[scale =.85] {\circled4} ;
    \node at (3,-2.53)[scale =.85] {\circled5} ;
     
  \epic
\]
\end{small}
\indent The commutativity of the unlabeled diagrams is easily checked. That of~\circled4 is shown by arguments like those used above to show commutativity of~\circled{C\kf}. For that of \circled5, it suffices to show the commutativity of the adjoint diagram, as per the hint  in \cite[4.7.3.4(c)]{li}, \emph{mutatis mutandis}.

Thus \circled{\kf B} commutes.\va2

The commutativity of \circled{\kf D} can be shown in the same way as that of the last diagram in \cite[4.6.8]{li}, thereby completing the proof of Lemma~\ref{lem:Kozpf}.
\end{proof}

For a Koszul-regular closed immersion $f\colon X\to Y$, the interaction of the isomorphism 
\[
 c^{\>\flat}_{\<\<f}(G)\colon \omega_{\<\<f}\Otimes{\sX} \LL f^*\<G\iso f^\flat\< G\qquad(G\in\Dqc(X))
 \]
 in \ref{KosReg}(i) with independent base change, $\Otimes{}$ and $\R\sHom$ is described in the following proposition.
  
\begin{subprop}\label{ci tensor hom} Let\/ $f\colon X\to Y$ be a Koszul-regular closed immersion of codimension\/ $d,$ 
and\/ $F,$ $G\in\Dqc(Y).$\va2

\textup{(i)} Let
\[
\def\1{$X'$}
\def\2{$X$}
\def\3{$Y'$}
\def\4{$Y$}
 \bpic[xscale=1, yscale=.8]

   \node(11) at (1,-1){\1};   
   \node(13) at (3,-1){\2}; 
     
   \node(31) at (1,-3){\3};
   \node(33) at (3,-3){\4};
   
    \draw[->] (11)--(13) node[above=1pt, midway, scale=.75] {$v$} ;  
      
   \draw[->] (31)--(33) node[below=1pt, midway, scale=.75] {$u$} ;
    
    \draw[->] (11)--(31) node[left=1pt, midway, scale=.75] {$g$} ;
   
    \draw[->] (13)--(33) node[right=1pt, midway, scale=.75] {$f$};
  \node at (2.05,-2)[scale=.85] {$\sigma$} ;
  
 \epic 
\]
be an independent square of scheme-maps\/ \textup{(\emph{see} \S\ref{Indt square})} in which\/ $g$ is affine. 

Then\/ $\sigma$ is a fiber square, and $g$ is a Koszul-regular closed immersion of codimension\/ $d.$ 

\pagebreak[3]

Moreover, with
\( 
\beta_\sigma(G)\colon \LL v^*\mkern-2.5mu f^\flat G\lto g^\flat\> \LL u^*\mkern-.5mu G
\)
adjoint to the natural composite map\/
\( 
\R g_*\LL v^*\<\< f^\flat G\iso \LL u^*\R \fst f^\flat G
 \lto\LL u^*\mkern-.5mu G, 
\)
the following natural\/ $\D(X')$-diagram commutes$\>:$
\[
\def\1{$\LL v^*(\omega_{\<\<f}\Otimes{\sX}\LL f^*\<G)$}
\def\2{$\LL v^*\omega_{\<\<f}\Otimes{\<X'}\LL v^*\LL f^*\<G$}
\def\3{$\omega_g\Otimes{\<X'}\LL g^*\LL u^*\mkern-.5mu G$}
\def\4{$\LL v^*\<\<f^\flat\<G$}
\def\5{$g^\flat\>\LL u^*\mkern-.5muG$}
 \bpic[xscale=4.5, yscale=1.5]

   \node(11) at (1,-1){\1}; 
   \node(12) at (2.02,-1){\2};   
   \node(13) at (3,-1){\3}; 
     
   \node(21) at (1,-2){\4};
   \node(23) at (3,-2){\5};
   
   \draw[->] (11)--(12) node[above, midway, scale=.75] {$\Iso$} ;  
   \draw[->] (12)--(13) node[above, midway, scale=.75] {$\Iso$} ;  
    
   \draw[->] (21)--(23) node[above=1pt, midway, scale=.9] {$\Iso$} 
                                   node[below=1pt, midway, scale=.75] {$\beta_\sigma(G)$} ;
    
    \draw[->] (11)--(21) node[left=1pt, midway, scale=.75] {$\LL v^*c^{\>\flat}_{\<\<f}(G)$} 
                                   node[right=1pt, midway, scale=.75]{$\simeq$} ;
   
    \draw[->] (13)--(23) node[right=1pt, midway, scale=.75] {$c^{\>\flat}_{\<\<g}(\LL u^*\mkern-.5muG)$}
                                   node[left=1pt, midway, scale=.75]{$\simeq$} ;

 \epic 
\]

\noindent\textup{(ii)} The following natural\/ $\D(X)$-diagram commutes$\>:$
\[
\def\1{$\omega_{\<\<f}\Otimes{\sX}\LL f^*(F\Otimes{Y}G\>)$}
\def\2{$\omega_{\<\<f}\Otimes{\sX}\LL f^*\<\<F\Otimes{\sX}\LL f^*\<G$}
\def\4{$f^\flat(F\Otimes{Y}G\>)$}
\def\5{$f^\flat\<\<F\Otimes{\sX}\LL f^*\<G$}
 \bpic[xscale=2.5, yscale=1.5]

   \node(11) at (3,-1){\1}; 
   \node(13) at (1,-1){\2}; 
     
   \node(21) at (3,-2){\4};
   \node(23) at (1,-2){\5};
   
   \draw[<-] (11)--(13) node[above, midway, scale=.75] {$\Iso$} ;  
    
   \draw[<-] (21)--(23)  node[above, midway, scale=.75] {\textup{\ref{flat and tensor}(ii)}}
                                    node[below=1pt, midway, scale=.75] {$\chi(f,F,G)$} ;
    
    \draw[->] (11)--(21) node[right=1pt, midway, scale=.75] {$c^{\>\flat}_{\<\<f}(F\Otimes{Y}G)$} 
                                    node[left=1pt, midway, scale=.75]{$\simeq$} ;
   
    \draw[->] (13)--(23) node[left, midway, scale=.75] {$c^{\>\flat}_{\<\<f}(F)\<\Otimes{\sX}\id$}
                                   node[right=1pt, midway, scale=.75]{$\simeq$} ;

  \epic 
\]
\noindent\textup{(iii)} If $F$ is pseudo-coherent and\/ $G\in\Dqcpl(Y),$ then the following natural\/  $\D(X)$-diagram, where the map $\xi$ is adjoint to the natural map
\[
\omega_{\<\<f}\Otimes{\sX}\R\sHom_Y(\LL f^*\<\<F\<,\>\LL f^*\mkern-.5mu G\>)\Otimes{\sX}\LL f^*\<\<F
\lto
\omega_{\<\<f}\Otimes{\sX}\LL f^*\mkern-.5mu G,
\]
commutes$\>:$\va2
\[
\def\1{$\omega_{\<\<f}\Otimes{\sX}\LL f^*\R\sHom_Y(F\<,G\>)$}
\def\2{$\omega_{\<\<f}\Otimes{\sX}\R\sHom_Y(\LL f^*\<\<F\<,\>\LL f^*\mkern-.5mu G\>)$}
\def\3{$\R\sHom_\sX(\LL f^*\<\<F\<,\>\omega_{\<\<f}\Otimes{\sX}\LL f^*\mkern-.5mu G\>)$}
\def\4{$f^\flat\R\sHom_Y(F\<,G\>)$}
\def\5{$\R\sHom_\sX(\LL f^*\<\<F\<,f^\flat\<G\>)$}
 \bpic[xscale=4, yscale=2.75]

   \node(11) at (1,-1){\1}; 
   \node(12) at (2.02,-1.53){\2};   
   \node(13) at (3.05,-1){\3}; 
     
   \node(21) at (1,-2){\4};
   \node(23) at (3.05,-2){\5};
   
   \draw[->] (11)--(12) node[above, midway, scale=.75] {$$} ;  
   \draw[->] (12)--(13) node[above=-1, midway, scale=.75] {$\xi\ $} ;  
    
   \draw[->] (21)--(23) node[above, midway, scale=.75] {\textup{\ref{flat and hom}(iii)}} 
                                   node[below=1pt, midway, scale=.75] {$\zeta^{-\<1}$} ;
    
    \draw[->] (11)--(21) node[left=1pt, midway, scale=.75] {$c^{\>\flat}_{\<\<f}(-)$} 
                                   node[right=1pt, midway, scale=.75]{$\simeq$} ;
   
    \draw[->] (13)--(23) node[right=1pt, midway, scale=.75] {$\via\>c^{\>\flat}_{\<\<f}(\LL f^*\mkern-.5mu G)$}
                                   node[left=1pt, midway, scale=.75]{$\simeq$} ;

 \epic 
\]

\end{subprop}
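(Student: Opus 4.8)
All three assertions fit a single pattern — that $c^{\>\flat}_{\<\<f}(-)$ commutes with a standard operation — and in each case the starting point is the factorization established in the proof of Proposition~\ref{KosReg},
\[
c^{\>\flat}_{\<\<f}(G)=\chi(f,\OY,G)\smallcirc\bigl(c^{\>\flat}\Otimes{\sX}\id_{\LL f^*\<G}\bigr),
\]
with $c^{\>\flat}\colon\omega_{\<\<f}\iso f^\flat\OY$ the chain of isomorphisms $(\ref{KosReg}.1)$ and $\chi$ the projection-type isomorphism of Proposition~\ref{flat and tensor}(ii). This factors every diagram in the statement into an \emph{$\omega$-part}, involving only $c^{\>\flat}$ (hence only the object $\OY$ and the explicit chain $(\ref{KosReg}.1)$, whose local form is Lemma~\ref{alt c}), and a \emph{$G$-part}, which merely shuffles module variables and is controlled by the formal properties of $\chi$ (transitivity, Proposition~\ref{pfchi}; the explicit description in~\ref{ft0}(ii)), of $\zeta$ (the footnote to~\ref{flat and hom}(iii), and Proposition~\ref{pfzeta}), and of tor-independent base change for $(-)^\flat$ (Theorem~\ref{indt base change}). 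I would treat (ii) first, deduce (iii) from it, and handle (i) last.

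For (ii): cancelling the common factor $c^{\>\flat}\Otimes{\sX}\id$ from both sides of the square, what remains is an identity among $\chi(f,\OY,F\Otimes{Y}G)$ and $\chi(f,F,G)\smallcirc\chi(f,\OY,F)$, intertwined with the associativity isomorphism for $\Otimes{Y}$ and the canonical $\OY\Otimes{Y}F\cong F$. This is a coherence statement for the projection map $\chi$ in its module arguments; I would verify it either by reducing, via Proposition~\ref{represent} (Yoneda), to the evident corresponding identity for the counit maps $t^{}_{\<\<-}$ and the maps $\gamma$, $\gamma^{}_0$, or by passing to the explicit local $\bar\chi$ of~\ref{ft0}(ii), where it becomes a routine verification at the level of $\OY$-complexes built from $\gamma^{}_0$ and $\bar\gamma^{}_0$ (see \eqref{gam0}, \eqref{bargamma}). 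For (iii): by the footnote to Proposition~\ref{flat and hom}(iii), $\zeta^{-1}$ is adjoint under~\ref{right adjoint} to $f^\flat\R\>\sHom_Y(F,G)\Otimes{\sX}\LL f^*\<\<F\xto{\chi}f^\flat(\R\>\sHom_Y(F,G)\Otimes{Y}F)\to f^\flat G$; transporting this across $c^{\>\flat}_{\<\<f}$ by means of part (ii), and using that $c^{\>\flat}_{\<\<f}$ carries the evaluation map $\R\>\sHom_Y(F,G)\Otimes{Y}F\to G$ to the evident $\OX$-map (immediate from the definitions of $\xi$ and of $\zeta$), reduces the square in (iii) to (ii). So (iii) is formal once (ii) is in hand.

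For (i): first I would record the preliminaries. Since $g$ is affine and $f$ is a closed immersion, independence of $\sigma$ gives $g_*\CO_{X'}\cong\LL u^*\!\fst\OX\cong u^*(\OY/\CI)$, so $X'=V(\CI\>\CO_{Y'})=X\times_Y Y'$ and $\sigma$ is a fiber square; then independence is tor-independence (\S\ref{Indt square}), so tensoring a generating Koszul-regular sequence of $\CI$ up to $\CO_{Y'}$ still gives a Koszul-regular sequence generating $\CI\>\CO_{Y'}$ (cf.\ \eqref{localize s} and \cite[tag~063K]{Stacks}), making $g$ a Koszul-regular closed immersion of codimension $d$. The $\omega$-part — compatibility of the chain $(\ref{KosReg}.1)$ for $f$ and for $g$ with $\beta_\sigma(\OY)$ — is precisely the content verified, for flat base change and in service of pseudofunctoriality, in subdiagrams \circled{\kf B} and \circled{C\kf} of the proof of Lemma~\ref{lem:Kozpf} (themselves resting on Lemma~\ref{alt c} and the localization identity \eqref{localize s}); those arguments use only that $\sigma$ is an independent fiber square, and so transfer with cosmetic changes. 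The $G$-part — compatibility of $\chi(f,\OY,G)$ with base change — I would obtain by localizing $Y$ and $Y'$ to affine schemes (so everything becomes qcqs), invoking $\xi^{}_{\<f}\colon f^\flat\iso f^{\<\times}$ (Corollary~\ref{flat=times}) and Remark~\ref{beta-times}(a) to transport the question to the pseudofunctor $(-)^{\<\times}$, and citing the base-change compatibility of the projection isomorphism for $(-)^{\<\times}$ from \cite[\S\S4.4, 4.7.3]{li}. Splicing the two parts along the factorization above yields the square in (i). \textbf{The main obstacle} is this $\omega$-part of (i): matching the several steps of $(\ref{KosReg}.1)$ — in particular Lemma~\ref{Kreg}'s isomorphism and the three $H^d$-identifications — with $\beta_\sigma$; I expect it to be no harder than the analogous verification already carried out in Lemma~\ref{lem:Kozpf}, but it is the step demanding the most care.
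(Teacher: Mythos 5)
Your proposal is essentially the paper's proof: the same factorization $c^{\>\flat}_{\<\<f}(G)=\chi(f,\OY,G)\smallcirc(c^{\>\flat}\Otimes{\sX}\id)$ drives everything, the preliminaries to (i) are established the same way (via $g_*\CO_{X'}\cong K_{Y'}(\mathbf t')$), and the diagram in (i) is split along that factorization into an $\omega_{\<\<f}$-part handled by \circled{C\kf} of Lemma~\ref{lem:Kozpf} and a $G$-part that is a $\chi$-vs-base-change compatibility. Part (ii) reduces to $\chi$-coherence and (iii) is deduced from (ii) after passing to the adjoint form of $\zeta^{-1}$ — all as the paper does, with the ordering (ii), (iii), (i) purely cosmetic since (ii) and (iii) feed into each other and (i) is independent.

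The one genuine divergence is your route through the $G$-part of (i): you localize, invoke $\xi^{}_{\<f}\colon f^\flat\iso f^{\<\times}$ and Remark~\ref{beta-times}(a), and cite the base-change compatibility of the projection isomorphism for $(-)^{\<\times}$ from \cite[\S\S4.4, 4.7.3]{li}, whereas the paper simply quotes \cite[4.7.3.4(c)]{li} (with $E\set\OY$) directly, and likewise for (ii) cites \cite[4.7.3.4(a)]{li}. Your detour through $(-)^{\<\times}$ is sound but costs a localization-to-qcqs step that the direct citation avoids; the paper's route is a bit more economical since the relevant compatibilities are already stated in \cite{li} at the level of the adjoint $f^{\<\times}$ (equivalently $f^\flat$) without needing $\xi^{}_{\<f}$. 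Both get there.

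One small point worth flagging: in your (ii) you reduce to a $\chi$-coherence identity and propose two verification routes (Yoneda via~\ref{represent}, or the local $\bar\chi$ of~\ref{ft0}(ii)); the paper's actual handling is even lighter — it splits the square into a top half that is an obvious tensor rearrangement and a bottom half that is cited. No harm, but your reduction-then-verify is more work than the paper does there.
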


\begin{proof}
(i) Working locally on $Y\<$,  one can assume that the kernel of the natural map
$\OY\to\fst\OX$ is $\mathbf t\OY$, where $\mathbf t$~is a Koszul-regular sequence of length~$d$. 
Then $\fst\OX$ is resolved by the Koszul complex~$K_Y(\mathbf t)$, and, with $\mathbf t'$ the sequence
$\mathbf t\CO_{Y'}$, one has in $\D(Y')$ the natural isomorphisms 
\[
g_*\CO_{\<X'}\cong \LL g_*\LL v^*\OX \cong \LL u^*\R\fst\OX \cong u^*\<K_Y(\mathbf t)\cong K_{Y'}(\mathbf t').
\]
Therefore, $\mathbf t'$ is a Koszul-regular sequence of length~$d$, whence the projection 
$\tilde g\colon\tilde X=Y'\times_YX\to Y'$ is a Koszul-regular closed immersion of codimension\/~$d;$ and the natural map $\tilde g_*\CO_{\tilde X}\to g_*\CO_{X'}$ is an isomorphism, so that, $g$~being affine, the natural map
$X'\to\tilde X$ is an isomorphism. The first two assertions, which need only be verified locally, result.

As for the diagram in question, the definition of $c^{\>\flat}_{\<\<f}(G)$ gives the following natural expansion:
\[
\def\1{$\LL v^*(\omega_{\<\<f}\Otimes{\sX}\LL f^*\<G)$}
\def\2{$\LL v^*\omega_{\<\<f}\Otimes{\<X'}\LL v^*\LL f^*\<G$}
\def\3{$\omega_g\Otimes{\<X'}\LL g^*\LL u^*\mkern-.5mu G$}
\def\4{$\LL v^*\<\<f^\flat\<G$}
\def\5{$g^\flat\>\LL u^*\mkern-.5muG$}
\def\6{$\LL v^*(f^\flat\OY\Otimes{\sX}\LL f^*\<G)$}
\def\7{$\LL v^*\<\<f^\flat\OY\Otimes{\sX}\LL v^*\LL f^*\<G$}
\def\8{$g^\flat\CO_{Y'}\Otimes{\sX}\LL g^*\LL u^*\mkern-.5mu G$}
 \bpic[xscale=4.5, yscale=2.5]

   \node(11) at (1,-1){\1}; 
   \node(12) at (2.015,-1){\2};   
   \node(13) at (3,-1){\3}; 
    
   \node(115) at (1,-1.5){\6}; 
   \node(125) at (2.015,-1.5){\7};   
   \node(135) at (3,-1.5){\8}; 
   
   \node(21) at (1,-2){\4};
   \node(23) at (3,-2){\5};
   
   \draw[->] (11)--(12) ;  
   \draw[->] (12)--(13) ;  
   
   \draw[->] (115)--(125) ;  
   \draw[->] (125)--(135) ; 
    
   \draw[->] (21)--(23) ;
    
    \draw[->] (11)--(115) ;
    \draw[->] (115)--(21) ;
     
    \draw[->] (12)--(125) ;

     \draw[->] (13)--(135) ;                              
     \draw[->] (135)--(23) ;

  \node at (1.49, -1.27)[scale=.9]{\circled1} ;
  \node at (2.53, -1.27)[scale=.9]{\circled2} ;
  \node at (2.015, -1.79)[scale=.9]{\circled3} ; 
 \epic 
\]
The commutativity of \circled1 is clear. For that of \circled2, cf.~that of \circled{C\kf} in \ref{lem:Kozpf}. For that of \circled3, cf.~\cite[4.7.3.4(c)]{li} (with $E\set\OY$). \va 1

(ii) The diagram expands naturally as
\[
\def\1{$\omega_{\<\<f}\Otimes{\sX}\LL f^*(F\Otimes{Y}G\>)$}
\def\2{$\omega_{\<\<f}\Otimes{\sX}\LL f^*\<\<F\Otimes{\sX}\LL f^*\<G$}
\def\3{$f^\flat\OY\Otimes{\sX}\LL f^*(F\Otimes{Y}G\>)$}
\def\4{$f^\flat\OY\Otimes{\sX}\LL f^*\<\<F\Otimes{\sX}\LL f^*\<G$}
\def\5{$f^\flat(F\Otimes{Y}G\>)$}
\def\6{$f^\flat\<\<F\Otimes{\sX}\LL f^*\<G$}
 \bpic[xscale=2.5, yscale=2.5]

   \node(11) at (3,-1){\1}; 
   \node(13) at (1,-1){\2}; 
   
   \node(115) at (3,-1.5){\3}; 
   \node(135) at (1,-1.5){\4};
   
   \node(21) at (3,-2){\5};
   \node(23) at (1,-2){\6};
   
   \draw[<-] (11)--(13) node[above, midway, scale=.75] {$\Iso$} ;  
   
   \draw[<-] (115)--(135) node[above, midway, scale=.75] {$\Iso$} ;
    
   \draw[<-] (21)--(23)  node[above, midway, scale=.75] {\textup{\ref{flat and tensor}(ii)}}
                                    node[below=1pt, midway, scale=.75] {$\chi(f,F,G)$}  ;
    
    \draw[->] (11)--(115) node[right=1pt, midway, scale=.75] {$\via\>c^{\>\flat}_{\<\<f}(\OY)$} 
                                    node[left=1pt, midway, scale=.75]{$\simeq$} ;
    \draw[->] (115)--(21) node[right=1pt, midway, scale=.75] {$c^{\>\flat}_{\<\<f}(F\<\Otimes{\sX}G\>)$} 
                                    node[left=1pt, midway, scale=.75]{$\simeq$} ;

    \draw[->] (13)--(135) node[left, midway, scale=.75] {$\via\>c^{\>\flat}_{\<\<f}(\OY)$}
                                   node[right=1pt, midway, scale=.75]{$\simeq$} ;
     \draw[->] (135)--(23) node[left, midway, scale=.75] {$\via\>c^{\>\flat}_{\<\<f}(F)$}
                                   node[right=1pt, midway, scale=.75]{$\simeq$} ;

  \epic 
\]
Commutativity of the top half is clear; and that of the bottom half is left for the reader to verify (cf.~\cite[4.7.3.4(a)]{li}
with $E\set\OY$).\va2

(iii) It suffices to prove the commutativity of the adjoint diagram, i.e., of the border of the following 
natural diagram (where $\CH\set\sHom$):
\[
\def\1{$\omega_{\<\<f}\Otimes{\sX}\LL f^*\R\CH_Y(F\<,G\>)\Otimes{\sX}\LL f^*\<\<F$}
\def\2{$\omega_{\<\<f}\Otimes{\sX}\R\CH_\sX(\LL f^*\<\<F\<,\>\LL f^*\mkern-.5mu G\>)\Otimes{\sX}\LL f^*\<\<F$}
\def\3{$\omega_{\<\<f}\Otimes{\sX}\LL f^*\mkern-.5mu G$}
\def\4{$f^\flat\R\CH_Y(F\<,G\>)\Otimes{\sX}\LL f^*\<\<F$}
\def\5{$\R\CH_\sX(\LL f^*\<\<F\<,f^\flat\<G\>)\Otimes{\sX}\LL f^*\<\<F$}
\def\6{$f^\flat\<G$}
\def\7{$\omega_{\<\<f}\Otimes{\sX}\LL f^*(\R\CH_Y(F\<,G\>)\Otimes{Y}F)$}
\def\8{$f^\flat(\R\CH_Y(F\<,G\>)\Otimes{Y}F)$}
 \bpic[xscale=4, yscale=2.5]

   \node(11) at (1,-1){\1}; 
   \node(12) at (2,-.5){\2};   
   \node(13) at (3.05,-1){\3}; 
   
   \node(21) at (2,-1.5){\7};
   \node(22) at (2,-2){\8};
      
   \node(31) at (1,-2.5){\4};
   \node(32) at (2.22,-2.5){\5};
   \node(33) at (3.05,-2.5){\6};
   
   \draw[->] (11)--(12) ;  
   \draw[->] (12)--(13) ;  
    
   \draw[->] (21)--(22) ;
    
   \draw[->] (31)--(32) ;  
   \draw[->] (32)--(33) ; 
   
    \draw[->] (11)--(31) ;
   
    \draw[->] (13)--(33) ;
    
    \draw[->] (11)--(21) ;
    \draw[->] (21)--(13) ;
    \draw[->] (31)--(22) ;    
    \draw[->] (22)--(33) ;
  
    \node at(2,-1)[scale=.9] {\circled4} ; 
    \node at (1.3, -1.77)[scale=.9]{\circled5} ;
    \node at (2.7, -1.77)[scale=.9]{\circled6} ;
    \node at (2.015, -2.27)[scale=.9]{\circled7} ; 

 \epic 
\]

The commutativity of subdiagram \circled4 is given by \cite[3.5.6(g)]{li}, with $\alpha\colon[D,E\>]\otimes D\to E$ the natural map.
That of \circled5 is given by (ii), with $(\R\CH_Y(F\<,G\>),F)$ in place of $(F\<,G\>)$.
That of \circled6 is clear.

For \circled7, it suffices to prove the commutativity of its adjoint, and so of all the subdiagrams of the following
natural one:
\[\mkern-3mu
\def\1{$\R\fst(f^\flat\R\CH_Y(F\<,G\>)\Otimes{\sX}\LL f^*\<\<F\>)$}
\def\2{$\R\fst f^\flat(\R\CH_Y(F\<,G\>)\Otimes{Y}F)$}
\def\3{$\R\fst f^\flat\R\CH_Y(F\<,G\>)\Otimes{Y}F$}
\def\4{$\R\CH_Y(F\<,G\>)\Otimes{Y}F$}
\def\5{$G$}
\def\6{$\R\fst f^\flat\<G$}
\def\7{$\R\fst \R\CH_\sX(\LL f^*\<\<F\<,f^\flat\<G\>)\Otimes{Y}F$}
\def\8{$\R\fst (\R\CH_\sX(\LL f^*\<\<F\<,f^\flat\<G\>)\Otimes{\sX}\LL f^*\<\<F\>)$}
\def\9{$\R\CH_Y(F\<,\R\fst f^\flat\<G\>)\Otimes{Y}F$}
 \bpic[xscale=2.6, yscale=1.45]

   \node(11) at (1,-1){\1}; 
   \node(14) at (4,-1){\2};   
   
   \node(22) at (1.4,-2){\3}; 
   \node(23) at (3.4,-2){\4};
     
   \node(33) at (4.15,-3){\5};
   
   \node(42) at (1.4,-4){\7};
   \node(43) at (3.4,-4){\9};
   
   \node(51) at (1,-5){\8}; 
   \node(54) at (4.5,-5){\6}; 
   
   \draw[->] (11)--(14) ;  
   
   \draw[->] (22)--(23) ;  
    
   \draw[->] (42)--(43) ;
    
   \draw[->] (51)--(54) ;  
   
    \draw[->] (.35,-1.25)--(.35,-4.75) ;
   
    \draw[->] (22)--(42) ;
      
    \draw[->] (43)--(23) ;
      
    \draw[->] (4.5,-1.25)--(54) ;
    
    \draw[->] (11)--(22) node[above=-5, midway, scale=.75]{$\mkern85mu\eqref{projf}^{-\<1}$};
    \draw[->] (14)--(23) ;
    \draw[->] (.6,-4.75)--(42) node[above=-10, midway, scale=.75]{$\mkern97mu\eqref{projf}^{-\<1}$} ;    
    \draw[->] (43)--(54) ;
    \draw[->] (54)--(33) ;    
    \draw[->] (23)--(33) ;

    \node at(2.5,-1.52)[scale=.9] {\circled7$_1$} ; 
    \node at (2.5, -3)[scale=.9]{\circled7$_2$} ;
    \node at (2.5, -4.52)[scale=.9]{\circled7$_3$} ;
  
 \epic 
\]

The commutativity of the unlabeled subdiagrams is easily checked. That of~\circled7$_1$ results from
\ref{flat and tensor}(ii), and that of \circled7$_2$ (without ``$\Otimes{Y}F\>$") from \ref{flat and hom}(iii).

The commutativity of \circled7$_3$ results from that of its adjoint diagram, namely, with the abbreviations 
$\fst$ for $\R\fst$, $f^*$ for $\LL f^*\<,$ $[-,-]^{}_Z$ for $\R\CH_Z(-,-)$, and 
$\otimes_Z$ for~$\Otimes{Z}\ (Z=X\textup{ or }Y)$, from that of the natural diagram
\[
\def\1{$f^*(\fst [f^*\<\<F\<,f^\flat\<G\>]^{}_X\otimes F\>)$}
\def\2{$f^*([F\<,\fst f^\flat\<G\>]^{}_Y\otimes F)$}
\def\3{$[f^*\<\<F\<,f^\flat\<G\>]^{}_X\otimes f^*\<\<F$}
\def\4{$f^\flat\<G,$}
 \bpic[xscale=2.75, yscale=1.65]

   \node(11) at (1,-1){\1}; 
   \node(13) at (3,-1){\2};

   \node(21) at (1,-2){\3};
   \node(23) at (3,-2){\4};
   
   \draw[->] (11)--(13) ;  
   
   \draw[->] (21)--(23) ;
    
    \draw[->] (11)--(21) node[left=1pt, midway, scale=.75]{\eqref{projf}} ;
       
    \draw[->] (13)--(23) ;

  \epic 
\]
which commutativity follows, e.g., from \cite[3.5.5, 3.5.6(d) and 3.4.6.2]{li}.
\end{proof}

\end{subcosa}  

\end{cosa}

\begin{small}
\begin{cosa}\label{general affine}  

Corollary~\ref{qp duality} extends to \emph{all} affine maps 
of qcqs schemes if we replace~$\Dqc$ by $\D(\sA_\qc)$, see Corollary~\ref{anyaffinemap}. Corollary~\ref{moderate}
records that this replacement is unnecessary~for schemes~$X$ such that the natural functor $\D(\sA_\qc(X))\to\Dqc(X)$ is an equival\-ence of categories---for instance 
finite\kf-dimensional noetherian schemes \cite[p.\,191, 3.7\kf]{Il} or quasi-compact separated schemes \cite[p.\,230, 5.5]{BN} (but not arbitrary qcqs schemes,  see \cite[p.\,195, 0.3\kf]{Il}). 

Some details follow; the rest are left to the reader.\va2

For any qcqs scheme $X$, the inclusion functor 
$j^{}_{\<\<X}\colon\sA_\qc(X)\to\sA(X)$ has a right adjoint~$Q_\sX$, the \emph{quasi-coherator}
\cite[p.\,187, 3.2]{Il}. For example, if $X$ is affine one can take $Q_\sX$ to be the sheafification of
the global section functor.

\end{cosa}

\begin{sublem}\label{oQ} 
Let\/ $f\colon X\to Y$ be an affine map of qcqs schemes,
let\/ $\phi\colon \oY\to Y$  be as in \/ \eqref{phi} and let\/ $Q\set Q_Y$ be the quasi-coherator.\va2

{\rm(i)} The inclusion\/
$\bar{\jmath}\colon\sA_\qc(\oY)
\hookrightarrow \sA(\oY)$ has a right adjoint\/ $\oQ$
such that\va1 $\phi_*\oQ=Q\phi_*\>.$ 

{\rm(ii)} The derived functor\/ $\R\oQ$ is right-adjoint to\/ 
$\R\bar\jmath\colon\D(\sA_\qc(\oY))\to \D(\>\oY)$.
\end{sublem}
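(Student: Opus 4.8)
The plan is to deduce both parts from the quasi-coherator $Q=Q_Y\colon\sA(Y)\to\sA_\qc(Y)$ on the \emph{scheme} $Y$ (which exists since $Y$ is qcqs, \cite[p.\,187, 3.2]{Il}): first construct $\oQ$ explicitly from $Q$, giving (i); then obtain (ii) by the usual recipe for deriving an adjunction whose left adjoint is exact. For (i), recall from \S\ref{decompose} that $\sA(\oY)$ is the category of pairs $(\CN,m)$ with $\CN$ an $\OY$-module and $m\colon\fst\OX\otimes_Y\CN\to\CN$ a scalar multiplication, $\phi_*$ being the forgetful functor $(\CN,m)\mapsto\CN$; and that, by \cite[p.\,218, (2.2.4)]{EGA1}, such a pair lies in $\sA_\qc(\oY)$ precisely when the underlying $\OY$-module $\CN$ is quasi-coherent. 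Given $(\CN,m)$, set $\oQ(\CN,m)\set(Q\CN,\bar m)$, where, writing $\epsilon\colon j_Y Q\CN\to\CN$ for the counit of $j_Y\dashv Q$ (and suppressing $j_Y$ hereafter), $\bar m$ is the image under $Q$ of the composite $\fst\OX\otimes_Y Q\CN\xto{\id\otimes\epsilon}\fst\OX\otimes_Y\CN\xto{m}\CN$; this makes sense because $\fst\OX\otimes_Y Q\CN$ is quasi-coherent (a tensor product of quasi-coherent $\OY$-modules), so that $Q(\fst\OX\otimes_Y Q\CN)$ is canonically $\fst\OX\otimes_Y Q\CN$ and $\epsilon\circ\bar m=m\circ(\id\otimes\epsilon)$. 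A routine check shows $\bar m$ obeys the scalar-multiplication axioms, so $\oQ(\CN,m)\in\sA_\qc(\oY)$, and by construction $\phi_*\oQ(\CN,m)=Q\CN=Q\phi_*(\CN,m)$.

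For the adjointness, let $(\CN',m')\in\sA_\qc(\oY)$ and $(\CN,m)\in\sA(\oY)$. A morphism $(\CN',m')\to(\CN,m)$ in $\sA(\oY)$ is an $\OY$-linear $g\colon\CN'\to\CN$ with $g\circ m'=m\circ(\id\otimes g)$; since $\CN'$ is quasi-coherent, $g$ factors uniquely as $\epsilon\circ\tilde g$ with $\tilde g\colon\CN'\to Q\CN$ $\OY$-linear. The two $\OY$-linear maps $\tilde g\circ m'$ and $\bar m\circ(\id\otimes\tilde g)$ from $\fst\OX\otimes_Y\CN'$ to $Q\CN$ agree after composition with $\epsilon$ --- by $\epsilon\tilde g=g$, $\epsilon\bar m=m(\id\otimes\epsilon)$ and $gm'=m(\id\otimes g)$ --- hence are equal, because $\fst\OX\otimes_Y\CN'$ is quasi-coherent and factorizations through $\epsilon$ are unique; so $\tilde g$ is a morphism $(\CN',m')\to\oQ(\CN,m)$, and $g\mapsto\tilde g$ is a natural bijection
\[
\Hom_{\sA(\oY)}\big(\bar{\jmath}(\CN',m'),(\CN,m)\big)\iso\Hom_{\sA_\qc(\oY)}\big((\CN',m'),\oQ(\CN,m)\big).
\]
(Existence of a right adjoint to $\bar{\jmath}$ also follows formally, $\bar{\jmath}$ being a colimit-preserving functor between locally presentable abelian categories --- $\sA_\qc(\oY)$ is equivalent to $\sA_\qc(X)$ by \S\ref{2.1} --- after which $\phi_*\oQ\cong Q\phi_*$ drops out of a Yoneda argument from $\phi^*\dashv\phi_*$, $\bar{\jmath}\dashv\oQ$, $j_Y\dashv Q$; but the explicit $\oQ$ above is what is used below.)

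For (ii): $\bar{\jmath}$ is exact, so $\R\bar{\jmath}$ is just $\bar{\jmath}$ applied degreewise, and $\sA(\oY)$ --- modules over the sheaf of rings $\fst\OX$ on $Y$ --- has K-injective resolutions (\cite[\S2.3]{li}), so $\R\oQ B\set\oQ I_B$ is defined for $B\to I_B$ a K-injective resolution. The substantive point is that $\oQ$ preserves K-injectivity: if $I\in\mathbf K(\oY)$ is K-injective and $E\in\mathbf K(\sA_\qc(\oY))$ is exact, then $\bar{\jmath}E$ is exact ($\bar{\jmath}$ being exact), whence $\Hom_{\mathbf K(\sA_\qc(\oY))}(E,\oQ I)\cong\Hom_{\mathbf K(\oY)}(\bar{\jmath}E,I)=0$ by the degreewise adjunction $\bar{\jmath}\dashv\oQ$ lifted to homotopy categories. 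Hence, for $A\in\D(\sA_\qc(\oY))$ and $B\in\D(\oY)$,
\[
\Hom_{\D(\sA_\qc(\oY))}(A,\R\oQ B)\iso\Hom_{\mathbf K(\sA_\qc(\oY))}(A,\oQ I_B)\iso\Hom_{\mathbf K(\oY)}(\bar{\jmath}A,I_B)\iso\Hom_{\D(\oY)}(\R\bar{\jmath}A,B),
\]
the outer isomorphisms by K-injectivity of $\oQ I_B$, resp.\ $I_B$, and the middle one by the degreewise adjunction; so $\R\oQ$ is right adjoint to $\R\bar{\jmath}$. The argument is largely formal: the only inputs special to the situation are that $\fst\OX$ is a quasi-coherent $\OY$-algebra (so that $Q$ does all the work, quasi-coherence of an $\fst\OX$-module being detected by the underlying $\OY$-module) and the standard fact that the right adjoint of an exact functor preserves K-injectivity; the main labour, such as it is, lies in the bookkeeping of (i) --- checking that $\bar m$ is a scalar multiplication and that the factored map $\tilde g$ respects it.
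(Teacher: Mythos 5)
Your argument matches the paper's proof closely: both construct $\oQ$ by applying the quasi-coherator $Q$ to the underlying $\OY$-module and equipping the result with the induced $\fst\OX$-action determined via the counit of $j_Y\dashv Q$ (your $\bar m$ is the paper's $\lambda$, characterized by the same commuting square), and both deduce (ii) from the observation that $\oQ$, having an exact left adjoint, preserves K-injectives. You simply spell out the details the paper leaves to the reader --- the verification that $\bar m$ satisfies the scalar-multiplication axioms, the factorization $g=\epsilon\circ\tilde g$ in the adjunction check, and the null-homotopy argument showing $\oQ I$ is K-injective --- so this is a complete and correct version of the same proof.
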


\begin{proof}
 (i) For $N\in\sA(Y)$, let $\epsilon_N\colon QN=j_YQN\to N$
be the counit map\va1 for the adjunction $j^{}_Y\!\dashv Q$.  
Set ${\oO}\set\phi_*\CO_{\>\oY}=\fst\OX,$ and $\>\otimes\>\set \otimes_{\OY}$ (nonderived).

For any $M\in\sA(\oY)$, scalar multiplication is an $\sA(Y)$-map 
\[
\mu\colon {\oO}\otimes\phi_*M\to \phi_*M. 
\]
Since $Q$ is right-adjoint to $j^{}_Y$, there is  a unique $\sA_\qc(Y)$-map~$\lambda$
making the following diagram commute:
\[
\CD
{\oO}\otimes Q\phi_*M @>\lambda>>Q\phi_*M\\
@V\id\otimes\>\> \epsilon VV @VV\epsilon V\\
{\oO}\otimes \phi_*M @>>\mu> \phi_*M
\endCD
\]
One checks that $\lambda$ 
makes $Q\phi_*M$ into a quasi-coherent ${\oO}$-module $\oQ M$; that this construction is functorial; and that the resulting functor $\oQ$ is as asserted in~(i).\va1

(ii) Since by (i),  $\oQ$ has an exact 
left adjoint,  therefore if~$\>G\>$ is a K-injective complex in~$\sA(\oY)$ then $\oQ G$ is  K-injective in~$\sA_\qc(\oY)$;
and  for any complex~$F\<$ in~$\sA_\qc(\oY)$, the functorial isomorphism of complexes of abelian groups 
\[
\Hom_{\sA(\oY)}(\bar\jmath\> F,G\>)\iso \Hom_{\sA_\qc(\oY)}(F,\>\oQ G\>)
\]
that results from~(i) gives a functorial derived-category isomorphism
\[
\R\<\<\Hom_{\D{(\oY)}}(\R \bar\jmath\> F\<,G\>)\iso \R\<\<\Hom_{\D(\sA_\qc(\oY))}(F\<,\>\R\oQ G\>),
\]
to which application of the homology functor $\textup{H}^0$ gives a functorial isomorphism
\[
\Hom_{\D(\>\oY)}(\R \bar\jmath\> F\<,G\>)\iso\Hom_{\D(\sA_\qc(\oY))}(F\<,\>\R\oQ G\>)
\]
that extends via K-injective resolution $G'\to G$ to arbitrary $G'\in\D(\oY)$.
\end{proof}

\begin{subcor}\label{anyaffinemap} With\/ $f$ as in~\textup{\ref{oQ}} and $\pt$ as in\/ \textup{\ref{duality for phi},} the functor\/ 
$\bar{f\:\<}^{\!\<*}\R\oQ\>\pt$ is right-adjoint to\/ $\R\fst\colon\D(\sA_\qc(X))\to\D(Y)$.
\end{subcor}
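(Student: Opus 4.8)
The plan is to build the adjunction $\R\fst \dashv \bar{f\:\<}^{\!\<*}\R\oQ\>\pt$ by composing three adjunctions, two of which are already available in the excerpt. Recall from~\S\ref{2.4} that $f=\phi\bar f$, so $\R\fst=\phi_*\R\bar\fst$ (note $\phi_*$ is exact, so no derived decoration is needed on it). First I would observe that on the level of categories we have a chain
\[
\D(\sA_\qc(X))\xrightarrow{\ \R\bar\fst\ }\D(\sA_\qc(\oY))\xrightarrow{\ \R\bar\jmath\ }\D(\>\oY)\xrightarrow{\ \phi_*\ }\D(Y),
\]
whose composite is $\R\fst$ (using that $\R\bar\fst$ as a functor $\D(\sA_\qc(X))\to\D(\sA_\qc(\oY))$, followed by the inclusion $\R\bar\jmath$ into $\D(\oY)$, agrees with $\R\bar\fst\colon\D(X)\to\D(\oY)$ restricted to $\Dqc$, via the identification $\D(\sA_\qc(X))\cong\Dqc(X)$ of~\ref{2.2} and Proposition~\ref{^* equivalence}). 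Each of the three functors has a right adjoint: $\R\bar\fst\colon\D(\sA_\qc(X))\to\D(\sA_\qc(\oY))$ is an \emph{equivalence} by Proposition~\ref{^* equivalence} (with quasi-inverse $\bar{f\:\<}^{\!\<*}$, which therefore is also its right adjoint); $\R\bar\jmath$ has right adjoint $\R\oQ$ by Lemma~\ref{oQ}(ii); and $\phi_*\colon\D(\oY)\to\D(Y)$ has right adjoint $\pt=\R\>\sHom_\psi(\fst\OX,-)$ by Corollary~\ref{adjunction0} (the adjunction $\phi_*\dashv\pt$). Composing right adjoints in the reverse order gives that $\bar{f\:\<}^{\!\<*}\circ\R\oQ\circ\pt$ is right-adjoint to $\phi_*\circ\R\bar\jmath\circ\R\bar\fst=\R\fst$, which is exactly the assertion.

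The one point requiring a little care is the identification of $\phi_*\circ\R\bar\jmath\circ\R\bar\fst$ with $\R\fst\colon\D(\sA_\qc(X))\to\D(Y)$. The functor $\R\bar\fst$ appearing in Lemma~\ref{oQ} and here should be read as the derived functor of $\bar\fst$ \emph{landing in} $\D(\sA_\qc(\oY))$; since $\bar\fst$ restricts to an exact equivalence $\sA_\qc(X)\to\sA_\qc(\oY)$ (see~\S\ref{2.1}), this derived functor is literally an equivalence of derived categories, and under the inclusion $\R\bar\jmath\colon\D(\sA_\qc(\oY))\hookrightarrow\D(\oY)$ it is carried to (the restriction to $\Dqc$ of) the ordinary $\R\bar\fst\colon\D(X)\to\D(\oY)$—this is the content of the last display in~\S\ref{2.1} together with \eqref{f and qc2}. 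Post-composing with the exact functor $\phi_*$ then gives $\R(\phi\bar f)_*=\R\fst$, again by the $\phi_*$-exactness already used repeatedly (e.g.\ in the proof of \eqref{f and qc2}). I expect this bookkeeping—keeping straight which copy of $\R\bar\fst$ is meant and checking the three right adjoints genuinely compose to the displayed functor—to be the main (and only real) obstacle; once it is dispatched, the statement follows purely formally from the uniqueness of adjoints. If one wants the counit and unit maps explicitly, they are obtained by splicing the counit/unit maps of the three constituent adjunctions in the evident way, exactly as in~\S\ref{counit unit} and~\ref{right adjoint}, but that refinement is not needed for the bare adjointness claim.
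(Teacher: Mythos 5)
Your proof is correct and follows essentially the same route as the paper: both factor $\R\fst$ as $\phi_*\circ\R\bar\jmath\circ\R\bar\fst$ with the middle $\R\bar\fst$ landing in $\D(\sA_\qc(\oY))$, cite the quasi-inverse equivalence $\D(\sA_\qc(X))\rightleftarrows\D(\sA_\qc(\oY))$ (paragraph just before~\ref{locally}), Lemma~\ref{oQ}(ii), and Corollary~\ref{adjunction0}, and then compose right adjoints. The only nitpick is that you initially cite Proposition~\ref{^* equivalence} for the equivalence at the $\D(\sA_\qc(-))$ level, where the precise reference is the end of~\S\ref{2.1}—but you correct this yourself in the bookkeeping paragraph.
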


\begin{proof}
The functor $\R\fst$ factors as 
\[
\D(\sA_\qc(X))\xto{\R\bar\fst\>}\D(\sA_\qc(\oY))\xto{\:\R \bar\jmath\:}\D(\>\oY)\xto{\<\phi_*\>}\D(Y).
\]
So the assertion follows  from the paragraph just before \ref{locally},  Lemma~\ref{oQ}, and Corollary~\ref{adjunction0}.
\end{proof}

\begin{subcor}\label{moderate}
In~\textup{\ref{anyaffinemap},} if the natural functor\/ $\boldsymbol j^{}_{\!X}\colon\D(\sA_\qc(X))\to\Dqc(X)$ is an equivalence 
then\/ $\boldsymbol j^{}_{\!X}\<\bar{f\:\<}^{\!\<*}\R\oQ\>\pt$ is right-adjoint to\/ 
$\R\fst\colon\Dqc(X)\to\D(Y).$
\end{subcor}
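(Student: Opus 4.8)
The plan is to transport the adjunction established in Corollary~\ref{anyaffinemap} across the equivalence $\boldsymbol j^{}_{\!X}$. The crux is to observe that, under the present hypothesis, the functor $\R\fst\colon\Dqc(X)\to\D(Y)$ of the statement is, up to natural isomorphism, the composite of the functor $\R\fst\colon\D(\sA_\qc(X))\to\D(Y)$ of Corollary~\ref{anyaffinemap} with a quasi-inverse of $\boldsymbol j^{}_{\!X}$; equivalently, $\R\fst\circ\boldsymbol j^{}_{\!X}$ is naturally isomorphic to the latter $\R\fst$. Granting this, the right adjoint of $\R\fst\colon\Dqc(X)\to\D(Y)$ is obtained from the right adjoint $\brf\R\oQ\>\pt$ of $\R\fst\colon\D(\sA_\qc(X))\to\D(Y)$ by post-composing with $\boldsymbol j^{}_{\!X}$ (the right adjoint of the quasi-inverse of $\boldsymbol j^{}_{\!X}$).

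To justify the identification, write $f=\phi\bar f$ as in \S\ref{2.1}--\S\ref{2.4}. Since $\phi_*$ is exact restriction of scalars and $f=\phi\bar f$, one has $\R\fst\cong\phi_*\R\bar\fst$ (cf.~\S\ref{2.2}). By Proposition~\ref{^* equivalence}, $\R\bar\fst$ restricts to an equivalence $\Dqc(X)\to\Dqc(\oY)$ with quasi-inverse $\brf$; by the paragraph just before~\ref{locally}, $\R\bar\fst$ likewise restricts to an equivalence $\D(\sA_\qc(X))\to\D(\sA_\qc(\oY))$ with quasi-inverse $\brf$; and $\brf$, being exact (\S\ref{2.1}), commutes with the natural functors $\boldsymbol j^{}_{\!X}$ and $\R\bar\jmath$. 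A short formal argument (insert $\brf\circ\R\bar\fst\cong\id$, use the compatibility $\boldsymbol j^{}_{\!X}\circ\brf\cong\brf\circ\R\bar\jmath$, then use $\R\bar\fst\circ\brf\cong\id$) yields $\R\bar\fst\circ\boldsymbol j^{}_{\!X}\cong\R\bar\jmath\circ\R\bar\fst$ on $\D(\sA_\qc(X))$, whence $\R\fst\circ\boldsymbol j^{}_{\!X}\cong\phi_*\R\bar\jmath\R\bar\fst$. But the right-hand side is exactly the composite $\D(\sA_\qc(X))\to\D(\sA_\qc(\oY))\to\D(\oY)\to\D(Y)$ used to define $\R\fst\colon\D(\sA_\qc(X))\to\D(Y)$ in the proof of Corollary~\ref{anyaffinemap}.

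With this identification in hand, the corollary is a formal consequence. Writing $\boldsymbol j^{-1}$ for a quasi-inverse of $\boldsymbol j^{}_{\!X}$, one has, for $G\in\Dqc(X)$ and $H\in\D(Y)$, natural isomorphisms
\begin{align*}
\Hom_{\D(Y)}(\R\fst G,H)
&\iso\Hom_{\D(Y)}\bigl(\R\fst\boldsymbol j^{}_{\!X}\boldsymbol j^{-1}G,\>H\bigr)\\
&\iso\Hom_{\D(\sA_\qc(X))}\bigl(\boldsymbol j^{-1}G,\>\brf\R\oQ\>\pt H\bigr)\\
&\iso\Hom_{\Dqc(X)}\bigl(G,\>\boldsymbol j^{}_{\!X}\brf\R\oQ\>\pt H\bigr),
\end{align*}
the first by the identification just established, the second by Corollary~\ref{anyaffinemap}, and the third because $\boldsymbol j^{}_{\!X}$ is an equivalence. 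This exhibits $\boldsymbol j^{}_{\!X}\brf\R\oQ\>\pt$ as a right adjoint of $\R\fst\colon\Dqc(X)\to\D(Y)$, as asserted.

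The only substantive step is the identification of the second paragraph, and I expect that to be the main (though essentially routine) obstacle: it asks one to match the ringed-space derived pushforward restricted to $\Dqc(X)$ with the composite built from the $\sA_\qc$-equivalences of \S\ref{2.1} and the quasi-coherator, and hence to assemble Proposition~\ref{^* equivalence}, the derived-category extension of those equivalences, and the exactness of $\brf$. Everything else is the standard yoga of transporting an adjunction along an equivalence of categories.
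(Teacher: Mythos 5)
Your proof is correct, and it is essentially the argument the paper intends (the paper leaves this corollary without an explicit proof). The two ingredients are exactly right: identify the restricted functor $\R\fst\colon\Dqc(X)\to\D(Y)$ with the composite defining the $\R\fst$ of Corollary~\ref{anyaffinemap}, after precomposing the former with $\boldsymbol j^{}_{\!X}$; then transport the adjunction of~\ref{anyaffinemap} along the equivalence.

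One remark on your second paragraph: the identification $\R\fst\circ\boldsymbol j^{}_{\!X}\cong\phi_*\R\bar\jmath\R\bar\fst$ can be reached more directly, without routing the argument through the three equivalences. Since $f$ is affine, every quasi-coherent $\OX$-module is $\fst$-acyclic, so for any $E\in\D(\sA_\qc(X))$ the canonical map $\fst E\to\R\fst(\boldsymbol j^{}_{\!X}E)$ is an isomorphism (this is the same appeal to the dualized \cite[2.7.5]{li} that the paper makes in \S\ref{affine aspect} to show the map $q^{}_{\<\<f}$ is an isomorphism); and $\fst E=\phi_*\bar\jmath\bar\fst E$ is by definition the composite used in the proof of~\ref{anyaffinemap}. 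Your chain of equivalences proves the same thing, at the cost of more bookkeeping, but it is sound: $\boldsymbol j^{}_{\!X}\brf\cong\brf\R\bar\jmath$ (by exactness of $\brf$ and preservation of quasi-coherence) plus the two quasi-inverse pairs from~\ref{^* equivalence} and from the paragraph before~\ref{locally} do yield $\R\bar\fst\circ\boldsymbol j^{}_{\!X}\cong\R\bar\jmath\circ\R\bar\fst$, whence the identification. The final transport step is the standard yoga and is carried out correctly.
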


\end{small}

\section{From commutative algebra to affine schemes}\label{affex} 

By Theorem~\ref{indt base change} with $u$ an open immersion, or by direct verification,
the foregoing constructions involving $f^\flat$ for finite pseudo\kf-coherent $f\colon X\to Y$ are compatible with open immersions on~$Y\<$, and so can be locally elucidated by making them more explicit when $Y$ and $X$ are affine schemes. We do this via an ``equivalence," given by \emph{sheafification,}  from the (concretely realized) duality theory for derived categories of modules over commutative rings  to the duality theory for $\Dqc$-categories over affine 
schemes---essentially a special case of the equivalence mentioned near the beginning of~\S\ref{pc finite}.
A quasi-inverse for this equivalence is provided by  the derived global section \mbox{functor.} 
Details appear in section~\ref{affine aspect}.

The underlying idea is, given a ring-homomorphism 
$\varphi\colon R\to S$ and a construction involving the corresponding scheme\kf-map
$f\colon \spec S\to\spec R$, to describe a concrete commutative\kf-algebra construction involving $\varphi$, whose sheafification is naturally isomorphic to the given one, and so constitutes a concrete realization. (Abusing terminology,  ``concrete" signifies ``concrete modulo choosing
K-injective or K-projective resolutions of complexes.")

For instance, sheafifying the right adjoint $\ush\varphi(-)\set\R\<\<\Hom_\varphi(S,-)$ of the restriction-of-scalars functor 
$\varphi_*\colon\D(S)\to\D(R)$ gives a right adjoint for $\R\fst$ (see Corollary~\ref{duality for phiaff} 
and  Proposition~\ref{affine duality}).

More such realizations are presented,  for the base\kf-change 
isomorphism~$\beta_\sigma$  of~Theorem~\ref{indt base change} (see Proposition~\ref{concrete bc2}),
and for the interaction of $\ush\varphi$ with~$\Otimes{}$ and $\R\<\Hom$ (see Propositions~\ref{concrete chi} 
and~\ref{concrete zeta}).

The algebraic version of pseudofunctoriality for Koszul-regular immersions (to which the scheme\kf-theoretic one has been reduced, see remarks immediately after Lemma~\ref{Kozpf}) is proved in section~\ref{transci}. This proof, essentially the one in \cite[Appendix C.6]{NS}, is more complicated than anything that came before.

\begin{cosa}\label{affine aspect}
For a commutative ring $T\<$, let $\sA(T)$ be the category of $\>T$-modules and~$\D(T)$ the corresponding derived category. 

Set $Z\set\spec T$. For the usual adjunction $\SH{\>\>T}\!\dashv \Gamma^{}_{\!\!Z}$ with
$\SH{\>\>T}\colon\sA(T)\to\sA(Z)$ the sheafification functor and 
$\Gamma^{}_{\!\!Z}\set\Gamma(Z,-)\colon \sA(Z)\to \sA(T)$ the global-section functor \cite[1.7.4]{EGA1},
the unit map $\id\iso\Gamma^{}_{\!\!Z}\>\SH{\>\>T}$ and 
counit map $\SH{\>\>T}\Gamma^{}_{\!\!Z}\to \id$ are the natural ones. 
%

Since $\Gamma^{}_{\!\!Z}$ has an exact left adjoint, it preserves
K-injectivity, and there results an adjunction $\SH{\>\>T}\!\<\dashv \R\Gamma^{}_{\!\!Z}$
of derived functors between $\D(T)$ and $\D(Z)$.  
Here the unit map is the natural functorial composite isomorphism\va{-1}
\begin{equation}\label{unitmap}
\SG\iso\Gamma^{}_{\!\!Z}\>\SH{\>\>T}\SG \iso\R\Gamma^{}_{\!\!Z}\>\SH{\>\>T}\SG
\qquad \SG\in\D(T)
\end{equation}
($\Gamma^{}_{\!\!Z}\>\SH{\>\>T}\to\R\Gamma^{}_{\!\!Z}\>\SH{\>\>T}$ is an isomorphism by  \cite[2.7.5, (ii)$\Rightarrow$(a)]{li}, dualized, with $d=0$,
and \cite[2.2.6]{li}));
and for $G\in\D(Z)$), if
$G\to J$ is a quasi-isomorphism of $\OZ$-complexes with $J$ K-injective,
then the counit~map\va{-1}
is the natural composite $\D(Z)$-map 
\[
\SH{\>\>T}\R\Gamma^{}_{\!\!Z}\>G \iso\SH{\>\>T}\<\Gamma^{}_{\!\!Z}J\lto J\iso G\>.
\]

The functor~
$\SH{\>\>T}$ factors naturally as 
\(
\sA(T)\xto{\bst}\sA_\qc(Z)\hookrightarrow\sA(Z),
\)
and $\bst$ is an equivalence---whence so is its derived functor
$\D(T)\to\D(\sA_\qc(Z))$, which will also be denoted $\bst$.

The derived~$\SH{\>\>T}$, considered as a functor from 
$\D(T)$ to $\Dqc(Z)\subset\D(Z)$, factors naturally as 
\begin{equation}\label{qcequiv}
\D(T)\xto[\lift1.3,\bst,\!]{\lift.45,\:\approx\:,} \D(\sA_\qc(Z))\xto[\>\>\lift1.3,\boldsymbol j^{}_{\!Z},]
{}\Dqc(Z).
\end{equation} 
Since $\boldsymbol j^{}_{\<\<Z}$ is an equivalence \cite[p.\,225, 5.1]{BN}, therefore so is this $\SH{\>\>T}\<$,
for which a quasi-inverse (i.e., right adjoint) is the restriction to $\Dqc(Z)$ of  $ \R\Gamma^{}_{\!\!Z}\>$.

\begin{subcosa}
Let $\varphi\colon R\to S$ be a homomorphism of commutative rings. Denote by
$\varphi_*\colon\sA(S)\to\sA(R)$ the exact restriction-of-scalars functor, and also (abusing notation)
its derived functor $\D(S)\to\D(R)$.  

Let\va{-3}
\[
 \spec S =:X\xto{\,f\,} Y\set \spec R
\]
be the scheme\kf-map corresponding to $\varphi$. 
For any $S$-complex~$\SEE$ 
 the natural $\D(Y)$-maps are isomorphisms
\begin{equation}\label{sheafify_*}
\SH{\<R}\varphi_*\SEE\xto[\upsilon_{\<\varphi}\<(\SEE)]{\Iso} \fst\>\SH{S} \SEE\xto[q^{}_{\<\<f}(\SEE)]{\Iso}\R\fst\> \SH{S}\SEE.
\end{equation}
The first isomorphism is elementary: it is same as the natural isomorphism 
$\widetilde \SEE_R\cong\bar \fst \widetilde \SEE_S$ from Example~\ref{locally}, with
scalars restricted from $\fst\OX$ to $\OY$. In more detail (see  \cite[1.7.7(ii)]{EGA1}), $\upsilon_{\<\varphi}\colon\SH{\<R}\varphi_*\iso \fst\>\SH{S}$ is the functorial map such~that 
$\Gamma^{}_{\<\<\!Y}\upsilon_\varphi$ is
the natural composite functorial $R$-isomorphism
\begin{equation}\label{factor}
\Gamma^{}_{\<\<\!Y}\SH{\<R}\varphi_*\iso\varphi_*\iso\varphi_*\Gamma^{}_{\!\!X}\SH{S}
=\Gamma^{}_{\<\<\!Y}\<\fst\>\SH{S}\<,
\end{equation}
whence $\upsilon_{\<\varphi}$ is the natural composite isomorphism
\begin{equation*}\label{factor'}
\SH{\<R}\varphi_*\iso\SH{\<R}\varphi_*\Gamma^{}_{\!\!X}\SH{S}
=\SH{\<R}\Gamma^{}_{\<\<\!Y}\<\fst\>\SH{S}\< \iso \fst\>\SH{S}\<.
\tag*{(\ref{factor})$'$}
\end{equation*}
And,  the map $q^{}_{\<\<f}(\SEE^{\>0})$ is an isomorphism for any $S$-module~$\SEE^{\>0}$
\cite[1.3.2]{EGA3}\kf,
that is, such an $\SEE^{\>0}$ is $\fst$-acyclic \cite[2.2.6]{li}; so by the dualized version of \cite[2.7.5, (ii)$\Rightarrow$(a)]{li},  $q^{}_{\<\<f}(\SEE)$ is an isomorphism for any $S$-complex~$\SEE$.\va2

The isomorphisms in \eqref{sheafify_*} are pseudofunctorial, in that for 
any homomorphism $\xi\colon S\to T$ of commutative rings, with corresponding scheme\kf-map
\(
 \spec T =:V\xto{\lift.7,\,g\,,} X\set \spec S,
\)
the following natural diagram commutes: 
\begin{equation}\label{pfgamma}
\def\1{$\SH{\<R}\<\varphi_*\xi_*$}
\def\3{$\SH{\<R}\<(\xi\varphi)_*$}
\def\4{$\fst\>\SH{S}\xi_*$}
\def\6{$\R\fst\>\SH{S}\xi_*$}
\def\7{$\fst g_*\>\SH{\>T}$}
\def\8{$\R\fst g_*\>\SH{\>T}$}
\def\9{$\R\fst \R g_*\>\SH{\>T}$}
\def\ten{$\R(fg)_*\>\SH{\>T}$}
\def\lvn{$(fg)_*\>\SH{\>T}$}
 \CD
  \bpic[xscale=3, yscale=1.35]

   \node(11) at (1,-1){\1};   
   \node(13) at (3,-1){\3};
   
   \node(21) at (1,-3){\4};   

   \node(31) at (1,-4){\6};
   \node(32) at (2,-3){\7};
   \node(33) at (3,-3){\lvn};
   
   \node(41) at (1,-5){\8};
   
   \node(51) at (2,-5){\9};
   \node(53) at (3,-5){\ten};

    \draw[double distance=2pt] (11)--(13) ; 
 
    \draw[->] (32)--(33) node[above=1pt, midway, scale=.75] {$\Iso$} ;
   
   \draw[->] (51)--(53) node[above=1pt, midway, scale=.75] {$\Iso$} ; 
                                   
    \draw[->] (11)--(21) node[left=1pt, midway, scale=.75] {$\upsilon_{\<\varphi}$} ;
    \draw[->] (21)--(31) node[left=1pt, midway, scale=.75] {$q^{}_{\<\<f}$} ;
    \draw[->] (31)--(41) node[left=1pt, midway, scale=.75] {$\R\fst\<\upsilon^{}_{\<\xi}$} ;
    \draw[->] (41)--(51) node[below, midway, scale=.75]{$\R\fst q^{}_g$} ;
                                                 
    \draw[->] (13)--(33) node[right=1pt, midway, scale=.75] {$\upsilon^{}_{\<\xi\varphi}$} ;
    \draw[->] (33)--(53) node[right=1, midway, scale=.75]{$q^{}_{\<\<f\<g}$} ;
    
    \draw[->] (21)--(32)  node[above=-1, midway, scale=.75] {$\fst\<\upsilon_{\<\xi}$} ;
    \draw[->] (32)--(51)    ;

    \node at (2,-2.02)[scale=.9]{\circled1} ;
    \node at (1.5,-4.02)[scale=.9]{\circled2} ;
    \node at (2.5,-4.02)[scale=.9]{\circled3} ;
 \epic
 \endCD
\end{equation}

Commutativity of \circled1 (i.e., pseudofunctoriality of $\upsilon$) is readily checked
via application of the equivalence $\Gamma^{}_{\<\<\!Y}$ and use of  \eqref{factor}, or otherwise; that of~ \circled2 is clear; and that of \circled3 is rudimentary, see.~e.g., \cite[(3.6.4.1)]{li}.\va2
\end{subcosa}

\pagebreak[3]
\begin{subcosa} (The reader is advised to proceed directly to section~\ref{ushvarphi}, returning here only as needed).\va2

The functor $\R\fst\colon\D(X)\to\D(Y)$ has a monoidal structure (see \cite[Definition 3.4.2]{li}), given by the natural composite map 
\[
\OY\lto\fst\OX\iso\R\fst\OX
\] 
and by the natural bifunctorial map (see e.g., \cite[3.2.4(ii)]{li})
\[
 \R\fst E\Otimes{Y}\R\fst F\lto\R\fst(E\Otimes{\sX}F\>)\qquad(E,F\in\D(X)).
\]

The functor $\varphi_*\colon\D(S)\to\D(R)$ has an analogous monoidal structure.

The exact functor $\SH{S}$ also has a monoidal structure.  For its description,
recall that for every $S$-complex~$\SG$, there is a quasi-isomorphism 
\mbox{$\pi^{}_{\SG}\colon\bar\SG\to \SG$}  such that 
$\bar\SG$ is a direct limit of bounded-above
flat $S$-complexes, and so is K-flat (see \cite[Prop.~2.5.5]{li} and its proof). Then 
$\SH{S}\bar\SG$ is a direct limit of bounded-above 
flat $\OX$-complexes, and so is a K-flat $\OX$-complex; and $\SH{S}\pi^{}_{\SG}$ is a quasi-isomorphism.  Therefore, one can
declare the said monoidal structure on $\SH{S}$ to be given by the natural isomorphism 
$\OX\iso\SH{S}\<S$ and by the unique bifunctorial $\D(X)$-isomorphism
(see \cite[2.6.5(ii)]{li}, dualized)
\begin{equation}\label{sheafify Otimes}
\SH{S}\<\SEE\Otimes{\sX}\<\SH{S}\<\SF\iso\SH{S}\<(\SEE\Otimes{\<S}\SF\>)
\qquad(\SEE,\SF\in\D(S))
\end{equation}
that makes the following otherwise natural diagram commute for all $\SEE,\>\SF$:
\[
\def\1{$\SH{S}\<\SEE\Otimes{\sX}\<\SH{S}\<\SF$}
\def\2{$\SH{S}\<(\SEE\Otimes{\<S}\SF\>)$}
\def\3{$\SH{S}\<\SEE\otimes_{\sX}\<\SH{S}\<\SF$}
\def\4{$\SH{S}\<(\SEE\otimes_{\<S}\SF\>)$}
  \bpic[xscale=3.9, yscale=1.5]

   \node(11) at (1,-1){\1};   
   \node(12) at (2,-1){\2}; 
      
   \node(21) at (1,-2){\3};
   \node(22) at (2,-2){\4};
       
    \draw[->] (11)--(12) node[above,midway, scale=.75]{$\Iso$} ; 
    \draw[->] (21)--(22) node[above,midway, scale=.75]{$\Iso$} ;  
    
    \draw[->] (11)--(21) ;
    \draw[->] (12)--(22) ; 
   \epic
\]

Similar remarks apply to the functor $\SH{\<R}$.

The isomorphism $\upsilon_{\<\varphi}\colon\SH{\<R}\varphi_*\iso\R\fst\>\SH{S}$ from \ref{factor'} is compatible with the monoidal structures on the functors involved:  
\begin{sublem}\label{mon_*}
For\/ $\SEE,$ $\SF\in\D(S),$ the following natural diagram commutes$:$
\[
\def\1{$\R \fst\> \SH{\<S}(\SEE\Otimes{\<\<S}\SF\>)$}
\def\2{$\R \fst (\SH{\<S}\SEE\Otimes{\<\<X}\SH{\<S}\SF\>)$}
\def\3{$\R \fst\> \SH{\<S}\SEE\Otimes{Y}\R \fst\> \SH{\<S}\SF$}
\def\4{$\SH{\<R}\< \varphi_*(\SEE\Otimes{\<\<S}\SF\>)$}
\def\5{$\SH{\<R}( \varphi_*\SEE\Otimes{\<\<R} \varphi_*\SF\>)$}
\def\6{$\SH{\<R}\< \varphi_*\SEE\Otimes{Y}\SH{R}\<\varphi_*\SF$}
  \bpic[xscale=3.9, yscale=1.5]

   \node(11) at (1,-1){\6};   
   \node(12) at (2.037,-1){\5}; 
   \node(13) at (3,-1){\4};   
   
   \node(21) at (1,-2){\3};
   \node(22) at (2.037,-2){\2};
   \node(23) at (3,-2){\1};
    
    \draw[->] (11)--(12) node[above,midway, scale=.75]{$\Iso$} ; 
    \draw[->] (12)--(13) ;
    
    \draw[->] (21)--(22) ;  
    \draw[->] (22)--(23) node[above,midway, scale=.75]{$\Iso$} ;
    
    \draw[->] (11)--(21) node[left,midway, scale=.75]{$\simeq$} 
                                   node[right,midway, scale=.75]{$\upsilon_{\<\<\varphi}\<\Otimes{Y}\<\<\upsilon_{\<\varphi}$} ;
    \draw[->] (13)--(23) node[right,midway, scale=.75]{$\simeq$} 
                                   node[left,midway, scale=.75]{$\upsilon_{\<\varphi}$} ;
 
   \epic
\]
\end{sublem}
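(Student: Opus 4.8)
The plan is to reduce the assertion, in two steps, to an elementary compatibility between restriction of scalars and tensor products of modules. First I would reduce to K-flat complexes: pick quasi-isomorphisms $\pi_\SEE\colon\bar\SEE\to\SEE$ and $\pi_\SF\colon\bar\SF\to\SF$ with $\bar\SEE$, $\bar\SF$ direct limits of bounded-above flat $S$-complexes, as in the paragraph preceding \eqref{sheafify Otimes}; then $\bar\SEE$, $\bar\SF$ are K-flat, the $R$-complexes $\varphi_*\bar\SEE$, $\varphi_*\bar\SF$ are K-flat, and the $\OX$-complexes $\SH{S}\bar\SEE$, $\SH{S}\bar\SF$ and the $\OY$-complexes $\SH{R}\varphi_*\bar\SEE$, $\SH{R}\varphi_*\bar\SF$ are K-flat. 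Since the diagram is functorial in $\SEE$ and $\SF$ and every functor appearing in it preserves the quasi-isomorphisms $\pi_\SEE$, $\pi_\SF$, it suffices to treat the case $\SEE=\bar\SEE$, $\SF=\bar\SF$. Then, by the definition of the monoidal structure on $\SH{S}$ and on $\SH{R}$ through the uniqueness clause of \cite[2.6.5(ii)]{li}, the definition of that on $\R\fst$ through \cite[3.2.4(ii)]{li}, and the fact that $q^{}_{\<\<f}$ of \eqref{sheafify_*} is an isomorphism on the $\fst$-acyclic complexes $\SH{S}\SEE$, $\SH{S}\SF$ and $\SH{S}(\SEE\otimes_S\SF)$, one may replace every $\Otimes{}$ by the corresponding $\otimes$, every derived structure map by its non-derived precursor, and $\R\fst$ by $\fst$; once the relevant canonical comparison maps are inserted, each face of the diagram becomes one of the cited commutative squares, or an instance of naturality. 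It thus remains to prove commutativity of the resulting diagram of $\OY$-complexes, every vertex of which is, degreewise, a quasi-coherent $\OY$-module.

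Next I would descend to modules. On the affine scheme $Y$ the global-section functor restricts to an exact, faithful equivalence $\sA_\qc(Y)\to\sA(R)$, so it is enough to show the diagram commutes after applying $\Gamma(Y,-)$. By \eqref{factor}, $\Gamma(Y,-)$ carries $\upsilon_\varphi$ to the natural composite $\Gamma^{}_{\<\<\!Y}\SH{R}\varphi_*\iso\varphi_*\iso\varphi_*\Gamma^{}_{\!\!X}\SH{S}=\Gamma^{}_{\<\<\!Y}\fst\SH{S}$; and, from the descriptions of $\SH{R}$, $\SH{S}$, $\varphi_*$ and $\fst$ as monoidal functors, $\Gamma(Y,-)$ (resp.~$\Gamma(X,-)$) carries each of the four horizontal structure maps, after the canonical identification $\Gamma\SH{}\iso\id$, to the obvious map on sections. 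Under these identifications the square becomes a diagram of $R$-modules, and what remains is the compatibility, via the sheafification isomorphisms of Example~\ref{locally}, of the canonical $R$-linear map $\varphi_*\SEE^{\>0}\otimes_R\varphi_*\SF^{\>0}\to\varphi_*(\SEE^{\>0}\otimes_S\SF^{\>0})$ (induced degreewise by $S$-bilinearity) with its sheaf-theoretic analogue for $\fst\colon\sA(X)\to\sA(Y)$. This is immediate from the constructions of $\SH{}$ and of these maps, and may be checked on generators.

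The step demanding the most care is the first one, specifically the verification that, once the canonical comparison maps are inserted, the derived structure maps in the diagram coincide with their non-derived precursors and that each resulting face is one already known to commute (cf.~Lemma~\ref{gamma-nu} and the discussion around \eqref{sheafify_*}); this requires no input beyond the uniqueness assertions in \cite[2.6.5]{li} and the defining property of $\gamma$ in \cite[3.2.4(ii)]{li}.
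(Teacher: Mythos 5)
Your overall strategy---reduce to K-flat complexes, pass from the derived to the non-derived monoidal structure maps, then finish in the category of modules---is the same route the paper takes. The opening reduction (choosing $\bar\SEE,\bar\SF$ as direct limits of bounded-above flat complexes, so that $\SH{S}\bar\SEE,\SH{S}\bar\SF$ are K-flat and $\fst$-acyclic) is exactly what the paper does, and your closing step of applying $\Gamma(Y,-)$ and checking on generators is a reasonable (and slightly cleaner) way to finish the purely non-derived part; the paper only applies $\Gamma^{}_{\!\!X}$ to one small subdiagram and verifies the remainder by explicit sheaf-level diagram expansion.

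Where you are too brisk is the middle step, the passage from derived to non-derived structure maps. You claim that, after inserting the canonical comparison maps, ``each face of the diagram becomes one of the cited commutative squares, or an instance of naturality,'' and that the only inputs are the uniqueness clause of \cite[2.6.5(ii)]{li} and the defining property of $\gamma$ in \cite[3.2.4(ii)]{li}. That is an understatement: the compatibility of the derived map $\gamma\colon \R\fst E\Otimes{Y}\R\fst F\to\R\fst(E\Otimes{\sX}F\>)$ with its non-derived precursor $\gamma^{}_0$ is not immediate from those two references alone. It is precisely the content of Lemma~\ref{tensor and _*} (which you do not cite), and in the paper's proof the corresponding subdiagrams (\circled4 for $\R\fst$, \circled2 for $\varphi_*$) each require a further five-piece expansion using \cite[(3.4.5.2)]{li}, \cite[(3.2.1.2)]{li}, \cite[(3.2.1.3)]{li} and \cite[(3.2.4.1)]{li}. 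If you invoke Lemma~\ref{tensor and _*} for $\R\fst$ and carry out its routine commutative-algebra analogue for $\varphi_*$ (or expand as the paper does), the argument closes cleanly; as written, that is the gap.
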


\begin{proof} 
In view of the above quasi-isomorphisms 
$\pi^{}_{\SEE}\colon\bar\SEE\to \SEE$ and $\pi^{}_{\SF}\colon\bar\SF\to \SF$, one can assume that all the complexes $\SEE$, $\SF$, $ \SH{S}\SEE$ and $\SH{S}\SF$ are K-flat. 

It's enough then to prove commutativity of the natural diagram          
\[\mkern-4mu
\def\1{$\SH{\<R}\< \varphi_*\SEE\Otimes{Y}\SH{R}\<\varphi_*\SF$}
\def\2{$\SH{\<R}\<( \varphi_*\SEE\Otimes{\<\<R} \varphi_*\SF\>)$}
\def\3{$\SH{\<R}\< \varphi_*(\SEE\Otimes{\<\<S}\SF\>)$}
\def\4{$\SH{\<R}\< \varphi_*\SEE\otimes_{Y}\SH{R}\<\varphi_*\SF$}
\def\5{$\fst\>\SH{S}\SEE\Otimes{Y}\<\<\fst\>\SH{S}\SF$}
\def\6{$\SH{\<R}\<( \varphi_*\SEE\otimes_{\<R} \varphi_*\SF\>)$}
\def\7{$\SH{\<R}\< \varphi_*(\SEE\otimes_{S}\SF\>)$}
\def\8{$\fst\>\SH{S}\SEE\otimes_{Y}\<\<\fst\>\SH{S}\SF$}
\def\9{$\fst(\SH{S}\SEE\otimes_{\<X}\SH{S}\SF\>)$}
\def\ten{$\fst\>\SH{S}\<(\SEE\otimes_{S}\SF\>)$}
\def\lvn{$\R\fst(\SH{S}\SEE\otimes_{\<X}\SH{S}\SF\>)$}
\def\twv{$\R\fst\>\SH{S}\<(\SEE\otimes_{S}\SF\>)$}
\def\thn{$\R\fst\>\SH{S}\SEE\Otimes{Y}\R\fst\>\SH{S}\SF$}
\def\frn{$\R \fst (\SH{\<S}\SEE\Otimes{\<\<X}\SH{\<S}\SF\>)$}
\def\ffn{$\R \fst\> \SH{\<S}(\SEE\Otimes{\<\<S}\SF\>)$}
  \bpic[xscale=4.5, yscale=1.5]

   \node(11) at (1,-1){\1};   
   \node(12) at (2,-1){\2}; 
   \node(13) at (3,-1){\3};
   
   \node(21) at (1.5,-1.75){\4}; 
   
   \node(31) at (1,-2.5){\5};   
   \node(32) at (2,-2.5){\6}; 
   \node(33) at (3,-2.5){\7};

   \node(41) at (1.5,-3.25){\8};
   \node(42) at (2,-4){\9};
   \node(43) at (3,-4){\ten}; 
   
   \node(52) at (2,-4.75){\lvn};
   \node(53) at (3,-4.75){\twv};
   
   \node(61) at (1,-5.5){\thn};
   \node(62) at (2,-5.5){\frn}; 
   \node(63) at (3,-5.5){\ffn};   
     
    \draw[->] (11)--(12) node[above=1, midway, scale=.75] {$\Iso$} ; 
     \draw[->] (12)--(13) ;
                                                              
    \draw[->] (32)--(33) ;
      
   \draw[->] (42)--(43) node[above=1, midway, scale=.75] {$$};
   
   \draw[->] (52)--(53) ;

   \draw[->] (61)--(62) ; 
   \draw[->] (62)--(63) ;
                                   
     \draw[->] (11)--(31) node[left=1, midway, scale=.75] {$\simeq$} ;
     \draw[->] (31)--(61) node[left=1, midway, scale=.75] {$\simeq$} ;
           
     \draw[->] (21)--(41) node[left=1, midway, scale=.75] {$\simeq$} ;
    
     \draw[->] (12)--(32) node[right=1, midway, scale=.75] {$\simeq$} ;
     \draw[->] (42)--(52) node[right=1, midway, scale=.75] {$\simeq$} ;
     \draw[->] (62)--(52) node[right=1, midway, scale=.75] {$\simeq$} ;      

     \draw[->] (13)--(33) node[right=1, midway, scale=.75] {$\simeq$} ;
     \draw[->] (33)--(43) node[right=1, midway, scale=.75] {$\simeq$} ;
     \draw[->] (43)--(53) node[right=1, midway, scale=.75] {$\simeq$} ;
     \draw[->] (63)--(53) node[right=1, midway, scale=.75] {$\simeq$} ;      

   \draw[->] (11)--(21) ;
   \draw[->] (21)--(32) ;
   \draw[->] (31)--(41) ;
   \draw[->] (41)--(42) ;
   
   \node at (1.6, -1.42)[scale=.9]{\circled1} ;
   \node at (2.52, -1.75)[scale=.9]{\circled2} ;
   \node at (2.25, -3.27)[scale=.9]{\circled3} ;
   \node at (1.4, -4.375)[scale=.9]{\circled4} ;
   \node at (2.52, -5.165)[scale=.9]{\circled5} ;
   
 \epic
\]
  
The commutativity of the unlabeled subdiagrams is clear.
 
Subdiagrams \circled1 and \circled5 commute by the  description of the map \eqref{sheafify Otimes}.

The commutativity of \circled3 can be easily be checked after application of the equivalence 
$\Gamma^{}_{\!\!X}$.

Subdiagram \circled4 expands naturally as follows, 
with $E\set\SH{S}\SEE$ and $F\set\SH{S}\SF$: 

\begin{small}
\[\mkern-3mu
\def\1{$\fst(E\otimes_\sX F\>)$}
\def\2{$\fst E\otimes_Y \fst F$}
\def\3{$\fst(f^*\mkern-2.5mu\fst E\otimes_\sX f^*\mkern-2.5mu\fst F\>)$}
\def\4{$\fst f^*(\fst E\otimes_Y \fst F\>)$}
\def\5{$\R\fst(f^*\mkern-2.5mu\fst E\otimes_\sX f^*\mkern-2.5mu\fst F\>)$}
\def\6{$\R\fst f^*(\fst E\otimes_Y \fst F\>)$}
\def\7{$\R\fst(E\otimes_\sX F\>)$}
\def\8{$\R\fst(f^*\mkern-2.5mu\fst E\Otimes{\sX} f^*\mkern-2.5mu\fst F\>)$}
\def\9{$\R\fst \LL f^*(\fst E\otimes_Y \fst F\>)$}
\def\ten{$\fst E\Otimes{Y} \fst F$}
\def\lvn{$\R\fst(\LL f^*\mkern-2.5mu\fst E\Otimes{\<\sX} \LL f^*\mkern-2.5mu\fst F\>)$}
\def\twv{$\R\fst \LL f^*(\fst E\Otimes{Y} \fst F\>)$}
\def\thn{$\R\fst(E\Otimes{\sX} F\>)$}
\def\frn{$\R\fst(\LL f^*\R\fst E\Otimes{\<\sX} \LL f^*\R\fst F\>)$}
\def\ffn{$\R\fst \LL f^*(\R\fst E\Otimes{Y} \R\fst F\>)$}
\def\sxn{$\R\fst E\Otimes{Y} \R\fst F$}
  \bpic[xscale=3.43, yscale=1.2]

   \node(11) at (1,-1){\1};   
   \node(14) at (4,-1){\2}; 
   
   \node(22) at (1.6,-2){\3}; 
   \node(23) at (2.9,-2){\4};  
   
   \node(32) at (1.6,-3){\5}; 
   \node(33) at (2.9,-3){\6}; 
   
   \node(41) at (1,-3.75){\7};   
   \node(42) at (1.6,-4.5){\8};
   \node(43) at (3.13,-4.5){\9};  
   \node(44) at (4,-5){\ten};
   
   \node(52) at (1.95,-6){\lvn};   
   \node(53) at (3.35,-6){\twv}; 
   
   \node(62) at (1.95,-7){\frn};   
   \node(63) at (3.35,-7){\ffn};
   
   \node(71) at (1,-8){\thn};  
   \node(74) at (4,-8){\sxn};

    \draw[<-] (11)--(14) ;
    
    \draw[<-] (22)--(23) ; 
    
    \draw[<-] (32)--(33) ;
    
    \draw[<-] (52)--(53) ; 
    
    \draw[<-] (62)--(63) ; 
         
    \draw[<-] (71)--(74) ;
     
    \draw[->] (11)--(41) ;
    \draw[->] (71)--(41) ;
    
    \draw[->] (22)--(32) ;
    \draw[<-] (32)--(42) ; 
    \draw[<-] (42)--(52) ; 
    \draw[<-] (62)--(52) ;

    \draw[->] (23)--(33) ;
    \draw[<-] (33)--(43) ; 
    \draw[<-] (43)--(53) ; 
    \draw[<-] (63)--(53) ; 
        
    \draw[<-] (14)--(44) ;
    \draw[<-] (74)--(44) ;
    
   \draw[<-] (11)--(22) ;
   \draw[->] (32)--(41) ;
   \draw[->] (42)--(71) ;
   \draw[->] (62)--(71) ;
   
   \draw[<-] (23)--(14) ;
   \draw[<-] (43)--(14) ;
   \draw[->] (44)--(53) ;
   \draw[->] (74)--(63) ;

   \node at (2.5, -1.52)[scale=.9]{\circled4$^{}_1$} ;
   \node at (3.27, -2.52)[scale=.9]{\circled4$^{}_2$} ;
   \node at (2.37, -4.55)[scale=.9]{\circled4$^{}_3$} ;
   \node at (1.6, -6.52)[scale=.9]{\circled4$^{}_4$} ;
   \node at (2.6, -7.55)[scale=.9]{\circled4$^{}_5$} ;
   
  \epic
\]
\end{small}
\hskip 8.4pt The commutativity of the unlabeled subdiagrams is clear.

The commutativity of subdiagram \circled4$^{}_1$ is given, e.g., by \cite[(3.4.5.2)]{li} (taking into account \emph{ibid.,} 3.1.9 and 3.4.4(a)). That of \circled4$^{}_5$ holds by definition of its bottom arrow
\cite[3.2.4(ii)]{li}; 
of \circled4$^{}_2$ by that of \cite[(3.2.1.3)]{li}; 
of \circled4$^{}_4$ by that of \cite[(3.2.1.2)]{li}; and
of \circled4$^{}_3$ by that of \cite[(3.2.4.1)]{li}.\va1

Finally, subdiagram \circled2 without``$\>\SH{\<R}\<$" expands naturally as follows:
\[
\def\1{$\varphi_*\SEE\Otimes{\<\<R} \varphi_*\SF$}
\def\2{$\varphi_*\LL\varphi^*(\varphi_*\SEE\Otimes{\<\<R} \varphi_*\SF\>)$}
\def\3{$\varphi_*(\LL\varphi^*\varphi_*\SEE\Otimes{\<S} \LL\varphi^*\varphi_*\SF\>)$}
\def\4{$\varphi_*(\SEE\Otimes{\<\<S}\SF\>)$}
\def\5{$\varphi_*\LL\varphi^*(\varphi_*\SEE\otimes_{\<R} \varphi_*\SF\>)$}
\def\6{$\varphi_*\SEE\otimes_R\varphi_*\SF$}
\def\7{$\varphi_*\varphi^*(\varphi_*\SEE\otimes_R\varphi_*\SF\>)$}
\def\8{$\varphi_*(\varphi^*\varphi_*\SEE\otimes_{S} \varphi^*\varphi_*\SF\>)$}
\def\9{$\varphi_*(\SEE\otimes_{S}\SF\>)$}
\def\0{$\varphi_*(\varphi^*\varphi_*\SEE\Otimes{\<\<S} \varphi^*\varphi_*\SF\>)$}
  \bpic[xscale=3.45, yscale=1.5]

   \node(11) at (1,-1){\1};   
   \node(14) at (4,-1){\4}; 
   
   \node(22) at (1.85,-2){\2}; 
   \node(23) at (3.15,-2){\3};  
   
   \node(32) at (1.57,-3){\5}; 
   \node(33) at (3,-3){\0}; 
   \node(34) at (4,-3){\4};
   
   \node(42) at (1.85,-4){\7};
   \node(43) at (3.15,-4){\8};  
   
   \node(51) at (1,-5){\6};   
   \node(54) at (4,-5){\9}; 
    
    \draw[->] (11)--(14) ;
    
    \draw[->] (22)--(23) ; 
    
    \draw[->] (33)--(34) ;
    
    \draw[->] (42)--(43) ;
     
    \draw[->] (51)--(54) ;
     
    \draw[->] (11)--(51) ;
    
    \draw[->] (22)--(32) ;
    \draw[->] (32)--(42) ; 
    
    \draw[->] (23)--(33) ;
    \draw[->] (33)--(43) ;
    
    \draw[double distance=2] (14)--(34) ;
    \draw[->] (34)--(54) ;

   \draw[->] (11)--(22) ;
   \draw[->] (23)--(14) ;
   \draw[->] (51)--(32) ;
   \draw[->] (51)--(42) ;
   \draw[->] (43)--(54) ;
   
   \node at (2.455, -1.52)[scale=.9]{\circled2$^{}_1$} ;
   \node at (2.28, -3.05)[scale=.9]{\circled2$^{}_2$} ;
   \node at (3.58, -2.52)[scale=.9]{\circled2$^{}_3$} ;
   \node at (1.58, -3.63)[scale=.9]{\circled2$^{}_4$} ;
   \node at (2.475, -4.528)[scale=.9]{\circled2$^{}_5$} ;
   
  \epic
\]

The commutativity of the unlabeled subdiagrams is clear.\!
Subdiagram~\circled2$^{}_1$ commutes by definition, cf.~\cite[3.2.4(ii)]{li}. Checking the commutativity of its nonderived version \circled2$^{}_5$ is left to the reader.
The commutativity of~\circled2$^{}_3$ (respectively~\circled2$^{}_4$) is given by that of \cite[(3.2.1.2)]{li} (respectively~\cite[(3.2.1.3)]{li}. The commutativity of \circled2$^{}_2$ is given by the commutative\kf-algebra counterpart (proved similarly) of \cite[(3.2.4.1)]{li}. Thus \circled2 commutes.

This concludes the proof of Lemma~\ref{mon_*}.
\end{proof}

\end{subcosa}

\begin{subcosa}\label{ushvarphi}

With $^\op$ denoting ``opposite category," the functor \va{-1}
\[
\Hom_\varphi\colon \sA(S)^\op\times\sA(R)\to\sA(S)\\[-5pt]
\]
is given by\va{-1}
\[
 \Hom_\varphi(\SEE,\SG\>)\set \Hom_R(\varphi_*\SEE, \>\SG\>)\qquad \bigl(\SEE\in\sA(S), \,\SG\in\sA(R)\bigr),
\] 
$ \Hom_R(\varphi_*\SEE, \>\SG\>)$ being an $S$-module in the usual way (cf.~\S\ref{2.3}).
\va5

The proof of the next proposition and its corollaries, being similar to that of Proposition~\ref{adjass0} and its corollaries,  is left to the reader.

\pagebreak

\begin{subprop}\label{adjass0aff}
There is a unique trifunctorial\/ $\D(S)$-isomorphism
\begin{multline*}
\bar\alpha(\SEE,\SF,\SG\>)\colon\R\<\<\Hom_\varphi(\SEE\Otimes{\<S}\<\SF, \>\SG\>)\iso\R\<\<\Hom_{\>S}(\SEE, \R\<\<\Hom_\varphi(\SF,\SG\>))\\   
\big(\SEE,\SF\in\D(\>S),\;\SG\in\D(R)\big)  
\end{multline*} 
such that the following natural diagram, with $\Hr\set\Hom$ and\/ $\bar\alpha^{}_0(\SEE,\SF,\SG\>)$ the standard isomorphism of\/ $S$-complexes, commutes\/$.$ 
\[\mkern-4mu
\def\1{$\Hr_\varphi(\SEE\otimes_{S}\<\SF, \>\SG\>)$}
\def\2{$\R \Hr_\varphi(\SEE\otimes_{S}\<\SF, \>\SG\>)$}
\def\3{$\R \Hr_\varphi(\SEE\Otimes{S}\<\SF, \>\SG\>)$}
\def\4{$\Hr_{S}\<\big(\<\SEE, \Hr_{\varphi}(\SF,\SG\>)\<\big)$}
\def\5{$\R \Hr_{S}\<\big(\<\SEE, \Hr_{\varphi}(\SF,\SG\>)\<\big)$}
\def\6{$\R \Hr_{S}\<\big(\<\SEE, \R \Hr_{\varphi}(\SF,\SG\>)\<\big)$}
 \bpic[xscale=4.25, yscale=1.6]

   \node(11) at (1,-1){\1};   
   \node(12) at (1.972,-1){\2}; 
   \node(13) at (3,-1){\3};
   
   \node(21) at (1,-2){\4};  
   \node(22) at (1.972,-2){\5};
   \node(23) at (3,-2){\6}; 
   
    \draw[->] (11)--(12) ;  
    \draw[->] (12)--(13) ;
 
    \draw[->] (21)--(22) ;  
    \draw[->] (22)--(23) ;

    \draw[->] (11)--(21) node[right=1pt, midway, scale=.75] {$\simeq$}
                                   node[left=1pt, midway, scale=.75] {$\bar\alpha^{}_0(\SEE,\SF,\SG\>)$} ;
    \draw[->] (13)--(23) node[left=1pt, midway, scale=.75] {$\simeq$}
                                   node[right=1pt, midway, scale=.75] {$\bar\alpha(\SEE,\SF,\SG\>)$} ; 
                                       
    \epic
\]
  \end{subprop}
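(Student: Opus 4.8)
\textbf{Proof plan for Proposition~\ref{adjass0aff}.}

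The plan is to reduce the statement to the classical (underived) adjoint-associativity isomorphism for modules, in exactly the way Proposition~\ref{adjass0} was reduced to \cite[Proposition (2.6.1)$^*$]{li}. First I would recall that for a K-flat $S$-complex $\SF$ and a K-injective $R$-complex $\SG$, the $S$-complex $\Hom_\varphi(\SF,\SG\>)=\Hom_R(\varphi_*\SF,\SG\>)$ is K-injective: indeed, since $\varphi_*$ is exact and preserves K-flatness, for any exact $S$-complex $\SEE$ the complex $\SEE\otimes_S\SF$ is exact, hence so is $\varphi_*(\SEE\otimes_S\SF\>)$, and therefore by the classical adjoint-associativity isomorphism of complexes $\Hom_S\!\big(\SEE,\Hom_\varphi(\SF,\SG\>)\big)\cong\Hom_\varphi(\SEE\otimes_S\SF,\SG\>)=\Hom_R(\varphi_*(\SEE\otimes_S\SF\>),\SG\>)$ is exact; this says precisely that $\Hom_\varphi(\SF,\SG\>)$ is K-injective.

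Next I would invoke the abstract machinery of \cite[\S2.6]{li} (the bifunctorial version used in the proof of \cite[(2.6.1)$^*$]{li}), applied with $\mathbf L_1'$ the homotopy category $\mathbf K(S)$, $\mathbf L_2'$ its full subcategory of K-flat $S$-complexes, and $\mathbf L_3'$ the full subcategory of $\mathbf K(R)$ spanned by K-injective $R$-complexes. The functor $(\SEE,\SF,\SG\>)\mapsto\Hom_\varphi(\SEE\otimes_S\SF,\SG\>)$ and the functor $(\SEE,\SF,\SG\>)\mapsto\Hom_S\!\big(\SEE,\Hom_\varphi(\SF,\SG\>)\big)$ are linked by the classical isomorphism $\bar\alpha^{}_0$; on the subcategories $\mathbf L_2'\times\mathbf L_3'$ both are ``computed by" resolutions in the first variable, by the K-injectivity observation above (this is the point where one checks that when $\SEE$ is exact and $(\SF,\SG\>)\in\mathbf L_2'\times\mathbf L_3'$ the complex $\Hom_S\!\big(\SEE,\Hom_\varphi(\SF,\SG\>)\big)$ is exact). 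Hence $\bar\alpha^{}_0$ descends to a unique trifunctorial derived isomorphism $\bar\alpha$ fitting into the stated commutative square, given explicitly by $\bar\alpha(\SEE,\SF,\SG\>)=\bar\alpha^{}_0(P_\SEE,\SF,I_\SG\>)$ where $P_\SEE\to\SEE$ is a K-flat resolution and $\SG\to I_\SG$ a K-injective one. Uniqueness follows because the horizontal maps in the top row of the square are isomorphisms once $\SEE$ is K-flat and $\SG$ is K-injective.

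I do not expect any genuine obstacle here: the argument is a verbatim transcription of the proof of Proposition~\ref{adjass0} with ``ringed space $\oY$" replaced by ``ring $S$", ``$\psi\colon\OY\to\CS$" replaced by ``$\varphi\colon R\to S$", and sheaf-$\CH om$ replaced by module-$\Hom$. The only thing worth spelling out is the K-injectivity of $\Hom_\varphi(\SF,\SG\>)$ for K-flat $\SF$ and K-injective $\SG$, which, as indicated, is immediate from exactness of $\varphi_*$ together with classical adjoint associativity for complexes of modules. Accordingly I would state the proof as ``the same as that of Proposition~\ref{adjass0}, \emph{mutatis mutandis}," and leave the routine verification to the reader, exactly as is done for the corollaries that follow.
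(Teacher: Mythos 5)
Your proposal is correct and is essentially the paper's own proof, which is just "same as Proposition~\ref{adjass0}, \emph{mutatis mutandis}" (the paper explicitly leaves this to the reader). One small slip: you claim "$\varphi_*$ preserves K-flatness," which is false in general (restriction of scalars preserves flatness only when $S$ is flat over $R$) and also irrelevant to your argument—what you actually use, correctly, is that $\SF$ K-flat over $S$ makes $\SEE\otimes_S\SF$ exact, that $\varphi_*$ is exact so $\varphi_*(\SEE\otimes_S\SF)$ is exact, and that $\SG$ K-injective over $R$ makes $\Hom_R(\text{exact},\SG)$ exact; so the K-injectivity of $\Hom_\varphi(\SF,\SG)$ goes through as you say.
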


\begin{subcor}\label{adjassaff}
There is a unique trifunctorial $\D(R)$-isomorphism
\begin{multline*}
\bar\alpha_\varphi(\SEE,\SF,\SG\>)\colon\R\<\<\Hom_{\<R}\<\<\big(\varphi_*(\SEE\Otimes{\<S}\SF\>), \>\SG\big)\iso\varphi_*\R\<\<\Hom_{S}(\SEE, \R\<\<\Hom_\varphi(\SF,\SG\>)\big)\\   
\big(\SEE,\SF\in\D(\>S),\;\SG\in\D(R)\big)  
\end{multline*} 
such that the following natural\/ $\D(R)$-diagram, with $\Hr\set\Hom$, commutes\/$.$ 
\[\mkern-5mu
\def\1{$\Hr_{\<R}\<\big(\varphi_*(\SEE\otimes_{\<S}\<\SF\>), \>\SG\big)$}
\def\2{$\R \Hr_{\<R}\<\big(\varphi_*(\SEE\otimes_{\<S}\<\SF\>), \>\SG\big)$}
\def\3{$\R \Hr_{\<R}\<\big(\varphi_*(\SEE\Otimes{\<S}\<\SF\>), \>\SG\big)$}
\def\4{$\varphi_*\Hr_{S}\<\big(\<\SEE, \Hr_{\varphi}(\SF,\SG\>)\<\big)$}
\def\5{$\varphi_*\R \Hr_{S}\<\big(\<\SEE, \Hr_{\varphi}(\SF,\SG\>)\<\big)$}
\def\6{$\varphi_*\R \Hr_{S}\<\big(\<\SEE, \R \Hr_{\varphi}(\SF,\SG\>)\<\big)$}
 \bpic[xscale=4.37, yscale=1.6]

   \node(11) at (1,-1){\1};   
   \node(12) at (1.98,-1){\2}; 
   \node(13) at (3,-1){\3};
   
   \node(21) at (1,-2){\4};  
   \node(22) at (1.98,-2){\5};
   \node(23) at (3,-2){\6}; 
   
    \draw[->] (11)--(12) ;  
    \draw[->] (12)--(13) ;
 
    \draw[->] (21)--(22) ;  
    \draw[->] (22)--(23) ;

    \draw[->] (11)--(21) node[right=1pt, midway, scale=.75] {$\simeq$}
                                   node[left=-.4pt, midway, scale=.75] {$\varphi_*\bar\alpha_0(\<\SEE,\SF,\SG)$} ;
    \draw[->] (13)--(23) node[left=1pt, midway, scale=.75] {$\simeq$}
                                   node[right=1pt, midway, scale=.75] {$\bar\alpha_\varphi(\SEE,\SF,\SG\>)$} ; 
                                       
    \epic
\]
This entails the functorial $R$-isomorphism
\begin{equation*}\label{AdjAss}
\textup{H}^0\bar\alpha_\varphi(\SEE,\SF,\SG)
\colon\!\<\Hom_{\D(R)}\!\<\<\big(\varphi_*(\SEE\Otimes{\>S}\<\SF\>), \SG\big)
\!\iso\!\varphi_*\<\Hom_{\D(S)}\<\big(\SEE, \R\<\<\Hom_\varphi(\SF,\SG)\big).
\end{equation*}
\end{subcor}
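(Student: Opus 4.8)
The plan is to mirror, in the purely algebraic setting of $\varphi\colon R\to S$, the derivation of Corollary~\ref{adjass} from Proposition~\ref{adjass0}. First I would establish Proposition~\ref{adjass0aff} itself, which is exactly the algebraic shadow of Proposition~\ref{adjass0}: one sets $\bar\alpha(\SEE,\SF,\SG)\set\bar\alpha_0(P_\SEE,\SF,I_\SG)$, where $P_\SEE\to\SEE$ is a K-flat resolution and $\SG\to I_\SG$ a K-injective one, and checks that $\Hom_\varphi(\SF,\SG)$ is K-injective when $\SF$ is K-flat and $\SG$ is K-injective (because $\Hom_S(\SEE,\Hom_\varphi(\SF,\SG))\cong\Hom_\varphi(\SEE\otimes_S\SF,\SG)$ is exact for exact $\SEE$). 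This is the same argument as the one invoked for \cite[Proposition (2.6.1)$^*$]{li}, with the categories $\mathbf L'_{\mathbf i}$ replaced by homotopy categories of $R$- and $S$-complexes and their K-flat/K-injective subcategories, so I would simply cite that template and record the needed exactness.

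Next, to get Corollary~\ref{adjassaff}, I would apply the restriction-of-scalars functor $\varphi_*$ to the commutative diagram of Proposition~\ref{adjass0aff}: since $\varphi_*$ is exact, the image diagram commutes, and one takes $\bar\alpha_\varphi(\SEE,\SF,\SG)\set\varphi_*\bar\alpha(\SEE,\SF,\SG)$. Uniqueness is checked, as in the proof of Corollary~\ref{adjass}, only when $\SEE$ is K-flat and $\SG$ is K-injective, in which case the horizontal maps $\Hom_\varphi\to\R\Hom_\varphi$ in the top row of the displayed diagram are isomorphisms, forcing $\bar\alpha_\varphi$ to be the unique map making the square commute; trifunctoriality follows from that of $\bar\alpha_0$ together with functoriality of the chosen resolutions.

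Finally, the asserted $R$-isomorphism $\textup{H}^0\bar\alpha_\varphi(\SEE,\SF,\SG)$ is obtained by applying the functor $\textup{H}^0\colon\D(R)\to\sA(R)$ (which coincides here with $\textup{H}^0\R\Gamma$, there being no global-section issue over an affine scheme) to the $\D(R)$-isomorphism $\bar\alpha_\varphi$, and using the standard identifications $\textup{H}^0\R\Hom_A(M,N)=\Hom_{\D(A)}(M,N)$ for $A=R$ and $A=S$, together with the fact that $\textup{H}^0$ commutes with the exact functor $\varphi_*$. This yields precisely the displayed formula.

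I do not expect any genuine obstacle: every ingredient is the literal algebraic counterpart of a statement already proved in the ringed-space setting (\ref{adjass0}, \ref{adjass}, \ref{adjunction0}), and the author explicitly flags that ``the proof, being similar to that of Proposition~\ref{adjass0} and its corollaries, is left to the reader.'' The only point requiring a modicum of care is the verification that $\Hom_\varphi(\SF,\SG)$ is K-injective for $\SF$ K-flat and $\SG$ K-injective—i.e., that $\Hom_S\bigl(\SEE,\Hom_\varphi(\SF,\SG)\bigr)\cong\Hom_R\bigl(\varphi_*(\SEE\otimes_S\SF),\SG\bigr)$ is acyclic whenever $\SEE$ is acyclic—which follows from $\varphi_*$ preserving acyclicity of $\SEE\otimes_S\SF$ (as $\SF$ is K-flat) and from K-injectivity of $\SG$; this is the algebraic ``Step'' analogous to the one isolated in the proof of \ref{adjass0}.
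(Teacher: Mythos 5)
Your proposal is correct and matches the paper's intended argument: the paper explicitly leaves the proofs of Proposition~\ref{adjass0aff} and its corollaries to the reader, citing their similarity to Proposition~\ref{adjass0} and Corollary~\ref{adjass}, and you have reproduced exactly that template (define $\bar\alpha_\varphi\set\varphi_*\bar\alpha$, deduce commutativity from exactness of $\varphi_*$, check uniqueness when $\SEE$ is K-flat and $\SG$ is K-injective so that the top-row arrows are isomorphisms, and apply $\textup{H}^0$ for the final displayed formula). The only slip is typographical: the horizontal maps in the top row of the displayed diagram are $\Hom_{\<R}\to\R\<\<\Hom_{\<R}$, not $\Hom_\varphi\to\R\<\<\Hom_\varphi$; this does not affect the argument.
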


Let\va{-1}
\[
\ush\varphi\colon\D(R)\to\D(S)\\[-2pt]
\] 
be the functor $\R\<\<\Hom_\varphi(S,-)$.

\begin{subcor}\label{duality for phiaff}
For\/ $\SEE\in\D(\>S)$ and $\SG\in\D(R),$ one has the  bifunctorial $\D(R)$-isomorphism
\[
\bar\alpha_\varphi(\SEE,S,\SG)\colon\R\<\<\Hom_R(\varphi_*\SEE, \>\SG\>)\iso\varphi_*\R\<\<\Hom_{\>S}(\SEE, \ush\varphi \SG\>),
\]
In particular, there is an adjunction
\begin{equation}\label{*sh}
\varphi_*\!\<\dashv \ush\varphi,
\end{equation}
given by the functorial\/ $R$-isomorphism
\begin{equation*}
\textup{H}^0\bar\alpha_\varphi(\SEE,S,\SG\>)\colon\Hom_{\D(R)}\!\<\<\big(\varphi_*\SEE, \>\SG\big)\iso
\varphi_*\<\Hom_{\D(S)}\<(\SEE, \>\ush\varphi \SG\big).
\end{equation*}
\end{subcor}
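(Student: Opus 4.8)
\textbf{Proof proposal for Corollary~\ref{duality for phiaff}.}
The plan is to deduce everything from Corollary~\ref{adjassaff} by specializing the middle argument. First I would set $\SF\set S$ in the isomorphism $\bar\alpha_\varphi(\SEE,\SF,\SG)$ of Corollary~\ref{adjassaff}. Since $\SEE\Otimes{S}S\cong\SEE$ naturally (the complex $S$ being K-flat over $S$, so no derived-functor subtlety arises), the source $\R\<\<\Hom_{\<R}\<\<\big(\varphi_*(\SEE\Otimes{\<S}S), \SG\big)$ becomes $\R\<\<\Hom_R(\varphi_*\SEE,\SG\>)$, while the target $\varphi_*\R\<\<\Hom_S\<\big(\SEE,\R\<\<\Hom_\varphi(S,\SG)\big)$ is by the very definition of $\ush\varphi$ equal to $\varphi_*\R\<\<\Hom_S(\SEE,\ush\varphi\SG\>)$. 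This gives the asserted bifunctorial $\D(R)$-isomorphism $\bar\alpha_\varphi(\SEE,S,\SG)$ directly.

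For the adjunction \eqref{*sh}, I would apply the functor $\textup H^0$ (equivalently $\textup{H}^0\R\Gamma$, though over affine schemes this is just $\textup H^0$ of the complex of $R$-modules, and one may further forget the $R$-module structure to land in abelian groups). Since for any ring homomorphism $\varphi$ the restriction functor $\varphi_*$ on $\D(S)$ is faithful and $\textup H^0\varphi_*\R\<\<\Hom_S(\SEE,\ush\varphi\SG)\cong\Hom_{\D(S)}(\SEE,\ush\varphi\SG\>)$ — using the standard identification $\textup H^0\R\<\<\Hom_S(-,-)\cong\Hom_{\D(S)}(-,-)$, cf.\ \cite[3.2.5(f)]{li} or its elementary algebraic analogue — and similarly $\textup H^0\R\<\<\Hom_R(\varphi_*\SEE,\SG)\cong\Hom_{\D(R)}(\varphi_*\SEE,\SG\>)$, applying $\textup H^0$ to $\bar\alpha_\varphi(\SEE,S,\SG)$ yields the functorial $R$-isomorphism
\[
\Hom_{\D(R)}\!\<\<\big(\varphi_*\SEE, \>\SG\big)\iso
\varphi_*\<\Hom_{\D(S)}\<(\SEE, \>\ush\varphi \SG\big).
\]
Trifunctoriality of $\bar\alpha_\varphi$ ensures this isomorphism is natural in both $\SEE$ and $\SG$, which is exactly the statement that $(\varphi_*,\ush\varphi)$ is an adjoint pair.

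The only point requiring genuine (if routine) care is the compatibility of the $\SF\set S$ specialization with the normalization built into Corollary~\ref{adjassaff}: one should check that under the commutative diagram defining $\bar\alpha_\varphi$, the map $\varphi_*\bar\alpha_0(\SEE,S,\SG)$ on the nonderived level really is the canonical ``evaluation / restriction-of-scalars'' isomorphism $\Hom_R(\varphi_*(\SEE\otimes_S S),\SG)\iso\varphi_*\Hom_S(\SEE,\Hom_\varphi(S,\SG))$, which follows from the standard hom-tensor adjunction together with $\Hom_\varphi(S,\SG)=\Hom_R(\varphi_*S,\SG)=\varphi_*\SG$ as an $S$-module. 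I do not expect any real obstacle here — this is the algebraic shadow of the ringed-space statements \ref{duality for phi} and \ref{adjunction0}, and the proof is formally parallel — so the main (modest) task is just bookkeeping of the identifications $\SEE\Otimes{S}S\cong\SEE$ and $\R\<\<\Hom_\varphi(S,-)=\ush\varphi(-)$.
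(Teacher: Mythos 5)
Your route is exactly the one the paper intends: the text immediately before Proposition~\ref{adjass0aff} says the proofs of that proposition and its corollaries are ``similar to that of Proposition~\ref{adjass0} and its corollaries'' and are left to the reader, and your specialization of Corollary~\ref{adjassaff} at $\SF\set S$, followed by $\textup H^0$, is precisely the affine counterpart of the passage from~\ref{adjass} to~\ref{duality for phi} and~\ref{adjunction0}. The reduction via $\SEE\Otimes{S}S\cong\SEE$ (with $S$ K-flat, so no derived subtlety), the identification $\R\<\<\Hom_\varphi(S,-)=\ush\varphi$, the use of $\textup H^0\R\<\<\Hom\cong\Hom_{\D}$, and the observation that trifunctoriality gives naturality in both variables are all correct. (Faithfulness of $\varphi_*$ is mentioned but not actually used.)

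One assertion in your ``compatibility check'' is wrong, though it is in a step you correctly flag as routine and does not affect the main line of argument. You write $\Hom_\varphi(S,\SG)=\Hom_R(\varphi_*S,\SG)=\varphi_*\SG$ as an $S$-module. The first equality is the definition (the $S$-structure on $\Hom_R(\varphi_*S,\SG)$ coming from $S$ acting on the source), but the second is false in general---$\Hom_R(\varphi_*S,\SG)$ is the coinduced $S$-module, and $\varphi_*\SG$ does not even typecheck, since $\SG$ is an $R$-complex and $\varphi_*$ goes from $S$-modules to $R$-modules. The correct nonderived statement underlying the compatibility is simply the classical hom-tensor adjunction $\Hom_R(\varphi_*(\SEE\otimes_S S),\SG)\iso\Hom_S(\SEE,\Hom_R(\varphi_*S,\SG))$, sending $\lambda$ to $e\mapsto(s\mapsto\lambda(se))$, together with $\SEE\otimes_S S=\SEE$; no further simplification of $\Hom_\varphi(S,\SG)$ occurs, or is needed.
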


Here, the counit at $\SG\in\D(R)$ is the natural composite map 
\begin{equation}\label{counit*sh}
\mathsf t^{}_{\varphi,\SG}\colon\varphi_*\ush\varphi\SG\iso \R\<\<\Hom_R(\varphi_*S, \SG\>) 
\lto\R\<\<\Hom_R(R, \SG\>)\iso\SG\>;
\end{equation}
and the unit at $\SEE\in\D(S)$ is the natural composite map 
\[
\mathsf u^{}_{\varphi,\SEE}\colon\SEE\iso
\R\<\<\Hom_S(S,\SEE) \lto\R\<\<\Hom_\varphi(S, \varphi_*\SEE) =\ush\varphi\varphi_*\SEE,
\]
i.e., the natural composite map \va{-1}
\[
\SEE\xto{\>\>\eta'_{\SEE}\>\>} \Hom_\varphi(S, \varphi_*\SEE) \lto\R\<\<\Hom_\varphi(S, \varphi_*\SEE) =\ush\varphi\varphi_*\SEE
\]
where for $e\in \SEE$, $\eta'_{\SEE}\mkern.5mu e$ is the $R$-homomorphism taking $s\in S$ to $se$.
(Cf.~\ref{counit unit}.)\va4

Arguing as in the proof of Proposition~\ref{duality map}, one gets:

\begin{subprop}\label{duality for varphi}
The inverse of the above isomorphism\/ $\bar\alpha_\varphi(\SEE,S,\SG\>)$ factors naturally as
\[
\varphi_*\R\<\<\Hom_S(\SEE,\ush\varphi\SG)\lto
\R\<\<\Hom_R(\varphi_*\SEE,\varphi_*\ush\varphi\SG)\lto
\R\<\<\Hom_R(\varphi_*\SEE,\SG).
\]
\end{subprop}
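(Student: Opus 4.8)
The plan is to imitate, in the purely algebraic setting, the proof of Proposition~\ref{duality map}, since the present statement (Proposition~\ref{duality for varphi}) is the exact commutative-algebra analogue of Corollary~\ref{duality for phi}'s factorization. First I would observe that $\bar\alpha_\varphi(\SEE,S,\SG\>)$ arises by specializing $\SF\set S$ in Corollary~\ref{adjassaff}, so the assertion amounts to describing the inverse of $\bar\alpha_\varphi(\SEE,S,\SG)$, i.e.\ the isomorphism $\varphi_*\R\<\<\Hom_S(\SEE,\ush\varphi\SG)\iso\R\<\<\Hom_R(\varphi_*\SEE,\SG)$, as the composite displayed in the statement. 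The two constituent maps are: the natural map $\nu$-type map $\varphi_*\R\<\<\Hom_S(\SEE,\ush\varphi\SG)\to\R\<\<\Hom_R(\varphi_*\SEE,\varphi_*\ush\varphi\SG)$ (the algebraic version of \eqref{nu}), and $\R\<\<\Hom_R(\varphi_*\SEE,-)$ applied to the counit $\mathsf t^{}_{\varphi,\SG}$ from \eqref{counit*sh}.

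The key step is to set $\SF\set S$ in the algebraic analogue of Lemma~\ref{gamma-nu}. Concretely, I would first establish (or simply invoke, if one is willing to regard it as the evident algebraic transcription) the commutative-algebra counterpart of Lemma~\ref{gamma-nu}: for $\SEE,\SF\in\D(S)$ and $\SG\in\D(R)$, the natural diagram relating $\varphi_*\R\<\<\Hom_S(\SEE,\R\<\<\Hom_\varphi(\SF,\SG))$, $\R\<\<\Hom_R(\varphi_*\SEE,\varphi_*\R\<\<\Hom_\varphi(\SF,\SG))$, $\R\<\<\Hom_R(\varphi_*(\SEE\Otimes{S}\SF),\SG)$, $\R\<\<\Hom_R(\varphi_*\SEE\Otimes{R}\varphi_*\SF,\SG)$ and $\R\<\<\Hom_R(\varphi_*\SEE,\R\<\<\Hom_R(\varphi_*\SF,\SG))$ commutes, the relevant maps being: the $\nu$-type map, the map induced by $\bar\alpha_\varphi$ (Corollary~\ref{adjassaff}), the map induced by $\gamma$, and the equation \eqref{phiRHompsi}-analogue $\varphi_*\R\<\<\Hom_\varphi(\SF,\SG)\cong\R\<\<\Hom_R(\varphi_*\SF,\SG)$. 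Substituting $\SF\set S$ collapses $\SEE\Otimes{S}S\iso\SEE$ and $\varphi_*\SEE\Otimes{R}\varphi_*S\to\varphi_*\SEE\Otimes{R}R\iso\varphi_*\SEE$; one then unwinds the border, using that the composite $\varphi_*\R\<\<\Hom_\varphi(S,\SG)\cong\R\<\<\Hom_R(\varphi_*S,\SG)\to\R\<\<\Hom_R(R,\SG)\iso\SG$ is exactly $\mathsf t^{}_{\varphi,\SG}$ by \eqref{counit*sh}, and that $\gamma(R,\SG,-)$ or rather the specialization $\gamma(R,-,-)$ is trivial (Corollary~\ref{gam for O}-analogue), to read off the claimed factorization. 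The remaining bookkeeping—commutativity of the small triangles involving the right-inverse of $\gamma$ and the natural isomorphisms $\SEE\Otimes{S}S\iso\SEE$—is routine and can be dispatched by reducing, via K-flat/K-injective resolutions, to the evident nonderived statements, precisely as in Proposition~\ref{duality map}.

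The main obstacle, as stated, is really the same one flagged in the excerpt: the proof ``is similar to that of Proposition~\ref{duality map}'' but one must be careful that the concrete realizations of $\gamma$, $\nu$, $\bar\alpha_\varphi$ and the counit behave as the abstract formalism in \cite{li} predicts. In practice the sharpest point is verifying the algebraic analogue of the subdiagram~\circled3 in the proof of Proposition~\ref{duality map}: that the appropriate triangle is left-conjugate to the clearly-commutative associativity/unit diagram for $\Otimes{R}$. I would handle this by the same device—pass to the conjugate diagram (using \cite[3.5.3(e), 3.5.6(e)]{li}), reduce to the case where $\SG$ is K-injective so that all the $\R\<\<\Hom$'s become honest $\Hom$'s, and then verify commutativity of the resulting diagram of $R$-complexes directly, which is elementary. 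Since every statement needed (Corollaries~\ref{adjassaff}, \ref{duality for phiaff}, the definition \eqref{counit*sh}, and the algebraic transcriptions of \ref{gamma}, \ref{gam for O}, \ref{gamma-nu}) is already available, the argument requires no new ingredient beyond this careful transcription, and I would simply present it as ``arguing as in the proof of Proposition~\ref{duality map}, mutatis mutandis.''
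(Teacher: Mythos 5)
Your proposal is correct and follows exactly the approach the paper intends: the paper itself prefaces the statement with ``Arguing as in the proof of Proposition~\ref{duality map}, one gets:'' and supplies no further details, so the intended proof is precisely the \emph{mutatis mutandis} transcription of that argument to the commutative-algebra setting that you describe. Your identification of the key steps (the algebraic analogue of Lemma~\ref{gamma-nu} with $\SF\set S$, the unit/associativity reduction for the analogue of subdiagram~\circled3, and the recognition of the counit via \eqref{counit*sh}) matches the paper's argument.
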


\medskip
The  equivalences $\SH{}$ and~$\R\Gamma$ (section~\ref{affine aspect}) transform   
$\ush\varphi$ into a right adjoint of  $\R\fst$, as follows. \va3

One has, for $E\in\Dqc(X)$ and $G\in\D(Y)$, the natural isomorphisms\va2
\begin{equation}
\begin{aligned}\label{dualityisos}
\Hom_{\Dqc(X)}^{\mathstrut}(E\<, \>\SH{S}\mkern-.5mu\ush{\varphi}\R\Gamma^{}_{\<\<\!Y}\>G\>)
&\iso 
\Hom_{\D(S)}(\R\Gamma^{}_{\<\<\!X}\<E, \ush{\varphi}\R\Gamma^{}_{\<\<\!Y}\>G\>)\\[4pt]
&\underset{\eqref{*sh}}\iso
\Hom_{\D(R)}(\varphi_*\R\Gamma^{}_{\<\<\!X}\<E, \R\Gamma^{}_{\<\<\!Y}\>G\>)\\
&\iso
\Hom_{\D(Y)}(\SH{\<R}\varphi_*\R\Gamma^{}_{\<\<\!X}\<E, G\>)\\[5pt]
&\underset{\eqref{sheafify_*}}\iso
\Hom_{\D(Y)}(\R\fst\>\SH{S}\<\R\Gamma^{}_{\<\<\!X}E, G\>)\\
&\iso
\Hom_{\D(Y)}(\R\fst E\<, G\>).
\end{aligned}
\end{equation}
\vskip2pt\noindent
Hence: 
\pagebreak[3]

\begin{subprop}\label{affine duality} \textup{(Cf.~\ref{anyaffinemap}.)}
The above-defined functor
\[
\SH{S}\mkern-.5mu\ush{\varphi}\R\Gamma^{}_{\<\<\!Y}\colon\D(Y)\to\Dqc(X)
\]
is right-adjoint to\/ $\R\fst\>,$
with unit at\/ $E\in\Dqc(X)$ the natural composite map 
\begin{align*}
E\iso\SH{S}\R\Gamma^{}_{\<\<\!X}E&\,\lto\,\SH{S}\ush\varphi\varphi_*\R\Gamma^{}_{\<\<\!X}E\\
&\iso\SH{S}\ush\varphi\R(\varphi_*\Gamma^{}_{\<\<\!X})E\\
&\ =\!\!=\,\,\SH{S}\ush\varphi\R(\Gamma^{}_{\<\<\!Y}\fst)E
\iso\SH{S}\ush\varphi\R\Gamma^{}_{\<\<\!Y}\R\fst E
\end{align*}
and counit at\/ $G\in\D(Y)$ the natural composite map 
\[
\ G\lot\SH{\<R}\R\Gamma^{}_{\<\<\!Y}G\lot
\SH{\<R}\varphi_*\ush\varphi\R\Gamma^{}_{\<\<\!Y}G
\underset{\eqref{sheafify_*}}\osi
\R\fst\>\SH{S}\ush\varphi\R\Gamma^{}_{\<\<\!Y}G.
\]
\end{subprop}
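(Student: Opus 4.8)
The plan is to read off Proposition~\ref{affine duality} directly from the chain of natural isomorphisms displayed in~\eqref{dualityisos}, together with the identification of the unit and counit maps. Since $\SH{S}\colon\D(S)\to\Dqc(X)$ and (its quasi-inverse) $\R\Gamma^{}_{\<\<\!X}$ are equivalences, and likewise on $Y$, and since by Corollary~\ref{duality for phiaff} the functor $\ush\varphi=\R\<\<\Hom_\varphi(S,-)$ is right-adjoint to $\varphi_*\colon\D(S)\to\D(R)$, the composite $\SH{S}\mkern-.5mu\ush{\varphi}\R\Gamma^{}_{\<\<\!Y}$ is visibly right-adjoint to the composite $\SH{\<R}\varphi_*\R\Gamma^{}_{\<\<\!X}$; and the latter is naturally isomorphic to $\R\fst$ via the isomorphism $q^{}_{\<\<f}\smallcirc\upsilon_{\<\varphi}$ of~\eqref{sheafify_*} together with the unit isomorphism $E\iso\SH{S}\R\Gamma^{}_{\<\<\!X}E$ of~\eqref{unitmap}. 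This gives the adjointness assertion; the functorial isomorphism $\Hom_{\Dqc(X)}(E,\SH{S}\mkern-.5mu\ush{\varphi}\R\Gamma^{}_{\<\<\!Y}G)\iso\Hom_{\D(Y)}(\R\fst E,G)$ is exactly the one spelled out in~\eqref{dualityisos}.

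The substantive part is identifying the unit and counit of this composite adjunction with the concrete composites displayed in the statement. For the counit at $G\in\D(Y)$: a composite of adjunctions has counit obtained by conjugating the counits of the factors. Starting from $\R\fst\>\SH{S}\ush\varphi\R\Gamma^{}_{\<\<\!Y}G$, I would first transport along~\eqref{sheafify_*} (i.e.\ $q^{-1}_{\<\<f}$ then $\upsilon^{-1}_{\<\varphi}$) to $\SH{\<R}\varphi_*\ush\varphi\R\Gamma^{}_{\<\<\!Y}G$, then apply $\SH{\<R}$ to the counit $\mathsf t^{}_{\varphi,\R\Gamma^{}_{\<\<\!Y}G}$ of $\varphi_*\!\dashv\ush\varphi$ from~\eqref{counit*sh}, landing in $\SH{\<R}\R\Gamma^{}_{\<\<\!Y}G$, and finally apply the counit of $\SH{\<R}\!\dashv\R\Gamma^{}_{\<\<\!Y}$ on $Y$ to reach $G$. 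This is precisely the composite $G\lot\SH{\<R}\R\Gamma^{}_{\<\<\!Y}G\lot\SH{\<R}\varphi_*\ush\varphi\R\Gamma^{}_{\<\<\!Y}G\osi\R\fst\>\SH{S}\ush\varphi\R\Gamma^{}_{\<\<\!Y}G$ of the statement, read backwards. For the unit at $E\in\Dqc(X)$ one argues dually: the unit is $\SH{S}$ applied to the unit $\mathsf u^{}_{\varphi,\R\Gamma^{}_{\<\<\!X}E}$ of $\varphi_*\!\dashv\ush\varphi$, preceded by the unit isomorphism $E\iso\SH{S}\R\Gamma^{}_{\<\<\!X}E$ of the equivalence on $X$, and followed by the natural identifications $\varphi_*\R\Gamma^{}_{\<\<\!X}\cong\R(\varphi_*\Gamma^{}_{\<\<\!X})=\R(\Gamma^{}_{\<\<\!Y}\fst)\cong\R\Gamma^{}_{\<\<\!Y}\R\fst$ (the middle equality being \eqref{factor}, the outer ones being the composition-of-derived-functors isomorphisms, valid because $\varphi_*$ and $\fst$ are exact and preserve injectivity, cf.\ \cite[2.2.6]{li}). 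That matches the displayed composite exactly.

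The main obstacle, such as it is, is bookkeeping rather than depth: one must check that the several ``natural identifications'' used—passing derived functors through exact functors, commuting $\R\Gamma$ with $\varphi_*$ and $\fst$, and the pseudofunctoriality compatibility~\eqref{pfgamma} relating $\upsilon$ and $q$—all cohere, so that the conjugate of the product adjunction really is the displayed composite and not some map differing by an unnoticed twist. Concretely I would verify that the diagram
\[
\def\1{$\varphi_*\R\Gamma^{}_{\<\<\!X}E$}
\def\2{$\R(\varphi_*\Gamma^{}_{\<\<\!X})E$}
\def\3{$\R(\Gamma^{}_{\<\<\!Y}\fst)E$}
\def\4{$\R\Gamma^{}_{\<\<\!Y}\R\fst E$}
\begin{equation*}
\CD
\1 @>\sim>> \2\\
@. @VV\sim V\\
\4 @<\sim<< \3
\endCD
\end{equation*}
commutes (a standard consequence of \cite[2.2.6]{li} and the definition of the composition isomorphism for derived functors), and that applying $\R\Gamma^{}_{\<\<\!Y}$, resp.\ $\R\Gamma^{}_{\<\<\!X}$, to the adjunction isomorphism of Proposition~\ref{affine duality} recovers $\textup{H}^0\bar\alpha_\varphi(\R\Gamma^{}_{\<\<\!X}E,S,\R\Gamma^{}_{\<\<\!Y}G)$ of Corollary~\ref{duality for phiaff}. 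Since every link in~\eqref{dualityisos} is a genuine adjunction- or equivalence-isomorphism, Yoneda then pins down the unit and counit uniquely, and the verification reduces to the elementary triangular identities, which I would leave to the reader in the style of the surrounding text.
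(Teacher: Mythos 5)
Your proposal is correct and follows essentially the same route as the paper, which derives the adjointness directly from the chain of isomorphisms in~\eqref{dualityisos} and then dismisses the identification of the unit and counit as ``straightforward (if slightly tedious).'' You have simply written out that tedious verification—composing the unit/counit of $\varphi_*\!\dashv\ush\varphi$ with those of the sheafification equivalences and transporting along~\eqref{sheafify_*}—which is precisely what the paper leaves implicit.
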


\begin{proof}
The first assertion results at once from \eqref{dualityisos}. Verifying that the unit and counit are as stated 
is straightforward (if slightly tedious).
\end{proof}
\end{subcosa}

\begin{subcosa}\label{generalized flat}
\emph{Till this section~\textup{\ref{affex}} ends,  set} $f^\flat G\set
\SH{S}\mkern-.5mu\ush{\varphi}\R\Gamma^{}_{\<\<\!Y}G \ (G\in\D(Y))$.\va2

Proposition~\ref{affine duality} shows that this is
consistent with the previous meaning of $f^\flat \<G$ when $G$ is as in 
Proposition~\ref{represent}\kf; but now $f^\flat$ will be right-adjoint to the \emph{entire} functor
$\R\fst\colon\Dqc(X)\to\D(Y)$ for \emph{any} map $f$ of affine schemes 
(cf.~Corollary~\ref{anyaffinemap}). The unit map $E\to f^\flat\R\fst E$ and 
the counit map $\R\fst f^\flat G\to G$ are then as in Proposition~\ref{affine duality}.\va1

\end{subcosa}

\begin{subcosa}
As for pseudofunctoriality, given \va{-1} ring homomorphisms
$R\xto{\varphi} S \xto{\xi\>\>} T$, with corresponding scheme\kf-maps
\(
W\xto{\ g\ }X \xto{\ f\ }Y,
\) 
define\va1 (abstractly) the functorial isomorphism 
\begin{equation}\label{pi}
\pi_{\xi\<,\varphi}\colon\ush{\xi}\ush{\varphi}\iso\ush{(\xi\varphi)}
\end{equation}
to be, analogously to ~\eqref{pf flat}, naturally right-conjugate to the identity map
$(\xi\varphi)_*=\varphi_*\xi_*$. (For a concrete realization, see Proposition~\ref{pfush}.)
One checks that such $\pi$ make $\ush{(-)}$ into a \emph{contravariant pseudofunctor}.

It follows that $(-)^\flat$ is made into a \emph{pseudofunctorial} right adjoint 
of~$\R(-)_*$ by composite natural isomorphisms of the form
\[
\pi'_{\xi\<,\varphi}\colon g^\flat \<\<f^\flat=\SH{\>T}\<\ush{\xi}\R\Gamma^{}_{\<\<\!X}\>\SH{S}\<\ush{\varphi}\R\Gamma^{}_{\<\<\!Y}
\iso
\SH{\>T}\<\ush{\xi}\<\ush{\varphi}\R\Gamma^{}_{\<\<\!Y}
\>\underset{\via\pi_{\xi\<,\varphi}}\iso\>
\SH{\>T}\<\ush{(\xi\varphi)}\R\Gamma^{}_{\<\<\!Y}=(fg)^\flat.
\]

One has then a natural commutative diagram of isomorphisms
\begin{equation}\label{sheafify pf}\mkern-25mu
\def\1{$\SH{\>T}\mkern-.5mu\ush{\xi}\R\Gamma^{}_{\<\<\!X}\>
\SH{S}\mkern-.5mu\ush{\varphi}\R\Gamma^{}_{\<\<\!Y}$}
\def\2{$\SH{\>T}\mkern-.5mu\ush{\xi}\ush{\varphi}\R\Gamma^{}_{\<\<\!Y}$}
\def\3{$\SH{\>T}\mkern-.5mu\ush{(\xi\varphi)}\R\Gamma^{}_{\<\<\!Y}$}
\def\4{$g^\flat\<\< f^\flat$}
\def\5{$(f\<g)^\flat$}
\CD
 \bpic[xscale=7, yscale=1.6]

   \node(11) at (1,-1){\1}; 
   \node(115) at (1.54,-1){\2};  
   \node(12) at (2,-1){\3}; 
   
   \node(21) at (1,-2){\4};  
   \node(22) at (2,-2){\5};
   
    \draw[->] (11)--(115) node[above, midway, scale=.75] {$\Iso$} ;
    \draw[->] (115)--(12) node[above, midway, scale=.75] {$\Iso$}
                                   node[below, midway, scale=.75] {$\SH{\>T}\<(\pi_{\xi\<,\varphi})$} ;

    \draw[->] (21)--(22) node[above, midway, scale=.95] {$\Iso$}
                                   node[below=1pt, midway, scale=.75] {\eqref{pf flat}} ;
    

    \draw[->] (11)--(21) node[left=1pt, midway, scale=.75] {$\simeq$};  
    \draw[->] (12)--(22) node[right=1pt, midway, scale=.75] {$\simeq$};

    \epic
  \endCD
\end{equation}

\end{subcosa}
\noindent
Thus the next proposition, which contains  a concrete realization of  
$\pi_{\xi\<,\varphi}$, entails, \va{-1} for pseudo\kf-coherent maps of affine schemes, 
a concrete realization of~the pseudofunctoriality isomorphism $g^\flat\<\< f^\flat\iso(f\<g)^\flat$
in~\eqref{pf flat}.

\begin{subprop}\label{pfush}
The\/ $\D(T)$-isomorphism
\[
\pi_{\xi\<,\varphi}(\SG)\colon\R\<\<\Hom_\xi(T,\R\<\<\Hom_\varphi(S,\SG))\iso
\R\<\<\Hom_{\xi\varphi}(T,\SG)\qquad(\SG\in\D(R))
\]
is the unique functorial map $\alpha$ such that
the following natural diagram commutes for all\/ $R$-complexes $\SG$.\va{-1}
\[
\def\1{$\Hom_\xi(T,\Hom_\varphi(S,\SG))$}
\def\2{$\Hom_{\xi\varphi}(T,\SG)$}
\def\3{$\R\<\<\Hom_\xi(T,\R\<\<\Hom_\varphi(S,\SG))$}
\def\4{$\R\<\<\Hom_{\xi\varphi}(T,\SG)$}
 \bpic[xscale=4.5, yscale=1.45]

   \node(11) at (1,-1){\1};   
   \node(12) at (2,-1){\2}; 
   
   \node(21) at (1,-2){\3};  
   \node(22) at (2,-2){\4};

    \draw[->] (21)--(22) node[below, midway, scale=.75] {$\alpha(\SG)$} ;
     
    \draw[->] (11)--(12) ;  
    
    \draw[->] (11)--(21) ;
    \draw[->] (12)--(22) ;
  \epic
\]
\end{subprop}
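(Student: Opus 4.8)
The plan is to mimic the proof of Proposition~\ref{adjass0} and its corollaries: first pin down the derived map by compatibility with the nonderived composition isomorphism, then verify that the abstractly defined $\pi_{\xi\<,\varphi}$ has that compatibility by reducing everything to the case of a K-injective $\SG$.

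\emph{Uniqueness.} I would first show that both vertical arrows in the diagram are isomorphisms whenever $\SG$ is K-injective over $R$. The key sub-fact is that $\Hom_\varphi(S,\SG)=\Hom_R(\varphi_*S,\SG)$ is then K-injective over $S$: since restriction of scalars $\varphi_*$ is exact, $\Hom_S(\SEE,\Hom_\varphi(S,\SG))\cong\Hom_R(\varphi_*\SEE,\SG)$ is exact for every exact $S$-complex $\SEE$. Hence $\R\<\<\Hom_\varphi(S,\SG)=\Hom_\varphi(S,\SG)$ and $\R\<\<\Hom_\xi(T,\R\<\<\Hom_\varphi(S,\SG))=\Hom_\xi(T,\Hom_\varphi(S,\SG))$, so the left arrow is an isomorphism; likewise $\R\<\<\Hom_{\xi\varphi}(T,\SG)=\Hom_{\xi\varphi}(T,\SG)$ and the right arrow is an isomorphism. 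For K-injective $\SG$ the diagram therefore determines $\alpha(\SG)$ uniquely, and since a functorial map on all of $\D(R)$ is determined by its restriction to K-injective complexes (via K-injective resolutions), $\alpha$ is unique if it exists.

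\emph{Existence.} Recall that, analogously to~\eqref{pf flat}, $\pi_{\xi\<,\varphi}$ is the composite
\[
\ush\xi\ush\varphi \to \ush{(\xi\varphi)}(\xi\varphi)_*\ush\xi\ush\varphi = \ush{(\xi\varphi)}\varphi_*\xi_*\ush\xi\ush\varphi \to \ush{(\xi\varphi)}\varphi_*\ush\varphi \to \ush{(\xi\varphi)}
\]
built from the unit of $\varphi_*\xi_*=(\xi\varphi)_*\dashv\ush{(\xi\varphi)}$ and the counits of $\xi_*\dashv\ush\xi$ and $\varphi_*\dashv\ush\varphi$; equivalently, by \cite[3.3.5]{li}, $\pi_{\xi\<,\varphi}$ is the unique natural transformation such that $\mathsf t^{}_{\xi\varphi}\smallcirc\big((\xi\varphi)_*\pi_{\xi\<,\varphi}\big)$ equals the composite counit $\mathsf t^{}_{\varphi}\smallcirc\big(\varphi_*(\text{counit}_\xi)\big)$. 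On the other hand the top arrow $\alpha_0(\SG)$ of the diagram---the standard composition isomorphism $\Hom_S(T,\Hom_R(S,\SG))\iso\Hom_R(T,\SG)$ of $S$-complexes---is, by the purely categorical (nonderived) content of the same conjugacy statement applied to the adjunctions $\varphi_*\dashv\Hom_\varphi(S,-)$, $\xi_*\dashv\Hom_\xi(T,-)$, $(\xi\varphi)_*\dashv\Hom_{\xi\varphi}(T,-)$ between the relevant module categories, the conjugate of the identity $\varphi_*\xi_*=(\xi\varphi)_*$, and so satisfies the analogous compatibility with the nonderived evaluation-at-$1$ counit maps.

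\emph{Matching the two.} For $\SG$ K-injective over $R$ the identifications from the uniqueness step turn $\ush\xi\ush\varphi\SG$ into $\Hom_\xi(T,\Hom_\varphi(S,\SG))$ and $\ush{(\xi\varphi)}\SG$ into $\Hom_{\xi\varphi}(T,\SG)$, and the derived counits $\mathsf t^{}_{\xi\varphi,\SG}$ (see~\eqref{counit*sh}), $\mathsf t^{}_{\varphi,-}$ and the counit of $\xi_*\dashv\ush\xi$ all restrict to the nonderived evaluation-at-$1$ maps. Hence the defining identity for $\pi_{\xi\<,\varphi}(\SG)$ becomes exactly the nonderived identity satisfied by $\alpha_0(\SG)$, transported along the two vertical isomorphisms; since a map into $\Hom_{\xi\varphi}(T,\SG)$ is determined by its composite with $\mathsf t^{}_{\xi\varphi,\SG}$, it follows that $\pi_{\xi\<,\varphi}(\SG)=(\text{right vertical})\smallcirc\alpha_0(\SG)\smallcirc(\text{left vertical})^{-1}$, i.e.\ the diagram commutes for K-injective $\SG$. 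Functoriality of $\pi_{\xi\<,\varphi}$ and of the whole construction then extends commutativity to arbitrary $\SG$ by K-injective resolution. The main obstacle is purely bookkeeping: keeping straight which maps are derived and which are their nonderived precursors, correctly invoking the characterization of right-conjugate transformations, and checking that the standard composition isomorphism of modules really is the nonderived conjugate of the identity---all routine, as in Propositions~\ref{adjass0} and~\ref{adjass0aff}. (Alternatively, one can observe that, modulo the identification $T\Otimes{S}S\cong T$, $\pi_{\xi\<,\varphi}(\SG)$ is the inverse of the instance $\bar\alpha(T,S,\SG)$ of derived adjoint associativity in Proposition~\ref{adjass0aff}, via Remark~\ref{conjugate}; that route replaces the above bookkeeping by an appeal to the already-established properties of $\bar\alpha$.)
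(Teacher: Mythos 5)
Your proof is correct and follows essentially the same route as the paper: reduce to K-injective $\SG$ (where, since $\varphi_*$ is exact, $\Hom_\varphi(S,\SG)$ is again K-injective, so the vertical arrows become isomorphisms), and then verify that the nonderived composition isomorphism satisfies the conjugacy criterion of \cite[3.3.7(a)]{li} with $\beta=\id_{(\xi\varphi)_*}$—which the paper carries out by the same ``evaluate at $1_T$ then at $1_S$'' check you delegate as routine. The only substantive difference is that you make the uniqueness argument explicit where the paper leaves it implicit; this is a welcome clarification rather than a divergence of method.
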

\begin{proof}
It suffices to verify commutativity of the second diagram in \cite[3.3.7(a)]{li},
with $\beta$ the identity map of $(\xi\varphi)_*=\varphi_*\xi_*$.

For this purpose, one may assume that the $R$-complex $\SG$ is K-injective, whence so is 
the $S$-complex $\Hom_\varphi(S,\SG)$. 
Hence the assertion follows 
from commutativity of the following natural diagram of $R$-complexes.
\[
\def\1{$(\xi\varphi)_*\mkern-1.5mu  \Hom_\xi(T,\Hom_\varphi(S,\SG))$}
\def\2{$(\xi\varphi)_*\mkern-1mu \Hom_{\xi\varphi}(T,\SG)$}
\def\3{$\varphi_*\mkern-1mu \Hom_S(\xi_*T,\Hom_\varphi(S,\SG))$}
\def\4{$\<\Hom_R((\xi\varphi)_*T,\SG)$}
\def\5{$\varphi_*\mkern-1mu \Hom_\varphi(S,\SG)$}
\def\6{$\<\Hom_R(\varphi_*S,\SG)$}
\def\7{$\<\Hom_R(R,\SG)=\SG$}
\def\8{$\varphi_*\mkern-1mu \Hom_S(S,\Hom_\varphi(S,\SG))$}
\bpic[xscale=7.3, yscale=1.35]

   \node(11) at (1,-1){\1};   
   \node(12) at (2,-1){\2}; 
   
   \node(21) at (1,-2){\3};  
   \node(22) at (2,-2.5){\4};  
   
   \node(31) at (1,-3){\8}; 
   
   \node(41) at (1,-4){\5};  
   \node(415) at (1.5,-4){\6};
   \node(42) at (2,-4){\7};  

    \draw[->] (11)--(12) ;  
    
    \draw[double distance=2pt] (41)--(415) ;
    \draw[->] (415)--(42) ; 
    
    \draw[double distance=2pt] (11)--(21) ;
    \draw[->] (21)--(31) ;
    \draw[->] (31)--(41) node[left=1pt, midway, scale=.75] {$\simeq$} ;
   
    \draw[double distance=2pt] (12)--(22) ;
    \draw[->] (22)--(42) ;

  \epic
\]
\vskip-3pt\noindent
To verify this commutativity, one checks that that, by traveling around the diagram \emph{either} clockwise \emph{or} counterclockwise from upper left to lower right, an $S$-homomorphism 
$\lambda\colon\xi_*T\to\Hom_\varphi(S,\SG)$  goes to $[\lambda(1_T)](1_S)$.
\end{proof}

Recall, from ~\ref{adjass0aff}, 
the map $\bar\alpha(\SEE,\SF,\SG)\ \big(\SEE,\SF\in\D(\>S),\;\SG\in\D(R)\big) $.
\begin{subcor}
It holds that \/ $\xi_*\pi_{\xi\<,\varphi}(\SG)= \bar\alpha(\xi_*T,S,\SG).$ \hfill{$\square$}
\end{subcor}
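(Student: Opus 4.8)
The plan is to unravel what the asserted identity $\xi_*\pi_{\xi,\varphi}(\SG)=\bar\alpha(\xi_*T,S,\SG)$ actually says, and then recognize it as an instance of a standard compatibility between adjoint associativity and the right-conjugacy characterization of $\pi_{\xi,\varphi}$. Recall that $\pi_{\xi,\varphi}(\SG)\colon\ush{\xi}\ush{\varphi}\SG\iso\ush{(\xi\varphi)}\SG$ was \emph{defined} (see~\eqref{pi}) to be naturally right-conjugate, via the adjunctions $\varphi_*\dashv\ush\varphi$, $\xi_*\dashv\ush\xi$ and $(\xi\varphi)_*\dashv\ush{(\xi\varphi)}$, to the identity map of $(\xi\varphi)_*=\varphi_*\xi_*$; and that $\bar\alpha(\SEE,\SF,\SG)$ from Proposition~\ref{adjass0aff} is the derived adjoint-associativity isomorphism $\R\Hom_\varphi(\SEE\Otimes{S}\SF,\SG)\iso\R\Hom_S(\SEE,\R\Hom_\varphi(\SF,\SG))$, characterized by commutativity with the nonderived $\bar\alpha_0$ on K-injective/K-flat representatives. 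With $\SEE\set\xi_*T$ and $\SF\set S$ one has $\SEE\Otimes{S}\SF\cong\xi_*T$, so $\bar\alpha(\xi_*T,S,\SG)$ is an isomorphism $\varphi_*\ush\varphi\SG\cong\R\Hom_\varphi(\xi_*T,\SG)\iso\R\Hom_S(\xi_*T,\R\Hom_\varphi(S,\SG))=\xi_*\ush\xi\ush\varphi\SG$; the claim is that this agrees with $\xi_*$ applied to $\pi_{\xi,\varphi}(\SG)$.

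First I would reduce to the nonderived level: replace $\SG$ by a quasi-isomorphic K-injective $R$-complex, so that $\Hom_\varphi(S,\SG)$ is K-injective over $S$ and all the derived homs in sight may be computed naively; then $\bar\alpha$ becomes $\bar\alpha_0$ and $\pi_{\xi,\varphi}(\SG)$ becomes the map described in Proposition~\ref{pfush}, which is the unique functorial map $\alpha$ making the square there commute. Second, I would write out the nonderived identity to be proved: both $\xi_*\pi_{\xi,\varphi}(\SG)$ and $\bar\alpha_0(\xi_*T,S,\SG)$ are maps of $R$-complexes $\Hom_R(\varphi_*S,\SG)\to\Hom_S(\xi_*T,\Hom_\varphi(S,\SG))$ (using $\varphi_*\ush\varphi\SG=\Hom_R(\varphi_*S,\SG)$ and the identification $\xi_*\Hom_\xi(T,-)=\Hom_S(\xi_*T,-)$). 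To check equality of two maps into an internal-hom object one applies the defining adjunction once more, i.e.\ passes to the adjoint maps out of a tensor product; concretely, one traces an element. Following Proposition~\ref{pfush}'s proof, one checks that going around either way sends a homomorphism $\lambda\colon\xi_*T\to\Hom_\varphi(S,\SG)$ to $[\lambda(1_T)](1_S)$ — the same recipe that characterizes $\pi_{\xi,\varphi}$ — while the definition of the nonderived associativity isomorphism $\bar\alpha_0$ (the standard Hom–tensor adjunction for the composite $R\to S\to T$, here with the trivial tensor factor $S$) produces exactly this same evaluation-at-$(1_T,1_S)$ map. Hence the two nonderived maps coincide, and passing back through the K-injective resolution and the uniqueness clauses of~\ref{adjass0aff} and~\ref{pfush} gives the derived identity.

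Alternatively, and perhaps more cleanly, I would phrase it conjugacy-theoretically: by Corollary~\ref{adjassaff} the isomorphism $\varphi_*\bar\alpha(\xi_*T,S,\SG)$ is, after the evident identifications, precisely the comparison isomorphism between the two ways of viewing $\Hom_{\D(R)}(\varphi_*\xi_*T,\SG)$ — once as $\Hom_{\D(R)}((\xi\varphi)_*T,\SG)$ and once, via adjoint associativity, as $\varphi_*\Hom_{\D(S)}(\xi_*T,\ush\varphi\SG)=\varphi_*\Hom_{\D(S)}(\xi_*T,\R\Hom_\varphi(S,\SG))$. On the other hand $\xi_*\pi_{\xi,\varphi}$, being $\xi_*$ applied to a map right-conjugate to $\id_{(\xi\varphi)_*}$, fits into the ``second diagram in~\cite[3.3.7(a)]{li}'' (with $\beta=\id$), which is exactly the assertion that $\pi_{\xi,\varphi}$ is compatible with the adjoint-associativity isomorphism of the $R$–$S$–$T$ sandwich. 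Composing these two identifications yields $\xi_*\pi_{\xi,\varphi}(\SG)=\bar\alpha(\xi_*T,S,\SG)$. The main obstacle — really the only place care is needed — is bookkeeping the identifications $\xi_*\Hom_\xi(T,-)=\Hom_S(\xi_*T,-)$, $\ush{(\xi\varphi)}=\R\Hom_{\xi\varphi}(T,-)$, and $\SEE\Otimes{S}S\cong\SEE$, and confirming that the nonderived $\bar\alpha_0$ with a unit tensor factor really is the trivial Hom–Hom ``currying'' isomorphism rather than something twisted by signs; once that is pinned down via the explicit formula in~\ref{adjass0aff} and the explicit description in the proof of~\ref{pfush}, the verification is the same element-chase already carried out there. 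I expect the proof to be two or three lines once these conventions are fixed, so I would present it as: reduce to K-injective $\SG$; invoke the characterizing squares of~\ref{adjass0aff} and~\ref{pfush}; observe both sides send $\lambda\mapsto[\lambda(1_T)](1_S)$; conclude by uniqueness.
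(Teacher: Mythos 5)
Your proposal is correct and is essentially the argument the paper deems too routine to record (hence the inline $\square$): reduce to K-injective $\SG$, observe that under the identification $\xi_*T\Otimes{S}S\cong\xi_*T$ both nonderived maps realize the standard currying isomorphism, and invoke the uniqueness clauses of~\ref{adjass0aff} and~\ref{pfush}. One small imprecision to watch: $[\lambda(1_T)](1_S)\in\SG$ is the image of $\lambda$ under the \emph{composite with the counit} (evaluation at $1_T$), not under the nonderived $\pi_{\xi,\varphi}(\SG)$ itself — whose value at $\lambda$ is the $R$-linear map $t\mapsto[\lambda(t)](1_S)$ — but since the counit-composite determines the adjoint, your conclusion goes through.
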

\end{cosa}

\begin{cosa} \label{concrete version beta}
This section is devoted to a description of  the (concrete) commutative\kf-algebra version of the func\-torial map~$\beta_\sigma$  in~Theorem~\ref{indt base change}.\va2

Let 
\[
\def\1{$S'$}
\def\2{$S$}
\def\3{$R\>'$}
\def\4{$R$}
\def\5{$X'$}
\def\6{$X$}
\def\7{$Y'$}
\def\8{$Y$}
 \bpic[xscale=.9, yscale=.65]

   \node(11) at (1,-1){\1};   
   \node(13) at (3,-1){\2}; 
     
   \node(31) at (1,-3){\3};
   \node(33) at (3,-3){\4};
   
    \draw[<-] (11)--(13) node[above=1pt, midway, scale=.75] {$\nu$} ;  
      
   \draw[<-] (31)--(33) node[below=1pt, midway, scale=.75] {$\mu$} ;
    
    \draw[<-] (11)--(31) node[left=1pt, midway, scale=.75] {$\xi$} ;
   
    \draw[<-] (13)--(33) node[right=1pt, midway, scale=.75] {$\varphi$};
  \node at (2.05,-2)[scale=.85] {$\hat\sigma$} ;

   \node(15) at (6,-1){\5};   
   \node(17) at (8,-1){\6}; 
     
   \node(35) at (6,-3){\7};
   \node(37) at (8,-3){\8};
   
    \draw[->] (15)--(17) node[above=1pt, midway, scale=.75] {$v$} ;  
      
   \draw[->] (35)--(37) node[below=1pt, midway, scale=.75] {$u$} ;
    
    \draw[->] (15)--(35) node[left=1pt, midway, scale=.75] {$g$} ;
   
    \draw[->] (17)--(37) node[right=1pt, midway, scale=.75] {$f$};
  \node at (7.05,-2)[scale=.85] {$\sigma$} ;
 \epic 
\]
\vskip-1pt
\noindent be, respectively,  a commutative diagram of maps of commutative rings and the corresponding commutative diagram of maps of affine schemes.

\pagebreak[3]
The functor 
\begin{equation}\label{phi^*}
\varphi^*(-)\set S\otimes_\varphi -
\end{equation}
from $\sA(R)$ to $\sA(S)$ (notation akin to that in \S\ref{pphi}) is left-adjoint to $\varphi_*\>$, with
unit at $N\in\sA(R)$ the natural map
\[
\eta_\varphi(N)\colon N\to\varphi_*S\otimes_RN=\varphi_*(S\otimes_\varphi N)=\varphi_*\varphi^*N,
\]
and with  counit at $M\in\sA(S)$ the scalar multiplication map 
\[
\epsilon_\varphi(M)\colon \varphi^*\varphi_*M=S\otimes_\varphi \varphi_*M\to M.
\]

\pagebreak[3]
As~in the proof of \cite[Proposition 3.2.1]{li}, it follows that the derived functor 
\begin{equation}\label{Lphi^*}
\LL\varphi^*(-)\set S\Otimes{\varphi} -
\end{equation}
from $\D(R)$ to $\D(S)$
is left-adjoint to $\varphi_*\>$, the adjunction isomorphism\va2
\begin{equation}\label{AdjIso}
\alpha\colon \Hom_{\D(S)}^{\mathstrut}(\LL\varphi^*\<\SG,\SEE)\iso\Hom_{\D(R)}(\SG,\varphi_*\SEE)
\end{equation}
being the unique bifunctorial map making the following natural diagram,
 
\pagebreak[3]\noindent
where $\K(T)$ is the category of homotopy classes of maps of \mbox{$T$-complexes,} commute (see \cite[Corollary 3.2.2]{li}):
\[
\def\1{$\Hom_{\K(S)}(\varphi^*\<\SG,\SEE)$}
\def\2{$\Hom_{\D(S)}(\varphi^*\<\SG,\SEE)$}
\def\3{$\Hom_{\D(S)}(\LL\varphi^*\<\SG,\SEE)$}
\def\4{$\Hom_{\K(R)}(\SG,\varphi_*\SEE)$}
\def\5{$\Hom_{\D(R)}(\SG,\varphi_*\SEE)$}
  \bpic[xscale=4, yscale=1.75]

   \node(11) at (1,-1){\1};   
   \node(12) at (1.98,-1){\2}; 
   \node(13) at (3,-1){\3};
   
   \node(21) at (1,-2){\4};
   \node(23) at (3,-2){\5};   
  
    \draw[->] (11)--(12) ;  
    \draw[->] (12)--(13) ;  
    
    \draw[->] (21)--(23) ;
  
    \draw[->] (11)--(21);
    \draw[->] (13)--(23) node[right=.5pt, midway, scale=.75] {$\alpha^{}$}
                                   node[left=1pt, midway, scale=.75] {$\simeq$} ;
 \epic
\]
In this adjunction, the unit at $\SG$ is the natural $\D(R)$-composition \va{-1}
\[
\bar\eta_\varphi(\SG)\colon \SG\iso \SG'\xto{\!\via\eta\>\>}\varphi_*\varphi^*\SG'\iso\varphi_*\LL\varphi^*\SG
\]
for any K-flat resolution $\SG'\to\SG$
(i.e., $R$-quasi-isomorphism\va2 with $\SG'$ K-flat);
\noindent and the counit at 
$\SEE$ is the natural $\D(S)$-composition\va{-3}
\[
\bar\epsilon_\varphi(\SEE)\colon\LL \varphi^* \varphi_*\SEE\lto \varphi^*\varphi_*\SEE\xto{\<\<\epsilon_\varphi(\SEE)\>\>}\SEE.
\]

\va2

The functorial $\D(R\>')$-map 
\[
\theta_{\hat\sigma}(\SEE)\colon\LL\mu^*\<\varphi_*\SEE
\lto\xi_*\LL\nu^*\SEE\qquad(\SEE\in\D(S))
\]
is defined (abstractly) to be the adjoint of the  $\D(R)$-map
\begin{equation}\label{adjtheta}
\varphi_*\bar\eta_\nu(\SEE)\colon\varphi_*\SEE\lto
\varphi_*\nu_*\LL\nu^*\SEE=\mu_*\xi_*\LL\nu^*\SEE\>.
\end{equation} 

In terms that are explicit---up to choices of a K-flat $S$-resolution \mbox{$\SEE'\to\SEE$} and a K-flat 
$R$-resolution $\SG\to\varphi_*\SEE'$---one can 
describe $\theta_{\hat\sigma}(\SEE)$ as being
the natural composite $\D(R\>')$-map
\begin{align*}
 \LL\mu^*\<\varphi_*\SEE
&\,=\!\!=\,R\>'\>\Otimes{\mu}\>\>\varphi_*\SEE
\iso R\>'\>\Otimes{\mu }\>\>\varphi_*\SEE'\\
&\iso R\>'\>\otimes_{\mu }\>\>\SG
\lto R\>'\>\otimes_\mu \>\>\varphi_*\SEE'
\lto\xi_*(S'\>\otimes_\nu\>\SEE')\\
&\iso\xi_*\LL\nu^*\SEE\>.
\end{align*}
Indeed, it suffices to consider the case $\SEE'=\SEE$, where, with $\SG\to\varphi_*\SEE$ 
the preceding K-flat resolution,
one  finds readily that this assertion\pagebreak[3] results from  
the commutativity\- (straightforward to verify)  of the natural $\D(R)$-diagram
\[\mkern-3mu
\def\1{$\varphi_*\SEE$}
\def\2{$\varphi_*(\nu_*S'\otimes_S\SEE)$}
\def\3{$\varphi_*\nu_*(S'\otimes_\nu\SEE)$}
\def\4{$\varphi_*\nu_*\nu^*\SEE$}
\def\5{$\mu_*\LL\mu^*\varphi_*\SEE$}
\def\6{$\SG$}
\def\7{$\mu_*\mu^*\SG$}
\def\8{$\mu_*R\>'\otimes_R\SG$}
\def\9{$\mu_*(R\>'\otimes_\mu\SG)$}
\def\ten{$\mu_*(R\>'\Otimes{\mu}\varphi_*\SEE)$}
\def\lvn{$\mu_*(R\>'\otimes_\mu\varphi_*\SEE)$}
\def\twv{$\mu_*\xi_*(S'\otimes_\nu\SEE)$}
\def\thn{$\mu_*\xi_*\nu^*\SEE$}
\def\frn{$\mu_*R\>'\otimes_R\varphi_*\SEE$}
  \bpic[xscale=3.53, yscale=1.45]

   \node(11) at (1,-1){\1};   
   \node(12) at (2.1,-1){\2}; 
   \node(13) at (3.13,-1){\3};
   \node(14) at (4,-1){\4};
   
   \node(21) at (1,-1.95){\5};   
   \node(22) at (1.53,-1.95){\6};   
   \node(23) at (2.65,-2){\frn};   
   
   \node(32) at (2.1,-2.92){\8}; 
   
   \node(41) at (1.53,-2.92){\7};   
   \node(42) at (1,-5){\9}; 
    
   \node(51) at (1,- 3.3){\ten};
   \node(52) at (2.12,-5){\lvn};
   \node(53) at (3.13,-5){\twv};   
   \node(54) at (4,-5){\thn};
  
    \draw[->] (11)--(12) ;  
    \draw[double distance=2] (12)--(13) ;
    \draw[double distance=2] (13)--(14) ;
    
    \draw[->] (52)--(53) ;
    \draw[double distance=2] (53)--(54) ;
    
    \draw[->] (11)--(21) node[left,midway,scale=.75]
                                 {$\bar\eta_\mu$};
    \draw[double distance=2] (21)--(51) ;
    
    \draw[->] (22)--(41) node[right,midway,scale=.75]
                                 {$\eta_\mu$};   
    
    \draw[->] (22)--(32) ;
    \draw[double distance=2] (32)--(42) ;
    \draw[->] (42)--(52) ;

    \draw[double distance=2] (13)--(53) ;
    \draw[double distance=2] (14)--(54) ;
    
    \draw[->] (11)--(22)  node[above=-5pt, midway, scale=.9] 
                                 {\rotatebox{-36}{$\mkern10mu\Iso$}};
    \draw[->] (11)--(23) ;
    \draw[->] (21)--(41) node[above=-5pt, midway, scale=.9] 
                                 {\rotatebox{-36}{$\mkern10mu\Iso$}} ;
    \draw[double distance=2] (23)--(52); 
    \draw[->] (32)--(23) ;
    \draw[double distance=2] (41)--(42); 
    \draw[<-] (42)--(51) node[left=1, midway, scale=.75] {$\simeq$} ; 
    
 \epic
\]


\begin{subcosa}\label{local theta}
It will be shown next that for any\/ $E\in\Dqc(X)$ and $\SEE\set\R\Gamma^{}_{\!\!X}E$ (so that $E\cong\SH{S}\<\SEE$, see  end of \S\ref{affine aspect}),
\emph{the sheafification\/ $\SH{R\>'}\<\theta_{\hat\sigma}(\SEE)$ is naturally isomorphic to the map
\(
\theta_{\<\sigma}(E)\colon \LL u^*\R\fst\>E\to \R g_*\LL v^*\<\<E
\)
in}\va1 \eqref{theta}.  

This follows from \cite[Example 3.10.1(a)]{li}, but here a somewhat different argument
will be given, whose motivation is
that sheafification preserves adjointness of maps
and that the sheafification of the adjoint \eqref{adjtheta} of~$\theta_{\hat\sigma}(\SEE)$ is naturally isomorphic to the natural composite map
\[
\R\fst E\lto \R\fst\R v_*\LL v^*E\iso \R u_*\R g_*\LL v^*E,
\]
which is, by definition, the adjoint of $\theta_{\<\sigma}\>$.

The precise formulation---Proposition ~\ref{conctheta} and its proof---needs some
preliminaries.\va2

\pagebreak[3]
The standard functorial $\OX$-isomorphism, for $R$-complexes~$\SG$, 
\begin{equation}\label{sheafify^*0}
\varsigma(\SG)=\varsigma_\varphi(\SG)\colon f^*\<\SH{\<R}\SG\iso \SH{S}\<\varphi^*\SG,
\end{equation}
see \cite[1.7.7(i)]{EGA1}, is adjoint to the natural composite map\va{-2}
\[
\SH{R}\SG\lto\SH{R}\varphi_*\varphi^*\SG\xto[\!\upsilon(\varphi^*\SG)\!]{\Iso}\fst\>\SH{\<S}\<\varphi^*\SG
\]
\vskip-3pt
\noindent with $\upsilon\colon\SH{\<R}\varphi_*\iso \fst\>\SH{S}$ the functorial isomorphism in ~\ref{factor'}.
In other words,  the following natural functorial  diagram commutes:
\begin{equation}\label{adjconj}
\def\1{$\SH{\<R}$}
\def\2{$\fst f^*\<\SH{\<R}$}
\def\3{$\SH{\<R}\varphi_*\varphi^*$}
\def\4{$\fst\>\SH{\<S}\<\varphi^*$}
 \CD
  \bpic[xscale=2.3, yscale=1.3]

   \node(11) at (1,-1){\1}; 
   \node(12) at (2,-1){\2}; 
     
   \node(21) at (1,-2){\3};
   \node(22) at (2,-2){\4};
   
    \draw[->] (11)--(12) ;  
    \draw[->] (21)--(22) node[below=1pt, midway, scale=.75] {$\upsilon$} 
                                   node[above, midway, scale=.75] {$\Iso$} ;   
       
    \draw[->] (11)--(21) ;
    \draw[->] (12)--(22) node[right=1pt, midway, scale=.75] {$\fst\varsigma$} 
                                    node[left=1pt, midway, scale=.75] {$\simeq$} ;
   \epic 
 \endCD
\end{equation}
There results commutativity of the natural diagram
\[
\def\1{$f^*\<\SH{\<R}\<\varphi_*$}
\def\2{$f^*\mkern-2.5mu\fst f^*\<\SH{\<R}\<\varphi_*$}
\def\4{$f^*\<\SH{\<R}\varphi_*\varphi^*\<\varphi_*$}
\def\5{$f^*\mkern-2.5mu\fst\>\SH{\<S}\<\varphi^*\<\varphi_*$}
\def\6{$\SH{\<S}\<\varphi^*\<\varphi_*$}
\def\8{$f^*\!\fst\>\SH{\<S}$}
\def\9{$\SH{\<S}$}
  \bpic[xscale=4.3, yscale=1.55]

   \node(11) at (1,-1){\1}; 
   \node(12) at (2.03,-1.5){\2};  
   \node(13) at (3,-1){\1}; 
     
   \node(21) at (1.32,-2){\4};
   \node(22) at (2.03,-2.27){\5};
   \node(23) at (3,-2.27){\6};
   
   \node(31) at (1,-3){\1};
   \node(32) at (2.03,-3){\8};
   \node(33) at (3,-3){\9};

  
   \draw[double distance=2pt] (11)--(13) ;
   \draw[->] (11)--(12) ;  
   \draw[->] (12)--(13)  ;
   
   \draw[->] (21)--(22) node[below, midway, scale=.75] {$f^*\upsilon$}
                                   node[above=-.7pt, midway, scale=.75] {\rotatebox{-7}{$\Iso$}} ;   
   \draw[->] (22)--(23)  ;
   
   \draw[->] (31)--(32) node[below, midway, scale=.75] {$f^*\upsilon$}
                                  node[above, midway, scale=.75] {$\Iso$} ;   
   \draw[->] (32)--(33)  ;

    \draw[double distance=2pt] (11)--(31) ;
    \draw[->] (11)--(21) ;
    \draw[->] (21)--(31) ;
    
    \draw[->] (12)--(22) node[right=1pt, midway, scale=.75] {$f^*\mkern-2.5mu\fst\varsigma$} 
                                    node[left=1pt, midway, scale=.75] {$\simeq$} ;
    \draw[->] (22)--(32) ;
    
    \draw[->] (13)--(23) node[right=1pt, midway, scale=.75] {$\varsigma$} 
                                   node[left=1pt, midway, scale=.75] {$\simeq$} ;
    \draw[->] (23)--(33) ;

  \epic 
\]
\vskip-3pt
\noindent whose border can be represented as the (commutative) natural diagram
\begin{equation}\label{adjconj'}
\def\1{$\SH{\<S}$}
\def\2{$f^*\!\fst\>\SH{\<S}$}
\def\3{$\SH{\<S}\<\varphi^*\<\varphi_*$}
\def\4{$f^*\<\SH{\<R}\<\varphi_*$}
 \CD
  \bpic[xscale=3, yscale=1.3]

   \node(11) at (1,-1){\1}; 
   \node(12) at (2,-1){\2}; 
     
   \node(21) at (1,-2){\3};
   \node(22) at (2,-2){\4};
   
    \draw[<-] (11)--(12) ;  
    \draw[<-] (21)--(22) node[below=1pt, midway, scale=.75] {$\varsigma$} 
                                   node[above, midway, scale=.75] {$\Iso$} ;   
       
    \draw[<-] (11)--(21) ;
    \draw[<-] (12)--(22) node[right=1pt, midway, scale=.75] {$f^*\upsilon$} 
                                    node[left=1pt, midway, scale=.75] {$\simeq$} ;
   \epic 
 \endCD
\end{equation}

As for derived versions of the foregoing, 
upon replacing $\SG$ by a quasi-isomorphic direct limit of bounded-above flat $R$-complexes (see the remarks preceding \eqref{sheafify Otimes}), one gets 
a natural functorial $\D(X)$-isomorphism
\begin{equation}\label{sheafify^*}
\bar\varsigma(\SG)\colon\LL f^*\<\SH{\<R}\<\SG\iso \SH{S}\<\LL\varphi^*\SG
\qquad(\SG\in\D(R))
\end{equation}
such that the following natural functorial diagram commutes
\begin{equation*}\label{Adjconj}
\def\1{$\SH{\<R}$}
\def\2{$\R\fst \LL f^*\<\SH{\<R}$}
\def\3{$\SH{\<R}\varphi_*\LL\varphi^*$}
\def\4{$\R\fst\>\SH{\<S}\<\LL\varphi^*,$}
 \CD
  \bpic[xscale=3, yscale=1.25]

   \node(11) at (1,-1){\1}; 
   \node(12) at (2,-1){\2}; 
     
   \node(21) at (1,-2){\3};
   \node(22) at (2,-2){\4};
   
    \draw[->] (11)--(12) ;  
    \draw[->] (21)--(22) node[below=1pt, midway, scale=.75] {\eqref{sheafify_*}} 
                                   node[above, midway, scale=.75] {$\Iso$} ;   
       
    \draw[->] (11)--(21) ;
    \draw[->] (12)--(22) node[right=1pt, midway, scale=.75] {$\R\fst\bar\varsigma$} 
                                    node[left=1pt, midway, scale=.75] {$\simeq$} ;
   \epic 
 \endCD
 \tag*{(\ref{adjconj})$'$}
\end{equation*}
from which one deduces, as above, commutativity of the natural diagram
\begin{equation*}\label{Adjconj'}
\def\1{$\SH{\<S}$}
\def\2{$\LL f^*\R\fst\>\SH{\<S}$}
\def\3{$\SH{\<S}\<\LL\varphi^*\<\varphi_*$}
\def\4{$\LL f^*\<\SH{\<R}\<\varphi_*$}
 \CD
  \bpic[xscale=3, yscale=1.25]

   \node(11) at (1,-1){\1}; 
   \node(12) at (2,-1){\2}; 
     
   \node(21) at (1,-2){\3};
   \node(22) at (2,-2){\4};
   
    \draw[<-] (11)--(12) ;  
    \draw[<-] (21)--(22) node[below=.5pt, midway, scale=.75] {$\bar\varsigma$} 
                                   node[above, midway, scale=.75] {$\Iso$} ;   
       
    \draw[<-] (11)--(21) ;
    \draw[<-] (12)--(22) node[right=1pt, midway, scale=.75] {\eqref{sheafify_*}} 
                                    node[left=1pt, midway, scale=.75] {$\simeq$} ;
   \epic 
 \endCD
 \tag*{(\ref{adjconj'})$'$}
\end{equation*}

\medbreak

\begin{subprop}\label{conctheta}
The following diagram, in which\/ $\SEE$ is any\/ $S$-complex, commutes.\looseness=1
\[
\def\1{$\SH{\<R\>'}\<\LL\mu^*\varphi_*\SEE$}
\def\2{$\SH{\<R\>'}\<\xi_*\>\LL\nu^*\SEE$}
\def\3{$\LL u^*\SH{\<R}\varphi_*\SEE$}
\def\4{$\R g_*\>\SH{\<S'}\<\LL\nu^*\SEE$}
\def\5{$\LL u^*\R\fst\>\SH{\<S}\<\SEE$}
\def\6{$\R g_*\LL v^*\SH{\<S}\<\SEE$}
  \bpic[xscale=3.75, yscale=1.6]

   \node(11) at (1,-1){\1};   
   \node(12) at (2,-1){\2}; 
      
   \node(21) at (1,-2){\3};
   \node(22) at (2,-2){\4};
   
   \node(31) at (1,-3){\5};
   \node(32) at (2,-3){\6};   
  
    \draw[->] (11)--(12) node[above=1pt, midway, scale=.75] {$\SH{R\>'}\<\theta_{\hat\sigma}(\SEE)$} ;  
       
    \draw[->] (31)--(32) node[below=1pt, midway, scale=.75] 
      {$\theta_{\<\sigma}(\SH{S}\SEE)$} ;
    
    \draw[->] (11)--(21) node[right=1pt, midway, scale=.75] {$\simeq$}
                                   node[left=1pt, midway, scale=.75]{\eqref{sheafify^*}} ;
    \draw[->] (21)--(31) node[right=1pt, midway, scale=.75] {$\simeq$}
                                   node[left=1pt, midway, scale=.75]{\eqref{sheafify_*}} ; 
    
    \draw[->] (12)--(22) node[left=1pt, midway, scale=.75] {$\simeq$}
                                   node[right=1pt, midway, scale=.75]{\eqref{sheafify_*}} ;
    \draw[->] (22)--(32) node[left=1pt, midway, scale=.75] {$\simeq$}
                                   node[right=1pt, midway, scale=.75]{\eqref{sheafify^*}} ;
  \epic
\]

\end{subprop}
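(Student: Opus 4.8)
The plan is to prove commutativity by tracing through the adjointness that defines $\theta_{\hat\sigma}$ and $\theta_{\sigma}$. Both maps are, by construction, adjoint (under $\LL\mu^*\dashv\mu_*$ and $\LL u^*\dashv u_*$ respectively) to natural composites built from the unit maps $\bar\eta_\nu$ and $\id\to\R v_*\LL v^*$. Since sheafification $\SH{(-)}$ carries adjunctions to adjunctions (section~\ref{affine aspect}) and, via the isomorphisms \eqref{sheafify_*} and \eqref{sheafify^*}, intertwines $\LL\mu^*$ with $\LL u^*$, $\mu_*$ with $\R u_*$ (and likewise for $\nu$, $\xi$), it suffices to check that $\SH{R'}$ applied to the adjoint map \eqref{adjtheta} of $\theta_{\hat\sigma}(\SEE)$ corresponds under these identifications to the adjoint map of $\theta_{\sigma}(\SH{S}\SEE)$, namely the natural composite $\R\fst E\to\R\fst\R v_*\LL v^*E\iso\R u_*\R g_*\LL v^*E$. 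First I would set up the adjoint version of the asserted square: replace the top and bottom rows by the corresponding $\mu_*$- and $u_*$-adjoints, obtaining a diagram whose commutativity is equivalent to that of the stated one (here one uses that the vertical isomorphisms \eqref{sheafify^*}, \eqref{sheafify_*} are compatible with passage to adjoints, which is the content of the derived versions (\ref{adjconj'})$'$ of the diagrams displayed just before the proposition).

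Next I would expand the adjoint of $\theta_{\hat\sigma}(\SEE)$ using the explicit description \eqref{adjtheta}: it is $\varphi_*$ applied to the unit $\bar\eta_\nu(\SEE)\colon\SEE\to\nu_*\LL\nu^*\SEE$, read as a map $\varphi_*\SEE\to\varphi_*\nu_*\LL\nu^*\SEE=\mu_*\xi_*\LL\nu^*\SEE$. Sheafifying, and using the pseudofunctoriality diagram \eqref{pfgamma} for the isomorphisms $\upsilon$, $q$ together with the naturality of $\upsilon$ with respect to the unit $\id\to\R(-)_*\LL(-)^*$, this becomes the composite $\R\fst E\to\R\fst\R v_*\LL v^*E\iso\R(uv)_*\LL v^*E=\R u_*\R g_*\LL v^*E$, which is exactly the adjoint defining $\theta_{\sigma}(\SH{S}\SEE)$. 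The one genuinely fiddly point is that there are two ways to identify $\SH{R'}\LL\mu^*\varphi_*\SEE$ with $\LL u^*\R\fst\SH{S}\SEE$—one through $\SH{R'}\xi_*\LL\nu^*$ going around the top-right, one through $\SH{R}\varphi_*$ going around the bottom-left—and one must confirm these agree with the pre-chosen vertical maps in the statement; this is where the compatibility of $\bar\varsigma$ (\eqref{sheafify^*}) and $\upsilon/q$ (\eqref{sheafify_*}) with composition of functors, recorded in the diagrams around (\ref{adjconj})$'$, does the work.

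I expect the main obstacle to be purely bookkeeping: assembling the large diagram obtained by gluing (a) the adjunction-transpose of the asserted square, (b) the explicit formula for $\theta_{\hat\sigma}$ from the displayed $\D(R)$-diagram in section~\ref{concrete version beta}, (c) the pseudofunctoriality square \eqref{pfgamma}, and (d) the derived compatibility diagrams (\ref{adjconj})$'$, (\ref{adjconj'})$'$, and then verifying that every cell commutes—each for a trivial reason (naturality, definition of a unit or counit, or an already-established lemma) but with enough cells that care is needed to route the diagram chase correctly. A clean alternative, which I would mention as a remark, is simply to invoke \cite[Example 3.10.1(a)]{li}, which identifies the sheafification of the algebraic $\theta_{\hat\sigma}$ with the geometric $\theta_{\sigma}$ directly; the self-contained argument above is preferable only because it keeps the exposition independent of that reference and makes the role of the explicit resolutions transparent. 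In either case no delicate analysis is involved—the content is entirely formal manipulation of adjoint functors between closed categories, exactly as flagged in section~\ref{expressions}.
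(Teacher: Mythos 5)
Your proposal is correct and takes essentially the same route as the paper: both expand the square via the adjoint description \eqref{adjtheta} of $\theta_{\hat\sigma}$ (the paper does this in place rather than first transposing the square, but that is the same trick) and then reduce commutativity to the diagrams $(\ref{adjconj})'$, $(\ref{adjconj'})'$ (applied with $(\mu,u)$ and $(\nu,v)$ in place of $(\varphi,f)$) together with two instances of the pseudofunctoriality square \eqref{pfgamma}. Your closing remark about invoking \cite[Example 3.10.1(a)]{li} instead is also exactly the alternative the paper itself flags at the start of \S\ref{local theta}.
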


\begin{proof} Keeping in mind the adjoint \eqref{adjtheta} of 
$\theta_{\hat\sigma}$, expand the diagram naturally, without ``$\,\SEE$," thus:
\[\mkern-3mu
\def\1{$\SH{\<R\>'}\<\LL\mu^*\varphi_*$}
\def\2{$\SH{\<R\>'}\<\xi_*\>\LL\nu^*$}
\def\3{$\LL u^*\SH{\<R}\varphi_*$}
\def\4{$\R g_*\>\SH{\<S'}\<\LL\nu^*$}
\def\5{$\LL u^*\R\fst\>\SH{\<S}$}
\def\6{$\R g_*\LL v^*\SH{\<S}$}
\def\7{$\SH{\<R\>'}\<\<\LL\mu^*\<\varphi_*\nu_*\LL\nu^*$}
\def\8{$\SH{\<R\>'}\<\LL\mu^*\<\<\mu_*\xi_*\>\LL\nu^*$}
\def\9{$\LL u^*\<\SH{\<R}\varphi_*\nu_*\LL\nu^*$}
\def\ten{$\LL u^*\SH{\<R}\mu_*\xi_*\>\LL\nu^*\quad$}
\def\lvn{$\LL u^*\R\fst\>\SH{\<S}\nu_*\LL\nu^*$}
\def\twv{$\LL u^*\R u_*\>\SH{\<R\>'}\<\xi_*\>\LL\nu^*$}
\def\thn{$\LL u^*\R\fst \R v_*\>\SH{\<S'}\<\<\LL\nu^*$}
\def\frn{$\LL u^*\R u_*\R g_*\>\SH{\<S'}\<\<\LL\nu^*$}
\def\ffn{$\LL u^*\R\fst \R v_*\LL v^*\<\SH{\<S}$}
\def\sxn{$\LL u^*\R u_*\R g_*\LL v^*\<\SH{\<S}$}
  \bpic[xscale=3.4, yscale=1.6]

   \node(11) at (1,-1.5){\1};   
   \node(14) at (4,-1.5){\2}; 
   
   \node(22) at (1.95,-1.5){\7};
   \node(23) at (3.05,-1.5){\8};

   \node(31) at (1,-3.5){\3};
   \node(32) at (1.95,-2.5){\9};
   \node(33) at (3.05,-2.5){\ten};
   \node(34) at (4,-3.5){\4};
   
   \node(41) at (1,-4.5){\5};
   \node(42) at (1.95,-3.5){\lvn};
   \node(43) at (3.05,-3.5){\twv};

   \node(52) at (1.95,-4.5){\thn};
   \node(53) at (3.05,-4.5){\frn};
   
   \node(62) at (1.95,-5.5){\ffn};
   \node(63) at (3.05,-5.5){\sxn};

   \node(74) at (4,-4.5){\6};   
  
    \draw[double distance=2pt] (22)--(23) ;
   
    \draw[double distance=2pt] (32)--(33) ;
        
    \draw[->] (52)--(53) node[above, midway, scale=.75] {$\Iso$} ;
    
    \draw[->] (62)--(63) node[above, midway, scale=.75] {$\Iso$} ;
       
    \draw[->] (11)--(31) node[right=1pt, midway, scale=.75] {$\simeq$}
                                   node[left, midway, scale=.75]{\eqref{sheafify^*}} ;
    \draw[->] (31)--(41) node[right=1pt, midway, scale=.75] {$\simeq$}
                                   node[left=1pt, midway, scale=.75]{\eqref{sheafify_*}} ; 
                                   
     \draw[->] (22)--(32) node[left=1pt, midway, scale=.75] {$\simeq$}
                                   node[right, midway, scale=.75]{\eqref{sheafify^*}} ;                              
     \draw[->] (23)--(33)  node[right=1pt, midway, scale=.75] {$\simeq$}
                                   node[left, midway, scale=.75]{\eqref{sheafify^*}} ; 
     
     \draw[->] (32)--(42) node[right, midway, scale=.75]{\eqref{sheafify_*}};                              
     \draw[->] (33)--(43) node[right=.5, midway, scale=.75] {$\simeq$}
                                    node[left, midway, scale=.75]{\eqref{sheafify_*}};
     
     \draw[->] (42)--(52) node[right, midway, scale=.75]{\eqref{sheafify_*}};                              
     \draw[->] (43)--(53) node[right=1pt, midway, scale=.75] {$\simeq$}
                                    node[left, midway, scale=.75]{\eqref{sheafify_*}} ;
     
     \draw[->] (52)--(62) node[left=1pt, midway, scale=.75] {$\simeq$}
                                   node[right, midway, scale=.75]{\eqref{sheafify^*}} ;                              
     \draw[->] (53)--(63)  node[right=1pt, midway, scale=.75] {$\simeq$}
                                   node[left, midway, scale=.75]{\eqref{sheafify^*}} ;
   
    \draw[->] (14)--(34) node[left=1pt, midway, scale=.75] {$\simeq$}
                                   node[right, midway, scale=.75]{\eqref{sheafify_*}} ;
    \draw[->] (34)--(74) node[left=1pt, midway, scale=.75] {$\simeq$}
                                   node[right, midway, scale=.75]{\eqref{sheafify^*}} ;
                                   
     \draw[->] (11)--(22) ;                              
     \draw[->] (23)--(14) ;
     
     \draw[->] (31)--(32) ;                              
     \draw[->] (43)--(14) ;
     
     \draw[->] (41)--(42) ; 
     
     \draw[->] (53)--(34) ; 
     
     \draw[->] (41)--(62) ;                              
     \draw[->] (63)--(74) ;
     
    \node at (3.427,-2.02)[scale=.9]{\circled1} ;
    \node at (2.48,-3.52)[scale=.9]{\circled2} ;   
    \node at (1.375,-4.52)[scale=.9]{\circled3} ;    
                         
 \epic
\]

Commutativity of the unlabeled subdiagrams is clear.

Commutativity of~\circled1 results from that of diagram~\ref{Adjconj'}, 
with $(\mu,u)$ in place of $(\varphi,f)$.ƒ
   
Commutativity of~\circled3 results from that of diagram~\ref{Adjconj}, 
with $(\nu,v)$ in place of $(\varphi,f)$.

Commutativity of~\circled2 results from that of the two diagrams obtained from~\eqref{pfgamma} by making the respective substitutions $(\xi,g)\mapsto(\nu,v)$ and $(\varphi,f)\mapsto(\mu,u)$.
\end{proof}

\begin{subcosa}
Now for the commutative\kf-algebra version of the map $\beta_\sigma$ in~\ref{indt base change}.\va1

Assume that the natural $\D(R)$ composite 
\(
R\>'\Otimes{\<\<R}S\to R\>'\otimes_R S\to  S'
\)
is an isomorphism,
i.e., $\textup{Tor}_i^R(R\>',S)=0$ for all $i>0$ and the natural map is an isomorphism $R\>'\otimes_R S\iso S'$; 
equivalently, assume  $\sigma$ to be a tor-independent fiber square (see \cite[(3.10.2)(ii)$'$]{li}). 
Thus we can, and do, identify the maps $\xi$ and $\nu$ with the respective canonical maps $R\>'\to R\>'\otimes_R\<S\>$ and
$S\to R\>'\otimes_R\< S$.
Then $\sigma$ is an independent square (see the remarks following
\eqref{theta}), so for any $S$-complex~$\SEE$ the above map 
$\theta_{\<\sigma}(\SH{S}\SEE)$ is an isomorphism,  
whence, by~\ref{conctheta} and \eqref{qcequiv},  so is $\theta_{\hat\sigma}(\SEE)$.

As in~\ref{indt base change}---but more generally (see~\ref{generalized flat}), the 
$\D(X')$-map $\beta_\sigma(G)$ is defined for all $G\in\D(Y)$ to be adjoint to 
the natural composite map
\[ 
\R g_*\LL v^*\<\< f^\flat G\xto[\>\>\lift1.35,\theta_{\<\sigma}^{-\<1},\>]{} \LL u^*\R\fst f^\flat G 
\lto \LL u^*\mkern-.5mu G.
\]

Let\/ $\beta_{\hat\sigma}(\SG)\ (\SG\in\D(R))$ be the $\D(S')$-map 
adjoint to the natural composite
\[ 
\xi_*\LL \nu^*\<\< \ush\varphi \SG\xto[\>\>\lift1.35,\theta_{\<\hat\sigma}^{-\<1},\>]{} 
\LL \mu^*\varphi_* \ush\varphi \SG \lto \LL \mu^*\SG,
\]
that is, $\beta_{\hat\sigma}(\SG)$ is the natural composite map
\[
\LL\nu^*\<\<\ush\varphi \SG 
\lto 
\ush\xi\xi_*\>\LL\nu^*\<\<\ush\varphi \SG
\xto{\ush\xi\theta^{-1}_{\hat\sigma}\<\<} 
\ush\xi\LL\mu^*\varphi_*\ush\varphi \SG
\lto
\ush\xi\LL\mu^*\SG,
\]
a map that is concrete to the extent that up to taking K-flat and K-injective resolutions,  
it has previously been explicitly described. (See the lines just after~\eqref{*sh} and~\eqref{adjtheta}.) \va2

\emph{For\/ $G\in\Dqc(Y),$ $\beta_\sigma(G)$ is isomorphic to the sheafification of 
$\beta_{\hat\sigma}(\R\Gamma^{}_{\!\!Y} G)$.} 
That follows from the next proposition, with $\SG\set\R\Gamma^{}_{\!\!Y} G$.
\end{subcosa}

Application of $\SH{S}\mkern-.5mu\ush{\varphi}$ to the unit isomorphism \eqref{unitmap} produces a natural functorial isomorphism
\begin{equation}\label{affdual}
\vartheta_{\<\!f}(\SG)\colon \SH{S}\mkern-.5mu\ush{\varphi}\SG\iso 
\SH{S}\mkern-.5mu\ush{\varphi}\R\Gamma^{}_{\<\<\!Y}\SH{\<R}\SG=
f^\flat\SH{\<R}\SG
\qquad(\SG\in\D(R)).
\end{equation}

\begin{subprop}\label{concrete bc2}
For any\/ $\SG\in\D(R),$ the following diagram 
commutes.
\[
\def\1{$\SH{\<S'}\<\LL\nu^*\<\ush\varphi \SG$}
\def\2{$\SH{\<S'}\<\ush\xi\LL\mu^*\SG$}
\def\3{$\LL v^*\SH{\<S}\ush\varphi \SG$}
\def\4{$g^\flat\SH{\<R\>'}\<\LL\mu^*\SG$}
\def\5{$\LL v^*\<\<f^\flat\SH{\<R} \SG$}
\def\6{$g^\flat\LL u^*\<\SH{\<R}\SG$}
  \bpic[xscale=3.75, yscale=1.6]

   \node(11) at (1,-1){\1};   
   \node(12) at (2,-1){\2}; 
      
   \node(21) at (1,-2){\3};
   \node(22) at (2,-2){\4};
   
   \node(31) at (1,-3){\5};
   \node(32) at (2,-3){\6};   
  
    \draw[->] (11)--(12) node[above=1pt, midway, scale=.75] {$\SH{S'}\<\<\beta_{\hat\sigma}(\SG)$} ;  
       
    \draw[->] (31)--(32) node[below=1pt, midway, scale=.75] {$\beta_\sigma(\SH{\<R}\SG)$} ;
    
    \draw[->] (11)--(21) node[right=1pt, midway, scale=.75]{$\simeq$}
                                   node[left=1pt, midway, scale=.75]{\eqref{sheafify^*}} ;
    \draw[->] (21)--(31) node[right=1pt, midway, scale=.75]{$\simeq$}
                                   node[left=1pt, midway, scale=.75]{$\LL v^*\vartheta_{\<\!f}$} ;

    \draw[->] (12)--(22) node[left=1pt, midway, scale=.75]{$\simeq$}
                                   node[right=1pt, midway, scale=.75]{$\vartheta_{\!g}$} ;
    \draw[->] (22)--(32) node[left=1pt, midway, scale=.75] {$\simeq$}
                                   node[right=1pt, midway, scale=.75]{\eqref{sheafify^*}} ;
  \epic
\]

\end{subprop}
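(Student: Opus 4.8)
The strategy is the familiar one: both $\beta^{}_\sigma$ and $\beta^{}_{\hat\sigma}$ are defined by adjunction, so the plan is to pass to adjoints and paste together compatibilities that are already available. Recall that in this section~\ref{affex} the scheme\kf-theoretic $f^\flat$ and its adjunction to $\R\fst$ are \emph{constructed} as the transport, via the inverse equivalences $\SH{}$ and $\R\Gamma$, of $\ush\varphi$ and the adjunction $\varphi_*\dashv\ush\varphi$ (Proposition~\ref{affine duality}, via the chain \eqref{dualityisos}); likewise $g^\flat$ arises from $\ush\xi$. Under these identifications $\R g_*\SH{S'}\cong\SH{R'}\xi_*$ by (the $(g,\xi)$-analogue of) \eqref{sheafify_*}, $\vartheta_f$ and $\vartheta_g$ identify $f^\flat\SH{R}$ with $\SH{S}\ush\varphi$ and $g^\flat\SH{R'}$ with $\SH{S'}\ush\xi$, and the adjunction isomorphism $\R g_*\dashv g^\flat$ goes over to the sheafification of the adjunction $\xi_*\dashv\ush\xi$. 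Hence it suffices to apply $\R g_*\cong\SH{R'}\xi_*$ to the square and replace $\beta^{}_\sigma(\SH{R}\SG)$, resp.\ $\SH{S'}\beta^{}_{\hat\sigma}(\SG)$, by the composite it is adjoint to---namely $\R g_*\LL v^*f^\flat\SH{R}\SG\xto{\theta_{\sigma}^{-1}}\LL u^*\R\fst f^\flat\SH{R}\SG\to\LL u^*\SH{R}\SG$, resp.\ $\SH{S'}$ of $\xi_*\LL\nu^*\ush\varphi\SG\xto{\theta_{\hat\sigma}^{-1}}\LL\mu^*\varphi_*\ush\varphi\SG\to\LL\mu^*\SG$---the last arrows being $\LL u^*$, resp.\ $\SH{}\LL\mu^*$, of the respective counits. (Here $\theta_\sigma$ and $\theta_{\hat\sigma}$ are isomorphisms because $\sigma$ is an independent fiber square.)

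The resulting ``adjoint square'' splits into pieces of three kinds. First, the $\theta$-pieces---comparing $\SH{R'}\theta_{\hat\sigma}(\ush\varphi\SG)$ with $\theta_{\sigma}(\SH{S}\ush\varphi\SG)$ across the isomorphisms \eqref{sheafify_*} and \eqref{sheafify^*}---commute by Proposition~\ref{conctheta}, applied with $\SEE\set\ush\varphi\SG$. Second, the naturality pieces, which express that the arrows built from $\LL(-)^*\SH{}$ and from $\bar\varsigma$ are functorial and compatible under \eqref{sheafify_*}, reduce---just as in the proof of~\ref{conctheta}---to the commutativity of \eqref{Adjconj}, \eqref{Adjconj'} and the pseudofunctoriality diagram \eqref{pfgamma}, used for $\mu$ and for $\nu$. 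Third, and genuinely new, is the counit piece: one must show that, under $f^\flat\SH{R}\SG=\SH{S}\ush\varphi\R\Gamma_Y\SH{R}\SG$ and $\R\fst\SH{S}\ush\varphi\cong\SH{R}\varphi_*\ush\varphi$ (from \eqref{sheafify_*}), the counit $\R\fst f^\flat\SH{R}\SG\to\SH{R}\SG$ of $\R\fst\dashv f^\flat$ equals $\SH{R}$ of the ring-level counit $\mathsf t^{}_{\varphi,\SG}\colon\varphi_*\ush\varphi\SG\to\SG$, modulo $\vartheta_f$ and the unit isomorphism \eqref{unitmap}. This is exactly what the explicit unit and counit of Proposition~\ref{affine duality} record, once \eqref{unitmap} is unwound: there the counit is realized as $\SH{R}\R\Gamma_Y\to\id$ precomposed with $\SH{R}$ of $\varphi_*\mathsf u^{}_{\varphi}$ and of the canonical isomorphisms relating $\varphi_*\Gamma_X$, $\Gamma_Y\fst$, $q^{}_{f}$ and $\upsilon_\varphi$; that this equals $\SH{R}(\mathsf t^{}_{\varphi,\SG})$ composed with the sheafification isomorphisms follows from the naturality of $\upsilon_\varphi$ and the triangle identities for $\SH{R}\dashv\R\Gamma_Y$.

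Pasting these pieces gives commutativity of the adjoint square, hence of the square in the proposition. The main obstacle is organizational rather than conceptual: one has to carry along the half-dozen sheafification isomorphisms ($\upsilon_\varphi$, $\upsilon_\xi$, $q^{}_{f}$, $q^{}_g$, $\bar\varsigma$ for $\varphi$ and for $\xi$, and \eqref{unitmap} for $Y$ and for $Y'$) and make sure that the two counit triangles---one for $\varphi_*\dashv\ush\varphi$, one for $\R\fst\dashv f^\flat$---are glued in consistently; this is the step where the explicit (co)unit formulas of~\ref{affine duality} and the pseudofunctoriality diagram \eqref{pfgamma} do the real work, everything else being naturality, transitivity of~$\theta$, and the already-established Proposition~\ref{conctheta}.
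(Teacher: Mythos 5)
Your plan is essentially the paper's own: expand the two $\beta$'s via their defining (co)unit/$\theta$ factorizations, invoke Proposition~\ref{conctheta} for the $\theta$-compatibility subdiagram (the paper's $\circled2$), reduce the counit subdiagram ($\circled3$) to the explicit counit description in Proposition~\ref{affine duality}, and absorb the rest by naturality, $\eqref{pfgamma}$, and the $\bar\varsigma$ compatibilities $\eqref{Adjconj}$/$\eqref{Adjconj'}$ (the paper's $\circled1$). The only organizational difference is that you pass to the $\R g_*$-adjoint square at the outset whereas the paper expands the $\beta$-composites inline; the ingredients and the three pieces you isolate are the same.
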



\begin{proof}
Expand the diagram naturally, without ``$\SG$," thus:
 \[\mkern-2mu
\def\1{$\SH{\<S'}\<\LL\nu^*\<\ush\varphi $}
\def\2{$\SH{\<S'}\<\ush\xi\LL\mu^*$}
\def\3{$\LL v^*\SH{\<S}\<\ush\varphi $}
\def\4{$g^\flat\SH{\<R\>'}\<\LL\mu^*$}
\def\5{$\LL v^*\!f^\flat\SH{\<R}$}
\def\6{$g^\flat\LL u^*\<\SH{\<R}$}
\def\7{$\SH{\<S'}\<\ush\xi\xi_*\LL\nu^*\<\ush\varphi$}
\def\8{$\SH{\<S'}\<\<\ush\xi\LL\mu^*\<\varphi_*\ush\varphi$}
\def\9{$g^\flat\<\SH{\<R\>'}\<\<\xi_*\LL\nu^*\<\ush\varphi$}
\def\ten{$g^\flat\<\SH{\<R\>'}\<\<\LL\mu^*\<\varphi_*\ush\varphi$}
\def\lvn{$g^\flat\R g_*\>\SH{\<S'}\<\<\LL\nu^*\<\ush\varphi$}
\def\twv{$g^\flat\LL u^*\<\SH{\<R}\<\varphi_*\ush\varphi$}
\def\thn{$g^\flat\R g_*\LL v^*\<\SH{\<S}\<\<\ush\varphi$}
\def\frn{$g^\flat\LL u^*\R\fst\>\SH{\<S}\<\ush\varphi$}
\def\ffn{$g^\flat\R g_*\LL v^*\!f^\flat\<\SH{\<R}$}
\def\sxn{$g^\flat\LL u^*\R\fst f^\flat\<\SH{\<R}$}
  \bpic[xscale=3.4, yscale=1.7]

   \node(11) at (1,-1.5){\1};   
   \node(14) at (3.95,-1.5){\2}; 
   
   \node(22) at (1.9,-1.5){\7};
   \node(23) at (3,-1.5){\8};

   \node(41) at (1,-3.75){\3};
   \node(32) at (1.9,-2.5){\9};
   \node(33) at (3,-2.5){\ten};
   \node(34) at (3.95,-3.75){\4};
   
   \node(42) at (1.9,-3.75){\lvn};
   \node(43) at (3,-3.75){\twv};

   \node(52) at (1.9,-5){\thn};
   \node(53) at (3,-5){\frn};
   
   \node(62) at (1.9,-6){\ffn};
   \node(63) at (3,-6){\sxn};

   \node(71) at (1,-6){\5};
   \node(74) at (3.95,-6){\6};   
  
     \draw[->] (22)--(23) node[above, midway, scale=.75] {$\via\theta_{\!\hat\sigma}^{-\<1}$} ;
   
    \draw[->] (32)--(33) node[below, midway, scale=.75] {$\via\theta_{\!\hat\sigma}^{-\<1}$} ;
        
    \draw[->] (52)--(53) node[above, midway, scale=.75] {$\via\theta_{\!\sigma}^{-\<1}$} ;
    
    \draw[->] (62)--(63) node[below, midway, scale=.75] {$\via\theta_{\!\sigma}^{-\<1}$} ;
       
    \draw[->] (11)--(41) node[right=1pt, midway, scale=.75] {$\simeq$}
                                   node[left, midway, scale=.75]{\eqref{sheafify^*}} ;
    \draw[->] (41)--(71) node[right=1pt, midway, scale=.75] {$\simeq$}
                                   node[left=.5pt, midway, scale=.75]{$\via\vartheta_{\<\!f}$} ;
                                   
     \draw[->] (22)--(32) node[left=1pt, midway, scale=.75] {$\simeq$}
                                   node[right, midway, scale=.75]{$\vartheta_{\!g}$} ;
     \draw[->] (23)--(33)  node[right=1pt, midway, scale=.75] {$\simeq$}
                                   node[left, midway, scale=.75]{$\vartheta_{\!g}$} ;
     
     \draw[->] (32)--(42) node[left=1pt, midway, scale=.75] {$\simeq$}
                                    node[right, midway, scale=.75]{\eqref{sheafify_*}};                              
     \draw[->] (33)--(43) node[right=1pt, midway, scale=.75] {$\simeq$}
                                   node[left, midway, scale=.75]{\eqref{sheafify^*}} ;
     
     \draw[->] (42)--(52) node[left=1pt, midway, scale=.75] {$\simeq$}
                                   node[right, midway, scale=.75]{\eqref{sheafify^*}} ;                              
     \draw[->] (43)--(53) node[right=1pt, midway, scale=.75] {$\simeq$}
                                    node[left, midway, scale=.75]{\eqref{sheafify_*}} ;
     
     \draw[->] (52)--(62) node[left=1pt, midway, scale=.75] {$\simeq$}
                                   node[right, midway, scale=.75]{$\via\vartheta_{\<\!f}$} ;
     \draw[->] (53)--(63)  node[right=1pt, midway, scale=.75] {$\simeq$}
                                   node[left, midway, scale=.75]{$\via\vartheta_{\<\!f}$} ;
   
    \draw[->] (14)--(34) node[left=1pt, midway, scale=.75] {$\simeq$}
                                   node[right, midway, scale=.75]{$\vartheta_{\!g}$} ;
    \draw[->] (34)--(74) node[left=1pt, midway, scale=.75] {$\simeq$}
                                   node[right, midway, scale=.75]{\eqref{sheafify^*}} ;
                                   
     \draw[->] (11)--(22) ;                              
     \draw[->] (11)--(42) ;   
          
     \draw[->] (23)--(14) ;
     
     \draw[->] (33)--(34) ;
     
     \draw[->] (41)--(52) ; 
     \draw[->] (43)--(74) ;
      
      \draw[->] (63)--(74) ;
  
     \draw[->] (71)--(62) ;      
  
    \node at (1.52,-2.02)[scale=.9]{\circled1} ;   
    \node at (2.45,-3.78)[scale=.9]{\circled2} ;   
    \node at (3.38,-5.52)[scale=.9]{\circled3} ;
    
 \epic
\]

Here, commutativity of the unlabeled subdiagrams is clear.

Commutativity of \circled2 results from Proposition~\ref{conctheta}.

Commutativity of \circled3, verifiable after dropping ``$g^\flat\LL u^*\>$," 
follows easily from the description of the counit map $\R\fst f^\flat\to\id$ in Proposition~\ref{affine duality}. Similarly,
\[
\def\1{$\SH{\<R\>'}$} 
\def\2{$\SH{\<R\>'}\xi_*\ush\xi$}
\def\4{$\R g_*g^\flat\<\SH{\<R\>'}$}
\def\5{$\R g_*\mkern.5mu\SH{\<S'}\<\<\ush\xi$}
  \bpic[xscale=3, yscale=1.5]

   \node(11) at (1,-1){\1}; 
   \node(12) at (2.,-1){\2};

   \node(21) at (1,-2){\4};
   \node(22) at (2,-2){\5};

    \draw[<-] (11)--(12) ;  
   
   \draw[<-] (21)--(22) node[below, midway, scale=.75]{$\vartheta_{\!g}$}
                                   node[above, midway, scale=.75]{$\Iso$} ;   
     
    \draw[<-] (11)--(21) ;
    \draw[<-] (12)--(22) node[right=1pt, midway, scale=.75] {\eqref{sheafify_*}} 
                                    node[left=1pt, midway, scale=.75] {$\simeq$} ;
      
  \epic 
\]
\vskip-2pt
commutes, whence so does
\[
\def\1{$g^\flat\SH{\<R\>'}\xi_*$} 
\def\2{$g^\flat\SH{\<R\>'}\xi_*\ush\xi\xi_*$}
\def\4{$g^\flat\R g_*g^\flat\<\SH{\<R\>'}\<\<\xi_*$}
\def\5{$g^\flat\R g_*\mkern.5mu\SH{\<S'}\<\<\ush\xi\xi_*$}
\def\6{$g^\flat\R g_*\mkern.5mu\SH{\<S'}$}
\def\8{$\SH{\<S'}\<\<\ush\xi\xi_*$}
\def\9{$\;\SH{\<S'},$}
  \bpic[xscale=4.3, yscale=1.75]

   \node(11) at (1,-1){\1}; 
   \node(12) at (2.03,-1.5){\2};  
   \node(13) at (3,-1){\1}; 
     
   \node(21) at (1.32,-2){\4};
   \node(22) at (2.03,-2.27){\5};
   \node(23) at (3,-2){\6};
   
   \node(31) at (1,-3){\1};
   \node(32) at (2.03,-3){\8};
   \node(33) at (3,-3){\9};

  
   \draw[double distance=2pt] (11)--(13) ;
   \draw[<-] (11)--(12) ;  
   \draw[<-] (12)--(13)  ;
   
   \draw[<-] (21)--(22) node[below, midway, scale=.75] {$\vartheta_{\!g}$}
                            node[above=-.7pt, midway, scale=.75] {\rotatebox{-7}{$\Iso$}} ;   
   \draw[<-] (22)--(23)  ;
   
   \draw[<-] (31)--(32) node[below=1pt, midway, scale=.75] {$\vartheta_{\!g}$}
                                        node[above, midway, scale=.75] {$\Iso$} ;   
   \draw[<-] (32)--(33)  ;

    \draw[double distance=2pt] (11)--(31) ;
    \draw[<-] (11)--(21) ;
    \draw[<-] (21)--(31) ;
    
    \draw[<-] (12)--(22) node[right=1pt, midway, scale=.75] {\eqref{sheafify_*}} 
                                    node[left=1pt, midway, scale=.75] {$\simeq$} ;
    \draw[<-] (22)--(32) ;
    
    \draw[<-] (13)--(23) node[right=1pt, midway, scale=.75] {\eqref{sheafify_*}}
                                   node[left=1pt, midway, scale=.75] {$\simeq$} ;
    \draw[<-] (23)--(33) ;

  \epic 
\]
giving commutativity of \circled1.
\end{proof}

\begin{subrem}
For $\beta_{\hat\sigma}(\SG)$ to be an isomorphism,  further conditions are needed---for example,
that $\varphi_* S$ is a pseudo\kf-coherent  $R$-module (whence \mbox{$S'$~is 
a} pseudo\kf-coherent  $R\>'$-module, see paragraph preceding \ref{qcHom}), and the~$R$-module $\mu_*R\>'$ has finite tor-dimension (whence the map $u$ has finite tor-dimension),
and $\SG\in\Dpl\<(R)$.
\end{subrem}

\begin{subrem} Paste the commutative diagrams in Propositions~\ref{concrete bc2} 
(without~``$\>\SG\>$") and~\ref{concbc} 
(with ``$\>G\>$" replaced by ``$\>\SH{\<R}\<$") along their common edge to get
the following natural diagram, the commutativity of whose border expresses
the relation between the concrete realizations of $\beta_\sigma$ that are indicated in those two places.
Conversely, a more concrete proof---avoiding adjoints---that the border commutes would, together with~\ref{concbc}, give~\ref{concrete bc2}.\va{-2}
\[
\def\1{$\SH{\<S'}\<\LL\nu^*\<\ush\varphi $}
\def\2{$\SH{\<S'}\<\ush\xi\LL\mu^*$}
\def\3{$\LL v^*\SH{\<S}\<\ush\varphi $}
\def\4{$g^\flat\SH{\<S}\<\LL\mu^*$}
\def\5{$\LL v^*\<\<f^\flat\SH{\<R} $}
\def\6{$g^\flat\LL u^*\<\SH{\<R}$}
\def\7{$\LL v^*\<\<\bar f^*\<\pt \SH{\<R}$}
\def\8{$\bar g^*\LL\bar  u^*\<\pt \SH{\<R}$}
\def\9{$\bar g^*\phi'{}^\flat \LL u^* \SH{\<R}$}
  \bpic[xscale=3.75, yscale=1.4]

   \node(11) at (1,-1){\1};   
   \node(12) at (2,-1){\2}; 
      
   \node(21) at (1,-2){\3};
   \node(22) at (2,-2){\4};
   
   \node(31) at (1,-3){\5};
   \node(32) at (2,-3){\6};   
   
   \node(41) at (1,-3.9){\7};
   
   \node(51) at (1,-4.8){\8};
   \node(52) at (2,-4.8){\9};

    \draw[->] (11)--(12) node[above=1pt, midway, scale=.75] {$\SH{S'}\<\<\beta_{\hat\sigma}$} ;  
    
    \draw[->] (31)--(32) node[below=1pt, midway, scale=.75] {$\beta_{\sigma}(\SH{\<R})$} ;
     
    \draw[->] (51)--(52) node[below=1pt, midway, scale=.75] {$\bar g^*\<\bar\beta_{\sigma}$} ;    
    
    \draw[->] (11)--(21) node[right=1pt, midway, scale=.75] {$\simeq$}
                                   node[left=1pt, midway, scale=.75]{\eqref{sheafify^*}} ;
    \draw[->] (21)--(31) node[right=1pt, midway, scale=.75] {$\simeq$}
                                   node[left=1pt, midway, scale=.75]{$\via\vartheta_{\<\!f}$} ; 
    \draw[double distance=2pt] (31)--(41) ;
    \draw[->] (41)--(51)  node[right=1pt, midway, scale=.75]{$\simeq$} ;
    
    \draw[->] (12)--(22) node[left=1pt, midway, scale=.75] {$\simeq$}
                                   node[right=1pt, midway, scale=.75]{$\vartheta_{\!g}$} ;
    \draw[->] (22)--(32) node[left=1pt, midway, scale=.75] {$\simeq$}
                                   node[right=1pt, midway, scale=.75]{$\via\,$\eqref{sheafify^*}} ;
    \draw[double distance=2pt] (32)--(52) ;
  \epic
\]

\vskip-3pt

\end{subrem}
\end{subcosa}
\end{cosa}

\begin{cosa}\label{concrete version chi}
This section~\ref{concrete version chi} is devoted to proving Proposition~\ref{concrete chi}, which gives, for maps of affine schemes, a concrete representation of a generalization of the map $\chi$ 
in~\ref{flat and tensor}.\va2

Again, $\varphi\colon R\to S$ is a homomorphism\va{1.5} of commutative rings, with
corresponding scheme\kf-map\va2
\(
 \spec S =:X\xto{\lift1.15,{\,f\,},} Y\set \spec R.
\)
 
Let $\SF$ and $\SG$ be $R$-complexes, $F\set\SH{\<R}\SF$, $G\set\SH{\<R}\<\SG$.\va{.6}  
There is a natural bifunctorial $\D(S)$-map
\begin{equation}\label{chi0}
\begin{aligned}
\chi^{}_0(\SF,\SG)\colon\ush\varphi\SF\Otimes{\<S}\LL\varphi^*\SG
&=\R\<\<\Hom_\varphi(S,\SF\>)\Otimes{\<S}(S\Otimes{\varphi}\SG)\\
&\lto 
\R\<\<\Hom_\varphi(S,\SF\Otimes{\<\<R}\SG)=\ush\varphi(\SF\Otimes{\<\<R}\SG)
\end{aligned}
\end{equation}
given by \cite[Corollary 2.6.5]{li}, in which, when
$\SF$ is K-injective and $\SG$ is~K-flat (whence $S\otimes_\varphi \SG$ is K-flat over~$S$),
take $\zeta$ to be the natural $\D(S)$-isomorphism\va{-2}
\[
\Hom_\varphi(S,\SF\>)\otimes_S(S\otimes_\varphi\SG)
\iso\R\<\<\Hom_\varphi(S,\SF\>)\Otimes{\<S}(S\Otimes{\varphi}\SG)
\]
and $\beta$ the natural composite $\D(S)$-map\va{-2}
\begin{multline*}
\Hom_\varphi(S,\SF\>)\otimes_S(S\otimes_\varphi\SG)
\iso\Hom_\varphi(S,\SF\>)\otimes_\varphi\SG\\
\lto\Hom_\varphi(S,\SF\otimes_R\SG)
\lto\,\R\<\<\Hom_\varphi(S,\SF\otimes_R\SG)
\iso\R\<\<\Hom_\varphi(S,\SF\Otimes{R}\SG).
\end{multline*}
As in \emph{loc.\,cit.,} $\chi^{}_0(\SF,\SG)$ is the unique bifunctorial map that equals
$\beta\smallcirc\zeta^{-\<1}$
whenever $\SF$ is K-injective and $\SG$ is K-flat; to this extent, $\chi^{}_0$ is concrete.\va1

The next proposition gives that the sheafification of 
$\chi^{}_0(\SF,\SG)$ is naturally isomorphic to the\/ $\Dqc(X)$-map\va{-2}
\[
\tilde\chi(F,G\>)\colon
f^\flat\<\< F\Otimes{\sX}\LL f^*\<G
\lto 
f^\flat(F\Otimes{Y}G\>)
\]
that is adjoint to the natural composite map\va{-1}
\[
\R\fst(f^\flat\<\< F\Otimes{\sX}\LL f^*\<G\>)
\underset{\lift1.1,p^{}_{\!f},}{\iso}\R\fst f^\flat\<\< F\Otimes{Y}G\lto F\Otimes{Y}G\\[-2pt]
\]   
with $p^{}_{\!f}\set p^{}_{\!f}(F,G)$ the projection isomorphism in~\ref{projf}---thereby making
$\chi^{}_0$ a concrete representation of $\tilde\chi$.
If $S$ is perfect as an $R$-module, or if $\SF$ and $\SF\Otimes{\<\<R}\SG$ are in $\Dpl\<(R)$,
then $\tilde\chi$ is the map $\chi$ in~\ref{flat and tensor}.

\begin{subprop}\label{concrete chi}
The following diagram commutes.
\[
\def\1{$\SH{\<S}(\ush\varphi\SF\Otimes{\<S}\LL\varphi^*\SG)$}
\def\2{$\SH{\<S}\ush\varphi(\SF\Otimes{\<\<R}\SG)$}
\def\3{$\SH{\<S}\ush\varphi\SF\Otimes{\sX}\SH{\<S}\LL\varphi^*\SG$}
\def\4{$f^\flat\SH{\<R}(\SF\Otimes{\<\<R}\SG)$}
\def\5{$f^\flat\< F\Otimes{\sX}\LL f^*\<G$}
\def\6{$f^\flat(F\Otimes{\<Y} G)$}
  \bpic[xscale=3.75, yscale=1.4]

   \node(11) at (1,-1){\1};   
   \node(12) at (2,-1){\2}; 
      
   \node(21) at (1,-2){\3};
   \node(22) at (2,-2){\4};
   
   \node(31) at (1,-3){\5};
   \node(32) at (2,-3){\6};   
  
    \draw[->] (11)--(12) node[above=1pt, midway, scale=.75] {$\SH{S}\<\chi^{}_0$} ;  
       
    \draw[->] (31)--(32) node[below=1pt, midway, scale=.75] {$\tilde\chi$} ;
    
    \draw[->] (11)--(21) node[right=1pt, midway, scale=.75] {$\simeq$}
                                   node[left=1pt, midway, scale=.75] {$\eqref{sheafify Otimes}$} ;
    \draw[->] (21)--(31) node[right=1pt, midway, scale=.75] {$\simeq$}
                                   node[left=1pt, midway, scale=.75]
                                            {$\textup{\eqref{affdual}}\Otimes{\sX}
                                            \<\<\textup{\eqref{sheafify^*}}$} ; 
    
    \draw[->] (12)--(22) node[left=1pt, midway, scale=.75] {$\simeq$}
                                   node[right=1pt, midway, scale=.75]{\textup{\eqref{affdual}}} ;
    \draw[->] (22)--(32) node[left=1pt, midway, scale=.75] {$\simeq$}
                                   node[right=1pt, midway, scale=.75] 
                                   {$\eqref{sheafify Otimes}$}  ;
  \epic
\]
\end{subprop}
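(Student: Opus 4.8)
The plan is to pass to adjoints. Since $\SH{(-)}$ and $\R\Gamma$ are inverse equivalences (see the end of \S\ref{affine aspect}), and every vertex of the square in Proposition~\ref{concrete chi} lies in $\Dqc(X)$, commutativity of that square is equivalent to the assertion that, after transport along the (invertible) vertical maps, $\SH{S}\chi^{}_0(\SF,\SG)$ becomes the $\Dqc(X)$-map $\tilde\chi(F,G)$. Both are maps $f^\flat\<\<F\Otimes{\sX}\LL f^*\<G\to f^\flat(F\Otimes{Y}G)$ between objects of $\Dqc(X)$, so it suffices to check that they have the same adjoint under the adjunction $\R\fst\dashv f^\flat$ of Proposition~\ref{affine duality}. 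Also, since $\chi^{}_0$ is the \emph{unique} bifunctorial map agreeing with $\beta\smallcirc\zeta^{-1}$ on pairs $(\SF,\SG)$ with $\SF$ K-injective and $\SG$ K-flat (see \eqref{chi0} and \cite[2.6.5]{li}), while the whole square consists of bifunctorial maps, I may assume throughout that $\SF$ is K-injective and $\SG$ is K-flat.

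Under these reductions the argument has three pieces. First, a purely commutative-algebra computation---the exact analogue, for $\varphi$ in place of $f$, of the characterization of $\chi$ in Proposition~\ref{flat and tensor}(i)---shows that $\chi^{}_0(\SF,\SG)$ is adjoint under the adjunction $\varphi_*\dashv\ush\varphi$ of Corollary~\ref{duality for phiaff} to the natural composite
\begin{equation*}
\varphi_*\bigl(\ush\varphi\SF\Otimes{S}\LL\varphi^*\SG\bigr)\xrightarrow{\ p_\varphi\ }\varphi_*\ush\varphi\SF\Otimes{R}\SG\xrightarrow{\ \mathsf t^{}_{\varphi,\SF}\Otimes{R}\>\id\ }\SF\Otimes{R}\SG,
\end{equation*}
where $p_\varphi$ is the projection isomorphism for $\varphi_*$ (the algebraic counterpart of \eqref{projphi}, \cite[3.4.6]{li}) and $\mathsf t^{}_{\varphi,\SF}$ is the counit \eqref{counit*sh}; this comes from unwinding \cite[2.6.5]{li}, just as \ref{flat and tensor}(i) does. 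Second, ingredient $(b)$: under \eqref{sheafify_*} and \eqref{affdual}, the map $\SH{R}\mathsf t^{}_{\varphi,\SF}$ is carried to the counit $t_G$ of $\R\fst\dashv f^\flat$---this is read off from the description of that counit in Proposition~\ref{affine duality} together with \eqref{counit*sh}. Third, ingredient $(a)$: under \eqref{sheafify_*}, \eqref{sheafify^*} and the monoidal structure maps \eqref{sheafify Otimes}, the map $\SH{R}(p_\varphi)$ is carried to the projection isomorphism $p_{\<\<f}$ of \eqref{projf}. Given $(a)$, $(b)$ and the first piece, the equality of adjoints follows by assembling the naturality squares of \eqref{sheafify_*} (against $p_\varphi$ versus $p_{\<\<f}$, and against $\mathsf t^{}_{\varphi,\SF}$ versus $t_G$) with the compatibility of the chain of isomorphisms \eqref{dualityisos} with the two adjunctions; the remaining bookkeeping---tracking \eqref{sheafify Otimes}, \eqref{affdual}, \eqref{sheafify^*} around the square---is then routine.

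The main obstacle is ingredient $(a)$: that sheafification intertwines the two projection isomorphisms. It is ``morally obvious'' but the only step needing genuine work, since projection maps are defined by adjunction and cannot be compared termwise. I would prove it by a diagram chase of the same flavour and length as the proof of Lemma~\ref{mon_*}, using \cite[3.2.4, 3.4.6, 3.7.1]{li} and their commutative-algebra avatars, together with the monoidality of $\upsilon_\varphi$ (Lemma~\ref{mon_*}), the monoidal structures of $\SH{S}$ and $\SH{R}$ (i.e.\ \eqref{sheafify Otimes}), and the compatibility of $\bar\varsigma$ with $\varsigma$ and $\upsilon_\varphi$ recorded in \eqref{adjconj} and its derived version. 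Once $(a)$ is in place, the proof is complete.
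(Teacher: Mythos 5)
Your proposal matches the paper's proof step for step: Lemma~\ref{dualchi} is your ``first piece,'' Lemma~\ref{dualproj} (together with Lemma~\ref{sheafify projn} and Lemma~\ref{mon^*}) is your ingredient~$(a)$, and the compatibility of sheafification with the counits (your ingredient~$(b)$) corresponds to subdiagrams \circled1 and \circled3 in the paper's final diagram, which are in turn reduced to the analogous subdiagrams in the proof of Proposition~\ref{concrete bc2}. One small notational slip: the arrow you label $p_\varphi$ in the displayed composite should be $p_\varphi^{-1}$, since $p_\varphi$ as defined in \eqref{projaff} goes from $\varphi_*\SEE\Otimes{R}\SG$ to $\varphi_*(\SEE\Otimes{S}\LL\varphi^*\SG)$; the intended direction is clear from the source and target, so this does not affect the argument.
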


\begin{proof}  Lemma~\ref{dualchi} below provides an abstract description of~$\chi^{}_0$ resembling that of~$\tilde\chi$. The lemma uses a commutative\kf-algebra analog of $p^{}_{\!f}$, 
\begin{equation}\label{projaff}
p_{\varphi}(\SEE,\SG)\colon \varphi_*\SEE\Otimes{\<\<R}\SG\iso \varphi_*(\SEE\Otimes{\<S}\LL\varphi^*\SG)
\qquad(\SEE\in\D(S),\,\SG\in\D(R)),
\end{equation}
defined as the unique bifunctorial $\D(R)$-isomorphism that is equal, when~$\SG$, 
hence~$\varphi^*\SG$, is K-flat, to the natural composite isomorphism
\[
\varphi_*\SEE\Otimes{\<\<R}\SG\iso \varphi_*(\SEE\otimes_{\<S}(S\otimes_\varphi \SG))
= \varphi_*(\SEE\otimes_{\<S}\varphi^*\SG)\iso \varphi_*(\SEE\Otimes{\<S}\LL\varphi^*\SG),
\]
see \cite[2.6.5]{li}. Lemma~\ref{dualproj} will show that the sheafification of $p_\varphi$ is a concrete realization 
of~$p^{}_{\!f}$. After that, the proof of  Proposition~\ref{concrete chi} will quickly be concluded. 

\begin{sublem} \label{dualchi}
For all\/ $\SF,\SG\in\D(R),$ the map\/ $\chi^{}_0(\SF,\SG)$ is adjoint to the natural composite map\va{-3}
\[
\varphi_*(\ush\varphi\SF\Otimes{\<S} \LL\varphi^*\SG)
\xto{p_\varphi^{-\<1}\,} \varphi_*\ush\varphi\SF\Otimes{\<\<R}\SG
\lto \SF\Otimes{\<\<R}\SG.
\]
\end{sublem}

\begin{proof} It is straightforward to reduce to showing that 
when $\SF$ is K-injective and $\SG$ is K-flat 
(so that $\chi^{}_0=\beta\smallcirc\zeta^{-1}$),
the border of the following natural diagram, in which $\Hr\set\Hom$, commutes:\va{-2}
\[\mkern-4mu
\def\1{$\varphi_*\big(\ush\varphi\SF\otimes_{S}\varphi^*\SG\big)$}
\def\2{$\varphi_*\big(\Hr_\varphi(S,\SF\>)\!\otimes_{S}\!(\<S\<\otimes_\varphi\<\<\SG)\big)$}
\def\3{$\varphi_*\Hr_\varphi(S,\SF\otimes_{\<R}\<\<\SG)$}
\def\4{$\Hr_R(S,\SF\>)\<\<\otimes_R\<\<\SG$}
\def\5{$\Hr_R(S,\SF\otimes_{\<R}\<\<\SG)$}
\def\6{$\varphi_*\Hr_\varphi(S,\SF\>)\<\<\otimes_R\<\<\SG$}
\def\7{$\varphi_*\ush\varphi\SF\otimes_R\<\<\SG$}
\def\8{$\Hr_R(R,\SF\>)\<\<\otimes_R\<\<\SG$}
\def\9{$\Hr_R(S,\SF\>)\otimes_{\<R}\<\<\SG$}
\def\ten{$\SF\otimes_{\<R}\<\<\SG$}
\def\lvn{$\Hr_R(R,\SF\otimes_{\<R}\<\<\SG)$}
\def\twv{$\varphi_*\big(\Hr_\varphi(S,\SF\>)\otimes_\varphi\<\<\SG\big)$}
\def\thn{$\varphi_*\big(\Hr_\varphi(S,\SF\>)\<\<\otimes_{S}\varphi^*\SG\big)$}
  \bpic[xscale=3.78, yscale=2]

   \node(11) at (1,-.5){\1} ;   
   \node(22) at (2.275, -1.25){\thn} ;
   \node(14) at (3.4,-.5){\7}; 
 
   \node(21) at (1,-1.25){\2};
   \node(23) at (3.4,-1.25){\6};
   
   \node(31) at (1,-2){\twv};
   \node(32) at (2.275,-2){\4};
   \node(33) at (3.4,-2){\8};
   
   \node(41) at (1,-2.75){\3};
   \node(42) at (1.85,-2.75){\5};   
   \node(43) at (2.7,-2.75){\lvn}; 
   \node(44) at (3.4,-2.75){\ten}; 
  
    \draw[->] (11)--(14)  node[above, midway, scale=.75] {$p_\varphi^{-\<1}$} ;

    \draw[double distance=2] (21)--(22) ;
    \draw[->] (22)--(23) node[above, midway, scale=.75] {$p_\varphi^{-\<1}$} ;

    \draw[double distance=2] (31)--(32) ;

    \draw[->] (32)--(33) ;
   
    \draw[double distance=2] (41)--(42) ;
    \draw[->] (42)--(43) ;    
    \draw[->] (43)--(44) ;    
    
    \draw[->] (11)--(21) node[left=1, midway, scale=.75] 
                                                  {$\varphi_*(\zeta^{-\<1})$} ;   
    \draw[->] (21)--(31) ;    
    \draw[->] (31)--(41) ;

    \draw[->] (14)--(23) ; 
    \draw[->] (23)--(33) ;
    \draw[->] (33)--(44) ;

   \draw[->] (11)--(22)  ; 
   \draw[double distance=2] (32)--(23) ;
   \draw[->] (32)--(42) ;
   \draw[->] (33)--(43) ;
  
   \node at (2.275,-1.64)[scale=.9]{\circled1} ;
 
 \epic
\]

\vskip-3pt

One readily checks commutativity of the unlabeled subdiagrams; and 
that of \circled1 follows from the above description of $p_\varphi\>$. The assertion results.  
\end{proof}

\begin{sublem}\label{sheafify projn} The map\/ $p_{\varphi}(\SEE,\SG)$ is adjoint to the natural composite map
\[
\LL\varphi^*(\varphi_*\SEE\Otimes{\<\<R}\SG)\iso 
\LL\varphi^*\varphi_*\SEE\Otimes{\<S}\LL\varphi^*\SG\lto
\SEE\Otimes{\<S}\LL\varphi^*\SG.
\]
\end{sublem}
\begin{proof} One may assume the $R$-complex $\SG$ to be K-flat, in which case the assertion amounts to commutativity of the border of the natural diagram
\[
\def\1{$\varphi_*\SEE\otimes_{\<R}\SG$}
\def\2{$\varphi_*\LL\varphi^*(\varphi_*\SEE\otimes_{\<R}\SG)$}
\def\3{$\varphi_*\varphi^*(\varphi_*\SEE\otimes_{\<R}\SG)$}
\def\4{$\varphi_*(\varphi^*\varphi_*\SEE\otimes_{\<S}\varphi^*\SG)$}
\def\5{$\varphi_*(\SEE\Otimes{\<S}\varphi^*\SG)$}
\def\6{$\varphi_*(\LL\varphi^*\varphi_*\SEE\Otimes{\<S}\LL\varphi^*\SG)$}
  \bpic[xscale=3.75, yscale=1.4]

   \node(11) at (1,-1){\1};   
   \node(13) at (3,-1){\2}; 
      
   \node(22) at (2,-2){\3};
   
   \node(32) at (2,-3){\4};
   
   \node(41) at (1,-4){\5};
   \node(43) at (3,-4){\6};   
  
    \draw[->] (11)--(13) ;  
       
    \draw[<-] (41)--(43) ;
    
    \draw[->] (11)--(41) node[left=1pt, midway, scale=.75] {$p_{\varphi}$} ;
    \draw[->] (22)--(32) ;
    \draw[->] (13)--(43) ; 

    \draw[->] (11)--(22) ;
    \draw[->] (13)--(22) ; 
    \draw[->] (32)--(41) ;
    \draw[->] (32)--(43) ; 

    \node at (2,-1.5)[scale=.9]{\circled1} ;
    \node at (1.45,-2.5)[scale=.9]{\circled2} ;
    \node at (2.55,-2.5)[scale=.9]{\circled3} ;
    \node at (2,-3.53)[scale=.9]{\circled4} ;
 \epic
\]
The commutativity of \circled2 is easily checked. For that of \circled 1 (resp.~\circled4, resp.~\circled3)
cf.~\cite[(3.2.1.3), resp.~(3.2.1.2), resp.~(3.2.4.1)]{li}. The assertion results.
\end{proof}

\pagebreak[3]
The sheafification $\SH{\<R}p_\varphi$ is naturally isomorphic to the projection map $p^{}_{\!f}\>$:

\begin{sublem}\label{dualproj} The following diagram, with\/ $E\set\SH{\<S}\SEE,$ $G\set\SH{\<R}\SG,$ and\/ $p^{}_{\!f}$~ as in \eqref{projf}$,$ commutes.
\[
\def\1{$\SH{\<R}(\varphi_*\SEE\Otimes{\<\<R}\SG)$}
\def\2{$\SH{\<R}\varphi_*\SEE\Otimes{Y}\SH{\<\<R}\SG$}
\def\3{$\R\fst\>\SH{\<S}\SEE\Otimes{Y}\SH{\<\<R}\SG$}
\def\4{$\R\fst E\Otimes{Y}G$}
\def\5{$\SH{\<R}\varphi_*(\SEE\Otimes{\<\<S}\LL\varphi^*\SG)$}
\def\6{$\R\fst\>\SH{\<S}\<(\SEE\Otimes{\<\<S}\LL\varphi^*\SG)$}
\def\7{$\R\fst(\SH{\<S}\SEE\Otimes{\<\<X}\SH{\<S}\LL\varphi^*\SG)$}
\def\8{$\R\fst(E\Otimes{\<\<X}\LL f^*\<G)$}
  \bpic[xscale=4.5, yscale=1.4]

   \node(11) at (1,-1){\1};   
   \node(12) at (2,-1){\5}; 
      
   \node(21) at (1,-2){\2};
   \node(22) at (2,-2){\6};
   
   \node(31) at (1,-3){\3};
   \node(32) at (2,-3){\7};   
   
   \node(41) at (1,-4){\4};
   \node(42) at (2,-4){\8}; 
  
    \draw[->] (11)--(12) node[above=1pt, midway, scale=.75] {$\SH{\<R}p_\varphi$} ;  
       
    \draw[->] (41)--(42) node[above, midway, scale=.9] {$\Iso$}
                                   node[below=1, midway, scale=.75] {$p^{}_{\!f}$} ;
    
    \draw[->] (11)--(21) node[right=1pt, midway, scale=.75] {$\simeq$}
                                   node[left=1pt, midway, scale=.75] {$\eqref{sheafify Otimes}$} ;
    \draw[->] (21)--(31) node[right=1pt, midway, scale=.75] {$\simeq$}
                                   node[left=1pt, midway, scale=.75]
                                            {$\textup{\eqref{sheafify_*}}$} ; 
    \draw[double distance=2](31)--(41);
    
    \draw[->] (12)--(22) node[left=1pt, midway, scale=.75] {$\simeq$}
                                   node[right=1pt, midway, scale=.75] 
                                    {\textup{\eqref{sheafify_*}}} ;
    \draw[->] (22)--(32) node[left=1pt, midway, scale=.75] {$\simeq$}
                                   node[right=1pt, midway, scale=.75] 
                                   {$\eqref{sheafify Otimes}$}  ;
    \draw[->] (32)--(42) node[left=1pt, midway, scale=.75] {$\simeq$}
                                   node[right=1pt, midway, scale=.75] 
                                   {$\eqref{sheafify^*}$}  ;
  \epic
\]
\end{sublem}

\begin{proof} Keeping in mind \ref{sheafify projn} and the analogous property of $p^{}_{\!f}$
(a property which defines $p^{}_{\!f}$), expand the diagram, naturally, as follows:
\begin{small}
\[\mkern-3mu
\def\1{$\SH{\<R}(\varphi_*\SEE\Otimes{\<\<R}\SG)$}
\def\2{$\SH{\<R}\varphi_*\SEE\<\Otimes{Y}\<\SH{\<R}\SG$}
\def\3{$\R\fst\>\SH{\<S}\SEE\<\Otimes{Y}\<\SH{\<R}\SG$}
\def\4{$\R\fst E\Otimes{Y}G$}
\def\5{$\SH{\<R}\varphi_*(\SEE\Otimes{\<\<S}\LL\varphi^*\SG)$}
\def\6{$\R\fst\>\SH{\<S}\<(\SEE\Otimes{\<\<S}\LL\varphi^*\SG)$}
\def\7{$\R\fst(\SH{\<S}\SEE\<\Otimes{\<\<X}\<\SH{\<S}\LL\varphi^*\SG)\:$}
\def\8{$\R\fst(E\Otimes{\<\<X}\LL f^*\<G)$}
\def\9{$\SH{\<R}\varphi_*\LL\varphi^*(\varphi_*\SEE\Otimes{\<\<R}\SG)$}
\def\ten{$\R\fst\>\SH{\<S}\LL\varphi^*(\varphi_*\SEE\Otimes{\<\<R}\SG)$}
\def\lvn{$\R\fst\LL f^*\SH{\<R}(\varphi_*\SEE\<\Otimes{\<\<R}\<\SG)$}
\def\twv{$\R\fst\LL f^*(\SH{\<R}\varphi_*\SEE\<\Otimes{Y}\<\SH{\<R}\SG)$}
\def\thn{$\R\fst\LL f^*(\R\fst E\Otimes{Y}G)$}
\def\frn{$\SH{\<R}\varphi_*(\LL\varphi^*\varphi_*\SEE\Otimes{\<S}\LL\varphi^*\SG)$}
\def\ffn{$\R\fst\>\SH{\<S}(\LL\varphi^*\varphi_*\SEE\Otimes{\<S}\LL\varphi^*\SG)$}
\def\sxn{$\R\fst(\SH{\<S}\LL\varphi^*\varphi_*\SEE\<\Otimes{\<\<X}\<\SH{\<S}
      \LL\varphi^*\SG)$}
\def\svn{$\R\fst(\LL f^*\<\SH{\<R}\varphi_*\SEE\<\Otimes{\<\<X}\<\LL f^*\<\SH{\<R}\SG)$}
\def\egn{$\R\fst(\LL f^*\R\fst E\Otimes{\<\<X}\LL\fst G)$}
  \bpic[xscale=3.25, yscale=2]

   \node(11) at (1,-1){\1};  
   \node(12) at (1.75,-.5){\9}; 
   \node(13) at (3.25,-.5){\frn}; 
   \node(14) at (4,-1){\5}; 
  
   \node(21) at (1,-2){\1}; 
   \node(22) at (1.75,-1.5){\ten};
   \node(23) at (3.25,-1.5){\ffn};

   \node(31) at (1,-3){\2};
   \node(32) at (1.75,-2.5){\lvn};
   \node(33) at (3.25,-2.5){\sxn};
   \node(34) at (4,-2){\6};
   
   \node(42) at (1.75,-3.5){\twv};
   \node(43) at (3.25,-3.5){\svn};
   \node(44) at (4,-3){\7};

   \node(51) at (1,-4){\4};
   \node(52) at (1.75,-4.5){\thn};
   \node(53) at (3.25,-4.5){\egn};
   \node(54) at (4,-4){\8};
  
    \draw[->] (11)--(12) ;
    \draw[->] (12)--(13) ;
    \draw[->] (13)--(14) ;
    
    \draw[->] (21)--(32) ;
    \draw[->] (22)--(23) ;
    \draw[->] (23)--(34) ;
    
    \draw[->] (31)--(42) ;
    \draw[->] (42)--(43) ;
    
    \draw[->] (33)--(44) ;

    \draw[->] (51)--(52) ;
    \draw[->] (52)--(53) ;
    \draw[->] (53)--(54) ;

    \draw[double distance=2] (11)--(21) ;
    \draw[->] (21)--(31) ;
    \draw[->] (31)--(51) ; 
                                  
    \draw[->] (12)--(22) ;
    \draw[->] (22)--(32) node[right=1pt, midway, scale=.75] {$\via\eqref{sheafify^*}$} ;
    \draw[->] (32)--(42) ;
    \draw[->] (42)--(52) node[right=1pt, midway, scale=.75] {$\via\eqref{sheafify_*}$} ; 
      
    \draw[->] (13)--(23) ;
    \draw[->] (23)--(33) ;
    \draw[->] (33)--(43) node[left=1pt, midway, scale=.75] {$\via\eqref{sheafify^*}$} ;                             
    \draw[->] (43)--(53) node[left=1pt, midway, scale=.75] {$\via\eqref{sheafify_*}$} ;                          
    
    \draw[->] (14)--(34) ;
    \draw[->] (34)--(44) ;
    \draw[->] (44)--(54) ;

    \node at (1.535,-1.03)[scale=.9]{\circled1} ;
    \node at (2.44,-2.52)[scale=.9]{\circled2} ;   
    \node at (3.455,-4.01)[scale=.9]{\circled3} ;    
 \epic
\]
\end{small}

The commutativity of \circled1 follows from that of \eqref{adjconj}$'$,
and that of \circled3 from \eqref{adjconj'}$'$. The commutativity of \circled2 results from the following formal consequence of Lemma~\ref{mon_*}.
\end{proof}

\begin{sublem}\label{mon^*} The following
diagram of natural isomorphisms commutes$\>:$
\[
\def\1{$\LL f^*\SH{\<R}(\SF\Otimes{\<\<R}\SG)$}
\def\2{$\LL f^*(\SH{\<R}\SF\Otimes{Y}\SH{\<R}\SG)$}
\def\3{$\LL f^*\SH{\<R}\SF\Otimes{\sX}\LL f^*\SH{\<R}\SG$}
\def\4{$\SH{\<S}\LL \varphi^*(\SF\Otimes{\<\<R}\SG)$}
\def\5{$\SH{\<S}(\LL \varphi^*\SF\Otimes{\<S}\LL \varphi^*\SG)$}
\def\6{$\SH{\<S}\LL \varphi^*\SF\Otimes{\<X}\SH{\<S}\LL \varphi^*\SG$}
  \bpic[xscale=4.25, yscale=1.5]

   \node(11) at (1,-1){\1};   
   \node(12) at (1.973,-1){\2}; 
   \node(13) at (3,-1){\3};   
   
   \node(21) at (1,-2){\4};
   \node(22) at (1.98,-2){\5};
   \node(23) at (3,-2){\6};                                                                                                                                                                                                                                                                                                                                                                                                                                                                                                                                                                                                                                                                                                                                                                                                                                                                                                                                                                                                                                                                                                                                                   
    
    \draw[->] (11)--(12) ;  
    \draw[->] (12)--(13) ;
    
    \draw[->] (21)--(22) ;  
    \draw[->] (22)--(23) ;
    
    \draw[->] (11)--(21) node[left=1pt, midway, scale=.75] {\eqref{sheafify^*}} ;
    \draw[->] (13)--(23) node[right=1pt, midway, scale=.75] {$\via\eqref{sheafify^*}$} ;
   \epic
\]
\end{sublem}

\begin{proof} It is enough  to show commutativity of the adjoint diagram, which expands naturally as:
\[\mkern-3mu
\def\1{$\SH{\<R}(\SF\Otimes{\<\<R}\SG)$}
\def\2{$\SH{\<R}\SF\<\Otimes{Y}\<\SH{\<R}\SG$}
\def\3{$\R\fst\LL f^*(\SH{\<R}\SF\<\Otimes{Y}\<\SH{\<R}\SG)$}
\def\4{$\R\fst(\LL f^*\SH{\<R}\SF\<\Otimes{\<X}\<\LL f^*\SH{\<R}\SG)$}
\def\5{$\ \SH{\<R}\varphi_*\LL\varphi^*\SF\<\Otimes{Y}\<\SH{\<R}\varphi_*\LL\varphi^*\SG$}
\def\6{$\R\fst\LL f^*\SH{\<R}\SF\<\Otimes{Y}\<\R\fst\LL f^*\SH{\<R}\SG$}
\def\7{$\ \ \SH{\<R}(\varphi_*\LL\varphi^*\SF\<\Otimes{\<\<R}\varphi_*\LL\varphi^*\SG)\quad$}
\def\8{$\R\fst\>\SH{S}\<\LL \varphi^*\SF\<\Otimes{Y}\<\R\fst\>\SH{S}\<\LL \varphi^*\SG$}
\def\9{$\SH{\<R}\varphi_*\LL\varphi^*(\SF\Otimes{\<\<R}\SG)$}
\def\ten{$\SH{\<R}\varphi_*(\LL\varphi^*\SF\<\Otimes{\<S}\LL\varphi^*\SG)$}
\def\lvn{$\R\fst\>\SH{S}\LL\varphi^*(\SF\Otimes{\<\<R}\SG)$}
\def\twv{$\R\fst\>\SH{S}\<(\LL\varphi^*\SF\Otimes{\<S}\LL\varphi^*\SG)$}
\def\thn{$\R\fst(\SH{S}\LL\varphi^*\SF\Otimes{\<\<X}\SH{S}\LL\varphi^*\SG)$}
\def\frn{$\R\fst\LL f^*\SH{\<R}(\SF\Otimes{\<\<R}\SG)$}
  \bpic[xscale=4.45, yscale=2.2]

   \node(11) at (1,-1){\1};  
   \node(12) at (1.95,-1){\2}; 
   \node(125) at (2.5,-.5){\3}; 
   \node(13) at (3,-1){\4}; 
  
   \node(22) at (1.95,-2){\5};
   \node(225) at (2.5,-1.5){\6}; 
  
   \node(32) at (1.95,-3.25){\7};
   \node(325) at (2.5,-2.5){\8}; 

   \node(40) at (1,-3.45){\frn}; 
   \node(41) at (1.5,-3.8){\9};  
   \node(42) at (1.95,-4.3){\ten}; 
   
   \node(51) at (1,-5){\lvn};  
   \node(52) at (1.95,-5){\twv};    
   \node(53) at (3,-5){\thn}; 
  
    \draw[->] (11)--(12) ;
    
    \draw[->] (41)--(42) ;
    
    \draw[->] (51)--(52) ;
    \draw[->] (52)--(53) ;

    \draw[->] (11)--(40) ;
    \draw[->] (11)--(41) ;
    \draw[->] (41)--(51) ; 
    \draw[->] (40)--(51) ;
                                  
    \draw[->] (12)--(22) ;
    \draw[->] (22)--(32) ;
    \draw[->] (32)--(42) ;
    \draw[->] (42)--(52) ; 
    
    \draw[->] (225)--(325) ;
      
    \draw[->] (3.05,-1.19)--(3.05,-4.81) ;
    
    \draw[->] (11)--(32) ;
    \draw[->] (12)--(125) ;
    \draw[->] (125)--(13) ;
    \draw[->] (12)--(225) ;
    \draw[->] (225)--(13) ;
    \draw[->] (22)--(325) ;
    \draw[->] (325)--(53) ;
         
    \node at (2.4,-1.03)[scale=.9]{\circled1} ;   
    \node at (2.24,-1.77)[scale=.9]{\circled2} ;   
    \node at (1.51,-2.75)[scale=.9]{\circled3} ; 
    \node at (1.19,-3.13)[scale=.9]{\circled4} ;
    \node at (2.35,-3.8)[scale=.9]{\circled5} ;    
                     
 \epic
\]

Commutativity of the unlabeled subdiagrams is clear. 

Commutativity of \circled1 follows from the fact that the natural map 
\[
\LL f^*(\SH{\<R}\SF\<\Otimes{Y}\<\SH{\<R}\SG)\to
\LL f^*\SH{\<R}\SF\<\Otimes{\<X}\<\LL f^*\SH{\<R}\SG
\] 
is adjoint to the natural composite map 
\[
\SH{\<R}\SF\Otimes{Y}\SH{\<R}\SG\to
\R\fst\LL f^*\SH{\<R}\SF\Otimes{Y}\R\fst\LL f^*\SH{\<R}\SG\to
\R\fst(\LL f^*\SH{\<R}\SF\Otimes{Y}\LL f^*\SH{\<R}\SG),
\]
cf. \cite[beginning of \S3.4.5]{li}.  That of \circled3 holds for analogous reasons.

Commutativity of \circled2 and of \circled4 follows from \ref{Adjconj}. 

Commutativity of \circled5 is given by Lemma~\ref{mon_*}.
\end{proof}
 
Now, to prove Proposition~\ref{concrete chi}, use Lemma~\ref{dualchi} to
expand the diagram in question, naturally, as follows:
\begin{small}
\[\mkern-4mu
\def\1{$\SH{\<S}(\ush\varphi\SF\Otimes{\<S}\LL\varphi^*\SG)$}
\def\2{$\SH{\<S}\ush\varphi\varphi_*(\ush\varphi\SF\Otimes{\<S}\LL\varphi^*\SG)$}
\def\3{$\SH{\<S}\ush\varphi\<(\varphi_*\ush\varphi\SF\<\Otimes{\<\<R}\<\<\SG)$}
\def\4{$\SH{\<S}\ush\varphi\<(\SF\<\Otimes{\<\<R}\<\<\SG)$}
\def\5{$\SH{\<S}\ush\varphi\SF\Otimes{\<X}\SH{\<S}\LL\varphi^*\SG$}
\def\6{$\quad f^\flat\SH{\<R}\varphi_*(\ush\varphi\SF\Otimes{\<S}\LL\varphi^*\SG))$}
\def\7{$f^\flat\SH{\<R}(\varphi_*\ush\varphi\SF\<\Otimes{\<\<R}\<\<\SG)$}
\def\8{$f^\flat\SH{\<R}(\SF\<\Otimes{\<\<R}\<\<\SG)$}
\def\9{$f^\flat\<\R\fst\SH{\<S}(\ush\varphi\SF\Otimes{\<S}\LL\varphi^*\SG)$}
\def\ten{$f^\flat(\SH{\<R}\varphi_*\ush\varphi\SF\<\Otimes{Y}\<G\>)$}
\def\lvn{$f^\flat\<\R\fst(\SH{\<S}\ush\varphi\SF\Otimes{\<X}\SH{\<S}\LL\varphi^*\SG)$}
\def\twv{$f^\flat(\R\fst\SH{\<S}\<\ush\varphi\SF\<\Otimes{Y}\<G\>)$}
\def\thn{$f^\flat \<\<F\Otimes{\<X}\LL f^*\<G$}
\def\frn{$f^\flat\R\fst(f^\flat \<\<F\Otimes{\<X}\LL f^*\<G)$}
\def\ffn{$f^\flat(\R\fst f^\flat \<\<F\Otimes{\<Y}G)$}
\def\sxn{$f^\flat(F\Otimes{\<Y}G)$}
\def\svn{$f^\flat\R\fst(\SH{\<S}\ush\varphi\SF\Otimes{\<X}\LL f^*\<G)$}
  \bpic[xscale=3.38, yscale=1.65]

   \node(11) at (1,-1.55){\1};   
   \node(12) at (1.93,-1){\2};
   \node(13) at (3.2,-1){\3};
   \node(14) at (4,-1.55){\4}; 

   \node(22) at (1.93,-2){\6};
   \node(23) at (3.2,-2){\7};
   
   \node(31) at (1,-3.45){\5};
   \node(32) at (1.93,-3){\9};
   \node(33) at (3.2,-3){\ten};
   \node(34) at (4,-3.45){\8};
   
   \node(42) at (1.93,-4){\lvn};
   \node(43) at (3.2,-5){\twv};
   
   \node(51) at (1,-5.43){\thn};
   \node(52) at (1.93,-6){\frn};
   \node(53) at (3.2,-6){\ffn};
   \node(54) at (4,-5.43){\sxn};   
   \node(55) at (1.93,-5){\svn};
  
    \draw[->] (11)--(12) ;
    \draw[->] (12)--(13) ;
    \draw[->] (13)--(14) ;
   
    \draw[<-] (22)--(23) node[above, midway, scale=.75] {$\Iso$}
                                   node[below=2, midway, scale=.75] {$\>\via \>p_\varphi$} ;
    \draw[->] (23)--(34) ;
        
    \draw[->] (51)--(52) ;
    \draw[<-] (52)--(53) node[above, midway, scale=.75] {$\Iso$}
                                   node[below=1, midway, scale=.75]{$\via \>p^{}_{\!f}$} ;
    \draw[->] (53)--(54) ;
    \draw[->] (43)--(55) node[above, midway, scale=.75] {$\Iso$}
                                   node[below=1, midway, scale=.75]{$\via \>p^{}_{\!f}$} ;
    \draw[->] (11)--(31) node[right=1pt, midway, scale=.75] {$$}
                                   node[left, midway, scale=.75]{} ;
    \draw[->] (31)--(51) ;          
                                   
     \draw[->] (12)--(22)  ;                              
     \draw[->] (22)--(32)  ; 
     \draw[->] (32)--(42)  ;                              
     \draw[->] (42)--(55)  ;       
     \draw[->] (55)--(52)  ; 
     
     \draw[->] (13)--(23)  ;                              
     \draw[->] (23)--(33)  ; 
     \draw[->] (33)--(43)  ;                                                     
     \draw[->] (43)--(53) node[right=1pt, midway, scale=.75] {\eqref{affdual}} ;
        
    \draw[->] (14)--(34) node[right=1pt, midway, scale=.75] {$$}
                                   node[left, midway, scale=.75]{} ;
    \draw[->] (34)--(54) ;          
                                   
     \draw[->] (11)--(32) ;                              
     \draw[->] (31)--(42) ;
     \draw[->] (33)--(54) ;
     
    \node at (1.64,-1.65)[scale=.9]{\circled1} ;
    \node at (2.58,-3.52)[scale=.9]{\circled2} ;   
    \node at (3.42,-4.44)[scale=.9]{\circled3} ;    
                         
 \epic
\]
\par
\end{small}
Commutativity of \circled3 (resp.~\circled1) is essentially the same as commutativity of \circled3 (resp.~\circled1) in the proof of Proposition~\ref{concrete bc2}.

The commutativity of \circled2 is given by Lemma~\ref{dualproj} 
(with $\SEE\set\ush\varphi \SF$).
\end{proof}
\end{cosa}

%
%
%
%
%
%

\begin{cosa}\label{concrete version zeta}
This section is devoted to proving Proposition~\ref{concrete zeta}, which gives, for maps of affine schemes, a concrete representation of a generalization of the map $\zeta$ in~\ref{flat and hom}(iii).\va2

Once again, $\varphi\colon R\to S$ is a homomorphism of commutative rings, with
corresponding scheme\kf-map\va2
\(
 \spec S =:X\xto{\,f\,} Y\set \spec R.
\)
The global-section functor from $\sA(Y)$ to $\sA(R)$ is denoted by $\Gamma^{}_{\!\<\<Y}$,\va1 and the sheafification functor
from $\sA(R)$ to $\sA_\qc(Y)$ by $\bsr\>$.\va1

The composite \emph{quasi-coherator} functor
\[
Q_Y\colon \sA(Y)\xto{\Gamma^{}_{\!\<\<Y}}\sA(R)\xto{\bsr}\sA_\qc(Y)
\] 
is right-adjoint to the 
inclusion functor $j^{}_Y\colon\sA_\qc(Y)\hookrightarrow\sA(Y)$.
Since $j^{}_Y$ is fully faithful, the unit map for this adjunction is an isomorphism 
\[
A\iso Q^{}_Yj^{}_YA=Q^{}_Y\<A\qquad (A\in\sA_\qc(Y)).
\]
The counit  is the natural map
\[
j^{}_YQ_YB=\SH{\<R}\Gamma^{}_{\!\!Y}B\lto B\qquad (B\in\sA(Y)).
\]

Since $j^{}_Y$ is exact, therefore $Q_Y$ preserves K-injectivity, and there results an adjunction
$\boldsymbol j^{}_{\<\<Y}\!\dashv\R Q_Y$ between the derived functors. In fact,
$\boldsymbol j^{}_{\<\<Y}$ gives an equivalence $\D(\sA_\qc(Y))\xto{\lift.5,{\:\approx\:},}\Dqc(Y)\subset\D(Y)$
\cite[p.\,225, 5.1]{BN}, with quasi-inverse the restriction  to $\Dqc(Y)$ of the right adjoint $\R Q_Y$. 
So if $C\in\Dqc(Y)$ then the counit map 
is an isomorphism $\R Q_YC=\boldsymbol j^{}_{\<\<Y}\R Q_YC\iso C$.

Set
\begin{align*}
\sHom_Y^\qc&\set Q_Y\sHom^{}_Y\colon \sA(Y)^\op\times\sA(Y)\to\sA_\qc(Y),\\[2pt]
\R\>\sHom_Y^\qc&\set \R Q_Y\R \sHom^{}_Y\colon \D(Y)^\op\times\D(Y)\to\D(\sA_\qc(Y)).
\end{align*}
For $\OY$-complexes $F$ and $G$, the natural bifunctorial map 
\[
\sHom_Y^\qc(F\<,G\>)\to\R\>\sHom_Y^\qc(F\<,G\>)
\]
is an isomorphism if $F$ is K-flat and $G$ is K-injective (so that $\sHom_Y(F\<,G\>)$ is K-injective).

For any $R$-complexes $\SF$ and $\SG$, and  $F\set\SH{\<R}\SF$, $G\set\SH{\<R}\SG$,
there is a natural $\sA_\qc(Y)$-isomorphism
\[
\kappa(\SF,\SG)\colon\SH{\<R}\!\Hom_R(\SF,\SG) \iso \SH{\<R}\Gamma^{}_{\!\!Y}\<\sHom_Y(F,G)
=\sHom^\qc_Y(F\<,G\>).
\]
Furthermore, there is a natural composite $\D(\sA_\qc(Y))$-isomorphism
\[
\boldsymbol\kappa(\SF,\SG)\colon
\SH{\<R}\<\R\<\<\Hom_R(\SF,\SG) \iso
\SH{\<R}\<\R\Gamma^{}_{\!\!Y}\R\>\sHom_Y(F,G)\iso
\R\> \sHom^\qc_Y(F\<,G\>).
\]
whose first component is an isomorphism
because the natural equivalences
\[
\D(R)\xto[\,\lift1.3,\bsr,]{\lift.5,{\,\approx\,},} \D(\sA_\qc(Y))
\xto[\>\lift1.5,\boldsymbol j^{}_{\<\<Y},]{\lift.5,{\,\approx\,},} \Dqc(Y).\\[-2pt]
\]
give rise, for any $n\in\mathbb Z$, to natural isomorphisms
\begin{align*}
 H^n\R\<\<\Hom_R(\SF,\SG)\cong\Hom_{\D(R)}(\SF,\SG[n])&\cong \Hom_{\D(Y)}(F,G[n])\\
 &\cong H^n\R\<\<\Hom_Y(F,G)\\
 & \cong H^n\R\Gamma^{}_{\!\!Y}\R\>\sHom_Y(F,G).
\end{align*}
\vskip2pt
Next, there is a natural bifunctorial isomorphism of \mbox{$S$-complexes}
\[
\Hom_S(S\otimes_\varphi \>\SF\<, \Hom_\varphi(S,\<\SG))
\<\!\iso\!\<\Hom_\varphi(S\otimes_{\varphi}\>\SF\<,\<\SG)
\<\!\iso\!\<\Hom_\varphi(S,\Hom_R(\SF\<,\<\SG)).
\]
From this, with $\SF$ K-flat and $\SG$ K-injective, one gets (via, e.g., \cite[2.6.5]{li}) a natural 
bifunctorial $\D(S)$-isomophism, concrete up to choice of K-flat or K-injective resolutions,
\begin{align*}
\zeta_0\colon\R\<\<\Hom_S(\LL\varphi^*\SF,\ush\varphi\SG)&=
\R\<\<\Hom_S(S\Otimes{\varphi}\< \SF\<, \R\<\<\Hom_\varphi(S,\SG))\\
&\iso\<\<\R\<\<\Hom_\varphi(S,\R\<\<\Hom_R(\SF\<,\SG))= \ush\varphi\R\<\<\Hom_R(\SF,\SG).
\end{align*}

\begin{subprop} \label{concrete zeta}
The sheafification $\SH{S}\<\zeta_0$ is naturally isomorphic to the map\/
$($that must then be an \emph{isomorphism)}
\[
\zeta\colon
\R\>\sHom^\qc_\sX(\LL f^*\<\<F,f^\flat G\>)
\lto f^\flat\R\>\sHom_Y(F,G\>)
\]
which is adjoint to the natural  composite map
\begin{align*}
\R\fst\R\>\sHom^\qc_\sX(\LL f^*\<\<F\<, f^\flat G\>) &\<\lto\< \R\fst\R\>\sHom_\sX(\LL f^*\<\<F\<, f^\flat G\>) \\
&\<\lto\< \R\>\sHom_Y(\R\fst\LL f^*\<\<F\<, \R\fst f^\flat G\>) \<\lto\< \R\>\sHom_Y(F\<,  G\>).
\end{align*}
\end{subprop}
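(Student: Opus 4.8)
The plan is to follow the pattern of Propositions~\ref{concrete bc2} and~\ref{concrete chi}: give an abstract adjoint description of $\zeta_0$ parallel to the one of the scheme-theoretic $\zeta$, and then verify by a diagram chase that the sheafification of $\zeta_0$ is carried to $\zeta$ by the compatibility isomorphisms relating sheafification with $(-)_*$, $(-)^*$, $\ush{(-)}$, $\R\sHom$ and the associated unit and counit maps.

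First I would record, as a lemma in the spirit of Lemma~\ref{dualchi}, an abstract characterization of $\zeta_0$: via \cite[2.6.5]{li}, $\zeta_0(\SF,\SG)$ is the unique bifunctorial $\D(S)$-map that is adjoint, under the adjunction $\varphi_*\!\dashv\ush\varphi$ of Corollary~\ref{duality for phiaff}, to the natural composite
\[
\varphi_*\R\<\<\Hom_S(\LL\varphi^*\SF,\ush\varphi\SG)\lto
\R\<\<\Hom_R(\varphi_*\LL\varphi^*\SF,\varphi_*\ush\varphi\SG)\lto
\R\<\<\Hom_R(\SF,\varphi_*\ush\varphi\SG)\lto
\R\<\<\Hom_R(\SF,\SG),
\]
where the first arrow is the natural map ($R$-module analogue of~\eqref{nu}), the second uses the unit $\SF\to\varphi_*\LL\varphi^*\SF$, and the third the counit \eqref{counit*sh}. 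This is the commutative-algebra avatar of Proposition~\ref{flat and hom}(i), and its proof is the same, mutatis mutandis. On the other side, $\zeta$ is, by its very definition, the map adjoint under the adjunction $\R\fst\!\dashv f^\flat$ of Proposition~\ref{affine duality} to the stated composite $\R\fst\R\sHom^\qc_\sX(\LL f^*\<F,f^\flat G)\to\R\fst\R\sHom_\sX(\LL f^*\<F,f^\flat G)\to\R\sHom_Y(\R\fst\LL f^*\<F,\R\fst f^\flat G)\to\R\sHom_Y(F,G)$, the middle arrow being $\nu$ of \eqref{nu}.

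Since, by Proposition~\ref{affine duality}, the equivalences $\SH{S}$, $\R\Gamma$ carry the adjunction $\varphi_*\!\dashv\ush\varphi$ onto $\R\fst\!\dashv f^\flat$ — i.e.\ the unit and counit of the former sheafify to those of the latter — it suffices to check commutativity of the square whose top edge is $\SH{S}\zeta_0$, whose bottom edge is $\zeta$, and whose two sides are the canonical identifications $\R\sHom^\qc_\sX(\LL f^*\<F,f^\flat G)\cong\SH{S}\R\<\<\Hom_S(\LL\varphi^*\SF,\ush\varphi\SG)$ (built from \eqref{sheafify^*}, \eqref{affdual}, and the evident isomorphism $\SH{S}\R\<\<\Hom_S(M,N)\cong\R\sHom^\qc_\sX(\SH{S}M,\SH{S}N)$, call it $\boldsymbol\kappa_f$) and $f^\flat\R\sHom_Y(F,G)\cong\SH{S}\ush\varphi\R\<\<\Hom_R(\SF,\SG)$ (built from \eqref{unitmap}, $\boldsymbol\kappa$, and \eqref{affdual}). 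Using that sheafification preserves adjointness, this in turn reduces to matching the two adjoint composites displayed above, which I would do by expanding into a large diagram whose cells commute by: the definition of $\vartheta_{\<\!f}$ (application of $\SH{S}\ush\varphi$ to \eqref{unitmap}); the description in Proposition~\ref{affine duality} of the counit $\R\fst f^\flat\to\id$ and that in \eqref{counit*sh} of $\varphi_*\ush\varphi\to\id$; the compatibility diagrams \eqref{adjconj} and its derived form, together with the monoidality Lemmas~\ref{mon_*} and~\ref{mon^*}; Lemma~\ref{dualproj} for the clause involving $F\to\R\fst\LL f^*\<F$ (adjoint to the projection isomorphism of~\eqref{projf}); and — the one genuinely new ingredient — a Hom-analogue of Lemmas~\ref{mon_*}/\ref{mon^*} asserting that the sheafification of the $R$-module map $\varphi_*\R\<\<\Hom_S\to\R\<\<\Hom_R(\varphi_*,\varphi_*)$ corresponds under \eqref{sheafify_*} to the scheme-theoretic $\nu$ of \eqref{nu}, together with the statement that the forgetful map $\R\sHom^\qc_Y\to\R\sHom_Y$ is simply $\boldsymbol j_Y$ and so is invisible after sheafification from the module side.

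The main obstacle will be that last ingredient, namely the compatibility of sheafification with internal $\R\sHom$ (and with the maps $\nu$ and, implicitly, the map $\rho$ of Corollary~\ref{tensor}): internal hom is contravariant, requires K-injective resolutions, and here the quasi-coherator $\R Q_Y$ also intervenes in $\R\sHom^\qc$. I expect to handle this by first replacing $\SF$ by a K-flat resolution and $\SG$ by a K-injective one, reducing to the nonderived functors, where compatibility of $\SH{}$ with ordinary $\Hom$ is the $\kappa$/$\varsigma$-type statement underlying \cite[1.7.7]{EGA1} used already in section~\ref{concrete version beta}, and then bootstrapping back to the derived level as in the proof of Proposition~\ref{flat and hom}. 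After that is in place, the remaining diagram chase is long but routine — each cell citing a result from \cite{li} or one of the earlier lemmas of this section — exactly as in the proofs of Propositions~\ref{concrete bc2} and~\ref{concrete chi}.
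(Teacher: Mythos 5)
Your proposal is correct and follows essentially the same strategy as the paper: it opens with an adjoint characterization of $\zeta_0$ (the paper's Lemma~\ref{adj zeta0}, proved by exactly the sort of diagram chase you sketch), reduces the comparison to matching adjoint composites via the fact that $\SH{S}\dashv\R\Gamma$ carries $\varphi_*\dashv\ush\varphi$ to $\R\fst\dashv f^\flat$, and correctly identifies the crucial new ingredient as the compatibility of sheafification with the map $\nu$ of~\eqref{nu} (the paper isolates this as subdiagram \circled2 in its expansion, proved via Lemma~\ref{mon_*} and a small sub-lemma on hom-tensor adjunction). The one point the paper handles a bit more explicitly than you do is the role of the quasi-coherator: it records the commutative diagram relating $\R\sHom^\qc$ to $\R\sHom$ via the counit of $\boldsymbol j_Y\dashv\R Q_Y$, checks that counit is an isomorphism on $\Dqc$, and routes the forgetful map $\R\sHom^\qc\to\R\sHom$ through that adjunction before taking adjoints, whereas you dispatch it with the observation that the map is ``simply $\boldsymbol j_Y$''; the content is the same but you would need to make that precise, in particular observing that the map $\Hom_{\D(Y)}(E,\epsilon_{\boldsymbol j,\R Q})$ is an isomorphism for $E\in\Dqc$.
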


\noindent\emph{Remark.} 
If $C\set\R\>\sHom_\sX(\LL f^*\<\<F\<, f^\flat G\>)\in\Dqc(Y)$ (which is so under the hypotheses of ~\ref{flat and hom}(iii)) then the map $\R\fst\R Q_YC\<\lto\< \R\fst C$ in~\ref{concrete zeta} is an isomorphism (see above). 
Keeping in mind \cite[3.2.4(i)]{li} and the paragraph preceding \cite[3.5.5]{li}, one finds then that the map $\zeta$ in
~\ref{flat and hom}(iii) is a special case of the map $\zeta$ in~\ref{concrete zeta}.

\begin{proof} The strategy is, to relate the sheafification of 
the adjoint of~$\zeta_0$ to the adjoint of $\zeta$.

\begin{sublem}\label{adj zeta0}
The map\/ $\zeta_0$ is adjoint to the natural  composite map
\begin{align*}
\varphi_*\R\<\<\Hom_S(\LL \varphi^*\SF\<, \ush\varphi \SG\>) 
\lto \R\<\<\Hom_R(\varphi_*\LL \varphi^*\SF\<, \varphi_* \ush\varphi \SG\>) \lto 
\R\<\<\Hom_R(\SF\<,  \SG\>).
\end{align*}
\end{sublem}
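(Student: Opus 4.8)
\textbf{Proof proposal for Lemma~\ref{adj zeta0}.}
The plan is to trace the definition of $\zeta_0$ through the adjunctions $\LL\varphi^*\!\dashv\varphi_*$ of Corollary~\ref{duality for varphi} (via \eqref{AdjIso}) and $\varphi_*\!\dashv\ush\varphi$ of that same corollary (via \eqref{*sh}), reducing the claim to a diagram of honest $S$- and $R$-complexes. First I would replace $\SF$ by a K-flat resolution and $\SG$ by a K-injective resolution, so that $\LL\varphi^*\SF$ identifies with $\varphi^*\SF=S\otimes_\varphi\SF$, the complex $\ush\varphi\SG$ identifies with $\Hom_\varphi(S,\SG)$, and $\R\<\<\Hom_R(\SF,\SG)$ with $\Hom_R(\SF,\SG)$; moreover $\varphi^*\SF$ is then K-flat over $S$ and $\ush\varphi\SG$ is K-injective over $S$, so all the derived functors appearing collapse to their underived versions. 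In this situation $\zeta_0$ is, by construction (via \cite[2.6.5]{li}), the $S$-complex map obtained from the chain of standard ``hom--tensor--restriction'' isomorphisms
\[
\Hom_S(S\otimes_\varphi\SF,\Hom_\varphi(S,\SG))
\iso\Hom_\varphi(S\otimes_\varphi\SF,\SG)
\iso\Hom_\varphi(S,\Hom_R(\SF,\SG)),
\]
and one only needs to check that its adjoint under $\varphi_*\!\dashv\ush\varphi$ is the stated composite.

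The key step is then a purely formal diagram chase at the level of complexes. Using the description of the counit $\mathsf t^{}_{\varphi,-}$ in \eqref{counit*sh} (``evaluation at $1_S$'') and of the counit $\bar\epsilon_\varphi$ of $\LL\varphi^*\!\dashv\varphi_*$ (scalar multiplication), I would write out the natural composite
\[
\varphi_*\Hom_S(\varphi^*\SF,\ush\varphi\SG)
\lto\Hom_R(\varphi_*\varphi^*\SF,\varphi_*\ush\varphi\SG)
\xto{\via(\eta_\varphi,\mathsf t^{}_{\varphi,\SG})}\Hom_R(\SF,\SG)
\]
and verify that, after passing to its adjoint via $\varphi_*\!\dashv\ush\varphi$, it equals $\zeta_0$. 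Concretely, an $S$-homomorphism $\lambda\colon\varphi^*\SF\to\ush\varphi\SG$ is sent, going one way around, to the $R$-map $x\mapsto[\lambda(1_S\otimes x)](1_S)$, and going the other way (applying the hom--tensor--restriction isomorphisms above and then the unit $\mathsf u^{}_{\varphi,-}$) to the same $R$-map; this is exactly the kind of ``chase $1_S\otimes x$ around both ways'' verification used in the proof of Proposition~\ref{pfush}. The argument is naturality-of-adjunction bookkeeping — one invokes \cite[3.3.7]{li} (or the remark there that a map and its conjugate under a pair of adjunctions are determined by a single commuting square) with the relevant functors in place of $(\fst,g_*,f^*,g^*)$ — together with the standard compatibilities \cite[3.5.3(e), 3.5.6(e)]{li} of the hom--tensor isomorphisms with evaluation maps.

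I do not expect a genuine obstacle here: both $\zeta_0$ and the composite in the statement are built from the same three elementary maps (scalar extension, scalar restriction, and evaluation at $1$), so the only real content is matching up the bookkeeping of which adjunction unit/counit appears where. The one point requiring mild care is that $\zeta_0$ is defined via the \emph{uniqueness} clause of \cite[2.6.5]{li} (it is the unique bifunctorial map agreeing with the explicit complex-level map on K-flat/K-injective arguments), so I would first establish the identity in that special case and then quote the uniqueness to conclude it for arbitrary $\SF,\SG$; likewise $\ush\varphi$, $\varphi_*$, $\LL\varphi^*$ and their units/counits must all be normalized as in Corollary~\ref{duality for varphi} and the lines after \eqref{*sh}. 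With those conventions fixed, the lemma follows by the diagram chase described above, and in particular $\zeta_0$ — being adjoint to a composite of the isomorphism of Proposition~\ref{duality for varphi} with evaluation-at-$1$, hence sheafifying compatibly — will be seen to match $\zeta$ once the equivalences $\SH{}$, $\R\Gamma$ are applied, as needed in the proof of Proposition~\ref{concrete zeta}.
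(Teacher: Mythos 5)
Your proposal matches the paper's proof in substance: both reduce to the case where $\SF$ is K-flat and $\SG$ is K-injective (so all the $\R$'s and $\LL$'s can be dropped and $\zeta_0$ becomes the explicit hom--tensor--restriction map), and both close by the concrete element-chase that sends $\lambda\colon\varphi^*\SF\to\ush\varphi\SG$ to $x\mapsto[\lambda(1_S\otimes x)](1_S)$ along either route. The paper packages this as a single commuting diagram with two labeled subdiagrams---one of which (your ``naturality-of-adjunction bookkeeping'' step) it disposes of by showing that the composite of $\nu$ with the counit inverts $\bar\alpha_\varphi(\LL\varphi^*\SF,S,\SG)$, i.e.~by invoking Proposition~\ref{duality for varphi}---while the other is exactly the element-chase you describe; so the two arguments are the same modulo presentation.
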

\begin{proof}
The assertion means that the border of the following natural diagram,  in which $\Hr=\Hom$, and whose top row composes to $\zeta_0$, commutes.
\begin{small}
\[\mkern-4mu
\def\1{$\R\Hr_S(\LL\varphi^*\SF,\ush\varphi\SG)$}
\def\2{$\R\Hp(\LL\varphi^*\SF,\SG)$}
\def\3{$\ush\varphi\R\Hr_{\<R}(\SF,\SG)$}
\def\4{$\ush\varphi\varphi_*\R\Hp(\LL\varphi^*\SF,\SG)$}
\def\5{$\ush\varphi\varphi_*\ush\varphi\R\Hr_{\<R}(\SF,\SG)$}
\def\6{$\ush\varphi\varphi_*\R\Hr_S(\LL\varphi^*\SF,\ush\varphi\SG)$}
\def\7{$\ush\varphi\R\Hr_{\<R}(\varphi_*\LL\varphi^*\SF,\varphi_*\ush\varphi\SG)$}
\def\8{$\ush\varphi\R\Hr_{\<R}(\varphi_*\LL\varphi^*\SF,\SG)$}
  \bpic[xscale=4.3, yscale=1.4]

   \node(11) at (1,-1){\1};   
   \node(12) at (2.02,-1){\2};
   \node(13) at (3,-1){\3};
 
   \node(23) at (2.5,-2){\5};
 
   \node(32) at (2.02,-3){\4};

   \node(41) at (1,-4){\6};
   \node(42) at (2.02,-4){\7};
   \node(43) at (3,-4){\8};
   
    \draw[->] (11)--(12) ;
    \draw[->] (12)--(13) ;
   
    \draw[->] (41)--(42) ;
    \draw[->] (42)--(43) ;
    
    \draw[->] (11)--(41) node[left=1pt, midway, scale=.75] {$\eta$} ;
    \draw[->] (12)--(32) node[left=1pt, midway, scale=.75] {$\eta$} ;          
                                   
    \draw[->] (43)--(13)  ;                              
                                    
     \draw[->] (41)--(32) node[above=-.5, midway, scale=.75] {$\ush\varphi\mu\mkern20mu$} ;                              
     \draw[->] (32)--(43) node[above=.5, midway, scale=.75] {$\mkern20mu\ush\varphi\lambda$} ;
     \draw[->] (32)--(23) ;
     \draw[<-] (2.55,-1.8)--(2.85,-1.2) node[above=-1, midway, scale=.75] {$\eta\mkern20mu$}; 
     \draw[->] (2.62,-1.8)--(2.92,-1.2) node[below=1, midway, scale=.75] {$\mkern5mu\epsilon$}; 
     
    \node at (2.02,-3.52)[scale=.9]{\circled1} ;
    \node at (2.65,-2.6)[scale=.9]{\circled2} ;   
                         
 \epic
\]
\end{small}
The unlabeled subdiagrams obviously commute, and $\epsilon\smallcirc\eta=\id$; so it's enough to show that \circled1 and \circled2 commute.

For \circled1 to commute, it suffices, by Proposition~\ref{duality for varphi}, that 
$\lambda\mu$ be inverse to the isomorphism 
$\bar\alpha_\varphi(\LL\varphi^*\SF,S,\SG)$ from Proposition~\ref{duality for phiaff}---which it~is: one~may assume that  $\SG$ is K-injective, so that all $\R$s can be dropped, making the assertion into an easily verified one in commutative algebra.

As for \circled2, one may assume $\SG$ to be K-injective and $\SF$ to be K-flat, 
thereby reducing to verifying commutativity of the natural diagram
\[
\def\1{$\varphi_*\Hr_\varphi(S\otimes_R \SF,\SG)$}
\def\2{$\varphi_*\Hr_\varphi(S,\Hr_R(\SF,\SG))$}
\def\3{$\Hr_\varphi(\varphi_*(S\otimes_R \SF\>),\SG)$}
\def\4{$\Hr_R(\SF,\SG)$}
  \bpic[xscale=4, yscale=1.4]

   \node(11) at (1,-1){\1};   
   \node(12) at (2,-1){\2}; 
   
   \node(21) at (1,-2){\3};
   \node(22) at (2,-2){\4};
                                                                                                                                                                                                                                                                                                                                                                                                                                                                                                                                                                                                                                                                                                                                                                                                                                                                                                                                                                                                                                                                                                                                                 
    \draw[->] (11)--(12) ;  
    
    \draw[->] (21)--(22) ;  
     
    \draw[->] (11)--(21)  ;
    \draw[->] (12)--(22) ;
    
  \epic
\]
To check this commutativity, send any map $\lambda\colon S\otimes_R \SF\to\SG$
from the upper left to the lower right corner, both clockwise and counterclockwise, and note that either way produces the map taking $x\in\SF$ to $\lambda(1\otimes x)$.
\end{proof}

Now, the adjunction $\boldsymbol j^{}_{\<\<Y}\!\dashv\R Q_Y$ entails a natural commutative diagram, with vertical arrows induced by counit maps,
\begin{small}
\[\mkern-4mu
\def\1{$\R\fst\R\>\sHom^\qc_\sX(\LL f^*\<\<F\<, f^\flat G\>)$}
\def\2{$\R\>\sHom^\qc_Y(\R\fst\LL f^*\<\<F\<, \R\fst f^\flat G\>)$}
\def\3{$\R\>\sHom^\qc_Y(F\<,  G\>)$}
\def\6{$\R\fst\R\>\sHom_\sX(\LL f^*\<\<F\<, f^\flat G\>)$}
\def\7{$\R\>\sHom_Y(\R\fst\LL f^*\<\<F\<, \R\fst f^\flat G\>)$}
\def\8{$\R\>\sHom_Y(F\<,  G\>)$}
  \bpic[xscale=4.4, yscale=1.4]

   \node(11) at (1,-1){\1};   
   \node(12) at (2.08,-1){\2};
   \node(13) at (3,-1){\3};

   \node(41) at (1,-2){\6};
   \node(42) at (2.08,-2){\7};
   \node(43) at (3,-2){\8};
   
    \draw[->] (11)--(12) ;
    \draw[->] (12)--(13) ;
   
    \draw[->] (41)--(42) ;
    \draw[->] (42)--(43) ;
    
    \draw[->] (11)--(41)  ;
    \draw[->] (12)--(42)  ;          
    \draw[->] (13)--(43)  node[right, midway, scale=.75]{$\epsilon_{\boldsymbol j\<\<,\R Q}$};
    
  \epic
\]
\end{small}\par
\noindent Also, $\Hom_{\D(Y)}(E,\epsilon_{\boldsymbol j\<\<,\R Q})$ is an isomorphism for any $E\in\Dqc(X)$,\va{.5} and so $\epsilon_{\boldsymbol j\<\<,\R Q}$ itself is an isomorphism.\va1 

Hence, Proposition~\ref{concrete zeta} results from the commutativity of the border of the following natural diagram, in which $\Hr\set\R\<\<\Hom$ and $\CH\set\R\>\sHom$:
\begin{small}
\[\mkern-4mu
\def\1{$\SH{\<S}\<\<\Hr_S(\LL\varphi^*\SF\<,\ush\varphi\SG)$}
\def\2{$\SH{\<S}\<\ush\varphi\Hr_R(\SF\<,\SG)$}
\def\3{$\quad\SH{\<S}\<\<\ush\varphi\<\<\varphi_*\Hr_S(\LL\varphi^*\SF\<,\ush\varphi\SG)$}
\def\4{$\SH{\<S}\<\<\ush\varphi\Hr_R(\varphi_*\LL\varphi^*\SF\<,\varphi_*\ush\varphi\SG)$}
\def\5{$\quad f^\flat\<\SH{\<R}\varphi_*\Hr_S(\LL\varphi^*\SF\<,\ush\varphi\SG)$}
\def\6{$f^\flat\<\SH{\<R}\Hr_R(\varphi_*\LL\varphi^*\SF\<,\varphi_*\ush\varphi\SG)$}
\def\8{$\CH^\qc_\sX(\SH{\<S}\<\<\LL\varphi^*\SF\<,\SH{\<S}\<\<\ush\varphi\SG)$}
\def\9{$f^\flat\R\fst\SH{\<S}\<\Hr_S(\LL\varphi^*\SF\<,\ush\varphi\SG)$}
\def\ten{$f^\flat\CH^\qc_Y(\SH{\<R}\<\varphi_*\LL\varphi^*\SF\<,\>\SH{\<R}\<\varphi_*\ush\varphi\SG)\ $}
\def\lvn{$\CH^\qc_\sX(\LL f^*\<\<F,f^\flat G\>)$}
\def\twv{$\ f^\flat\R\fst\CH_\sX^\qc(\SH{\<S}\<\LL\varphi^*\SF\<,\>\SH{\<S}\<\ush\varphi\SG)$}
\def\thn{$\ f^\flat\CH^\qc_Y(\R\fst\>\SH{\<S}\<\LL\varphi^*\SF\<,\>
  \R\fst\> \SH{\<S}\<\ush\varphi\SG)\ $}
\def\frn{$f^\flat\<\SH{\<R}\<\Hr_R(\SF\<,\SG)$}
\def\ffn{$f^\flat\R\fst\CH^\qc_\sX(\LL f^*\<\<F,f^\flat G\>)$}
\def\sxn{$f^\flat\CH^\qc_Y(\R\fst\LL f^*\<\<F,\R\fst f^\flat G\>)$}
\def\svn{$f^\flat\CH^\qc_Y(F,G\>)$}
  \bpic[xscale=3.35, yscale=1.5]

   \node(11) at (1,-1){\1};   
   \node(14) at (4,-1){\2}; 

   \node(22) at (1.7,-2){\3};
   \node(23) at (3.25,-2){\4};
   
   \node(32) at (1.7,-3){\5};
   \node(33) at (3.25,-3){\6};
   
   \node(40) at (1,-4){\1};
   \node(41) at (1,-5){\8};
   \node(44) at (4,-4){\frn};
   
   \node(52) at (2.4,-4){\9};
   \node(53) at (3.25,-5){\ten};
  
   \node(62) at (1.7,-6){\twv};
   \node(63) at (2.5, -7){\thn};
   
   \node(71) at (1,-7){\lvn};
   \node(74) at (4,-7){\svn};
  
   \node(82) at (1.7,-8){\ffn};
   \node(83) at (3.25,-8){\sxn};

    \draw[->] (11)--(14) node[above, midway, scale=.75] {$$} ;
  
    \draw[->] (22)--(23) node[above, midway, scale=.75] {$$} ;
    
    \draw[->] (32)--(33) node[above, midway, scale=.75] {$$} ;
     
    \draw[->] (40)--(52) node[above, midway, scale=.75] {$$} ;
    
    \draw[->] (62)--(63) node[above, midway, scale=.75] {$$} ;

    \draw[->] (82)--(83) node[above, midway, scale=.75] {$$} ;

    \draw[double distance = 2] (11)--(40) node[right=1pt, midway, scale=.75] {$$} ;
    \draw[->] (40)--(41) node[right=1pt, midway, scale=.75] {$$} ;
    \draw[->] (41)--(71) node[right=1pt, midway, scale=.75] {$$} ;
    
    \draw[->] (22)--(32) node[right=1pt, midway, scale=.75] {$$} ;
    \draw[->] (32)--(52) node[right=1pt, midway, scale=.75] {$$} ; 
    \draw[->] (52)--(62) node[right=1pt, midway, scale=.75] {$$} ;
    \draw[->] (62)--(82) node[right=1pt, midway, scale=.75] {$$} ;
    
    \draw[->] (23)--(33) node[right=1pt, midway, scale=.75] {$$} ;
    \draw[->] (33)--(53) node[right, midway, scale=.75] {$f^\flat\boldsymbol\kappa$} ; 
    \draw[->] (53)--(63) node[right=1pt, midway, scale=.75] {$$} ;
    \draw[->] (63)--(83) node[right=1pt, midway, scale=.75] {$$} ;
       
    \draw[->] (14)--(44) node[right=1pt, midway, scale=.75] {$$} ;
    \draw[->] (44)--(74) node[right=1pt, midway, scale=.75] {$f^\flat\boldsymbol\kappa$} ;
   
     \draw[->] (11)--(22) ; 
                                  
     \draw[->] (41)--(62) ;
     \draw[->] (71)--(82) ;

     \draw[->] (23)--(14) ; 
     \draw[->] (33)--(44) ; 
     \draw[->] (53)--(74) ;
     \draw[->] (83)--(74) ;
     
     \node at (2.47,-1.5)[scale=.9]{\circled0} ; 
 
    \node at (1.35,-2.5)[scale=.9]{\circled1} ;
    
    \node at (2.7,-4.5)[scale=.9]{\circled2} ; 
    
    \node at (3.25,-6.5)[scale=.9]{\circled3} ;
                           
 \epic
\]
\end{small}

Commutativity of the unlabeled subdiagrams is simple to check.

Commutativity of \circled0 is given by Lemma~\ref{adj zeta0}.

Commutativity of \circled1  is essentially the same as that of subdiagram \circled1 in the proof of 
Proposition~\ref{concrete bc2}.

Commutativity of \circled3 follows easily from that of the diagram in~\ref{Adjconj} and of subdiagram \circled3 in the proof of 
Proposition~\ref{concrete bc2}.

As for the remaining subdiagram \circled2, let us show, more generally, that for any $S$-complexes~$\SA$ and~$\SB$, the following natural $\Dqc(Y)$-diagram commutes.
\[
\def\5{$\quad \<\SH{\<R}\<\varphi_*\Hr_S(\SA\>,\SB)$}
\def\6{$\<\SH{\<R}\Hr_R(\varphi_*\SA\>,\varphi_*\SB)$}
\def\9{$\R\fst\SH{\<S}\<\Hr_S(\SA\>,\SB)$}
\def\ten{$\CH^\qc_Y(\SH{\<R}\<\varphi_*\SA\>,\>\SH{\<R}\<\varphi_*\SB)\ $}
\def\twv{$\ \R\fst\CH_\sX^\qc(\SH{\<S}\<\SA\>,\>\SH{\<S}\<\SB)$}
\def\thn{$\ \CH^\qc_Y(\R\fst\>\SH{\<S}\<\SA\>,\>
  \R\fst\> \SH{\<S}\<\SB)\ $}
  \bpic[xscale=5, yscale=1.5]

   \node(32) at (1,-3){\5};
   \node(33) at (2,-3){\6};
      
   \node(52) at (1,-4){\9};
   \node(53) at (2,-4){\ten};
  
   \node(62) at (1,-5){\twv};
   \node(63) at (2, -5){\thn};

   \draw[->] (32)--(33) ;
   
   \draw[->] (62)--(63) ;
   
   \draw[->] (32)--(52) ;
   \draw[->] (52)--(62) ;
   
   \draw[->] (33)--(53) ;
   \draw[->] (53)--(63) ;

 \epic
\]

The natural isomorphism
$
\Hom(\SH{\<R}\SEE,\< \CH^\qc_Y(F,G))\<\<\!\iso\!\!\Hom(\SH{\<R}\SEE, \<\CH_Y(F,G))
\\(\SEE\in\D(R)\textup{;} \;F,G\in\D(Y)),
$
allows one to replace $\CH^\qc$ in the preceding dia\-gram by $\CH$, whereupon it suffices to show commutativity of the adjoint diagram, which is, up to obvious isomorphisms, the border of the following natural diagram:
\begin{small}
\[\mkern-4mu
\def\1{$\SH{\<R}\<\varphi_{\<*}\Hr_S(\SA\>,\SB)\<\Otimes{Y}\<\SH{\<R}\<\varphi_{\<*}\SA$}
\def\2{$\SH{\<R}\Hr_R(\varphi_{\<*}\SA\>,\varphi_{\<*}\SB)\<\Otimes{Y}\<\SH{\<R}\<\varphi_{\<*}\SA$}
\def\3{$\quad\SH{\<R}(\varphi_{\<*}\Hr_S(\SA\>,\SB)\<\Otimes{\<\<R}\<\varphi_{\<*}\SA)$}
\def\4{$\SH{\<R}(\Hr_R(\varphi_{\<*}\SA\>,\varphi_{\<*}\SB)\<\Otimes{\<\<R}\<\varphi_{\<*}\SA)$}
\def\5{$\R\fst\SH{\<S}\<\Hr_S(\SA\>,\SB)\<\Otimes{Y}\<\R\fst\SH{S}\SA$}
\def\6{$\SH{\<R}\<\varphi_{\<*}(\Hr_S(\SA\>,\SB)\<\Otimes{\<S}\<\SA)$}
\def\7{$\CH_Y\<(\SH{\<R}\<\varphi_{\<*}\SA\>,\>\SH{\<R}\<\varphi_{\<*}\SB)\<\<\Otimes{Y}\<\<\SH{\<R}\<\varphi_{\<*}\SA$}
\def\8{$\quad\R\fst(\SH{\<S}\<\Hr_S(\SA\>,\SB)\<\Otimes{\<\<X}\<\SH{S}\SA)$}
\def\9{$\R\fst\SH{S}(\Hr_S(\SA\>,\SB)\<\Otimes{\<S}\<\SA)$}
\def\ten{$\SH{\<R}\<\varphi_{\<*}\SB$}
\def\lvn{$\R\fst(\CH_\sX(\SH{\<S}\SA\>,\SH{\<S}\SB)\<\Otimes{\<\<X}\<\SH{S}\SA)$}
\def\twv{$\R\fst\CH_\sX(\SH{\<S}\<\SA\>,\>\SH{\<S}\<\SB)\<\Otimes{Y}\<\R\fst\SH{S}\SA$}
\def\thn{$\CH_Y(\R\fst\>\SH{\<S}\<\SA\>,\R\fst\>\SH{\<S}\<\SB)\<\Otimes{Y}\<\R\fst\SH{S}\SA$}
\def\frn{$\R\fst\SH{S}\SB$}
  \bpic[xscale=2.7, yscale=1.5]

   \node(11) at (1,-1){\1};   
   \node(14) at (4,-1){\2}; 

   \node(22) at (1.7,-2){\3};
   \node(23) at (3.5,-2){\4};
   
   \node(31) at (1,-3){\5};
   \node(33) at (2.465,-3){\6};
   \node(34) at (3.98,-3){\7};
   
   \node(42) at (1.7,-4.2){\8};
   \node(43) at (3.45,-4.2){\9};
   \node(44) at (4.5,-4.2){\ten};
   
   \node(52) at (2.75,-5.1){\lvn};
 
   \node(61) at (1,-6){\twv};
   \node(63) at (3.05, -6){\thn};
   \node(64) at (4.5,-6){\frn};

    \draw[->] (11)--(14) node[above, midway, scale=.75] {$$} ;
    
    \draw[->] (22)--(23) node[above, midway, scale=.75] {$$} ;
    
    \draw[->] (42)--(43) node[above, midway, scale=.75] {$$} ;
  
    \draw[->] (61)--(63) node[above, midway, scale=.75] {$$} ;
    
    \draw[->] (63)--(64) node[above, midway, scale=.75] {$$} ;
     
    \draw[->] (11)--(31) node[right=1pt, midway, scale=.75] {$$} ;
    \draw[->] (31)--(61) node[right=1pt, midway, scale=.75] {$$} ;
 
    \draw[->] (4.5,-1.2)--(4.5, -2.8) node[right=1pt, midway, scale=.75] {$$} ;      
    \draw[->] (4.5,-3.2)--(4.5, -3.8) node[right=1pt, midway, scale=.75] {$$} ; 
    \draw[->] (44)--(64) node[right=1pt, midway, scale=.75] {$$} ;
   
     \draw[->] (11)--(22) ; 
                                  
     \draw[->] (31)--(42) ;
     \draw[->] (42)--(52) ;
     
     \draw[->] (22)--(33) ;
     \draw[->] (33)--(43) ;

     \draw[->] (23)--(14) ; 
     \draw[-] (23)--(3.89,-2.85) ;
     \draw[->] (4.05,-3.2)--(44) ;
     \draw[->] (33)--(44) ;

    \draw[->] (61)--(52) ;
    \draw[->] (43)--(64) ;
    \draw[->] (52)--(64) ;
     
    \node at (4.1,-2.53)[scale=.9]{\circled1} ; 
    \node at (3,-2.53)[scale=.9]{\circled2} ;
    \node at (2.1,-3.62)[scale=.9]{\circled3} ; 
    \node at (3,-4.7)[scale=.9]{\circled4} ;
    \node at (2.8,-5.6)[scale=.9]{\circled5} ;
 \epic
\]
\end{small}

The commutativity of the unlabeled subdiagram is easily checked.

The commutativity of \circled2 and \circled5 follows at once the abstract definition of the map \cite[(3.5.4.1)]{li},
a map whose concrete realization is the usual one---see, e.g., the two lines preceding \cite[3.1.9]{li}.

The commutativity of \circled3 is an instance of Lemma~\ref{mon_*}.

Finally, the commutativity of subdiagrams \circled1 and \circled4 results from the next lemma, which  expresses compatibility of (the counit map for) $\hom\>$-$\>\otimes$ adjunction with sheafification.

\begin{sublem} For any\/ $R$-complexes\/ $\SEE$ and\/ $\SF,$ the following natural diagram commutes.
\[
\def\1{$\SH{\<R}\<\Hr_S(\SEE,\SF\>)\Otimes{\<Y}\SH{\<R}\SEE$}
\def\2{$\CH_Y(\SH{\<R}\<\SEE,\SH{\<R}\<\SF\>)\Otimes{\<Y}\SH{\<R}\SEE$}
\def\3{$\SH{\<R}\<(\Hr_S(\SEE,\SF\>)\Otimes{\<\<R}\SEE)$}
\def\4{$\SH{\<R}\<\SF$}
  \bpic[xscale=5, yscale=1.65]

   \node(32) at (1,-3){\1};
   \node(33) at (2,-3){\2};
      
   \node(52) at (1,-4){\3};
   \node(53) at (2,-4){\4};
  
   \draw[->] (32)--(33) ;
   
   \draw[->] (52)--(53) ;
   
   \draw[->] (32)--(52) ;
   
   \draw[->] (33)--(53) ;

 \epic
\]

\end{sublem}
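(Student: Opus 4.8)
The plan is to reduce this final compatibility statement to a purely formal fact about symmetric monoidal functors, by the same mechanism used throughout section~\ref{affex}: sheafification transports adjunction units and counits, and in particular it is compatible with $\hom$-$\otimes$ (Hom-tensor) adjunction. Concretely, the map $\Hr_S(\SEE,\SF\>)\Otimes{\<\<R}\SEE\to\SF$ in the bottom row is the counit of the adjunction $(-\Otimes{\<\<R}\SEE)\dashv \R\<\<\Hom_S(\SEE,-)$ (more precisely, its restriction of scalars via $\varphi$, but this is harmless since $\SEE$, $\SF$ are $S$-complexes and the arrow is $R$-linear), while the top-right composite is built from the sheafified functor $\CH_Y(\SH{\<R}\SEE,-)$ together with the monoidal comparison $\SH{\<R}(-\Otimes{\<\<R}-)\iso \SH{\<R}(-)\Otimes{Y}\SH{\<R}(-)$ from \eqref{sheafify Otimes}. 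So the diagram asserts that sheafification carries the evaluation map $\alpha\colon\R\<\<\Hom_S(\SEE,\SF\>)\Otimes{\<\<R}\SEE\to\SF$ to the evaluation map $\CH_Y(\SH{\<R}\SEE,\SH{\<R}\SF\>)\Otimes{Y}\SH{\<R}\SEE\to\SH{\<R}\SF$.

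First I would observe that, because the natural functors $\D(R)\xto{\bsr}\D(\sA_\qc(Y))\xto{\boldsymbol j^{}_{\<\<Y}}\Dqc(Y)$ are equivalences and $\SH{\<R}$ is an exact symmetric monoidal functor with monoidal comparison \eqref{sheafify Otimes}, one may replace all $\R$-decorated functors by their nonderived precursors after choosing a K-flat resolution of $\SEE$ and a K-injective resolution of $\SF$; this is exactly the device already invoked several times (e.g.\ in the proofs of Lemma~\ref{ft0}, Lemma~\ref{concrete chi}, Lemma~\ref{concrete zeta}). Then I would recognize the resulting nonderived statement as an instance of the general coherence fact for a symmetric monoidal functor between monoidal closed categories: the comparison map $\SH{\<R}[\SEE,\SF\>]\to[\SH{\<R}\SEE,\SH{\<R}\SF\>]$ (the map denoted $\alpha$ in the proof of Lemma~\ref{Lem:TransTrace}, characterized there as adjoint to $\SH{\<R}[\SEE,\SF\>]\otimes\SH{\<R}\SEE\to\SH{\<R}([\SEE,\SF\>]\otimes\SEE)\to\SH{\<R}\SF$) is by its very definition compatible with evaluation. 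That is precisely what this lemma records, so it follows from unwinding that definition — or, if one prefers a citation, from \cite[3.5.6(g)]{li} applied to the monoidal functor $\SH{\<R}$, much as \circled4 and \circled7 in the proof of Proposition~\ref{concrete zeta} were handled.

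The remaining work is bookkeeping: one must check that the monoidal comparison \eqref{sheafify Otimes} and the sheafified internal-hom comparison $\kappa$ (and its derived version $\boldsymbol\kappa$) used in the surrounding diagrams of the proof of \ref{concrete zeta} are mutually adjoint in the appropriate sense, so that the identification of $\alpha$ with the evaluation map really is the content of the asserted commutative square. I would do this by replacing $\CH_Y$ by $\CH^\qc_Y$ only after the fact — i.e.\ work with ordinary $\sHom$ throughout, exactly as in the parenthetical remark in the proof of \ref{concrete zeta} about the isomorphism $\Hom(\SH{\<R}\SEE,\CH^\qc_Y(F,G))\iso\Hom(\SH{\<R}\SEE,\CH_Y(F,G))$ — and then pass the final compatibility back through that isomorphism. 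I expect the main (though still routine) obstacle to be keeping the adjunction conventions straight: the counit of $(-\otimes\SEE)\dashv[\SEE,-]$ on the algebraic side is $R$-linear but lands among $S$-modules, and one must be careful that $\varphi_*$ of this counit is the one that sheafifies correctly; beyond that, the verification is a direct diagram chase of the type already carried out repeatedly above, so I would leave it, as the excerpt does, with the lemma stated and the one-line reduction to coherence of a symmetric monoidal functor.
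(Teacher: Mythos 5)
Your proposal matches the paper's sketch: one reduces to the non-derived case by taking a K-flat resolution of $\SEE$ (a direct limit of bounded-above flat complexes) and a K-injective resolution of $\SF$, after which the diagram commutes by unwinding the definitions of the maps involved — your framing of this as coherence for the symmetric monoidal functor $\SH{\<R}$, with the internal-hom comparison characterized by its adjoint as in the proof of Lemma~\ref{Lem:TransTrace}, is just a more structured way to say what the paper calls an ``elementwise'' verification. One small clarification: the ``$\Hr_S$'' in the lemma statement appears to be a slip for ``$\Hr_R$'' (consistent with $\SEE$, $\SF$ being $R$-complexes), which dissolves the concern about $\varphi_*$ and restriction of scalars you raise in your last paragraph — no such bookkeeping actually enters here.
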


\begin{proof} (Sketch.)
It may be assumed that $\SEE$ is a direct limit of bounded-above flat complexes, and that
$\SF$ is K-injective. Then the diagram can be replaced by its non-derived counterpart, whose commutativity can be verified elementwise, via the definitions of the maps involved.
\end{proof}

This completes the proof of Proposition~\ref{concrete zeta}.
\end{proof}

\begin{subrem}
With assumptions and notation as in Proposition \ref{flat and hom}, that proposition and
~\ref{concrete zeta} show that the following natural diagram commutes:
\[
\def\1{$\SH{S}\<\R\<\<\Hom_S(\LL\varphi^*\SF,\ush\varphi\SG)$}
\def\2{$\SH{S}\<\ush\varphi\R\<\<\Hom_{\<R}(\SF,\SG)$}
\def\3{$\R\>\sHom_\sX(\LL f^*\<\<F\<,f^\flat G)$}
\def\4{$f^\flat\R\>\sHom_Y(F\<,G)$}
\def\5{$\bar{f\:\<}^{\!\<*}\R\>\sHom_{\>\oY}(\LL \phi^*\<\<F\<,\pt G\>)G)$}
\def\6{$\bar{f\:\<}^{\!\<*}\<\<\pt\R\>\sHom_Y(F\<,G\>)$}
  \bpic[xscale=4.75, yscale=1.65]

   \node(32) at (1,-3){\1};
   \node(33) at (2,-3){\2};
      
   \node(42) at (1,-4){\3};
   \node(43) at (2,-4){\4};
   
   \node(52) at (1,-5){\5};
   \node(53) at (2,-5){\6};
  
   \draw[->] (32)--(33) node[above=1, midway, scale=.75]{$\SH{\<R}\zeta_0$} ;
   
   \draw[->] (52)--(53) node[below=1, midway, scale=.75]{$\bar{f\:\<}^{\!\<*}\<\<\bar\zeta$} ;
   
   \draw[->] (32)--(42) node[left=1, midway, scale=.75]{$\simeq$} ;
   \draw[->] (42)--(52) node[left=1, midway, scale=.75]{$\simeq$} 
                                  node[right=1, midway, scale=.75]{\eqref{phi to f}};

   \draw[->] (33)--(43) node[right=1, midway, scale=.75]{$\simeq$} ;
   \draw[->] (53)--(43) node[right=1, midway, scale=.75]{$\simeq$} ;
 \epic
\]

Conversely, a more concrete proof---eschewing adjoints---of this commutativity would, together with~\ref{flat and hom}, give~\ref{concrete zeta}.

\end{subrem}

\begin{small}
\begin{subxrz} Prove Proposition~\ref{concrete zeta} along the following lines:

Show how sheafification respects conjugacy of functorial maps. 
Then show that $\zeta_0$ and $\zeta$ are right-conjugate, respectively, to the projection maps 
\begin{align*}
p_\varphi\colon\varphi_*\SEE\Otimes{R}\SF&\lto\varphi_*(\SEE\Otimes{S}\LL\varphi^*\SF\>),\\
p^{}_{\!f}\colon\R\fst E\Otimes{Y}F&\lto\R\fst(E\Otimes{\sX}\LL f^*\<\<F\>). 
\end{align*}
(Cf.~\cite[4.2.3(e)]{li}.)
Finally, apply Lemma~\ref{dualproj}.
\end{subxrz}
\end{small}
\end{cosa}

\begin{cosa}[Completion of proof of ~\ref{Kozpf}] \label{transci}
Recall that ~\ref{Kozpf} reduces\va{-1} via~\ref{lem:Kozpf} to the situation where
the closed immersions\va2 $X\xto{\,f\,} Y\xto{\,g\,}Z$ correspond to surjective ring homomorphisms $R\overset{\lift{.5},\varphi,\;}\twoheadrightarrow S \overset{\lift{.8},\xi,\ }\twoheadrightarrow T$ whose kernels $J$ and~$I$\va1 are generated by Koszul-regular sequences $\mathbf r=(r^{}_1,\dots,r_e)$ and~$\mathbf{\bar s}=(\bar s^{}_1,\dots,\bar s_d)$ respectively, and, if $s_i\in R$ is such that  
\mbox{$\bar s_i=\xi(s_i)\ (1\le i\le d)$} then
the $R$-sequence $(\mathbf{r,s})=(r^{}_1,\dots,r_e,s^{}_1,\dots, s_d)$ is 
Koszul-regular and it generates the kernel $L$ of $\xi\varphi$.
Thus \ref{Kozpf} results from its commutative\kf-algebra analog, Proposition~\ref{affine version} below. 

\pagebreak[3]
Proposition~\ref{affine version}. involves commutative\kf-algebra versions of some of the maps in Lemma~\ref{alt c}. 
Set $\mathbf{\bar s}\set (\bar s^{}_1,\dots,\bar s_d)$, let $K_{\<S}(\mathbf{\bar s})$ be the associated Koszul complex,
let $\vartheta^{}_{\mathbf{\bar s}}\colon T^d\!\iso\! I/\<I^{\>2\>}$ be the $T$-isomorphism sending 
the~$i$-th canonical basis element of~ $T^d$ to $(\bar s_i+I^2)\in I/I^2\ (1\le i\le d),$ 
and let~ 
$\Ush c{\mathbf{\>\>\bar s}}$~be the natural composite $\D(T)$-isomorphism
\begin{align*}
N_\xi\set\Hom_T\!\big(\<{\lift1.7,{\bigwedge},\lift2,{\<d},}_{\mkern-7.5mu T^{\mathstrut}}\<\<
\big(\<I/\<I^{\>2\>}\big)\<,\<T\big)[-d\>]
&\xto{\<\<\via\,\vartheta^{}_{\mathbf{\bar s}}\>}
\Hom_T\!\big(\<{\lift1.7,{\bigwedge},\lift2,{\<d},}_{\mkern-7.5mu T^{\mathstrut}}\<\<
\big(T^{\>d}\>\big)\<,\<T\big)[-d\>]\\
&\iso
\xi^*\<\<\big(H^d\<\<\Hom_S(K_{\<S}(\mathbf{\bar s}),S)\big)[-d\>]\\
&\iso
\xi^*\<\<\big(H^d\R\<\<\<\Hom_S(\xi_*T,S)\big)[-d\>]\\
&\iso 
\big(\xi^*\<\<H^d\xi_*\R\<\<\<\Hom_\xi(T,S)\big)[-d\>]\\
&\iso
\big(H^d\R\<\<\Hom_\xi(T,S)\big)[-d\>]\\
&\iso
\R\<\<\Hom_\xi(T,S)=:\ush\xi \<S.
\end{align*}	
(The last isomorphism holds since  \mbox{$H^i\R\<\<\Hom_\xi(T,S)\!=\!H^i\<\<\Hom_S(K_{\<S}(\mathbf{\bar s}),S)\!=\!0$}
if $i\ne d$.)
In view of the relevant details of the equivalence described in section~\ref{affine aspect},
Lemma~\ref{alt c} implies that $\Ush c{\mathbf{\>\>\bar s}}$ is obtained (up to canonical isomorphism) by applying the global section functor to $\>c_{\<\<f}^{\>\flat}\colon \omega_{\<\<f}\iso f^\flat\OY$ in (\ref{KosReg}.1). It follows (or can be checked directly) that $\Ush c{\mathbf{\>\>\bar s}}$
does not depend on the choice of the Koszul-regular generating sequence $\mathbf{\bar s}$ of $I$,
so that it can---and will---be denoted by $\Ush c\xi$. \va2

The $\D(S)$-isomorphism \va{-3}
\[
\Ush c\varphi \colon N_\varphi\set\Hom_S\<\<\big(\<{\lift1.7,{\bigwedge},\lift2,{\<e},}_{\mkern-7.5mu S^{\mathstrut}}\<\<\big(\<J/\<J^2\big)\<,S\big)[-e]
\iso\ush\varphi \<R
\]
and the $\D(T)$-isomorphism\va{-2}
\[
\Ush c{{\xi\varphi}}\colon N_{\xi\varphi}\set\Hom_T\<\<\big(\<{\lift1.7,{\bigwedge},\lift2.4,{\<d+e},}_{\mkern-29mu T^{\mathstrut}}
\mkern10mu\big(\<L/\<L^{\>2\>}\big)\<,\<T\big)[-d-\<e]\iso\ush{(\xi\varphi)}\<R
\]
are defined analogously.\va2

Note that $N_\xi$ (resp. \kern-3pt$N_\varphi\>$, $N_{\xi\varphi}$) is flat over $T$ (resp. \kern-3pt$S$, $T$).

\begin{subprop}\label{affine version} 
The next\/ $\D(T)$-diagram, with\/ $\Hr\set\Hom,$  commutes, and its sheafification is isomorphic to the \/$\D(X)$-diagram in~\textup{\ref{Kozpf}.} 
\[
\def\1{$\ush\xi\<\ush\varphi \<R$}
\def\2{$\ush{(\xi\varphi)}\<R$}
\def\3{$N_\xi\otimes_{\mkern.5mu T}\xi^*\<\<N_\varphi$}
\def\4{$N_{\xi\varphi}$}
\def\5{$\ush\xi \<S\Otimes{T}\LL\>\xi^*\<\ush\varphi \<R$}
 \bpic[xscale=4.75, yscale=1.5]

   \node(11) at (1,-3){\1} ;   
   \node(12) at (2,-3){\2} ;
   
   \node(101) at (1,-2){\5} ;
    
   \node(21) at (1,-1){\3} ;  
   \node(22) at (2,-1){\4} ;
 
    \draw[->] (11)--(12)  node[above=1, midway, scale=.85] {\kern8pt\Iso}
                                   node[below=1, midway, scale=.75] {\kern8pt\textup{\eqref{pi}}} ;   
    
    \draw[->] (21)--(22)  node[above=1, midway, scale=.75] {\kern-8pt$h$}
                                     node[below=1, midway, scale=.75] {\kern-8pt\eqref{exact'aff}\kf} ;
    
    \draw[<-] (11)--(101) node[left=1, midway, scale=.75] {$\chi^{}_0(S,\ush\varphi \<R)$} 
                                     node[right, midway, scale=.75]{\eqref{chi0}} ;
                                   
    \draw[<-](101)--(21)  node[left, midway, scale=.9] {$\Ush c\xi\Otimes{T}\LL\>\xi^*
    \<\<\Ush c\varphi$} ;

    \draw[<-] (12)--(22)  node[right, midway, scale=.9]{$\Ush c{{\xi\varphi}}$} ;
 
  \epic
\]
\end{subprop}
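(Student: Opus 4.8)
The statement has two halves: (a) the $\D(T)$-diagram commutes, and (b) its sheafification is isomorphic to the $\D(X)$-diagram in Lemma~\ref{Kozpf}. The plan is to dispose of (b) first, essentially for free, and then to concentrate all the work on (a).

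For (b), the point is that every functor and map appearing in the diagram of Proposition~\ref{affine version} has, by the equivalences of section~\ref{affine aspect}, a sheafified counterpart that matches the corresponding ingredient of Lemma~\ref{Kozpf}. Concretely: $\ush{\xi}\ush{\varphi}R$, $\ush{(\xi\varphi)}R$, $N_\xi\otimes_T\xi^*N_\varphi$, $N_{\xi\varphi}$ and $\ush{\xi}S\Otimes{T}\LL\xi^*\ush{\varphi}R$ sheafify (via $\SH{T}$ together with \eqref{sheafify^*}, \eqref{sheafify Otimes}) to $f^\flat g^\flat\OZ$, $(gf)^\flat\OZ$, $\omega_{\<\<f}\otimes_\sX f^*\omega_g$, $\omega_{gf}$ and $f^\flat\OY\Otimes{\sX}\LL f^*g^\flat\OZ$ respectively; the map \eqref{pi} sheafifies to \eqref{pf flat} by the commutative diagram \eqref{sheafify pf}; the map $h$ of \eqref{exact'aff} sheafifies to \eqref{tensoromega} by the remarks in \S\ref{explicit}; the maps $\Ush c\xi$, $\Ush c\varphi$, $\Ush c{\xi\varphi}$ sheafify to the instances of (\ref{KosReg}.1) by Lemma~\ref{alt c} (this is exactly the content of the paragraph introducing $\Ush c\xi$ above); and $\chi^{}_0(S,\ush\varphi R)$ sheafifies to $\chi(f,\OY,g^\flat\OZ)$, i.e.\ to the instance of \ref{flat and tensor}(ii) appearing in $\Delta$, by Proposition~\ref{concrete chi}. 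So once (a) is known, applying $\SH{T}$ and transporting along these identifications yields precisely the diagram~$\Delta$ of Lemma~\ref{Kozpf}, proving that $\Delta$ commutes and completing the reduction.

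For (a), the strategy is to reduce the commutativity of the $\D(T)$-diagram to a purely module-theoretic identity by choosing favorable resolutions, and then to verify that identity elementwise. First I would replace $R$ by a K-injective resolution wherever it occurs as the second argument of a $\Hom_\varphi$ or $\Hom_\xi$; more practically, since $N_\xi$, $N_\varphi$, $N_{\xi\varphi}$ are explicit finite-free (hence both K-flat and, being concentrated in one degree up to shift, K-injective after the evident identifications) complexes and the Koszul complexes $K_{\<S}(\mathbf{\bar s})$, $K_{\<S}(\mathbf r)$, $K_{\<R}(\mathbf r,\mathbf s)$ are finite-free resolutions of $\xi_*T$, $\varphi_*S/\!\!\sim$, etc., one can compute $\ush\xi S$, $\ush\varphi R$, $\ush{(\xi\varphi)}R$ and all the maps between them at the level of actual complexes of free modules. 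The isomorphism \eqref{pi} then becomes, via Proposition~\ref{pfush}, the evaluation-style isomorphism $\R\Hom_\xi(T,\R\Hom_\varphi(S,R))\iso\R\Hom_{\xi\varphi}(T,R)$ computed by $\Hom_S(K_{\<S}(\mathbf{\bar s}),\Hom_R(K_{\<R}(\mathbf r),R))\to\Hom_R(K_{\<R}(\mathbf r,\mathbf s),R)$; the map $\chi^{}_0$ becomes, via Lemma~\ref{dualchi} and the explicit description preceding \eqref{sheafify Otimes}, a comparison of Koszul complexes; the maps $\Ush c\xi$, $\Ush c\varphi$, $\Ush c{\xi\varphi}$ unwind through the definition above into the usual self-duality isomorphisms of exterior algebras; and $h$ is the explicit $\lambda$ of \S\ref{explicit}. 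With all four sides made explicit, commutativity of the square becomes a statement about how the left inverse $\mathfrak j$ and the surjection $\mathfrak p$ in the exact sequence \eqref{ciexact} interact with wedge products and with the Koszul differential, and this I would verify by tracking a generator (a wedge of dual basis vectors) around the square. This is the approach of \cite[Appendix C.6]{NSa}, which is cited as the model for the proof; the computation there handles exactly the interplay $ij=\id-qp$ together with the sign $(-1)^{de}$ built into \eqref{tensoromega}.

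The main obstacle, I expect, is the elementwise verification in (a): matching signs. The isomorphism \eqref{pi} is defined abstractly via conjugacy/adjunction (Proposition~\ref{pfush} reduces it to the commutativity of a diagram in \cite[3.3.7(a)]{li}), and translating that abstract definition into an explicit formula on Koszul complexes involves the Koszul-resolution comparison maps, each of which carries Koszul signs; simultaneously, $\Ush c\xi$ involves the shift functor and the self-duality of $\Hom_T(\bigwedge^d T^d,T)$, which contributes the factor $(-1)^{d}$-type signs from \cite[(1.5.4)]{li}, and $\chi^{}_0$ contributes the Koszul sign from the explicit description of $\gamma^{}_Y$ in \S\ref{hbar} (the $(-1)^{ik}$ there). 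Keeping these bookkeeping signs consistent across all four maps—so that they cancel exactly, leaving the $(-1)^{de}$ that \eqref{tensoromega} was rigged to produce—is the delicate part. Everything else is either formal transport along the section~\ref{affine aspect} equivalences or a direct but mechanical exterior-algebra computation.
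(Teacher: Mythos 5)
Your part~(b) matches the paper almost word for word: the paper indeed disposes of the sheafification comparison by transporting each corner and each arrow along the equivalences of \S\ref{affine aspect}, invoking Lemma~\ref{alt c} for $\Ush c\xi$, $\Ush c\varphi$, $\Ush c{\xi\varphi}$, Proposition~\ref{concrete chi} for $\chi^{}_0$, and diagram~\eqref{sheafify pf} for the pseudofunctoriality edge. No concerns there.

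For part~(a) the general idea is right---resolve with Koszul complexes, make the four maps explicit, then chase a generator---and your concern about sign bookkeeping is well founded. But the paper's proof makes one strategic move that your proposal omits and that is, in practice, what turns the ``delicate part'' into a mechanical one: \emph{it passes to the adjoint diagram.} Rather than comparing two $\D(T)$-maps $N_\xi\otimes_T\xi^*N_\varphi\to\ush\xi\ush\varphi R$ directly, it compares their adjoints $\varphi_*\xi_*(N_\xi\otimes_T\xi^*N_\varphi)\to R$, which live in $\D(R)$. This has two consequences. First, it lets the paper factor each of the abstractly defined maps (\eqref{pi}, $\chi^{}_0$ via Lemma~\ref{dualchi}, the $\Ush c$'s) through explicit chains involving the counits of $\varphi_*\xi_*\!\dashv\ush\xi\ush\varphi$, so that the big expanded diagram decomposes into many subdiagrams, almost all of which commute for formal reasons---only one subdiagram~$\circled1'$ actually needs an elementwise check. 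Second, after taking adjoints all vertices are Koszul complexes or Hom complexes between strictly perfect $R$-complexes, so the key technical device, Lemma~\ref{explicit d}, can represent $d^{-1}_{\mathbf{\bar s}}$ (and its analogues) as genuine maps of complexes, which is exactly what you need before a generator chase is even meaningful. Your proposal gestures at ``translating the abstract definitions into explicit formulas on Koszul complexes'' but gives no device for doing so; this is the real gap, because without the adjoint-diagram factorization the maps out of $\ush\xi\ush\varphi R$ are only specified up to unique isomorphism and have no privileged complex-level representative to chase. Finally, the sign cancellation is not quite as you describe: the paper's generator chase shows the $(-1)^{de}$ built into $h$ (via~\eqref{tensoromega}) is cancelled by a tensor transposition $(1_S/\mathbf r^J)\otimes_S(1_S/\mathbf w^d)\mapsto(1_S/\mathbf w^d)\otimes_S(1_S/\mathbf r^J)$ of homogeneous elements of degrees $e$ and $d$, not by accumulated Koszul signs from the other three arrows.
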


\begin{proof} 
As regards sheafification, there are evident natural isomorphisms
\begin{align*}
\SH{\>T}\<N_\xi
&\iso
\upcheck{\big({\lift1.7,{\bigwedge},\lift2.4,{\<d},}_{\mkern-10mu X^{\mathstrut}}\<f^*\<\big(\CI\<\</\CI^{\>2\>}\big)\big)\!}[-d\>]=\colon\omega_{\<\<f}\\
\SH{\>T}\<N_\varphi
&\iso
f^*\<\upcheck{\big({\lift1.7,{\bigwedge},\lift2,{\<e},}_{\mkern-7mu Y^{\mathstrut}}\>g^*\<\big(\CJ\<\</\<\<\CJ^{\>2\>}\big)\big)\!}[-e]=\colon\omega_{\<g}\\
\SH{\>T}\<N_{\xi\varphi}
&\iso
\upcheck{\big({\lift1.7,{\bigwedge},\lift2.4,{\<d+e},}_{\mkern-33mu X^{\mathstrut}}\mkern12mu (g\<f)^{\<*}\<\big(\CL/\<\CL^{\>2\>}\big)\big)\!}[-d-\<\<e]=:\omega_{g\<f}\>,
\end{align*}
via which the sheafification of the top row in \ref{affine version} is naturally isomorphic to the top row in  \ref{Kozpf}.

Lemma~\ref{alt c}, \emph{mutatis mutandis,} shows the sheafification of the right column in \ref{affine version} to be isomorphic to the right column in  \ref{Kozpf}.  The top arrows in the respective left columns can be treated similarly.

That $\chi^{}_0$ in \ref{affine version} sheafifies to the map in \ref{Kozpf} labeled \ref{flat and tensor} 
is given by Proposition~\ref{concrete chi}.

That the sheafification of the bottom row in \ref{affine version} is naturally isomorphic to the bottom row in  \ref{Kozpf} is given by \eqref{sheafify pf}, \emph{mutatis mutandis}.
\va2

Next, to show the commutativity of the diagram in~\ref{affine version}, it suffices to show the commutativity of its adjoint, as follows.\va2

Let $p_{\<\varphi}$ and $p_\xi$ be as in \eqref{projaff} \emph{ff.}\va1

For a sequence $\mathbf q$ in a ring $Q\<$, let 
$\Kb{Q};{\mathbf q};$ be the complex $\Hom_Q(K_{\<\<Q}(\mathbf q),Q)$.
Let
\(
\mathfrak t_{\>\>\mathbf q}\colon\Kb{Q};{(\mathbf{q})};\to 
K_{\!Q}^0(\mathbf{q})=\Hom_Q(Q,Q)=Q
 \)
be the natural map. 

One has the composite natural isomorphism (with $\tilde\gamma^{}_0$ an isomorphism because the complex $K_R(\mathbf r)$ is strictly perfect)
\begin{align*}
\mathfrak g_{\mathbf r,\mathbf s}\colon\Kb{R};{\mathbf r};\otimes_R\Kb{R};{\mathbf s};
&=
\Hom_R(K_R(\mathbf r),R)\otimes_R\Hom_R(K_{\<\<R}(\mathbf s),R)\\
&\underset{\lift1.3,\tilde\gamma^{}_0,}\iso
\Hom_R(K_{\<\<R}(\mathbf r),\Hom_R(K_{\<\<R}(\mathbf s),R))\\
&\iso
\Hom_R(K_{\<\<R}(\mathbf r)\<\<\otimes_R\< K_{\<\<R}(\mathbf s),R)
\iso
\Kb{R};{\mathbf{(r,s)}};.
\end{align*}

Let $d_{\>\mathbf{\bar s}}$ be the natural composite $\D(S)$-isomorphism\va{-1}
\[
\xi_*N_\xi \xto{\<\xi_*\Ush c{\xi}\>}\xi_*\ush\xi \<S
\iso\R\<\<\Hom_S(\xi_*T,S)\iso \Kb{S};{\mathbf{\bar s}};,
\]
and define analogously\va{-2}
\[
d_{\>\mathbf r}\colon
\varphi_*N_\varphi \iso \Kb{R};{\mathbf r};,\qquad
d_{\>\mathbf r\<, \>\mathbf s}\colon
\varphi_*\xi_*N_{\xi\varphi} \iso \Kb{R};{(\mathbf r,\mathbf s)};.
\]
\vskip2pt\stepcounter{equation}
Now expand the adjoint diagram naturally as in diagram (\theequation)\  below.
Diagram-chasing shows that proving commutativity of all the subdiagrams of this expanded
diagram gives the desired commutativity of the adjoint diagram itself. 

\pagebreak
For each of the unlabeled subdiagrams, commutativity is either obvious or straightforward to verify.

\begin{small}
\begin{figure}[t]
\[
 \def\1{$\Kb{R};{(\mathbf r, \mathbf s)};$} 
 \def\2{$\Kb{R};{\mathbf r};\otimes_R \Kb{R};{\mathbf s};$} 
 \def\3{$\ \varphi_{\<*}\ush\varphi\<\<R\<\otimes_R\<\Kb R;\mathbf s;\ \,$} 
 \def\4{$\varphi_{\<*}(\ush\varphi\<\<R\otimes_S\Kb S;\mathbf{\bar s};)$} 
 \def\5{$\!\varphi_{\<*}(S\otimes_{S}\<\ush\varphi\<\<R)$} 
 \def\7{$\varphi_{\<*}\ush\varphi\<\<R$} 
 \def\9{$R$} 
 \def\ten{$\varphi_{\<*}N_{\<\varphi}\otimes_R \Kb{R};{\mathbf s};$} 
 \def\lvn{$\varphi_{\<*}\ush\varphi \<R$} 
 \def\thn{$\varphi_{\<*}( N_{\<\varphi}\<\<\otimes_S\<\<\Kb{S};{\mathbf{\bar s}};)$} 
 \def\frn{$\ \varphi_{\<*}( \Kb{S};{\mathbf{\bar s}};\otimes_S\<N_{\<\varphi})$} 
 \def\ffn{$\varphi_{\<*}(\xi_*N_{\xi}\otimes_S\<N_{\<\varphi})$} 
 \def\sxn{$\varphi_{\<*}(\xi_*\ush\xi S\Otimes{S}\ush\varphi \<R)$} 
 \def\svn{$\varphi_{\<*}\xi_*(\<N_\xi\otimes_{\mkern.5mu T}\xi^*\<\<N_\varphi\<)$} 
 \def\egn{$\varphi_{\<*}\xi_*N_{\xi\varphi}$} 
 \def\ntn{$\varphi_{\<*}\xi_*(\ush\xi \<S\<\Otimes{T}\<\LL\>\xi^*\<\<\ush\varphi \<R)$} 
 \def\twy{$\ \varphi_{\<*}\xi_*\ush\xi\ush\varphi \<R$} 
 \def\twn{$\varphi_{\<*}\xi_*\ush{(\xi\varphi)}\<R$} 
 \def\twt{$\varphi_{\<*}(\ush\varphi\<\<R\otimes_{S}S)$} 
 \def\twth{$(R\otimes_{R}R)\ $} 
 \def\twfr{$(R\otimes_{R}\<\Kb{R};{\mathbf s};)$} 
 \def\twfv{$\varphi_{\<*}\ush\varphi\<\<R\otimes_{R}R$}
\bpic[xscale=2.45, yscale=1.5] 

   \node(11) at (1,-.8){\svn} ;  
   \node(15) at (5,-.8){\egn} ;  
  
   \node(22) at (1.9,-2){\frn} ;  
   \node(23) at (3,-1.4){\thn} ;
   \node(24) at (4.1,-2.35){\1} ;  
  
   \node(33) at (3,-2.75){\ten} ;

   \node(43) at (3,-4.285){\2} ;
   \node(44) at (4.1,-4.285){\9} ; 
   
   \node(52) at (1.9,-5){\ffn} ;
   \node(53) at (3.55,-5){\twfr} ;    
   \node(54) at (4.1,-5.93){\twth} ; 

   \node(63) at (3,-5.93){\3} ;
   \node(65) at (5,-5.93){\twn} ; 

   \node(73) at (3,-7.35){\4} ;

   \node(82) at (1.9,-6.64){\sxn} ;
   \node(84) at (4.1,-7.35){\twfv} ;
   
   \node(92) at (1.9,-8.84){\5} ;
   \node(93) at (3,-8.84){\twt} ;
   \node(94) at (4.1,-8.84){\7} ;
  
   \node(101) at (1,-10){\ntn} ;  
   \node(105) at (5,-10){\twy} ;

    \draw[->] (11)--(15) node[above, midway, scale=.75] {$\varphi_{\<*}\xi_*h$}
                                    node[below, midway, scale=.75] {\eqref{exact'aff}};
    
     \draw[->] (101)--(105) node[below, midway, scale=.75] {$\varphi_{\<*}\xi_*\chi^{}_0(S,\ush\varphi \<R)\qquad\quad$} ;

     \draw[double distance=2] (92)--(93) ; 
     
    \draw[->] (11)--(101)  node[left, midway, scale=.75]{$\varphi_*\xi_*(\Ush c{{\xi}}\<\otimes_T\<\Ush c{{\varphi^{}}})$} ;

    \draw[<-] (22)--(52) node[right, midway, scale=.85]
                {$\textup{\scriptsize via}_{}\,d_{\>\mathbf{\bar s}}$} ;
    \draw[->] (52)--(82) node[left, midway, scale=.85]{$\textup{\scriptsize via}_{}\<\<$} 
                                    node[right=-1, midway, scale=.85]{$\Ush c{\xi}\mkern4mu                                    
                                    \textup{\scriptsize and}\, \Ush c{\varphi}$} ;
                                    
    \draw[->] (23)--(33) node[left=-.5, midway, scale=.75]{$p^{-\<1}_{\mkern-1.5mu\varphi} $} ;
    \draw[<-] (43)--(33) node[left=-1, midway, scale=.85]
                                 {$\textup{\scriptsize via}_{}\,d_{\>\mathbf r}$} ;
    \draw[->] (43)--(63) ;
    \draw[->] (63)--(73) node[left=-.5, midway, scale=.75]{$p^{}_{\mkern-1.5mu\varphi} $} ;
    \draw[->] (24)--(44) node[left, midway, scale=.85]{$\mathfrak t_{\>\>\mathbf{r\<,\>s}}$} ;
    \draw[double distance=2] (54)--(44) ;
    \draw[->] (94)--(84) ;
    
    \draw[->] (15)--(65) node[right, midway, scale=.75]{$\varphi_*\xi_*\Ush c{{\xi\varphi^{}}}$} ;
    \draw[->] (105)--(65) node[right, midway, scale=.75]{\eqref{pi}};

     \draw[<-] (23)--(22) node[below=-1.85,midway, scale=.75]{$\mkern37mu\varphi_{\<*}s$}; 
     \draw[<-] (24)--(43) node[left=1,midway, scale=.85]{$\mathfrak g_{\mathbf{r\<,\>s}}\mkern2mu$} ;
     \draw[->] (63)--(53) ;
     \draw[->] (3.565,-5.22)--(54) node[left, midway, scale=.85]{$\textup{\scriptsize via}_{}\<\<$} 
                                    node[right=-1, midway, scale=.85]{$\mkern3mu\mathfrak t_{\>\>\mathbf{s}}$} ;
     \draw[->] (63)--(84) node[left, midway, scale=.85]{$\textup{\scriptsize via}_{}\<\<$} 
                                    node[right=-1, midway, scale=.85]{$\mkern3mu\mathfrak t_{\>\>\mathbf{s}}$} ;
     \draw[->] (84)--(54) ;
     \draw[->] (43)--(44) node[above,midway, scale=.85]
                        {$\!\mathfrak t_{\>\>\mathbf r}\<\<\otimes_{\<R}\<\<\mathfrak t_{\>\>\mathbf s}\>$} ;
     \draw[->] (65)--(44) ;
     \draw[->] (15)--(24) node[above=-2, midway, scale=.85]{$d_{\>\mathbf r\<,\>\mathbf s}         
                                      \mkern25mu$} ;
     \draw[<-] (4.23,-2.55)--(4.94,-5.72) ;
     \draw[->] (1.1,-1.01)--(1.79,-4.79) node[left=-.5, midway, scale=.75]{$\varphi_{\<*} p^{}_{\<\xi}$} ;
     \draw[->] (1.1,-9.78)--(1.79,-6.85) node[left=-.5, midway, scale=.75]{$\varphi_{\<*} p^{}_{\<\xi}\>$} ;
     \draw[->] (82)--(92) ;
     \draw[->] (93)--(84) node[below=-2.7, midway, scale=.75]
                                     {$\mkern28mu p^{-\<1}_{\mkern-1.5mu \varphi}$} ;
     \draw[->] (93)--(94) ;
     \draw[->] (4.89,-9.8)--(94) ;
     \draw[->] (73)--(93) node[left, midway, scale=.85]{$\textup{\scriptsize via}_{}\<\<$} 
                                    node[right=-1, midway, scale=.85]{$\mathfrak t_{\>\>\mathbf{\bar s}}$} ;     
    \node at (1.9,-1.42)[scale=.9]{\circled1} ;
    \node at (2.45,-5.43)[scale=.9]{\circled2} ;
    \node at (4.55,-6.64)[scale=.9]{\circled3} ;
    \node at (3,-9.52)[scale=.9]{\circled4} ;

 \epic
\]
\centerline{\bf\qquad(\theequation)}
\end{figure}
\end{small}

For the commutativity of \circled2, use the commutativity (resulting directly from 
the definition of $d_{\>\mathbf r}$) of
\[
\def\1{$\varphi_{\<*}( N_{\<\varphi}\otimes_S\<\Kb{S};{\mathbf{\bar s}};)$}
\def\2{$\varphi_{\<*} N_{\<\varphi}\otimes_R\Kb{R};{\mathbf{s}};$}
\def\3{$\Kb{R};{\mathbf r};\otimes_R \Kb{R};{\mathbf s};$}
\def\4{$\varphi_{\<*}(\ush\varphi\<\<R\otimes_S\<\Kb{S};{\mathbf{\bar s}};)$}
\def\5{$\varphi_{\<*}\ush\varphi\<\<R\otimes_R\Kb R;\mathbf s;,$}
  \bpic[xscale=4.5, yscale=1.35]

   \node(32) at (1,-3){\1} ;
   \node(33) at (2,-3){\2};

   \node(53) at (2,-4){\3};
  
   \node(62) at (1,-5){\4};
   \node(63) at (2, -5){\5} ;

   \draw[->] (33)--(32) node[above, midway, scale=.75]{$\ p_{\varphi}$} ;
   
   \draw[->] (63)--(62) node[below, midway, scale=.75]{$\ p_{\varphi}$} ;
   
   \draw[->] (32)--(62) node[left, midway, scale=.75]{$\via\>\Ush c{\varphi}$} ;
   
   \draw[->] (33)--(53) node[right=1, midway, scale=.75]{$\via\>d_{\>\mathbf r}$} ;
   \draw[->] (53)--(63) node[right=1, midway, scale=.75]{natural} ;

 \epic
\]
plus the definition of $d_{\>\mathbf{\bar s}}\>$, to reduce to noting the obvious commutativity of\va{-2} 
\[
\def\1{$\varphi_{\<*}( \Kb{S};{\mathbf{\bar s}};\otimes_S\<N_{\<\varphi})$}
\def\2{$\varphi_{\<*}(N_{\<\varphi}\otimes_S\<\Kb{S};{\mathbf{\bar s}};)$}
\def\3{$\varphi_{\<*}( \Kb{S};{\mathbf{\bar s}};\otimes_S\<\ush\varphi\<\<R)$}
\def\4{$\varphi_{\<*}(\ush\varphi\<\<R\otimes_S\<\Kb{S};{\mathbf{\bar s}};)$}
\def\5{$\varphi_{\<*}(S\otimes_S\<\ush\varphi\<\<R)$}
\def\6{$\varphi_{\<*}(\ush\varphi\<\<R\otimes_SS)$}
  \bpic[xscale=4.5, yscale=1.25]

   \node(32) at (1,-3){\1};
   \node(33) at (2,-3){\2};
      
   \node(52) at (1,-4){\3};
   \node(53) at (2,-4){\4};
  
   \node(62) at (1,-5){\5};
   \node(63) at (2, -5){\6};

   \draw[->] (33)--(32) node[above=1, midway, scale=.75]{\kern1pt natural} ;
   
   \draw[->] (53)--(52) node[above=1, midway, scale=.75]{\kern1pt natural} ;
   
\draw[double distance=2pt] (63)--(62) ;
   
   \draw[->] (32)--(52) node[left, midway, scale=.75]{$\via\>\Ush c{\varphi}$} ;
   \draw[->] (52)--(62) node[left, midway, scale=.75]{$\via\>\mathfrak t_{\>\>\mathbf{\bar s}}$} ;
   
   \draw[->] (33)--(53) node[right, midway, scale=.75]{$\via\>\Ush c{\varphi}$} ;
   \draw[->] (53)--(63) node[right, midway, scale=.75]{$\via\>\mathfrak t_{\>\>\mathbf{\bar s}}$} ;

 \epic
\]

The commutativity of \circled3 follows from the definition of the map \eqref{pi}.

The commutativity of \circled4 is given by Lemma~\ref{dualchi}, \emph{mutatis mutandis}..

It remains to verify the commutativity of \circled1, or equivalently, 
of the natural diagram \va{-2}
\[
 \def\1{$\Kb{R};{(\mathbf r, \mathbf s)};$} 
 \def\2{$\Kb{R};{\mathbf r};\otimes_R \Kb{R};{\mathbf s};$} 
 \def\ten{$\varphi_{\<*}N_{\<\varphi}\otimes_R \Kb{R};{\mathbf s};$} 
 \def\thn{$\varphi_{\<*}( N_{\<\varphi}\<\<\otimes_S\<\<\Kb{S};{\mathbf{\bar s}};)$} 
 \def\frn{$\ \varphi_{\<*}( \Kb{S};{\mathbf{\bar s}};\otimes_S\<N_{\<\varphi})$} 
 \def\ffn{$\varphi_{\<*}(\xi_*N_{\xi}\otimes_S\<N_{\<\varphi})$} 
 \def\svn{$\varphi_{\<*}\xi_*(\<N_\xi\otimes_{\mkern.5mu T}\xi^*\<\<N_\varphi\<)$} 
 \def\egn{$\varphi_{\<*}\xi_*N_{\xi\varphi}$} 
 \bpic[xscale=2.25, yscale=1.65] 

   \node(11) at (1,-1){\svn} ;  
   \node(15) at (3,-1){\egn} ;  
   \node(24) at (5,-1){\1} ;

   \node(43) at (5,-2){\2} ;
   \node(52) at (1,-2){\ffn} ;
      
   \node(22) at (1,-3){\frn} ;  
   \node(23) at (3,-3){\thn} ;
   \node(33) at (5,-3){\ten} ;
   
    \draw[<-] (11)--(15) node[above, midway, scale=.75] {$\varphi_{\<*}\xi_*h^{-\<1}$} ;
    \draw[<-] (23)--(33) node[below, midway, scale=.75]{$p_{\mkern-1.5mu\varphi} $} ;

    \draw[->] (22)--(52) node[left, midway, scale=.8]
                                {$\via d\>^{-\<1}_{\mathbf{\bar s}}$} ;
    \draw[->] (43)--(33) node[right, midway, scale=.8]
                                 {$\via d\>^{-\<1}_{\mathbf r}$} ;
 
     \draw[->] (23)--(22) node[below, midway, scale=.75]{$\varphi_*s $} ; 
     \draw[->] (24)--(43) node[right, midway, scale=.8]{$\mathfrak g^{-1}_{\mathbf{r\<,\>s}}\mkern3mu$} ;
     \draw[<-] (15)--(24) node[above, midway, scale=.8]
                                    {$\>d^{-\<1}_{\mathbf r\<,\>\mathbf s}$} ;
     \draw[->] (11)--(52) node[left=-.5, midway, scale=.75]{$\varphi_{\<*} p^{}_{\<\xi}$} ;
     
      \node at (3,-2)[scale=.9]{\circled1$'$} ; 
 \epic
\]
\vskip-2pt
This diagram is the canonical image in $\D(R)$ of an explicitly describable diagram in the category of $R$-complexes.
To see this, represent $d\>^{-\<1}_{\mathbf{\bar s}}$ 
as the image in~$\D(S)$ of a map of complexes, as follows (and analogously for 
$d\>^{-\<1}_{\mathbf{r}}$ and $d\>^{-\<1}_{\mathbf{r,s}}$):\va{-2}
\begin{sublem}\label{explicit d}
The map\/ $d\>^{-\<1}_{\mathbf{\bar s}}$ is the canonical image in\/~$\D(S)$ of the natural composite
map of\/~$S$-complexes
\begin{align*}
d\>'_{\mathbf{\bar s}}\colon \Kb{S};{\mathbf{\bar s}}; 
\lto 
\big(H^d\<\Kb{S};{\mathbf{\bar s}}; \big)[-d\>]
&\iso\!
\big(K^d_S(\mathbf{\bar s})\otimes_{\<S}\>\xi_*T\big)[-d\>]\\
&\iso\<
\xi_*\Hom_T\!\big(\lift1.7,{\bigwedge},\lift2.4,{\<d},_{\mkern-7mu T^{\mathstrut}}\<\<\big(T^{\>d}\>\big),T\big)[-d\>]\\
&\underset{\lift.8,\<\via\vartheta^{}_{\mathbf{\bar s}},\!}{\iso}\<
\xi_*\Hom_T\!\big(\lift1.7,{\bigwedge},\lift2.4,{\<d},_{\mkern-7mu T^{\mathstrut}}\<\<\big(I/I^{\>2\>}\big),T\big)[-d\>]
=
\xi_*N_\xi.
\end{align*}
\end{sublem}

\begin{proof} It suffices to prove equality of the adjoint maps, that is, that the border of the following natural diagram commutes:
\[
 \def\1{$\xi^*\<\<\Kb{S};{\mathbf{\bar s}};$} 
 \def\2{$\xi^*\<\<\big(H^d\<\Kb{S};{\mathbf{\bar s}}; \big)[-d\>]$} 
 \def\4{$\big(H^d\ush\xi \<S\big)[-d\>]$} 
\def\5{$\big(H^d\xi^*\<\<\Kb{S};{\mathbf{\bar s}};\big)[-d\>]$} 
 \def\6{$\ush\xi \<S$} 
 \def\8{$\xi^*\<\<\big(K^d_S(\mathbf{\bar s})\otimes_{\<S}\>\xi_*T\big)[-d\>]$}
 \def\9{$\xi^*\<\<\Hom_S\!\big(\lift1.7,{\bigwedge},\lift2.4,{\<d},_{\mkern-9mu S^{\mathstrut}}\<\<
               \big(I/I^{\>2\>}\big),S/I\big)[-d\>]$} 
\def\ten{$H^d\<\<\Hom_T(\xi^*\<\<\<K_{S}(\mathbf{\bar s}),T\>)[-d\>]$} 
 \def\lvn{$N_\xi$}
 \def\twv{$\Hom_T\!\big(\lift1.7,{\bigwedge},\lift2.4,{\<d},_{\mkern-7mu T^{\mathstrut}}\<\<\big(I/I^{\>2\>}\big),T\big)[-d\>]$} 
 \def\thn{$\Hom_T\!\big(\lift1.7,{\bigwedge},\lift2.4,{\<d},_{\mkern-7mu T^{\mathstrut}}\<\<\big(T^{\>d}\>\big),T\big)[-d\>]$} 
 \bpic[xscale=3, yscale=1.7] 

   \node(11) at (1,-1){\1} ;  
   \node(14) at (4,-1){\2} ;  
  
   \node(21) at (1,-1.95){\6} ;  
   \node(22) at (1.9,-1.95){\4} ;  
   \node(23) at (3.1,-1.95){\5} ;
  
   \node(34) at (4,-2.74){\8} ;

   \node(43) at (3.1,-3.53){\ten} ; 
   
   \node(51) at (1,-4.53){\lvn} ;  
   \node(52) at (2,-4.53){\twv} ;  
   \node(54) at (4,-4.53){\thn} ;

    \draw[->] (11)--(14) node[above, midway, scale=.75] {$$}
                                    node[above, midway, scale=.75] {};
    
     \draw[->] (21)--(22) ;

     \draw[double distance=2] (51)--(52) ; 
     \draw[<-] (52)--(54)  node[below, midway, scale=.75] {$\via \vartheta^{}_{\mathbf{\bar s}}$} ;

    \draw[->] (11)--(21) ;
    \draw[->] (21)--(51) node[left, midway, scale=.75] {$\xi_*\Ush c{\xi}^{-\<1}$} ;  
    
    \draw[->] (23)--(43) ; 
    
    \draw[->] (14)--(34) ;
    \draw[->] (34)--(54) ;  
    
     \draw[<-] (23)--(11) ;
     \draw[<-] (23)--(14) ; 
     \draw[->] (22)--(23) ;
     \draw[<-] (54)--(43) ;

    \node at (1.95,-3.24)[scale=.9]{\circled5} ;

 \epic
\]

Showing commutativity of subdiagram \circled5 is a minor variant of showing 
that $\Ush c\xi$ is identifiable with $\Gamma(X,c^{\>\flat}_{\<\<f})$
(see the third paragraph before~\ref{affine version}). Details are left to the reader.

The commutativity of the unlabeled subdiagrams is easy to check.

The desired conclusion results.
\end{proof}

In continuation of the proof of \ref{affine version}, to describe $d\>'_{\mathbf{\bar s}}$ more explicitly, the following abbreviations are helpful.\va1

With $r_{\!i}^L\set(r^{}_{\!i}+L^2)\in L/L^2$, and so on, one has the generators
 \begin{alignat*}{2}
\mathbf{r}^J&\set r^J_{\!\lift1,1,}\wedge\dots\wedge r^J_{\!e}\quad&&\textup{of\quad}
   \lift1.7,{\bigwedge},\lift2,{\<e},_{\mkern-8mu S^{\mathstrut}}\<\<\big(J/J^{\>2\>}\big),\\
\mathbf{\bar s}^I&\set \bar s^I_1\wedge\dots\wedge \bar s^I_d\quad&&\textup{of\quad}
   \lift1.7,{\bigwedge},\lift2.4,{\<d},_{\mkern-7mu T^{\mathstrut}}\<\<\big(I/I^{\>2\>}\big),\\
\mathbf{(r,s)}^L&\set r^L_{\!\lift1,1,}\wedge\dots\wedge r^L_{\!e}\wedge s^L_1\wedge\dots\wedge s^L_d 
\quad&&\textup{of\quad}
    \lift1.7,\bigwedge,\lift2.4,{\<d+e},_{\mkern-29mu T^{\mathstrut}}\mkern10mu\big(\<L/\<L^{\>2\>}\big).\end{alignat*}

With $(v_1,\dots,v_{d+e})$ the standard basis of $R^{d+e}\<=\<R^d\oplus R^e\<$, and $(w_1,\dots,w_d)$ the standard basis of $S^d\<$, one has the generators
\begin{alignat*}{2}
\mathbf{v}^{d+e}&\set v_1\wedge\dots\wedge v_{d+e}\quad&&\textup{of\quad}
   \lift1.7,{\bigwedge},\lift2,{\<{d+e}},_{\mkern-30mu R^{\mathstrut}}\mkern10mu\big(\<R^{\>d+e}\big),\\
\mathbf{v}^d&\set v_1\wedge\dots\wedge v_d \quad&&\textup{of\quad}
   \lift1.7,{\bigwedge},\lift2,{\<d},_{\mkern-8mu R^{\mathstrut}}\<\<\big(\<R^d\>\big),\\
\mathbf{v}^{d,e}&\set v_{d+1}\wedge\dots\wedge v_{d+e}\quad&&\textup{of\quad}
   \lift1.7,{\bigwedge},\lift2,{\<e},_{\mkern-8mu R^{\mathstrut}}\<\<\big(\<R^e\>\big),\\
\mathbf{w}^d&\set w_1\wedge\dots\wedge w_d\quad&&\textup{of\quad}
   \lift1.7,{\bigwedge},\lift2,{\<d},_{\mkern-8mu R^{\mathstrut}}\<\<\big(\<S^d\>\big).
\end{alignat*}

For a ring $Q$ and a generator $\mathbf g$  of a rank-one free $Q$-module $G$, denote
 by~$\mathbf 1_Q/\mathbf g$ the map in $\Hom_Q(G,Q)$ that takes $\mathbf g$ to the identity~$1_Q$
 of $Q\>$.\va2

One checks that in degree $d+e$, 
\[
\mathfrak g_{\mathbf r,\mathbf s}\big((1_R/\mathbf{v}^{d,e})\otimes_R (1_R/\mathbf{v}^d)\big)
=(-1)^{de}(1_R/\mathbf{v}^{d+e}).
\]

 \pagebreak[3]
Further, in degree $d$, 
\[
d\>'_{\mathbf{\bar s}}(1_S/\mathbf w^d)=1_T/\>\mathbf{\bar s}^I,
\]
and analogously,
\begin{alignat*}{2}
d\>'_{\mathbf{r}}(1_R/\mathbf{v}^{d,e})&=1_S/\mathbf{r}^J   &&\qquad\textup{(in degree $e$),}\\ 
d\>'_{\mathbf{r,s}}(1_R/\mathbf{v}^{d+e})&= 1_T/\mathbf{(r,s)}^L    &&\qquad\textup{(in degree $d+e$)}.
\end{alignat*}

Now one need only verify commutativity of the diagram of $R$-complexes represented by \circled1$'$  
with $d\>^{-\<1}$  replaced by $d\>'\<$; and for this, one need only look at what happens to  the generator 
$1_R/\mathbf{v}^{d+e}$ of 
the $R$-module $K_{\!R}^{d+e}((\mathbf r, \mathbf s))$.\va1

 Recalling \eqref{exact'aff}, one checks that moving around counterclockwise from 
 $K_{\!R}^{d+e}((\mathbf r, \mathbf s))$ to $\varphi_{\<*}(\xi_*N_\xi\otimes_S N_\varphi)^{d+e}$
 acts successively on $1_R/\mathbf{v}^{d+e}$ as:
 \begin{align*}
 1_R/\mathbf{v}^{d+e}
&\longmapsto
 1_T/\mathbf{(r,s)}^L\\
& \longmapsto
 (1_T/\mathbf{\bar s}^I)\otimes_T(1_T\otimes_S1_S/\mathbf{r^J})
 \longmapsto
 (1_T\</\>\mathbf{\bar s}^I)\otimes_S (1_S/ \mathbf{r}^J).
 \end{align*}

On the other hand, moving clockwise 
 acts successively on $1_R/\mathbf{v}^{d+e}$ as:
\begin{align*}
 1_R/\mathbf{v}^{d+e}
&\longmapsto
(-1)^{de}(1_R/\mathbf{v}^{d,e}\otimes_R(1_R/\mathbf{v}^d\>)\\
&\longmapsto
(-1)^{de}(1_S/\mathbf{r}^J)\otimes_R(1_R/\mathbf{v}^d\>)\\
&\longmapsto
(-1)^{de}(1_S/\mathbf{r}^J)\otimes_S(1_S/\mathbf{w}^d\>)\qquad\textup{(see \eqref{projaff} \emph{ff.})}\\
&\longmapsto
(1_S/\mathbf{w}^d\>)\otimes_S(1_S/\mathbf{r}^J) 
\longmapsto
  (1_T\</\>\mathbf{\bar s}^I)\otimes_S (1_S/ \mathbf{r}^J).
 \end{align*}

This completes the proof.
\end{proof}

\end{cosa}

\pagebreak

\end{document}